\documentclass[
    a4paper,
    english,
    oneside]{article}

\usepackage[utf8]{inputenc}
\usepackage[T1]{fontenc}
\usepackage{expl3}
\usepackage[english]{babel}
\usepackage[a4paper,top=2.5cm,bottom=2.5cm,left=3cm,right=3cm]{geometry}

\usepackage{%
    amssymb,
    amsfonts,
    amsthm,
    mathtools,
    amsmath}

\input{./macros}

\usepackage[
    backend=biber,
    style=alphabetic,
    doi=false,
    isbn=false,
    url=true,
    date=year,
    maxbibnames=99,
    giveninits=true,
      ]{biblatex}

\AtEveryBibitem{\clearfield{series}}
\AtEveryBibitem{\clearfield{labeladdress}}

\definecolor{LinkBlue}{cmyk}{1,0.50,0,0}
\usepackage[
    hidelinks,
    colorlinks=true,
    unicode,
    linkcolor=LinkBlue,
    citecolor=LinkBlue,
    urlcolor=LinkBlue]{hyperref}

\title{A Categorical Approach to Nilspace Theory}
\author{Noa Bihlmaier\thanks{
    \textsc{Mathematisches Institut, Universität Bonn, Endenicher Allee 60, 53115 Bonn, Germany} \\
    \indent\textit{Email address:} \href{mailto:nobi@math.uni-bonn.de}{\texttt{nobi@math.uni-bonn.de}}}
        \and
        Nick Ruoff\thanks{
    \textsc{Mathematisches Institut, Universität Bonn, Endenicher Allee 60, 53115 Bonn, Germany} \\
    \indent\textit{Email address:} \href{mailto:ruoff@math.uni-bonn.de}{\texttt{ruoff@math.uni-bonn.de}}}}

\begin{document}
\maketitle
\thispagestyle{empty}

\begin{abstract}
Originating in ergodic structure theory, nilspace theory is concerned with the study of cubical spaces.
A first key result in this theory, the \emph{weak structure theorem}, is the representation of a concrete fibrant compact cubespace as
a tower of its canonical truncations, exhibiting each step over the previous one as a principal bundle by a compact abelian group.
We present a categorical proof of the weak structure theorem by systematically interpreting the relevant concepts in an appropriate context.

\end{abstract}

\tableofcontents
\clearpage

\section{Introduction}

The theory of \emph{nilspaces} originally emerged in the late 2010s after Host and Kra and independently Ziegler proved the convergence of multiple ergodic averages \cite{Host2005,Ziegler2006} by means of what is now called \emph{Host-Kra structure theory} \cite{Host2018,Eisner2025}.
One of the key insights in their proofs is that every ergodic measure-preserving $\Z$-system $X$
decomposes into \emph{structured factors $Z_k$} which capture the convergence behavior of the $k+1$-fold ergodic averages (or: the factors are \emph{characteristic} for the convergence of the $k$-fold ergodic averages),
and \emph{random parts $Z_k^\perp$} on which the $k+1$-fold ergodic averages tend to zero.
The first two such decompositions are easy to describe:
$Z_0$ is the invariant factor of the system, i.e., where the action is trivial,
and corresponds to von Neumann's Mean Ergodic Theorem,
and $Z_1$ is the \emph{Kronecker factor} on which the system is a rotational system on a compact abelian group by the Halmos-von Neumann Theorem.
The $Z_k$ make up a tower of factors of $X$
\begin{center}
\begin{tikzcd}[ampersand replacement=\&]
	X \& \dots \& {Z_{k}} \& {Z_{k-1}} \& \dots \& {Z_1} \& {Z_0}
	\arrow[two heads, from=1-1, to=1-2]
	\arrow[two heads, from=1-2, to=1-3]
	\arrow[two heads, from=1-3, to=1-4]
	\arrow[two heads, from=1-4, to=1-5]
	\arrow[two heads, from=1-5, to=1-6]
	\arrow[two heads, from=1-6, to=1-7]
\end{tikzcd}
\end{center}
which exhibits $Z_k$ as a skew-product extension of $Z_{k-1}$ by a compact abelian group $A_k$,
i.e., this tower is an iterated abelian bundle.
This statement is known under the name of \emph{weak structure theorem}.
From there, the \emph{strong structure theorem} describes the system $Z_k$ as a limit of translational systems on $k$-step nilmanifolds,
i.e., translational systems on a homogeneous space $G/\Lambda$ where $G$ is a $k$-step nilpotent Lie group and $\Lambda$ is a lattice therein.
This description is sufficient to prove convergence of the $k$-fold ergodic averages,
see also \cite{Leibman2006}.

Interestingly, underlying the Host-Kra theory there are many \emph{cubical patterns} \cite{Host2004,Host2005,Host2005a,Host2008, Host2010}.
In fact, the Host-Kra factors are the universal characteristic factors for cubical ergodic averages,
and Host-Kra-Gowers semi-norms, cubegroups and iterated self-joinings
are described using indices describing the combinatorial behavior of discrete cubes ${\bbox}^n = \{0,1\}^n$.
Extracting the algebraic properties of these cubical patterns is at the core of nilspace theory,
pioneered by Antol\'in Camarena and Szegedy in \cite{Camarena2010},
who exploit the cubical patterns to obtain an analogous weak and strong structure theorem,
showing that what matters is not the ergodic system itself
but merely its underlying cubical structure.
More groundwork on nilspaces has been put forward in a series of papers by Gutman, Manners and Varj\'u \cite{Gutman2020,Gutman2020a,Gutman2020b}
and by Candela \cite{Candela2017,Candela2017a},
who both systematized and improved the weak and strong structure theorem for nilspaces.
From there, it is straightforward to deduce topological Host-Kra theory \cite{Host2010,Shao2012}
from the strong structure theory of nilspaces, see \cite{Glasner2018,Gutman2020b,Glasner2023}.
But nilspace theory also gives ways of proving the strong structure theorem for measure-preserving systems,
as in \cite{Gutman2023} and \cite{Candela2020,Candela2023}.
Apart from its applications to ergodic theory, nilspace theory has many other applications,
e.g. in Inverse Gowers Theory, Higher Order Fourier Analysis and additive combinatorics,
see \cite{Candela2021,Candela2022,Jamneshan2026d}.

\subsection*{Overview}

The goal of this text is to give a new proof of (a generalized version of) the following theorem.

\begin{theorem*}[Corollary~\ref{cor:compact-weak-structure}]
Let $X$ be a compact ergodic nilspace.
Then there is a tower of surjective fibrations
\begin{center}
\begin{tikzcd}
	X & \cdots & {\tau_2X} & {\tau_1X} & {\tau_0X}
	\arrow[from=1-1, to=1-2]
	\arrow[from=1-2, to=1-3]
	\arrow[from=1-3, to=1-4]
	\arrow[from=1-4, to=1-5]
\end{tikzcd}
\end{center}
such that $\tau_n X$ is a compact $n$-step nilspace
and the compact group $\pi_n(X,x_0)$ acts freely on $\tau_n X$
such that the orbits are the fibers of the quotient $\tau_n X\to\tau_{n-1} X$.
\end{theorem*}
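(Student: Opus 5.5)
The strategy is to realise nilspace theory inside a homotopy-theoretic framework for cubical spaces. The canonical truncations $\tau_n X$ then become Postnikov-type truncations, and the fibers of $\tau_n X\to\tau_{n-1}X$ are identified with Eilenberg–MacLane-type objects. First I set up the ambient category: cubespaces as presheaves on the cube category (with faces, degeneracies and the relevant cubical maps), and nilspaces as the fibrant objects, meaning those with the Kan-type corner completion property. For such a fibrant $X$ I define $\tau_n X$ as the quotient of $X$ by the relation $x\sim_n y$. This relation says that the $(n+1)$-cube equal to $x$ at one vertex and $y$ elsewhere lies in $C^{n+1}(X)$, equivalently that $x$ and $y$ agree on the $n$-skeleton. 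Using fibrancy I show that $\sim_n$ is an equivalence relation and that $\tau_n X$ is again fibrant and $n$-step, i.e.\ $(n+1)$-cubes have unique completions. I also show that the maps $X\to\tau_nX\to\tau_{n-1}X$ are surjective fibrations. These are the analogues of the Postnikov section axioms, obtained by lifting corners along the quotient maps.

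Next I define the homotopy group $\pi_n(X,x_0)$ for a basepoint $x_0$. It consists of the $\sim_{n-1}$-classes of points in the fiber of $X\to\tau_{n-1}X$ over the image of $x_0$, modulo $\sim_n$. The group law comes from filling $n$-dimensional corners, and it is abelian for $n\ge1$ by an Eckmann–Hilton type argument using two independent cube directions. The key step is constructing the action of $\pi_n(X,x_0)$ on $\tau_nX$. For $a\in\pi_n$ and $y\in\tau_nX$ I connect $y$ to $x_0$ by a path of $1$-cubes; ergodicity, i.e.\ $\tau_0X=\ast$, makes all points cube-connected. I then transport $a$ along this path, using $n$-step uniqueness of completions to make transport canonical. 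Independence of the path, and triviality of the monodromy, come from the fact that the fiber structure group is abelian and is acted on trivially at the $n$-step level. Freeness and transitivity on fibers follow: two points of $\tau_nX$ over the same point of $\tau_{n-1}X$ differ by a unique element, obtained by completing the relevant corner.

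Finally I add the topology. For compact $X$, meaning each $C^k(X)$ is a closed subset of the compact Hausdorff space $X^{\{0,1\}^k}$, the relations $\sim_n$ are closed. Hence $\tau_nX$ is compact Hausdorff, and corner completion on $\tau_nX$ is continuous by a closed-graph argument: unique completion defines a map whose graph is closed in a compact space. The same argument makes the group operations and the action continuous, so $\pi_n(X,x_0)$ is a compact abelian group acting continuously and freely with the fibers as orbits. Because of the compactness, this action is a principal bundle. I expect the main obstacle to be the action step: showing that transport of $\pi_n$ is well defined globally (trivial monodromy) and compatible with cubes. That amounts to the statement that $n$-cubes in $\tau_nX$ are preserved by the action in the expected way. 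I would first try to prove this by induction on cube dimension, deducing the compatibility from uniqueness of $(n+1)$-completions in $\tau_nX$.
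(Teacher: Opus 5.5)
The gap is in the construction of the action, which is the step that carries the whole statement. You derive path-independence of transport from the abelianness of the structure group, but that inference is invalid. Transport around a loop is an automorphism of $\pi_n(X,x_0)$, and commutativity says nothing about that automorphism being the identity (compare $\pi_1(\mathbb{RP}^2)$ acting by $-1$ on $\pi_2(\mathbb{RP}^2)$). ``Acted on trivially at the $n$-step level'' is just the claim restated.

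What makes transport coherent is concreteness together with the gluing property, not commutativity. The paper needs two identities.

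First, in a concrete fibrant ergodic cubeset any two points span exactly one $1$-cube. So there is a canonical edge $(x_0,y)$ and no path to choose. The cocycle identity $h_{(y_0,z_0)}\circ h_{(x_0,y_0)}=h_{(x_0,z_0)}$ (Lemma~\ref{lem:homotopy-action-well-defined-global}) follows by gluing the two transport $(n+1)$-cubes along their common face. By concreteness, the glued cube has the same corner as the direct transport cube, so the two coincide by unique completion.

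Second, and this has no counterpart in your sketch, transport must be tied to the group law: $h_{(x_0,y_0)}(a)(1^n)=(a*b)(1^n)$ whenever $y_0=b(1^n)$ (Lemma~\ref{lem:homotopy-action-well-defined}). This is proved by an iterated gluing, using universal replacement, that turns $\overline{a*b}$ into a cube with the same corner as the degenerate cube on $h_{(x_0,y_0)}(a)$.

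Only both identities together give the action axiom for $\alpha(a,x)=h_{(x_0,x)}(a)(1^n)$. They also show that on each fiber the action is translation in that fiber's group, carried over by $h$ (Lemma~\ref{lem:action-independence-basepoint}, Proposition~\ref{prop:unit-principal-bundle}). Freeness and ``orbits $=$ fibers'' rest on this, and your ``completing the relevant corner'' presupposes it.

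There are also smaller points in the algebra.
\begin{itemize}
\item The product must come from $(n+1)$-dimensional corners. Completions of $n$-corners are not unique in $\tau_nX$, and on the $n$-ergodic fiber every $n$-configuration is a cube.
\item Commutativity follows directly from the coordinate swap $x_1\leftrightarrow x_2$; no Eckmann--Hilton argument is needed.
\item You never construct inverses, and that is exactly where concreteness enters (the flip argument of Proposition~\ref{prop:concrete-structure-group-is-group}).
\end{itemize}

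Your topological step is a genuinely different, classical route, and it is sound once the algebra is repaired. Closedness of $\sim_n$ follows from the characterization in Proposition~\ref{prop:univ-replacement}(b). Continuity of unique completion follows from the closed-graph criterion for compact Hausdorff spaces, and this gives continuity of the group law and of $\alpha$. Continuity of $\alpha$ in $y$ relies on the direct edge $(x_0,y)$, not on a chosen path.

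The paper never checks continuity. It passes to condensed cubesets, where every construction, being natural and finite-product preserving, is computed pointwise on extremally disconnected $E$. Theorem~\ref{thm:cond-weak-structure} is then the discrete theorem applied to each $X(E)$. Compactness of $\tau_nX$ is Proposition~\ref{prop:cond-cube-trunc-qcqs}, which is essentially your closedness of $\sim_n$ in condensed form. The paper's route buys generality and removes all point-set work; yours is more elementary.

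Finally, ``by compactness this is a principal bundle'' holds only in the wide sense. Local triviality does not follow from compactness; it holds when $\pi_n(X,x_0)$ is Lie, by Gleason.
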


To do so, we thoroughly study and introduce the concepts of cubesets, fibrations, truncations, structure groups and related notions
from a categorical point of view.

The central definition in nilspace theory is that of a \emph{cubeset} $X$.
It consists of a family of sets $X(n)$ of $n$-dimensional cubes for every $n\in\N$,
compatible with certain restriction maps.
This information can be encoded in that $X$ is a \emph{presheaf} on cubes.
Rather than saying what a single cube is, it is easier to say what the category of cubes is,
i.e., how the cubes of different dimension interact with each other.

\begin{definition*}[Definition~\ref{def:cube_cat}]
The \emph{category of cubes $\bbox$} is the category with objects ${\bbox}^n = \{0,1\}^n$ for $n\in\N_0$
and morphisms $f\colon{\bbox}^n\to{\bbox}^m$ given by affine linear maps $\Z^n\to\Z^m$ that map $\{0,1\}^n$ into $\{0,1\}^m$.
\end{definition*}

The first $n$-cubes can be pictured as follows, drawing the images of every injective map into the cube.

\begin{center}

\pgfdeclarelayer{symfacefill}
\pgfdeclarelayer{symfaceoutline}
\pgfsetlayers{symfacefill,symfaceoutline,main}

\newcommand{\SymFace}[1]{%
  \begin{pgfonlayer}{symfacefill}
    \path[fill=gray, fill opacity=0.10] #1 -- cycle;
  \end{pgfonlayer}
  \begin{pgfonlayer}{symfaceoutline}
    \path[draw=gray!60,line width=0.45pt] #1 -- cycle;
  \end{pgfonlayer}
}

\newcommand{\SymDiagFace}[1]{%
  \begin{pgfonlayer}{symfacefill}
    \path[fill=gray, fill opacity=0.16] #1 -- cycle;
  \end{pgfonlayer}
  \begin{pgfonlayer}{symfaceoutline}
    \path[draw=gray!70,line width=0.5pt] #1 -- cycle;
  \end{pgfonlayer}
}

\begin{tikzpicture}[
    x=1cm,y=1cm,
    font=\small,
    line cap=round,
    line join=round,
    vertex/.style={circle,fill,inner sep=1.4pt},
    mainedge/.style={draw=black, line width=0.8pt},
    affdiag/.style={draw=black, line width=0.65pt},
    simpedge/.style={
        draw=black,
        line width=0.75pt,
        -{Stealth[length=1.6mm,width=1.1mm]},
        shorten <= 8.0pt,
        shorten >= 8.0pt
    },
    rowlab/.style={anchor=east},
    collab/.style={font=\normalsize},
scale=0.5
]

\node[collab] at ( 1.2,  4.0) {$n=1$};
\node[collab] at ( 5.7,  4.0) {$n=2$};
\node[collab] at (11.3,  4.0) {$n=3$};

\begin{scope}
    \coordinate (A) at (0,0);
    \coordinate (B) at (2.2,0);

    \draw[mainedge] (A)--(B);
    \node[vertex] at (A) {};
    \node[vertex] at (B) {};
\end{scope}

\begin{scope}[shift={(5,0)}]
    \coordinate (A) at (0,0);
    \coordinate (B) at (2.5,0);
    \coordinate (C) at (0,2.5);
    \coordinate (D) at (2.5,2.5);

    \SymFace{(A)--(B)--(D)--(C)}

    \draw[mainedge] (A)--(B);
    \draw[mainedge] (B)--(D);
    \draw[mainedge] (D)--(C);
    \draw[mainedge] (C)--(A);

    \draw[affdiag] (A)--(D);
    \draw[affdiag] (B)--(C);

    \foreach \P in {A,B,C,D} {
        \node[vertex] at (\P) {};
    }
\end{scope}

\begin{scope}[shift={(10,0)}]
    \coordinate (A) at (0,0);
    \coordinate (B) at (2.4,0);
    \coordinate (C) at (0,2.4);
    \coordinate (D) at (2.4,2.4);

    \coordinate (E) at (1.1,0.8);
    \coordinate (F) at (3.5,0.8);
    \coordinate (G) at (1.1,3.2);
    \coordinate (H) at (3.5,3.2);

    \SymFace{(A)--(B)--(D)--(C)}
    \SymFace{(E)--(F)--(H)--(G)}
    \SymFace{(A)--(B)--(F)--(E)}
    \SymFace{(C)--(D)--(H)--(G)}
    \SymFace{(A)--(C)--(G)--(E)}
    \SymFace{(B)--(D)--(H)--(F)}

    \SymDiagFace{(A)--(B)--(H)--(G)}
    \SymDiagFace{(A)--(C)--(H)--(F)}
    \SymDiagFace{(A)--(E)--(H)--(D)}

    \draw[mainedge] (A)--(B);
    \draw[mainedge] (B)--(D);
    \draw[mainedge] (D)--(C);
    \draw[mainedge] (C)--(A);

    \draw[mainedge] (E)--(F);
    \draw[mainedge] (F)--(H);
    \draw[mainedge] (H)--(G);
    \draw[mainedge] (G)--(E);

    \draw[mainedge] (A)--(E);
    \draw[mainedge] (B)--(F);
    \draw[mainedge] (C)--(G);
    \draw[mainedge] (D)--(H);

    \draw[affdiag] (A)--(D);
    \draw[affdiag] (B)--(C);
    \draw[affdiag] (E)--(H);
    \draw[affdiag] (F)--(G);
    \draw[affdiag] (A)--(F);
    \draw[affdiag] (B)--(E);
    \draw[affdiag] (C)--(H);
    \draw[affdiag] (D)--(G);
    \draw[affdiag] (A)--(G);
    \draw[affdiag] (C)--(E);
    \draw[affdiag] (B)--(H);
    \draw[affdiag] (D)--(F);

    \draw[affdiag] (A)--(H);
    \draw[affdiag] (B)--(G);
    \draw[affdiag] (C)--(F);
    \draw[affdiag] (D)--(E);

    \foreach \P in {A,B,C,D,E,F,G,H} {
        \node[vertex] at (\P) {};
    }
\end{scope}
\end{tikzpicture}
\end{center}

There is an important subclass of morphisms in $\bbox$:
the morphisms $f\colon{\bbox}^n\to{\bbox}^m$ that are the restriction of a linear map.
Since they also have to map the unit cube $\{0,1\}^n$ into the unit cube,
each such linear map arises from a (not necessarily invertible) permutation of coordinates and thus
is characterized by a map of sets $\{1,\ldots,m\}\to\{1,\ldots,n\}$.
There's only one caveat: it might be possible for $f$ to be constant zero in some coordinates,
so the map of sets $\{1,\ldots,m\}\to\{1,\ldots,n\}$ only is partially defined.

\begin{definition*}[Definition~\ref{def:indexcats}]
\emph{Segal's category $\bbGamma$} is the opposite of the category of finite sets
and partially defined maps.
\end{definition*}

With this adapted category, every morphism indeed arises as a coordinate permutation
which upgrades to an equivalence of categories.

\begin{proposition*}[Proposition~\ref{lem:iso-gamma-bblox}]
There is an equivalence of categories between $\bbGamma$ and the subcategory of $\bbox$
consisting of the linear maps.
\end{proposition*}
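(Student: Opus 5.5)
We will construct an explicit functor $\Phi\colon\bbGamma\to\bbox_{\mathrm{lin}}$, where $\bbox_{\mathrm{lin}}$ denotes the subcategory of $\bbox$ with the same objects and only the linear morphisms, and show that it is fully faithful and essentially surjective.

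\textbf{Choice of skeleton.} Since $\bbGamma$ is the opposite of finite sets with partial maps, we may restrict to the skeleton on the objects $[n]=\{1,\ldots,n\}$ with $[0]=\emptyset$. It then suffices to produce an isomorphism of categories between this skeleton and $\bbox_{\mathrm{lin}}$.

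\textbf{Definition of $\Phi$.} On objects, set $\Phi([n])={\bbox}^n$. A morphism ${\bbox}^n\to{\bbox}^m$ in $\bbGamma$ is a partial map $\alpha\colon[m]\rightharpoonup[n]$. Send it to the map $\Phi(\alpha)\colon\Z^n\to\Z^m$ given by
\[
\Phi(\alpha)(x)_j=\begin{cases}x_{\alpha(j)}, & j\in\operatorname{dom}\alpha,\\ 0, & \text{otherwise.}\end{cases}
\]
This map is linear and sends $\{0,1\}^n$ into $\{0,1\}^m$.

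\textbf{Functoriality.} For partial maps $\alpha\colon[m]\rightharpoonup[n]$ and $\beta\colon[k]\rightharpoonup[m]$, the $j$-th coordinate of $\Phi(\alpha)\circ\Phi(\beta)$ is $x_{\alpha(\beta(j))}$ when $j\in\operatorname{dom}\beta$ and $\beta(j)\in\operatorname{dom}\alpha$, and $0$ otherwise. This is exactly $\Phi(\alpha\circ\beta)$, where $\alpha\circ\beta$ is the composite of partial maps. Identities are clearly preserved. Because the composition is reversed, $\Phi$ is a covariant functor out of the opposite category, as required.

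\textbf{Full faithfulness.} This is the key step. Let $L\colon\Z^n\to\Z^m$ be linear with $L(\{0,1\}^n)\subseteq\{0,1\}^m$. Write the $j$-th row as $L(x)_j=\sum_i a_{ji}x_i$ with $a_{ji}\in\Z$.
\begin{itemize}
\item Evaluating at the basis vector $e_i$ shows that $a_{ji}\in\{0,1\}$.
\item Evaluating at $e_i+e_{i'}$ for $i\neq i'$ shows that $a_{ji}+a_{ji'}\le 1$.
\end{itemize}
So each row has at most one nonzero entry, and that entry equals $1$. Define $\alpha(j)$ to be the index of this entry, leaving $\alpha$ undefined at $j$ if the row is zero. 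Then $\Phi(\alpha)=L$, which gives fullness. Faithfulness follows because $\alpha$ can be read off from the matrix of $\Phi(\alpha)$.

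\textbf{Conclusion.} $\Phi$ is bijective on objects and fully faithful, hence an isomorphism of categories from the skeleton onto $\bbox_{\mathrm{lin}}$. Composing with the equivalence $\bbGamma\simeq$ its skeleton yields the claimed equivalence.

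The only genuine obstacle is the matrix argument in the full faithfulness step: it uses that "linear" means linear over $\Z$ with integer coefficients, and it must be checked carefully at the edge cases $n=0$ or $m=0$. The remaining steps are bookkeeping with partial-map composition and the direction of the opposite category.
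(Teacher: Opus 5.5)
Your proof is correct and is essentially the paper's: your $\Phi$ is the paper's functor $G$, and your row-by-row matrix argument is the linear case of Lemma~\ref{lem:coordinate_description}, which the paper uses to write down the inverse functor $F$ explicitly (reading off $j(i)$ or $\ast$ from each coordinate) rather than arguing via full faithfulness. One small slip: in the functoriality step, the composite whose $j$-th coordinate is $x_{\alpha(\beta(j))}$ is $\Phi(\beta)\circ\Phi(\alpha)\colon{\bbox}^n\to{\bbox}^k$, not $\Phi(\alpha)\circ\Phi(\beta)$; this reversal is exactly the contravariance you then invoke.
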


It turns out that there is a purely categorical, coordinate-free construction to arrive from Segal's category $\bbGamma$
at the category of cubes $\bbox$.

\begin{proposition*}[Proposition~\ref{prop:cube_cat_linear}]
The cube category $\bbox$ is the unstraightening
\[
    \mathrm{Un}(\ast/\F_2^\bullet)=\int_{\bbGamma} \ast/\F_2^\bullet\to\bbGamma
\]
of the functor $\ast/\F_2^\bullet\colon\bbGamma\to\Cat$ that assigns to every $n$-element set
the delooping of the group $\F_2^n = (\Z/2\Z)^n$.
\end{proposition*}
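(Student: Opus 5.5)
We prove the statement by exhibiting an explicit isomorphism of categories $\int_{\bbGamma}\ast/\F_2^\bullet\to\bbox$ over the linear subcategory, using Proposition~\ref{lem:iso-gamma-bblox} to identify $\bbGamma$ with the linear maps of $\bbox$.

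First we make the functor precise. A morphism $\langle n\rangle\to\langle m\rangle$ in $\bbGamma$ is a partial map $\phi\colon\{1,\dots,m\}\rightharpoonup\{1,\dots,n\}$. Under Proposition~\ref{lem:iso-gamma-bblox} it corresponds to the linear map $L_\phi\colon\F_2^n\to\F_2^m$ given by $(L_\phi x)_j=x_{\phi(j)}$, with $(L_\phi x)_j=0$ if $\phi(j)$ is undefined. This map is a group homomorphism, so it induces a functor $B\F_2^n\to B\F_2^m$ on deloopings. We take this as the functor $\ast/\F_2^\bullet\colon\bbGamma\to\Cat$; functoriality holds because $L_{\phi\circ\psi}=L_\phi L_\psi$ with the appropriate variance.

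Next we unwind the Grothendieck construction. Objects of $\int_{\bbGamma}\ast/\F_2^\bullet$ are pairs $(\langle n\rangle,\ast)$. A morphism $(\langle n\rangle,\ast)\to(\langle m\rangle,\ast)$ is a pair $(\phi,v)$ with $\phi$ as above and $v\in\F_2^m$ a morphism $L_\phi(\ast)=\ast\to\ast$ in $B\F_2^m$. Composition is
\[
(\phi',v')\circ(\phi,v)=(\phi'\circ\phi,\;v'+L_{\phi'}v).
\]
(We fix whichever of the co/contravariant conventions for $\mathrm{Un}$ matches the paper.) We send $(\phi,v)$ to the map $x\mapsto L_\phi x+v$ on $\{0,1\}^n$, where addition is taken mod $2$ coordinatewise. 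The claim is that this map $\Z^n\to\Z^m$ is the restriction of an affine map preserving the unit cube. In each coordinate $j$ the output is $x_{\phi(j)}$ or $1-x_{\phi(j)}$ or a constant $0$ or $1$, and each of these is affine over $\Z$. Functoriality follows from the composition formula, since $L_{\phi'}(L_\phi x+v)+v'=L_{\phi'\phi}x+L_{\phi'}v+v'$.

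The main step is bijectivity on hom-sets, i.e.\ a normal form for morphisms of $\bbox$. Let $f\colon\{0,1\}^n\to\{0,1\}^m$ be the restriction of an affine $A x+b$. Each coordinate $f_j(x)=\sum_i a_{ji}x_i+b_j$ must take values in $\{0,1\}$ on all vertices. Evaluating at $0$ gives $b_j\in\{0,1\}$, and evaluating at $e_i$ gives $a_{ji}\in\{-b_j,1-b_j\}$. Evaluating at $e_i+e_k$ shows that two nonzero coefficients would force a value outside $\{0,1\}$; for instance $b_j=0$ with $a_{ji}=a_{jk}=1$ gives $2$. Hence each $f_j$ is constant, or equal to $x_i$, or equal to $1-x_i$, and this data is exactly a unique pair $(\phi,v)$. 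Injectivity holds because distinct pairs give distinct functions on the vertices, and this is the point where the convention that morphisms are the induced maps on $\{0,1\}^n$ matters. The functor is the identity on objects, so it is an isomorphism of categories. It commutes with the projections to $\bbGamma$ and restricts to the equivalence of Proposition~\ref{lem:iso-gamma-bblox} on pairs with $v=0$.
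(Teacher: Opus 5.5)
Your proof is correct and is essentially the paper's. Your functor $(\phi,v)\mapsto(x\mapsto L_\phi x+v)$ is exactly the paper's $U(f,a)=\phi_a\circ G(f)$, and your coordinate normal form is the content of Lemma~\ref{lem:coordinate_description} together with the unique linear-times-flip factorization of Lemma~\ref{lem:linearization-auto-bblox}. The only difference is that you conclude by showing bijectivity on hom-sets, whereas the paper writes down the inverse $V(f)=(F(H(f)),f(0))$ and checks its functoriality separately; your route makes that extra check unnecessary.
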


This proposition ultimately relies on the fact that a morphism of cubes
can be uniquely decomposed into a linear map followed by a \emph{flip},
i.e. a map that only reverses the orientation of the cube along certain coordinate axes
(Lemma~\ref{lem:linearization-auto-bblox}).
The description of the cube category $\bbox$ as unstraightening over $\bbGamma$
now enables us to lift certain nice properties of $\bbGamma$ to $\bbox$:
among them a classification of epi- and monomorphisms (Lemma~\ref{lem:bbox-epi-mono}),
the existence of a Cisinski generalized Reedy structure (Lemma~\ref{lem:bbox-reedy-cat})
and the cartesian monoidal structure (Corollary~\ref{prop:bbox-symm-monoid}).

Now given the basic building blocks for the theory, a \emph{cubeset $X$} is some kind of space made out of cubes.
So $X$ consists of a collection of $n$-cubes $X(n)$ for every $n\in\N_0$,
and for any map $f\colon{\bbox}^n\to{\bbox}^m$ between cubes,
there is a restriction $X(f)\colon X(m)\to X(n)$, compatible with composition and the identity map.
This justifies calling the set $X(n)$ the collection of $n$-cubes.

\begin{definition*}[Definition~\ref{def:cubeset}]
A \emph{cubeset $X$} is a presheaf of sets on the category of cubes $\bbox$,
and the category $\cSet$ of cubesets is the presheaf category $\PSh(\bbox)=\Fun(\bbox^\op,\Set)$.
\end{definition*}

We briefly recall all relevant properties of the presheaf category of cubes in Section~\ref{ssec:cubesets-basics} and study their relation to presheaves on $\bbGamma$.
Most notably, we give an alternative description of the presheaf category
over a category that arises as the Grothendieck construction of a functor $F\colon\cC\to\Grp$
(Proposition~\ref{prop:unstrait-presehaf}).
The three main examples of cubesets are also given in this section:

\begin{example*}[Lemma~\ref{lem:point-cset}]
Given a set $S$, the \emph{trivial cubeset $i_!S$} is given by the constant presheaf
that assigns to an $n$-cube ${\bbox}^n$ the set $S$ and to a map of cubes the identity on $S$.

The \emph{discrete cubeset $i_\ast S$} is given by the presheaf
\[
    {\bbox}^n\mapsto\hom_{\set}(\{0,1\}^n,S).
\]
\end{example*}

\begin{example*}[Example~\ref{ex:host_kra_cubegroups}]
Given an abelian group $A$ and $k\in\N$ we define the \emph{$k$-th Eilenberg-Mac Lane cubeset $\cD_k(A)$} to be the subcubeset of $i_\ast A$ whose $n$-cubes are the subgroup of $A^{2^n}$ generated by elements of the form
\[
    a^F\colon\{0,1\}^n\to A,\quad\omega\mapsto\begin{cases}
        a,\quad & \omega\in F, \\
        0,\quad & \omega\notin F,
    \end{cases}
\]
where $F$ runs over all cube faces of $\{0,1\}^n$
whose dimension is at least $n-k$ and $a$ is an element of $A$.
\end{example*}

In practice, the relevant cubesets, and in particular, all examples considered above,
have the additional property of \emph{concreteness}.
A cubeset $X$ is concrete if each $n$-cube of $X$ is fully determined by its vertices.

\begin{definition*}[Definition~\ref{def:concrete-cubeset}]
A cubeset $X$ is \emph{concrete} if the natural map
\[
    X(n) \to \prod_{0\to n} X(0) = X(0)^{2^n}
\]
is injective.
We denote the full subcategory of concrete cubesets by $\ccSet$.
\end{definition*}

Since concreteness is defined via limits, concrete cubesets are stable under limits
and thus there is a \emph{concretification}, i.e. a left adjoint to the inclusion,
that assigns to a cubeset $X$ an associated concrete cubeset (Proposition~\ref{prop:concrete-left-adjoint}).
The rest of Section~\ref{ssec:concrete-cubesets} deals with stability properties of
concrete cubesets: they are stable under coproducts, filtered colimits, internal Hom and subobjects.

Another important aspect of cubesets is \emph{corner completion}.

\begin{definition*}[Definition~\ref{def:corner}]
For $n\in\N$, the \emph{$n$-corner ${\bbcor}^n$} is the union of all linear faces of ${\bbox}^n$
of dimension less than $n$ that don't contain the point $1^n=(1,\ldots,1)$.
\end{definition*}

The first few $n$-corners look as follows.
\begin{center}

\begin{tikzpicture}[
    x=1cm,y=1cm,
    font=\small,
    line cap=round,
    line join=round,
    vertex/.style={circle,fill,inner sep=1.35pt},
    greyvertex/.style={circle,fill=gray!45,inner sep=1.35pt},
    ambient/.style={draw=gray!45,line width=0.45pt},
    corneredge/.style={draw=black,line width=0.9pt},
    cornerface/.style={
        draw=gray!55,
        fill=gray!45,
        line width=0.45pt,
        fill opacity=0.28
    },
    toplabel/.style={font=\normalsize},
scale=0.5
]

\begin{scope}[shift={(0,0)}]
    \coordinate (A) at (0,0);
    \coordinate (B) at (2,0);

    \draw[ambient] (A)--(B);

    \node[vertex] at (A) {};
    \node[greyvertex] at (B) {};
\end{scope}

\begin{scope}[shift={(4.2,-0.4)}]
    \coordinate (A) at (0,0);
    \coordinate (B) at (2.4,0);
    \coordinate (C) at (0,2.4);
    \coordinate (D) at (2.4,2.4);

    \draw[ambient] (A)--(B)--(D)--(C)--cycle;

    \draw[corneredge] (A)--(B);
    \draw[corneredge] (A)--(C);

    \node[vertex] at (A) {};
    \node[vertex] at (B) {};
    \node[vertex] at (C) {};
    \node[greyvertex] at (D) {};
\end{scope}

\begin{scope}[shift={(8.8,-0.95)}]
    \coordinate (A) at (0,0);
    \coordinate (B) at (2.5,0);
    \coordinate (C) at (0,2.5);
    \coordinate (D) at (2.5,2.5);

    \coordinate (E) at (1.0,1.0);
    \coordinate (F) at (3.5,1.0);
    \coordinate (G) at (1.0,3.5);
    \coordinate (H) at (3.5,3.5);

    \draw[ambient] (A)--(B)--(D)--(C)--cycle;
    \draw[ambient] (E)--(F)--(H)--(G)--cycle;
    \draw[ambient] (A)--(E);
    \draw[ambient] (B)--(F);
    \draw[ambient] (C)--(G);
    \draw[ambient] (D)--(H);

    \filldraw[cornerface] (A)--(B)--(D)--(C)--cycle;
    \filldraw[cornerface] (A)--(B)--(F)--(E)--cycle;
    \filldraw[cornerface] (A)--(C)--(G)--(E)--cycle;

    \draw[corneredge] (A)--(B)--(D)--(C)--cycle;
    \draw[corneredge] (A)--(E)--(F)--(B);
    \draw[corneredge] (E)--(G)--(C);

    \draw[ambient] (D)--(H);
    \draw[ambient] (F)--(H);
    \draw[ambient] (G)--(H);

    \foreach \P in {A,B,C,D,E,F,G}
        \node[vertex] at (\P) {};

    \node[greyvertex] at (H) {};
\end{scope}

\end{tikzpicture}
\end{center}

This leads to the definition of a fibrant cubeset and the relative version thereof, a \emph{fibration}.

\begin{definition*}[Definition~\ref{def:nil-fibration}]
    \begin{enumerate}[(i)]
        \item
A cubeset $X$ is called \emph{fibrant} if it has \emph{corner completion},
i.e., for every corner ${\bbcor}^n\to X$ in $X$ there exists a completion to a cube ${\bbox}^n\to X$
along the inclusion ${\bbcor}^n\to{\bbox}^n$.

        \item
A map $f\colon X\to Y$ between cubesets is called \emph{fibration},
if for every commutative diagram
\begin{center}
\begin{tikzcd}[ampersand replacement=\&]
	{{\bbcor}^n} \& X \\
	{{\bbox}^n} \& Y
	\arrow[from=1-1, to=1-2]
	\arrow[hook, from=1-1, to=2-1]
	\arrow[from=1-2, to=2-2]
	\arrow[dashed, from=2-1, to=1-2]
	\arrow[from=2-1, to=2-2]
\end{tikzcd}
\end{center}
there exists a diagonal lift ${\bbox}^n\to X$.
        \end{enumerate}
\end{definition*}

This means that every $n$-corner in $X$ that can be completed in $Y$ to an $n$-cube already has a completion to an $n$-cube in $X$ which maps to the cube in $Y$.

The notion of fibration originates in model categories and is usually defined
via a right lifting property against a set of morphisms,
as is the case for fibrations of cubesets.
Applying Quillen's small object argument, we obtain a weak factorization system:
two classes of morphisms $(l(r(\Lambda)),r(\Lambda))$ such that each map of cubesets
decomposes into
\begin{center}
\begin{tikzcd}
	X & W & Y
	\arrow["h", tail, from=1-1, to=1-2]
	\arrow["f"', curve={height=12pt}, from=1-1, to=1-3]
	\arrow["g", two heads, from=1-2, to=1-3]
\end{tikzcd}
\end{center}
where $g$ is a fibration and $h$ is a \emph{trivial cofibration},
meaning that it belongs to the class $l(r(\Lambda))$, see Section~\ref{ssec:fibrations}.
There is also an explicit construction of trivial cofibrations
starting from corner inclusions ${\bbcor}^n\to{\bbox}^n$ (Construction~\ref{cons:triv-cofib}).
This gives a generalization of the extension property of simplicial cubesets from \cite[Lemma 3.1.5]{Candela2017} and \cite[Lemma 7.5]{Gutman2020}.

\begin{proposition*}[Proposition~\ref{prop:triv-nilcofib-simp}]
Let $K$ be a non-empty downwards-closed collection of subsets of $\{1,\ldots,n\}$.
Then the inclusion
\[
    \bigcup_{A\in K}{\bbox}^{\lvert A\rvert}\to{\bbox}^n
\]
is a trivial cofibration.
\end{proposition*}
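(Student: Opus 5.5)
Write $\bbox^K := \bigcup_{A\in K}\bbox^{\lvert A\rvert}$ for the subcubeset of $\bbox^n$ generated by the linear faces $\bbox^A\subseteq\bbox^n$. These are the faces spanned by the coordinates in $A$, with all other coordinates set to $0$. The plan is to build $\bbox^K\to\bbox^n$ as a finite composite of pushouts of corner inclusions $\bbcor^m\to\bbox^m$. Such a composite lies in $l(r(\Lambda))$, because that class contains $\Lambda$ and is closed under pushouts and composition; this is exactly the mechanism of Construction~\ref{cons:triv-cofib}.

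The induction adds one subset at a time. Since $K$ is non-empty and downwards closed, $\emptyset\in K$. Enumerate the subsets of $\{1,\ldots,n\}$ not in $K$ as $B_1,\ldots,B_r$ in order of non-decreasing cardinality. Put $K_0=K$ and $K_i=K_{i-1}\cup\{B_i\}$. Each $K_i$ is downwards closed: every proper subset of $B_i$ either lies in $K$ or is some $B_j$ with $j<i$, since it has smaller cardinality. Also $K_r$ is the full power set, so $\bbox^{K_r}=\bbox^n$. It therefore suffices to show that each inclusion $\bbox^{K_{i-1}}\to\bbox^{K_i}$ is a trivial cofibration.

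For this step, let $B=B_i$ with $m=\lvert B\rvert\geq 1$. The key claim is that the square
\[
\begin{tikzcd}[ampersand replacement=\&]
	{\bbcor^{m}} \& {\bbox^{K_{i-1}}} \\
	{\bbox^{m}} \& {\bbox^{K_i}}
	\arrow[from=1-1, to=1-2]
	\arrow[hook, from=1-1, to=2-1]
	\arrow[hook, from=1-2, to=2-2]
	\arrow[from=2-1, to=2-2]
\end{tikzcd}
\]
is a pushout, where the bottom map identifies $\bbox^m$ with the face $\bbox^B$. We need two facts about the face $\bbox^B\subseteq\bbox^n$:
\begin{enumerate}[(i)]
\item Its faces of dimension $<m$ that avoid the top vertex $1^B$ are precisely the faces lying in $\bbox^{B'}$ for proper subsets $B'\subsetneq B$. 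Hence $\bbox^B\cap\bbox^{K_{i-1}}\supseteq\bbcor^m$.
\item Conversely, the intersection $\bbox^B\cap\bbox^{K_{i-1}}$ is contained in $\bbcor^m$.
\end{enumerate}
Given these, the square is a pushout because colimits in the presheaf category $\cSet$ are computed levelwise, and the union of two subpresheaves is the pushout over their intersection.

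The main obstacle is (ii), the claim that the intersection is exactly the corner and nothing larger. The intersection is a subpresheaf, and cubes in it can be arbitrary maps $\bbox^k\to\bbox^n$, not just linear faces. So one must show the following: any map $\bbox^k\to\bbox^n$ factoring both through $\bbox^B$ and through some $\bbox^{A}$ with $A\in K_{i-1}$ factors through some $\bbox^{B\cap A}$, and $B\cap A\subsetneq B$.

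I would first attack this using the description of morphisms in $\bbox$ from Lemma~\ref{lem:linearization-auto-bblox}, namely as affine maps whose image in $\{0,1\}^n$ is an affine subcube. Factoring through $\bbox^A$ amounts to vanishing of the coordinates outside $A$. Vanishing outside both $A$ and $B$ means vanishing outside $A\cap B$, and the map factors through $\bbox^{A\cap B}$ since such faces are retracts by coordinate projection. Finally, $A\cap B\neq B$ because $B\notin K_{i-1}$ and $K_{i-1}$ is downwards closed. Once (i) and (ii) are in place, composing the $r$ pushouts gives the claim.
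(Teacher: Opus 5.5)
Your proof is correct and is essentially the paper's argument. The paper likewise adjoins a missing subset all of whose proper subsets already lie in $K$, identifies the intersection via $\bbox^{\lvert A\rvert}\times_{\bbox^n}\bbox^{\lvert B\rvert}=\bbox^{\lvert A\cap B\rvert}$ as the corner of the face $\bbox^{\lvert B\rvert}$, and iterates pushouts of corner inclusions; the differences are cosmetic, namely that you fix the enumeration by cardinality in advance and you spell out the factorization through $\bbox^{\lvert A\cap B\rvert}$ via coordinate projections, which the paper leaves implicit.
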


The rest of the section on fibrations deals with fibrant replacements
and their interaction with concrete cubesets,
as well as with stability properties of fibrations and trivial cofibrations.

Following this theme we introduce cubesets with gluing property.
Informally, a cubeset $X$ has the gluing property if, for any two $n$-cubes $a$ and $b$ in $X$
that share a common face of dimension $n-1$,
we can glue them along this face to a new cube $c$ of $X$.
To make this precise, we define a class of morphisms $\bprism$,
and then define the gluing property to be a right lifting property against the morphisms in $\bprism$
(Definition~\ref{def:gluing-property}).
It turns out that a cubeset $X$ with the gluing property, such that the gluing of any two cubes is unique, already is concrete (Proposition~\ref{prop:unique-gluing-concrete}).
Moreover, each fibrant cubeset has the gluing property (Proposition~\ref{prop:nilfib-gluing}).

In Section~\ref{ssec:corner-gamma} we give an alternative description of corner completion.
Every cubeset $X$ can be interpreted as a $\bbGamma$-set $j^\ast X$
whose $n$-cubes are just the $n$-cubes of $X$, but we only consider maps $X(m)\to X(n)$
that arise from linear maps of cubes (Remark~\ref{rem:adjoint-triple-j}).

\begin{definition*}[Definition~\ref{def:omega_corner_gamma}]
Let $n\in\N$ and $\omega\in\{0,1\}^n\setminus\{0^n\}$.
Then define the $n$-corner ${\bbcor}^n_\omega$ w.r.t. $\omega$ as the union over all injections $f\colon{\bblox}^{n-1}\to{\bblox}^n$ in $\bbGamma$
such that $f$ misses $\omega$, i.e., $\omega\notin\im f$:
\[
    {\bbcor}^n_\omega = \bigcup_{\substack{f\colon{\bblox}^{n-1}\sub{\bblox}^n \\ \omega\notin\im f}}{\bblox}^{n-1}\hookrightarrow{\bblox}^n.
\]
\end{definition*}

In two dimensions, those corners look as follows.
\begin{center}
\begin{tikzpicture}[
    scale=0.6,
    line cap=round,
    line join=round,
    bg/.style={draw=gray!45, line width=0.5pt},
    face/.style={draw=black, line width=1.2pt},
    vtx/.style={circle, fill=black, inner sep=1.4pt},
    missing/.style={circle, draw=red!70!black, fill=white, line width=0.7pt, inner sep=1.6pt},
    lab/.style={font=\scriptsize},
    title/.style={font=\small\bfseries}
]

\begin{scope}[shift={(0,0)}]
  \node[title] at (1,2.6) {$w=(1,1)$};

  \coordinate (O) at (0,0);
  \coordinate (A) at (2,0);
  \coordinate (B) at (0,2);
  \coordinate (C) at (2,2);

  \draw[bg] (O)--(A)--(C)--(B)--cycle;

  \draw[face] (O)--(A);
  \draw[face] (O)--(B);

  \node[vtx] at (O) {};
  \node[vtx] at (A) {};
  \node[vtx] at (B) {};
  \node[missing] at (C) {};

  \node[lab, below left]  at (O) {};
  \node[lab, below right] at (A) {};
  \node[lab, above left]  at (B) {};
  \node[lab, above right] at (C) {};
\end{scope}

\begin{scope}[shift={(4.8,0)}]
  \node[title] at (1,2.6) {$w=(1,0)$};

  \coordinate (O) at (0,0);
  \coordinate (A) at (2,0);
  \coordinate (B) at (0,2);
  \coordinate (C) at (2,2);

  \draw[bg] (O)--(A)--(C)--(B)--cycle;

  \draw[face] (O)--(B);
  \draw[face] (O)--(C);

  \node[vtx] at (O) {};
  \node[missing] at (A) {};
  \node[vtx] at (B) {};
  \node[vtx] at (C) {};

  \node[lab, below left]  at (O) {};
  \node[lab, below right] at (A) {};
  \node[lab, above left]  at (B) {};
  \node[lab, above right] at (C) {};
\end{scope}

\begin{scope}[shift={(9.6,0)}]
  \node[title] at (1,2.6) {$w=(0,1)$};

  \coordinate (O) at (0,0);
  \coordinate (A) at (2,0);
  \coordinate (B) at (0,2);
  \coordinate (C) at (2,2);

  \draw[bg] (O)--(A)--(C)--(B)--cycle;

  \draw[face] (O)--(A);
  \draw[face] (O)--(C);

  \node[vtx] at (O) {};
  \node[vtx] at (A) {};
  \node[missing] at (B) {};
  \node[vtx] at (C) {};

  \node[lab, below left]  at (O) {};
  \node[lab, below right] at (A) {};
  \node[lab, above left]  at (B) {};
  \node[lab, above right] at (C) {};
\end{scope}

\end{tikzpicture}
\end{center}

Now for a general cubeset, there is no reason as to why fibrancy against the (classical) corner in $\bbox$ implies fibrancy against the corner ${\bbcor}^n_\omega$ in $\bbGamma$.
However, if $X$ is concrete, then one cannot distinguish between those notions.

\begin{proposition*}[Proposition~\ref{prop:nilfib-restricted-nilfib-concrete}]
Let $X$ be a concrete cubeset.
Then the following assertions are equivalent.
\begin{enumerate}[(a)]
    \item The cubeset $X$ is fibrant.
    \item The underlying $\bbGamma$-set of $X$ is fibrant w.r.t. the corner inclusions
            ${\bbcor}^n_\omega\to{\bblox}^n$.
\end{enumerate}
\end{proposition*}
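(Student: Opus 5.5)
The strategy is to compare the two corners after applying the flip automorphisms of ${\bbox}^n$. By Lemma~\ref{lem:linearization-auto-bblox}, every morphism of cubes factors uniquely as a linear map followed by a flip. The flips act transitively on the vertices $\{0,1\}^n$. For $\omega\neq 0^n$, let $\sigma_\omega$ be the flip sending $1^n$ to $\omega$ (so $\sigma_\omega$ is the affine involution $x\mapsto x+\bar\omega$ with $\bar\omega=1^n-\omega$, coordinatewise mod 2). The first step is to identify the image $\sigma_\omega({\bbcor}^n)\subseteq{\bbox}^n$ in terms of ${\bbcor}^n_\omega$. Both are determined by sets of $(n-1)$-faces, so this is a combinatorial comparison.

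The subtlety is that linear faces through $0^n$ are not preserved by flips. Under $\sigma_\omega$, the faces $\{x_i=0\}$ of ${\bbcor}^n$ become the faces $\{x_i=\bar\omega_i\}$ avoiding $\omega$. The linear injections ${\bblox}^{n-1}\to{\bblox}^n$ in $\bbGamma$, on the other hand, are the coordinate maps, including diagonal maps and maps with a constant-zero coordinate. These always contain $0^n$ and give diagonal-type faces such as $\{x_i=x_j\}$. So ${\bbcor}^n_\omega$ is in general not isomorphic to ${\bbcor}^n$ as a subpresheaf of ${\bbox}^n$; in the $n=2$ picture, the corner for $\omega=(1,0)$ consists of an axis edge together with the diagonal. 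For this reason I would not try to prove an isomorphism of corners. Instead I would use concreteness to show that the two lifting problems are equivalent at the level of vertex data.

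For (a)$\Rightarrow$(b), let a map ${\bbcor}^n_\omega\to j^\ast X$ be given. Concreteness lets me view its data as a partial vertex assignment $\{0,1\}^n\setminus\{\omega\}\to X(0)$ whose restrictions to each constituent face are cubes of $X$. (Every vertex other than $\omega$ lies in some constituent face, by a short combinatorial check.) The goal is to produce a vertex at $\omega$ completing this to an $n$-cube. To do so, I build a classical corner in $X$: pulling back along $\sigma_\omega$, I need cubes on the faces $\{x_i=\bar\omega_i\}$. These are not among the given faces when $\omega\ne 1^n$. I would obtain them via an induction on $n$ combined with the gluing property (Proposition~\ref{prop:nilfib-gluing}) and the extension result Proposition~\ref{prop:triv-nilcofib-simp}. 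The point is that a face avoiding $\omega$ can be tiled or reached from the given diagonal faces by gluing $(n-1)$-cubes along common codimension-one faces. Concreteness (Proposition~\ref{prop:unique-gluing-concrete} and the injectivity into $X(0)^{2^n}$) ensures that the glued cubes agree on overlaps, so they assemble into a genuine map ${\bbcor}^n\to X$. Corner completion then yields the missing vertex. The resulting $n$-cube restricts to the given data on the diagonal faces precisely because cubes in a concrete cubeset are determined by their vertices.

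For (b)$\Rightarrow$(a), a classical corner in $X$ is, by concreteness, a vertex assignment on $\{0,1\}^n\setminus\{1^n\}$. Here I would show that the linear injections ${\bblox}^{n-1}\to{\bblox}^n$ missing $1^n$ have images that are cubes of $X$. Each such image is obtained from the faces $\{x_i=0\}$ by a composite of restrictions of the given cubes, possibly after a lower-dimensional completion handled by induction on $n$ using (b). Applying (b) with $\omega=1^n$ then finishes this direction.

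The main obstacle is the comparison step in (a)$\Rightarrow$(b): showing that the face data of ${\bbcor}^n_\omega$ generates, inside a concrete fibrant $X$, a full classical corner at $\omega$ after flipping. This is where both concreteness, needed to treat cubes as vertex labelings and to make gluings unique, and an inductive use of lower-dimensional corner completion are essential. I would first verify the cases $n=2,3$ by hand to find the right order of gluings.
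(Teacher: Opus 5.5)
Your plan for (a)$\Rightarrow$(b) is sound, and it is leaner than the paper's. You read the $\omega$-corner as a vertex labelling of $\{0,1\}^n\setminus\{\omega\}$, produce cubes on the faces $[x_i=1-\omega_i]$, complete this flipped classical corner, and use concreteness to check the completion. The paper instead passes through the full affine corners $\Lambda'$, glues in all antidiagonals (Lemma~\ref{lem:different-liftings-comparison}), and then invokes Lemma~\ref{lem:alternative-nilfibrant-concrete}. That lemma's proof just restricts back to a classical corner and uses concreteness, which is what you do directly.

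\textbf{The gap.} The one step with real content is left open: producing a cube on the non-linear face $[x_i=1]$ when $\omega_i=0$. You defer it to working out $n=2,3$ by hand, and the tools you name do not accomplish it. No induction on $n$ is needed. Proposition~\ref{prop:triv-nilcofib-simp} only concerns unions of linear (inert) subcubes, so it cannot produce an upper face.

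\textbf{How to close it.} The missing idea is a single application of the gluing property for each such $i$:
\begin{enumerate}
\item Pick $k$ with $\omega_k=1$, which exists since $\omega\neq 0^n$.
\item The faces $[x_k=0]$ and $[x_k=x_i]$ are linear $(n-1)$-faces missing $\omega$, so cubes on them are part of the given data.
\item These two faces meet in the $(n-2)$-face $[x_k=x_i=0]$. The faces opposite to it, $[x_k=0,x_i=1]$ and $[x_k=x_i=1]$, together make up $[x_i=1]$.
\item Since $X$ is fibrant it has the gluing property (Proposition~\ref{prop:nilfib-gluing}). This yields a cube on $[x_i=1]$ whose vertex labels are the given ones.
\end{enumerate}
This is exactly the first half of the proof of Lemma~\ref{lem:different-liftings-comparison}.

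With these faces in hand, the rest of your argument goes through. By concreteness, the cubes on the faces $[x_i=1-\omega_i]$ agree on overlaps because they agree on vertices, so they form a classical corner after flipping. Fibrancy completes it. The completion restricts to each given face of ${\bbcor}^n_\omega$, because those faces miss $\omega$ and cubes are determined by their vertices.

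\textbf{The direction (b)$\Rightarrow$(a).} You miss the observation that makes this immediate. A linear injection ${\bbox}^{n-1}\to{\bbox}^n$ falls into one of two cases:
\begin{itemize}
\item It has a constant-zero coordinate. Then, by counting, its image is some $[x_k=0]$.
\item It repeats a variable. Then its image contains $1^n$.
\end{itemize}
So the linear faces missing $1^n$ are precisely the inert ones, and $j_!{\bbcor}^n_{1^n}={\bbcor}^n$. A classical corner therefore \emph{is} a map ${\bbcor}^n_{1^n}\to j^\ast X$, and (b) at $\omega=1^n$ completes it.

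Your remark that the two corners are not isomorphic is correct only for $\omega\neq 1^n$; at $\omega=1^n$ they coincide, and this is what the paper uses. The ``composite of restrictions'' and ``lower-dimensional completion by induction'' you propose for this direction have nothing to do: the only faces involved are the given ones.
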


Another important property of a cubeset is its \emph{step}.

\begin{definition*}[Definition~\ref{def:n-step}]
A cubeset $X$ is called \emph{$k$-step} if for all $n\geq k+1$ the map
\[
    \hom({\bbox}^n, X) \to \hom({\bbcor}^n, X)
\]
is bijective, i.e., each $n$-corner in $X$ has a unique completion to an $n$-cube of $X$.
The full subcategory of $k$-step cubesets is denoted $\cSet_k$.
\end{definition*}

This means that if $X$ is $k$-step, its collection of $n$-cubes for $n>k$ is fully determined
by the $k$-cubes of $X$.
We give multiple equivalent characterizations of $k$-step cubesets in Proposition~\ref{prop:truncations-char}.
Given a general cubeset $X$, one can associate to it the maximal $k$-step quotient $\tau_k X$,
meaning that every other map $X\to Y$ where $Y$ is $k$-step factors through $X\to \tau_k X$.
Speaking categorically this states that the inclusion $\cSet_k\to\cSet$ admits a left adjoint
$\tau_k\colon\cSet\to\cSet_k$ (Proposition~\ref{prop:step-left-adjoint}).
This left adjoint preserves finite products (Proposition~\ref{prop:n-trunc-ihom-prod})
and we give stability conditions on $k$-step cubesets in Proposition~\ref{prop:n-trunc-closure}.
In general, it is not easy to compute the $k$-step quotient $\tau_k X$ of $X$.
However, if $X$ is concrete and fibrant, then a process called \emph{universal replacement}
yields $\tau_k X$ as an easy-to-compute factor of $X$.

\begin{definition*}[Definition~\ref{def:univ-replacement}]
For $x,y\in X(0)$ define $x\sim_k y$ if there exist ($k+1$)-cubes in $X$ that agree on the corner and have $x$ and $y$ on their uppermost vertex $1^{k+1}$, respectively.
\end{definition*}

This equivalence relation on $X(0)$ induces a factor $X(0)/\sim_k$
which yields a new cubeset $X/\sim_k$ whose $n$-cubes $X/\sim_k(n)$ are given by the image of $X(n)$ under the projection
\[
    X(0)^{2^n}\to (X(0)/\sim_k)^{2^n}.
\]
Explicitly this means that two $n$-cubes in $X$ are identified if all their points are equivalent in $X(0)$.

\begin{proposition*}[Proposition~\ref{prop:trunc_concfib}]
Let $X$ be a concrete fibrant cubeset.
Then the truncation $\tau_k X$ is given by $X/\sim_k$.
Moreover, the unit $\eta\colon X\to\tau_k X$ is a surjective fibration
and $\tau_k X$ is concrete and fibrant.
\end{proposition*}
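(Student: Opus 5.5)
We argue directly with the explicit quotient $X/\sim_k$, in the standard nilspace order. First $\sim_k$ must be an equivalence relation. Then $X/\sim_k$ is shown to be concrete, fibrant and $k$-step, and $X\to X/\sim_k$ to be a surjective fibration. Finally the universal property of the left adjoint $\tau_k$ (Proposition~\ref{prop:step-left-adjoint}) identifies $X/\sim_k$ with $\tau_k X$.

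\textbf{Step 1: $\sim_k$ is an equivalence relation.} Reflexivity is immediate. Symmetry holds by definition. For transitivity, suppose $c_1,c_2$ witness $x\sim_k y$ and $c_2',c_3$ witness $y\sim_k z$. We use the gluing property (Proposition~\ref{prop:nilfib-gluing}) together with corner completion to build a $(k+2)$-cube. Its two opposite $(k+1)$-faces carry $x$ and $z$ on the top vertex and agree on the corner; this is the standard ``$\sim_k$ is transitive'' argument of Gutman--Manners--Varj\'u. A key auxiliary fact is the following: if two $(k+1)$-cubes $c,c'$ agree off the top vertex, then $c(1^{k+1})\sim_k c'(1^{k+1})$. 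More generally, any two cubes of dimension $\ge k+1$ whose vertices agree except at one vertex have $\sim_k$-equivalent values there. For higher dimensions this follows by restricting to a $(k+1)$-face through that vertex and its corner.

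\textbf{Step 2: properties of the quotient.} By construction $X/\sim_k$ is a concrete subcubeset of $i_\ast(X(0)/\sim_k)$. Functoriality is inherited from $X$, since the quotient is taken vertexwise, so $X/\sim_k$ is concrete and $\eta\colon X\to X/\sim_k$ is surjective on every $X(n)$.

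For fibrancy of $\eta$ we use the concrete description of Proposition~\ref{prop:nilfib-restricted-nilfib-concrete}. Suppose we are given a corner $\lambda$ in $X$ and a cube $\bar c$ in $X/\sim_k$ extending $\eta\lambda$. Lift $\bar c$ to a cube $c'$ of $X$. Then $c'$ agrees with $\lambda$ on the corner only up to $\sim_k$, vertex by vertex. We correct $c'$ one vertex at a time. Using the characterization of $x\sim_k y$ in Step 1, we change a vertex value within its $\sim_k$-class while staying a cube. This requires the lemma that for a cube $c$ of dimension $n\ge k+1$ and $y\sim_k c(v)$, the function $c$ with $c(v)$ replaced by $y$ is again a cube. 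We prove the lemma by gluing $c$ with an $n$-cube built from the witnesses of $c(v)\sim_k y$, extended to dimension $n$ via Proposition~\ref{prop:triv-nilcofib-simp}, and taking the outer face.

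Once $c'$ agrees with $\lambda$ on the corner, completing the corner $\lambda$ in $X$ gives a cube $c$ with the same top vertex up to $\sim_k$. Hence $\eta c=\bar c$. Fibrancy of $X/\sim_k$ follows by composing with the fibration $X\to\ast$.

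For the $k$-step property, take two $(k+1)$-cubes in $X/\sim_k$ with the same corner and lift them. By the corrected lifting above we may assume the lifts agree on the corner in $X$, so their tops are $\sim_k$-equivalent by definition. This gives uniqueness of completion in dimension $k+1$. Higher dimensions reduce to this case by the characterizations in Proposition~\ref{prop:truncations-char}.

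\textbf{Step 3: universal property.} Let $f\colon X\to Y$ with $Y$ $k$-step. If $x\sim_k y$ via $c_1,c_2$, then $fc_1$ and $fc_2$ share a corner, so $f(x)=f(y)$. Thus $f$ factors through $X(0)/\sim_k$ on vertices. If $Y$ is concrete, cubes map correctly and the factorization is unique because $\eta$ is surjective. For a general $k$-step $Y$ the vertex map does not immediately determine $Y$-cubes. Here we use that $\eta$ is a surjective fibration with $\eta c=\eta c'$ only when $c,c'$ differ by vertexwise $\sim_k$ moves. Each single move of Step 2 is realized by a $(k+1)$-cube configuration that $f$ collapses by the $k$-step property. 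Hence $f(c)=f(c')$, and $f$ descends.

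The main obstacle is the vertex-replacement lemma in Step 2, and its use in showing that cubes with equal image are related by moves that every $k$-step target collapses. This is where the gluing and trivial-cofibration machinery must do the real work.
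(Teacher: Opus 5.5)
\textbf{There is a genuine gap.} Your overall plan matches the paper's: pass to $X/\sim_k$, use universal replacement, then invoke the universal property. The problem is the vertex-replacement lemma in Step~2, which is false in the range where you use it.

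Replacing a vertex of a cube by a $\sim_k$-equivalent point preserves cubehood only for cubes of dimension $\le k+1$; this is Proposition~\ref{prop:univ-replacement}(c). For dimension $\ge k+2$ it fails whenever $X$ is not $k$-step. Take $X=\cD_2(A)$ with $A\neq 0$ and $k=1$. Every configuration $\{0,1\}^2\to A$ is a $2$-cube, so all points are $\sim_1$-equivalent. But every $3$-cube $c$ satisfies $\sum_\omega(-1)^{\lvert\omega\rvert}c(\omega)=0$, since each generator $a^F$ with $\dim F\ge 1$ does. Hence changing a single vertex never gives a cube.

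This breaks your proof that $\eta$ is a fibration in dimensions $m\ge k+2$. In the example, the configuration ``corner $\lambda$ with top $c'(1^3)$'' is typically not a cube, so the lift $c'$ cannot be corrected to agree with $\lambda$ on the corner. Step~3 uses the same connectivity by single-vertex moves to get $f(c)=f(c')$ for a general $k$-step $Y$. So the descent also fails for cubes of dimension $\ge k+2$.

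The repair, as in the paper, is never to modify cubes above dimension $k+1$.
\begin{itemize}
\item First prove uniqueness of completions in $X/\sim_k$ in every dimension $m\ge k+1$. Do dimension $k+1$ as you do. For $m\ge k+2$, pull two $m$-cubes with the same corner back along $(x_1,\ldots,x_{k+1})\mapsto(x_1,\ldots,x_k,x_{k+1},\ldots,x_{k+1})$. This map sends $\bbcor^{k+1}$ into $\bbcor^m$ and $1^{k+1}$ to $1^m$, so the tops agree, and concreteness gives equality.
\item Your appeal to Proposition~\ref{prop:truncations-char} does not give this reduction. Its condition~(e) is the internal one, involving $\bbcor^{k+1}\times\bbox^j$, and uniqueness in dimension $k+1$ alone does not imply $k$-step without concreteness.
\item For $m\ge k+2$, $\eta$ is a fibration: complete $\lambda$ arbitrarily in $X$ and use this uniqueness in $X/\sim_k$.
\item The descent in Step~3 follows by induction on $m$. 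Vertexwise $\sim_k$-equivalent $m$-cubes have inert faces with equal images in $Y$, hence equal corners, hence equal images by unique corner completion in $Y$.
\item Fibrancy of $X/\sim_k$ is not just composition with $X\to\ast$. You must lift a whole corner of $X/\sim_k$ to a corner of $X$. That uses surjectivity of $\eta$ on points and the fibration property of $\eta$ in lower dimensions.
\end{itemize}
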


This proposition generalizes the well-known fact that $\sim_k$ is the smallest equivalence relation
on $X(0)$ such that $X/\sim_k$ is $k$-step, see \cite[Proposition 6.3]{Gutman2020}.

In Section~\ref{ssec:connectivity} we further investigate the $0$-truncation of a (general) cubeset.
This includes the definition of \emph{ergodicity}.

\begin{definition*}[Definition~\ref{def:ergodic}]
A cubeset $X$ is \emph{$n$-ergodic} if $\tau_{n-1}X = \ast$.
\end{definition*}

We will then describe a decomposition into ergodic components (Proposition~\ref{prop:ergodic-decomp}).
Also, there is the analogous concept for $\bbGamma$-sets.
In Section~\ref{ssec:truncation-gamma} we compute the truncation over $\bbGamma$
for concrete cubesets which continues the theme of Section~\ref{ssec:corner-gamma}.

Next, in Section~\ref{ssec:structure-group} we study algebraic invariants of cubesets:
their structure groups.

\begin{definition*}[Definition~\ref{def:structure-group}]
If $X$ is a fibrant cubeset, then an $n$-sphere of $X$ based at $x_0\in X$
is an $n$-cube $s\colon{\bbox}^n\to X$ whose restriction to the $n$-corner is constant $x_0$.
\end{definition*}

Now one would like to use corner completion one dimension higher to define the multiplication of $n$-spheres.
In case $n = 2$, for example, given spheres $c,d$ in $X$
one can build a 3-corner of $X$ which has $c$ on one side, $d$ on another,
and the constant cube $x_0$ on the remaining inert faces.
\begin{center}
\begin{tikzcd}[row sep=small, column sep=small]
	& d &&&& d && {c*d} \\
	{x_0} && c && {x_0} && c \\
	& {x_0} && {x_0} && {x_0} && {x_0} \\
	{x_0} && {x_0} && {x_0} && {x_0}
	\arrow[no head, from=1-6, to=1-8]
	\arrow[dashed, no head, from=1-6, to=3-6]
	\arrow[no head, from=2-1, to=1-2]
	\arrow[no head, from=2-1, to=2-3]
	\arrow[no head, from=2-5, to=1-6]
	\arrow[color={rgb,255:red,214;green,92;blue,92}, no head, from=2-5, to=1-8]
	\arrow[no head, from=2-5, to=2-7]
	\arrow[color={rgb,255:red,214;green,92;blue,92}, no head, from=2-5, to=4-5]
	\arrow[no head, from=2-7, to=1-8]
	\arrow[dashed, no head, from=3-2, to=1-2]
	\arrow[dashed, no head, from=3-2, to=3-4]
	\arrow[dashed, no head, from=3-6, to=3-8]
	\arrow[color={rgb,255:red,214;green,92;blue,92}, no head, from=3-8, to=1-8]
	\arrow[no head, from=4-1, to=2-1]
	\arrow[dashed, no head, from=4-1, to=3-2]
	\arrow[no head, from=4-1, to=4-3]
	\arrow[no head, from=4-3, to=2-3]
	\arrow[no head, from=4-3, to=3-4]
	\arrow[dashed, no head, from=4-5, to=3-6]
	\arrow[color={rgb,255:red,214;green,92;blue,92}, dashed, no head, from=4-5, to=3-8]
	\arrow[no head, from=4-5, to=4-7]
	\arrow[no head, from=4-7, to=2-7]
	\arrow[no head, from=4-7, to=3-8]
\end{tikzcd}
\end{center}
Then using corner completion we obtain the composition $c*d$
by taking the diagonal of the completed 3-cube (depicted in red).
But: composition might not be well-defined.
To account for this deficit, we just define two $n$-spheres to be equivalent if
they both arise as the diagonal of the completion of the same ($n+1$)-corner.
In case that $X$ is a concrete fibrant cubeset, this actually defines an equivalence relation on $n$-spheres,
yielding the set $\pi_n(X,x_0)$ on which we can install the multiplication just described,
see Constructions~\ref{cons:structure-monoid-concrete} and~\ref{cons:concrete-structure-monoid-mult}.

\begin{proposition*}[Propositions~\ref{prop:concrete-structure-group-is-monoid}~and~\ref{prop:concrete-structure-group-is-group}]
Let $X$ be a fibrant concrete cubeset.
The set $\pi_n(X,x_0)$ with the multiplication just described is an abelian group.
\end{proposition*}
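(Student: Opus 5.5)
We prove the statement with the standard ``cube-gluing'' technique. Concreteness lets us treat every cube as a map $\{0,1\}^m\to X(0)$ that is determined by its vertices. We write $[s]$ for the class of an $n$-sphere $s$ in $\pi_n(X,x_0)$. By Constructions~\ref{cons:structure-monoid-concrete} and~\ref{cons:concrete-structure-monoid-mult}, $[c]*[d]$ is the class of the diagonal of any $(n+1)$-cube whose two designated faces are $c$ and $d$ and whose remaining corner faces are constant $x_0$. We take as given that this relation is an equivalence relation and that $*$ is well defined.

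\textbf{Monoid structure.} The unit is the constant sphere $x_0$. If we put $c$ on one face and the constant cube on the other, the cube obtained from $c$ by the degeneracy ${\bbox}^{n+1}\to{\bbox}^n$ completes the corner. Its diagonal is again $c$, because concreteness identifies this cube with its vertex data. So $[c]*[x_0]=[c]$, and symmetrically $[x_0]*[c]=[c]$.

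For associativity we work in dimension $n+2$. We build an $(n+2)$-corner from $c$, $d$, $e$ and constant faces, and complete it by corner completion (fibrancy). Reading off the $(n+1)$-dimensional faces and diagonals that contain the completed cube, we find $(n+1)$-cubes witnessing both $([c]*[d])*[e]$ and $[c]*([d]*[e])$ as the same main diagonal $n$-sphere. To check that a given face has the required shape, we use the gluing property (Proposition~\ref{prop:nilfib-gluing}) and the trivial cofibrations of Proposition~\ref{prop:triv-nilcofib-simp}. These let us extend partial configurations, such as unions of faces indexed by a downward closed family, to full cubes. Since every relevant cube is determined by its vertices, the diagonals agree on the nose.

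\textbf{Commutativity.} The flip symmetry of ${\bbox}^{n+1}$ that swaps the two special coordinates is an automorphism in $\bbox$. Precomposing a cube that witnesses $c*d$ with this swap gives a cube that witnesses $d*c$. It fixes the diagonal direction, so its diagonal is the same sphere. Hence $[c]*[d]=[d]*[c]$.

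\textbf{Inverses.} Given $c$, we form the $(n+1)$-corner whose first designated face is $c$, whose diagonal position is constant $x_0$, and whose other corner faces are constant. By Proposition~\ref{prop:nilfib-restricted-nilfib-concrete}, a concrete fibrant $X$ also completes the corners ${\bbcor}^{n+1}_\omega$ for any $\omega\neq 0^{n+1}$. We choose $\omega$ so that the missing face is the second designated face. Completing this corner yields a sphere $d$ with $c*d$ equal to the constant sphere. Before calling $d$ a sphere, we must check that its own corner is constant. Its corner vertices lie on faces that are already constant, except possibly at $1^n$, so this holds.

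The main obstacle is the associativity step. The $(n+2)$-dimensional bookkeeping has to show that the relevant faces of the completed cube are exactly the witnessing cubes, with all other faces constant. We would handle this by choosing the corner carefully in coordinates $(\text{sphere},s,t)$. We then use uniqueness of gluing via concreteness (Proposition~\ref{prop:unique-gluing-concrete}) so that we never need to construct two different completions.
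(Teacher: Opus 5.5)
Your monoid part follows the paper's route. In the concrete case the paper reuses the proof of Proposition~\ref{prop:structure-group-is-monoid} and lets the quotient absorb the non-uniqueness of completions. The unit comes from the degeneracy. Commutativity comes from the coordinate transposition $\sigma_{12}$, which fixes $s$ (it is a permutation, not a flip in the paper's sense). Associativity comes from an $(n+2)$-corner. In that last step, be concrete. The corner faces are the witnesses $\overline{c*d},\overline{c*e},\overline{d*e}$ on $\epsilon_1,\epsilon_2,\epsilon_3$, with $x_0$ elsewhere. You extract with $r=(x_1,x_2,x_2,\ldots)$ and $b=(x_1,x_1,x_2,\ldots)$ and use $r\circ s=b\circ s$. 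No gluing or uniqueness is needed, since in $\tilde\pi_n$ any completion computes the product.

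The genuine gap is in the inverse step. Take $\omega=(1,0,1,\ldots,1)$. The corner ${\bbcor}^{n+1}_\omega$ is the union of \emph{all} linear $n$-faces missing $\omega$. These are $[x_1=0]$, $[x_i=0]$ for $i\ge 3$, $[x_1=x_2]$, and in addition $[x_2=x_k]$ for $3\le k\le n+1$.

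To invoke Proposition~\ref{prop:nilfib-restricted-nilfib-concrete} you need a map $j_!{\bbcor}^{n+1}_\omega\to X$, so these extra faces must carry cubes of $X$. Your configuration does not specify them, and they cannot be ``constant''. The vertex $(0,1,\ldots,1)$ lies both on $[x_2=x_k]$ and on $[x_1=0]$, where it carries $c(1^n)$. This is exactly the obstruction recorded in Remark~\ref{rem:structure-mon-not-grouplike-gamma}, and it is why the paper gets no inverses over $\bbGamma$ for $n\ge 2$.

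The step can be repaired. The vertex data on $[x_2=x_k]$ is $x_0$ everywhere except $c(1^n)$ at $(0,1,\ldots,1)$. In that face's coordinates this is the configuration of $c\circ\rho_1^n$, which lies in $X$ because cubesets are closed under flips. Concreteness then makes all the pieces agree on overlaps.

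Once repaired, however, your route collapses onto the paper's. Precomposing with the flip $\rho_2^{n+1}$, which sends $\omega$ to $1^{n+1}$, turns your lifting problem into the ordinary corner with $c\circ\rho_1^n$ on $\epsilon_1$ and $\epsilon_2$ and $x_0$ elsewhere. That is precisely the paper's construction in Proposition~\ref{prop:concrete-structure-group-is-group}. There one completes this ordinary corner by plain fibrancy, flips back, and checks on vertices (concreteness) that the diagonal is constant.

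So the detour through $\omega$-corners, and through the heavy direction (a)$\Rightarrow$(b) of Proposition~\ref{prop:nilfib-restricted-nilfib-concrete}, buys nothing. As written it also skips the one place where flips are genuinely needed. For $n=1$ there are no extra faces, and your argument is complete as stated.

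One minor point: the vertices of $d$ with first coordinate $0$ lie on the face carrying $c$, not on a constant face. They equal $x_0$ because they are corner vertices of $c$.
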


The group $\pi_n(X,x_0)$ is called the \emph{$n$-th structure group of $X$}.
Actually, in Section~\ref{ssec:structure-group} we construct the structure groups for general cubesets.
In this case, $\pi_n(X,x_0)$ only is an abelian monoid.

Moreover, if $x_0$ and $y_0$ are points of $X$ that lie in the same ergodic component of $X$,
then the structure groups are the same.

\begin{proposition*}[Proposition~\ref{prop:structure-monoid-independence-base-point}]
Let $X$ be a concrete fibrant cubeset and $x_0$, $y_0$ points of $X$ in the same ergodic component.
Then there is a canonical isomorphism of abelian groups $\pi_n(X,x_0)\simeq\pi_n(X,y_0)$.
\end{proposition*}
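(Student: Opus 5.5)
We construct the isomorphism by translating spheres along a cube joining the two base points. Since $x_0$ and $y_0$ lie in the same ergodic component, they have the same image in $\tau_0 X$. By Proposition~\ref{prop:trunc_concfib} with $k=0$, $\tau_0X = X/\sim_0$, so (possibly after a finite chain, if $\sim_0$ is only generated transitively) there is a $1$-cube $e\colon{\bbox}^1\to X$ with $e(0)=x_0$ and $e(1)=y_0$. Using the cartesian monoidal structure, consider ${\bbox}^{n}\times{\bbox}^1={\bbox}^{n+1}$ with last coordinate the ``transport'' direction. Given an $n$-sphere $s$ at $x_0$, we form a partial $(n+1)$-configuration. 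On the face $\{t=0\}$ we place $s$. On each face $\{\omega_i=0\}$ we place the degenerate cube $e\circ\mathrm{pr}_{n+1}$, which is constant $x_0$ along the sphere directions and equal to $e$ in the last direction. On the face $\{t=1\}$ we require the corner to be constant $y_0$.

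The first concrete step is to see that the data on $\{t=0\}$ together with the faces $\{\omega_i=0\}$ form an $(n+1)$-corner, up to a flip of the last coordinate. We then apply corner completion (Definition~\ref{def:nil-fibration}) with respect to a suitable $\omega$; by Proposition~\ref{prop:nilfib-restricted-nilfib-concrete} we may choose the omitted vertex to be $1^{n+1}$. Concreteness lets us check compatibility vertex-wise. The resulting face $\{t=1\}$ is an $n$-cube whose restriction to the $n$-corner is determined by the corner data. Those data are $e(1)=y_0$ everywhere on the corner except at the vertex $1^n$. Hence $\{t=1\}$ is an $n$-sphere $s'$ at $y_0$, and we define $\phi_e([s])=[s']$.

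Next we verify well-definedness on $\pi_n$. Different completions, or equivalent $s$, must give equivalent $s'$. We will show this by stacking: build an $(n+2)$-dimensional configuration containing the two transport cubes and the $(n+1)$-cube that witnesses the equivalence relation of Construction~\ref{cons:structure-monoid-concrete}. We complete it using fibrancy one dimension up (the gluing property, Proposition~\ref{prop:nilfib-gluing}), and read off a witness that the two outputs are equivalent. Additivity is proved the same way. Take the $(n+1)$-cube defining $c*d$ at $x_0$ and transport it as a whole along $e$, using an $(n+2)$-cube; its faces transport $c$, $d$ and $c*d$ simultaneously. This gives $\phi_e(c*d)=\phi_e(c)*\phi_e(d)$. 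The reversed cube $\bar e$, obtained by flipping ${\bbox}^1$, gives an inverse $\phi_{\bar e}$. We show $\phi_{\bar e}\circ\phi_e=\mathrm{id}$ by gluing the two transport cubes along their common face $\{t=1\}$, again via the gluing property.

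Finally we need canonicity, i.e.\ independence of the choice of $e$, and we expect this to be the main obstacle. Given two paths $e,e'$ from $x_0$ to $y_0$, the composite $\phi_{\bar e'}\circ\phi_e$ is transport around a loop at $x_0$. We must show it is the identity on $\pi_n(X,x_0)$, i.e.\ that translating a sphere along any $1$-cube from $x_0$ to $x_0$ acts trivially. The first thing we would try is to form the $2$-cube with edges $e$ and $e'$, obtained by completing the $2$-corner $(e,e')$. Transporting along this $2$-cube gives an $(n+2)$-cube interpolating between the two transports. Because $n\geq1$ and the sphere's nontrivial vertex is $1^n$, this interpolating cube differs from an equivalence witness only by degenerate data. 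We would then use commutativity of $\pi_n$ (Proposition~\ref{prop:concrete-structure-group-is-group}) to absorb the discrepancy.
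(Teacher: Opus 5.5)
Your transport map is the paper's $h_z$, and the group-isomorphism part of your plan follows the paper closely. The paper also puts the sphere on one lower face of an $(n+1)$-corner and the degeneracy $z\circ p_1^n$ of the edge on all other lower faces. This is already a standard corner, so neither a flip nor Proposition~\ref{prop:nilfib-restricted-nilfib-concrete} is needed. It then reads off the opposite face and proves additivity with an $(n+2)$-corner carrying $\overline{a*b}$ and the two transport cubes. It gets the inverse by flipping the transport cube in the transport direction: the flipped cube is directly a transport cube along $z\circ\rho_1^1$, so no gluing is needed.

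The structural difference is that the paper first replaces $X$ by $\tau_nX$. There, corners of dimension $n+1$ and $n+2$ have unique completions and $\pi_n$ is literally the set of spheres (Proposition~\ref{prop:trunc-structure-monoid}), so your stacking argument for well-definedness disappears. Even staying in $X$, well-definedness is a one-liner. Two completions of the same transport corner have $\sim_n$-equivalent top vertices, so their top faces are equivalent by universal replacement and Lemma~\ref{lem:concrete-equiv-rel-structure-monoid}(c). Replacing $s$ by an equivalent sphere changes only the vertex $(1^n,0)$ of a transport cube.

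The genuine gap is your last paragraph. As written it is not an argument: ``differs from an equivalence witness only by degenerate data'' and ``absorb the discrepancy by commutativity'' are not checkable steps. Identifying $\phi_{\bar e'}\circ\phi_e$ with transport around a loop would itself require a composition lemma such as Lemma~\ref{lem:homotopy-action-well-defined-global}.

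More to the point, the obstacle you are fighting does not exist. Since $X$ is concrete, $X(1)\to X(0)^2$ is injective, so there is at most one $1$-cube from $x_0$ to $y_0$, and the only loop at $x_0$ is degenerate. One such cube exists without any chain:
\begin{itemize}
\item $\sim_0$ is already an equivalence relation, and $\pi_0(X)=X(0)/{\sim_0}$ by Proposition~\ref{prop:trunc_concfib};
\item Proposition~\ref{prop:univ-replacement}(b) with $n=0$ says $x_0\sim_0 y_0$ exactly when $(x_0,y_0)\in X(1)$.
\end{itemize}
So $e$ is determined by its endpoints and $\phi_e$ depends on no choices; this is exactly how the paper obtains canonicity. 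Replace your loop argument by this observation.
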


In Section~\ref{ssec:structure-group-gamma}, we consider the structure monoids of $\bbGamma$-sets
modeled on the alternative corner ${\bbcor}^n_\omega$.
This gives another notion of structure monoid of a cubeset,
which again is isomorphic to the former notion if $X$ is a concrete fibrant cubeset.

All of the previous considerations then are combined in Section~\ref{ssec:postnikov}
to prove the following theorem, known as the \emph{weak structure theorem}.

\begin{theorem*}[Theorem~\ref{thm:weak-structure}]
Let $X$ be a concrete ergodic fibrant cubeset.
Then there exists a tower of surjective fibrations
\begin{center}
\begin{tikzcd}
	X & \cdots & {\tau_2X} & {\tau_1X} & {\tau_0X}
	\arrow[from=1-1, to=1-2]
	\arrow[from=1-2, to=1-3]
	\arrow[from=1-3, to=1-4]
	\arrow[from=1-4, to=1-5]
\end{tikzcd}
\end{center}
such that $\tau_n X$ is a concrete fibrant $n$-step cubeset
and $\tau_n X$ is a principal $\cD_n(\pi_n(X,x_0))$-bundle over $\tau_{n-1} X$.
This tower is natural in $X$.
\end{theorem*}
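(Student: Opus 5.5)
The plan is to build the tower from the truncation adjunctions and then analyse one step at a time. Since $\cSet_{n-1}\subseteq\cSet_n$ (an $(n-1)$-step cubeset is $n$-step by the characterizations in Proposition~\ref{prop:truncations-char}), the universal property of $\tau_n$ (Proposition~\ref{prop:step-left-adjoint}) gives a unique map $\tau_nX\to\tau_{n-1}X$ under $X$. Composing these maps yields the tower, and naturality in $X$ follows from functoriality of the units. By Proposition~\ref{prop:trunc_concfib}, each $\tau_nX=X/\sim_n$ is concrete, fibrant and $n$-step, and each unit $X\to\tau_nX$ is a surjective fibration. Concretely, $\sim_n$ refines $\sim_{n-1}$, so $\tau_nX\to\tau_{n-1}X$ is the induced quotient $X/\sim_n\to X/\sim_{n-1}$. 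It is surjective because the units are. It is a fibration because a lifting problem for a corner in $\tau_nX$ lifts through $X\to\tau_nX$ (a surjective fibration, so corners lift), is solved in $X$, and is then pushed back down. Identifying $\tau_{n-1}\tau_nX\cong\tau_{n-1}X$ then reduces everything to a single statement: for a concrete ergodic fibrant $n$-step $Y$, the map $Y\to\tau_{n-1}Y$ is a principal $\cD_n(\pi_n(Y,y_0))$-bundle, and $\pi_n(Y,y_0)\cong\pi_n(X,x_0)$.

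For the comparison of groups I would compare $n$-spheres. Any $n$-sphere in $X$ maps to an $n$-sphere in $\tau_nX$. Surjectivity on $\pi_n$ follows from the fibration property of $X\to\tau_nX$, applied to a sphere downstairs together with the constant corner upstairs. Injectivity uses the fact that two spheres in $X$ become equal in $\tau_nX$ exactly when their top vertices are $\sim_n$-related relative to the constant corner, which is precisely the defining relation of $\pi_n(X,x_0)$. Independence of the base point (Proposition~\ref{prop:structure-monoid-independence-base-point}) together with ergodicity makes the group canonical.

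The core step is the bundle structure. Set $A=\pi_n(Y,y_0)$. The action of $A$ on $Y(0)$ is defined by gluing: given $a\in A$ represented by a sphere at $y_0$ and a point $y$, ergodicity gives a path (a $1$-cube) from $y_0$ to $y$, along which we transport the sphere to one based at $y$. We then set $a\cdot y$ to be its top vertex, and well-definedness uses the unique corner completion in dimensions $>n$. This action extends to an action of $\cD_n(A)$ on $Y$ cubewise: for each face $F$ of codimension $\le n$, adding $a^F$ to an $m$-cube means gluing spheres along $F$, which uses the gluing property (Proposition~\ref{prop:nilfib-gluing}) and concreteness to check that the result is again a cube. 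Three things then have to be verified:
\begin{enumerate}[(i)]
\item The action is free. A fixed point would exhibit a sphere equivalent to the trivial one.
\item The orbits on points are exactly the $\sim_{n-1}$-classes. If $y\sim_{n-1}y'$, the two $n$-cubes sharing a corner differ by a sphere, after gluing with their common corner and using fibrancy.
\item The $\cD_n(A)$-orbits on $m$-cubes are exactly the fibers of $Y(m)\to\tau_{n-1}Y(m)$. This is the principal-bundle condition: two cubes with pointwise $\sim_{n-1}$-equivalent vertices differ by an element of $\cD_n(A)(m)$.
\end{enumerate}

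I expect (iii) to be the main obstacle. I would attack it by induction along the vertices of ${\bbox}^m$ in a Reedy-compatible order (Lemma~\ref{lem:bbox-reedy-cat}). At each stage the current discrepancy is corrected by an $a^F$ with $F$ of codimension $\le n$, using the trivial cofibrations of Proposition~\ref{prop:triv-nilcofib-simp} to extend partial cubes. When $m>n$, the $n$-step property forces the correction to be determined by its values on lower faces. Throughout, the $\bbGamma$-description of corners and structure groups (Proposition~\ref{prop:nilfib-restricted-nilfib-concrete} and Section~\ref{ssec:structure-group-gamma}) should simplify the bookkeeping, since only linear faces need to be handled.
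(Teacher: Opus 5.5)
Your outline follows the paper's route. The tower comes from the units and Proposition~\ref{prop:trunc_concfib}, and everything reduces to an ergodic concrete fibrant $n$-step $Y$. The action on points is $a\cdot y=h_{(y_0,y)}(a)(1^n)$, the top vertex of the sphere transported along the unique $1$-cube $(y_0,y)$.

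\textbf{The gap.} You never show that this is an \emph{action}. None of (i)--(iii) gives $a\cdot(b\cdot y)=(a*b)\cdot y$, and without it there is no $\cD_n(A)$-action and hence no principal bundle. This step is not formal. With $z=b\cdot y$, the left side transports $a$ along $(y_0,z)$, while the right side transports $a*b$ along $(y_0,y)$. Comparing them takes two separate gluing arguments in the paper:
\begin{enumerate}
\item[(1)] The cocycle identity $h_{(y,z)}\circ h_{(y_0,y)}=h_{(y_0,z)}$ (Lemma~\ref{lem:homotopy-action-well-defined-global}). One glues $\overline{h_u(a)}$ and $\overline{h_v(b)}$ along their common face and compares vertices by concreteness.
\item[(2)] The identity $(a'*b')(1^n)=h_{(y,b'(1^n))}(a')(1^n)$ for spheres $a',b'$ at $y$ (Lemma~\ref{lem:homotopy-action-well-defined}). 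This is an iterated gluing that turns $\overline{a'*b'}$ into a cube with the same $(n+1)$-corner as the degeneration of $h(a')$. It uses $y\sim_{n-1}z$ and unique corner completion.
\end{enumerate}
Apply these with $a'=h_{(y_0,y)}(a)$ and $b'=h_{(y_0,y)}(b)$, together with the fact that $h_{(y_0,y)}$ is a monoid homomorphism, and you get associativity. Your remark that ``well-definedness uses unique corner completion'' only covers existence and uniqueness of each single transport, not its compatibility with $*$.

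\textbf{Step (iii) is not the real obstacle.} The Reedy/$\bbGamma$ bookkeeping you plan for it is unnecessary. Once you have a genuine action on points that preserves $\sim_{n-1}$-classes, the extension to cubes goes as follows.
\begin{itemize}
\item For $m\le n$ it is pointwise: $\cD_n(A)(m)=A^{2^m}$, and the resulting configuration lies in $Y(m)$ by universal replacement for $\sim_{n-1}$ (Proposition~\ref{prop:univ-replacement}). No gluing of spheres along faces is needed.
\item For $m>n$ one uses unique corner completion.
\end{itemize}
The shear map $\cD_n(A)\times Y\to Y\times_{\tau_{n-1}Y}Y$ is then bijective in degrees $m\le n$ because it is bijective pointwise. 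This is your (i) and (ii): $a\mapsto h_{(y_0,y)}(a)(1^n)$ is a bijection onto the $\sim_{n-1}$-class of $y$, since $h_{(y_0,y)}$ is bijective and a sphere at $y$ is determined by its top vertex. Source and target are both $n$-step (Proposition~\ref{prop:n-trunc-closure}). So bijectivity in degrees $>n$ follows by induction through unique corner completion, exactly as in Proposition~\ref{prop:fiber-torsor}.

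This direct pointwise argument is slightly cleaner than the paper's fibrewise route through Proposition~\ref{prop:fiber-torsor} and Lemma~\ref{lem:action-independence-basepoint}. But it presupposes the action axiom above.
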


The existence of the tower (functorial in $X$) by itself is clear:
one just applies the $n$-truncations $\tau_n X$ to $X$ and uses the corresponding units $\tau_n X\to\tau_{n-1}X$.
We also have seen that, since $X$ is concrete and fibrant, the truncations are all themselves concrete and fibrant.
Now for the assertion on the principal $\cD_n(\pi_n(X,x_0))$-bundle,
one can pick an arbitrary basepoint $x_0$, and by ergodicity, the structure group $\pi_n(X,x_0)$
doesn't depend on this choice.
Now the fiber of $\tau_nX\to\tau_{n-1}X$ is $n$-ergodic and $n$-step:
$n$-ergodicity follows from universal replacement, used in the computation of the truncation,
and $n$-step is clear as subobject of something $n$-step.
In particular, the homotopy group $\pi_n(X,x_0)$ is in bijection to the points of the fiber of $\tau_nX\to\tau_{n-1}X$ over $x_0$.
Thus, we can install an action of $\pi_n(X,x_0)$ on each fiber that is,
since it is, up to a twist, just left multiplication in the structure group,
free and transitive.
Then one has to glue all the actions of each fiber together to obtain a global action on $\tau_n X$
making the unit of the truncation a principal bundle for the $n$-th structure group.

In the last Section~\ref{sec:condensed-cubeset} we prove a condensed variant of the weak structure theorem, generalizing the weak structure theorem for compact cubesets.
For the reader unfamiliar with condensed sets, we refer to Section~\ref{sec:condensed-cubeset}
where the most important definitions are given, with pointers to the literature.

\begin{theorem*}[Theorem~\ref{thm:cond-weak-structure}]
Let $X$ be a condensed concrete ergodic fibrant cubeset.
Then there exists a tower of surjective fibrations
\begin{center}
\begin{tikzcd}
	X & \cdots & {\tau_2X} & {\tau_1X} & {\tau_0X}
	\arrow[from=1-1, to=1-2]
	\arrow[from=1-2, to=1-3]
	\arrow[from=1-3, to=1-4]
	\arrow[from=1-4, to=1-5]
\end{tikzcd}
\end{center}
such that $\tau_n X$ is a condensed concrete fibrant $n$-step cubeset
and $\tau_n X$ is a principal $\cD_n(\pi_n(X,x_0))$-bundle over $\tau_{n-1} X$.
This tower is natural in $X$.
\end{theorem*}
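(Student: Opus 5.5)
The plan is to reduce the condensed statement to the discrete weak structure theorem (Theorem~\ref{thm:weak-structure}), applied levelwise. A condensed cubeset is a functor $S\mapsto X(S)$ from extremally disconnected profinite sets $S$ to cubesets, sending finite coproducts to products. Equivalently it is a cubeset object in condensed sets. I will first check that each notion in the statement can be tested pointwise, i.e.\ on every $X(S)$: concreteness, fibrancy, $n$-step, the truncations $\tau_n$, and the Eilenberg--Mac Lane construction $\cD_n$.

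\textbf{Step 1: reduce the defining properties to values.} Concreteness is the injectivity of $X(n)\to X(0)^{2^n}$, a limit condition, and limits in condensed sets are computed pointwise. Fibrancy and the $n$-step condition are lifting properties against the finite cubesets ${\bbcor}^n\to{\bbox}^n$. For extremally disconnected $S$, evaluation at $S$ is exact and preserves surjections, since these $S$ are projective. So surjectivity of $\hom({\bbox}^n,X)\to\hom({\bbcor}^n,X)$ as condensed sets is equivalent to surjectivity on $S$-points for each such $S$. One subtlety is the definition of fibrancy: the statement presumably intends the internal (condensed) surjectivity. The $S$-points of $X$ do not form a sub-cubeset of a single fixed $X(\ast)$ in general, but $X(S)$ is itself a concrete fibrant cubeset, because fibrancy on $S$-points is exactly what the condensed surjectivity gives.

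\textbf{Step 2: truncation and the tower.} I will use Proposition~\ref{prop:trunc_concfib} to compute $\tau_k X(S)=X(S)/\sim_k$. I then want $\tau_k$ of the condensed object to be the condensed set $S\mapsto \tau_k(X(S))$. Here $\sim_k$ is defined by an existential over $(k+1)$-cubes agreeing on a corner, so it is the image of a map of condensed sets. Quotients by such equivalence relations are computed pointwise on extremally disconnected $S$, and the relation $\sim_k$ on $X(S)(0)$ coincides with the condensed relation, again by projectivity of $S$. Naturality of $\tau_k$ in $S$ is functoriality of Proposition~\ref{prop:trunc_concfib}. The units $\tau_n X\to\tau_{n-1}X$ are then pointwise surjective fibrations, hence surjective fibrations of condensed cubesets. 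Ergodicity, $\tau_0X=\ast$, also passes to each $X(S)$, since $\tau_0$ commutes with evaluation.

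\textbf{Step 3: the principal bundle structure.} This is the main obstacle, because $X(S)$ need not be ergodic and the basepoint $x_0$ lives only in $X(\ast)$. My plan is to define the condensed structure group $\pi_n(X,x_0)$ as $S\mapsto\pi_n(X(S),x_0|_S)$, where $x_0|_S$ is pulled back along $S\to\ast$. This is a condensed abelian group by Propositions~\ref{prop:concrete-structure-group-is-monoid} and~\ref{prop:concrete-structure-group-is-group}, applied pointwise and using naturality of the constructions. The action of $\cD_n(\pi_n)$ on $\tau_nX$ should be built by the same formula as in the proof of Theorem~\ref{thm:weak-structure}: transport to a varying base point via the canonical isomorphism of Proposition~\ref{prop:structure-monoid-independence-base-point}, then multiply.

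To avoid needing ergodicity of $X(S)$, I would rephrase the principal bundle condition internally. The claim to prove is that the map
\[
\cD_n(\pi_n(X,x_0))\times\tau_nX\to\tau_nX\times_{\tau_{n-1}X}\tau_nX
\]
is an isomorphism of cubesets. Being an isomorphism can be checked pointwise. For each $S$ it reduces to the discrete argument, where the only use of ergodicity is that any two points lie in a common ergodic component. In $X(S)$ this holds locally: ergodicity of $X$ says $X\to\ast$ is $0$-connected as a condensed map, so any pair of $S$-points becomes $\sim_0$-related after refining $S$ by a finite cover. Extremally disconnected $S$ split such covers, so this gives the relation on $S$ itself.

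I expect the remaining work to be checking that the base-point transport isomorphisms are compatible with this refinement and are natural in $S$. Once that holds, the isomorphism above follows from the discrete theorem, and naturality in $X$ follows from naturality of every construction involved.
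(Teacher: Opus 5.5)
Your proposal is correct and follows the paper's proof. The paper likewise computes concreteness, fibrancy, $\tau_n$, $\pi_n(X,x_0)$ and $\cD_n$ pointwise on $\extr_\kappa$ and applies Theorem~\ref{thm:weak-structure} to each $X(E)$; it adds only that the coequalizer condition for the bundle is automatic, because the surjection $\tau_nX\to\tau_{n-1}X$ is an effective epimorphism.

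The obstacle you raise in Step 3 is not real. As your own Step 2 observes, $\tau_0$ is computed pointwise, so $\tau_0X=\ast$ already makes every $X(E)$ ergodic, and no refinement of covers is needed. The naturality in $E$ that you leave open is also immediate. On points the action is $a\cdot x=h_{(x_0,x)}(a)(1^n)$, defined by unique corner completion along the unique edge $(x_0,x)$, and both are preserved by the restriction maps $\tau_nX(E)\to\tau_nX(E')$.
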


We prove this theorem by taking the tensor product of presentable ($\infty$-)categories
of the category of cubesets with the category of ($\kappa$-)condensed sets.
This gives a notion of condensed cubeset, and all the definitions (concrete, ergodic, fibrant, etc.)
internalize to condensed cubesets.
It then turns out that, interpreting a condensed cubeset as a functor $\extr^\op\to\cSet$,
all the internal notions coincide with the pointwise notions.
Therefore, the condensed weak structure theorem is a direct corollary from the assertion on
functoriality in the (discrete) weak structure theorem.

When considering the full subcategory of quasicompact quasiseparated (qcqs) condensed cubesets
one obtains the classical weak structure theorem for compact Hausdorff spaces as in
\cite[Proposition 2.1.9]{Candela2017a} and \cite[Theorem 5.4]{Gutman2020}.

\subsubsection*{Acknowledgments}

We thank Pablo Candela, Asgar Jamneshan, Emilio Parini, Philipp Schmale, Elias Schuster and Pauwel van den Eeckhaut for many valuable discussions on the topics of this paper.
We thank Asgar Jamneshan and Rainer Nagel for comments on an earlier draft.

\subsubsection*{Disclosure of AI Usage}

During the process of developing this paper we used ChatGPT to generally discuss ideas and thoughts.
In particular, we extensively discussed about different approaches to model structures on categories.
It suggested the explicit counterexample in Remark~\ref{rem:fibrant-non-reflective}.
Moreover, it created some of the TikZ-figures of cubes.
We also used it to identify spelling mistakes and verify proofs.

Other than in the above disclosed parts, there has been no involvement of artificial intelligence in this paper.

\section{Cubesets}
In this section, we introduce the category of cubes and cubesets, study their basic properties and compare them to $\bbGamma$-sets.

\subsection{The Building Blocks}\label{ssec:index-cats}

In this subsection we first want to discuss several variants of the indexing category for cubesets and relations between candidates of such.

\subsubsection{Defining the Indexing Category}

Let us start with the classical category of combinatorial cubes used in nilspace theory, being introduced e.g. in \cite[p. 3]{Camarena2010}, \cite[Definition 3.1]{Gutman2020} and \cite[Definition 1.1.1]{Candela2017}.

\begin{definition}\label{def:cube_cat}
Define the \emph{cube category $\bbox$} as the skeletal category%
\footnote{If not stated otherwise, by category we mean a (static/ordinary) $(1,1)$-category.}
whose objects are the sets $\{0,1\}^n$ for $n\in\N_0$ and where a morphism $f\colon\{0,1\}^n\to\{0,1\}^m$
is a set-theoretical map that is the restriction of an affine linear map $\Z^n\to\Z^m$.
\end{definition}

To understand this category it is helpful to first study an important subcategory.

\begin{definition}\label{def:linear_cubes}
Denote by $\bblox$ the wide subcategory of $\bbox$ consisting of all maps that are restrictions of linear maps.
We denote this corresponding inclusion by $j\colon\bblox\to\bbox$.
\end{definition}

This linear cube category relates very nicely to a classically known indexing category from (higher) category theory.

\begin{definition}\label{def:indexcats}
Let us define the following small categories.
\begin{enumerate}[label=(\roman*), ref=\thedefinition(\roman*)]
    \item\label{def:finstar}
        Define \emph{Segal's category} $\bbGamma$ as the opposite of $\Fin_\ast$.
        The category $\Fin_\ast$ is (the skeleton of) the category of finite pointed sets.
        This means that its objects are indexed by the set $\N_0$, which we also denote by
        \[\langle n \rangle=\{\ast\}\sqcup\langle n\rangle^\circ=\{\ast\}\sqcup \{1,\ldots,n\}\]
        for $n\in \N_0$.
        Morphisms $\langle n\rangle \to \langle m\rangle$ in $\Fin_*$ are given by base-point preserving maps of sets,
        or equivalently, by partially defined maps $\langle n\rangle^\circ\to \langle m\rangle^\circ$.
    \item\label{def:inert_active}
        We call a morphism $\alpha$ in $\Fin_\ast$ \emph{inert} (or \emph{lower face}) if for all $i\neq\ast$ the preimage $\alpha^{-1}(i)$ consists of exactly one element.
        And we call $\alpha$ \emph{active} (or \emph{diagonal}) if it is everywhere defined, i.e., $\alpha(j)\neq\ast$ for all $j\ne \ast$.
    \item
        Define the category $\finert$ to be the wide subcategory of $\Fin_*$ consisting of inert morphisms only.
        Define $\inert$ as the opposite of $\finert$.
    \item\label{def:almost-inert}
        Define the category $\Fin_{*,\le 1}$ to be the subcategory of $\Fin_*$ where morphisms are required to have fibers of size at most one for all non-basepoints.    
\end{enumerate}
\end{definition}

\begin{proposition}\label{lem:iso-gamma-bblox}
There is a canonical equivalence of categories $\bbGamma\simeq{\bblox}$.
\end{proposition}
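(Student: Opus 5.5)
We will construct an explicit functor $\Phi\colon\bbGamma=\Fin_\ast^\op\to\bblox$ and check that it is bijective on objects and fully faithful. On objects, set $\Phi(\langle n\rangle)={\bbox}^n=\{0,1\}^n$. A morphism $\langle n\rangle\to\langle m\rangle$ in $\bbGamma$ is a pointed map $\alpha\colon\langle m\rangle\to\langle n\rangle$, i.e.\ a partially defined map $\langle m\rangle^\circ\to\langle n\rangle^\circ$. We send it to the map $\Phi(\alpha)\colon\{0,1\}^n\to\{0,1\}^m$ given by
\[
    \Phi(\alpha)(x)_i = \begin{cases} x_{\alpha(i)}, & \alpha(i)\neq\ast,\\ 0, & \alpha(i)=\ast,\end{cases}
\]
with the convention $x_\ast=0$. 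This is the restriction of the linear map $\Z^n\to\Z^m$ whose matrix has, in row $i$, a single $1$ in column $\alpha(i)$ (or a zero row if $\alpha(i)=\ast$). It clearly maps $\{0,1\}^n$ into $\{0,1\}^m$. Functoriality is a direct check: $(x_{\beta(\cdot)})_{\alpha(i)}=x_{\beta(\alpha(i))}$, with the basepoint propagating to $0$ because $x_\ast=0$. Identities go to identities. The contravariance is absorbed exactly by passing to $\Fin_\ast^\op$.

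Faithfulness is immediate. Evaluating $\Phi(\alpha)$ on the basis vectors $e_j$ recovers $\alpha$: we have $\alpha(i)=j$ iff $\Phi(\alpha)(e_j)_i=1$, and $\alpha(i)=\ast$ iff the $i$-th coordinate vanishes on all $e_j$. For $n=0$ there is nothing to check.

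Fullness is the main point, though it is elementary. Let $f\colon\{0,1\}^n\to\{0,1\}^m$ be the restriction of a linear map $A\colon\Z^n\to\Z^m$ with integer matrix $(a_{ij})$. Fix a row $i$ and consider the linear form $\ell(x)=\sum_j a_{ij}x_j$, which must take values in $\{0,1\}$ on $\{0,1\}^n$.
\begin{itemize}
    \item Evaluating at $e_j$ gives $a_{ij}\in\{0,1\}$ for all $j$.
    \item Evaluating at $e_j+e_k$ for $j\neq k$ gives $a_{ij}+a_{ik}\le 1$.
\end{itemize}
Hence each row contains at most one entry $1$ and all other entries are $0$. Define $\alpha(i)$ to be the column of that $1$, or $\ast$ if the row is zero. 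Then $f=\Phi(\alpha)$. Since a restriction to $\{0,1\}^n$ determines the linear map (the $e_j$ lie in the cube), the choice of $A$ does not matter. This restriction step is the only place where the requirement of mapping the unit cube into the unit cube is used, so it is the step to state carefully.

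Finally, $\Phi$ is bijective on objects, since both categories are skeletal with objects indexed by $\N_0$. Thus $\Phi$ is an isomorphism of categories, in particular a canonical equivalence $\bbGamma\simeq\bblox$. Under this equivalence, the inert morphisms correspond to the inclusions of lower coordinate faces, and the active ones to the maps with no zero coordinates, i.e.\ diagonals. This justifies the terminology of Definition~\ref{def:inert_active}.
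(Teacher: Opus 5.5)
Your proof is correct and takes essentially the paper's route. Your $\Phi$ is the paper's functor $G$, and your row-by-row analysis of the integer matrix (testing on $e_j$ and $e_j+e_k$) is the linear case of Lemma~\ref{lem:coordinate_description}; the paper uses that lemma to write down the inverse functor $F$ explicitly, which is equivalent to your check of bijectivity on objects plus full faithfulness.
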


To prove this, let us first describe the morphisms of $\bbox$ in a more coordinate-dependent version.

\begin{lemma}\label{lem:coordinate_description}
A set-theoretical map $f\colon\{0,1\}^n\to\{0,1\}^m$ is a morphism in $\bbox$
if and only if it is of the form (its \emph{canonical representation})
\[
    f(x_1,\ldots,x_n) = (y_1,\ldots,y_m)
\]
where each $y_i$ is either $0$, $1$, or there exists a unique $j(i)$ such that $y_i = x_{j(i)}$ or $y_i = 1 - x_{j(i)}$.

Similarly, a morphism $f\colon\{0,1\}^n\to\{0,1\}^m$ in $\bblox$ can be described as
\[
    f(x_1,\ldots,x_n) = (y_1,\ldots,y_m)
\]
where each $y_i$ is either equal to zero or there exists a unique $j(i)$ such that $y_i = x_{j(i)}$.
\end{lemma}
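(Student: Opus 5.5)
We argue coordinatewise: $f$ is affine linear precisely when each component $f_i=\mathrm{pr}_i\circ f$ is, so the statement reduces to classifying affine linear maps $\Z^n\to\Z$ that send $\{0,1\}^n$ into $\{0,1\}$. The only remaining point is to check that the affine extension is well defined.

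For the ``if'' direction, each listed form of $y_i$ ($0$, $1$, $x_j$, $1-x_j$) is the restriction of an affine map $\Z^n\to\Z$ and takes values in $\{0,1\}$ on $\{0,1\}^n$. Assembling the components gives an affine map $\Z^n\to\Z^m$ whose restriction is $f$. In the linear case the forms $0$ and $x_j$ are restrictions of linear maps.

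For the ``only if'' direction, let $g(x)=c+\sum_j a_jx_j$ with $c,a_j\in\Z$ be the $i$-th component of an affine map restricting to $f$. The plan is to read off the coefficients from the values at the vertices, which is the main step:
\begin{itemize}
\item evaluating at $0^n$ gives $c\in\{0,1\}$;
\item evaluating at the unit vector $e_j$ gives $c+a_j\in\{0,1\}$, so $a_j\in\{-1,0,1\}$, with $a_j=1$ forcing $c=0$ and $a_j=-1$ forcing $c=1$;
\item evaluating at $e_j+e_k$ for $j\neq k$ gives $c+a_j+a_k\in\{0,1\}$.
\end{itemize}
Now suppose two indices $j\neq k$ have $a_j,a_k\neq0$. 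Both coefficients must then have the same sign, since each sign determines $c$. If both equal $1$, then $c=0$ and $c+a_j+a_k=2$. If both equal $-1$, then $c=1$ and $c+a_j+a_k=-1$. Either way we contradict $c+a_j+a_k\in\{0,1\}$, so at most one $a_j$ is nonzero.

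This leaves three cases. If all $a_j=0$, then $y_i=c\in\{0,1\}$ is constant. If $a_j=1$, then $y_i=x_j$. If $a_j=-1$, then $y_i=1-x_j$. The index $j(i)$ is unique because $x_j$ and $x_{j'}$ differ as functions on $\{0,1\}^n$ for $j\neq j'$, and likewise for $1-x_j$, $x_{j'}$ and the constants. Hence the canonical representation is determined by the set map $f$, whatever affine extension was chosen.

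For $\bblox$, the same argument applies with $c=0$ forced by linearity. Then $a_j=-1$ is excluded, since it would require $c=1$, and the constant $1$ is excluded as well. What remains is $y_i=0$ or $y_i=x_{j(i)}$.
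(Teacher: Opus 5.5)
Your proof is correct and follows the same route as the paper. Both arguments reduce to a single coordinate $c+\sum_j a_jx_j$, use the values at $0^n$ and $e_j$ to force $a_j\in\{-1,0,1\}$ with the sign fixing $c$, exclude two nonzero coefficients, and obtain the linear case from $c=0$. The only difference is that you make explicit the test points $e_j+e_k$ behind the paper's phrase ``testing with appropriate $x$'' and spell out why $j(i)$ is unique.
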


\begin{proof}
The first part can be found in \cite[Lemma 1.1.3]{Candela2017} and \cite[Remark after Definition 3.1]{Gutman2020}.
Let us recall the argument for completeness.

First, if $f$ is of the form in the lemma, then by setting $g_i(x) = f_i(x) - f_i(0)$
we see that $g_i(x)$ is either the zero map, or $g_i(x) = \pm x_{j(i)}$ for some $j(i)$.
In particular, $f$ is the restriction of an affine map from $\Z^n$ to $\Z^m$.

On the other hand, if $f$ arises from an affine map,
then there exists a matrix $A\in\Z^{m\times n}$ and a point $p\in\Z^m$ such that
\[
    f(x) = Ax + p.
\]
From the condition $f(x)\in\{0,1\}^m$, hence $f_i(x)\in\{0,1\}$, for all $x\in\{0,1\}^n$ and all $i\in\{1,\ldots,m\}$ we know that
every row of $A$ has at most one non-zero entry which is either $1$ or $-1$.
Clearly each non-zero entry of $A$ can only be $1$ or $-1$ or else $f_i(x)$ wouldn't land in $\{0,1\}$.
Moreover, if there were two or more non-zero entries, then testing with appropriate $x$ would contradict $f_i(x)\in\{0,1\}$.
Now if in row $i$ every entry is zero, $p_i\in\{0,1\}$ and we set $y_i = p_i$.
If in row $i$ the entry in column $j(i)$ is $1$, $p_i$ has to be zero and we set $y_i = x_{j(i)}$.
If in row $i$ the entry in column $j(i)$ is $-1$, $p_i$ has to be one and we set $y_i = 1 - x_{j(i)}$.
Here, $j(i)$ is uniquely determined.
One can now easily check that the map
\[
    (x_1,\ldots,x_n) \mapsto (y_1,\ldots,y_m)
\]
is the same as the map $f$.

The second part follows from the first noting that an affine map $A$ is linear if and only if $A(0)=0$.
\end{proof}

With this at hand, we can prove that the linear part of $\bbox$ simply is $\bbGamma$.

\begin{proof}[Proof of Proposition \ref{lem:iso-gamma-bblox}]
There is a functor
\[
    F\colon{\bblox}\to\Fin_\ast^\op
\]
which acts on objects as $F(\{0,1\}^n) = \langle n\rangle$.
To a morphism $f\colon\{0,1\}^n\to\{0,1\}^m$ it assigns the morphism
\[
    F(f)\colon\langle m\rangle\to\langle n\rangle,\quad i\mapsto
        \begin{cases}
            j(i),\quad&\text{if}\; y_i = x_{j(i)}, \\
            \ast,\quad&\text{if}\; y_i = 0
        \end{cases}
\]
in $\Fin_\ast$.
In the other direction, there exists the functor
\[
    G\colon\Fin_\ast^\op\to{\bblox}
\]
which acts on objects as $G(\langle n\rangle) = \{0,1\}^n$.
To a morphism $\alpha\colon\langle m\rangle\to\langle n\rangle$ in $\Fin_\ast$ it assigns the morphism
\[
    G(\alpha)\colon\{0,1\}^n\to\{0,1\}^m,\quad (x_1,\ldots,x_n)\mapsto (y_1,\ldots,y_m)
\]
where $y_i = 0$ if $\alpha(i) = \ast$ and $y_i = x_{\alpha(i)}$ otherwise.%
\footnote{Note that this is simply pullback with $\alpha$, if one uses the convention $x_\ast=0$.}
It is a straightforward computation to show that $F$ and $G$ are inverse to each other.
\end{proof}

To reach a categorical description of the whole cube category $\bbox$,
the idea is to note that in fact $\bblox$ is not just a subcategory of $\bbox$ but a retract,
i.e., there is a functor $\bbox\to \bblox$ assigning to every affine linear map its \emph{linear part}.
This exhibits the category $\bbox$ as \emph{fibered over $\bbGamma=\bblox$},
where the fiber of $\langle n\rangle$ consists of the additional \emph{flips} induced by the non-linear base-point translation.
This leads to the description of $\bbox$ as a certain \emph{skew-product} of categories,
or, in more modern language, as \emph{unstraightening} via the classical \emph{Grothendieck construction}.

\begin{construction}\label{cons:unstraightening}
Let us recall the Grothendieck construction.
Given a (pseudo-)functor $F\colon\cC\to\Cat$ one can construct a new category
\[
    \mathrm{Un}(F)=\int_{X\in\cC} F(X)
\]
as follows:
\begin{itemize}
    \item its objects are pairs $(X,A)$ where $X\in\cC$ and $A\in F(X)$ and
    \item a morphism $(X,A)\to (Y,B)$ is a pair
    \[(f\colon X\to Y,\phi\colon F(f)(A)\to B)\]
where $f$ is a morphism in $\cC$ and $\phi$ is a morphism in $F(Y)$.
    \end{itemize}
Now composition is given in the only way possible:
given $(f,\phi)\colon (X,A)\to (Y,B)$ and $(g,\psi)\colon (Y,B)\to (Z,C)$ its composite is the pair
\[
    (g\circ f\colon X\to Y\to Z,\psi\circ F(g)(\phi)\colon F(g\circ f)(A)\to F(g)(B)\to C).
\]
This construction gives rise to a ($\infty$-)functor
\[
    \int_\cC\colon\Fun(\cC,\Cat)\to\Cat/\cC
\]
which induces an equivalence between $\Fun(\cC,\Cat)$ and the subcategory of $\Cat/\cC$
of Grothendieck opfibrations (also called cocartesian fibrations), called the \emph{straightening-unstraightening equivalence}, see \cite[Chapter 3]{Lurie2009}.
\end{construction}

Hence, let us consider an appropriate functor $\bbGamma\to \Cat$ encoding the flip-action of cubes.

\begin{definition}
The assignment $\langle n\rangle\mapsto\F_2^n$ defines a functor $\Fin_\ast^\op\to\Grp$%
\footnote{Note that we denote the group $\Z/2\Z$ by $\mathbb{F}_2$
since we want to avoid the double quotient notation $\ast/(\Z/2\Z)$.}
since a morphism $\alpha\colon\langle n\rangle\to\langle m\rangle$ in $\bbGamma$ induces
by pullback at its opposite a group homomorphism $\alpha_\ast=(\alpha^\op)^\ast\colon\F_2^n\to\F_2^m$.
At coordinate $j\in\{1,\ldots,m\}$ it is given by
\[
    (x_1,\ldots,x_n)\mapsto\begin{cases}
                                x_{\alpha(j)},\quad&\text{if}\;\alpha(j)\neq\ast, \\
                                0,\quad&\text{else}.
                            \end{cases}
\]
By composing with the functor $\Grp\to\Cat,\;G\mapsto */G=\mathrm{B}G$ we obtain a functor
\[
    \ast/\F_2^\bullet\colon\bbGamma\to\Cat,\quad\langle n\rangle\mapsto\ast/\F_2^n.
\]
\end{definition}

With this definition we arrive at the following categorical definition of the cube category.

\begin{proposition}\label{prop:cube_cat_linear}
The cube category $\bbox\to\bbGamma$ is the unstraightening
\[
    \mathrm{Un}(\ast/\F_2^\bullet)=\int_{\bbGamma} \ast/\F_2^\bullet\to\bbGamma
\]
of the functor $\ast/\F_2^\bullet\colon\bbGamma\to\Cat$.
\end{proposition}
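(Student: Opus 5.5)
I will write down an explicit functor $\Phi\colon\int_{\bbGamma}\ast/\F_2^\bullet\to\bbox$ over $\bbGamma$ and check directly that it is an isomorphism of categories.

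\emph{Objects and morphisms of the unstraightening.} Each $\ast/\F_2^n$ has a single object. So the objects of $\int_{\bbGamma}\ast/\F_2^\bullet$ are just the $\langle n\rangle$, and $\Phi$ sends $\langle n\rangle$ to ${\bbox}^n$. By Construction~\ref{cons:unstraightening}, a morphism $\langle n\rangle\to\langle m\rangle$ is a pair $(\alpha,\varepsilon)$, where $\alpha\colon\langle n\rangle\to\langle m\rangle$ is a morphism in $\bbGamma$ and $\varepsilon\in\F_2^m$. Composition is
\[
(\beta,\delta)\circ(\alpha,\varepsilon)=(\beta\circ\alpha,\ \delta+\beta_\ast\varepsilon).
\]

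\emph{The functor.} Let $G$ denote the equivalence $\bbGamma\simeq\bblox$ of Proposition~\ref{lem:iso-gamma-bblox}. For $\varepsilon\in\F_2^m$ let $t_\varepsilon\colon\{0,1\}^m\to\{0,1\}^m$ be the flip $y\mapsto y+\varepsilon \pmod 2$. This is a morphism of $\bbox$ by Lemma~\ref{lem:coordinate_description}, since each coordinate is $y_i$ or $1-y_i$. Set
\[
\Phi(\alpha,\varepsilon)=t_\varepsilon\circ G(\alpha).
\]

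\emph{Functoriality.} The key identity is the commutation rule
\[
G(\beta)\circ t_\varepsilon=t_{\beta_\ast\varepsilon}\circ G(\beta),
\]
which expresses that a linear map applied to a translate equals the translated image, computed mod $2$ coordinatewise. It is checked coordinatewise: if $\beta(j)=\ast$, both sides have $j$-th coordinate $0$; otherwise both sides have $j$-th coordinate $x_{\beta(j)}+\varepsilon_{\beta(j)}$. With this rule,
\[
\Phi(\beta,\delta)\circ\Phi(\alpha,\varepsilon)=t_\delta\, G(\beta)\, t_\varepsilon\, G(\alpha)=t_{\delta+\beta_\ast\varepsilon}\, G(\beta\alpha),
\]
which is $\Phi$ of the composite. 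Identities are clearly preserved. I also have to match the variance conventions, $\bbGamma=\Fin_\ast^\op$ against the direction of $\alpha_\ast$, and make sure the twist in the composition law lands on the correct side. I expect this bookkeeping to be the main fiddly point, though not a conceptual obstacle.

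\emph{Bijectivity on hom-sets.} This is the content of the unique decomposition of a cube morphism into a linear map followed by a flip. By Lemma~\ref{lem:coordinate_description}, every $f\colon{\bbox}^n\to{\bbox}^m$ has coordinates $y_i\in\{0,1,x_{j(i)},1-x_{j(i)}\}$. Put $\varepsilon=f(0)$. Then $t_\varepsilon\circ f$ sends $0$ to $0$ and each coordinate is still of the allowed form, so it is linear, i.e.\ it equals $G(\alpha)$ for a unique $\alpha$. Hence $f=t_\varepsilon G(\alpha)$. Uniqueness follows by evaluating at $0$, which recovers $\varepsilon$, and then using that $G$ is faithful. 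So $\Phi$ is bijective on objects and fully faithful, hence an isomorphism. Finally, composing $\Phi$ with the linear-part map $f\mapsto t_{f(0)}\circ f$ recovers the projection to $\bbGamma$, so $\Phi$ is compatible with the structure maps to $\bbGamma$.
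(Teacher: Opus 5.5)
Your proof is correct and follows essentially the paper's route: your functor $\Phi(\alpha,\varepsilon)=t_\varepsilon\circ G(\alpha)$ is exactly the paper's functor $U$, and your commutation rule together with the unique factorization of a cube map as a linear map followed by a flip is the content of Lemma~\ref{lem:linearization-auto-bblox}. The only difference is cosmetic: the paper writes down the inverse functor $V(f)=(F(H(f)),f(0))$ explicitly, whereas you check bijectivity on hom-sets directly, which amounts to the same argument.
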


In our case, the Grothendieck construction takes the following form.

\begin{remark}
Let us describe the category
\[
    \int_{\bbGamma} \ast/\F_2^\bullet\to\bbGamma.
\]
Since the category $\ast/\F_2^n$ consists of exactly one object $\ast_n$,
an object in ${\int_{\bbGamma}} \ast/\F_2^\bullet$ is just an object $\langle n\rangle$ of $\bbGamma$.

A morphism $(f,\phi)\colon\langle n\rangle\to\langle m\rangle$ consists of a morphism $f$ in $\bbGamma$
and a morphism $\phi\colon(\ast/\F_2^\bullet)(f)(\ast_n)=\ast_m\to\ast_m$ in $\ast/\F_2^m$ which corresponds to an element $a\in\F_2^m$.

Now composition works as follows: given $(f,a)\colon\langle n\rangle\to\langle m\rangle$
and $(g,b)\colon\langle m\rangle\to\langle k\rangle$, in the first component it is just ordinary composition $g\circ f$.
In the second component, the element of $\F_2^k$ is given by $b+g_\ast(a)$, so
\[
    (g,b)\circ(f,a)=(g\circ f, b+g(a)).
\]
\end{remark}

Note that the functor $\ast/\F_2^\bullet\colon\bbGamma\to\Cat$ factors over the isomorphism
$G\colon\bbGamma\to\bblox$ from Proposition~\ref{lem:iso-gamma-bblox} by interpreting $\{0,1\}^n$
as the group $\F_2^n$.

\begin{remark}
Since $\Aut_{\bbGamma}(\langle n\rangle) = \uS_n$ and $\Aut_{\ast/\F_2^n}(\ast) = \F_2^n$
there is a split short exact sequence
\begin{center}
\begin{tikzcd}
	0 & {\F_2^n} & {\Aut_{\int_{\bbGamma} \ast/\F_2^\bullet}(\langle n\rangle)} & {\uS_n} & 0
	\arrow[from=1-1, to=1-2]
	\arrow[from=1-2, to=1-3]
	\arrow[from=1-3, to=1-4]
	\arrow[from=1-4, to=1-5]
\end{tikzcd}
\end{center}
which exhibits $\Aut_{\int_{\bbGamma} \ast/\F_2^\bullet}(\langle n\rangle)$ as the group-theoretic semidirect product of $\F_2^n$ and $\uS_n$.
\end{remark}

The group $\F_2^n$ now acts as \emph{base-point shift} or, rather, as \emph{flip} on the $n$-cubes.
\begin{lemma}\label{lem:linearization-auto-bblox}
For every $n\in\N_0$ there exists an injective group homomorphism
\[
    \phi_{(-)}\colon\F_2^n\to\Aut_{{\bbox}}(\{0,1\}^n)
\]
with $\phi_a(0) = a$.
This group homomorphism is natural in ${\bblox}$ in the sense that for every map
$f\colon\{0,1\}^n\to\{0,1\}^m$ in ${\bblox}$ we have that $\phi_{f(a)}f = f\phi_a$ for all $a\in\{0,1\}^n$.

Moreover, for every morphism $f\colon\{0,1\}^n\to\{0,1\}^m$ there exists a unique factorization
\begin{center}
\begin{tikzcd}
	{\{0,1\}^n} & {\{0,1\}^m} & {\{0,1\}^m}
	\arrow["g", from=1-1, to=1-2]
	\arrow["\phi", from=1-2, to=1-3]
\end{tikzcd}
\end{center}
where $g$ is a morphism in ${\bblox}$ and $\phi$ is in the image of $\phi_{(-)}$.
In this case, $\phi = \phi_{f(0)}$.
\end{lemma}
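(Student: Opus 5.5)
The plan is to define the flips explicitly and then use the coordinate description of Lemma~\ref{lem:coordinate_description} for everything else. For $a\in\F_2^n=\{0,1\}^n$, set
\[
    \phi_a(x_1,\ldots,x_n)=(x_1+a_1,\ldots,x_n+a_n) \pmod 2,
\]
so the $i$-th coordinate is $x_i$ if $a_i=0$ and $1-x_i$ if $a_i=1$. By Lemma~\ref{lem:coordinate_description} this is a morphism of $\bbox$. It also satisfies the following.
\begin{itemize}
    \item $\phi_a(0)=a$.
    \item $\phi_a\phi_b=\phi_{a+b}$ and $\phi_0=\mathrm{id}$, so each $\phi_a$ is an automorphism and $a\mapsto\phi_a$ is a homomorphism.
    \item $\phi_{(-)}$ is injective, because $\phi_a(0)=a$ recovers $a$.
\end{itemize}

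For naturality, take $f$ in $\bblox$. By Lemma~\ref{lem:coordinate_description}, $f(x)_i$ is either $0$ or $x_{j(i)}$, so $f$ is the restriction of a $\Z$-linear map. Reducing mod $2$, $f$ is an $\F_2$-linear map $\F_2^n\to\F_2^m$; this is exactly the description $G(\alpha)$ from the proof of Proposition~\ref{lem:iso-gamma-bblox}. Hence
\[
    f(\phi_a(x))=f(x+a)=f(x)+f(a)=\phi_{f(a)}(f(x)),
\]
which gives $f\phi_a=\phi_{f(a)}f$.

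For the factorization, let $f$ be an arbitrary morphism of $\bbox$. Existence comes from setting $g=\phi_{f(0)}\circ f$, which uses $\phi_{f(0)}^{-1}=\phi_{f(0)}$. We have $g(0)=f(0)+f(0)=0$, and $g$ lies in $\bbox$ as a composite. An affine map sending $0$ to $0$ is linear, so $g$ lies in $\bblox$; concretely, in the canonical representation each coordinate $1-x_j$ becomes $x_j$ and each constant $1$ becomes $0$. Then $f=\phi_{f(0)}g$. For uniqueness, suppose $f=\phi_b g'$ with $g'$ linear. Evaluating at $0$ gives $f(0)=\phi_b(0)=b$, so $b=f(0)$. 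Then $g'=\phi_{f(0)}f=g$, since $\phi_b$ is invertible.

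No step is a real obstacle. The only care needed is to identify $\{0,1\}^n$ with $\F_2^n$ consistently, so that linear cube maps are $\F_2$-linear. That identification is what makes naturality a one-line computation.
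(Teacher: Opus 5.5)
Your proof is correct and follows the paper's argument. You use the same flips $\phi_a$ (addition of $a$ in $\F_2^n$), and you prove existence and uniqueness of the factorization the same way, via $g=\phi_{f(0)}f$ and evaluation at $0$. The one difference is in the naturality step: you observe once that linear cube maps are $\F_2$-linear, which gives $f\phi_a=\phi_{f(a)}f$ in one line. The paper instead checks $\phi_{f(a)}f\phi_a=f$ coordinatewise, splitting on whether $f_i\equiv 0$ or $f_i=x_{j(i)}$, which amounts to the same verification.
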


\begin{proof}
Let $a\in\{0,1\}^n$.
Define $\phi_{a,i}(x_1,\ldots,x_n) = x_i$ if $a_i = 0$ and $\phi_{a,i}(x_1,\ldots,x_n) = 1-x_i$ if $a_i = 1$.
Thus, identifying $\{0,1\}^m$ with $\F_2^m$ the map $\phi_a$ simply is addition of $a$.
Hence, $\phi_a$ is self-inverse and thus an automorphism of $\{0,1\}^n$.
Moreover, $\phi_a\phi_b = \phi_{a+b}$ where the sum $a+b$ is taken in $\F_2^n$:
If $\phi_a$ is the identity, then $a = \phi_a(0) = 0$ by definition, so $\phi_{(-)}$ is injective.

To prove the second part, let $f\colon\{0,1\}^n\to\{0,1\}^m$ be a map in $\bblox$.
We need to show $\phi_{f(a)}f\phi_a = f$.
The idea is to compare coordinatewise -- either $f_i$ is zero and then $\phi_{f(a)}$ does not affect the $i$-th coordinate, resulting in $f$,
or both $\phi_a$ and $\phi_{f(a)}$ do the same thing (do or don't contribute a flip),
again resulting in $f$.

For $1\leq i\leq m$ we have that $f_i(x_1,\ldots,x_n) = y_i$ where $y_i \equiv 0$ or $y_i = x_{j(i)}$ for some unique $j(i)$ by Lemma~\ref{lem:coordinate_description}.
In the first case $f_i(x) = 0$, we have $f_i(a) = 0$, so $\phi_{f(a),i}(y_1,\ldots,y_m) = y_i$, hence
\[
    \phi_{f(a),i}(f(\phi_a(x_1,\ldots,x_n)))=f_i(\phi_a(x_1,\ldots,x_n)) = 0=f_i(a).
\]
In the second case $f_i(x) = x_{j(i)}$.
If $a_{j(i)}=0$, then $f_i(a)=0$ and so $\phi_{a,j(i)}(x)=x_{j(i)}$,
$f_i(x)= x_{j(i)}$ and $\phi_{f(a),i}(y) = y_i$, therefore
\[
    \phi_{f(a),i}(f(\phi_a(x))) = x_{j(i)} = f_i(x).
\]
If $a_{j(i)}=1$, then $f_i(a)=1$ and so $\phi_{a, j(i)}(x)=1-x_{j(i)}$,
$f_i(x)=x_{j(i)}$ and $\phi_{f(a),i}(y) = 1-y_i$, therefore
\[
    (\phi_{f(a)}(f(\phi_a))(x))_i = \phi_{f(a),i}(1-x_{j(i)}) = x_{j(i)} = f_i(x).
\]
This shows that indeed $\phi_{f(a)}f\phi_a = f$.

For the last part let $g = \phi_{f(0)}f$ which is linear since $g(0) = \phi_{f(0)}(f(0)) = 0$.
By construction, $\phi_{f(0)}$ is self-inverse and so $f$ factors as $\phi_{f(0)}\circ g$.
For uniqueness, take another factorization of $f = \phi'g'$ where $g'$ is linear and $\phi'$ is in the image of $\phi_{(-)}$.
Since $g'$ is linear, $g'(0)=0$ and we obtain that $\phi'(0) = f(0) = \phi_{f(0)}(0)$.
Therefore, $\phi'=\phi_{f(0)}$ and from self-inverseness of $\phi$ it follows that $g' = g$.
\end{proof}

\begin{proof}[Proof of Proposition~\ref{prop:cube_cat_linear}]
We obtain a functor
\[
    H\colon{\bbox}\to{\bblox}
\]
which is the identity on objects and it assigns to a morphism $f\colon\{0,1\}^n\to\{0,1\}^m$
the linear morphism $g\colon\{0,1\}^n\to\{0,1\}^m$ from Lemma~\ref{lem:linearization-auto-bblox} which is a functorial assignment since it is unique
and $\phi$ commutes as in Lemma~\ref{lem:linearization-auto-bblox}.
Clearly, $H$ exhibits the inclusion $j\colon{\bblox}\to{\bbox}$ as a retraction.

Now we can describe the functors which establish the equivalence $\int_{\bbGamma} \ast/\F_2^\bullet\simeq{\bbox}$ over $\bbGamma$ which will make for a commutative diagram
\begin{center}
\begin{tikzcd}
	\int_{\bbGamma} \ast/\F_2^\bullet & {{\bbox}} \\
	{\bbGamma} & {\bblox}
	\arrow["U", shift left, from=1-1, to=1-2]
	\arrow["{\mathrm{Un}}"', from=1-1, to=2-1]
	\arrow["V", shift left, from=1-2, to=1-1]
	\arrow["H", from=1-2, to=2-2]
	\arrow["G", shift left, from=2-1, to=2-2]
	\arrow["F", shift left, from=2-2, to=2-1]
\end{tikzcd}.
\end{center}
The functor $V$ acts as the identity on objects and it assigns to a morphism $f\colon\{0,1\}^n\to\{0,1\}^m$
\[
    V(f) = (F(H(f)),f(0)).
\]
For functoriality all that remains to be shown is that
\[
    f(g(0)) = f(0) + (F(H(f)))_\ast(g(0))
\]
in $\F_2^m$.\footnote{Which basically is affine linearity of $f$.}
But $F(H(f))_\ast = G(F(H(f))) = H(f)$ and so
\[
    (F(H(f)))_\ast(g(0)) = H(f)(g(0)) = \phi_{f(0)}(f(g(0)))
\]
for the unique $\phi_{f(0)}$ from Lemma~\ref{lem:linearization-auto-bblox}.
But in $\F_2^m$, the map $\phi_{f(0)}$ acts as addition of $f(0)$ and so
\[
    f(0)+\phi_{f(0)}(f(g(0))) = f(0) + f(0) + f(g(0)) = f(g(0)).
\]
It then automatically fulfills $\mathrm{Un}\circ V = F\circ H$.

The functor $U$ acts on objects identically and to a morphism $(f,a)$ it assigns $\phi_a\circ G(f)$.
Now functoriality follows from the statements made in Lemma~\ref{lem:linearization-auto-bblox}:
\begin{align*}
        U((f,a)\circ(g,b))
    &=  U(f\circ g,a+f_\ast(b)) \\
    &=  \phi_{a+f_\ast(b)}\circ G(f\circ g) \\
    &=  \phi_a\phi_{G(f)(b)}G(f)G(g) \\
    &=  \phi_a G(f)\phi_b G(g) \\
    &=  U(f,a)\circ U(g,b).
\end{align*}
In particular, $H\circ U = G\circ\mathrm{Un}$.
It is then easily verified that $U$ and $V$ form an equivalence of categories.
\end{proof}

\begin{remark}\label{rem:bbox-factorization}
In $\Fin_\ast$, every morphism $f\colon\langle m\rangle\to\langle n\rangle$
can be decomposed as
\begin{center}
\begin{tikzcd}
	{\langle m\rangle} & {\langle k\rangle} & {\langle n\rangle}
	\arrow["\alpha", from=1-1, to=1-2]
	\arrow["\sigma", from=1-2, to=1-3]
\end{tikzcd},
\end{center}
where $\alpha$ is the inert map that is the retraction of the order-preserving/canonical    
inclusion $\langle k\rangle^\circ\sub\langle m\rangle^\circ$ of the points $i$ where $f(i)\neq\ast$
and $\sigma$ is active.
This yields a decomposition of every map $f\colon\{0,1\}^n\to\{0,1\}^m$ in $\bbox$ as
\begin{center}
\begin{tikzcd}
	{\{0,1\}^n} & {\{0,1\}^k} & {\{0,1\}^m} & {\{0,1\}^m}
	\arrow["{\sigma^\ast}", from=1-1, to=1-2]
	\arrow["{\alpha^\ast}", from=1-2, to=1-3]
	\arrow["\phi", from=1-3, to=1-4]
\end{tikzcd}
\end{center}
where $\sigma^\ast$ is a (not necessarily bijective) permutation of coordinates arising from the active map $\sigma$,
$\alpha^\ast$ is a projection that only inserts zeros (arising from the inert $\alpha$)
and $\phi$ is the flip corresponding to a unique element $a\in\F_2^m$, in this case $a=f(0)$.
\end{remark}

\begin{example}
Let us draw the objects $n=1,2,3$ in the above categories.
Depicted are the lower-dimensional subobjects with their corresponding intersections.

\begin{center}
\pgfdeclarelayer{facefill}
\pgfdeclarelayer{afffacefill}
\pgfdeclarelayer{faceoutline}
\pgfdeclarelayer{afffaceoutline}

\pgfsetlayers{facefill,afffacefill,faceoutline,afffaceoutline,main}

\newcommand{\Face}[1]{%
  \begin{pgfonlayer}{facefill}
    \begin{scope}[blend mode=multiply]
      \path[fill=gray, fill opacity=0.12] #1 -- cycle;
    \end{scope}
  \end{pgfonlayer}
  \begin{pgfonlayer}{faceoutline}
    \path[draw=gray!60,line width=0.45pt] #1 -- cycle;
  \end{pgfonlayer}
}

\newcommand{\CornerFace}[1]{%
  \begin{pgfonlayer}{facefill}
    \begin{scope}[blend mode=multiply]
      \path[fill=gray, fill opacity=0.16] #1 -- cycle;
    \end{scope}
  \end{pgfonlayer}
  \begin{pgfonlayer}{faceoutline}
    \path[draw=gray!60,line width=0.45pt] #1 -- cycle;
  \end{pgfonlayer}
}

\newcommand{\AffFace}[1]{%
  \begin{pgfonlayer}{afffacefill}
    \path[fill=gray, fill opacity=0.10] #1 -- cycle;
  \end{pgfonlayer}
  \begin{pgfonlayer}{afffaceoutline}
    \path[draw=gray!60,line width=0.45pt] #1 -- cycle;
  \end{pgfonlayer}
}

\newcommand{\DiagFace}[1]{%
  \begin{pgfonlayer}{afffacefill}
    \path[fill=gray, fill opacity=0.16] #1 -- cycle;
  \end{pgfonlayer}
  \begin{pgfonlayer}{afffaceoutline}
    \path[draw=gray!70,line width=0.5pt] #1 -- cycle;
  \end{pgfonlayer}
}

\begin{tikzpicture}[
    x=1cm,y=1cm,
    font=\small,
    line cap=round,
    line join=round,
    vertex/.style={circle,fill,inner sep=1.4pt},
    ambient/.style={draw=gray!55, line width=0.45pt},
    mainedge/.style={draw=black, line width=0.8pt},
    affdiag/.style={draw=black, line width=0.65pt},
    simpedge/.style={
        draw=black,
        line width=0.75pt,
        -{Stealth[length=1.6mm,width=1.1mm]},
        shorten <= 8.0pt,
        shorten >= 8.0pt
    },
    rowlab/.style={anchor=east},
    collab/.style={font=\normalsize},
scale=0.6
]

\node[rowlab] at (-0.8,  0.0) {$\bbox$};
\node[rowlab] at (-0.8, -3.6) {$\bblox$};
\node[rowlab] at (-0.8, -7.4) {$\inert$};

\begin{scope}[shift={(0,0.4)}]
    \coordinate (A) at (0,0);
    \coordinate (B) at (2.2,0);

    \draw[mainedge] (A)--(B);

    \node[vertex] at (A) {};
    \node[vertex] at (B) {};
\end{scope}

\begin{scope}[shift={(4.3,0.0)}]
    \coordinate (A) at (0,0);
    \coordinate (B) at (2.5,0);
    \coordinate (C) at (0,2.5);
    \coordinate (D) at (2.5,2.5);

    \AffFace{(A)--(B)--(D)--(C)}

    \draw[mainedge] (A)--(B);
    \draw[mainedge] (B)--(D);
    \draw[mainedge] (D)--(C);
    \draw[mainedge] (C)--(A);

    \draw[affdiag] (A)--(D);
    \draw[affdiag] (B)--(C);

    \foreach \P in {A,B,C,D} {
        \node[vertex] at (\P) {};
    }
\end{scope}

\begin{scope}[shift={(9.25,0.0)}]
    \coordinate (A) at (0,0);
    \coordinate (B) at (2.4,0);
    \coordinate (C) at (0,2.4);
    \coordinate (D) at (2.4,2.4);

    \coordinate (E) at (1.1,0.8);
    \coordinate (F) at (3.5,0.8);
    \coordinate (G) at (1.1,3.2);
    \coordinate (H) at (3.5,3.2);

    \AffFace{(A)--(B)--(D)--(C)}
    \AffFace{(E)--(F)--(H)--(G)}

    \AffFace{(A)--(B)--(F)--(E)}
    \AffFace{(C)--(D)--(H)--(G)}

    \AffFace{(A)--(C)--(G)--(E)}
    \AffFace{(B)--(D)--(H)--(F)}

    \DiagFace{(A)--(B)--(H)--(G)}
    \DiagFace{(A)--(C)--(H)--(F)}
    \DiagFace{(A)--(E)--(H)--(D)}

    \draw[mainedge] (A)--(B);
    \draw[mainedge] (B)--(D);
    \draw[mainedge] (D)--(C);
    \draw[mainedge] (C)--(A);

    \draw[mainedge] (E)--(F);
    \draw[mainedge] (F)--(H);
    \draw[mainedge] (H)--(G);
    \draw[mainedge] (G)--(E);

    \draw[mainedge] (A)--(E);
    \draw[mainedge] (B)--(F);
    \draw[mainedge] (C)--(G);
    \draw[mainedge] (D)--(H);

    \draw[affdiag] (A)--(D);
    \draw[affdiag] (B)--(C);

    \draw[affdiag] (E)--(H);
    \draw[affdiag] (F)--(G);

    \draw[affdiag] (A)--(F);
    \draw[affdiag] (B)--(E);

    \draw[affdiag] (C)--(H);
    \draw[affdiag] (D)--(G);

    \draw[affdiag] (A)--(G);
    \draw[affdiag] (C)--(E);

    \draw[affdiag] (B)--(H);
    \draw[affdiag] (D)--(F);

    \draw[affdiag] (A)--(H);
    \draw[affdiag] (B)--(G);
    \draw[affdiag] (C)--(F);
    \draw[affdiag] (D)--(E);

    \foreach \P in {A,B,C,D,E,F,G,H} {
        \node[vertex] at (\P) {};
    }
\end{scope}

\begin{scope}[shift={(0,-3.6)}]
    \coordinate (A) at (0,0);
    \coordinate (B) at (2.2,0);
    \draw[mainedge] (A)--(B);
    \node[vertex] at (A) {};
    \node[vertex] at (B) {};
\end{scope}

\begin{scope}[shift={(4.3,-4.0)}]
    \coordinate (A) at (0,0);
    \coordinate (B) at (2.5,0);
    \coordinate (C) at (0,2.5);
    \coordinate (D) at (2.5,2.5);

    \Face{(A)--(B)--(D)--(C)}

    \draw[mainedge] (A)--(B);
    \draw[mainedge] (A)--(C);
    \draw[mainedge] (A)--(D);

    \foreach \P in {A,B,C,D} {
        \node[vertex] at (\P) {};
    }
\end{scope}

\begin{scope}[shift={(9.25,-4.4)}]
    \coordinate (A) at (0,0);
    \coordinate (B) at (2.4,0);
    \coordinate (C) at (0,2.4);
    \coordinate (D) at (2.4,2.4);

    \coordinate (E) at (1.1,0.8);
    \coordinate (F) at (3.5,0.8);
    \coordinate (G) at (1.1,3.2);
    \coordinate (H) at (3.5,3.2);

    \Face{(A)--(B)--(D)--(C)}
    \Face{(A)--(E)--(G)--(C)}
    \Face{(A)--(B)--(F)--(E)}
    \Face{(A)--(D)--(H)--(E)}
    \Face{(A)--(B)--(H)--(G)}
    \Face{(A)--(C)--(H)--(F)}

    \draw[mainedge] (A)--(B);
    \draw[mainedge] (A)--(C);
    \draw[mainedge] (A)--(D);
    \draw[mainedge] (A)--(E);
    \draw[mainedge] (A)--(F);
    \draw[mainedge] (A)--(G);
    \draw[mainedge] (A)--(H);

    \foreach \P in {A,B,C,D,E,F,G,H} {
        \node[vertex] at (\P) {};
    }
\end{scope}

\begin{scope}[shift={(0,-7.4)}]
    \coordinate (A) at (0,0);
    \coordinate (B) at (2.2,0);

    \draw[ambient] (A)--(B);
    \node[vertex] at (A) {};
    \node[vertex,fill=gray!45] at (B) {};
\end{scope}

\begin{scope}[shift={(4.3,-7.8)}]
    \coordinate (A) at (0,0);
    \coordinate (B) at (2.5,0);
    \coordinate (C) at (0,2.5);
    \coordinate (D) at (2.5,2.5);

    \draw[ambient] (A)--(B);
    \draw[ambient] (A)--(C);
    \draw[ambient] (A)--(D);

    \draw[mainedge] (A)--(B);
    \draw[mainedge] (A)--(C);
\CornerFace{(A)--(B)--(D)--(C)};

    \foreach \P in {B,C,D} {
        \node[vertex,fill=gray!45] at (\P) {};
    }
            \node[vertex] at (A) {};

\end{scope}

\begin{scope}[shift={(9.25,-8.4)}]
    \coordinate (A) at (0,0);
    \coordinate (B) at (2.4,0);
    \coordinate (C) at (0,2.4);
    \coordinate (D) at (2.4,2.4);

    \coordinate (E) at (1.1,0.8);
    \coordinate (F) at (3.5,0.8);
    \coordinate (G) at (1.1,3.2);
    \coordinate (H) at (3.5,3.2);

    \draw[ambient] (A)--(B);
    \draw[ambient] (A)--(C);
    \draw[ambient] (A)--(D);
    \draw[ambient] (A)--(E);
    \draw[ambient] (A)--(F);
    \draw[ambient] (A)--(G);
    \draw[ambient] (A)--(H);

    \CornerFace{(A)--(B)--(D)--(C)}
    \CornerFace{(A)--(B)--(F)--(E)}
    \CornerFace{(A)--(C)--(G)--(E)}

    \draw[mainedge] (A)--(B);
    \draw[mainedge] (A)--(E);
    \draw[mainedge] (A)--(C);

    \foreach \P in {A} {
        \node[vertex] at (\P) {};
    }
    \foreach \P in {B,C,D,E,F,G,H} {
    \node[vertex,fill=gray!45] at (\P) {};
    }
\end{scope}
\end{tikzpicture}
    \end{center}
\end{example}

From now on, we will freely switch between $\bbox$ and $\int_{\bbGamma} \ast/\F_2^\bullet$ depending on which category is better suited for certain applications.
Moreover, we will denote objects of $\bbox$ by $\langle n\rangle$, $\{0,1\}^n$ or ${\bbox}^n$,
depending on the mood of the authors when writing the corresponding parts.%
\footnote{As rough guidance: $\langle n\rangle$ is mostly used for emphasizing the abstract object,
and for the relation to $\bbGamma$,
$\{0,1\}^n$ is written for either emphasizing the relation to the geometric intuition of a cube
or for the combinatorial description of the category,
and $\bbox^n$ is mostly used as the Yoneda embedding of $\langle n\rangle$.}

\subsubsection{Properties of the Indexing Category}

Having seen their importance for nilspace theory,
let us recall some important properties of the categories $\Fin_\ast$, $\bbGamma$ and $\Fin_{\ast,\leq 1}$.

\begin{lemma}\label{lem:basic-prop-indexcat}
\begin{enumerate}[(i)]
    \item The category $\Fin_\ast$ is a Cisinski generalized Reedy category%
    \footnote{see \cite[Def 8.1.1]{Cisinski2006} or \cite{Berger2010}, and \cite{Riehl2014,Riehl2017} for the theory of Reedy categories}
        and has a symmetric monoidal structure given on objects by $\langle n\rangle\otimes\langle m\rangle = \langle n+m\rangle$.
    \item The category $\bbGamma$ is a Cisinski generalized Reedy category and has a symmetric monoidal structure given on objects by $\langle n\rangle\otimes\langle m\rangle = \langle n+m\rangle$.
    \item The category $\Fin_{\ast,\leq 1}$ is a Cisinski generalized Reedy category and has a symmetric monoidal structure given on objects by $\langle n\rangle\otimes\langle m\rangle = \langle n+m\rangle$.
\end{enumerate}
\noindent
Furthermore, in $\Fin_\ast$,
\begin{enumerate}[(1)]
    \item the epimorphisms agree with surjective maps and the split epimorphisms, and
    \item the monomorphisms agree with (everywhere defined) injective maps and the split monomorphisms.
\end{enumerate}
\end{lemma}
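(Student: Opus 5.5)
The plan is to handle $\Fin_\ast$ first by hand and then obtain $\bbGamma$ and $\Fin_{\ast,\le 1}$ formally from it. For the Reedy structure on $\Fin_\ast$ I will take the degree $\deg\langle n\rangle = n$. The direct subcategory $R^+$ consists of the injective maps, equivalently the monomorphisms. The inverse subcategory $R^-$ consists of the surjective maps, equivalently the epimorphisms. I will check Cisinski's axioms in turn.

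First, isomorphisms lie in $R^+\cap R^-$ and both classes are closed under composition. A non-invertible mono strictly raises degree, and a non-invertible epi strictly lowers it.

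Second, every map factors as an epi followed by a mono, uniquely up to unique isomorphism. This is image factorization of pointed sets: $\langle n\rangle \twoheadrightarrow \operatorname{im} f \hookrightarrow \langle m\rangle$.

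Third, if $\theta$ is an isomorphism and $\pi$ is an epi with $\theta\pi=\pi$, then $\theta=\mathrm{id}$. This holds because $\pi$ is surjective.

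Fourth, every epi in $\Fin_\ast$ splits, since a basepoint-preserving section exists. Hence any two epis with the same sections agree, which gives the last Cisinski axiom.

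The claims (1) and (2) about $\Fin_\ast$ are checked separately. A surjective pointed map has a section, and a split epi is surjective. For monos, I test against maps out of $\langle 1\rangle$: this shows a mono is injective as a map of pointed sets, and in the partial-map picture this means everywhere defined and injective. An injective pointed map has a retraction that sends the complement of the image to $\ast$, so it is a split mono. Split monos are monos, closing the circle.

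For $\bbGamma=\Fin_\ast^\op$ I will use that the opposite of a Cisinski generalized Reedy category is again one, with $R^+$ and $R^-$ swapped. This is symmetric in the axioms once all epis and monos are split, which is exactly what (1) and (2) give. For $\Fin_{\ast,\le1}$, a map with fibers of size at most one over non-basepoints factors in $\Fin_\ast$ as a surjection followed by an injection, and both factors again have fibers of size at most one. So the factorization restricts, and the remaining axioms are inherited.

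Symmetric monoidality comes from the wedge sum $\langle n\rangle\vee\langle m\rangle\cong\langle n+m\rangle$. On $\Fin_\ast$ this is the coproduct, hence symmetric monoidal. On $\bbGamma$ it becomes the product of the opposite category. For $\Fin_{\ast,\le1}$ I will check that the wedge of two maps with fibers of size at most one again has this property.

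The only real obstacle I anticipate is the last Cisinski axiom for the subcategory $\Fin_{\ast,\le1}$. There, epis are only those with fibers of size at most one, so the sections of an epi must also lie in the subcategory. I will verify this by noting that such an epi is a bijection away from the points sent to $\ast$. Its section is therefore unique and is an injection, which lies in $\Fin_{\ast,\le1}$.
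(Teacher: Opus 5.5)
Your overall route is the paper's: degree $n$, injections as $R^{+}$ and surjections as $R^{-}$ in $\Fin_\ast$, the swapped classes for $\bbGamma$, and injections plus inert maps for $\Fin_{\ast,\le 1}$. Your handling of sections in $\Fin_{\ast,\le 1}$ and of the monoidal structures is correct, and more explicit than the paper's.

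\textbf{The gap is in the step for $\bbGamma$.} It is \emph{not} true in general that the opposite of a Cisinski generalized Reedy category is again one ``once all epis and monos are split''. Cisinski's last axiom also requires that two maps of $R^{-}$ with the same \emph{set} of sections be equal. Dualized, this says that two monos with the same set of retractions are equal, and splitting does not give that. For a counterexample, finite nonempty sets with injections and surjections form a Cisinski generalized Reedy category in which every epi and every mono splits. Yet the two injections $\{a\}\to\{b,c\}$ share their unique retraction, so the opposite fails the axiom.

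For $\Fin_\ast$ you must check this property by hand, and the basepoint is what makes it work. Let $i\neq j\colon\langle m\rangle\to\langle n\rangle$ be injective and pick $x$ with $i(x)\neq j(x)$, so $x\neq\ast$. If $j(x)=i(x')$, then every retraction of $i$ sends $j(x)$ to $x'\neq x$. Otherwise $j(x)$ lies outside the image of $i$, so some retraction of $i$ sends $j(x)$ to $\ast$. In both cases $i$ has a retraction that is not a retraction of $j$. The paper only declares $\bbGamma_\pm$ without this check, so this is exactly the point a complete proof has to supply.

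There are two smaller gaps.
\begin{enumerate}
\item Your ``hence'' in the fourth step is likewise not a formal consequence of splitting. A point $x\neq\ast$ with $p(x)=\ast$ lies in the image of no section of $p$. So for surjections $p\neq q$ with $p(x)\neq q(x)$, first arrange by symmetry that $p(x)\neq\ast$. Then a section $s$ of $p$ with $s(p(x))=x$ exists and satisfies $qs\neq\mathrm{id}$.
\item In (1), the cycle ``surjective $\Rightarrow$ split epi $\Rightarrow$ surjective'' never shows that an arbitrary epimorphism is surjective. As in the paper, test against maps $\langle m\rangle\to\langle 1\rangle$. If $j\neq\ast$ is missed by $\alpha$, the map sending only $j$ to $1$ and the zero map agree after precomposing with $\alpha$, contradicting that $\alpha$ is epi.
\end{enumerate}
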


\begin{proof}
Let us begin with the latter assertions.
\begin{enumerate}[(1)]
\item If $\alpha\colon\langle n\rangle\to\langle m\rangle$ is an epimorphism in $\Fin_\ast$
one can test against $\langle m\rangle\to\langle 1\rangle$ to see that $\alpha$ is surjective,
and the existence of the section is evident.
\item If $\alpha\colon\langle n\rangle\to\langle m\rangle$ is a monomorphism in $\Fin_\ast$
one can test against $\langle 1\rangle\to\langle n\rangle$ to see that $\alpha$ is injective and everywhere defined.
Every injection has a retraction given by the identity on the image of $\alpha$ and undefined everywhere else.
\end{enumerate}
Now to the structures on the categories.
\begin{enumerate}[(i)]
\item The degree function is given by $\mathrm{d}(\langle n\rangle) = n$.
The degree increasing maps $\Fin_{\ast,+}$ are the monomorphisms
and the degree decreasing maps $\Fin_{\ast,-}$ are the surjections.
\item One simply takes $\bbGamma_+ = ((\Fin_\ast)_-)^\op$ and $\bbGamma_- = ((\Fin_\ast)_+)^\op$.
\item Here one uses the everywhere defined injections as $(\Fin_{\ast,\leq 1})_+$ and $(\Fin_{\ast,\leq 1})_- = \finert$ which correspond to the surjections in $\Fin_{\ast,\leq 1}$.\qedhere
\end{enumerate}
\end{proof}

\begin{remark}
Note that there does not exist a Cisinski generalized Reedy structure on $\finert$ with $\mathrm{d}(\langle n\rangle) = n$
since the only inert maps admitting a section are the isomorphisms, so $(\finert)_-$ is the core of $\finert$.
But simultaneously this implies that $(\finert)_+$ needs to consist of all maps.
This contradicts the fact that every inert map is surjective and thus lowers the degree.
The same argument works for $\inert$.
\end{remark}

\begin{lemma}\label{lem:fin-star-co-comp}
The category $\Fin_\ast$ is both finitely complete and finitely cocomplete
and the projection $\Fin_\ast\to\Fin$ creates finite limits and connected colimits.
\end{lemma}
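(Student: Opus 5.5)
We identify $\Fin_\ast$ with the category of sets $S$ equipped with a point $\ast$ and base-point preserving maps, i.e. with the coslice $\{\ast\}/\Fin$. The plan is to use the general fact that for any category $\cC$ and object $c$, the forgetful functor $c/\cC\to\cC$ creates limits (all of them, in particular finite ones) and creates connected colimits. Since $\Fin$ is finitely complete and finitely cocomplete, this gives finite limits and finite connected colimits in $\Fin_\ast$ at once. Finite cocompleteness additionally needs an initial object and finite coproducts, which we handle separately.

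\textbf{Limits.} Let $D\colon I\to\Fin_\ast$ be a finite diagram and let $L=\lim UD$ be its limit in $\Fin$. The base points define a cone $\{\ast\}\to UD$, since every transition map preserves base points. This cone induces a unique point $\ast_L\in L$, and the projections $L\to D(i)$ are then base-point preserving. Given a pointed cone $T\to D$, the induced map $T\to L$ in $\Fin$ is pointed by uniqueness: $\ast_T$ and $\ast_L$ both induce the base-point cone. Concretely, for finite products this yields $\langle n\rangle\times\langle m\rangle$ with base point $(\ast,\ast)$, and for equalizers the subset on which the two maps agree, which contains $\ast$.

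\textbf{Connected colimits.} Let $I$ be finite and connected, and let $C=\colim UD$ in $\Fin$. For any two objects $i,j$ of $I$ joined by a zigzag, the images of $\ast_{D(i)}$ and $\ast_{D(j)}$ in $C$ coincide, because each transition map sends base point to base point. So $C$ has a well-defined base point, and all coprojections are pointed. A pointed cocone $D\to T$ induces a map $C\to T$ in $\Fin$, and this map is pointed because $\ast_C$ is the image of some $\ast_{D(i)}$. Uniqueness is inherited from $\Fin$. This is the one step where connectedness is genuinely needed, and also where nonemptiness of $I$ enters: for empty $I$ there is no $\ast$ to choose.

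\textbf{Remaining colimits.} To finish finite cocompleteness, note that $\langle 0\rangle$ is a zero object, hence initial. Finite coproducts are the wedge sums $\langle n\rangle\vee\langle m\rangle=\langle n+m\rangle$, obtained as the pushout of $\langle n\rangle\leftarrow\langle 0\rangle\to\langle m\rangle$. Since this pushout diagram is connected, it exists by the previous step. Together with coequalizers (also a connected diagram), this gives all finite colimits. These coproducts are not created by the projection to $\Fin$, which matches the statement claiming only connected colimits. I expect no real obstacle here; the only care point is the connectedness argument for the base point of $C$.
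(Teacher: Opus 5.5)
Your proof is correct and takes essentially the same approach as the paper. The paper only calls the statement well known and records, as you do, that the coproduct is the pushout $A\sqcup_\ast B$ over the point; your coslice argument for the creation of finite limits and connected colimits supplies the standard details it omits.
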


\begin{proof}
This is well known.
Recall that the coproduct of finite pointed sets $A$ and $B$ is the pushout $A\sqcup_\ast B$.
\end{proof}

With this, we directly see that the monoidal structure on $\bbGamma$ is cartesian.

\begin{corollary}\label{rem:bblox-symm-monoid}
The symmetric monoidal structure on $\bblox$ defined by
\[
    \langle n\rangle \otimes\langle m\rangle =\langle n+m\rangle
\]
agrees with the cartesian one, i.e.,
\[
    \langle n+m\rangle=\langle n\rangle \times\langle m\rangle.
\]
\end{corollary}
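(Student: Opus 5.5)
We use the equivalence $\bbGamma=\Fin_\ast^\op\simeq\bblox$ of Proposition~\ref{lem:iso-gamma-bblox}. A product in $\bblox$ is then the same as a coproduct in $\Fin_\ast$, which exists by Lemma~\ref{lem:fin-star-co-comp}. So it suffices to show that $\langle n+m\rangle$, with suitable maps, is the coproduct of $\langle n\rangle$ and $\langle m\rangle$ in $\Fin_\ast$. We also have to check that the given symmetric monoidal structure is the one these coproduct maps induce.

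\textbf{Coproduct in $\Fin_\ast$.} The coproduct of pointed sets is the wedge $\langle n\rangle\sqcup_\ast\langle m\rangle$. Its non-basepoint elements are $\langle n\rangle^\circ\sqcup\langle m\rangle^\circ$, so it is canonically isomorphic to $\langle n+m\rangle$. The structure maps are
\begin{itemize}
\item $\iota_1\colon\langle n\rangle\to\langle n+m\rangle$, given by $i\mapsto i$;
\item $\iota_2\colon\langle m\rangle\to\langle n+m\rangle$, given by $i\mapsto n+i$.
\end{itemize}
The universal property can be checked directly, without citing the lemma. A pair of pointed maps $\langle n\rangle\to\langle k\rangle$ and $\langle m\rangle\to\langle k\rangle$ determines a unique map on the disjoint union of non-basepoints, with $\ast\mapsto\ast$.

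\textbf{Translation to cubes.} Apply the functor $G$ from the proof of Proposition~\ref{lem:iso-gamma-bblox}. The maps $\iota_1,\iota_2$ become the coordinate projections
\[
\{0,1\}^{n+m}\to\{0,1\}^n,\qquad \{0,1\}^{n+m}\to\{0,1\}^m
\]
onto the first $n$ and the last $m$ coordinates. So in $\bblox$ the object $\langle n+m\rangle$, together with these two projections, is a product $\langle n\rangle\times\langle m\rangle$.

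A concrete sanity check: a pair of linear maps $f\colon\{0,1\}^k\to\{0,1\}^n$ and $g\colon\{0,1\}^k\to\{0,1\}^m$ corresponds to the map $x\mapsto(f(x),g(x))$. This map is again linear, and its coordinates are still of the form in Lemma~\ref{lem:coordinate_description}: each is $0$ or a single $x_j$. This matches the set-theoretic product.

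\textbf{Monoidal structures.} We must check that $\otimes$ agrees with $\times$ on morphisms and on the coherence data, not only on objects.
\begin{itemize}
\item \emph{Morphisms.} The tensor of $\alpha\colon\langle n\rangle\to\langle n'\rangle$ and $\beta\colon\langle m\rangle\to\langle m'\rangle$ in $\Fin_\ast$ is the block map $\alpha\sqcup\beta$. This is exactly the map induced on coproducts, i.e.\ $\alpha\vee\beta$.
\item \emph{Unit.} The unit $\langle 0\rangle$ is initial in $\Fin_\ast$, hence terminal in $\bblox$, where it is the point $\{0,1\}^0$.
\item \emph{Constraints.} The associator and symmetry are the canonical reindexing bijections. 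By the uniqueness in the universal property, they coincide with the coproduct coherence isomorphisms.
\end{itemize}

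\textbf{Main obstacle.} The only non-formal point is the compatibility of the chosen identification $\langle n\rangle\sqcup_\ast\langle m\rangle\cong\langle n+m\rangle$ with the block-sum ordering used in $\otimes$. This is handled by fixing the order-preserving identification above once and for all.
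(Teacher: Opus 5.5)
Your proposal is correct and takes the same approach as the paper. The paper deduces the corollary directly from Lemma~\ref{lem:fin-star-co-comp}: the coproduct in $\Fin_\ast$ is the wedge $\langle n\rangle\sqcup_\ast\langle m\rangle\cong\langle n+m\rangle$, which is a product in $\bbGamma=\Fin_\ast^\op\simeq\bblox$. Your explicit checks on morphisms, the unit and the coherence isomorphisms go beyond what the paper writes, but they are accurate and consistent with it.
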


Now, these properties translate through the unstraightening to the whole cube category $\bbox$ as follows.

\begin{lemma}\label{lem:bbox-epi-mono}
In the category $\bbox$, we have that
\begin{enumerate}[(i)]
    \item the epimorphisms agree with the surjective maps and with the split epimorphisms, and
    \item the monomorphisms agree with the injective maps and with the split monomorphisms.
\end{enumerate}
Furthermore, the monomorphisms are those maps in whose canonical representation
each $x_j$ appears (as $x_j$ or as $1 - x_j$) at least once.
The epimorphisms are precisely those maps such that no $y_i$ is constant and every $x_j$ appears at most once.
\end{lemma}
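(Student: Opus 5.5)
The plan is to reduce everything to $\Fin_\ast$ via the factorization of Lemma~\ref{lem:linearization-auto-bblox}. Every morphism $f$ of $\bbox$ factors uniquely as $f=\phi_{f(0)}\circ g$ with $g$ in $\bblox$. Since $\phi_{f(0)}$ is an isomorphism, $f$ is epi, mono, split epi, split mono, surjective or injective if and only if $g$ is. So it suffices to prove the statements for linear maps, while still testing against arbitrary maps of $\bbox$.

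\textbf{Monomorphisms.} For the implications, the chain split mono $\Rightarrow$ mono holds in any category. Injective $\Rightarrow$ mono holds because the functor $\bbox\to\Set$ forgetting to the underlying map is faithful. It remains to show that a mono is injective and that an injective map is split mono.

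For mono $\Rightarrow$ injective, I test against maps out of ${\bbox}^0$, whose morphisms into ${\bbox}^n$ are exactly the points of $\{0,1\}^n$. Hence $f\circ x=f\circ y$ forces $x=y$, which is injectivity.

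For injective $\Rightarrow$ split mono, take the linear part $g$ with canonical representation $y_i=x_{j(i)}$ or $0$. Injectivity of $g$ means every $x_j$ occurs at least once. Under Proposition~\ref{lem:iso-gamma-bblox} this says the corresponding map $\langle m\rangle\to\langle n\rangle$ in $\Fin_\ast$ is surjective. By Lemma~\ref{lem:basic-prop-indexcat}(1) it has a section, and this section gives a retraction $r$ of $g$ in $\bblox\subseteq\bbox$. Then $r\circ\phi_{f(0)}^{-1}$ is a retraction of $f$. This also proves the coordinate description: $f$ is mono iff each $x_j$ appears in the canonical representation of $f$, since flips only replace $x_j$ by $1-x_j$.

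\textbf{Epimorphisms.} The implications split epi $\Rightarrow$ epi and surjective $\Rightarrow$ epi are formal, again using faithfulness of the forgetful functor. For surjective $\Rightarrow$ split epi, observe that surjectivity onto $\{0,1\}^m$ forces $g$ to have no constant coordinate. It also forces no repeated variable, since $y_i=y_{i'}$ identically would miss points. So the associated map of $\Fin_\ast$ is an everywhere defined injection, which has a retraction by Lemma~\ref{lem:basic-prop-indexcat}(2); this gives a section of $g$, and composing gives a section of $f$. The coordinate description of epimorphisms (no constant $y_i$, each $x_j$ at most once) follows from the same count.

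\textbf{Main obstacle.} The hardest step is epi $\Rightarrow$ surjective, because $\bbox$ is not $\Fin_\ast$ and has few test objects. I would argue contrapositively: if $f$ (equivalently $g$) is not surjective, then either some coordinate $y_i$ is constant, or two coordinates $y_i,y_{i'}$ depend on the same $x_j$. In each case I construct two distinct maps $h_1,h_2\colon{\bbox}^m\to{\bbox}^1$ with $h_1f=h_2f$.
\begin{itemize}
\item If $y_i$ is constant, take $h_1=\mathrm{pr}_i$ and $h_2$ the constant map with value $y_i$.
\item If $y_i=y_{i'}$ up to a flip, take $h_1=\mathrm{pr}_i$ and $h_2$ equal to $\mathrm{pr}_{i'}$ or its flip $1-\mathrm{pr}_{i'}$, as appropriate.
\end{itemize}
These are distinct on $\{0,1\}^m$ (this needs $m\geq1$; the case $m=0$ is trivially surjective), contradicting that $f$ is epi. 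Checking that this case split exhausts non-surjectivity is a routine consequence of Lemma~\ref{lem:coordinate_description}.
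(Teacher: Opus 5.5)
Your proposal is correct and is essentially the paper's proof: you reduce via $f=\phi_{f(0)}\circ g$ to the linear part and take the splittings from Lemma~\ref{lem:basic-prop-indexcat}. The only deviation is that you prove epi $\Rightarrow$ surjective (and mono $\Rightarrow$ injective) with explicit test maps. The paper instead observes that an epimorphism (monomorphism) of $\bbox$ lying in $\bblox$ stays one in the subcategory $\bbGamma$, so Lemma~\ref{lem:basic-prop-indexcat} applies at once. For linear $g$ your test maps ${\bbox}^m\to{\bbox}^1$ are exactly the maps $\langle 1\rangle\to\langle m\rangle$ of $\Fin_\ast$ used in that lemma, so your ``main obstacle'' is no harder than what the lemma already does.
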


\begin{proof}
Let $f\colon{\bbox}^n\to{\bbox}^m$ be an epimorphism in ${\bbox}$.
We factor it as $\phi\circ g$ with $\phi\in\F_2^m$, and so we can assume that $g$ is in $\bbGamma$
and an epimorphism.
By Lemma~\ref{lem:basic-prop-indexcat}, its opposite $\alpha$ in $\Fin_\ast$ is an everywhere defined injection and there exists a splitting $\beta$ with $\beta\circ\alpha = \id$.
The splitting transfers to $f=\phi g$ since $\phi$ is invertible, and so $f$ is a split epimorphism.
Now if $f$ is a split epimorphism, it is in particular split in $\set$ and so is surjective.
On the other hand, every surjection clearly is an epimorphism (by applying the forgetful functor $\bbox\to\set$).
Moreover, from the canonical representation of $g = \alpha^\ast$ where $\alpha$ is an everywhere defined injection,
we infer that there appears no constant term $0$ and each $x_i$ appears at most once, since $\alpha$ has fibers of size at most one.
Therefore, the same holds for $f$.
On the other hand, every map $f$ with the same properties on its canonical representation is clearly surjective.

If $f\colon{\bbox}^n\to{\bbox}^m$ is a monomorphism in $\bbox$,
then in its factorization $f=\phi\circ g$, so is $g$.
Again, by Lemma~\ref{lem:basic-prop-indexcat}, we have that for $g=\alpha^\ast$ the map $\alpha$ is a split epimorphism, and hence $g$ is split monic.
The same line of reasoning as above also shows that split monomorphisms are injective,
and injections are monomorphisms.
Furthermore, $g=\alpha^\ast$ for $\alpha$ surjective shows that in the canonical representation of $g$,
each $x_i$ appears at least once.
And conversely, every such map is injective.
\end{proof}

\begin{lemma}\label{lem:bbox-reedy-cat}
The category $\bbox$ inherits a Cisinski generalized Reedy structure from $\bbGamma$
where the degree of $\{0,1\}^n$ is given by $n$,
the monomorphisms are the degree increasing maps and the epimorphisms are the degree decreasing maps.
\end{lemma}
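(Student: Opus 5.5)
We take $\mathrm{d}(\{0,1\}^n)=n$, $\bbox_+$ the monomorphisms and $\bbox_-$ the epimorphisms. By Lemma~\ref{lem:bbox-epi-mono} these are the injective maps and the surjective maps, which are the split monomorphisms and split epimorphisms. We then check Cisinski's axioms one at a time, transporting each from $\bbGamma$ (Lemma~\ref{lem:basic-prop-indexcat}(ii)) along the unique factorization $f=\phi_{f(0)}\circ g$ of Lemma~\ref{lem:linearization-auto-bblox}.

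\emph{Isomorphism invariance and degrees.} Both classes contain the isomorphisms, are closed under composition, and are stable under composing with isomorphisms, since they are described set-theoretically. An injective map $\{0,1\}^n\to\{0,1\}^m$ forces $n\le m$, and a surjective one forces $n\ge m$. If $f$ lies in both classes it is bijective, and $n=m$. This gives the degree conditions, and the isomorphisms are exactly $\bbox_+\cap\bbox_-$.

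\emph{Factorization.} Given $f$, write $f=\phi g$ with $g$ in $\bblox\simeq\bbGamma$. Factor $g=g_+g_-$ in $\bbGamma$ with $g_-$ epic and $g_+$ monic; explicitly, this is the decomposition of Remark~\ref{rem:bbox-factorization}, surjection onto the used coordinates followed by insertion of zeros and duplicates. Then $f=(\phi g_+)\circ g_-$. Here $\phi g_+$ is injective, hence in $\bbox_+$, and $g_-$ is surjective, hence in $\bbox_-$. For uniqueness up to unique isomorphism, note that the factorization is the image factorization in $\set$. Its middle object is determined up to a unique bijection, and that bijection is an isomorphism in $\bbox$ because the image of an affine map of cubes is an affine face isomorphic to a cube. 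Concretely, any two factorizations $f=m e=m'e'$ induce a bijection $u$ with $u e=e'$ and $m'u=m$. Since $e$ is split epic, $u=e's$ for a section $s$ of $e$, so $u$ is a morphism of $\bbox$, and likewise $u^{-1}$.

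\emph{Cisinski's two extra axioms.} The first axiom asks that if $\theta\in\bbox_-$ and $\alpha\theta=\theta$ for an isomorphism $\alpha$, then $\alpha=\id$. This holds because $\theta$ is surjective, so it is an epimorphism. The second axiom asks that every map in $\bbox_-$ admits a section, which is split epicity from Lemma~\ref{lem:bbox-epi-mono}. Cisinski's definition also requires $\bbox_-$ to be stable under pullback along $\bbox_+$, or at least that such pullbacks exist with the expected form. This is the step we expect to be the main obstacle, because pullbacks in $\bbox$ are not pullbacks in $\set$ (for instance, two faces may intersect emptily).

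For that step we first use the flip to reduce to linear maps: pullback is invariant under composing with isomorphisms, so we may assume both maps are linear. The problem then becomes the corresponding statement for $\bbGamma$. If the precise wording of the axiom instead requires absolute pushouts or that split epimorphisms be determined by their sections, the same reduction along $\phi$ works. In that case we would invoke the generalized Reedy structure of $\bbGamma$ and transfer it through the equivalence $\int_{\bbGamma}\ast/\F_2^\bullet\simeq\bbox$ (Proposition~\ref{prop:cube_cat_linear}). This transfer rests on a general principle: the Grothendieck construction of a functor into groupoids of the form $\ast/G$ over a generalized Reedy category is again generalized Reedy, since all the added maps are automorphisms.
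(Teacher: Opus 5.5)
Your checks of the degree conditions, of $\bbox_+\cap\bbox_-$ being the isomorphisms, and of the factorization are fine. The image-factorization argument with $u=e's$ is more explicit than the paper's appeal to $\bbGamma$, and it even gives uniqueness up to a \emph{unique} isomorphism.

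The gap is the axiom specific to Cisinski's definition \cite[Def.~8.1.1]{Cisinski2006}, which is exactly where the paper's proof ends: besides having sections, two maps of $\bbox_-$ with the same set of sections must coincide. You never prove this. You only say that ``the same reduction along $\phi$ works'' and invoke a principle about Grothendieck constructions, justified by ``all the added maps are automorphisms''. The reduction is not literal, because a section of $e=\phi_a g$ is not a section of its linear part $g$ unless $a=0$. The axiom you do verify, $\alpha\theta=\theta\Rightarrow\alpha=\id$, is Berger--Moerdijk's condition and is not the one at stake.

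Here is what is needed. Write a candidate section as $s=\phi_c h$ with $h$ linear and $c=s(0)$.
\begin{itemize}
\item By the naturality in Lemma~\ref{lem:linearization-auto-bblox}, $es=\phi_{a+g(c)}\circ gh$. By uniqueness of the flip--linear factorization, $s$ is a section of $e$ if and only if $gh=\id$ and $g(c)=a$.
\item Since $g$ has a linear section $h_0$, the fiber $g^{-1}(a)\ni h_0(a)$ is nonempty. So $s\mapsto(s(0),h)$ identifies the sections of $e$ with $g^{-1}(a)\times\mathrm{Sec}_{\bbGamma}(g)$, a product of two nonempty sets.
\item If $e'=\phi_{a'}g'$ has the same sections, then $\mathrm{Sec}_{\bbGamma}(g)=\mathrm{Sec}_{\bbGamma}(g')$ and $g^{-1}(a)=(g')^{-1}(a')$.
\item Hence $g=g'$ by the corresponding property of $\bbGamma$ (Lemma~\ref{lem:basic-prop-indexcat}), and then $a=g(c)=g'(c)=a'$.
\end{itemize}
So the real input behind your general principle is that a split epimorphism induces a surjection on flip groups, which lets a flip be absorbed.

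Finally, the pullback-stability condition you single out as the main obstacle is not part of Cisinski's axioms Sq0--Sq3, nor of Berger--Moerdijk's. Absolute pushouts of split epimorphisms belong to the definition of EZ-categories. Drop that paragraph rather than leave it open. With the section axiom established as above, your argument proves the lemma.
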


\begin{proof}
Clearly, the non-invertible monomorphisms increase, and the non-invertible epimorphisms decrease the degree (since in $\Fin$, an injection (resp. surjection) between sets of the same cardinality is invertible).
Moreover, the isomorphisms preserve the degree and are both mono- and epimorphisms.
The factorization $f = m\circ e$ with $e$ epimorphic and $m$ monomorphic
follows from the corresponding factorization in $\bbGamma$ (Lemma~\ref{lem:basic-prop-indexcat})
which also implies uniqueness up to isomorphism.
Lastly, Lemma~\ref{lem:bbox-epi-mono} shows that every epimorphism has a section
and since the factorization $f = \phi\circ g$ is unique,
the set of sections determines the epimorphism uniquely by Lemma~\ref{lem:basic-prop-indexcat}.
\end{proof}

\begin{lemma}\label{lem:bblox-bbox-pres-limits}
The inclusion $j\colon\bbGamma\to\bbox$ preserves finite limits.
\end{lemma}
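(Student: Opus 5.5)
We need to show $j\colon\bbGamma\simeq\bblox\to\bbox$ preserves finite limits. A functor preserves finite limits iff it preserves the terminal object and pullbacks (equivalently, terminal object, binary products and equalizers), provided these limits exist in the source; $\bbGamma=\Fin_\ast^\op$ has them by Lemma~\ref{lem:fin-star-co-comp}. The plan is to compute each limit explicitly in $\bbGamma$ (as colimits in $\Fin_\ast$) and check the universal property directly in $\bbox$, using the factorization of Lemma~\ref{lem:linearization-auto-bblox}: every morphism $f$ of $\bbox$ is uniquely $\phi_{f(0)}\circ g$ with $g$ linear.

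\emph{Terminal object and products.} The terminal object of $\bbGamma$ is $\langle 0\rangle$ (initial in $\Fin_\ast$), i.e. $\{0,1\}^0$, which is terminal in $\bbox$ since there is exactly one map of sets into a point and it is affine. For products, Corollary~\ref{rem:bblox-symm-monoid} identifies $\langle n\rangle\times\langle m\rangle=\langle n+m\rangle$ with the coordinate projections. In $\bbox$, by Lemma~\ref{lem:coordinate_description} a map $\{0,1\}^k\to\{0,1\}^{n+m}$ is affine exactly when each output coordinate is $0$, $1$, $x_j$ or $1-x_j$, a coordinatewise condition. Hence pairs of $\bbox$-maps into $\{0,1\}^n$ and $\{0,1\}^m$ correspond bijectively to $\bbox$-maps into $\{0,1\}^{n+m}$, so the same cone is a product in $\bbox$.

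\emph{Equalizers.} This is the main step. Given parallel linear $f,g\colon\{0,1\}^n\rightrightarrows\{0,1\}^m$, the equalizer in $\bbGamma$ is the coequalizer in $\Fin_\ast$ of the corresponding partial maps $\alpha,\beta\colon\langle m\rangle\rightrightarrows\langle n\rangle$. Concretely, take the quotient of $\langle n\rangle$ by the equivalence relation generated by $\alpha(i)\sim\beta(i)$, and collapse to $\ast$ the class of $\ast$. On the cube side this is the subcube $E\subseteq\{0,1\}^n$ of those $x$ with $f(x)=g(x)$: coordinates identified with each other must agree, and coordinates linked to $\ast$ must be zero. Thus $E$ is the image of a linear injection $e\colon\{0,1\}^k\to\{0,1\}^n$. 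Now let $h\colon\{0,1\}^l\to\{0,1\}^n$ be an arbitrary $\bbox$-map with $fh=gh$. Setwise, $h$ lands in $E$, so $h$ factors uniquely through $e$ as a map of sets, $h=e\circ u$, because $e$ is injective. It remains to see that $u$ is affine. The point is that $u=r\circ h$, where $r$ is a linear retraction of $e$, and such an $r$ exists because monomorphisms in $\bbox$ split (Lemma~\ref{lem:bbox-epi-mono}). Explicitly, $r$ picks one coordinate from each equivalence class. Hence $u$ is a composite of $\bbox$-maps.

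The only genuine check is that the set-theoretic equalizer computed above really coincides with the $\Fin_\ast$-coequalizer. In particular the $\ast$-class must correspond to coordinates forced to be $0$, not to coordinates that may be $0$ or $1$. This follows by unwinding the definition of $G$ in Proposition~\ref{lem:iso-gamma-bblox}, using the convention $x_\ast=0$.

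Alternatively, one can handle pullbacks directly by the same subset-and-retraction argument.
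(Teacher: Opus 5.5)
Your proof is correct and takes the same route as the paper. Products are handled by the coordinatewise description of affine maps. For equalizers, you show that the underlying set diagram of the $\bbGamma$-equalizer is a set-theoretic equalizer, and then that the resulting set-level factorization lies in $\bbox$.

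The differences are only in the sub-steps. You compute the $\Fin_\ast$-coequalizer explicitly, whereas the paper shows that the image of $e$ is all of $\{x : f(x)=g(x)\}$ by testing against linear maps out of ${\bblox}^1$ that hit a given point. You obtain affineness of the factorization from a retraction of the split mono $e$, whereas the paper reads it off coordinatewise from the canonical representation of $e$, which is the same idea.
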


\begin{proof}
For products, we have seen that $\langle n\rangle\times\langle m\rangle = \langle n+m\rangle$.
Applying $j$ this corresponds to the set-theoretical product $\{0,1\}^n\times\{0,1\}^m = \{0,1\}^{n+m}$
with linear projections.
Given another $k$-cube with morphisms ${\bbox}^k\to{\bbox}^n$ and ${\bbox}^k\to{\bbox}^m$,
there exists a unique set-map $\{0,1\}^k\to\{0,1\}^{n+m}$ making the diagram
\begin{center}
\begin{tikzcd}
	& {\{0,1\}^{n+m}} & \\
	{\{0,1\}^n} && {\{0,1\}^m} \\
	& {\{0,1\}^k}
	\arrow[from=1-2, to=2-1]
	\arrow[from=1-2, to=2-3]
	\arrow[dashed, from=3-2, to=1-2]
	\arrow[from=3-2, to=2-1]
	\arrow[from=3-2, to=2-3]
\end{tikzcd}
\end{center}
commutative.
Looking at its $i$-th coordinate we see that, by commutativity of the diagram
and the fact that all solid arrows are in $\bbox$, it is of the form $0$, $1$, $x_{j(i)}$ or $1-x_{j(i)}$ for a unique $1\leq j(i)\leq k$.
This means that the set-map is in $\bbox$ and so $j$ preserves products.

For the preservation of equalizers consider the diagram
\begin{center}
\begin{tikzcd}
	{j\langle k\rangle} & {j\langle m\rangle} & {j\langle m'\rangle} \\
	{{\bbox}^n}
	\arrow["e", from=1-1, to=1-2]
	\arrow["f", shift left, from=1-2, to=1-3]
	\arrow["g"', shift right, from=1-2, to=1-3]
	\arrow["L"', from=2-1, to=1-2]
\end{tikzcd}
\end{center}
where $e$ is the equalizer of $f$ and $g$ in $\bbGamma$
and $fL = gL$, but $L$ is a map in $\bbox$.
Since $e$ is monic in $\bbGamma$, it is injective in $\set$.
Moreover, for all $a\in\{0,1\}^m$ with $f(a) = g(a)$ there exists $c\in\{0,1\}^k$ such that $e(c) = a$:
If this weren't the case, we could define a map $\langle 1\rangle\to\langle m\rangle$ in $\bbGamma$ mapping $1$ to $a$
which then would get equalized by $f$ and $g$, but couldn't factor through $e$, a contradiction.
Hence, upon applying the forgetful functor to $\set$,
this diagram is an equalizer diagram.
Hence there exists a unique map $T\colon\{0,1\}^n\to\{0,1\}^k$ of sets(!) making the diagram
\begin{center}
\begin{tikzcd}
	{j\langle k\rangle} & {j\langle m\rangle} & {j\langle m'\rangle} \\
	{{\bbox}^n}
	\arrow["e", from=1-1, to=1-2]
	\arrow["f", shift left, from=1-2, to=1-3]
	\arrow["g"', shift right, from=1-2, to=1-3]
	\arrow["T", dashed, from=2-1, to=1-1]
	\arrow["L"', from=2-1, to=1-2]
\end{tikzcd}
\end{center}
commutative (in $\set$).
Now since $e$ is injective, from Lemma~\ref{lem:bbox-epi-mono} we infer that the $i$-th coordinate of $T$ is of the form $0$, $1$, $x_{j(i)}$ or $1-x_{j(i)}$ for a unique $1\leq j(i)\leq n$.
This means that the set-map $T$ is in $\bbox$ and so $j$ preserves equalizers.
\end{proof}

\begin{corollary}\label{prop:bbox-symm-monoid}
There is a symmetric monoidal structure on $\bbox$ defined by the formula
\[
    {\bbox}^n\otimes{\bbox}^m={\bbox}^{n+m}
\]
which agrees with the cartesian monoidal structure.
\end{corollary}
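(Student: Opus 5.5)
The plan is to show directly that $\{0,1\}^{n+m}$, with the coordinate projections $p_1\colon\{0,1\}^{n+m}\to\{0,1\}^n$ and $p_2\colon\{0,1\}^{n+m}\to\{0,1\}^m$, is a product of ${\bbox}^n$ and ${\bbox}^m$ in $\bbox$. Once that is established, the symmetric monoidal structure is simply the cartesian one. The assignment ${\bbox}^n\otimes{\bbox}^m={\bbox}^{n+m}$ on objects, and $f\otimes g=f\times g$ on morphisms, is then well-defined and symmetric monoidal, because any choice of products yields a cartesian symmetric monoidal structure that is unique up to unique isomorphism. The unit is ${\bbox}^0$, which is terminal: $\{0,1\}^0$ is a point, and the constant map to it is affine.

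\textbf{Step 1: projections.} The projections $p_1,p_2$ are linear, so they lie in $\bblox$ and hence in $\bbox$. In fact they are the images under $j$ of the product projections in $\bbGamma$ from Corollary~\ref{rem:bblox-symm-monoid}. Since $j$ preserves finite limits by Lemma~\ref{lem:bblox-bbox-pres-limits}, the pair $(j\langle n+m\rangle,p_1,p_2)$ is a product in $\bbox$. This argument already covers the key claim. If I want to avoid relying only on the citation, I would repeat the direct verification, which is Step 2.

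\textbf{Step 2: universal property.} Given $f\colon{\bbox}^k\to{\bbox}^n$ and $g\colon{\bbox}^k\to{\bbox}^m$ in $\bbox$, the only possible set-map is $(f,g)\colon\{0,1\}^k\to\{0,1\}^{n+m}$. Every coordinate of $(f,g)$ is a coordinate of $f$ or of $g$. By Lemma~\ref{lem:coordinate_description}, each such coordinate is $0$, $1$, $x_j$ or $1-x_j$, so $(f,g)$ is a morphism of $\bbox$. Uniqueness follows because morphisms of $\bbox$ are set-maps, and a set-map into $\{0,1\}^{n+m}$ is determined by its two projections.

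\textbf{Step 3: tensor on morphisms and coherence.} The tensor of $f$ and $g$ is $f\times g=(fp_1,gp_2)$. Its components are affine, and in terms of the Grothendieck description it corresponds to $(\alpha\otimes\beta,(a,b))$. The associator, unitors and symmetry are the canonical coordinate permutations and identifications, which are linear isomorphisms. Their coherence follows formally from the universal property, so there is nothing further to check.

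I expect no serious obstacle here. The only point requiring care is that the universal map constructed in $\set$ actually lands in $\bbox$, and the coordinate description of Lemma~\ref{lem:coordinate_description} handles this immediately.
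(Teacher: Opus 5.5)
Your proposal is correct and is essentially the paper's proof: the paper likewise deduces ${\bbox}^n\times{\bbox}^m\simeq{\bbox}^{n+m}$ from Corollary~\ref{rem:bblox-symm-monoid} together with the fact that $j$ preserves finite limits (Lemma~\ref{lem:bblox-bbox-pres-limits}). Your Step~2 just unfolds the product part of that lemma's proof, using the coordinatewise criterion of Lemma~\ref{lem:coordinate_description}.
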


\begin{proof}
All that is to be shown is that we have ${\bbox}^n\times{\bbox}^m\simeq{\bbox}^{n+m}$.
But this follows from Lemma~\ref{lem:bblox-bbox-pres-limits} and Lemma~\ref{rem:bblox-symm-monoid}.
\end{proof}

\begin{remark}
Note that the property that the symmetric monoidal structure is the cartesian one
is false in other classical indexing categories, e.g., in the simplex category $\bbDelta$
or other variants of cube categories in cubical homotopy theory, see below.
\end{remark}

\begin{remark}
Note that Segal's category $\bbGamma$ is used to model $\bbGamma$-spaces,
see \cite{Bousfield1978,Brown1981,Schwede1999,Schwede2000},
as well as being the (opposite of the) $\E_\infty$-operad modeling commutative monoids in $\infty$-categories,
see \cite{Lurie2017,Hebestreit2021}.
The trivial operad $\finert$ in turn is used to define operads ($\infty$-multicategories).

Often, $\bbGamma$ is understood as a commutative variant of the classical simplex category $\bbDelta$
(which even is an Eilenberg-Zilber category).
One can relate them using the \emph{cut functor} of Dedekind cuts $\bbDelta\to\bbGamma$.
This functor exhibits every symmetric operad as a non-commutative operad
and shows that $\mathbb{E}_\infty$-monoids are $\mathbb{E}_1$-monoids, see \cite{Lurie2017,Hebestreit2021}.
There are also relations to other important indexing categories,
e.g. the trees of dendroidal sets, see \cite{Cisinski2011,Heuts2022}.
\end{remark}

\begin{remark}\label{def:cubical_homotopy_cubes}
There already is a well-established cubical homotopy theory, see e.g. \cite{Brown1981,Cisinski2006,Brown2011,Doherty2024}.
The cube category used in the theory of nilspaces has many more morphisms (diagonals)
than these cube categories.
However, it misses the usual \enquote{connections}.

In particular, there are Cisinski model structures to model spaces, and Joyal model structures to model $(\infty,n)$-categories using cubical sets.
However, those structures do not fit our purposes, and in particular their definitions of fibration and homotopy group differ from ours.
\end{remark}

\subsection{Cubesets: the Basics}\label{ssec:cubesets-basics}

Now that we have established what the basic building blocks of our geometric objects are,
namely the $n$-dimensional cubes ${\bbox}^n$, we can start creating geometric spaces out of them.
As usual, one wants to glue multiple cubes together to obtain \emph{cubesets}.
This is achieved by taking the free cocompletion of the category $\bbox$.

\subsubsection{Cubesets are Presheaves}

\begin{definition}\label{def:cubeset}
The category of cubesets $\cSet$ is the presheaf category on $\bbox$,
\[
    \cSet = \PSh(\bbox) = \Fun({\bbox}^{\op},\Set).
\]
\end{definition}

As a presheaf category, the category of cubesets $\cSet$ enjoys many pleasant properties.
One of the most important certainly is the fact that it is a Grothendieck topos
where all limits and colimits can be computed pointwise.

Recall that the Yoneda embedding $\bbox\to\PSh(\bbox)$ exhibits $\cSet$ as the free cocompletion
of the cube category $\bbox$.
This is described by the following universal property of the category $\cSet$ (by which it is uniquely determined):
every functor $\bbox\to\mcD$ where $\mcD$ has all small colimits
has a unique extension to a colimit preserving functor $\cSet\to\mcD$.
Concretely, this means that every presheaf $F$ can be represented as the canonical colimit over its category of elements,
or equivalently over the slice category $\bbox_{/F}$, that is over all cubes ${\bbox}^n\to F$ in $F$:
\[
    F = \varinjlim_{{\bbox}^n\to F}{\bbox}^n
\]
Moreover, this colimit is \emph{free} in the sense that for
$F = \varinjlim_{{\bbox}^n\to F} {\bbox}^n$ and $G = \varinjlim_{{\bbox}^m\to G} {\bbox}^m$,
the double limit formula
\[
    \hom(F,G) = \varprojlim_{{\bbox}^n\to F}\varinjlim_{{\bbox}^m\to G}\hom_{\bbox}({\bbox}^n,{\bbox}^m)
\]
holds.
This means that every cube ${\bbox}^n$ in $\cSet$ is a tiny object and in particular compact projective.
Thus, the category $\PSh(\bbox)$ is $\aleph_0$-presentable, so every colimit preserving functor
$G\colon\PSh(\bbox)\to\mcD$ admits a right adjoint, given on an object $D\in\mcD$ by the presheaf
\[
    \langle n\rangle\mapsto\hom(G({\bbox}^n),D).
\]
The necessity of defining the right adjoint as it is given follows from the adjunction and the Yoneda lemma.
Moreover, if $G$ preserves limits and is accessible
(i.e. preserves $\lambda$-filtered colimits for $\lambda$ big enough), and $\mcD$ is presentable,
then it admits a left adjoint (and conversely).

Another important construction is the lift of the symmetric monoidal structure
${\bbox}^n\otimes{\bbox}^m={\bbox}^{n+m}$ in $\bbox$ onto the full $\cSet$.
This is achieved by \emph{Day convolution}.

\begin{remark}[Day Convolution]
If $\mcC$ is a monoidal category, then one can define a tensor product on its presheaf category $\PSh(\mcC)$.
Given $A,B\in\PSh(\mcC)$, their tensor product $A\otimes B$ is given by left Kan extension
of the external product $A\overline{\times}B\colon\mcC^{\op}\times\mcC^{\op}\to\set$ along the tensor product
$\otimes^\op\colon\mcC^\op\times\mcC^\op\to\mcC$,
\begin{center}
\begin{tikzcd}
	{\mcC^\op\times\mcC^\op} && \mcC^\op \\
	{\set\times\set} \\
	\set
	\arrow["\otimes^\op", from=1-1, to=1-3]
	\arrow["{A\times B}"', from=1-1, to=2-1]
	\arrow[dashed, from=1-3, to=3-1]
	\arrow["\times"', from=2-1, to=3-1]
\end{tikzcd}
\end{center}
which is given by the coend
\[
    (A\otimes B)(e) = \int^{(c,d)\in\mcC\times\mcC}\hom(e,c\otimes_{\mcC}d)\times A(c)\times B(d)
\]
or equivalently, the colimit
\[
    (A\otimes B)(e) = \varinjlim_{e\to c\otimes d} (A(c)\times B(d)).
\]
This defines a monoidal structure on $\PSh(\mcC)$.
If the tensor product on $\mcC$ is symmetric, then so is the Day convolution product.
\end{remark}

Specialized to cubesets it is surprising to see that Day convolution agrees with the cartesian monoidal structure.

\begin{proposition}\label{prop:Day-conv-cubeset}
Day convolution on $\cSet = \PSh(\bbox)$ agrees with the cartesian monoidal structure.
\end{proposition}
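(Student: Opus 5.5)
The plan is to reduce everything to representables and then extend by colimits in each variable. By Corollary~\ref{prop:bbox-symm-monoid}, the tensor product on $\bbox$ is the cartesian product: ${\bbox}^n\otimes{\bbox}^m={\bbox}^{n+m}\simeq{\bbox}^n\times{\bbox}^m$. Consequently, for each object $e$ there is a natural bijection
\[
    \hom(e,c\otimes d)\simeq\hom(e,c)\times\hom(e,d).
\]

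Substituting this into the coend formula for Day convolution gives
\[
    (A\otimes B)(e)=\int^{(c,d)}\hom(e,c)\times\hom(e,d)\times A(c)\times B(d).
\]
The integrand splits as a product of a term depending only on $c$ and a term depending only on $d$. Since $\times$ in $\set$ preserves colimits in each variable separately, the coend over $\bbox\times\bbox$ factors as
\[
    \Bigl(\int^{c}\hom(e,c)\times A(c)\Bigr)\times\Bigl(\int^{d}\hom(e,d)\times B(d)\Bigr).
\]
Here one uses that a coend over a product category can be computed iteratively (Fubini), and that each factor is a colimit in one variable only.

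By the co-Yoneda lemma, each factor reduces to $A(e)$ and $B(e)$ respectively. Hence $(A\otimes B)(e)\simeq A(e)\times B(e)$. This isomorphism is natural in $e$, $A$ and $B$, since every identification used is natural.

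The one point to check carefully is that these natural isomorphisms are compatible with the monoidal structure, namely with the unit, associator and symmetry, and not merely objectwise. To handle this, I would first note that the unit of Day convolution is the representable on the unit ${\bbox}^0$. Since ${\bbox}^0$ is terminal in $\bbox$ (the unique map to $\{0,1\}^0$), this representable is the terminal presheaf. Next, I would argue via uniqueness: both monoidal structures preserve colimits separately in each variable. For the cartesian structure this holds because $\cSet$ is a topos, so products commute with colimits. Both structures also agree with $\times$ on representables, by Corollary~\ref{prop:bbox-symm-monoid} together with the fact that the Yoneda embedding preserves limits. A cocontinuous-in-each-variable monoidal structure on $\PSh(\bbox)$ extending a given one on $\bbox$ is unique up to unique monoidal equivalence, by the universal property of the free cocompletion. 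This yields the identification of the two monoidal structures.

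I expect the main (mild) obstacle to be exactly this coherence bookkeeping. The objectwise computation itself is routine once Corollary~\ref{prop:bbox-symm-monoid} is in place.
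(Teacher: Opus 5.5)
Your proposal is correct and is essentially the paper's argument. The paper likewise reduces to Corollary~\ref{prop:bbox-symm-monoid} (the tensor on $\bbox$ is cartesian) and then extends to all presheaves via the Kan/coend formula for Day convolution together with the fact that products in $\cSet$ commute with colimits in each variable; your Fubini and co-Yoneda computation is the explicit pointwise unpacking of this, and your coherence paragraph (uniqueness of a separately cocontinuous extension that makes the Yoneda embedding strong monoidal) correctly spells out what the paper leaves implicit.
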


\begin{proof}
Recall that on $\bbox$, the symmetric monoidal structure defined by ${\bbox}^n\otimes{\bbox}^m = {\bbox}^{n+m}$ is the cartesian one, see Proposition~\ref{prop:bbox-symm-monoid}.
Then the general case follows from the Kan formula and the fact that colimits are stable under base change in $\PSh(\bbox)$.
\end{proof}

\begin{remark}
The same line of reasoning also works when replacing $\bbox$ by $\bbGamma$:
the proof of the preceding Proposition~\ref{prop:Day-conv-cubeset} relies only on the fact
that on the site $\mcC$, the monoidal structure is the cartesian one.
But this is also the case for $\bbGamma = \bblox$ by Remark~\ref{rem:bblox-symm-monoid}.
\end{remark}

Next, let us discuss the internal Hom in $\cSet$.
That Day convolution agrees with the cartesian product also implies that the tensor product is closed.
Recall that in any category of presheaves, the internal Hom of two cubesets $X$ and $Y$ is given on objects by
\[
    \ihom(X,Y)(n) = \hom(X\times{\bbox}^n,Y).
\]
This description makes $\cSet$ into a closed symmetric monoidal category where the tensor product
is the cartesian product (or equivalently, Day convolution).

\begin{example}
The \emph{path space} $\uP X$ of a cubeset $X$ is given by $\uP X = \ihom([1],X)$,
which has simply $\uP X(n) = X(n+1)$, i.e., the cubeset of edges in $X$.

\end{example}

\begin{example}\label{ex:host_kra_cubegroups}
An important class of examples of cubesets is given by \emph{Host-Kra cubegroups}.
They are defined as follows:
For a filtered group $G_\bullet=(G_0=G_1\ge G_2\ge \cdots)$ with $[G_i,G_j]\subset G_{i+j}$
define $\HK(G_\bullet)$ to be the sub-cubeset of $i_*(G_0)$
whose $n$-cubes are the subgroup of $G^{2^n}$ spanned by configurations
\[
    g^F(\omega)=\begin{cases}
        g,\quad & \omega\in F\\
        1,\quad & \omega \notin F,
    \end{cases},\qquad \omega\in \{0,1\}^n
\]
for a face $F$ of $\{0,1\}^n$ and $g\in G_{\mathrm{codim} F}$.

In this paper, the most relevant case of these cubegroups is given by an abelian group $A$
with filtration $A=G_0=\dots=G_k\ge 0=G_{k+1}$.
The corresponding cubegroup is called the \emph{$k$-th Eilenberg-Mac Lane cubeset $\cD_k(A)$}.
For more on cubegroups we refer to \cite{Bihlmaier2026b}.
\end{example}

\begin{remark}\label{rem:EM_preserves_products}
Note that the Eilenberg-Mac Lane cubeset induces a functor $\cD_k\colon \Ab\to \cSet$
for every $k\geq 1$ that factors through \emph{abelian} cubegroups.
This functor preserves products.
\end{remark}

\subsubsection{Cube- and \texorpdfstring{$\bbGamma$-}{Gamma}sets}

Since every morphism of sites (the indexing categories) induces a triple of adjunctions
between the corresponding presheaf categories, and $\bbGamma$ is a retract of the unstraightening $\bbox$,
we obtain several comparison functors between cubesets and $\bbGamma$-sets.

Let us first consider the triple of adjunctions induced by the unstraightening $\mathrm{Un}\colon \bbox\to \bbGamma$.

\begin{remark}
The cocartesian fibration/Grothendieck opfibration $\mathrm{Un}\colon\bbox\to\bbGamma$ induces a triple of adjunctions
\begin{center}
\begin{tikzcd}
	{\PSh(\bbox)} && {\PSh(\bbGamma)}
	\arrow[""{name=0, anchor=center, inner sep=0}, "{{\mathrm{Un}_\ast}}"', curve={height=24pt}, from=1-1, to=1-3]
	\arrow[""{name=0p, anchor=center, inner sep=0}, phantom, from=1-1, to=1-3, start anchor=center, end anchor=center, curve={height=24pt}]
	\arrow[""{name=1, anchor=center, inner sep=0}, "{{\mathrm{Un}_!}}", curve={height=-24pt}, from=1-1, to=1-3]
	\arrow[""{name=1p, anchor=center, inner sep=0}, phantom, from=1-1, to=1-3, start anchor=center, end anchor=center, curve={height=-24pt}]
	\arrow[""{name=2, anchor=center, inner sep=0}, "{{\mathrm{Un}^\ast}}"', from=1-3, to=1-1]
	\arrow[""{name=2p, anchor=center, inner sep=0}, phantom, from=1-3, to=1-1, start anchor=center, end anchor=center]
	\arrow[""{name=2p, anchor=center, inner sep=0}, phantom, from=1-3, to=1-1, start anchor=center, end anchor=center]
	\arrow["\dashv"{anchor=center, rotate=-81}, draw=none, from=1p, to=2p]
	\arrow["\dashv"{anchor=center, rotate=-99}, draw=none, from=2p, to=0p]
\end{tikzcd}
\end{center}
where $\mathrm{Un}_!$ and $\mathrm{Un}_\ast$ are the pointwise left, resp. right, Kan extension
of $\mathrm{Un}$ along the Yoneda embedding $\bbox\to\PSh(\bbox)$.
The right adjoint $\mathrm{Un}_\ast$ is given by
\[
    \mathrm{Un}_\ast X(n) = \hom_{\PSh(\bbGamma)}(\langle n\rangle,\mathrm{Un}_\ast X) = \hom_{\cSet}(\mathrm{Un}^\ast\langle n\rangle,X)
\]
where $\mathrm{Un}^\ast\langle m\rangle = y(\langle m\rangle)\circ\mathrm{Un}$.
The left adjoint can be computed via the formula
\begin{align*}
    \mathrm{Un}_!X(m) &= \varinjlim_{\bbox^n\to\langle m\rangle}X(n),
\end{align*}
where the index category runs over all maps ${\bbox}^n\to\langle m\rangle$ for fixed $m$.
It equivalently can be described as the unique cocontinuous extension of the horizontal functor
\begin{center}
\begin{tikzcd}
	\bbox & \bbGamma & {\PSh(\bbGamma)} \\
	\\
	\cSet
	\arrow["{\mathrm{Un}}", from=1-1, to=1-2]
	\arrow["y"', from=1-1, to=3-1]
	\arrow["y", from=1-2, to=1-3]
	\arrow["{\mathrm{Un}_!}", dashed, from=1-3, to=3-1]
\end{tikzcd}
\end{center}
making the diagram commutative, as in the universal property of the free cocompletion.
Indeed, for any $X\in\PSh(\bbGamma)$, we compute that
\begin{align*}
    \hom_{\PSh(\bbGamma)}(\mathrm{Un}_! y_{\cSet}(n),X) 
    &= \hom_{\cSet}(y_{\cSet}(n),\mathrm{Un}^\ast X) \\
    &= \mathrm{Un}^\ast X(n) \\
    &= X(\mathrm{Un}(n)) \\
    &= X(n)  \\
    &= \hom_{\PSh(\bbGamma)}(\langle n\rangle,X)
\end{align*}
and so the claim follows from the Yoneda lemma.

Such a triple of adjoint functors always induces a geometric morphism $\mathrm{Un}\colon\cSet\to\PSh(\bbGamma)$
whose right adjoint part is $\mathrm{Un}_\ast$ and the left exact left adjoint is $\mathrm{Un}^\ast$.
\end{remark}

Another important example is that of the wide subcategory inclusion $j\colon\bbGamma=\bblox\hookrightarrow\bbox$, which as before induces a triple of adjoint functors.

\begin{remark}\label{rem:adjoint-triple-j}
Consider the triple of adjunctions
\begin{center}
\begin{tikzcd}
	\cSet && {\PSh(\bbGamma)}
	\arrow[""{name=0, anchor=center, inner sep=0}, "{{j^\ast}}", from=1-1, to=1-3]
	\arrow[draw=none, from=1-1, to=1-3]
	\arrow[""{name=1, anchor=center, inner sep=0}, "{{j_\ast}}", curve={height=-24pt}, from=1-3, to=1-1]
	\arrow[""{name=2, anchor=center, inner sep=0}, "{{j_!}}"', curve={height=24pt}, from=1-3, to=1-1]
	\arrow["\dashv"{anchor=center, rotate=-65}, draw=none, from=0, to=1]
	\arrow["\dashv"{anchor=center, rotate=-115}, draw=none, from=2, to=0]
\end{tikzcd}
\end{center}
Here, $j^*$ is the pullback of $j$, simply forgetting the affine structure of a cubeset and only remembering the linear maps.
The left adjoint $j_!$ is the unique cocontinuous extension of the functor $\bbGamma\to\cSet$ assigning $\langle n\rangle$ to ${\bbox}^n$.
We have that $j_!(y_{\bbGamma}(n)) = y_{\bbox}(n)$.
Indeed, for any $X\in\cSet$, we compute that
\[
    \hom_{\cSet}(j_! y_{\bbGamma}(n),X) = \hom_{\PSh(\bbGamma)}(y_{\bbGamma}(n),j^\ast X) = j^\ast X(n) = X(j(n)) = X(n) = \hom_{\cSet}({\bbox}^n,X)
\]
and so the claim follows from the Yoneda lemma.
\end{remark}

More interestingly, in this case not only is $j_\ast\colon\PSh(\bbGamma)\to\cSet$
(the right adjoint part of) a geometric morphism but also $j^\ast\colon\cSet\to\PSh(\bbGamma)$.

\begin{lemma}\label{lem:j-geometric-morphism}
The functor $j_!$ preserves finite limits and finite unions of subobjects.
\end{lemma}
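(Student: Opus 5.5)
We want to show that $j_!\colon\PSh(\bbGamma)\to\cSet$ preserves finite limits and finite unions of subobjects. The plan is to compute $j_!$ explicitly through the unstraightening description $\bbox\simeq\int_{\bbGamma}\ast/\F_2^\bullet$ of Proposition~\ref{prop:cube_cat_linear}.

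By the left Kan extension formula,
\[
    j_!X(m)=\varinjlim_{(\langle m\rangle\to j\langle n\rangle)^{\op}}X(n),
\]
where the colimit runs over the comma category $\langle m\rangle\downarrow j$. By Lemma~\ref{lem:linearization-auto-bblox}, every morphism $f\colon\langle m\rangle\to j\langle n\rangle$ in $\bbox$ factors uniquely as $\phi_{f(0)}\circ g$ with $g$ linear.

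The first key claim is that the comma category $\langle m\rangle\downarrow j$ decomposes as a disjoint union indexed by the flip component. For each flip $a\in\F_2^m$, the component containing $\phi_a\colon\langle m\rangle\to\langle m\rangle$ has $(\langle m\rangle,\phi_a)$ as an initial object. Indeed, any $f=\phi_{f(0)}g$ factors through $\phi_{a}$ via a linear map precisely when $a$ is the appropriate preimage. This uses the naturality $\phi_{g(a)}g=g\phi_a$ to move flips past linear maps. I would check this carefully, because the target index set for the components must be identified correctly. The expected answer is that the components are indexed by $\F_2^m$ itself, each with an initial object given by a flip of $\langle m\rangle$.

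Granting this, the colimit over a disjoint union of categories with initial objects is a coproduct. This gives the formula
\[
    j_!X(m)\cong\coprod_{a\in\F_2^m}X(m)=\F_2^m\times X(m),
\]
with restriction along $f=\phi_b g$ acting by
\[
    (a,x)\mapsto\bigl(g^{\ast}\text{-pullback of }a+b,\;X(g)x\bigr),
\]
suitably made precise using the twisted composition law $(g,b)\circ(f,a)=(g\circ f,\,b+g_\ast(a))$. Equivalently, $j_!X=\mathrm{Un}^\ast$ of $X$ twisted by the free $\F_2^\bullet$-torsor. This is the expected shape for left Kan extension along the fiber inclusion of a Grothendieck construction of groups, in the spirit of the comparison between presheaves on an unstraightening and equivariant presheaves on the base.

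Given the formula, both assertions reduce to pointwise statements in $\Set$, since limits, colimits, and subobjects in presheaf categories are computed pointwise. For finite limits, the functor $X\mapsto\F_2^m\times X(m)$ is a coproduct of copies of evaluation. Coproducts do not commute with arbitrary products in $\Set$, so this needs care. Products must instead be checked using the correct indexing: $j_!(X\times Y)(m)$ versus $j_!X(m)\times j_!Y(m)$ differ unless the flip coordinate is shared. This suggests the decomposition is not a plain coproduct but is fibered over a fixed set $\F_2^m$, with the cocone maps compatible in that coordinate. The intended statement is then that $j_!X(m)\to\F_2^m$ has fiber $X(m)$ over each point, so that $j_!X=E\times_{\mathrm{Un}^\ast\ast}\mathrm{Un}^\ast X$ for a fixed presheaf $E$. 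This identification is the crux of the proof.

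Once $j_!=E\times_{(-)}\mathrm{Un}^\ast(-)$-type holds, the conclusions follow formally:
\begin{enumerate}[(i)]
    \item $\mathrm{Un}^\ast$ preserves all limits and colimits, because it is restriction along a functor.
    \item Fibered products with a fixed object over the terminal-related base preserve connected limits. The terminal object is also preserved, since $j_!(\ast)=j_!\,y_{\bbGamma}(0)=\bbox^0=\ast$.
    \item Pullback is cocontinuous in a topos, so finite unions, being images of coproducts, are preserved together with monomorphisms.
\end{enumerate}

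The main obstacle is therefore the first step: analysing the connected components of $\langle m\rangle\downarrow j$ and their initial objects, and verifying that the resulting fibered description is compatible with the restriction maps. If initial objects fail to exist, I would instead show that each component is cofiltered-free, namely a groupoid-free poset with a minimum, by using uniqueness in Lemma~\ref{lem:linearization-auto-bblox}.
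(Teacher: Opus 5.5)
Your argument has a genuine gap. Everything rests on the claim that the comma category $\langle m\rangle\downarrow j$ splits into $2^m$ components with initial objects $(\langle m\rangle,\phi_a)$, and this claim is false.

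\begin{itemize}
\item The comma category has a terminal object $(\langle 0\rangle,\,{\bbox}^m\to{\bbox}^0)$, so it is connected.
\item Since $\phi_a$ is an involution, a morphism $(\langle m\rangle,\phi_a)\to(\langle n\rangle,f)$ must be $g=f\phi_a$. This $g$ is linear only if $f(a)=0$. So the objects $(\langle m\rangle,\phi_a)$ are not even weakly initial: the constant map ${\bbox}^m\to{\bbox}^1$ with value $1$ receives no map from any of them.
\item Consequently the formula $j_!X(m)\cong\F_2^m\times X(m)$ is wrong. For $X=\ast$ it gives $2^m$ elements, whereas $j_!(\ast)=j_!y_{\bbGamma}(0)={\bbox}^0=\ast$, as you note yourself. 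For $X=y_{\bbGamma}(k)$ the true value $\hom_{\bbox}({\bbox}^m,{\bbox}^k)$ has $(2m+2)^k$ elements, not $2^m(m+1)^k$.
\item Your fallback $j_!X\cong E\times\mathrm{Un}^\ast X$ fails too. Taking $X=\ast$ forces $E=\ast$, i.e.\ $j_!=\mathrm{Un}^\ast$. But $j_!y_{\bbGamma}(1)(0)=\{0,1\}$, while $\mathrm{Un}^\ast y_{\bbGamma}(1)(0)=\hom_{\bbGamma}(\langle 0\rangle,\langle 1\rangle)$ is a singleton.
\end{itemize}
Since your steps (i)--(iii) all depend on this identification, nothing is actually proved.

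The missing idea is the one the paper uses. $\bbGamma$ has finite limits and $j$ preserves them (Lemma~\ref{lem:bblox-bbox-pres-limits}). Hence $\langle m\rangle\downarrow j$ has finite limits and is cofiltered. The Kan extension formula then writes $j_!X(m)$ as a filtered colimit of evaluations, which commutes with finite limits in sets. Finite unions follow because a union is the pushout of two subobjects over their intersection, and $j_!$ preserves monomorphisms, pullbacks and pushouts.

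If you want to keep your explicit-computation route, use the correct initial object of $\langle m\rangle\downarrow j$. It is $(\langle 2m+1\rangle,u)$ with $u(x)=(x_1,\ldots,x_m,1-x_1,\ldots,1-x_m,1)$. Every coordinate of an affine map is uniquely one of $0,1,x_i,1-x_i$, so the map factors uniquely through $u$ via a linear map. This gives $j_!X(m)\cong X(\langle 2m+1\rangle)$. In other words, $j$ has a left adjoint $L$ and $j_!\cong L^\ast$ is a restriction functor. That preserves all limits and colimits, which is stronger than the lemma.
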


\begin{proof}
For $X$ a presheaf on $\bbGamma$, by the formula for the pointwise Kan extension we have that
\[
    j_! X({\bbox}^n) = \varinjlim_{{\bbox}^n\to j(\langle m\rangle)}X(\langle m\rangle)
\]
where we denote by ${\bbox}^n$ an object of $\bbox$ and by $\langle m\rangle$ an object in $\bbGamma$,
and the index diagram is the projection
\[
    Q\colon ({\bbox}^n\downarrow j)\to\bbGamma
\]
where $({\bbox}^n\downarrow j)$ is the category of objects in $\bbGamma$ $j$-over ${\bbox}^n$.
By Lemma~\ref{lem:fin-star-co-comp}, $\bbGamma$ has all finite limits
and by Lemma~\ref{lem:bblox-bbox-pres-limits}, $j$ preserves them.
Now it is a general fact that if $\bbGamma$ is finitely complete and $j$ preserves finite limits,
then also $({\bbox}^n\downarrow j)$ has finite limits and the projection $Q$ creates them.
This means that the index category in the colimit formula for $j_! X$ is cofiltered,
which in turn means that, since finite limits commute with filtered colimits in $\set$,
that $j_!$ preserves finite limits.

Being a left adjoint, $j_!$ preserves colimits.

The union of two subobjects is given by the pushout of their pullback,
and $j_!$ preserves both those operations, so it preserves binary unions of subobjects.
By induction, it preserves all finite unions of subobjects.
\end{proof}

\begin{remark}
For an example of a $\bbGamma$-set which is not a cubeset (doesn't arise from $j^\ast X$ for some $X\in\cSet$)
consider the \emph{upper Host-Kra cubegroups}.
For a filtered group $G_\bullet$ the upper Host-Kra $\bbGamma$-group $\HK_*(G_\bullet)$ can be constructed
by defining the value on $\langle n\rangle $ to be the subgroup of $G^{\{0,1\}^n}$ generated by all elements of the form
\[
    g^F\colon\{0,1\}^n\to G,\quad \omega\mapsto \begin{cases}
        g,\quad & \text{if}\;\omega\notin F, \\
        e,\quad & \text{else},
    \end{cases}
\]
for $F\subsetneqq \{0,1\}^n$ being an inert face of codimension $i>0$ and $g\in G_i$.
Compare also with the full Host-Kra cubegroup~\ref{ex:host_kra_cubegroups},
see also \cite{Bihlmaier2026b}.
\end{remark}

A further nice application of adjoint functors induced by morphisms of sites is the comparison to plain sets, and its adjoints.

\begin{lemma}\label{lem:point-cset}
Consider the embedding of the terminal object $i\colon\ast\to\bbox$.
This induces a triple of adjoints
\begin{center}
\begin{tikzcd}
	\cSet && {\Set.}
	\arrow[""{name=0, anchor=center, inner sep=0}, "{{i^\ast}}", from=1-1, to=1-3]
	\arrow[draw=none, from=1-1, to=1-3]
	\arrow[""{name=1, anchor=center, inner sep=0}, "{{i_\ast}}", curve={height=-24pt}, from=1-3, to=1-1]
	\arrow[""{name=2, anchor=center, inner sep=0}, "{{i_!}}"', curve={height=24pt}, from=1-3, to=1-1]
	\arrow["\dashv"{anchor=center, rotate=-65}, draw=none, from=0, to=1]
	\arrow["\dashv"{anchor=center, rotate=-115}, draw=none, from=2, to=0]
\end{tikzcd}
\end{center}
Here, the pullback $i^\ast\colon\PSh(\bbox)\to \Set=\PSh(\ast)$ of $i\colon\ast\to\bbox$ is the evaluation at $0$.
Its left adjoint $i_!$ sends a set $S$ to the constant presheaf ${\bbox}^n\mapsto S$.
The right adjoint $i_\ast$ assigns to a set $S$ the cubeset given by
\[
    {\bbox}^n\mapsto\hom_{\set}(\{0,1\}^n,S) =  S^{2^n}.
\]
Both adjoints $i_!$ and $i_\ast$ are fully faithful.
\end{lemma}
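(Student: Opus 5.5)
The plan is to compute the three functors directly from the general formulas for restriction and Kan extension along a functor of sites, and then verify full faithfulness from the form of the units and counits.

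\textbf{The restriction $i^\ast$.} Since $i$ picks out the object ${\bbox}^0=\{0,1\}^0$, precomposition with $i$ gives $i^\ast X = X(0)$.

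\textbf{The left adjoint $i_!$.} By the pointwise formula,
\[
    i_!S(n)=\varinjlim_{({\bbox}^n\to i(\ast))} S .
\]
The indexing category is the comma category $({\bbox}^n\downarrow i)$. It has the single object ${\bbox}^n\to{\bbox}^0$, because ${\bbox}^0$ is terminal in $\bbox$. Its only morphism is the identity, since the only endomorphism of ${\bbox}^0$ is the identity. Hence $i_!S(n)=S$. Every structure map is induced by the unique maps to the terminal object, so every restriction is $\id_S$.

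As a check, we verify the adjunction directly. A natural transformation from the constant presheaf $S$ to $X$ is determined by its component at $0$. Indeed, for every $n$ there is a map ${\bbox}^0\to{\bbox}^n$ (any vertex inclusion), and the composite ${\bbox}^0\to{\bbox}^n\to{\bbox}^0$ is the identity. Naturality along ${\bbox}^n\to{\bbox}^0$ then forces the component at $n$ to be $X(\ast\leftarrow n)\circ\alpha_0$. Conversely, any map $S\to X(0)$ defines such a transformation. This gives $\hom(i_!S,X)\cong\hom(S,X(0))$.

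\textbf{The right adjoint $i_\ast$.} By Yoneda and the adjunction,
\[
    i_\ast S(n)=\hom_{\cSet}({\bbox}^n,i_\ast S)=\hom_{\set}(i^\ast{\bbox}^n,S)=\hom_{\set}(\hom_{\bbox}({\bbox}^0,{\bbox}^n),S).
\]
The maps ${\bbox}^0\to{\bbox}^n$ in $\bbox$ are exactly the $2^n$ points of $\{0,1\}^n$, so
\[
    i_\ast S(n)=\hom_{\set}(\{0,1\}^n,S)=S^{2^n}.
\]
Functoriality is precomposition with the underlying set map of a morphism of cubes.

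\textbf{Full faithfulness.} For an adjoint triple $i_!\dashv i^\ast\dashv i_\ast$, the left adjoint $i_!$ is fully faithful if and only if the unit $S\to i^\ast i_!S$ is an isomorphism. Likewise, $i_\ast$ is fully faithful if and only if the counit $i^\ast i_\ast S\to S$ is an isomorphism. Both are identities:
\begin{itemize}
    \item $i^\ast i_!S=i_!S(0)=S$;
    \item $i^\ast i_\ast S=\hom(\{0,1\}^0,S)=S$.
\end{itemize}
More abstractly, this is the general fact that Kan extensions along a fully faithful functor are fully faithful, and $i$ is fully faithful because $\End({\bbox}^0)$ is trivial.

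The only point needing care is the identification of the comma category for $i_!$ and of $\hom({\bbox}^0,{\bbox}^n)$ with the vertices. Both rest on ${\bbox}^0$ being terminal and on the description of morphisms in Lemma~\ref{lem:coordinate_description}, under which a map from $\{0,1\}^0$ is just a point of $\{0,1\}^n$.
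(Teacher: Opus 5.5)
Your proposal is correct and follows essentially the same route as the paper. Both identify $i_!$ and $i_\ast$ via the Kan extension (co)limit formulas and the universal property of evaluation at ${\bbox}^0$, and both obtain full faithfulness from the relevant unit or counit being an isomorphism. The only difference is cosmetic: you check the unit of $i_!\dashv i^\ast$ and the counit of $i^\ast\dashv i_\ast$ separately, whereas the paper checks only the counit and deduces full faithfulness of $i_!$ from that of $i_\ast$ via the general adjoint-triple fact.
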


\begin{proof}
Existence of the adjoints is clear from general nonsense.
The explicit description can be seen by writing down the universal properties:
if $X$ is a cubeset and $S$ a set, then a map $S\to X(0)$ gives rise to a unique morphism
$i_!S\to X$ which factors pointwise as $i_!S(n) = S \to X(0) \to X(n)$ via the unique map $n\to 0$.
In particular, we have that evaluation at $0$ induces a natural bijection
\[
    \hom_{\cSet}(i_!S,X) \to \hom_{\set}(S,X(0)).
\]
On the other hand, a map $X(0)\to S$ induces a unique map $X(n)\to X(0)^{2^n}\to S^{2^n}$, so
\[
    \hom_{\cSet}(X,S^{2^{(-)}}) = \hom_{\set}(X(0),S).
\]
Alternatively, by the (co)limit formula for left and right Kan extension it is clear that
\begin{align*}
    i_!S(n) &= \varinjlim_{n\to 0} S = S, \\
    i_\ast S(n) &= \varprojlim_{0\to n} S = \prod_{0\to n} S = S^{2^n}.
\end{align*}
The fully faithfulness of $i_\ast$ follows directly from the fact that the counit $i^\ast i_\ast S\to S$ is an isomorphism:
$i^\ast i_\ast S$ simply is $i_\ast S(\ast) = S$.
But now the fully faithfulness of $i_*$ by general nonsense also implies fully faithfulness of $i_!$.
\end{proof}

\begin{definition}
If $S$ is a set, we call $i_! S$ the \emph{trivial cubeset} and $i_* S$ the \emph{discrete cubeset}.
\end{definition}

\begin{remark}\label{rem:left-adjoint-i_!}
The functor $i_!$ has a further left adjoint:
since $i_!$ assigns to a set $S$ the constant diagram,
its left adjoint assigns to a presheaf $X\in\cSet$ the colimit of $X$ over ${\bbox}^\op$.
In particular, $i_!$ also preserves limits.
\end{remark}

\begin{remark}
We could also consider many more triples of adjunctions between other presheaf categories, e.g. simplicial sets, by extending the diagram of index-categories
\begin{center}
\begin{tikzcd}
 \inert & {\mathrm{Fin}_{*,\le 1}^{\op}} & \bbGamma & \bbox & {\int_{\bbGamma}*/\mathbb{F}_2^{\bullet}} \\
	&& \bbDelta
	\arrow[from=1-1, to=1-2]
	\arrow[from=1-2, to=1-3]
	\arrow["j", from=1-3, to=1-4]
	\arrow["{=}"{marking, allow upside down}, draw=none, from=1-4, to=1-5]
	\arrow["{{\mathrm{Cut}}}", from=2-3, to=1-3]
\end{tikzcd}
\end{center}
where all functors are bijective on objects, the horizontal arrows are faithful and none is full,
and $\mathrm{Un}\circ j = \id$.
\end{remark}

For the rest of this subsection we will give another description of $\cSet = \PSh(\bbox)$ in terms of its unstraightening over $\bbGamma$.
Since $\bbox$ is an unstraightening of $\bbGamma$, we expect to be able to translate $\cSet$ as $\bbGamma$-sets with a certain action.
Let us explain this in more detail.

\begin{construction}
Let $\mcC$ be a small category, $G\colon\mcC\to\Grp$ a functor and let $\mathrm{Un}\colon\mcD\to\mcC$ be its unstraightening.
Given a presheaf $F\in\PSh(\mcC)$, one obtains a bifunctor $\Hom(F,F)\colon\mcC\times\mcC^\op\to\set$ 
given by precomposing the Hom-functor $\hom\colon\set^\op\times\set\to\set$ with the functor
\[
    F^\op\times F\colon\mcC\times\mcC^\op\to\set^\op\times\set.
\]
Explicitly,
\[
    \hom(F,F)(X,Y) = \hom_\set(F(X),F(Y)),\qquad X,Y\in\mcC.
\]
Moreover, we can extend $G$ to be a functor $\mcC\times\mcC^\op\to\set$ which only depends on the first
variable and uses that every group has an underlying set.

Now construct the following category $\mcE_{G,\cC}(2)$:
\begin{itemize}
    \item its objects are tuples $(F,\alpha)$ where $F$ is a presheaf on $\mcC$
        and $\alpha$ is a dinatural transformation $\alpha\colon G\to\hom(F,F)$
        whose components $\alpha_X$ are monoid homomorphisms.
	\item A morphism $\eta\colon (F,\alpha)\to(F',\alpha')$ is a natural transformation $\eta\colon F\to F'$
        such that for all $X\in\mcC$, the diagram
\begin{center}
\begin{tikzcd}
	{G(X,X)} & {\hom(F,F)(X,X)} \\
	{\hom(F',F')(X,X)} & {\hom(F(X),F'(X))}
	\arrow["{\alpha_X}", from=1-1, to=1-2]
	\arrow["{\alpha'_X}"', from=1-1, to=2-1]
	\arrow["{\eta_{X\ast}}", from=1-2, to=2-2]
	\arrow["{\eta_X^\ast}"', from=2-1, to=2-2]
\end{tikzcd}
\end{center}
is commutative.
\end{itemize}
Unwinding the definitions, the datum of the objects of $\mcE_{G,\cC}(2)$ is the same
as giving the datum of a presheaf $F\colon\mcC^\op\to\set$ together with a
$G(X)$-action $\alpha_X$ on $F(X)$ for every $X\in\mcC$ such that for each map $f\colon X\to Y$ in $\mcC$ and every $g\in G(X)$,
the diagram
\begin{center}
\begin{tikzcd}
	{F(Y)} & {F(X)} \\
	{F(Y)} & {F(X)}
	\arrow["{F(f)}", from=1-1, to=1-2]
	\arrow["{\alpha_Y(G(f)g)}"', from=1-1, to=2-1]
	\arrow["{\alpha_X(g)}", from=1-2, to=2-2]
	\arrow["{F(f)}"', from=2-1, to=2-2]
\end{tikzcd}
\end{center}
is commutative.
\end{construction}

But this information suffices to reconstruct a presheaf on $\mcD$, so we obtain the following Proposition.

\begin{proposition}\label{prop:unstrait-presehaf}
The categories $\PSh(\mcD)$ and $\mcE_{G,\cC}(2)$ are equivalent.
\end{proposition}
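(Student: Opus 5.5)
We construct explicit functors $\Phi\colon\PSh(\mcD)\to\mcE_{G,\cC}(2)$ and $\Psi\colon\mcE_{G,\cC}(2)\to\PSh(\mcD)$ and check that they are mutually inverse up to natural isomorphism. As in the description of $\int_{\bbGamma}\ast/\F_2^\bullet$, the objects of $\mcD=\mathrm{Un}(\ast/G)$ are those of $\mcC$. A morphism $X\to Y$ in $\mcD$ is a pair $(f,b)$ with $f\colon X\to Y$ in $\mcC$ and $b\in G(Y)$, and composition is $(g,c)\circ(f,b)=(gf,c\cdot G(g)(b))$. Every morphism factors uniquely as
\[
    (f,b)=(\id_Y,b)\circ(f,e),
\]
a \enquote{flip} after a morphism in the image of the section $s\colon\mcC\to\mcD$, $f\mapsto(f,e)$. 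This section is functorial. The other key identity, mirroring Lemma~\ref{lem:linearization-auto-bblox}, is
\[
    (f,e)\circ(\id_X,g)=(f,G(f)(g))=(\id_Y,G(f)g)\circ(f,e).
\]

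For $\Phi$: given $P\in\PSh(\mcD)$, set $F=P\circ s^\op\in\PSh(\mcC)$. Let $\alpha_X(g)=P(\id_X,g)$ act on $F(X)=P(X)$, using the inverse $P(\id_X,g^{-1})$ if necessary so that we get a left action; $P$ is contravariant, so this choice has to be fixed. Since $g\mapsto(\id_X,g)$ is a group homomorphism $G(X)\to\Aut_\mcD(X)$, $\alpha_X$ is a monoid homomorphism into $\hom(F(X),F(X))$. Applying $P$ to the commutation identity above gives exactly the square in the unwound description of $\mcE_{G,\cC}(2)$, that is, the dinaturality of $\alpha$. A natural transformation $P\to P'$ restricts along $s$, and its naturality at the morphisms $(\id_X,g)$ gives the equivariance square, so $\Phi$ is a functor.

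For $\Psi$: given $(F,\alpha)$, define $P(X)=F(X)$ and set
\[
    P(f,b)=F(f)\circ\alpha_Y(b)\colon F(Y)\to F(X),
\]
again inverting $b$ if the variance requires it. The main computation is functoriality,
\[
    P\bigl((g,c)\circ(f,b)\bigr)=P(f,b)\circ P(g,c).
\]
The left side is $F(gf)\alpha_Z(c\cdot G(g)b)$. The right side is $F(f)\alpha_Y(b)F(g)\alpha_Z(c)$. Moving $\alpha_Y(b)$ past $F(g)$ by the compatibility square gives $F(f)F(g)\alpha_Z(G(g)b)\alpha_Z(c)$, and this equals the left side because $\alpha_Z$ is a homomorphism and $F$ is a functor. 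Identities are preserved because $\alpha_X(e)=\id$. On morphisms, an equivariant natural transformation $\eta$ commutes with both factors $F(f)$ and $\alpha_Y(b)$, hence is natural for $P$.

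Finally, $\Phi\Psi=\id$ on the nose, because $\Psi(F,\alpha)$ restricted along $s$ is $F$ and its flips are $\alpha$. Also $\Psi\Phi\cong\id$ (in fact equality), because of the unique factorization $(f,b)=(\id,b)\circ(f,e)$: a presheaf on $\mcD$ is determined by its values on these two kinds of morphisms. The only real obstacle is bookkeeping. The variance and left/right conventions for the action have to be chosen so that the order in the group law of $\mcD$ matches the required composition order. Because the paper's cubical case has abelian groups $\F_2^n$, any mismatch there disappears, but in general one must fix the convention (for instance, act by inverses) once at the start, so that the commutation identity used in the functoriality computation holds in the right order.
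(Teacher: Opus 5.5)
Your proposal is correct and follows the paper's proof. Both restrict along the section $f\mapsto(f,e)$, let the flips $(\mathrm{id}_X,g)$ furnish the actions, get the compatibility square from the two factorizations of $(f,G(f)g)$, and rebuild the presheaf on $\mcD$ through the unique factorization $(f,b)=(\mathrm{id}_Y,b)\circ(f,e)$. The convention you leave open is exactly the one the paper fixes, $\alpha_X(g)=P(\mathrm{id}_X,g^{-1})$ and $P(f,b)=F(f)\circ\alpha_Y(b^{-1})$; with it your functoriality computation closes for non-abelian $G$, since $\alpha_Z\bigl((c\cdot G(g)b)^{-1}\bigr)=\alpha_Z(G(g)b^{-1})\,\alpha_Z(c^{-1})$, whereas without inverses the factors come out in the wrong order.
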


\begin{proof}
Restricting a presheaf $F$ on $\mcD$ to $\mcC$
yields a presheaf $F$ on $\mcC$ together with a $G(X)$-action $\alpha_X$ on $F(X)$
(which is just a monoid homomorphism $G(X)=G(X,X)\to \hom(F,F)(X,X) = \End F(X)$) by defining $\alpha_X(g)=F((\id_X, g^{-1}))$.
This action has the following property:
for every map $f\colon X\to Y$ in $\mcC$ and every $g\in G(X)$, we have that
$F(f)\circ\alpha_Y(G(f)g) = \alpha_X(g)\circ F(f)$.
This is due to the fact that in $\mcD$ we have that a morphism $(f,G(f)g^{-1})\colon X\to Y$ in $\mcD$
can be factored as $(f,e)\circ (\id_X,g^{-1})$ and as $(\id_Y,G(f)g^{-1})\circ (f,e)$,
and applying $F$ to both factorizations yields the desired equality.

In the other direction one can easily extend an object $(F,\alpha)$ in $\mcE_{G,\cC}(2)$ to a presheaf on $\mcD$
which is given on objects by $F(X)$ and a map $(f,g) = (\id_Y,g)\circ (f,e)$ in $\mcD$
is assigned to $F(f)\circ \alpha_Y(g^{-1})\colon F(Y)\to F(X)$.

Then it is routine to check that these constructions induce an equivalence of categories.
\end{proof}

This allows one to see cubesets as $\bbGamma$-sets with a compatible family of $\mathbb{F}_2^n$-actions on the set of $n$-cubes.
We can also encode the functoriality differently by not requiring a condition on the action to be compatible with some given presheaf transition maps
but rather by first giving the actions and then requiring the transition maps to be action preserving homomorphisms.

\begin{remark}
Another description of the presheaf category on $\mcD$ is the following:
define a category whose objects are tuples
$(F_X)_{X\in\mcC}$ where $F_X$ is a presheaf on $*/G(X)$ (i.e., a $G$-set/$G$-module/a set with $G$-action) together with a functorial assignment of morphisms
$\phi\colon G(f)^\ast F_Y\to F_X$ for every map $f\colon X\to Y$ in $\mcC$.
And a morphism between two such objects is a family of natural transformations,
compatible with the maps $\phi$.
Then this category also is equivalent to $\PSh(\mcD)$.
\end{remark}

\subsection{Concrete Cubesets}\label{ssec:concrete-cubesets}

The notion of concrete sheaf \cite{Dubuc1979} specializes to the topos of cubesets,
giving us the notion of concrete cubeset.
This recovers the classical notion of nilspace, since all nilspaces considered in the literature
are concrete cubesets, cf. \cite[Definition 1.2.1]{Candela2017} and \cite[Definition 3.2]{Gutman2020}.

\begin{definition}\label{def:concrete-cubeset}
A cubeset $X$ is called \emph{concrete} if the canonical map
\[
    \hom({\bbox}^n,X) = X(n) \to \prod_{0\to n} X(0) = (X(0))^{2^n}
\]
is injective.
We denote the full subcategory spanned by concrete cubesets by $\ccSet$.
\end{definition}

\begin{example}
If $S$ is a set, the trivial cubeset $i_! S$ and the discrete cubeset $i_* S$ are concrete.
Moreover, the Host-Kra cubegroups from Example~\ref{ex:host_kra_cubegroups} are concrete.
\end{example}

\begin{remark}
In view of the triple of adjoint functors from Lemma~\ref{lem:point-cset},
$X$ is concrete precisely if the unit $X\to i_\ast i^\ast X$ is a monomorphism
(recall that $i_{*}$ assigns to a set the discrete cubeset $n\mapsto X(0)^{2^n}$, and $i^*$ is simply evaluating at $0$).
Since being a monomorphism can be checked pointwise on sections, this means that the map
\[
    \hom({\bbox}^n,X)\to\hom({\bbox}^n,i_\ast i^\ast X) = \hom(i_! i^\ast {\bbox}^n,X),
\]
given by pullback along the counit $i_!i^\ast{\bbox}^n\to{\bbox}^n$ is a monomorphism.
Taking
\[
    M = \{i_!i^\ast{\bbox}^n\to{\bbox}^n : n\in\N_0\},
\]
we see that the concrete cubesets are precisely the $M$-separated presheaves on ${\bbox}^n$.
The $M$-local presheaves (=$M$-sheaves) are precisely the ones where $X \simeq i_\ast i^\ast X$,
And by fully faithfulness of $i_\ast$ this implies that the category of $M$-local presheaves is equivalent to $\Set$ under the inclusion $i_\ast\colon\set\to\cSet$.

Put differently, the geometric morphism $i\colon\set\to\cSet$ induces a Lawvere-Tierney topology on $\cSet$,
which corresponds uniquely to a Grothendieck topology on $\bbox$,
whose covers are given by the maps $i_!i^\ast{\bbox}^n\to{\bbox}^n$,
such that the sheaves for this topology are precisely $\set$, and the separated presheaves are the concrete cubesets.
See also \cite[Chapter V]{MacLane1994} and \cite[Chapter A4.3]{Johnstone2002}.
\end{remark}

\begin{lemma}\label{lem:concrete-morphism}
A map $f\colon X\to Y$ of concrete cubesets is fully determined by its value $f_0$ on points.
\end{lemma}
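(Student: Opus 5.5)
The plan is to use naturality of the vertex map. Let $f,g\colon X\to Y$ be maps of concrete cubesets with $f_0 = g_0$ as maps $X(0)\to Y(0)$. For each $n\in\N_0$, we want to show $f_n = g_n$ as maps $X(n)\to Y(n)$.

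For every morphism $v\colon{\bbox}^0\to{\bbox}^n$ in $\bbox$, i.e., every vertex $v\in\{0,1\}^n$, naturality of $f$ gives
\[
    Y(v)\circ f_n = f_0\circ X(v)\colon X(n)\to Y(0),
\]
and the same holds for $g$. Taking the product over all $2^n$ vertices, this says the square
\begin{center}
\begin{tikzcd}
	{X(n)} & {X(0)^{2^n}} \\
	{Y(n)} & {Y(0)^{2^n}}
	\arrow[from=1-1, to=1-2]
	\arrow["{f_n}"', from=1-1, to=2-1]
	\arrow["{f_0^{2^n}}", from=1-2, to=2-2]
	\arrow[from=2-1, to=2-2]
\end{tikzcd}
\end{center}
commutes, where the horizontal maps are the canonical maps of Definition~\ref{def:concrete-cubeset}. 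Equivalently, this is naturality of the unit $X\to i_\ast i^\ast X$ applied to $f$, using that $i_\ast i^\ast f$ is computed componentwise as $f_0^{2^n}$ by Lemma~\ref{lem:point-cset}.

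Now fix $c\in X(n)$. The image of $f_n(c)$ under the bottom map equals $f_0^{2^n}$ of the vertex tuple of $c$. By the same square for $g$, and since $f_0=g_0$, this coincides with the image of $g_n(c)$. Because $Y$ is concrete, the bottom map $Y(n)\to Y(0)^{2^n}$ is injective, so $f_n(c)=g_n(c)$.

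Only concreteness of $Y$ is needed, not of $X$. There is no real obstacle; the only care required is identifying the canonical map with the product of the restrictions along vertex inclusions, which is exactly the definition.
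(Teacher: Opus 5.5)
Your proof is correct and is essentially the paper's argument: the paper uses the same naturality square relating $f_n$ to $\prod f_0$ via the canonical vertex maps, and concludes from injectivity of $Y(n)\to\prod_{0\to n}Y(0)$. Your remark that only concreteness of $Y$ is needed also matches the paper's proof, which likewise uses only that this map is monic.
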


\begin{proof}
This follows from the commutativity of the diagram
\begin{center}
\begin{tikzcd}
	{X(n)} & {Y(n)} \\
	{\prod\limits_{0\to n}X(0)} & {\prod\limits_{0\to n}Y(0)}
	\arrow["{f_n}", from=1-1, to=1-2]
	\arrow[hook, from=1-1, to=2-1]
	\arrow[hook, from=1-2, to=2-2]
	\arrow["{\prod f_0}"', from=2-1, to=2-2]
\end{tikzcd}
\end{center}
and since the right vertical map is monic.
\end{proof}

\begin{proposition}\label{prop:concrete-left-adjoint}
The inclusion $\ccSet\to\cSet$ preserves limits and has a left adjoint $(-)^\mathrm{conc}\colon\cSet\to\ccSet$.
This left adjoint, the \emph{concretification},%
\footnote{also called concretization}
takes a cubeset $X$ and assigns to it, at stage $n$, the image of the map $X(n)\to\prod_{0\to n}X(0)$.
\end{proposition}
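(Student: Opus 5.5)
First I will show that the inclusion preserves limits. Limits in $\cSet$ are computed pointwise, so for a diagram $(X_i)$ of concrete cubesets the canonical map for the limit is
\[
    (\varprojlim X_i)(n)=\varprojlim X_i(n)\to\prod_{0\to n}\varprojlim X_i(0)=\varprojlim\prod_{0\to n}X_i(0).
\]
This is a limit of injections, and limits of monomorphisms in $\Set$ are monomorphisms, so the map is injective. Hence $\ccSet$ is closed under limits in $\cSet$, and the inclusion preserves them.

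Next I will construct the left adjoint directly, rather than appealing to the adjoint functor theorem; this also gives the explicit formula. For a cubeset $X$, I define $X^\mathrm{conc}(n)$ as the image of $X(n)\to\prod_{0\to n}X(0)$. Equivalently, $X^\mathrm{conc}$ is the image of the unit $X\to i_\ast i^\ast X$ from Lemma~\ref{lem:point-cset}. Images in a presheaf topos are computed pointwise, so $X^\mathrm{conc}$ is a subcubeset of $i_\ast i^\ast X$, and $X\to X^\mathrm{conc}$ is an epimorphism. I then check concreteness. At level $0$ the map $X(0)\to X(0)$ is the identity, so $X^\mathrm{conc}(0)=X(0)$. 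The structure map $X^\mathrm{conc}(n)\to X^\mathrm{conc}(0)^{2^n}=X(0)^{2^n}$ is therefore the inclusion of a subset, which is injective. Alternatively, I can note that a subobject of the concrete cubeset $i_\ast S$ is concrete.

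For the universal property, let $Y$ be concrete and $f\colon X\to Y$ a map. I need a unique factorization through $q\colon X\to X^\mathrm{conc}$. Uniqueness holds because $q$ is an epimorphism; alternatively it follows from Lemma~\ref{lem:concrete-morphism}, since $q_0$ is the identity. For existence I use naturality of the unit $\eta$ for $i_\ast i^\ast$: the square with $f$, $i_\ast i^\ast f$, $\eta_X$ and $\eta_Y$ commutes. Its image under $\eta_X$ is $X^\mathrm{conc}$, so $i_\ast i^\ast f$ maps $X^\mathrm{conc}$ into the image of $\eta_Y\circ f$, which lies inside the image of $\eta_Y$. Since $\eta_Y$ is a monomorphism, its image is isomorphic to $Y$, and composing with the inverse gives $X^\mathrm{conc}\to Y$. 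Concretely, at level $n$ an element of $X^\mathrm{conc}(n)$ is the vertex tuple of some $c\in X(n)$, and I send it to $f(c)$. This is well defined because concreteness of $Y$ says that $f(c)$ is determined by its vertices $f_0$ applied to those of $c$. Naturality in $X$ follows from uniqueness.

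There is no real obstacle. The only point needing care is that $X^\mathrm{conc}$ is actually a subpresheaf, meaning the images are closed under the restriction maps of $\bbox$. This holds because the map $X\to i_\ast i^\ast X$ is natural, and the image of a natural transformation is a subpresheaf.
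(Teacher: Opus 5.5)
Your proof is correct and follows essentially the same route as the paper. Concreteness is closed under pointwise limits, the image of $X(n)\to\prod_{0\to n}X(0)$ serves as the reflection, and the factorization through a concrete $Y$ comes from functoriality of image factorizations, which you phrase via naturality of the unit $X\to i_\ast i^\ast X$ together with the fact that $\eta_Y$ is monic; your write-up is slightly more explicit than the paper's, e.g.\ in checking that $X^{\mathrm{conc}}(0)=X(0)$ so that the result is concrete.
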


\begin{proof}
Being concrete is stable under limits and so $\ccSet\sub\cSet$ preserves them.
Since the image is functorial (in any elementary topos, at least),
the described construction indeed makes a concrete cubeset out of a (general) cubeset.
Given a map $f\colon X\to Y$ where $Y$ is concrete, by functoriality of the image
there exists a unique map $g_n\colon\im_n\to Y_n$ making the diagram
\begin{center}
\begin{tikzcd}
	{X(n)} & {\im_n} & {\prod\limits_{0\to n}X(0)} \\
	{Y(n)} & {Y(n)} & {\prod\limits_{0\to n}Y(0)}
	\arrow[two heads, from=1-1, to=1-2]
	\arrow["{{f_n}}", from=1-1, to=2-1]
	\arrow[hook, from=1-2, to=1-3]
	\arrow["{{g_n}}", dashed, from=1-2, to=2-2]
	\arrow["{{\prod f_0}}", from=1-3, to=2-3]
	\arrow["{=}"', from=2-1, to=2-2]
	\arrow[hook, from=2-2, to=2-3]
\end{tikzcd}
\end{center}
commutative.
This shows that taking the image is left adjoint to the inclusion $\ccSet\sub\cSet$.
\end{proof}

Differently than with limits, not every colimit of concrete cubesets is concrete.

\begin{example}\label{ex:concrete-colimit-not}
Consider the 2-corner ${\bbcor}^2\sub{\bbox}^2$ which is defined as the pushout of the inert map $\ast\to{\bbox}^1$ along itself.
Then ${\bbcor}^2$, ${\bbox}^2$ and $\ast$ are concrete cubesets, but the pushout
\begin{center}
\begin{tikzcd}
	{{\bbcor}^2} & \ast \\
	{{\bbox}^2} & {\bbox}^2/{\bbcor}^2
	\arrow[from=1-1, to=1-2]
	\arrow[from=1-1, to=2-1]
	\arrow[from=1-2, to=2-2]
	\arrow[from=2-1, to=2-2]
	\arrow["\lrcorner"{anchor=center, pos=0.125, rotate=180}, draw=none, from=2-2, to=1-1]
\end{tikzcd}
\end{center}
is not.
First of all, ${\bbox}^2/{\bbcor}^2(0) = \{0,1\}$ where $0$ corresponds to the corner ${\bbcor}^2$ and $1$ to the other point in ${\bbox}^2$.
Second, note that ${\bbox}^2/{\bbcor}^2(1)$ is the set of all maps ${\bbox}^1\to{\bbox}^2$ where two maps get identified if and only if they both factor over the corner ${\bbcor}^2\sub{\bbox}^2$.
This follows from the fact that pushouts are computed pointwise in $\cSet$ and then one checks the universal property directly.
This means that ${\bbox}^2/{\bbcor}^2(1)$ has more than four elements and so cannot be concrete since ${\bbox}^2/{\bbcor}^2(0)^2 = \{0,1\}^2$.
More explicitly, in ${\bbox}^2/{\bbcor}^2$ there exists a nontrivial 1-cube with vertices $0$ and $0$, induced by the off-diagonal.
\end{example}

However, as usual, concrete cubesets are stable under filtered colimits and coproducts.

\begin{proposition}\label{prop:concrete-stability}
Coproducts and filtered colimits of concrete cubesets are concrete.
Moreover, subobjects of concrete cubesets are concrete.
\end{proposition}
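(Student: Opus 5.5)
The plan is to work pointwise throughout, since colimits, monomorphisms and the map $X(n)\to\prod_{0\to n}X(0)$ are all computed levelwise in $\cSet=\PSh(\bbox)$. For a cubeset $X$, write $c_X^n\colon X(n)\to X(0)^{2^n}$ for the canonical comparison map. The map $c_X^n$ is natural in $X$, because it is the $n$-th component of the unit $X\to i_\ast i^\ast X$ from Lemma~\ref{lem:point-cset}. Concreteness of $X$ means exactly that each $c_X^n$ is injective. So in each case the task is to show that $c^n$ is injective for the colimit or subobject.

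The subobject case is the easiest, and I would do it first. Let $Y\hookrightarrow X$ be a monomorphism with $X$ concrete. Then $Y(n)\to X(n)$ and $Y(0)\to X(0)$ are injective. Naturality gives the commutative square $c_X^n\circ (Y(n)\to X(n)) = (Y(0)^{2^n}\to X(0)^{2^n})\circ c_Y^n$. The left-hand composite is injective, so $c_Y^n$ is injective.

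For filtered colimits, let $X=\varinjlim_i X_i$ be filtered with each $X_i$ concrete. Then $X(n)=\varinjlim_i X_i(n)$, and $X(0)^{2^n}=\varinjlim_i X_i(0)^{2^n}$, because filtered colimits commute with finite products in $\set$. Under these identifications $c_X^n=\varinjlim_i c_{X_i}^n$. Filtered colimits of injections in $\set$ are injective, so $c_X^n$ is injective. If one prefers to avoid the identification, there is a direct element chase: two elements of $X(n)$ with equal vertices are represented in some common $X_i(n)$. Their $2^n$ vertices become equal at some later stage $X_j(0)$; this uses filteredness and finiteness of $2^n$. Concreteness of $X_j$ then identifies the two elements in $X_j(n)$, hence in $X(n)$.

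For coproducts, let $X=\coprod_k X_k$, so that $X(n)=\coprod_k X_k(n)$. Take $a\in X_k(n)$ and $b\in X_l(n)$ with the same vertices. The vertices of $a$ lie in $X_k(0)$ and those of $b$ lie in $X_l(0)$, because the restriction maps preserve the summand. Since $2^n\geq 1$, equality of vertices forces $k=l$. Concreteness of $X_k$ then gives $a=b$.

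The only mild subtlety is this coproduct step. The product $X(0)^{2^n}$ does not split as a coproduct, so one really uses that every cube has at least one vertex and that the vertex maps respect the decomposition into summands. Everything else is formal.
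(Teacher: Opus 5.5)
Your proof is correct and follows essentially the same route as the paper: reduce to sections, use that filtered colimits commute with finite products in sets, and use the naturality square for a subobject. Your coproduct argument is the paper's factorization through $\coprod_i\prod_{0\to n}X_i(0)$ written out elementwise. You also make explicit why the comparison map into $\prod_{0\to n}\coprod_i X_i(0)$ is injective, namely that the product runs over the nonempty set of $2^n$ vertices; the paper only asserts this injectivity.
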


\begin{proof}
The claim about filtered colimits follows from the fact that filtered colimits commute with finite limits in $\set$.
For the claim about coproducts note that the map $\coprod_i X_i(n) \to \prod_{0\to n}\coprod_i X_i(0)$ factors as
\[
    \coprod_i X_i(n) \to \coprod_i\prod_{0\to n} X_i(0) \to \prod_{0\to n}\coprod_i X_i(0)
\]
where the second map is the natural map arising from swapping the colimit with the limit,
which in our case is injective.
The first map is just a coproduct of injections and hence also injective.

For subobjects consider an inclusion $X\to Y$ with $Y$ concrete.
Now, since monomorphisms are injections sectionwise,
by naturality of the map we obtain the commutative square
\begin{center}
\begin{tikzcd}[ampersand replacement=\&]
	{Y(n)} \& {\prod_{0\to n}Y(0)} \\
	{X(n)} \& {\prod_{0\to n}X(0)}
	\arrow[hook, from=1-1, to=1-2]
	\arrow[hook, from=2-1, to=1-1]
	\arrow[from=2-1, to=2-2]
	\arrow[from=2-2, to=1-2]
\end{tikzcd}
\end{center}
where the left vertical and the upper horizontal maps are monomorphisms.
Thus, $X(n)\to \prod_{0\to n}X(0)$ is monic as well.
\end{proof}

Moreover, concrete cubesets are stable under taking the internal Hom,
essentially due to the fact that ${\bbox}^n$ is concrete and a morphism between concrete cubesets is determined on points.

\begin{proposition}\label{prop:concrete-ihom-prod}
Let $X$ and $Y$ be cubesets, where $Y$ is concrete.
Then $\ihom(X,Y)$ is concrete.
Furthermore, concretification preserves finite products.
\end{proposition}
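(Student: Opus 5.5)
For the first claim, I will use the description $\ihom(X,Y)(n)=\hom(X\times{\bbox}^n,Y)$. Concreteness of $\ihom(X,Y)$ means the following: a map $s\colon X\times{\bbox}^n\to Y$ is determined by its $2^n$ restrictions $s_\omega\colon X\times{\bbox}^0\to Y$ along the vertex inclusions $\omega\colon{\bbox}^0\to{\bbox}^n$. So let $s,t\colon X\times{\bbox}^n\to Y$ agree after restriction to every $X\times\{\omega\}$. Since $Y$ is concrete, it suffices to show that $s$ and $t$ agree on every $k$-cube $(a,f)\in X(k)\times\hom({\bbox}^k,{\bbox}^n)$ after composing with the injection $Y(k)\to Y(0)^{2^k}$.

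That is, it suffices to check agreement on each vertex $v\colon{\bbox}^0\to{\bbox}^k$. By naturality, the $v$-vertex of $s(a,f)$ is $s(v^\ast a,f(v))$. The pair $(v^\ast a,f(v))$ is a point of $X\times{\bbox}^n$ lying in $X\times\{f(v)\}$, so $s$ and $t$ agree there by assumption. This gives the first claim.

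For finite products, I will compare $(X\times Y)^{\mathrm{conc}}$ with $X^{\mathrm{conc}}\times Y^{\mathrm{conc}}$ directly, using the explicit image formula of Proposition~\ref{prop:concrete-left-adjoint}. Limits are pointwise, and $(X\times Y)(0)=X(0)\times Y(0)$. Hence the map $X(n)\times Y(n)\to (X(0)\times Y(0))^{2^n}\cong X(0)^{2^n}\times Y(0)^{2^n}$ is the product of the two maps $X(n)\to X(0)^{2^n}$ and $Y(n)\to Y(0)^{2^n}$. The image of a product of maps of sets is the product of the images. This identifies $(X\times Y)^{\mathrm{conc}}(n)$ with $X^{\mathrm{conc}}(n)\times Y^{\mathrm{conc}}(n)$, naturally in $n$. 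I then need to check that this identification is the canonical comparison map $(X\times Y)^{\mathrm{conc}}\to X^{\mathrm{conc}}\times Y^{\mathrm{conc}}$ induced by the projections. This holds because both maps are determined on points (Lemma~\ref{lem:concrete-morphism}), and on points both are the identity of $X(0)\times Y(0)$. The empty product is $\ast$, which is concrete, so it is preserved.

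As an alternative route for products, I could use the first claim with the standard argument that a reflector preserves products once the reflective subcategory is an exponential ideal. I will stick to the direct image computation, since it avoids that detour. The only step needing care is the naturality bookkeeping in the first part: the vertex $v$ of $s(a,f)$ is $s(v^\ast a, f\circ v)$, which relies on $\hom$ out of ${\bbox}^0$ being evaluation at $0$. Everything else is pointwise set theory.
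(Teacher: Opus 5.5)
Your proof is correct, but in both parts it takes a different route from the paper.

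For the first claim, the paper first treats concrete $X$. There $X\times{\bbox}^n$ is concrete, and maps out of it are identified with certain maps $X(0)\times\{0,1\}^n\to Y(0)$. The paper then reduces general $X$ to $X^{\mathrm{conc}}$: it proves $(X\times{\bbox}^n)^{\mathrm{conc}}=X^{\mathrm{conc}}\times{\bbox}^n$ via the adjunction and Yoneda, which gives $\ihom(X,Y)\cong\ihom(X^{\mathrm{conc}},Y)$ for concrete $Y$. You instead observe two things. First, a map into a concrete $Y$ is determined by its level-$0$ component whatever the source, since the proof of Lemma~\ref{lem:concrete-morphism} only uses concreteness of the target. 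Second, every point of $X\times{\bbox}^n$ is the image of a point of $X$ under some $\mathrm{id}_X\times\omega$. This handles arbitrary $X$ in one step. It also replaces the paper's somewhat informal ``same kind of condition'' comparison by an explicit naturality check at vertices.

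For products, the paper deduces preservation from the first claim by the exponential-ideal argument: $\hom((X\times Y)^{\mathrm{conc}},Z)=\hom(X,\ihom(Y,Z))=\hom(X^{\mathrm{conc}}\times Y^{\mathrm{conc}},Z)$ for concrete $Z$. You compute directly from the image formula, using two facts: images of maps of sets commute with products, and $i_\ast$ preserves products, so the ambient discrete cubesets split compatibly in $n$.

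Your route is more elementary and self-contained. The paper's route is formal and never needs an explicit description of the reflector. That is why it can be reused verbatim for the truncation $\tau_n$ in Proposition~\ref{prop:n-trunc-ihom-prod}, where no such image formula is available.
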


\begin{proof}
First we treat the case where $X$ and $Y$ are concrete, and hence so is $X\times{\bbox}^n$.
Therefore, $\hom(X\times{\bbox}^n,Y)$ consists of all maps $f_0\colon X(0)\times\{0,1\}^n\to Y(0)$ that satisfy some extra condition (see Lemma~\ref{lem:concrete-morphism}).
But $\prod_{0\to n} \hom(X,Y)$ consists of $2^n$ maps $f_0\colon X(0)\to Y(0)$ that satisfy the same kind of condition.
Therefore, the canonical map
\[
    \ihom(X,Y)(n) = \hom(X\times{\bbox}^n,Y) \to \prod_{0\to n}\hom(X,Y) = \prod_{0\to n}\ihom(X,Y)(0)
\]
is injective.
Now let $X$ be an arbitrary cubeset, and take $Z$ to be concrete.
Then we have that
\[
    \hom((X\times{\bbox}^n)^\mathrm{conc},Z) = \hom(X\times{\bbox}^n,Z) = \hom(X,\ihom({\bbox}^n,Z)).
\]
By the previous case, $\ihom({\bbox}^n,Z)$ is concrete.
Therefore, the right hand side is equal to
\[
    \hom(X,\ihom({\bbox}^n,Z)) = \hom(X^\mathrm{conc},\ihom({\bbox}^n,Z)) = \hom(X^\mathrm{conc}\times{\bbox}^n,Z) = \hom(X^\mathrm{conc}\times({\bbox}^n)^\mathrm{conc},Z)
\]
where the last equality follows from the fact that ${\bbox}^n$ is concrete.
By the Yoneda lemma we obtain that $(X\times{\bbox}^n)^\mathrm{conc} = X^\mathrm{conc}\times({\bbox}^n)^\mathrm{conc}$.
This assertion now implies that the concretification-adjunction also holds internally, i.e.,
for general $X$ and concrete $Y$ we have that
\begin{align*}
    \ihom(X,Y)(n) &= \hom(X\times{\bbox}^n,Y) \\
    &= \hom((X\times{\bbox}^n)^\mathrm{conc},Y) \\
    &= \hom(X^\mathrm{conc}\times({\bbox}^n)^\mathrm{conc},Y) \\
    &= \hom(X^\mathrm{conc}\times{\bbox}^n,Y) \\
    &= \ihom(X^\mathrm{conc},Y)(n),
\end{align*}
natural in $n$.
Thus the claim on $\ihom(X,Y)$ follows for general $X$ and concrete $Y$.
Lastly, we can deduce that $(X\times Y)^\mathrm{conc} = X^\mathrm{conc}\times Y^\mathrm{conc}$ for all cubesets $X,Y$:
If $Z$ is concrete, we have that
\begin{align*}
    \hom((X\times Y)^\mathrm{conc},Z) &= \hom(X\times Y,Z) \\
    &= \hom(X,\ihom(Y,Z)) \\
    &= \hom(X^\mathrm{conc},\ihom(Y^\mathrm{conc},Z)) \\
    &= \hom(X^\mathrm{conc}\times Y^\mathrm{conc},Z).
\end{align*}
Again, the Yoneda lemma implies that $(X\times Y)^\mathrm{conc} = X^\mathrm{conc}\times Y^\mathrm{conc}$.
Lastly, the terminal object $\ast$ is concrete, so $\ast^\mathrm{conc}=\ast$.
\end{proof}

\begin{remark}
Note that the inclusion $\ccSet\sub\cSet$ is not a geometric morphism
since concretification doesn't preserve all finite limits.

For example, consider a non-surjective map $X\to Y$  between cubesets $X$ and $Y$ inducing a bijection on underlying sets $X(0)\simeq Y(0)$, where $Y$ is concrete
(e.g., consider the counit $i_! i^* X\to X$ for some non-constant concrete $X$).
Now build the quotient $Y/X$ (the cofiber) so that the diagram
\begin{center}
\begin{tikzcd}[ampersand replacement=\&]
	X \& Y \\
	{*} \& {Y/X}
	\arrow[from=1-1, to=1-2]
	\arrow[from=1-1, to=2-1]
	\arrow[from=1-2, to=2-2]
	\arrow[from=2-1, to=2-2]
\end{tikzcd}
\end{center}
is a pushout square.
Consider the kernel pair $K$ of the map $Y\to Y/X$.
Then $K$ is a subobject of the concrete cubeset $Y\times Y$ and hence concrete.
Since $X\ne Y$, we have $Y/X\ne \ast$ and $Y\to Y/X$ is a surjection.
Thus, $K$ is a proper subobject of $Y\times Y$.
On the other hand, the concretification of $Y/X$ is the point $\ast$ since $Y/X(\ast)=\ast$.
So we conclude
\[
    (Y\times_{Y/X}Y)^\mathrm{conc} = K^\mathrm{conc} = K \neq Y\times Y = Y^\mathrm{conc}\times_{(Y/X)^\mathrm{conc}}Y^\mathrm{conc}.
\]
\end{remark}

\section{Fibrant Cubesets and Their Weak Structure Theory}
This section deals with the weak structure theory of cubesets.
First, we discuss the notion of a fibration. Then, we consider truncated cubesets and their homotopy groups.
Using this, we obtain the structure tower of a concrete fibrant cubeset, which is studied in the last subsection.

\subsection{Fibrations}\label{ssec:fibrations}

In this section we work in the unrestricted (classical) category of cubes $\bbox$
with all morphisms (projections, permutations and flips).

Recall that given $n\in\N$ there are $n$ many canonical inert maps $\epsilon^{n-1}_i\colon\langle n\rangle\to\langle n-1\rangle$ in $\fin_\ast$ which are
the unique monotonic surjections being undefined in $i$, i.e. given by $j\mapsto j$ if $j<i$, $i\mapsto\ast$ and $j\mapsto j-1$ if $j>i$.
We denote the corresponding maps in $\bbox$ by $\epsilon_i^{n-1}$
(or simply $\epsilon_i$ if the dimension $n$ is clear from the context) as well,
which now correspond to lower face maps and are inclusions ${\bbox}^{n-1}\to{\bbox}^n$.
The image of $\epsilon_i^{n-1}$ is the subset $[x_i = 0] = \{x\in\{0,1\}^n : x_i = 0\}$ for $1\leq i\leq n$.

\begin{definition}\label{def:corner}
Let $n\geq 1$.
The $n$-corner ${\bbcor}^n$ of the $n$-cube ${\bbox}^n$ is the union of all $(n-1)$-cubes ${\bbox}^{n-1}\subset{\bbox}^n$ such that the inclusion arises from an inert map in $\fin_\ast$.

Equivalently, it is the colimit over the diagram of all representable subobjects
${\bbox}^m\sub{\bbox}^n$ for $m<n$ such that the inclusion is inert,
meaning that it is a composition of maps of the form $\epsilon_i$.
\end{definition}

\begin{center}

\begin{tikzpicture}[
    x=1cm,y=1cm,
    font=\small,
    line cap=round,
    line join=round,
    vertex/.style={circle,fill,inner sep=1.35pt},
    greyvertex/.style={circle,fill=gray!45,inner sep=1.35pt},
    ambient/.style={draw=gray!45,line width=0.45pt},
    corneredge/.style={draw=black,line width=0.9pt},
    cornerface/.style={
        draw=gray!55,
        fill=gray!45,
        line width=0.45pt,
        fill opacity=0.28
    },
    toplabel/.style={font=\normalsize},
scale=0.5
]

\node[toplabel] at (1.0,3.35) {$\bbcor^1$};
\node[toplabel] at (5.4,3.35) {$\bbcor^2$};
\node[toplabel] at (10.55,3.35) {$\bbcor^3$};

\begin{scope}[shift={(0,0)}]
    \coordinate (A) at (0,0);
    \coordinate (B) at (2,0);

    \draw[ambient] (A)--(B);

    \node[vertex] at (A) {};
    \node[greyvertex] at (B) {};
\end{scope}

\begin{scope}[shift={(4.2,-0.4)}]
    \coordinate (A) at (0,0);
    \coordinate (B) at (2.4,0);
    \coordinate (C) at (0,2.4);
    \coordinate (D) at (2.4,2.4);

    \draw[ambient] (A)--(B)--(D)--(C)--cycle;

    \draw[corneredge] (A)--(B);
    \draw[corneredge] (A)--(C);

    \node[vertex] at (A) {};
    \node[vertex] at (B) {};
    \node[vertex] at (C) {};
    \node[greyvertex] at (D) {};
\end{scope}

\begin{scope}[shift={(8.8,-0.95)}]
    \coordinate (A) at (0,0);
    \coordinate (B) at (2.5,0);
    \coordinate (C) at (0,2.5);
    \coordinate (D) at (2.5,2.5);

    \coordinate (E) at (1.0,1.0);
    \coordinate (F) at (3.5,1.0);
    \coordinate (G) at (1.0,3.5);
    \coordinate (H) at (3.5,3.5);

    \draw[ambient] (A)--(B)--(D)--(C)--cycle;
    \draw[ambient] (E)--(F)--(H)--(G)--cycle;
    \draw[ambient] (A)--(E);
    \draw[ambient] (B)--(F);
    \draw[ambient] (C)--(G);
    \draw[ambient] (D)--(H);

    \filldraw[cornerface] (A)--(B)--(D)--(C)--cycle;
    \filldraw[cornerface] (A)--(B)--(F)--(E)--cycle;
    \filldraw[cornerface] (A)--(C)--(G)--(E)--cycle;

    \draw[corneredge] (A)--(B)--(D)--(C)--cycle;
    \draw[corneredge] (A)--(E)--(F)--(B);
    \draw[corneredge] (E)--(G)--(C);

    \draw[ambient] (D)--(H);
    \draw[ambient] (F)--(H);
    \draw[ambient] (G)--(H);

    \foreach \P in {A,B,C,D,E,F,G}
        \node[vertex] at (\P) {};

    \node[greyvertex] at (H) {};
\end{scope}
\end{tikzpicture}
\end{center}

Note that the point ${\bbox}^0$ doesn't have a corner.

\begin{remark}\label{rem:corner-combinatorial}
An $n$-corner in $X$ is a map $\lambda\colon{\bbcor}^n\to X$, i.e. an element of $\hom({\bbcor}^n,X)$.
Explicitly, it consists of a tuple of maps $\lambda_i\colon{\bbox}^{n-1}\to X$ for $1\leq i\leq n$
such that whenever $\epsilon_j\colon{\bbox}^{n-2}\to{\bbox}^{n-1}$ is an inert map for $1\leq j\leq n-1$
we have that
\[
    \begin{cases}
        \lambda_i\circ\epsilon_j = \lambda_{j+1}\circ\epsilon_i\quad&\text{for}\; i\leq j, \\
        \lambda_i\circ\epsilon_j = \lambda_j\circ\epsilon_{i-1}\quad&\text{for}\; i\geq j+1.
    \end{cases}
\]
\end{remark}

\begin{remark}
The $n$-corner ${\bbcor}^n$ is a compact connected object%
\footnote{In a category $\cC$, an object $X$ is compact if $\hom(X,-)$ preserves filtered colimits,
and it is connected if $\hom(X,-)$ preserves coproducts.}
in $\cSet$,
as it is a finite colimit of tiny (so in particular compact and connected) objects, the $k$-cubes,
over a connected diagram%
\footnote{A connected object in the category of small categories,
equivalently every two objects in the diagram are connected by a zig-zag of morphisms.}.
Explicitly this means that the functor $\hom({\bbcor}^n,-)\colon\cSet\to\set$ preserves filtered colimits and arbitrary coproducts.
\end{remark}

\begin{definition}\label{def:nil-fibration}
We set
\[
    \Lambda = \{{\bbcor}^n\to{\bbox}^n : n\in\N\}
\]
to be the set of all corner inclusions $\lambda^n\colon{\bbcor}^n\to{\bbox}^n$.
Now a map $f\colon X\to Y$ of cubesets is a \emph{fibration}
if it has the right lifting property against $\Lambda$, i.e.,
for every commutative diagram
\begin{center}
\begin{tikzcd}
	{{\bbcor}^n} & X \\
	{{\bbox}^n} & Y
	\arrow[from=1-1, to=1-2]
	\arrow[hook', from=1-1, to=2-1]
	\arrow[from=1-2, to=2-2]
	\arrow[dashed, from=2-1, to=1-2]
	\arrow[from=2-1, to=2-2]
\end{tikzcd}
\end{center}
there exists a diagonal filler.
Moreover, a cubeset $X$ is \emph{fibrant} if the projection $p\colon X\to\ast$ is a fibration.
We denote the full subcategory of fibrant cubesets by $\cSetnil$.
\end{definition}

Geometrically, a fibration is a map $f$ such that every $n$-corner in $X$,
whose image under $f$ comes from an $n$-cube in $Y$, already arose from an $n$-cube in $X$.
We also say that a fibrant cubeset has \emph{corner completion}.

\begin{example}
The $n$-cube ${\bbox}^n$ is not fibrant for $n\geq 1$, but the point ${\bbox}^0=\ast$ is.
For this, consider, e.g. the $2$-corner spanned by twice the same nondegenerate $1$-cube in ${\bbox}^n$.
This does not extend since any affine linear extension to $\bbox^2$ would necessarily map the new vertex to something outside the $n$-cube.

Injective objects such as the subobject classifier $\Omega$, or more generally $\Omega^X=\ihom(X,\Omega)$ are fibrant (see \cite[Chapter IV.10, Corollary 2]{MacLane1994}),
as well as discrete and trivial objects $i_\ast S$ and $i_!S$ for nonempty sets $S$.
The Host-Kra cubegroups from Example \ref{ex:host_kra_cubegroups} are also fibrant.
\end{example}

\begin{remark}
Note that in this definition we do not include right lifting against the map $\emptyset\to\ast$,
which would correspond to the $n=0$ case in \cite[Definition 7.1]{Gutman2020}.
Our convention seems to agree with \cite[Definition 3.3.7, Lemma 3.3.9]{Candela2017} although there is no explicit definition of the index $n$.
This means that a fibration need not be surjective in general, take for example the map $\emptyset\to\ast$ (or any map with empty domain).
However, every fibration between non-empty concrete ergodic (see Section \ref{ssec:connectivity}) cubesets is surjective.
\end{remark}

\begin{remark}
Equivalently, a cubeset $X$ is fibrant if restricting an $n$-cube to its $n$-corner
\[
    \hom({\bbox}^n,X)\to\hom({\bbcor}^n,X)
\]
is a surjection for all $n\in\N$.
If $X$ is concrete, then an $n$-corner $\lambda$ in $X$ is given by a map
\[
    \lambda\colon\{0,1\}^n\setminus\{1^n\}\to X_0
\]
such that the restriction of $\lambda$ along every lower face
$\epsilon_i\colon \{0,1\}^{n-1}\to\{0,1\}^n$ yields an ($n-1$)-cube of $X$,
i.e., an element of $X_{n-1}$.
Thus, being fibrant amounts to finding a value $\lambda(1^n)\in X_0$
such that the extension $\lambda\colon\{0,1\}^n\to X_0$ is in $X_n$.
\end{remark}

Given the notion of fibration it is possible to construct a \emph{weak factorization system}
$(l(r(\Lambda)),r(\Lambda))$ where $r(\Lambda)$ is the class of fibrations
(those maps with the {\textbf{r}ight lifting property} against $\Lambda$)
and $l(r(\Lambda))$ is the class of \emph{trivial cofibrations},
defined to be the class of all maps that have the {\textbf{l}eft lifting property} against all fibrations.
Being a weak factorization system means that every morphism $f\colon X\to Y$ in $\cSet$ factors as
\begin{center}
\begin{tikzcd}
	X & W & Y
	\arrow["h", tail, from=1-1, to=1-2]
	\arrow["f"', curve={height=12pt}, from=1-1, to=1-3]
	\arrow["g", two heads, from=1-2, to=1-3]
\end{tikzcd}
\end{center}
where $g$ is a fibration and $h$ is a trivial cofibration.

Since $\Lambda$ is a set, the domain of every map $\lambda^n$ in $\Lambda$ is compact and $\cSet$ has all (small) colimits,
Quillen's \emph{small object argument} allows for a functorial factorization:
for every morphism $f\colon X\to Y$ there exists a factorization $f=gh$ with $g\in r(\Lambda)$
and $h\in l(r(\Lambda))$, functorial in $f$:
\begin{center}
\begin{tikzcd}
	X & W & Y \\
	{X'} & {W'} & {Y'}
	\arrow["h"', tail, from=1-1, to=1-2]
	\arrow["f", curve={height=-12pt}, from=1-1, to=1-3]
	\arrow[from=1-1, to=2-1]
	\arrow["g"', two heads, from=1-2, to=1-3]
	\arrow[from=1-2, to=2-2]
	\arrow[from=1-3, to=2-3]
	\arrow["{h'}", tail, from=2-1, to=2-2]
	\arrow["{f'}"', curve={height=12pt}, from=2-1, to=2-3]
	\arrow["{g'}", two heads, from=2-2, to=2-3]
\end{tikzcd},
\end{center}
see \cite[Theorem 12.2.2, Theorem 12.5.1]{Riehl}.
Moreover, one can describe the trivial cofibrations as follows:
since we can apply the small object argument, $l(r(\Lambda))$ is the smallest class of morphisms in $\cSet$
that contains $\Lambda$ and is closed under retracts, transfinite composition and pushouts,
see \cite[Proposition 2.1.9]{Cisinski2025}.
It then automatically is also closed under coproducts.
We also say that $l(r(\Lambda))$ is the \emph{saturated class} $\mathrm{sat}(\Lambda)$ of $\Lambda$.

\begin{construction}\label{cons:triv-cofib}
There is an inductive construction of every trivial cofibration:
a \emph{relative $\Lambda$-cell complex} is a transfinite composition of pushouts of morphisms in $\Lambda$.
Then every trivial cofibration is a retract of a relative $\Lambda$-cell complex.
This means that given a trivial cofibration $f\colon X\to Y$ there exists
a relative $\Lambda$-cell complex $g\colon X\to Z$ and a retraction $r\colon Z\to Y$ such that the diagram
\begin{center}
\begin{tikzcd}
	X & X & X \\
	Y & Z & Y
	\arrow["{=}"{description}, draw=none, from=1-1, to=1-2]
	\arrow["f"', from=1-1, to=2-1]
	\arrow["{=}"{description}, draw=none, from=1-2, to=1-3]
	\arrow["g"', from=1-2, to=2-2]
	\arrow["f"', from=1-3, to=2-3]
	\arrow["s", from=2-1, to=2-2]
	\arrow["{\mathrm{id}}"', curve={height=12pt}, from=2-1, to=2-3]
	\arrow["r", from=2-2, to=2-3]
\end{tikzcd}
\end{center}
is commutative (being a retraction is a priori weaker but one can make it work with always $X$ and the identity in the upper row).
That $g$ is a relative $\Lambda$-cell complex means that there exists an ordinal indexed diagram
$A\colon\alpha\to\cSet$ such that $g = \varinjlim g_\alpha$ is the transfinite composition of $A$
and in every step $g_\alpha\colon A_\alpha\to A_{\alpha+1}$ there exists a map
$h_\alpha\colon B_\alpha\to C_\alpha$ in $\Lambda$ such that the diagram
\begin{center}
\begin{tikzcd}
	{B_\alpha} & {A_\alpha} \\
	{C_\alpha} & {A_{\alpha+1}}
	\arrow[from=1-1, to=1-2]
	\arrow["{h_\alpha}"', from=1-1, to=2-1]
	\arrow["{g_\alpha}", from=1-2, to=2-2]
	\arrow[from=2-1, to=2-2]
\end{tikzcd}
\end{center}
is a pushout square,
see \cite[Definition 2.1.9, Corollary 2.1.15]{Hovey2007}.
From this construction we also see that every trivial cofibration is a monomorphism.
\end{construction}

There is an important subclass of trivial cofibrations,
given by relative $\Lambda$-cell complexes, which we want to highlight briefly.
It is given by what is classically known to be \emph{partial cubes} \cite[Lemma 7.5]{Gutman2020} or \emph{simplicial cubesets} \cite[Definition 3.1.4]{Candela2017}.
Recall that every subset $A\sub\langle n\rangle^\circ=\{1,\dots, n\}$ with $k$ elements induces an inert map
$\langle n\rangle\to\langle k\rangle$ in $\fin_\ast$ which singles out the elements in $A$.
By pullback along this map we obtain a subobject
\[
    {\bbox}^k\to{\bbox}^n
\]
in $\bbox$ whose image consists of those $x\in\{0,1\}^n$ such that $x_i = 0$ whenever $i\not\in A$.
Note that the empty set corresponds to the point ${\bbox}^0\to{\bbox}^n$.
The following Proposition shows when multiple such subobjects glue to a trivial cofibration
and is a generalization of \cite[Lemma 3.1.5]{Candela2017} and \cite[Lemma 7.5]{Gutman2020}.

\begin{proposition}\label{prop:triv-nilcofib-simp}
Let $K$ be a non-empty collection of subsets of $\langle n\rangle^\circ$ which is downwards-closed,
i.e., if $A\in K$ and $B\sub A$ then $B\in K$.
Then the inclusion
\[
    \bigcup_{A\in K}{\bbox}^{\lvert A\rvert}\to{\bbox}^n
\]
is a trivial cofibration.
\end{proposition}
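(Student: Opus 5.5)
The plan is to build the inclusion $S_K := \bigcup_{A\in K}{\bbox}^{\lvert A\rvert}\to{\bbox}^n$ as a finite composite of pushouts of corner inclusions $\lambda^m\colon{\bbcor}^m\to{\bbox}^m$. By Construction~\ref{cons:triv-cofib}, such a composite is a relative $\Lambda$-cell complex and hence a trivial cofibration. The idea is to add the missing subsets of $\langle n\rangle^\circ$ one at a time, in order of increasing cardinality. Concretely, I would enumerate the subsets not in $K$ as $B_1,\dots,B_r$ so that $\lvert B_1\rvert\le\lvert B_2\rvert\le\dots$. I then set $K_0=K$ and $K_i=K_{i-1}\cup\{B_i\}$. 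Every proper subset of $B_i$ has strictly smaller cardinality, so it lies either in $K$ or among $B_1,\dots,B_{i-1}$. Hence each $K_i$ is again downwards closed, and $K_r$ is the full power set, so $S_{K_r}={\bbox}^n$ (the face indexed by $\langle n\rangle^\circ$ itself). It therefore suffices to show that each step $S_{K_{i-1}}\to S_{K_i}$ is a pushout of $\lambda^{m}$ with $m=\lvert B_i\rvert$. Note that $m\ge1$, because $\emptyset\in K$ by non-emptiness and downward closure.

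For a single step, write $B=B_i$, $m=\lvert B\rvert$, and let $F_B\cong{\bbox}^m\sub{\bbox}^n$ be the corresponding face. I claim the square with top row ${\bbcor}^m\to S_{K_{i-1}}$, left map $\lambda^m$, and bottom row $F_B\to S_{K_i}$ is a pushout. Since $\cSet$ is a topos, unions of subobjects are pushouts over intersections. So $S_{K_i}=S_{K_{i-1}}\cup F_B$ is the pushout of $S_{K_{i-1}}\leftarrow S_{K_{i-1}}\cap F_B\to F_B$, and the problem reduces to computing the intersection $S_{K_{i-1}}\cap F_B$ inside ${\bbox}^n$ and identifying it with ${\bbcor}^m\sub F_B$.

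The main obstacle is this intersection computation. Intersections of subobjects distribute over unions, so $S_{K_{i-1}}\cap F_B=\bigcup_{A\in K_{i-1}}(F_A\cap F_B)$. Two things then need checking. First, the intersection of the representable faces $F_A\cap F_B$ is exactly $F_{A\cap B}$ as a sub-presheaf, not just on vertices. For this I would use that intersections are pullbacks and that $j$ preserves finite limits (Lemma~\ref{lem:bblox-bbox-pres-limits}): pulling back inert inclusions in $\bbGamma$ gives the inert inclusion of $A\cap B$. Alternatively, one can argue directly with the canonical representation of Lemma~\ref{lem:coordinate_description}, noting that a cube ${\bbox}^k\to{\bbox}^n$ factoring through both faces has zero coordinates outside $A$ and outside $B$. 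Second, I have to check which sets $A\cap B$ occur. Since $B\notin K_{i-1}$ and $K_{i-1}$ is downwards closed, each $A\cap B$ is a proper subset of $B$. Conversely, every proper subset of $B$ lies in $K_{i-1}$ by the ordering of the $B_j$. Hence the intersection is $\bigcup_{C\subsetneq B}F_C$.

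Finally, under the identification $F_B\cong{\bbox}^m$, the faces $F_C$ for $C\subsetneq B$ are exactly the inert faces of dimension less than $m$. Their union is the corner ${\bbcor}^m$ of Definition~\ref{def:corner}, because every such face is contained in a maximal one $F_{B\setminus\{b\}}$, which is an image of some $\epsilon_i$. This identifies each step as a pushout of $\lambda^m$, which completes the induction. A small care point is to make sure the identification $F_B\cong{\bbox}^m$ via the order-preserving inert map carries inert faces to inert faces; this follows since inert maps are closed under composition.
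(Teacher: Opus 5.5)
Your proposal is correct and follows essentially the same route as the paper: adjoin a minimal missing subset at a time, identify the intersection of the current union with the new face ${\bbox}^{\lvert B\rvert}$ as the union of its proper inert faces (the corner), and use that a union of subobjects is the pushout over their intersection. Your explicit ordering by cardinality and your check that $F_A\cap F_B=F_{A\cap B}$ as sub-presheaves just spell out details the paper leaves implicit.
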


\begin{proof}
If $K$ consists of all subsets of $\langle n\rangle^\circ$,
then $\bigcup_{A\in K}{\bbox}^{\lvert A\rvert}\to{\bbox}^n$ is (isomorphic to) the identity map
so there is nothing to show.
If $K$ consists of all subsets of $\langle n\rangle^\circ$ except the whole set
then $\bigcup_{A\in K}{\bbox}^{\lvert A\rvert}\to{\bbox}^n$ is the inclusion of the $n$-corner
since for $\lvert A\rvert = n-1$ the maps ${\bbox}^{n-1}={\bbox}^{\lvert A\rvert}\to{\bbox}$
are given by $\epsilon_i$, where $i$ depends on which element $A$ misses in $\langle n\rangle^\circ$.
So again, there is nothing to show.
Now suppose that $K$ misses at least two subsets of $\langle n\rangle^\circ$
and choose $A_0\sub\langle n\rangle^\circ$ such that $A_0$ is not contained in $K$
but every proper subset of $A_0$ is (e.g., choose a set of minimal cardinality of those not contained in $K$).
Set $K' = K\cup\{A_0\}$ and consider the commutative diagram
\begin{center}
\begin{tikzcd}
	{\bigcup\limits_{A\in K}{\bbox}^{\lvert A\rvert}\times_{{\bbox}^n}{\bbox}^{\lvert A_0\rvert}} & {\bigcup\limits_{A\in K}{\bbox}^{\lvert A\rvert}} & \\
	{{\bbox}^{\lvert A_0\rvert}} & {\bigcup\limits_{A'\in K'}{\bbox}^{\lvert A'\rvert}} \\
	&& {{\bbox}^n}
	\arrow[from=1-1, to=1-2]
	\arrow[from=1-1, to=2-1]
	\arrow[from=1-2, to=2-2]
	\arrow[from=1-2, to=3-3]
	\arrow[from=2-1, to=2-2]
	\arrow[from=2-1, to=3-3]
	\arrow[from=2-2, to=3-3]
\end{tikzcd}
\end{center}
of subobjects of ${\bbox}^n$ where the outer square is a pullback square
and the inner square is bicartesian.
Now the left vertical map is a corner inclusion since for $A\in K$ the map
\[
    {\bbox}^{\lvert A\cap A_0\rvert}
    ={\bbox}^{\lvert A\rvert}\times_{{\bbox}^n}{\bbox}^{\lvert A_0\rvert}
    \to{\bbox}^{\lvert A_0\rvert}
\]
is the inclusion corresponding to the subobject $A\cap A_0\sub A_0$.
But from the assertion on $A_0$ we infer that it contains every $B\sub A_0$ that is not the whole $A_0$
and thus is a corner inclusion by the second proof step.
Iterating this argument verifies the claim.
\end{proof}

A special case is given by the following Corollary.

\begin{corollary}\label{cor:internal-nilfib}
Given $n\in\N$ and $k\in\N_0$, the \emph{roof} $\lambda^n\times\mathrm{id}_k\colon{\bbcor}^n\times{\bbox}^k\to{\bbox}^{n+k}$ is a trivial cofibration.
\end{corollary}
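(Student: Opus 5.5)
The plan is to realize the roof as a special case of Proposition~\ref{prop:triv-nilcofib-simp}. Identify ${\bbox}^{n+k}={\bbox}^n\times{\bbox}^k$ via Corollary~\ref{prop:bbox-symm-monoid}, so that the coordinates split as $\langle n+k\rangle^\circ=\{1,\ldots,n\}\sqcup\{n+1,\ldots,n+k\}$. Consider the collection
\[
    K=\{A\subseteq\langle n+k\rangle^\circ : \{1,\ldots,n\}\not\subseteq A\}.
\]
This is clearly downwards-closed. It is non-empty because $n\ge 1$, so $\emptyset\in K$. Proposition~\ref{prop:triv-nilcofib-simp} then says that $\bigcup_{A\in K}{\bbox}^{\lvert A\rvert}\to{\bbox}^{n+k}$ is a trivial cofibration. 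It remains to identify this union with ${\bbcor}^n\times{\bbox}^k$ as subobjects of ${\bbox}^{n+k}$.

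For the identification, note that $K$ has maximal elements $A_i=\langle n+k\rangle^\circ\setminus\{i\}$ for $1\le i\le n$. The union is therefore $\bigcup_{i=1}^n{\bbox}^{n+k-1}_{(i)}$, where ${\bbox}^{n+k-1}_{(i)}$ is the lower face $[x_i=0]$ of ${\bbox}^{n+k}$. Under the product decomposition, this face is $[x_i=0]\times{\bbox}^k$, that is, $\epsilon_i({\bbox}^{n-1})\times{\bbox}^k$.

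On the other side, ${\bbcor}^n=\bigcup_{i=1}^n\epsilon_i({\bbox}^{n-1})$ as a union of subobjects of ${\bbox}^n$. The functor $-\times{\bbox}^k$ preserves unions of subobjects, because $\cSet$ is a topos, so unions are pullback-stable images of coproducts and products preserve these. Hence
\[
    {\bbcor}^n\times{\bbox}^k=\bigcup_{i=1}^n\bigl(\epsilon_i({\bbox}^{n-1})\times{\bbox}^k\bigr)\subseteq{\bbox}^n\times{\bbox}^k,
\]
and this agrees with the union above. The inclusion $\lambda^n\times\mathrm{id}_k$ is precisely this subobject inclusion, which gives the claim.

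The only point needing care is matching the subobject ${\bbox}^{\lvert A\rvert}\to{\bbox}^{n+k}$ of the proposition, defined via the inert map singling out $A$, with the product $\epsilon_i({\bbox}^{n-1})\times{\bbox}^k$. For $A=A_i$ this is immediate from the coordinate description: both sides are the set of points with $x_i=0$, realized by the inert map omitting $i$. The product in $\cSet$ of the representables ${\bbox}^{n-1}\times{\bbox}^k$ is the representable ${\bbox}^{n-1+k}$, by the fact that the Yoneda embedding preserves products together with Corollary~\ref{prop:bbox-symm-monoid}. I do not expect a genuine obstacle beyond this bookkeeping.
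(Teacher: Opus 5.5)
Your proof is correct and follows the paper's argument. The paper also applies Proposition~\ref{prop:triv-nilcofib-simp} to the downwards-closed family generated by the sets $A_i=\langle n+k\rangle^\circ\setminus\{i\}$ for $1\le i\le n$, which is exactly your $K$; your identification of the resulting union with ${\bbcor}^n\times{\bbox}^k$, via stability of unions of subobjects under $-\times{\bbox}^k$, spells out a step the paper leaves implicit.
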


\begin{proof}
Take $K$ to be the downwards-closed collection of subsets of $\langle n+k\rangle^\circ$
generated by the sets $A_i = \langle n+k\rangle^\circ\setminus\{i\}$ for $1\leq i\leq n$.
\end{proof}

\begin{remark}\label{prop:internal-nilfib}
By inspecting the proof of Proposition~\ref{prop:triv-nilcofib-simp}
one sees that in the case of the previous Corollary,
the only corner completions used are of dimension at least $n$.
\end{remark}

\begin{remark}\label{rem:internal-nilfib}
One could also define a map $f\colon X\to Y$ to be a fibration if the push-pull map
\[
    \ihom({\bbox}^n,X)\to\ihom({\bbcor}^n,X)\times_{\ihom({\bbcor}^n,Y)}\ihom({\bbox}^n,Y)
\]
is an epimorphism for every $n\in\N$ \emph{internally} in $\cSet$, i.e., on the internal Hom.
This notion a priori is stronger than being a fibration since evaluation at the point
returns the condition for being a fibration since $\ihom(W,Z)(\ast) = \hom(W,Z)$.
But the preceding Corollary shows that the converse also holds, since $\ihom({\bbcor}^n,X)(k)=\hom({\bbcor}^{n}\times {\bbox}^k,X)$.
\end{remark}

Another consequence of the small object argument is the \emph{functorial fibrant replacement}
which asserts that there exists a functor $(-)^\mathrm{fib}\colon\cSet\to\cSetnil$
together with a natural transformation $\mathrm{id}\to(-)^\mathrm{fib}$
whose components are all trivial cofibrations.
This can be seen by applying the functorial factorization to the map $p\colon X\to\ast$.

\begin{construction}
One can even give an explicit construction of a functorial fibrant replacement, see \cite[Remark 12.5.6]{Riehl}.%
\footnote{We prefer Garner's algebraic small object argument over the classical one from Quillen, since it adds way less redundant cells}
Beware that this is not a left/right adjoint to the inclusion -- we will see in \ref{rem:fibrant-non-reflective} that these do not exist.
Let $X\in\cSet$ and set $X_0\coloneqq X$.
For $\alpha\in\N_0$ let $M_\alpha$ be the set of all maps $f\colon{\bbcor}^n\to X_\alpha$
such that there does not exist a lift
\begin{center}
\begin{tikzcd}
	& {X_\beta} \\
	{{\bbcor}^n} & {X_\alpha}
	\arrow[from=1-2, to=2-2]
	\arrow[dashed, from=2-1, to=1-2]
	\arrow["f"', from=2-1, to=2-2]
\end{tikzcd}
\end{center}
for some $\beta<\alpha$ (the strictness here is important for functoriality).
Then define $X_{\alpha+1}$ so that the diagram
\begin{center}
\begin{tikzcd}
	{\coprod\limits_{M_\alpha}{\bbcor}^n} & {X_\alpha} \\
	{\coprod\limits_{M_\alpha}{\bbox}^n} & {X_{\alpha+1}}
	\arrow[from=1-1, to=1-2]
	\arrow[hook', from=1-1, to=2-1]
	\arrow[from=1-2, to=2-2]
	\arrow[hook', from=2-1, to=2-2]
\end{tikzcd}
\end{center}
is a pushout square.
Now one defines $X^\mathrm{fib}$ to be the sequential colimit of the $X_\alpha$.
It is clear that with this definition the map $X\to X^\mathrm{fib}$ is a trivial cofibration
(it is in the saturated class of the set of corner inclusions $\Lambda$ which agrees with the trivial cofibrations),
and that $X^\mathrm{fib}\to\ast$ is a fibration
(by compactness, every $n$-corner in $X^\mathrm{fib}$ lies in a finite stage $X_\alpha$
with $\alpha$ minimal and so admits a filler in $X_{\alpha+1}$).
\end{construction}

\begin{remark}\label{rem:fibrant-non-reflective}
Given a notion of fibration defined via a right lifting property against a class of morphisms,
e.g. in a model category, the full subcategory of fibrant objects is in general not reflective.
The first heuristic is that almost all non-trivial fibrant replacements satisfy
$X^\mathrm{fib}\neq (X^\mathrm{fib})^\mathrm{fib}$ so that fibrant replacement can't be the reflector,
see the construction above.

As an explicit counterexample, take the model structure on $\set$ where the weak equivalences are all maps, the cofibrations the injections and the fibrations the surjections.
In this case, the fibrant objects are precisely the non-empty sets which do not form a reflective subcategory.

Usually, fibrant objects are rarely ever stable under subobjects.
In the Kan-Quillen model structure on simplicial sets one can just take an inclusion $\bbDelta^n\sub K$ where $K$ is a Kan complex,
and in the cubeset setting the same works with a cube ${\bbox}^n\sub X$ with $X$ fibrant.

More generally, if the trivial cofibrations are effective monics and there exists a fibrant replacement,
then the full subcategory of fibrant objects is reflective precisely if it is everything.
For this, assume there exists a non-fibrant object $E$.
Use fibrant replacement to find an effective monic $E\hookrightarrow X$ for fibrant $X$.
But now, by effectiveness, $E$ is the equalizer of its cokernel pair $X\rightrightarrows Y$.
Embed $Y$ into a fibrant object to obtain that $E$ is limit of a diagram of fibrant objects, hence
these are not stable under limits and thereby do not form a reflective subcategory.
\end{remark}

\begin{construction}
There is another version of fibrant replacement which \emph{does} satisfy $(X^\mathrm{fib})^\mathrm{fib} = X^\mathrm{fib}$
which will be called \emph{naive fibrant replacement}.
Let $X\in\cSet$ and set $X_0\coloneqq X$.
For $\alpha\in\N_0$ let $N_\alpha$ be the set of all maps $f\colon{\bbcor}^n\to X_\alpha$
such that there does not exist a filler
\begin{center}
\begin{tikzcd}
	{{\bbcor}^n} & {X_\alpha} \\
	{{\bbox}^n} & \ast
	\arrow["f", from=1-1, to=1-2]
	\arrow[from=1-1, to=2-1]
	\arrow[from=1-2, to=2-2]
	\arrow[dashed, from=2-1, to=1-2]
	\arrow[from=2-1, to=2-2]
\end{tikzcd}.
\end{center}
Then define $X_{\alpha+1}$ so that the diagram
\begin{center}
\begin{tikzcd}
	{\coprod\limits_{N_\alpha}{\bbcor}^n} & {X_\alpha} \\
	{\coprod\limits_{N_\alpha}{\bbox}^n} & {X_{\alpha+1}}
	\arrow[from=1-1, to=1-2]
	\arrow[hook', from=1-1, to=2-1]
	\arrow[from=1-2, to=2-2]
	\arrow[hook', from=2-1, to=2-2]
\end{tikzcd}
\end{center}
is a pushout square.
Then define $X^\mathrm{fib}$ as the sequential colimit of the $X_\alpha$.

Note that this construction is not functorial since $N_0$ doesn't depend functorially on $X$!
See \cite{Nikolaus2011} on how to resolve this issue.
\end{construction}

However, fibrant objects are stable under some operations.

\begin{lemma}
Nilfibrant cubesets are stable under coproducts and products, i.e.,
if $(X_i)_{i\in I}$ is a family of fibrant cubesets, then $\coprod_{i\in I} X_i$ and $\prod_{i\in I} X_i$ are fibrant as well.
\end{lemma}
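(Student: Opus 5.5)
We verify the defining lifting property directly, treating products and coproducts separately. Throughout, a cubeset $X$ is fibrant exactly when the restriction map $\hom({\bbox}^n,X)\to\hom({\bbcor}^n,X)$ is surjective for every $n\in\N$.

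\emph{Products.} Since $\hom({\bbcor}^n,-)$ and $\hom({\bbox}^n,-)$ preserve limits, the restriction map for $\prod_i X_i$ is the product of the restriction maps $\hom({\bbox}^n,X_i)\to\hom({\bbcor}^n,X_i)$. Each factor is surjective because $X_i$ is fibrant. A product of surjections of sets is surjective, using the axiom of choice to pick a lift in each factor. Equivalently, an $n$-corner in $\prod_i X_i$ is a family of $n$-corners $\lambda_i$ in the $X_i$. Choosing a completion of each $\lambda_i$ gives a cube ${\bbox}^n\to\prod_i X_i$ extending the corner.

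\emph{Coproducts.} By the earlier remark, ${\bbcor}^n$ is a connected object, so $\hom({\bbcor}^n,-)$ preserves coproducts. The same holds for the representable ${\bbox}^n$, since coproducts in $\cSet$ are computed pointwise. Hence an $n$-corner $\lambda\colon{\bbcor}^n\to\coprod_i X_i$ factors through a single summand $X_{i_0}$. Fibrancy of $X_{i_0}$ gives a completion ${\bbox}^n\to X_{i_0}$, and composing with the coprojection $X_{i_0}\to\coprod_i X_i$ yields the desired filler. Put differently, the restriction map for the coproduct is the coproduct of the surjective restriction maps, and a coproduct of surjections is surjective.

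The only point requiring care is the connectedness of ${\bbcor}^n$ for $n\geq 1$. It is a colimit of representables over a connected diagram: every face in the diagram contains the vertex $0^n$, so all the faces are linked through the point. Because each representable is connected, such a colimit is connected. This was already noted in the earlier remark, so we only invoke it. For the empty coproduct, note that $\emptyset$ is fibrant because there are no corners into it, as ${\bbcor}^n\neq\emptyset$. The empty product is $\ast$, which is fibrant.
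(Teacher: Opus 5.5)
Your proof is correct and follows the paper's own argument. For products you use that $\hom({\bbox}^n,-)$ and $\hom({\bbcor}^n,-)$ preserve products and that a product of surjections is surjective; for coproducts you use connectedness of ${\bbcor}^n$ to factor a corner through a single summand and fill it there. Your additional remarks — the use of choice, the empty family, and why ${\bbcor}^n$ is connected as a colimit of representables over a connected nonempty diagram — are correct details that the paper leaves implicit.
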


\begin{proof}
A cubeset $X$ is fibrant if and only if the restriction $\hom({\bbox}^k,X)\to\hom({\bbcor}^k,X)$ is surjective for all $k\in\N$.
This implies the property for products, since $\hom(Y,-)$ preserves products and epimorphisms are stable under taking products in $\set$.
For coproducts note that a map ${\bbcor}^k\to\coprod_{i\in I} X_i$ factors through an inclusion $X_{i_0}\to\coprod_{i\in I} X_i$
since ${\bbcor}^k$ is connected.
\end{proof}

Furthermore, fibrancy does not behave well with concreteness.

\begin{example}
Note that a fibrant replacement of a concrete cubeset is not necessarily concrete again.
For example, consider the concrete cubeset given by
\begin{center}
\begin{tikzcd}[ampersand replacement=\&]
	b \& b \\
	a \& b
	\arrow[no head, from=1-1, to=1-2]
	\arrow[no head, from=2-1, to=1-1]
	\arrow[no head, from=2-1, to=1-2]
	\arrow[no head, from=2-1, to=2-2]
	\arrow[no head, from=2-2, to=1-2]
\end{tikzcd}
\end{center}
(i.e. three times the same edge between $a$ and $b$, connected by degenerate edges).
In a fibrant replacement we add a completion to the corner
\begin{center}
\begin{tikzcd}[row sep=tiny, column sep=tiny]
	&& b &&&&&&& b &&& c \\
	b &&& b &&&& b &&& b \\
	&&&&&& \leadsto \\
	&& b &&& b &&&& b &&& b \\
	a &&& b &&&& a &&& b
	\arrow[no head, from=1-10, to=1-13]
	\arrow[no head, from=2-1, to=1-3]
	\arrow[no head, from=2-1, to=2-4]
	\arrow[no head, from=2-8, to=1-10]
	\arrow[no head, from=2-8, to=1-13]
	\arrow[no head, from=2-8, to=2-11]
	\arrow[no head, from=2-11, to=1-13]
	\arrow[no head, from=4-3, to=1-3]
	\arrow[no head, from=4-3, to=4-6]
	\arrow[no head, from=4-10, to=1-10]
	\arrow[no head, from=4-10, to=1-13]
	\arrow[no head, from=4-10, to=4-13]
	\arrow[no head, from=4-13, to=1-13]
	\arrow[no head, from=5-1, to=2-1]
	\arrow[no head, from=5-1, to=4-3]
	\arrow[no head, from=5-1, to=5-4]
	\arrow[no head, from=5-4, to=2-4]
	\arrow[no head, from=5-4, to=4-6]
	\arrow[no head, from=5-8, to=1-10]
	\arrow[no head, from=5-8, to=2-8]
	\arrow[no head, from=5-8, to=2-11]
	\arrow[no head, from=5-8, to=4-10]
	\arrow[no head, from=5-8, to=4-13]
	\arrow[no head, from=5-8, to=5-11]
	\arrow[no head, from=5-11, to=1-13]
	\arrow[no head, from=5-11, to=2-11]
	\arrow[no head, from=5-11, to=4-13]
\end{tikzcd}.
\end{center}
However, this completion adds three different(!) squares between $a$, $b$, $b$ and the new vertex, induced by the active diagonal faces resulting in a non-concrete cubespace.
\end{example}

\begin{remark}
We expect that if a cubeset is fibrant, then its concretisation is not necessarily fibrant.
A candidate for an explicit counterexample could be the naive fibrant replacement of the non-concrete cubeset given by
\begin{center}
\begin{tikzcd}[ampersand replacement=\&]
	\& b \& c \\
	b \& a \& b \\
	d \& b
	\arrow[no head, from=1-2, to=1-3]
	\arrow[no head, from=2-1, to=3-1]
	\arrow["f", no head, from=2-2, to=1-2]
	\arrow["g"', no head, from=2-2, to=2-1]
	\arrow["f"', no head, from=2-2, to=2-3]
	\arrow["g", no head, from=2-2, to=3-2]
	\arrow[no head, from=2-3, to=1-3]
	\arrow[no head, from=3-2, to=3-1]
\end{tikzcd}
\end{center}
	There, the corner
\begin{center}
\begin{tikzcd}[ampersand replacement=\&, row sep=tiny,
    column sep=tiny]
	\&\& c \&\&\& \\
	b \&\&\& c \\
	\\
	\&\& b \&\&\& d \\
	a \&\&\& b
	\arrow[no head, from=2-1, to=1-3]
	\arrow[no head, from=2-1, to=2-4]
	\arrow[no head, from=4-3, to=1-3]
	\arrow[no head, from=4-3, to=4-6]
	\arrow[no head, from=5-1, to=2-1]
	\arrow[no head, from=5-1, to=4-3]
	\arrow[no head, from=5-1, to=5-4]
	\arrow[no head, from=5-4, to=2-4]
	\arrow[no head, from=5-4, to=4-6]
\end{tikzcd}
\end{center}
only becomes a corner in the concretisation, and there is no reason how this should be filled before taking concretisation.
\end{remark}

One might wonder whether we have something like the typical pushout product stability axiom (SM7), see Chapter 2.2 of \cite{Quillen1967} for the origin of this axiom.
Unfortunately, even the usual prism lemma fails.

\begin{example}
	Consider the pushout product of $\partial \bbox^1=\ast\sqcup\ast\to\bbox^1$ with $\bbcor^1\to \bbox^1$.
	This corresponds to the prism
	\[(\partial {\bbox}^1\times {\bbox}^1)\cup ({\bbox}^1\times {\bbcor}^1)\hookrightarrow {\bbox}^2,\]
	pictorially
	\begin{center}
\begin{tikzpicture}[
    x=0.7cm,y=0.7cm,
    font=\small,
    line cap=round,
    line join=round,
    vertex/.style={circle,fill,inner sep=1.2pt},
    ambient/.style={draw=gray!55, line width=0.45pt},
    mainedge/.style={draw=black, line width=0.8pt}
]

\coordinate (A) at (0,0);
\coordinate (B) at (2.0,0);
\coordinate (C) at (0,2.0);
\coordinate (D) at (2.0,2.0);

\fill[gray!15] (A)--(B)--(D)--(C)--cycle;

\draw[mainedge] (A)--(B);
\draw[mainedge] (A)--(C);
\draw[mainedge] (B)--(D);

\foreach \P in {A,B,C,D} {
    \node[vertex] at (\P) {};
}

\end{tikzpicture}
	\end{center}
	and is clearly not a trivial cofibration.
\end{example}

\begin{remark}
	Note that this U-shape is precisely the horn in classical cubical homotopy theory,
	see~\cite[Definition 10.3.2]{Brown2011}.
\end{remark}

This shows that the pushout product of a trivial cofibration with a general (normal) monomorphism is not necessarily a trivial cofibration.

\subsubsection{Cubesets with Gluing Property}

Given two $n$-cubes, we can glue them along an ($n-1$)-dimensional face (an inert $\epsilon_i\colon{\bbox}^{n-1}\to{\bbox}^n$) to obtain ${\bbox}^n\sqcup_{{\bbox}^{n-1}}{\bbox}^n$.
Opposite to where we have glued, we obtain a map ${\bbox}^{n-1}\sqcup{\bbox}^{n-1}\to{\bbox}^n\sqcup_{{\bbox}^{n-1}}{\bbox}^n$ induced by the \emph{upper face map} $\overline{\epsilon}_i\colon{\bbox}^{n-1}\to{\bbox}^n$
which is $\epsilon_i$ but with the $0$ swapped for a $1$.
This map can be glued along the map $\epsilon_i\sqcup\overline{\epsilon}_i\colon{\bbox}^{n-1}\sqcup{\bbox}^{n-1}\to{\bbox}^n$,
so that we obtain a pushout square
\begin{center}
\begin{tikzcd}
	{{\bbox}^{n-1}\sqcup{\bbox}^{n-1}} & {{\bbox}^n} \\
	{{\bbox}^{n}\sqcup_{{\bbox}^{n-1}}{\bbox}^{n}} & {({\bbox}^{n}\sqcup_{{\bbox}^{n-1}}{\bbox}^{n})\sqcup_{{\bbox}^{n-1}\sqcup{\bbox}^{n-1}}{\bbox}^n}
	\arrow[from=1-1, to=1-2]
	\arrow[from=1-1, to=2-1]
	\arrow[from=1-2, to=2-2]
	\arrow[from=2-1, to=2-2]
\end{tikzcd}
\end{center}
where the lower horizontal map is a monomorphism since the upper one is.
This allows us to define the set of morphisms
\[
    \bprism\coloneq \{ {{\bbox}^{n}\sqcup_{{\bbox}^{n-1}}{\bbox}^{n}}\to{({\bbox}^{n}\sqcup_{{\bbox}^{n-1}}{\bbox}^{n})\sqcup_{{\bbox}^{n-1}\sqcup{\bbox}^{n-1}}{\bbox}^n} \mid n\in\N \}.
\]

\begin{definition}\label{def:gluing-property}
A cubeset $X$ has the \emph{gluing property} if the map $p\colon X\to\ast$ has the right lifting property w.r.t. $\bprism$.
Moreover we say that $X$ has \emph{unique gluing} if the lift is unique (and exists, i.e. the object is local with respect to $\bprism$).
	We say that $X$ has \emph{at most unique gluing} if the lift is unique whenever it exists.
\end{definition}

\begin{proposition}\label{prop:unique-gluing-concrete}
A cubeset $X$ has at most unique gluing if and only if it is concrete.
\end{proposition}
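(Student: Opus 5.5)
The plan is to translate a lifting problem against $\bprism$ into a statement about cubes, and then to show that uniqueness of lifts is the same as ``an $n$-cube is determined by two opposite $(n-1)$-faces''. For this translation, a map ${\bbox}^n\sqcup_{{\bbox}^{n-1}}{\bbox}^n\to X$ is a pair of $n$-cubes $a,b\in X(n)$ with $\epsilon_i^\ast a=\epsilon_i^\ast b$. An extension to the pushout $({\bbox}^{n}\sqcup_{{\bbox}^{n-1}}{\bbox}^{n})\sqcup_{{\bbox}^{n-1}\sqcup{\bbox}^{n-1}}{\bbox}^n$ is then exactly an $n$-cube $c\in X(n)$ with
\[
    \epsilon_i^\ast c=\overline{\epsilon}_i^\ast a,\qquad \overline{\epsilon}_i^\ast c=\overline{\epsilon}_i^\ast b,
\]
because $\cSet$ is a presheaf category and the pushout is computed pointwise. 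Since the lower horizontal map in the defining square is a monomorphism, two lifts agree iff their $c$'s agree. Conjugating by permutations in $\bbox$, the index $i$ may be taken to be $n$ (or any fixed value), without loss of generality.

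For the forward direction, suppose $X$ is concrete. Any two lifts $c,c'$ have the same restrictions along $\epsilon_i$ and $\overline{\epsilon}_i$. These two faces together cover every vertex of $\{0,1\}^n$, so $c$ and $c'$ have the same image in $X(0)^{2^n}$. By concreteness, $c=c'$.

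For the converse, assume at most unique gluing. The first step is to show that every $c\in X(n)$ is determined by the pair $(\epsilon_i^\ast c,\overline{\epsilon}_i^\ast c)$. Given $c$, I manufacture a gluing problem that $c$ solves, using a flip and a degeneracy. Let $\phi$ be the flip in coordinate $i$ from Lemma~\ref{lem:linearization-auto-bblox}, and put $a=\phi^\ast c$, so that $\epsilon_i^\ast a=\overline{\epsilon}_i^\ast c$ and $\overline{\epsilon}_i^\ast a=\epsilon_i^\ast c$. Let $b=\pi_i^\ast\overline{\epsilon}_i^\ast c$ be the degenerate cube, where $\pi_i\colon{\bbox}^n\to{\bbox}^{n-1}$ forgets coordinate $i$. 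Then $\epsilon_i^\ast b=\overline{\epsilon}_i^\ast b=\overline{\epsilon}_i^\ast c$. Hence $(a,b)$ is a valid gluing datum, since $\epsilon_i^\ast a=\epsilon_i^\ast b$. Moreover, $c$ solves it, and so does any $c'$ with the same two opposite faces as $c$. Uniqueness of the lift forces $c=c'$.

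The second step is an induction on $n$ to show that $X(n)\to X(0)^{2^n}$ is injective. The case $n=0$ is trivial. For $n=1$, the two opposite faces of a $1$-cube are its vertices, so this is exactly the first step. For the inductive step, if $c,c'\in X(n)$ have the same vertices, then their faces $\epsilon_i^\ast c$ and $\epsilon_i^\ast c'$ have the same vertices, and likewise for $\overline{\epsilon}_i$. By induction these faces coincide in $X(n-1)$, and the first step then gives $c=c'$.

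The only delicate point is the first step of the converse. There one must check carefully that the flipped cube and the degenerate cube have precisely the face identities claimed, with the correct orientation of $\phi$ relative to $\epsilon_i$ and $\overline{\epsilon}_i$. Getting this wrong would make $a$ and $b$ fail to agree on $\epsilon_i$; the rest is formal.
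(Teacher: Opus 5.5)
Your proof is correct and follows the paper's argument: induction on dimension, where the key step is that an $n$-cube is determined by two opposite faces because it solves a gluing problem built from a degenerate cube, so at most unique gluing forces equality. The only cosmetic difference is that the paper needs no flip. It glues the degenerate cube on the lower face $\epsilon_i^\ast c$ directly to $c$ itself, and this already exhibits $c$, and any $c'$ with the same two opposite faces, as a lift.
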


\begin{proof}
The if direction is clear.
For the only if direction we argue by induction on the dimension $n$.
For $n=1$, we need to show that for two points $a,b\in X(0)$
there is at most one edge with these two boundary points.
Consider some edge $f$ between $a$ and $b$, and glue it to the degenerate $a$-cube.
Therefore, $f$ is a gluing of $f$ and the degenerate cube.
Now, any other $g$ between $a$ and $b$ also is a gluing of $f$ and the degenerate $a$-cube,
so we have $f=g$
\begin{center}
\begin{tikzcd}[row sep=tiny,column sep=tiny]
	a && \\
	\\
	a && b
	\arrow["e", no head, from=3-1, to=1-1]
	\arrow["f"', no head, from=3-1, to=3-3]
	\arrow["f"', shift right, no head, from=3-3, to=1-1]
	\arrow["g", shift left, no head, from=3-3, to=1-1]
\end{tikzcd}.
\end{center}
Thus, the map $X(1)\to X(0)^2$ is injective.
For a general dimension we argue similarly: Consider two $n$-cubes $f$ and $g$ with the same points.
Then by induction, all their faces of dimension $n-1$ agree.
Now, take any face of $f$ and consider the degenerate $n$-cube of this face.
Then, $f$ and $g$ both yield gluings of this degenerate cube and of $f$, and thus $g=f$.
\begin{center}
\begin{tikzcd}[ampersand replacement=\&,sep=small]
	\& \bullet \&\&\& \\
	\bullet \\
	\\
	\& \bullet \&\&\& \bullet \\
	\bullet \&\&\& \bullet
	\arrow[""{name=0, anchor=center, inner sep=0}, no head, from=1-2, to=4-5]
	\arrow[""{name=1, anchor=center, inner sep=0}, curve={height=-12pt}, no head, from=1-2, to=4-5]
	\arrow["a"{description}, no head, from=2-1, to=1-2]
	\arrow[""{name=2, anchor=center, inner sep=0}, no head, from=2-1, to=5-4]
	\arrow[""{name=3, anchor=center, inner sep=0}, curve={height=-12pt}, no head, from=2-1, to=5-4]
	\arrow[no head, from=4-2, to=1-2]
	\arrow[""{name=4, anchor=center, inner sep=0}, no head, from=4-2, to=4-5]
	\arrow[no head, from=5-1, to=2-1]
	\arrow["a"{description}, no head, from=5-1, to=4-2]
	\arrow[""{name=5, anchor=center, inner sep=0}, no head, from=5-1, to=5-4]
	\arrow[no head, from=5-4, to=4-5]
	\arrow["f"{description}, shift right=2, draw=none, from=2, to=0]
	\arrow["g"{description}, shift left=2, draw=none, from=3, to=1]
	\arrow["f"', shift left=4, draw=none, from=5, to=4]
\end{tikzcd}
\end{center}
\end{proof}

\begin{proposition}\label{prop:nilfib-gluing}
Let $X$ be a fibrant cubeset. Then $X$ has the gluing property.
\end{proposition}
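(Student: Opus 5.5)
Since fibrancy is the right lifting property against $\Lambda$ and the gluing property is the right lifting property against $\bprism$, it suffices to show that every map in $\bprism$ is a trivial cofibration, i.e. lies in $\mathrm{sat}(\Lambda)=l(r(\Lambda))$. A map in $l(r(\Lambda))$ lifts against every fibration, in particular against $X\to\ast$. So fix $n$, write $P_n={\bbox}^n\sqcup_{{\bbox}^{n-1}}{\bbox}^n$ (two $n$-cubes $a,b$ glued along a common face in direction $i$, say $i=n+1$ after relabelling) and $Q_n=P_n\sqcup_{{\bbox}^{n-1}\sqcup{\bbox}^{n-1}}{\bbox}^n$. The goal is to exhibit $P_n\to Q_n$ as a retract of a composite of pushouts of corner inclusions and inclusions from Proposition~\ref{prop:triv-nilcofib-simp}.

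The plan is the classical one from nilspace theory: realize the glued configuration inside an $(n+1)$-cube. Let $\rho\colon{\bbox}^{n+1}\to{\bbox}^n$ in $\bbox$ be the map which is the identity in the first $n-1$ coordinates and sends the last two coordinates $(s,t)$ to $s+t-\dots$. Affine maps cannot add coordinates, so instead I use the standard trick. Place $a$ on the face $[x_{n+1}=0]$ of ${\bbox}^{n+1}$, viewed in coordinates $(y,x_n)$. Place $b$ on the face $[x_n=1]$, using coordinates $(y,x_{n+1})$, so that the two share the $(n-1)$-face $[x_n=1,x_{n+1}=0]$; after a flip this matches the gluing in $P_n$. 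Then $P_n$ becomes a union of two codimension-one faces of ${\bbox}^{n+1}$, and after applying a flip $\phi_e\in\F_2^{n+1}$ (Lemma~\ref{lem:linearization-auto-bblox}) these are two \emph{lower} faces through $0$. By Proposition~\ref{prop:triv-nilcofib-simp}, with $K$ the downward closure of the two corresponding $n$-element subsets, the inclusion $P_n\to{\bbox}^{n+1}$ is a trivial cofibration. Here I use that trivial cofibrations are invariant under the automorphism $\phi_e$, and I need to check that the union of the two faces really is the pushout $P_n$; unions of subobjects are pushouts over intersections in a topos.

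Next, the diagonal map $\delta\colon{\bbox}^n\to{\bbox}^{n+1}$, $(y,t)\mapsto(y,t,1-t)$ or the appropriate flipped version, is a morphism in $\bbox$. It restricts at $t=0$ to the free outer face of $a$ and at $t=1$ to the free outer face of $b$; these are exactly the two faces glued on in $Q_n$. Hence the inclusion $P_n\hookrightarrow{\bbox}^{n+1}$ extends to a map $s\colon Q_n\to{\bbox}^{n+1}$, namely $\delta$ on the extra cube. I would then like a retraction $r\colon{\bbox}^{n+1}\to Q_n$ with $rs=\id$. This is where the argument as stated breaks down, and it is the main obstacle: a map from a representable into $Q_n$ picks out a single $(n+1)$-cube of $Q_n$, and $Q_n$ has none that restricts to all three pieces. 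So a retract argument at the level of ${\bbox}^{n+1}$ fails.

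The fix is to run the argument directly on lifts. Given $P_n\to X$ and $X$ fibrant, extend it to $c\colon{\bbox}^{n+1}\to X$ using the trivial cofibration above. Then take $c\circ\delta$ as the third cube. Its restrictions to the two outer faces agree with those of $a$ and $b$, because $\delta$ composed with the face inclusions equals the face inclusions of the corresponding pieces, and $c$ extends $a$ and $b$. This yields the required map $Q_n\to X$ extending $P_n\to X$, which is the gluing property. The remaining routine check is the explicit coordinate bookkeeping: choosing the flip so that both faces of $P_n$ become lower faces, and verifying the face identities for $\delta$ via Lemma~\ref{lem:coordinate_description}.
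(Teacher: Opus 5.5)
Your final argument is correct and is essentially the paper's proof. You identify $P_n$ with the roof ${\bbcor}^2\times{\bbox}^{n-1}\subset{\bbox}^{n+1}$, i.e.\ two lower faces, which is a trivial cofibration by Proposition~\ref{prop:triv-nilcofib-simp} (equivalently Corollary~\ref{cor:internal-nilfib}); you extend by fibrancy to an $(n+1)$-cube $c$, and read off the glued cube as $c\circ\delta$ along the anti-diagonal.

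The opening reduction to showing the maps in $\bprism$ are trivial cofibrations is not needed, and you rightly drop it. Note also that $\delta(y,t)=(y,t,1-t)$ is the right map only after the flip that makes both faces lower faces. In your unflipped placement the connecting map is the diagonal $(y,t)\mapsto(y,t,t)$.
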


\begin{proof}
We simply embed the 2-roof $\bbcor^2\times \bbox^{n-1}\to \bbox^2\times \bbox^{n-1}$ and then restrict to the triangle which includes the off-diagonal of the $2$-square.
This is the same as the inclusion used in the gluing property.

\pgfdeclarelayer{facefill}
\pgfdeclarelayer{faceoutline}
\pgfsetlayers{facefill,faceoutline,main}
\begin{center}
\newcommand{\DarkFace}[1]{%
  \begin{pgfonlayer}{facefill}
    \path[fill=gray, fill opacity=0.24] #1 -- cycle;
  \end{pgfonlayer}
  \begin{pgfonlayer}{faceoutline}
    \path[draw=gray!70,line width=0.5pt] #1 -- cycle;
  \end{pgfonlayer}
}

\newcommand{\PrismFace}[1]{%
  \begin{pgfonlayer}{facefill}
    \path[fill=blue!20!gray, fill opacity=0.24] #1 -- cycle;
  \end{pgfonlayer}
  \begin{pgfonlayer}{faceoutline}
    \path[draw=blue!35!black,line width=0.5pt] #1 -- cycle;
  \end{pgfonlayer}
}

\begin{tikzpicture}[
    x=1cm,y=1cm,
    font=\small,
    line cap=round,
    line join=round,
    vertex/.style={circle,fill,inner sep=1.4pt},
    ambient/.style={draw=gray!50, line width=0.45pt},
    mainedge/.style={draw=black, line width=0.8pt},
    prismedge/.style={draw=blue!35!black, line width=0.75pt}
]

\coordinate (A) at (0,0);
\coordinate (B) at (2.4,0);
\coordinate (C) at (0,2.4);
\coordinate (D) at (2.4,2.4);

\coordinate (E) at (1.1,0.8);
\coordinate (F) at (3.5,0.8);
\coordinate (G) at (1.1,3.2);
\coordinate (H) at (3.5,3.2);

\draw[ambient] (A)--(B)--(D)--(C)--cycle;
\draw[ambient] (E)--(F)--(H)--(G)--cycle;
\draw[ambient] (A)--(E);
\draw[ambient] (B)--(F);
\draw[ambient] (C)--(G);
\draw[ambient] (D)--(H);

\DarkFace{(A)--(B)--(D)--(C)}
\DarkFace{(A)--(B)--(F)--(E)}

\PrismFace{(C)--(D)--(F)--(E)}

\draw[mainedge] (A)--(B);
\draw[mainedge] (B)--(D);
\draw[mainedge] (D)--(C);
\draw[mainedge] (C)--(A);

\draw[mainedge] (A)--(B);
\draw[mainedge] (B)--(F);
\draw[mainedge] (F)--(E);
\draw[mainedge] (E)--(A);

\draw[prismedge] (C)--(D);
\draw[prismedge] (D)--(F);
\draw[prismedge] (F)--(E);
\draw[prismedge] (E)--(C);

\foreach \P in {A,B,C,D,E,F,G,H} {
    \node[vertex] at (\P) {};
}

\end{tikzpicture}
\end{center}
Hence, a gluing may be computed from fibrancy by taking the off-diagonals of a lift of the roof
$\bbox^{n-1}\times \bbcor^2\hookrightarrow \bbox^{n+1}$.
\end{proof}

\begin{corollary}
Concrete fibrant cubesets are precisely the fibrant cubesets with unique gluing property.
\end{corollary}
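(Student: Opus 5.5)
The corollary follows directly from the two preceding propositions, so I would argue the two inclusions separately.

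First, let $X$ be concrete and fibrant. By Proposition~\ref{prop:nilfib-gluing}, fibrancy gives the gluing property, meaning lifts against every map in $\bprism$ exist. By Proposition~\ref{prop:unique-gluing-concrete}, concreteness gives at most unique gluing, meaning any such lift is unique. Together these say exactly that $X$ is local with respect to $\bprism$, i.e.\ $X$ has unique gluing.

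Conversely, let $X$ be fibrant with unique gluing. Unique gluing in particular implies at most unique gluing, so Proposition~\ref{prop:unique-gluing-concrete} shows that $X$ is concrete.

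Hence the class of concrete fibrant cubesets coincides with the class of fibrant cubesets having unique gluing. The only point needing care is the bookkeeping of the three gluing notions in Definition~\ref{def:gluing-property}: unique gluing is the conjunction of the gluing property (existence of lifts) with at most unique gluing (uniqueness of lifts). With that noted, there is no real obstacle, since the substantive work lies in the two cited propositions.
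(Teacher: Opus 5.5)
Your proof is correct and is the argument the paper intends. The paper states this corollary without a separate proof, as the immediate combination of Proposition~\ref{prop:nilfib-gluing} (fibrant implies the gluing property) and Proposition~\ref{prop:unique-gluing-concrete} (concrete if and only if at most unique gluing), with unique gluing being the conjunction of those two conditions, exactly as you note.
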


\begin{lemma}
The subcategory of cubespaces with unique gluing property forms a reflective subcategory of all cubesets, i.e. the inclusion admits a left adjoint.
\end{lemma}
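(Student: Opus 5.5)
The plan is to identify the full subcategory of cubesets with unique gluing property as the subcategory of objects \emph{local} with respect to the set $\bprism$, and then apply the general theory of orthogonality classes in locally presentable categories. By Definition~\ref{def:gluing-property}, $X$ has unique gluing precisely when for every $n$ the restriction map
\[
    \hom\bigl(({\bbox}^{n}\sqcup_{{\bbox}^{n-1}}{\bbox}^{n})\sqcup_{{\bbox}^{n-1}\sqcup{\bbox}^{n-1}}{\bbox}^n,X\bigr)\to\hom({\bbox}^{n}\sqcup_{{\bbox}^{n-1}}{\bbox}^{n},X)
\]
is bijective, i.e.\ $X$ is right orthogonal to every map in $\bprism$. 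Thus the subcategory is the orthogonality class $\bprism^\perp$, where $\bprism$ is a \emph{set} of morphisms.

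Next, recall that $\cSet=\PSh(\bbox)$ is locally finitely presentable ($\aleph_0$-presentable, as noted in Section~\ref{ssec:cubesets-basics}). I will observe that the domains and codomains of the maps in $\bprism$ are finite colimits of representables, hence compact. The key input is the standard theorem (Adámek--Rosický, \emph{Locally presentable and accessible categories}, Theorem~1.39, or equivalently Lurie's localization theorem) that for any set $S$ of morphisms in a locally presentable category, the full subcategory $S^\perp$ is reflective (and itself locally presentable). Applying it with $S=\bprism$ gives the left adjoint.

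To make this concrete rather than purely a citation, I would sketch the reflector as a transfinite (here countable, by compactness of the domains) construction in the style of the small object argument: at each stage push out along all maps in $\bprism$ to add missing gluings, and simultaneously coequalize parallel pairs of gluings, i.e.\ push out along the codiagonals $C\sqcup_B C\to C$ for $B\to C$ in $\bprism$; take the sequential colimit. Compactness ensures that the colimit is local, and the universal property follows since local objects lift uniquely against every map generated this way. The main obstacle is exactly this uniqueness part: fibrant-type lifting alone gives only a weak reflection, so the codiagonal maps must be included, and one must check locality against $\bprism$ implies locality against their codiagonals (a formal fact). Alternatively, one could combine Proposition~\ref{prop:unique-gluing-concrete} with concretification, but existence of gluings is not preserved by Proposition~\ref{prop:concrete-left-adjoint} alone, so I would rely on the general orthogonality theorem.
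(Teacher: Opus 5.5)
Your proposal is correct and matches the paper's proof: the paper likewise identifies the cubesets with unique gluing as the $\bprism$-local objects and applies the Adámek--Rosický orthogonality theorem (Theorem~1.39) to get the reflector. Your extra sketch, which pushes out along the maps in $\bprism$ together with their codiagonals and uses compactness of the domains, correctly unpacks that theorem rather than giving a different route.
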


\begin{proof}
This follows from the fact that cubesets with unique gluing property are the $\bprism$-local objects,
and on local objects there is always a reflection, see \cite[Theorem 1.39]{Adamek1994}.
\end{proof}

\subsubsection{Fibrations over \texorpdfstring{$\bbGamma$}{Gamma}}\label{ssec:corner-gamma}

In this section we work in the opposite category $\bbGamma$ of finite pointed sets, unless otherwise noted.
Recall that it can be interpreted as a wide subcategory $\bblox$ of the whole cube category $\bbox$
where the morphisms fix the base point.
In this category, there is another definition of $n$-corner than in the whole $\bbox$.
Recall that we denote by ${\bblox}^n$ the image of $\langle n\rangle$ under the Yoneda embedding
$\bbGamma\to\PSh(\bbGamma)$ and by ${\bbox}^n$ the image of $\{0,1\}^n$ under the Yoneda embedding
$\bbox\to\cSet$.

\begin{definition}\label{def:omega_corner_gamma}
Let $n\in\N$ and $\omega\in\{0,1\}^n\setminus\{0^n\}$.
Then define the $n$-corner ${\bbcor}^n_\omega$ w.r.t. $\omega$ as the union over all injections $f\colon{\bblox}^{n-1}\to{\bblox}^n$ in $\bbGamma$
such that $f$ misses $\omega$, i.e., $\omega\notin\im f$:
\[
    {\bbcor}^n_\omega = \bigcup_{\substack{f\colon{\bblox}^{n-1}\sub{\bblox}^n \\ \omega\notin\im f}}{\bblox}^{n-1}\hookrightarrow{\bblox}^n.
\]
\end{definition}

\begin{example}
In two dimensions, the corners look like
\begin{center}
\begin{tikzpicture}[
    scale=0.6,
    line cap=round,
    line join=round,
    bg/.style={draw=gray!45, line width=0.5pt},
    face/.style={draw=black, line width=1.2pt},
    vtx/.style={circle, fill=black, inner sep=1.4pt},
    missing/.style={circle, draw=red!70!black, fill=white, line width=0.7pt, inner sep=1.6pt},
    lab/.style={font=\scriptsize},
    title/.style={font=\small\bfseries}
]
\begin{scope}[shift={(0,0)}]
  \node[title] at (1,2.6) {$w=(1,1)$};

  \coordinate (O) at (0,0);
  \coordinate (A) at (2,0);
  \coordinate (B) at (0,2);
  \coordinate (C) at (2,2);

  \draw[bg] (O)--(A)--(C)--(B)--cycle;

  \draw[face] (O)--(A);
  \draw[face] (O)--(B);

  \node[vtx] at (O) {};
  \node[vtx] at (A) {};
  \node[vtx] at (B) {};
  \node[missing] at (C) {};

  \node[lab, below left]  at (O) {};
  \node[lab, below right] at (A) {};
  \node[lab, above left]  at (B) {};
  \node[lab, above right] at (C) {};
\end{scope}

\begin{scope}[shift={(4.8,0)}]
  \node[title] at (1,2.6) {$w=(1,0)$};

  \coordinate (O) at (0,0);
  \coordinate (A) at (2,0);
  \coordinate (B) at (0,2);
  \coordinate (C) at (2,2);

  \draw[bg] (O)--(A)--(C)--(B)--cycle;

  \draw[face] (O)--(B);
  \draw[face] (O)--(C);

  \node[vtx] at (O) {};
  \node[missing] at (A) {};
  \node[vtx] at (B) {};
  \node[vtx] at (C) {};

  \node[lab, below left]  at (O) {};
  \node[lab, below right] at (A) {};
  \node[lab, above left]  at (B) {};
  \node[lab, above right] at (C) {};
\end{scope}

\begin{scope}[shift={(9.6,0)}]
  \node[title] at (1,2.6) {$w=(0,1)$};

  \coordinate (O) at (0,0);
  \coordinate (A) at (2,0);
  \coordinate (B) at (0,2);
  \coordinate (C) at (2,2);

  \draw[bg] (O)--(A)--(C)--(B)--cycle;

  \draw[face] (O)--(A);
  \draw[face] (O)--(C);

  \node[vtx] at (O) {};
  \node[vtx] at (A) {};
  \node[missing] at (B) {};
  \node[vtx] at (C) {};

  \node[lab, below left]  at (O) {};
  \node[lab, below right] at (A) {};
  \node[lab, above left]  at (B) {};
  \node[lab, above right] at (C) {};
\end{scope}

\end{tikzpicture}
\end{center}
And the three-dimensional case looks as follows.
\begin{center}
\begin{tikzpicture}[
    scale=0.7,
    line cap=round,
    line join=round,
    edge/.style={draw=gray!65, line width=0.55pt},
    frontedge/.style={draw=black, line width=0.65pt},
    vertex/.style={circle, fill=black, inner sep=1.15pt},
    missing/.style={circle, draw=red!70!black, fill=white, line width=0.75pt, inner sep=1.8pt},
    faceA/.style={fill=blue!35,   draw=blue!60!black,   fill opacity=0.23, line width=0.8pt},
    faceB/.style={fill=green!35,  draw=green!50!black,  fill opacity=0.23, line width=0.8pt},
    faceC/.style={fill=orange!35, draw=orange!70!black, fill opacity=0.23, line width=0.8pt},
    faceD/.style={fill=purple!30, draw=purple!65!black, fill opacity=0.23, line width=0.8pt}
]

\begin{scope}[shift={(0,4.8)}]
  \coordinate (A) at (0,0);
  \coordinate (B) at (2,0);
  \coordinate (C) at (0,2);
  \coordinate (D) at (2,2);
  \coordinate (E) at (0.9,0.8);
  \coordinate (F) at (2.9,0.8);
  \coordinate (G) at (0.9,2.8);
  \coordinate (H) at (2.9,2.8);

  \filldraw[faceA] (A)--(C)--(G)--(E)--cycle;
  \filldraw[faceB] (A)--(D)--(H)--(E)--cycle;
  \filldraw[faceC] (A)--(C)--(H)--(F)--cycle;

  \draw[edge] (A)--(B)--(D)--(C)--cycle;
  \draw[edge] (E)--(F)--(H)--(G)--cycle;
  \draw[edge] (A)--(E);
  \draw[edge] (B)--(F);
  \draw[edge] (C)--(G);
  \draw[edge] (D)--(H);

  \node[vertex]  at (A) {};
  \node[missing] at (B) {};
  \node[vertex]  at (C) {};
  \node[vertex]  at (D) {};
  \node[vertex]  at (E) {};
  \node[vertex]  at (F) {};
  \node[vertex]  at (G) {};
  \node[vertex]  at (H) {};
\end{scope}

\begin{scope}[shift={(5.0,4.8)}]
  \coordinate (A) at (0,0);
  \coordinate (B) at (2,0);
  \coordinate (C) at (0,2);
  \coordinate (D) at (2,2);
  \coordinate (E) at (0.9,0.8);
  \coordinate (F) at (2.9,0.8);
  \coordinate (G) at (0.9,2.8);
  \coordinate (H) at (2.9,2.8);

  \filldraw[faceA] (A)--(B)--(F)--(E)--cycle;
  \filldraw[faceB] (A)--(D)--(H)--(E)--cycle;
  \filldraw[faceC] (A)--(B)--(H)--(G)--cycle;

  \draw[edge] (A)--(B)--(D)--(C)--cycle;
  \draw[edge] (E)--(F)--(H)--(G)--cycle;
  \draw[edge] (A)--(E);
  \draw[edge] (B)--(F);
  \draw[edge] (C)--(G);
  \draw[edge] (D)--(H);

  \node[vertex]  at (A) {};
  \node[vertex]  at (B) {};
  \node[missing] at (C) {};
  \node[vertex]  at (D) {};
  \node[vertex]  at (E) {};
  \node[vertex]  at (F) {};
  \node[vertex]  at (G) {};
  \node[vertex]  at (H) {};
\end{scope}

\begin{scope}[shift={(10.0,4.8)}]
  \coordinate (A) at (0,0);
  \coordinate (B) at (2,0);
  \coordinate (C) at (0,2);
  \coordinate (D) at (2,2);
  \coordinate (E) at (0.9,0.8);
  \coordinate (F) at (2.9,0.8);
  \coordinate (G) at (0.9,2.8);
  \coordinate (H) at (2.9,2.8);

  \filldraw[faceA] (A)--(B)--(D)--(C)--cycle;
  \filldraw[faceB] (A)--(C)--(H)--(F)--cycle;
  \filldraw[faceC] (A)--(B)--(H)--(G)--cycle;

  \draw[edge] (A)--(B)--(D)--(C)--cycle;
  \draw[edge] (E)--(F)--(H)--(G)--cycle;
  \draw[edge] (A)--(E);
  \draw[edge] (B)--(F);
  \draw[edge] (C)--(G);
  \draw[edge] (D)--(H);

  \node[vertex]  at (A) {};
  \node[vertex]  at (B) {};
  \node[vertex]  at (C) {};
  \node[vertex]  at (D) {};
  \node[missing] at (E) {};
  \node[vertex]  at (F) {};
  \node[vertex]  at (G) {};
  \node[vertex]  at (H) {};
\end{scope}

\begin{scope}[shift={(15.0,4.8)}]
  \coordinate (A) at (0,0);
  \coordinate (B) at (2,0);
  \coordinate (C) at (0,2);
  \coordinate (D) at (2,2);
  \coordinate (E) at (0.9,0.8);
  \coordinate (F) at (2.9,0.8);
  \coordinate (G) at (0.9,2.8);
  \coordinate (H) at (2.9,2.8);

  \filldraw[faceA] (A)--(C)--(G)--(E)--cycle;
  \filldraw[faceB] (A)--(B)--(F)--(E)--cycle;
  \filldraw[faceC] (A)--(C)--(H)--(F)--cycle;
  \filldraw[faceD] (A)--(B)--(H)--(G)--cycle;

  \draw[edge] (A)--(B)--(D)--(C)--cycle;
  \draw[edge] (E)--(F)--(H)--(G)--cycle;
  \draw[edge] (A)--(E);
  \draw[edge] (B)--(F);
  \draw[edge] (C)--(G);
  \draw[edge] (D)--(H);

  \node[vertex]  at (A) {};
  \node[vertex]  at (B) {};
  \node[vertex]  at (C) {};
  \node[missing] at (D) {};
  \node[vertex]  at (E) {};
  \node[vertex]  at (F) {};
  \node[vertex]  at (G) {};
  \node[vertex]  at (H) {};
\end{scope}

\begin{scope}[shift={(2.5,0)}]
  \coordinate (A) at (0,0);
  \coordinate (B) at (2,0);
  \coordinate (C) at (0,2);
  \coordinate (D) at (2,2);
  \coordinate (E) at (0.9,0.8);
  \coordinate (F) at (2.9,0.8);
  \coordinate (G) at (0.9,2.8);
  \coordinate (H) at (2.9,2.8);

  \filldraw[faceA] (A)--(C)--(G)--(E)--cycle;
  \filldraw[faceB] (A)--(B)--(D)--(C)--cycle;
  \filldraw[faceC] (A)--(D)--(H)--(E)--cycle;
  \filldraw[faceD] (A)--(B)--(H)--(G)--cycle;

  \draw[edge] (A)--(B)--(D)--(C)--cycle;
  \draw[edge] (E)--(F)--(H)--(G)--cycle;
  \draw[edge] (A)--(E);
  \draw[edge] (B)--(F);
  \draw[edge] (C)--(G);
  \draw[edge] (D)--(H);

  \node[vertex]  at (A) {};
  \node[vertex]  at (B) {};
  \node[vertex]  at (C) {};
  \node[vertex]  at (D) {};
  \node[vertex]  at (E) {};
  \node[missing] at (F) {};
  \node[vertex]  at (G) {};
  \node[vertex]  at (H) {};
\end{scope}

\begin{scope}[shift={(7.5,0)}]
  \coordinate (A) at (0,0);
  \coordinate (B) at (2,0);
  \coordinate (C) at (0,2);
  \coordinate (D) at (2,2);
  \coordinate (E) at (0.9,0.8);
  \coordinate (F) at (2.9,0.8);
  \coordinate (G) at (0.9,2.8);
  \coordinate (H) at (2.9,2.8);

  \filldraw[faceA] (A)--(B)--(F)--(E)--cycle;
  \filldraw[faceB] (A)--(B)--(D)--(C)--cycle;
  \filldraw[faceC] (A)--(D)--(H)--(E)--cycle;
  \filldraw[faceD] (A)--(C)--(H)--(F)--cycle;

  \draw[edge] (A)--(B)--(D)--(C)--cycle;
  \draw[edge] (E)--(F)--(H)--(G)--cycle;
  \draw[edge] (A)--(E);
  \draw[edge] (B)--(F);
  \draw[edge] (C)--(G);
  \draw[edge] (D)--(H);

  \node[vertex]  at (A) {};
  \node[vertex]  at (B) {};
  \node[vertex]  at (C) {};
  \node[vertex]  at (D) {};
  \node[vertex]  at (E) {};
  \node[vertex]  at (F) {};
  \node[missing] at (G) {};
  \node[vertex]  at (H) {};
\end{scope}

\begin{scope}[shift={(12.5,0)}]
  \coordinate (A) at (0,0);
  \coordinate (B) at (2,0);
  \coordinate (C) at (0,2);
  \coordinate (D) at (2,2);
  \coordinate (E) at (0.9,0.8);
  \coordinate (F) at (2.9,0.8);
  \coordinate (G) at (0.9,2.8);
  \coordinate (H) at (2.9,2.8);

  \filldraw[faceA] (A)--(C)--(G)--(E)--cycle;
  \filldraw[faceB] (A)--(B)--(F)--(E)--cycle;
  \filldraw[faceC] (A)--(B)--(D)--(C)--cycle;

  \draw[edge] (A)--(B)--(D)--(C)--cycle;
  \draw[edge] (E)--(F)--(H)--(G)--cycle;
  \draw[edge] (A)--(E);
  \draw[edge] (B)--(F);
  \draw[edge] (C)--(G);
  \draw[edge] (D)--(H);

  \node[vertex]  at (A) {};
  \node[vertex]  at (B) {};
  \node[vertex]  at (C) {};
  \node[vertex]  at (D) {};
  \node[vertex]  at (E) {};
  \node[vertex]  at (F) {};
  \node[vertex]  at (G) {};
  \node[missing] at (H) {};
\end{scope}
\end{tikzpicture}
\end{center}
\end{example}

For every dimension $n$ there are thus $2^n-1$ corners.
However, as the next proposition shows, there essentially are only $n$ relevant corners.
Let $d_\omega = \lvert\omega\rvert$ be the distance of $\omega$ to $0^n$.

\begin{proposition}
Let $n\in\N$ and $\omega,\epsilon\in\{0,1\}^n\setminus\{0^n\}$.
Then there is an automorphism $\phi$ on ${\bblox}^n$ in $\bbGamma$ such that the induced function $\mathrm{Sub}({\bblox}^n)\to\mathrm{Sub}({\bblox}^n)$
maps ${\bbcor}^n_\omega$ to ${\bbcor}^n_\epsilon$ precisely if $d_\omega = d_\epsilon$.
\end{proposition}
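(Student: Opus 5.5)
The plan is to identify the automorphisms of ${\bblox}^n$ in $\bbGamma$ and compute how they act on the corners ${\bbcor}^n_\omega$. By Proposition~\ref{lem:iso-gamma-bblox}, $\Aut_{\bbGamma}(\langle n\rangle)=\uS_n$, and under $G$ a permutation $\sigma$ acts on $\{0,1\}^n$ by permuting coordinates. In particular it preserves the Hamming weight $d_\omega=\lvert\omega\rvert$.

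First I show that $\sigma$ transports ${\bbcor}^n_\omega$ to ${\bbcor}^n_{\sigma(\omega)}$. The image of a representable subobject $f\colon{\bblox}^{n-1}\hookrightarrow{\bblox}^n$ under $\sigma$ is the representable subobject $\sigma\circ f$. Moreover $\omega\notin\im f$ if and only if $\sigma(\omega)\notin\im(\sigma f)$. Since the induced map on $\mathrm{Sub}({\bblox}^n)$ preserves unions, we get $\sigma({\bbcor}^n_\omega)={\bbcor}^n_{\sigma(\omega)}$. The symmetric group acts transitively on the vectors of a fixed weight. Hence if $d_\omega=d_\epsilon$, there is a $\sigma$ with $\sigma(\omega)=\epsilon$, and this $\sigma$ maps ${\bbcor}^n_\omega$ to ${\bbcor}^n_\epsilon$.

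For the converse, I show that the corner ${\bbcor}^n_\omega$ remembers $\omega$. Precisely, the claim is that the set of $0$-cubes ${\bbcor}^n_\omega(0)\subseteq\{0,1\}^n$ equals $\{0,1\}^n\setminus\{\omega\}$. Unions of subpresheaves are computed sectionwise, and the $0$-cubes of the image of $f$ are exactly the set-theoretic image of $f$. So ${\bbcor}^n_\omega(0)$ is the union of the images $\im f$ of the admissible $f$, and it does not contain $\omega$.

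It remains to show that every $v\neq\omega$ lies in some $\im f$ with $\omega\notin\im f$. The vertex $v=0^n$ lies in every image, and for $n\geq 2$ admissible $f$ exist (see the case analysis below). For $n=1$ one checks directly that ${\bbcor}^1_{1}$ is the point $0$. By Lemma~\ref{lem:basic-prop-indexcat}, monomorphisms $\langle n-1\rangle\to\langle n\rangle$ in $\bbGamma$ are opposite to surjections $\langle n\rangle\to\langle n-1\rangle$ in $\Fin_\ast$. Such a surjection either sends exactly one $i$ to $\ast$, giving image $\{x_i=0\}$, or identifies exactly two indices $i\neq j$, giving image $\{x_i=x_j\}$. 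Now let $v\neq\omega$ be nonzero, and split into two cases.
\begin{enumerate}[(i)]
\item If some $i$ has $v_i=0$ and $\omega_i=1$, take the face $\{x_i=0\}$.
\item Otherwise there is an $i$ with $v_i=1$ and $\omega_i=0$. Since $\omega\neq 0^n$, pick $j$ with $\omega_j=1$; then $v_j=1$, because we are not in case (i). Take the face $\{x_i=x_j\}$, which contains $v$ but not $\omega$.
\end{enumerate}

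Finally, suppose an automorphism $\sigma$ maps ${\bbcor}^n_\omega$ onto ${\bbcor}^n_\epsilon$. Evaluating at $0$, the bijection $\sigma$ of $\{0,1\}^n$ sends $\{0,1\}^n\setminus\{\omega\}$ onto $\{0,1\}^n\setminus\{\epsilon\}$. Therefore $\sigma(\omega)=\epsilon$, and so $d_\omega=d_\epsilon$. The main care point is the case analysis above, in particular the need for the diagonal faces $\{x_i=x_j\}$ in case (ii); the rest is bookkeeping with the identification $\bbGamma\simeq\bblox$.
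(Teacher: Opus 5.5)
Your first paragraph, transporting ${\bbcor}^n_\omega$ to ${\bbcor}^n_{\sigma(\omega)}$ by a coordinate permutation, is exactly the paper's argument, and the paper stops there: it only proves the ``if'' direction. Your converse therefore goes beyond the paper, but as written it contains a false step. In $\PSh(\bbGamma)$ the $0$-cubes of the representable are
\[
{\bblox}^n(0)=\hom_{\bbGamma}(\langle 0\rangle,\langle n\rangle)=\hom_{\Fin_\ast}(\langle n\rangle,\langle 0\rangle),
\]
which is a single point, because the only linear map $\{0,1\}^0\to\{0,1\}^n$ hits $0^n$. So ${\bbcor}^n_\omega(0)$ is not $\{0,1\}^n\setminus\{\omega\}$. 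It is that one point, the same for every $\omega$, and ``evaluating at $0$'' gives no information. Your claim that the $0$-cubes of the image of $f$ form the set-theoretic image $\im f$ holds in $\cSet$, where maps from the point can hit every vertex, but not over $\bbGamma$.

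The repair is to evaluate at $\langle 1\rangle$ instead.
\begin{enumerate}
\item The $1$-cubes $t\colon{\bblox}^1\to{\bblox}^n$ are in bijection with $\{0,1\}^n$ via $t\mapsto t(1)$. This is how the paper encodes vertices over $\bbGamma$; compare the notation $c(0,\omega)$ in Section~\ref{ssec:truncation-gamma}.
\item An automorphism $\sigma$ acts by $t\mapsto\sigma\circ t$, i.e.\ by permuting the coordinates of $t(1)$.
\item The $1$-cubes of the image of $f\colon{\bblox}^{n-1}\hookrightarrow{\bblox}^n$ are the composites $f\circ t'$. Their endpoints $f(t'(1))$ run through exactly $\im f$.
\end{enumerate}
Since unions of subpresheaves are computed sectionwise, your case analysis (i)/(ii) then correctly shows ${\bbcor}^n_\omega(1)\cong\{0,1\}^n\setminus\{\omega\}$. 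The rest of your argument goes through unchanged: $\sigma(\omega)=\epsilon$, hence $d_\omega=d_\epsilon$.

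Equivalently, you may apply $j_!$ and evaluate at $0$ in $\cSet$, where ${\bbox}^n(0)=\{0,1\}^n$ does hold. This works because $j_!$ preserves monomorphisms and finite unions of subobjects (Lemma~\ref{lem:j-geometric-morphism}) and sends ${\bblox}^n$ to ${\bbox}^n$.
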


\begin{proof}
If $d_\omega = d_\epsilon$, there exists an automorphism $\phi$ of ${\bblox}^n$ in $\bbGamma$ with $\phi(\omega) = \epsilon$,
given by permuting the corresponding coordinates (since there are equally many 1's in $\omega$ as in $\epsilon$).
Now we need to see that the composition ${\bbcor}^n_\omega\to{\bblox}^n\to{\bblox}^n$ corresponds to ${\bbcor}^n_\epsilon\sub{\bblox}^n$.
Since $\phi$ is an isomorphism, it induces a bijection of subobjects.
Thus, for $f\colon{\bblox}^{n-1}\hookrightarrow{\bblox}^n$, $\omega\notin\im f$ precisely if $\epsilon\notin\im g$ for $g = \phi\circ f$.
Therefore, the unions are the same and so $\phi$ maps ${\bbcor}^n_\omega$ to ${\bbcor}^n_\epsilon$.
\end{proof}

\begin{definition}\label{def:corner_in_gamma}
We define
\[
    \Lambda_{\bbGamma} \coloneq \{{\bbcor}^n_\omega\hookrightarrow{\bblox}^n \mid n\in\N,\,\omega\in\{0,1\}^n\setminus\{0^n\}\}.
\]
\end{definition}

A priori, given a cubeset $X$, its underlying $\bbGamma$-set $j^\ast X$ (see Remark~\ref{rem:adjoint-triple-j})
supports a different notion of fibrancy w.r.t. $\Lambda_{\bbGamma}$.
However, in the concrete setting, there is no difference as the next proposition shows.

\begin{proposition}\label{prop:nilfib-restricted-nilfib-concrete}
Let $X$ be a concrete cubeset.
Then the following assertions are equivalent.
\begin{enumerate}[(a)]
    \item The cubeset $X$ is fibrant.
    \item The projection $p\colon j^\ast X\to\ast$ has the right lifting property against
        $\Lambda_{\bbGamma}$, i.e. the underlying $\bbGamma$-set is fibrant w.r.t. $\Lambda_{\bbGamma}$.
\end{enumerate}
\end{proposition}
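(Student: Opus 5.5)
The implication (b)$\Rightarrow$(a) should be formal, and (a)$\Rightarrow$(b) is where concreteness and the cube's affine automorphisms enter.

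\emph{(b)$\Rightarrow$(a).} I would first identify ${\bbcor}^n_{1^n}$ with the classical corner. A linear injection ${\bblox}^{n-1}\to{\bblox}^n$ has a canonical representation (Lemma~\ref{lem:coordinate_description}) in which every $x_j$ appears and all other entries are $0$. Since there are $n$ coordinates and only $n-1$ variables, either some coordinate is constantly $0$, or some variable appears twice and no coordinate is $0$. In the second case $1^{n-1}\mapsto 1^n$, so the face contains $1^n$. In the first case the image lies in some $[x_i=0]$, is contained in the lower face $\epsilon_i$, and misses $1^n$. Hence ${\bbcor}^n_{1^n}$ is exactly the union of the inert faces. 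By Lemma~\ref{lem:j-geometric-morphism}, $j_!$ preserves finite unions of subobjects and sends ${\bblox}^m$ to ${\bbox}^m$, so $j_!{\bbcor}^n_{1^n}={\bbcor}^n$ as a subobject of ${\bbox}^n$. The adjunction $j_!\dashv j^\ast$ then identifies lifting against ${\bbcor}^n\to{\bbox}^n$ in $\cSet$ with lifting against ${\bbcor}^n_{1^n}\to{\bblox}^n$ for $j^\ast X$. So (b) gives (a), and this direction does not use concreteness.

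\emph{(a)$\Rightarrow$(b).} Here I would use concreteness to translate everything into vertex labelings. Since $X$ and $j^\ast X$ have the same cubes, a map ${\bbcor}^n_\omega\to j^\ast X$ is a labeling $\lambda\colon\{0,1\}^n\setminus\{\omega\}\to X(0)$. Every vertex other than $\omega$ lies on some admissible face, e.g.\ an inert face $[x_i=0]$ with $\omega_i=1$ or a diagonal through $\omega$'s complement. The only condition on $\lambda$ is that its restriction along each linear injection missing $\omega$ lies in $X(n-1)$. By the same dictionary, a filler is a value at $\omega$ making the whole labeling an $n$-cube, and the faces through $\omega$ impose no further constraint. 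The plan is to rewrite this as a classical corner problem with missing vertex $\omega$, and then transport it by the flip $\phi_{\omega+1^n}$ of Lemma~\ref{lem:linearization-auto-bblox}, which is an automorphism in $\bbox$ taking $1^n$ to $\omega$. For that it suffices to check that the $n$ affine faces $[x_i=1-\omega_i]$, which are the facets avoiding $\omega$, are cubes of $X$ under $\lambda$. Fibrancy of $X$ then supplies a completion, and concreteness guarantees that the resulting cube restricts to $\lambda$ on every linear face.

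For $i$ with $\omega_i=1$ the facet is $[x_i=0]$, which is a linear face missing $\omega$ and hence a cube by assumption. The main obstacle is the case $\omega_i=0$, where the facet is the upper face $[x_i=1]$ and is not part of the $\omega$-corner. I would handle it by induction on $n$ together with the gluing property (Proposition~\ref{prop:nilfib-gluing}). Pick $k$ with $\omega_k=1$. The diagonal face $D=[x_i=x_k]$ is linear, misses $\omega$, and is therefore a cube, and the lower face $[x_k=0]$ is a cube too. The upper face $[x_i=1]$ shares the $(n-2)$-dimensional face $[x_i=1,x_k=1]$ with $D$, and together with $[x_i=1,x_k=0]\subseteq[x_k=0]$ this exhibits $[x_i=1]$ as obtained by gluing two known cubes along a common lower-dimensional face. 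I would organise this as an $(n-1)$-dimensional corner-completion problem of the kind treated by Corollary~\ref{cor:internal-nilfib} and solve it using fibrancy. Concreteness (Proposition~\ref{prop:unique-gluing-concrete}) forces the glued cube's vertices to coincide with $\lambda$, so $[x_i=1]$ is a cube. If this direct gluing does not line up, I would fall back on inducting over $d_\omega$, stepping from $\omega$ to a vertex with one more $1$ via the diagonal face.
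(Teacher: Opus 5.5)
Your proposal is correct. The direction (b)$\Rightarrow$(a) is exactly the paper's argument.

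For (a)$\Rightarrow$(b), your key step is also the paper's. Lemma~\ref{lem:different-liftings-comparison} produces the upper facet $[x_i=1]$ (for $\omega_i=0$) by a $\bprism$-pushout that glues $[x_k=0]$ and the diagonal $[x_i=x_k]$ (the paper's $s_k$) along their common face $[x_i=x_k=0]$. It then rotates $\omega$ to $1^n$ by a flip, just as you do.

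Where you differ is in the packaging.
\begin{itemize}
\item The paper proves the concreteness-free inclusion $j_!(\Lambda_{\bbGamma})\subset\mathrm{sat}(\Lambda'\cup\bprism)$. This forces it to also glue in all antidiagonals, so that it lands on the $\cSet$-corner ${\bbcor}^n_{1^n}$. It then invokes Lemma~\ref{lem:alternative-nilfibrant-concrete} to get lifting against $\Lambda'$, and that lemma is the only place concreteness enters.
\item You use concreteness from the outset to identify $\omega$-corners in $j^\ast X$ with vertex labelings. So after the gluing and the flip you already hold a classical corner, and fibrancy plus concreteness finish. The antidiagonal step and the intermediary $\Lambda'$ disappear.
\end{itemize}
The paper's route isolates a purely formal statement about saturated classes. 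Yours is shorter but relies on concreteness throughout. Your dictionary also tacitly uses that intersections of linear faces in $\PSh(\bbGamma)$ are representable with the expected vertex sets; this is Lemma~\ref{lem:bblox-bbox-pres-limits}.

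Your hedges are unnecessary. You need no induction on $n$ or on $d_\omega$, and no reorganisation via Corollary~\ref{cor:internal-nilfib}, because Proposition~\ref{prop:nilfib-gluing} applies directly to $[x_k=0]$ and $[x_i=x_k]$. It is the lift itself, not concreteness, that makes the glued cube's vertices agree with $\lambda$ on $[x_i=1]$. Concreteness is needed only for the translation between corners and labelings.
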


\begin{proof}
By adjunction, a diagonal lift in the left diagram (in $\PSh(\bbGamma)$) corresponds to a lift in the right diagram (in $\cSet$):
\begin{center}
\begin{tikzcd}
	{{\bbcor}^n_\omega} & {j^\ast X} & {j_!{\bbcor}^n_\omega} & X \\
	{{\bblox}^n} & \ast & {j_!{\bblox}^n} & \ast
	\arrow[from=1-1, to=1-2]
	\arrow[from=1-1, to=2-1]
	\arrow[""{name=0, anchor=center, inner sep=0}, from=1-2, to=2-2]
	\arrow[from=1-3, to=1-4]
	\arrow[""{name=1, anchor=center, inner sep=0}, from=1-3, to=2-3]
	\arrow[from=1-4, to=2-4]
	\arrow[dashed, from=2-1, to=1-2]
	\arrow[from=2-1, to=2-2]
	\arrow[dashed, from=2-3, to=1-4]
	\arrow[from=2-3, to=2-4]
	\arrow["\leftrightsquigarrow"{description}, draw=none, from=0, to=1]
\end{tikzcd}
\end{center}
But $j_!{\bblox}^n = {\bbox}^n$ and by Lemma~\ref{lem:j-geometric-morphism}, $j_!$ preserves finite unions of subobjects
(the union of subobjects is the pushout of those subobjects along their intersection), so we have that
\[
    j_!{\bbcor}^n_\omega = \bigcup_{\substack{f\colon{\bbox}^{n-1}\sub{\bbox}^n \\ f\in\PSh(\bbGamma) \\ \omega\notin\im f}}{\bbox}^{n-1}
\]
where the union now is taken in $\cSet$ instead of $\PSh(\bbGamma)$.
Note that for $\omega=1^n$ this just corresponds to the classical corner ${\bbcor}^n$ in $\cSet$:
If $f\colon{\bbox}^{n-1}\hookrightarrow{\bbox}^n$ is a linear (= base point preserving) injective map in $\bbox$ with $1^n\notin\im f$,
then by its canonical representation we know that each $x_i$ appears at least once, and there must be at least one constant $0$.
In particular, by dimension, every $x_i$ appears exactly once, so $f=\epsilon_k$ for some $k$.
Here, $\epsilon_k$ is the inert map corresponding to the $k$-th coordinate.
Thus, $j_!{\bbcor}^n_{1^n} = {\bbcor}^n$.
This directly yields the implication $(b)\implies (a)$.

The other direction requires a bit more work for which we first develop the setup.
\end{proof}

We now want to mimic the definition of $\Lambda_{\bbGamma}$ in $\PSh(\bbGamma)$ also in $\cSet$.

\begin{definition}
Let $n\in\N$ and $\omega\in\{0,1\}^n\setminus\{0^n\}$.
Then define the $n$-corner ${\bbcor}^n_\omega$ w.r.t. $\omega$ as the union over all injections
$f\colon{\bbox}^{n-1}\to{\bbox}^n$ in $\bbox$ such that $f$ misses $\omega$, i.e., $\omega\notin\im f$:
\[
    {\bbcor}^n_\omega = \bigcup_{\substack{f\colon{\bbox}^{n-1}\sub{\bbox}^n \\ \omega\notin\im f}}{\bbox}^{n-1}\hookrightarrow{\bbox}^n.
\]
We define
\[
    \Lambda' \coloneq \{{\bbcor}^n_\omega\hookrightarrow{\bbox}^n \mid n\in\N,\,\omega\in\{0,1\}^n\setminus\{0^n\}\}
\]
as a set of morphisms in $\cSet$.
\end{definition}

\begin{lemma}\label{lem:alternative-nilfibrant-concrete}
Let $X$ be a concrete cubeset with the gluing property and consider the projection $p\colon X\to\ast$.
Then the following assertions are equivalent.
\begin{enumerate}[(a)]
    \item $p$ has the right lifting property w.r.t. $\Lambda'$.
    \item $p$ has the right lifting property w.r.t. $\Lambda$, i.e., $X$ is fibrant.
\end{enumerate}
\end{lemma}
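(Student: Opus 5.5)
The plan is to run both directions through one observation. Since $X$ is concrete, a map from a union of representable subobjects of ${\bbox}^n$ into $X$ is determined by its values on vertices. Moreover, two cubes with the same vertices coincide, by Definition~\ref{def:concrete-cubeset} and Lemma~\ref{lem:concrete-morphism}. In addition, the flip $\phi_{\omega+1^n}$ from Lemma~\ref{lem:linearization-auto-bblox} is an automorphism of ${\bbox}^n$ sending $\omega$ to $1^n$. It maps injections missing $\omega$ bijectively onto injections missing $1^n$, so it identifies ${\bbcor}^n_\omega\hookrightarrow{\bbox}^n$ with ${\bbcor}^n_{1^n}\hookrightarrow{\bbox}^n$. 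Hence it suffices to compare lifting against ${\bbcor}^n\hookrightarrow{\bbox}^n$ and against ${\bbcor}^n_{1^n}\hookrightarrow{\bbox}^n$, the latter formed in $\cSet$. Note that ${\bbcor}^n\subseteq{\bbcor}^n_{1^n}$, because every lower face $\epsilon_i$ misses $1^n$. The inclusion is in general strict: for $n=2$, the antidiagonal $x\mapsto(x,1-x)$ also misses $1^2$.

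For (a)$\Rightarrow$(b), take an $n$-corner $\lambda\colon{\bbcor}^n\to X$. I first extend it to ${\bbcor}^n_{1^n}$ (see below). Then (a) gives a filler ${\bbox}^n\to X$, and its restriction to ${\bbcor}^n$ agrees with $\lambda$, since all vertices of ${\bbcor}^n$ lie in the extended corner. For (b)$\Rightarrow$(a), take $\mu\colon{\bbcor}^n_{1^n}\to X$. By (b), the restriction $\mu|_{{\bbcor}^n}$ has a filler $c\colon{\bbox}^n\to X$. For every injective $f\colon{\bbox}^{n-1}\to{\bbox}^n$ with $1^n\notin\im f$, the cubes $c\circ f$ and $\mu\circ f$ have the same vertices. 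Indeed, every vertex $v\neq 1^n$ of ${\bbox}^n$ lies in ${\bbcor}^n$, since $v$ has a zero coordinate $i$ and so lies in $\im\epsilon_i$. By concreteness $c\circ f=\mu\circ f$, so $c$ fills $\mu$. This direction needs only concreteness.

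The real content, and the expected main obstacle, is the extension step in (a)$\Rightarrow$(b). Let $\lambda\colon{\bbcor}^n\to X$ be a corner and $f$ an injection missing $1^n$. I must show that the vertex configuration $\lambda\circ f\colon\{0,1\}^{n-1}\to X(0)$ is an $(n-1)$-cube of $X$. Once this holds for all such $f$, the resulting cubes are automatically compatible on overlaps by concreteness, and they glue to a map ${\bbcor}^n_{1^n}\to X$.

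To prove that claim I would use the canonical representation of Lemma~\ref{lem:coordinate_description}. If some output coordinate of $f$ is constant $0$, then $f$ factors through a lower face $\epsilon_i$, and $\lambda\circ f$ is a cube as a restriction of $\lambda_i$. Otherwise $f$ avoids $1^n$ only because some $x_j$ appears both as $y_i=x_j$ and as $y_k=1-x_j$. In that case I plan to realize $\lambda\circ f$ as a gluing, via the gluing property (Definition~\ref{def:gluing-property}). The image of $f$ is covered by two $(n-1)$-cubes lying in the lower faces $[x_i=0]$ and $[x_k=0]$. Restricted to these faces, $\lambda$ supplies two cubes sharing an $(n-2)$-face. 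The prism filler then produces an $n$-cube whose upper (opposite) face is the required diagonal configuration. The model case is $n=2$: gluing the edges $\lambda(01)\!-\!\lambda(00)$ and $\lambda(00)\!-\!\lambda(10)$ yields the antidiagonal edge.

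For general $f$ with several complementary pairs and possibly several constant-$1$ coordinates, I would induct on the number of such pairs, or equivalently on $n$. Flips and coordinate permutations in $\bbox$ reduce to the case of a single complementary pair, the other coordinates being handled by restricting to subcubes. The delicate part is checking that the two glued cubes really share the correct face, with matching orientation, so that the prism map in $\bprism$ applies. I would do this by writing the pushout ${\bbox}^n\sqcup_{{\bbox}^{n-1}}{\bbox}^n$ explicitly in coordinates.
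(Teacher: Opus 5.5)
Your proposal is correct and follows the paper's proof. For (b)$\Rightarrow$(a) you reduce to $\omega=1^n$ by a flip and use concreteness, since every vertex of ${\bbcor}^n_{1^n}$ already lies in ${\bbcor}^n$. For (a)$\Rightarrow$(b) you extend the corner over each antidiagonal by a $\bprism$-filler built from two lower faces, with compatibility guaranteed by concreteness.

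Two small corrections. First, the prism filler in dimension $n-1$ directly produces the $(n-1)$-cube $\lambda\circ f$, not an $n$-cube having it as a face; your $n=2$ model case is already described correctly. Second, an injection ${\bbox}^{n-1}\to{\bbox}^n$ has only one surplus output coordinate. So an $f$ missing $1^n$ that does not factor through a lower face has exactly one complementary pair and no constant coordinates, and the induction over several pairs is unnecessary.
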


\begin{proof}
$(a)\implies(b)$:
Let $\lambda\colon{\bbcor}^n\to X$ be an $n$-corner in $X$.
For every subobject $f\colon{\bbox}^{n-1}\to{\bbox}^n$ that misses $1^n$ and is not an inert,
there exists an $i$ such that $x_i$ appears twice in the canonical representation of $f$,
once as $x_i$ (at coordinate $k$) and once as $1-x_i$ (at coordinate $j$).
Then $f\lvert_{[x_i=0]}$ factors through the inert $\epsilon_k$ as an $n-2$-dimensional cube,
and $f\lvert_{[x_i=1]}$ factors through the inert $\epsilon_j$ as an $n-2$-dimensional cube.
Restricting the corner to $\epsilon_k\cup\epsilon_j$ yields a map ${\bbox}^{n-1}\sqcup_{{\bbox}^{n-2}}{\bbox}^{n-1}\to X$.
Since $X$ has the (unique) gluing property, we can extend it (uniquely) to a map
\[
    ({\bbox}^{n-1}\sqcup_{{\bbox}^{n-2}}{\bbox}^{n-1})\sqcup_{{\bbox}^{n-2}\sqcup{\bbox}^{n-2}}{\bbox}^{n-1}\to X
\]
whose restriction to ${\bbox}^{n-1}$ can be viewed as an extension of $\lambda$ to (the image of) $f$.
Iterating this process exhausts, after finitely many steps, all injective non-inert maps ${\bbox}^{n-1}\to{\bbox}^n$ that miss $1^n$,
compatible on intersections (by the uniqueness of the gluings),
and thus gives a map $\lambda'\colon{\bbcor}^n_{1^n}\to X$ that restricts to $X$.
This map $\lambda'$ can, by assumption, be lifted to a cube in $X$ which then also is a completion of $\lambda$.

$(b)\implies(a)$:
By applying an automorphism, we may assume that $\omega = 1^n$.
So we need to see that $p$ lifts against ${\bbcor}^n_{1^n}\to{\bbox}^n$.
Restricting to ${\bbcor}^n$ yields a lift
\begin{center}
\begin{tikzcd}
	{{\bbcor}^n} & X \\
	{{\bbcor}^n_{1^n}} & X \\
	{{\bbox}^n} & \ast
	\arrow[from=1-1, to=1-2]
	\arrow[from=1-1, to=2-1]
	\arrow[from=2-1, to=2-2]
	\arrow[from=2-1, to=3-1]
	\arrow["\id"', from=2-2, to=1-2]
	\arrow[from=2-2, to=3-2]
	\arrow[dashed, from=3-1, to=1-2]
	\arrow[from=3-1, to=3-2]
\end{tikzcd}
\end{center}
making the outer rectangle and triangles commutative.
Since $X$ is concrete, also the lower small triangle commutes:
the values of the corner completion on an antidiagonal in ${\bbcor}^n_{1^n}$ not in ${\bbcor}^n$
are uniquely determined by its values on points, which already are present in ${\bbcor}^n$.
\end{proof}

\begin{lemma}\label{lem:different-liftings-comparison}
We have the following inclusion
\[
    j_!(\Lambda_{\bbGamma})\subset\mathrm{sat}(\Lambda'\cup\bprism)
\]
of morphisms in $\cSet$.
\end{lemma}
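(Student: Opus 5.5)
The plan is to factor $j_!$ of a $\bbGamma$-corner inclusion as a composite
\[
    j_!{\bbcor}^n_\omega \hookrightarrow {\bbcor}^n_\omega \hookrightarrow {\bbox}^n ,
\]
where the middle object is the $\bbox$-version of the $\omega$-corner. The second map lies in $\Lambda'$ by definition. Saturated classes are closed under composition, so it suffices to show that the first inclusion lies in $\mathrm{sat}(\bprism)$. By Lemma~\ref{lem:j-geometric-morphism}, $j_!{\bbcor}^n_\omega$ is the union in $\cSet$ of the linear injective faces ${\bbox}^{n-1}\hookrightarrow{\bbox}^n$ missing $\omega$. The target ${\bbcor}^n_\omega$ is the union of all affine injective faces missing $\omega$. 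So the task is to adjoin the non-linear faces one at a time, each time by a pushout of a map in $\bprism$.

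The first step is to classify the faces. Since ${\bbox}^n$ is concrete, an injective face is determined, up to an automorphism of ${\bbox}^{n-1}$, by its image. By Lemma~\ref{lem:coordinate_description} and Lemma~\ref{lem:bbox-epi-mono}, an injective $f\colon{\bbox}^{n-1}\to{\bbox}^n$ has in its canonical representation either one constant coordinate, or exactly one variable appearing twice. Hence the images are exactly the sets $[x_k=0]$, $[x_k=1]$, $\{x_k=x_l\}$ and $\{x_k=1-x_l\}$. The linear ones are $[x_k=0]$ and $\{x_k=x_l\}$.

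The second step is to adjoin each non-linear face as the third side of a triangle in the $(x_k,x_l)$-plane, using the prism gluing of Proposition~\ref{prop:nilfib-gluing}. There the union of two $(n-1)$-faces sharing an $(n-2)$-face is completed by the opposite face.
\begin{enumerate}[(1)]
    \item For an upper face $[x_k=1]$ missing $\omega$ we have $\omega_k=0$. Since $\omega\neq 0^n$, there is an $l\neq k$ with $\omega_l=1$. Then $[x_l=0]$ and $\{x_k=x_l\}$ are linear and both miss $\omega$. They meet in $[x_k=x_l=0]$, and the triangle with vertices $(0,0),(1,0),(1,1)$ in the $(x_k,x_l)$-plane has $[x_k=1]$ as its third side. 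So we first adjoin all admissible upper faces in this way.
    \item For an antidiagonal $\{x_k=1-x_l\}$ missing $\omega$ we have $(\omega_k,\omega_l)\in\{(0,0),(1,1)\}$. If $(\omega_k,\omega_l)=(1,1)$, we use the linear sides $[x_k=0]$ and $[x_l=0]$. If $(\omega_k,\omega_l)=(0,0)$, we use the upper faces $[x_k=1]$ and $[x_l=1]$, which miss $\omega$ and were adjoined in stage (1).
\end{enumerate}
Since there are finitely many faces, the whole inclusion is a finite composite of such attachments.

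The main obstacle is checking that each pushout really produces the union. The pushout of the prism map along the map into the current union $U$ gives an object mapping epimorphically onto $U\cup F$, where $F$ is the new face. This map is an isomorphism only if $F\cap U$ equals the union of the two gluing sides; in general $F\cap U$ is larger. The reasoning would be to proceed by induction on dimension. The surplus intersection $F\cap U$ is a union of affine faces of dimension at most $n-2$ inside $F\simeq{\bbox}^{n-1}$. Before attaching $F$, I would adjoin these lower-dimensional faces by the same triangle trick, applied one dimension down via the roof embedding used in the proof of Proposition~\ref{prop:nilfib-gluing}. This modifies the gluing datum so that the relevant square of subobjects of ${\bbox}^n$ is bicartesian, as in the proof of Proposition~\ref{prop:triv-nilcofib-simp}. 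If this bookkeeping becomes unwieldy, my fallback is to use only that the resulting map is a retract of a relative $\bprism$-cell complex. I would exhibit the retraction using concreteness of ${\bbox}^n$: every cell complex built inside ${\bbox}^n$ maps to its image there, and its image is exactly ${\bbcor}^n_\omega$.
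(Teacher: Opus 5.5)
Your construction is the paper's own. It uses the same triangles for the upper faces and the antidiagonals, followed by $\Lambda'$. You have also located exactly where it breaks, which the paper passes over by simply asserting that all maps are injective. A pushout along a map of $\bprism$ yields $U\sqcup_{(G_1\cap F)\sqcup(G_2\cap F)}F$, and this is $U\cup F$ only when $F\cap U=(G_1\cap F)\cup(G_2\cap F)$.

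Neither of your repairs closes this, and no repair can, because the inclusion already fails for $n=3$, $\omega=1^3$, where $j_!{\bbcor}^3_{1^3}={\bbcor}^3$. Take $U={\bbcor}^3$ and $F=\{x_1=1-x_2\}$, parametrized by $\phi(s,t)=(s,1-s,t)$. Then $F\cap U$ is the U-shape formed by the edges $(0,1,t)$, $(1,0,t)$ and $(u,1-u,0)$; the last one is the antidiagonal of the face $[x_3=0]$.
\begin{itemize}
\item Fix (a): all three edges already lie in $U$, so there is nothing lower-dimensional left to adjoin. A bicartesian square would require attaching $F$ along the whole U-shape. That inclusion into ${\bbox}^2$ is not in $\mathrm{sat}(\Lambda'\cup\bprism)$: $\cD_1(A)$ is concrete and fibrant, hence lifts against $\Lambda'\cup\bprism$, and every pair of points is an edge, but its $2$-cubes satisfy $v_{00}-v_{10}-v_{01}+v_{11}=0$.
\item Fix (b) has the retract backwards. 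You need a section ${\bbcor}^n_\omega\to Z$ under $j_!{\bbcor}^n_\omega$, not the comparison map from $Z$ onto its image. Here $Z$ contains two distinct $1$-cubes over $(u,1-u,0)$, and every $2$-cube of $Z$ over $F$ uses the new one, so no section exists.
\end{itemize}
The same duplication already occurs in your step (1). For example, for $\omega=(1,1,0)$ the face $[x_3=1]$ meets $U$ in its entire boundary.

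To see that the statement itself fails, factor ${\bbcor}^3\to X\to\ast$ by the small object argument for $\Lambda'\cup\bprism$.
\begin{itemize}
\item Key claim: a $2$-cube of a cell ${\bbox}^m$ that is not in the attaching locus has at most two of its four boundary edges in the locus. Degenerate squares are immediate.
\item For $\Lambda'$: an injective square missing $\omega$ lies in a codimension-one face missing $\omega$. So a square outside ${\bbcor}^m_\omega$ contains $\omega$, and so do two of its edges.
\item For $\bprism$: the new cube is attached along $[x_i=0]\sqcup[x_i=1]$, and $x_i$ varies along exactly two edges of any square not contained in these faces.
\item Pushouts along monomorphisms are levelwise disjoint unions. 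So by induction over the attachments, no $2$-cube of $X$ outside ${\bbcor}^3$ has three boundary edges in ${\bbcor}^3$.
\item But a $3$-cube $\tau$ of $X$ extending the tautological corner would make $\tau\circ\phi$ exactly such a $2$-cube.
\end{itemize}
Hence ${\bbcor}^3\hookrightarrow{\bbox}^3$ does not lift against $X\to\ast\in r(\Lambda'\cup\bprism)$, and $j_!(\Lambda_{\bbGamma})\not\subset\mathrm{sat}(\Lambda'\cup\bprism)$.

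What your construction does establish is the only consequence used later, in Proposition~\ref{prop:nilfib-restricted-nilfib-concrete}. Each pushout $P$ has the vertices of $U\cup F$ and maps onto $U\cup F\subseteq{\bbox}^n$. Since ${\bbox}^n$ is concrete, $P^{\mathrm{conc}}=U\cup F$. Hence any concrete $X$ with the right lifting property against $\Lambda'\cup\bprism$ extends a map $j_!{\bbcor}^n_\omega\to X$ face by face to the $\cSet$-version of ${\bbcor}^n_\omega$, and then to ${\bbox}^n$. Concreteness has to be used on the test object, not on ${\bbox}^n$.
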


\begin{proof}
Let $f\in\Lambda_{\bbGamma}$ be a map $f\colon{\bbcor}^n_\omega\to{\bblox}^n$.
We first extend $j_!f$ along outer faces that don't contain $\omega$,
using pushouts along maps in $\bprism$,
such that, after applying an automorphism of the cube mapping $\omega$ to $1^n$,
the domain of the extended $j_!f$ contains the classical corner ${\bbcor}^n$.
Lastly, we extend $j_!f$ to all anti-diagonals, using pushouts along maps in $\bprism$.
This procedure thus has extended $j_!f$ to a map in $\Lambda'$, yielding the claim.

Assume that there is an inert $\epsilon_i\colon{\bbox}^{n-1}\to{\bbox}^n$ such that $\omega_i = 0$,
i.e., $\omega\in\epsilon_i$.
Now take some $k$ with $\omega_k = 1$.
Then $\epsilon_k$ factors through ${\bbcor}^n_\omega$.
Define $s_k\colon{\bbox}^{n-1}\to{\bbox}^n$ by
\[
    s_k(x_1,\ldots,x_{n-1})\mapsto (x_1,\ldots,x_k,x_{k+1},\dots,x_{i-1}, x_k, x_{i}\ldots,x_{n-1})
\]
where the second $x_k$ appears at index $i$, having image $[x_k=x_i]$ (for $i<k$ swap them).
Since $s_k$ is linear and doesn't contain $\omega$, it factors through ${\bbcor}^n_\omega$.
Moreover, $\epsilon_k\cap s_k$ is isomorphic to ${\bbox}^{n-2}$,
so we obtain a map $(\epsilon_k,s_k)\colon{\bbox}^{n-1}\sqcup_{{\bbox}^{n-2}}{\bbox}^{n-1}\to{\bbcor}^n_\omega$.
Now $\epsilon_k\cap\overline{\epsilon}_i$ and $s_k\cap\overline{\epsilon}_i$ both are isomorphic to ${\bbox}^{n-2}$,
and their intersection as subobjects of ${\bbox}^n$ is empty.
This induces a map $({\bbox}^{n-1}\sqcup_{{\bbox}^{n-2}}{\bbox}^{n-1})\sqcup_{{\bbox}^{n-2}\sqcup{\bbox}^{n-2}}{\bbox}^{n-1}\to{\bbox}^n$
which on the left acts as $(\epsilon_k,s_k)$ and on the right as $\overline{\epsilon}_i$.
This map makes the outer square commutative
\begin{center}
\begin{tikzcd}
	{{\bbox}^{n-1}\sqcup_{{\bbox}^{n-2}}{\bbox}^{n-1}} && {{\bbcor}^n_\omega} & \\
	\\
	{({\bbox}^{n-1}\sqcup_{{\bbox}^{n-2}}{\bbox}^{n-1})\sqcup_{{\bbox}^{n-2}\sqcup{\bbox}^{n-2}}{\bbox}^{n-1}} && {X_i} \\
	&&& {{\bbox}^n}
	\arrow[""{name=0, anchor=center, inner sep=0}, from=1-1, to=1-3]
	\arrow[from=1-1, to=3-1]
	\arrow[from=1-3, to=3-3]
	\arrow["j_!f", from=1-3, to=4-4]
	\arrow[from=3-1, to=3-3]
	\arrow[from=3-1, to=4-4]
	\arrow[dashed, from=3-3, to=4-4]
	\arrow["\lrcorner"{anchor=center, pos=0.125, rotate=180}, draw=none, from=3-3, to=0]
\end{tikzcd}.
\end{center}
Taking the pushout $X_i$ gives a factorization of $j_!f$ where the first map
is in $\mathrm{sat}(\Lambda'\cup\bprism)$ (since the left vertical map is in $\bprism$),
and all maps are injective.
Now repeat this process for all $i$ with $\omega_i=0$.
This yields a factorization
\begin{center}
\begin{tikzcd}
	{{\bbcor}^n_\omega} & X & {{\bbox}^n}
	\arrow["h"', from=1-1, to=1-2]
	\arrow["j_!f", curve={height=-12pt}, from=1-1, to=1-3]
	\arrow["g"', from=1-2, to=1-3]
\end{tikzcd}
\end{center}
of $j_!f$ where $h$ is in $\mathrm{sat}(\Lambda'\cup\bprism)$.
If there is no $i$ with $\omega_i=0$, then $h$ is the identity.
Now we rotate the cube: there exists an automorphism $\sigma$ of ${\bbox}^n$ in $\cSet$ that interchanges $\omega$ and $1^n$
(take the corresponding flips at those coordinates $i$ where $\omega_i = 0$).
Under this rotation, $g\colon X\to{\bbox}^n$ is the subobject corresponding to
\[
    \sigma({\bbcor}^n_\omega)\cup{\bbcor}^n\sub{\bbox}^n.
\]
Lastly, we have to glue the antidiagonals to $X$ via pushouts along $\bprism$:
for two inerts $\epsilon_i$ and $\epsilon_j$ in ${\bbox}^n$ with $i<j$, such that the map $a_{ij}\colon{\bbox}^{n-1}\to{\bbox}^n$
connecting $\epsilon_i$ and $\epsilon_j$ as an antidiagonal
(explicitly given by
\[
    a_{ij}(x_1,\ldots,x_{n-1}) = (y_1,\ldots,y_n)
\]
where $y_k = x_k$ for $k<j$, $y_j = 1-x_i$, $y_k = x_{k-1}$ for $k>j$)
does not yet factor through $X$, we can glue it as in the pushout
\begin{center}
\begin{tikzcd}
	{{\bbox}^{n-1}\sqcup_{{\bbox}^{n-2}}{\bbox}^{n-1}} && X & \\
	\\
	{({\bbox}^{n-1}\sqcup_{{\bbox}^{n-2}}{\bbox}^{n-1})\sqcup_{{\bbox}^{n-2}\sqcup{\bbox}^{n-2}}{\bbox}^{n-1}} && {Y_{ij}} \\
	&&& {{\bbox}^n}
	\arrow[""{name=0, anchor=center, inner sep=0}, from=1-1, to=1-3]
	\arrow[from=1-1, to=3-1]
	\arrow[from=1-3, to=3-3]
	\arrow["g", from=1-3, to=4-4]
	\arrow[from=3-1, to=3-3]
	\arrow[from=3-1, to=4-4]
	\arrow[dashed, from=3-3, to=4-4]
	\arrow["\lrcorner"{anchor=center, pos=0.125, rotate=180}, draw=none, from=3-3, to=0]
\end{tikzcd}.
\end{center}
Iterating this process for all such pairs $(i,j)$ yields an injection $Y\to{\bbox}^n$.
So we have shown that $g$ factors as
\begin{center}
\begin{tikzcd}
	X & Y & {{\bbox}^n}
	\arrow["{h'}"', from=1-1, to=1-2]
	\arrow["g", curve={height=-12pt}, from=1-1, to=1-3]
	\arrow["{g'}"', from=1-2, to=1-3]
\end{tikzcd}
\end{center}
where $h'\in\mathrm{sat}(\Lambda'\cup\bprism)$.
To show $g'\in\mathrm{sat}(\Lambda'\cup\bprism)$, we show that
\[
    Y = \bigcup_{\substack{f\colon{\bbox}^{n-1}\sub{\bbox}^n \\ 1^n\notin\im f}}{\bbox}^{n-1}
\]
where $f$ runs over maps in $\cSet$.
By construction, $Y$ is contained in the right hand side.
On the other hand, if $f\colon{\bbox}^{n-1}\to{\bbox}^n$ is a subobject with $1^n\notin\im f$,
then in the canonical representation of $f$,
each $x_i$ appears at least once, and either $y_j \equiv 0$ for some $j$ or there exists precisely one $i$ such that $x_i$ and $1-x_i$ appear.
In the first case, $f$ is an inert and thus factors through $X$, and hence through $Y$.
In the second case, $f$ is an antidiagonal and therefore factors through $Y$.
\end{proof}

This allows us to conclude the proposition.

\begin{proof}[Proof of Proposition~\ref{prop:nilfib-restricted-nilfib-concrete}]
$(a)\implies (b)$:
By Lemma~\ref{lem:alternative-nilfibrant-concrete}, the projection $p$ has the right lifting property w.r.t. $\Lambda'$.
Since $X$ is fibrant, it has the gluing property by Proposition~\ref{prop:nilfib-gluing},
i.e., $p$ has the right lifting property w.r.t. $\bprism$.
This means that $p$ has the right lifting property w.r.t. $\mathrm{sat}(\Lambda'\cup\bprism)$.
By Lemma~\ref{lem:different-liftings-comparison}, $p$ has the right lifting property w.r.t. $j_!(\Lambda_{\bbGamma})$.
This shows that $j^\ast X\to\ast$ has the right lifting property w.r.t. $\Lambda_{\bbGamma}$.
\end{proof}

\subsection{Truncations}\label{sec:truncation}

\begin{definition}\label{def:n-step}
A cubeset $X$ is of \emph{degree $n$} (or \emph{$n$-step}) for $n\geq 0$ if corner completion strictly above dimension $n$ is unique.
This means that for every $k > n$ restriction along the $k$-corner
\[
    \hom({\bbox}^k,X)\to\hom({\bbcor}^k,X)
\]
is an isomorphism (bijective).
A cubeset $X$ is of \emph{degree $-1$} if it is either empty or the point,
and it is of \emph{degree $-2$} if it is the point.
We denote the full subcategory of cubesets of degree $n$ by $\cSet_n$.
\end{definition}

\begin{remark}
If we let $\Lambda_n = \{{\bbcor}^k\to{\bbox}^k : k > n\}$, then the cubesets of degree $n$ are precisely the $\Lambda_n$-local objects.
In particular, a $\Lambda_n$-local object $X$ cannot distinguish between a $k$-corner and a $k$-cube.
\end{remark}

\begin{remark}\label{rem:corner-no-topology}
Note that, however, $\Lambda_n$ does not induce a Lawvere-Tierney topology on $\cSet$, or equivalently a Grothendieck topology on $\bbox$.
If it were, then to any morphism $f\colon{\bbox}^n\to{\bbox}^m$, a cover of ${\bbox}^m$ would allow a pullback along $f$ to a cover of ${\bbox}^n$.
But if $1^m\colon\ast\to{\bbox}^m$ is the point $1^m$ in $\{0,1\}^m$, the corner in ${\bbox}^m$ does not factor through this point (which it would have to, since the point only is covered by the point).
\end{remark}

\begin{proposition}\label{prop:truncations-char}
Let $X$ be a cubeset. Then the following assertions are equivalent.
\begin{enumerate}[(a)]
	\item The cubeset $X$ is $n$-step, i.e. the pullback map
	\[
        \hom({\bbox}^k,X)\to\hom({\bbcor}^k,X)
    \]
    is bijective for all $k>n$.
	\item For every $f\colon Y\to Z\in l(r(\Lambda_n))$
    (i.e., a trivial cofibration/retract of a relative cell complex w.r.t. corner inclusions of dimension $\geq n+1$),
	one has that
    \[
        f^\ast\colon\hom(Z,X)\to \hom(Y,X)
    \]
    is a bijection.
    \item It is \emph{internally $n$-step}, meaning that
	\[
        \ihom({\bbox}^k,X)\to\ihom({\bbcor}^k,X)
    \]
	is an isomorphism of cubesets for all $k>n$.
    \item For every $f\colon Y\to Z\in l(r(\Lambda_n))$
	(i.e. a trivial cofibration/retract of a relative cell complex w.r.t. corner inclusions of dimension $\geq n+1$),
    one has that
	\[
    	\ihom(Z, X)\to \ihom(Y,X)
	\]
	is an isomorphism.
    \item The map
	\[
        \ihom({\bbox}^{n+1},X)\to\ihom({\bbcor}^{n+1},X)
    \]
    is an isomorphism.
\end{enumerate}
\end{proposition}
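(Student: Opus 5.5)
The plan is to prove the cycle of implications (a)$\Rightarrow$(b)$\Rightarrow$(d)$\Rightarrow$(c)$\Rightarrow$(e)$\Rightarrow$(a).

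\emph{(a)$\Rightarrow$(b).} Let $S$ be the class of monomorphisms $f\colon Y\to Z$ for which $f^\ast\colon\hom(Z,X)\to\hom(Y,X)$ is bijective. By (a), $S$ contains $\Lambda_n$. Bijectivity of $f^\ast$ is stable under the three saturation operations:
\begin{itemize}
\item under pushouts, because $\hom(-,X)$ turns pushouts into pullbacks and pullbacks of bijections are bijections;
\item under transfinite composition, because $\hom(\varinjlim,X)=\varprojlim\hom(-,X)$;
\item under retracts, because retracts of bijections are bijections.
\end{itemize}
By Construction~\ref{cons:triv-cofib}, the class $l(r(\Lambda_n))$ is the saturation of $\Lambda_n$, so it is contained in $S$.

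\emph{(b)$\Rightarrow$(d).} Evaluated at ${\bbox}^m$, the map $\ihom(Z,X)\to\ihom(Y,X)$ is $\hom(Z\times{\bbox}^m,X)\to\hom(Y\times{\bbox}^m,X)$. So it suffices to show that $f\times\id_{{\bbox}^m}$ again lies in $l(r(\Lambda_n))$ whenever $f$ does. The functor $-\times{\bbox}^m$ is a left adjoint on the topos $\cSet$, so it preserves pushouts, transfinite compositions and retracts. It therefore suffices to check the generators. For these, $\lambda^k\times\id_m\colon{\bbcor}^k\times{\bbox}^m\to{\bbox}^{k+m}$ is the roof of Corollary~\ref{cor:internal-nilfib}. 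By Remark~\ref{prop:internal-nilfib}, it is built only from corner completions of dimension at least $k>n$, hence it lies in $l(r(\Lambda_n))$. This is the step I expect to be the main obstacle, since it requires revisiting the inductive construction in Proposition~\ref{prop:triv-nilcofib-simp}. There, each step adds a set $A_0$ containing some $A_i=\langle k+m\rangle^\circ\setminus\{i\}$-generated piece, and one must confirm that $\lvert A_0\rvert\geq k$ always holds. I would argue this directly: every minimal missing set $A_0$ must contain all of $\{1,\dots,k\}$, since otherwise $A_0$ would lie in $K$.

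\emph{(d)$\Rightarrow$(c)} is the special case $f=\lambda^k$. \emph{(c)$\Rightarrow$(e)} is the special case $k=n+1$.

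\emph{(e)$\Rightarrow$(a).} Evaluating (e) at $\ast$ gives bijectivity at $k=n+1$. For $k>n+1$ we induct on $k$. Internal bijectivity at level $k$ evaluated at ${\bbox}^m$ gives bijectivity of $\hom({\bbox}^{k+m},X)\to\hom({\bbcor}^k\times{\bbox}^m,X)$. Applying this with $m=k'-k$, we reduce (a) at level $k'$ to showing that restriction along ${\bbcor}^{k'}\hookrightarrow{\bbcor}^{n+1}\times{\bbox}^{k'-n-1}$ is bijective on $\hom(-,X)$. This inclusion is a finite union of faces: the corner faces ${\bbox}^{k'-1}$ indexed by $i>n+1$ are missing and must be recovered.

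I would rather exploit the roof filtration from Proposition~\ref{prop:triv-nilcofib-simp} applied to the downward-closed family generated by the $A_i$ with $i\le k'$ versus $i\le n+1$. Passing from one to the other adds sets of size $\geq k'-1>n$. At each step, the fill is a corner completion ${\bbcor}^{d}\to{\bbox}^{d}$ with $d\ge n+1$. By induction on $d$, using internal bijectivity at levels between $n+1$ and $d-1$, such a completion is unique and exists, which closes the induction.
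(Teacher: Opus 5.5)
Your proposal is correct. Step (a)$\Rightarrow$(b) is the paper's saturation argument, but the other nontrivial steps are organized differently.

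\textbf{(b)$\Rightarrow$(d).} The paper proves (a)$\Rightarrow$(c) directly on hom-sets: unique corner completion in all dimensions $>n$ gives unique fillers of the roofs ${\bbcor}^k\times{\bbox}^j\to{\bbox}^{k+j}$. It then gets (c)$\Rightarrow$(d) by saturation again. You instead prove (b)$\Rightarrow$(d) by showing that the class $l(r(\Lambda_n))$ itself is closed under $-\times{\bbox}^m$, checking this on generators via the same roofs. The input is identical: Corollary~\ref{cor:internal-nilfib} and Remark~\ref{prop:internal-nilfib}. Your observation that every set outside $K$ contains $\{1,\dots,k\}$ is precisely the justification of that remark. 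Your packaging yields a slightly stronger structural fact about the class of $\Lambda_n$-trivial cofibrations.

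\textbf{(e)$\Rightarrow$(a).} The paper inducts on $j$ through ${\bbcor}^{n+1}\times{\bbox}^{j+1}\to{\bbcor}^{n+1+j}\times{\bbox}^1\to{\bbcor}^{n+2+j}$, where the second map is a single pushout of an $(n+1+j)$-corner inclusion. To apply the induction hypothesis to the first map, one actually needs it for $\ihom({\bbox}^1,X)$ rather than $X$. This is harmless, since $\ihom({\bbox}^1,X)$ again satisfies (e), but the paper leaves it implicit. You use strong induction on the dimension and decompose ${\bbcor}^{n+1}\times{\bbox}^{k'-n-1}\subseteq{\bbcor}^{k'}$ in one go. You do this by running the iteration of Proposition~\ref{prop:triv-nilcofib-simp} between two downward-closed families, each time adding a set of minimal cardinality lying in the larger family but not the current one. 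This only ever uses $X$ itself and is the cleaner argument.

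There are two slips in that last step.
\begin{itemize}
\item The inclusion goes ${\bbcor}^{n+1}\times{\bbox}^{k'-n-1}\hookrightarrow{\bbcor}^{k'}$, not the reverse.
\item The added sets are exactly those $B$ with $\{1,\dots,n+1\}\subseteq B\subsetneq\langle k'\rangle^\circ$. Their sizes therefore run through $n+1,\dots,k'-1$ (for example $B=\{1,\dots,n+1\}$), not ``$\geq k'-1$''.
\end{itemize}
What the argument needs is only $n+1\le d\le k'-1$. Each such corner fill is unique by the induction hypothesis, which is \emph{external} unique completion at level $d<k'$. From (e) you only have internal bijectivity at level $n+1$, not at the intermediate levels your wording suggests.
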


\begin{proof}
	A morphism of cubesets is an isomorphism if and only if it is a bijection pointwise.
	This means that a cubeset $X$ is internally $n$-step precisely if
    \[
        \hom({\bbox}^k\times{\bbox}^j,X)\to\hom({\bbcor}^k\times{\bbox}^j,X)
    \]
	is a bijection for all $j\in \N_0$ and all $k>n$.
	Thus, the internal variants are stronger than the non-internal variants (being the internal ones at $j=0$).

	$(a)\implies(b)$ and $(c)\implies(d)$: It suffices to see that the preimage of the class of isomorphisms under a cocontinuous functor is saturated.
	For this note that the pushout along an isomorphism is again an isomorphism,
	as well as transfinite compositions of isomorphisms and retracts.

    $(a)\implies(c)$:
	We follow the route of Remark~\ref{rem:internal-nilfib}.
	In Corollary~\ref{cor:internal-nilfib} we have shown that the roof $\bbcor^{k}\times{\bbox}^j\to{\bbox}^{k+j}$
	is a relative cell complex built from corner inclusions ${\bbcor}^i\to{\bbox}^i$ for $k\le i$
	(one examines the proof carefully to see that there is no other $i$ involved, see Remark~\ref{prop:internal-nilfib}).
	This implies that the roofs ${\bbcor}^k\times{\bbox}^j\to{\bbox}^{k+j}$
	are in the saturated closure of all ${\bbcor}^i\to{\bbox}^i$ for $k\le i$.
	Thus we know that having unique corner completion implies unique internal corner completion.

    $(e)\implies(a)$:
    We need to show that the natural map
    \[
        \hom({\bbox}^{n+1+j},X)\to\hom({\bbcor}^{n+1+j},X)
    \]
    is bijective for all $j\in\N_0$.
    We have a factorization
    \begin{center}
    \begin{tikzcd}
	{\hom({\bbox}^{n+1+j},X)} && {\hom({\bbcor}^{n+1}\times{\bbox}^j,X)} \\
	& {\hom({\bbcor}^{n+1+j},X)}
	\arrow[from=1-1, to=1-3]
	\arrow[from=1-1, to=2-2]
	\arrow[from=2-2, to=1-3]
    \end{tikzcd}
    \end{center}
    where the horizontal map is bijective for all $j\in\N_0$ by assumption on the internal Hom.
    For $j=0$, the left map is bijective by assumption (since then the right map degenerates).
    In the induction step, suppose both the left and right map are bijections.
    Now the map ${\bbcor}^{n+1+j}\times{\bbox}^1\to{\bbcor}^{n+1+j+1}$ is a pushout
    along the ($n+1+j$)-dimensional corner inclusion ${\bbcor}^{n+1+j}\to{\bbox}^{n+1+j}$
    (see the proof of Proposition~\ref{prop:internal-nilfib}),
    so pullback along the former induces a bijection on $\hom$-sets.
    Moreover, by induction hypothesis, the map ${\bbcor}^{n+1}\times{\bbox}^j\to{\bbcor}^{n+1+j}$
    induces a bijection on the level of $\hom(-,X)$.
    Thus we can use the factorization
    \begin{center}
    \begin{tikzcd}
	{{\bbcor}^{n+1}\times{\bbox}^{j+1}} & {{\bbcor}^{n+1+j}\times{\bbox}^1} & {{\bbcor}^{n+1+j+1}}
	\arrow[from=1-1, to=1-2]
	\arrow[from=1-2, to=1-3]
    \end{tikzcd}
    \end{center}
    to see that the composite induces a bijection on $\hom(-,X)$.
    In turn this means that both $\hom({\bbcor}^{n+1+j},X)\to\hom({\bbcor}^{n+1}\times{\bbox}^j,X)$
    and hence $\hom({\bbox}^{n+1+j},X)\to\hom({\bbcor}^{n+1+j},X)$ are bijections, completing the induction.
\end{proof}

\begin{remark}
	Note that imposing the bijection for a single $k$ externally fails:
	That
	\[\hom({\bbox}^k,X)\to \hom({\bbcor}^k,X)\]
	is bijective is not equivalent to the map
    \[\ihom({\bbox}^k,X)\to \ihom({\bbcor}^k,X)\]
    being an isomorphism,
    since the latter considers all dimensions.
	As an explicit counterexample take some cubeset with $X(0)=X(1)=\ast$ but $X(2)\ne \ast$ and consider
	$k=1$.
\end{remark}

\begin{proposition}\label{prop:step-left-adjoint}
The inclusion of $n$-step cubesets into all cubesets admits a left adjoint which we call the canonical truncation $\tau_n\colon\cSet\to\cSet_n$.
\end{proposition}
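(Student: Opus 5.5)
The plan is to identify $\cSet_n$ as a category of local objects and then invoke the general reflection theorem for orthogonality classes in locally presentable categories. By the remark following Definition~\ref{def:n-step}, the $n$-step cubesets are exactly the objects of $\cSet$ that are orthogonal to the set
\[
    \Lambda_n=\{{\bbcor}^k\to{\bbox}^k : k>n\}.
\]
Here orthogonal means that every map from the domain extends uniquely along the inclusion. For the degenerate cases $n=-1,-2$ I would treat the statement separately: the point is terminal, and $\{\emptyset,\ast\}$ is reflective via $X\mapsto\emptyset$ if $X=\emptyset$ and $\ast$ otherwise.

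For $n\ge 0$, the key inputs are the following.
\begin{enumerate}[(i)]
    \item $\cSet=\PSh(\bbox)$ is locally finitely presentable, as recalled in Section~\ref{ssec:cubesets-basics}.
    \item $\Lambda_n$ is a \emph{set} of morphisms between finitely presentable objects. The corners are finite colimits of representables, so they are compact.
\end{enumerate}
Given these, \cite[Theorem 1.39]{Adamek1994} applies exactly as in the lemma on unique gluing: the full subcategory of $\Lambda_n$-orthogonal objects is reflective. This yields the left adjoint $\tau_n$.

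To make the construction more explicit, I would describe $\tau_nX$ as a colimit of a transfinite (here countable, by compactness) sequence $X=X_0\to X_1\to\cdots$. Each step has two parts:
\begin{itemize}
    \item \emph{Adding fillers.} Take the pushout along $\coprod{\bbcor}^k\to\coprod{\bbox}^k$ over all $k$-corners with $k>n$, which forces existence of completions.
    \item \emph{Identifying fillers.} Take the coequalizer along the codiagonal maps
    \[
        {\bbox}^k\sqcup_{{\bbcor}^k}{\bbox}^k\to{\bbox}^k,
    \]
    which forces uniqueness of completions.
\end{itemize}
Orthogonality to ${\bbcor}^k\to{\bbox}^k$ is equivalent to right lifting against both ${\bbcor}^k\to{\bbox}^k$ and its codiagonal. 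The objects ${\bbcor}^k$ and ${\bbox}^k\sqcup_{{\bbcor}^k}{\bbox}^k$ are compact, so $\hom$ out of them commutes with the sequential colimit. Hence every corner, or every pair of fillers, appearing in the colimit already appears at a finite stage and is completed, respectively identified, at the next stage. The colimit is therefore $n$-step. The universal property follows by inductively extending a map $X\to Y$ with $Y$ $n$-step: each pushout stage extends uniquely because $Y$ is orthogonal, and each coequalizer stage extends uniquely because fillers in $Y$ are unique.

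The main obstacle I expect is only bookkeeping: checking that orthogonality against $\Lambda_n$ coincides with lifting against $\Lambda_n$ together with the codiagonals, and that the compactness argument terminates at $\omega$. Citing \cite[Theorem 1.39]{Adamek1994} avoids both issues entirely.
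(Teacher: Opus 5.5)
Your proposal is correct and takes the same approach as the paper. The paper's proof simply cites \cite[Theorem 1.39]{Adamek1994}, applied to the $\Lambda_n$-local objects in the locally finitely presentable presheaf category (or the $\infty$-categorical analogue \cite[Proposition 5.5.4.2]{Lurie2009}). Your explicit iterative construction (add fillers, then identify fillers, and stop at $\omega$ by compactness) matches the naive construction the paper itself describes later, in Section~\ref{ssec:truncation-compute}.
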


\begin{proof}
This follows from \cite[Theorem 1.39]{Adamek1994}.
Alternatively, one can use the $\infty$-categorical version from \cite[Proposition 5.5.4.2]{Lurie2009}.
\end{proof}

First, let us remark that a certain fibrant replacement does not change the truncation.

\begin{lemma}
	Let $X\to X^{\mathrm{fib},n}$ be a fibrant replacement of $X$ starting in dimension $n+1$,
	i.e., a fibrant replacement w.r.t. $\Lambda_n$.
	Then
	\[
	    \tau_{n}X=\tau_n (X^{\mathrm{fib},n}).
	\]
\end{lemma}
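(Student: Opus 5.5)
The plan is to compare the two reflections via the universal property of $\tau_n$. Write $\iota\colon X\to X^{\mathrm{fib},n}$ for the replacement map. By construction, $\iota$ is a relative $\Lambda_n$-cell complex: a transfinite composition of pushouts of corner inclusions ${\bbcor}^k\to{\bbox}^k$ with $k>n$. In particular, $\iota\in l(r(\Lambda_n))$. I will show that for every $n$-step cubeset $Z$ the map
\[
    \iota^\ast\colon\hom(X^{\mathrm{fib},n},Z)\to\hom(X,Z)
\]
is a bijection.

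This bijection is exactly the implication $(a)\implies(b)$ of Proposition~\ref{prop:truncations-char}, applied to $f=\iota$. Concretely, the class of maps $f$ for which $f^\ast\colon\hom(-,Z)$ is bijective is saturated: it is closed under pushouts, transfinite composition and retracts, because $\hom(-,Z)$ turns colimits into limits. Since this class contains $\Lambda_n$ when $Z$ is $n$-step, it contains $\iota$.

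Next, I combine this bijection with the adjunction from Proposition~\ref{prop:step-left-adjoint}. For $Z\in\cSet_n$ this gives natural bijections
\[
    \hom(\tau_n X^{\mathrm{fib},n},Z)\cong\hom(X^{\mathrm{fib},n},Z)\xrightarrow{\ \iota^\ast\ }\hom(X,Z)\cong\hom(\tau_n X,Z).
\]
The composite is precomposition with $\tau_n(\iota)\colon\tau_n X\to\tau_n X^{\mathrm{fib},n}$, so it is natural in $Z$. The Yoneda lemma in $\cSet_n$ then shows that $\tau_n(\iota)$ is an isomorphism, which is the claimed equality $\tau_nX=\tau_n(X^{\mathrm{fib},n})$.

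The only point needing care is confirming that the replacement really lies in $l(r(\Lambda_n))$, that is, that only corners of dimension $k>n$ are attached. This holds because the (functorial or naive) fibrant replacement construction is run with the set $\Lambda_n$ in place of $\Lambda$. I do not expect any genuine obstacle here.
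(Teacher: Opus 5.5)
Your proposal is correct and matches the paper's proof: you note that the replacement map lies in $l(r(\Lambda_n))$, use Proposition~\ref{prop:truncations-char}(b) to get a bijection $\hom(X^{\mathrm{fib},n},Z)\to\hom(X,Z)$ for every $n$-step $Z$, and conclude with the adjunction and the Yoneda lemma. You also make explicit a step the paper leaves implicit, namely that the composite bijection is precomposition with $\tau_n(\iota)$, which is exactly why Yoneda shows that $\tau_n(\iota)$ is an isomorphism.
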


\begin{proof}
The map $X\to X^{\mathrm{fib},n}$ is a trivial cofibration w.r.t. $\Lambda_n$,
i.e., $(X\to X^{\mathrm{fib},n})\in l(r(\Lambda_n))$.
By Proposition~\ref{prop:truncations-char}~(b), for every $n$-step cubeset $Y$, there is a bijection
\[
    \hom(X^{\mathrm{fib},n},Y) \to \hom(X,Y)
\]
natural in $X$ and $Y$.
Applying the truncation to both sides, the result follows from the Yoneda lemma.
\end{proof}

Thus, a good first step to truncate is to build a certain fibrant replacement.
Note that fibrant replacement a priori is not a left adjoint due to the impossibility of choosing a canonical corner completion.
Testing against objects with unique corner completion however makes this lift unique and thus we obtain the left adjointness.

\begin{remark}
Note that truncation does not preserve finite limits in general.
If it would do so, then $\cSet_n\to\cSet$ would be a geometric morphism
with the inclusion corresponding to a subtopos on $\cSet$.
Hence there exists a Grothendieck topology on $\bbox$ such that the $n$-step cubesets are the sheaves for this topology:
the sieves are precisely the injections $f\colon S\to{\bbox}^k$ for which pullback $f^\ast\colon\hom({\bbox}^k,X)\to\hom(S,X)$
is an isomorphism for each $n$-step cubeset $X$ (and $S$ corresponds to a subfunctor of the representable ${\bbox}^k$).
In particular, the corner inclusion ${\bbcor}^{n+1}\to{\bbox}^{n+1}$ is a cover for this topology.
But then Remark~\ref{rem:corner-no-topology} yields a contradiction to stability of covers under pullback.
\end{remark}

However, the truncation does preserve finite products.

\begin{proposition}\label{prop:n-trunc-ihom-prod}
If $X$ and $Y$ are cubesets and $Y$ is $n$-step, then $\ihom(X,Y)$ is $n$-step.
Moreover, the $n$-truncation preserves finite products.
\end{proposition}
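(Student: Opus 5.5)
The plan is to imitate the proof of Proposition~\ref{prop:concrete-ihom-prod}, replacing concreteness by the $n$-step condition. For the internal Hom, I would use Proposition~\ref{prop:truncations-char}. To show that $\ihom(X,Y)$ is $n$-step, condition (a) asks that
\[
    \hom({\bbox}^k,\ihom(X,Y))\to\hom({\bbcor}^k,\ihom(X,Y))
\]
be bijective for $k>n$. By adjunction and symmetry of the cartesian product, this map is identified with
\[
    \hom(X,\ihom({\bbox}^k,Y))\to\hom(X,\ihom({\bbcor}^k,Y)).
\]
Since $Y$ is $n$-step, characterization (c) of Proposition~\ref{prop:truncations-char} says that $\ihom({\bbox}^k,Y)\to\ihom({\bbcor}^k,Y)$ is an isomorphism of cubesets for all $k>n$. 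Hence the map above is bijective, and $\ihom(X,Y)$ is $n$-step. The only subtle point is that the adjunction $\hom(A\times B,Y)\cong\hom(A,\ihom(B,Y))$ is natural in $B$, so that it transports restriction along ${\bbcor}^k\to{\bbox}^k$ correctly; this is formal.

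For finite products, the terminal object $\ast$ is $n$-step, since it is the point, so $\tau_n\ast=\ast$. For binary products I would run the Yoneda argument of Proposition~\ref{prop:concrete-ihom-prod}: for $Z$ an $n$-step cubeset,
\begin{align*}
    \hom(\tau_n(X\times W),Z) &= \hom(X\times W,Z) = \hom(X,\ihom(W,Z)) \\
    &= \hom(\tau_nX,\ihom(W,Z)) = \hom(\tau_nX\times W,Z) = \hom(W,\ihom(\tau_nX,Z)) \\
    &= \hom(\tau_nW,\ihom(\tau_nX,Z)) = \hom(\tau_nX\times\tau_nW,Z).
\end{align*}
The third and sixth equalities use the adjunction $\tau_n\dashv$ inclusion (Proposition~\ref{prop:step-left-adjoint}), which applies because $\ihom(W,Z)$ and $\ihom(\tau_nX,Z)$ are $n$-step by the first part. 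All identifications are natural in $Z$.

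It remains to know that $\tau_nX\times\tau_nW$ is itself $n$-step, so that Yoneda in $\cSet_n$ gives $\tau_n(X\times W)\cong\tau_nX\times\tau_nW$. This holds because $n$-step cubesets are the $\Lambda_n$-local objects, hence closed under limits ($\hom({\bbox}^k,-)$ and $\hom({\bbcor}^k,-)$ commute with products). Finally I would check that the resulting isomorphism is the canonical comparison map $\tau_n(X\times W)\to\tau_nX\times\tau_nW$, by tracing the unit through the chain.

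I expect no serious obstacle. The main point to get right is using the internal characterization (c) rather than the external one in the first part, since the external statement alone does not give an isomorphism on all of $\ihom$.
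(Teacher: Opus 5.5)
Your proof is correct and is essentially the paper's argument. The paper verifies the first part for representable $X={\bbox}^m$ via the roof corollary and then passes to colimits of representables, which is exactly what applying $\hom(X,-)$ to the isomorphism in characterization (c) of Proposition~\ref{prop:truncations-char} packages. For products the paper likewise just says to argue as in Proposition~\ref{prop:concrete-ihom-prod}, and your explicit chain (using the first part twice, so that no internal form of the reflection is needed) is a clean way of filling in that step.
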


\begin{proof}
First we show that $\ihom(X,Y)$ is $n$-step if $Y$ is so.
We need to see that pullback along the inclusion ${\bbcor}^k\to{\bbox}^k$ induces a bijection
\[
    \hom({\bbox}^k\times X,Y) = \hom({\bbox}^k,\ihom(X,Y)) \to \hom({\bbcor}^k,\ihom(X,Y)) = \hom({\bbcor}^k\times X,Y)
\]
for all $k>n$.
For $X = {\bbox}^m$ it follows from Corollary~\ref{cor:internal-nilfib}
and Remark~\ref{prop:internal-nilfib},
see also the proof of $(a)\implies(c)$ of Proposition~\ref{prop:truncations-char}.
But then it follows for general $X$ by writing $X$ as the colimit of representables,
using that colimits are stable under base change in $\cSet$.

For the second claim, one argues as in the last part of the proof of
Proposition~\ref{prop:concrete-ihom-prod}.
\end{proof}

The category of $n$-step cubesets additionally has the following closure properties.

\begin{proposition}\label{prop:n-trunc-closure}
The category of $n$-step cubesets $\cSet_n$ is closed under limits, filtered colimits and coproducts in $\cSet$.
\end{proposition}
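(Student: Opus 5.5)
The plan is to use that $\cSet_n$ is exactly the class of $\Lambda_n$-local objects, i.e. those $X$ for which $\hom({\bbox}^k,X)\to\hom({\bbcor}^k,X)$ is bijective for all $k>n$. Each of the three closure properties then reduces to how the functors $\hom({\bbox}^k,-)$ and $\hom({\bbcor}^k,-)$ interact with the relevant (co)limit, together with the fact that bijections are stable under the corresponding (co)limits in $\set$.

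\emph{Limits.} Both $\hom({\bbox}^k,-)$ and $\hom({\bbcor}^k,-)$ preserve all limits, as representable-type functors. So for a diagram $(X_i)$ of $n$-step cubesets, the restriction map for $\varprojlim X_i$ is the limit of the restriction maps for the $X_i$. A limit of bijections is a bijection, so $\varprojlim X_i$ is $n$-step. This is the general statement that local objects are closed under limits.

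\emph{Filtered colimits and coproducts.} Recall that ${\bbcor}^k$ is compact and connected, so $\hom({\bbcor}^k,-)$ preserves filtered colimits and coproducts. The representable ${\bbox}^k$ is tiny, hence $\hom({\bbox}^k,-)$ preserves all colimits, and in particular these two kinds. Consequently the restriction map for $\varinjlim X_i$ (filtered) or $\coprod X_i$ is the filtered colimit, respectively the coproduct, of the restriction maps for the $X_i$. Filtered colimits and coproducts of bijections in $\set$ are bijections, which gives the claim.

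The only point needing care is the colimit preservation of $\hom({\bbcor}^k,-)$. It is already recorded in the remark on compactness and connectedness of ${\bbcor}^n$ as a finite connected colimit of representables. If one wants to argue it directly: a finite limit (the corner condition in Remark~\ref{rem:corner-combinatorial}) commutes with filtered colimits in $\set$. For coproducts, a connected diagram of representables into $\coprod X_i$ lands in a single summand. So I expect no real obstacle here, and the proof is a short formal argument.

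Finally, the degree $-1$ and $-2$ conventions, if $n$ ranges over them, would need a separate trivial check. Since the statement concerns $n\geq 0$, I would not treat them.
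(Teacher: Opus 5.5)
Your proposal is correct and is the paper's own argument: closure under limits by continuity of $\hom$, under filtered colimits by compactness of ${\bbcor}^k$, and under coproducts by its connectedness, where you additionally spell out that $\hom({\bbox}^k,-)$ commutes with these colimits too. One small correction to your closing aside: for the degree $-1$ and $-2$ conventions the coproduct claim actually fails (e.g.\ $\ast\sqcup\ast$ is neither empty nor a point), so the restriction to $n\geq 0$ is necessary, not just a trivial omission.
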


\begin{proof}
	By continuity of the Hom-functor, $\cSet_n$ is closed under limits.
Closure under filtered colimits follows from the fact that ${\bbcor}^k$ is compact for all $k\in\N$.
Closure under coproducts follows from the fact that ${\bbcor}^k$ is connected for all $k\in\N$.
\end{proof}

\subsubsection{Computing the Truncation of Concrete Fibrant Cubesets}\label{ssec:truncation-compute}

The localisation $\tau_{n}X$ may always be computed naively by some iterated construction, see \cite[Construction 1.37]{Adamek1994}:
\begin{itemize}
	\item Glue to $X$ a $k$-cube for every $k$-corner which does not admit a filler in $X$ (for all $k> n$).
	\item Coequalize two $k$-cubes if they agree on their $k$-corner (for all $k>n$).
	\item Iterate this process until it stabilizes (at limit ordinals take the colimit).
\end{itemize}
In our case, the morphisms at which we localize have compact domain and codomain
and thus the process stabilizes after $\aleph_0$-many steps.
For general cubespaces, all these steps and in particular such an iterated construction seem unavoidable,
as the following examples show.
\begin{example}
\begin{enumerate}[(i)]
	\item For an example where one needs iteration of the identification step consider the double Kebab bread
	 \begin{center}
\begin{tikzcd}[ampersand replacement=\&, row sep=tiny, column sep=tiny]
	\&\&\&\&\&\& \bullet \& \\
	\& \bullet \& \bullet \&\&\&\&\& \bullet \\
	\\
	\bullet \&\&\& \bullet \\
	\&\&\&\& \bullet \\
	\&\&\&\&\&\& \bullet \\
	\& \bullet \&\&\&\&\& \bullet \\
	\&\&\&\& \bullet
	\arrow[dotted, no head, from=2-2, to=1-7]
	\arrow[dashed, no head, from=2-2, to=4-4]
	\arrow[dotted, no head, from=2-3, to=2-8]
	\arrow[dashed, no head, from=2-3, to=4-4]
	\arrow[dashed, no head, from=4-1, to=2-2]
	\arrow[dashed, no head, from=4-1, to=2-3]
	\arrow[dashed, no head, from=5-5, to=6-7]
	\arrow[dashed, no head, from=5-5, to=7-7]
	\arrow[dotted, no head, from=6-7, to=1-7]
	\arrow[no head, from=7-2, to=1-7]
	\arrow[no head, from=7-2, to=2-2]
	\arrow[no head, from=7-2, to=2-3]
	\arrow[no head, from=7-2, to=2-8]
	\arrow[no head, from=7-2, to=4-1]
	\arrow[no head, from=7-2, to=4-4]
	\arrow[no head, from=7-2, to=5-5]
	\arrow[no head, from=7-2, to=6-7]
	\arrow[no head, from=7-2, to=7-7]
	\arrow[no head, from=7-2, to=8-5]
	\arrow[dotted, no head, from=7-7, to=2-8]
	\arrow[dashed, no head, from=8-5, to=6-7]
	\arrow[dashed, no head, from=8-5, to=7-7]
\end{tikzcd}
\end{center}
	\item For the necessity of the insertion step simply take a non-fibrant cubeset with many corners that do not yet exist, but assure that if they exist they are unique (e.g., take the $k$-skeleton of a nontrivial concrete fibrant cubeset).
	\item To see that in each identification step one needs to take the induced equivalence relation consider
	\begin{center}
\begin{tikzcd}[ampersand replacement=\&, row sep= tiny, column sep=tiny]
	\&\&\&\&\& b \& \\
	\&\&\& d \&\&\& c \\
	\\
	\& f \&\& a \&\& e \\
	\\
	d \&\&\& g \\
	\& c
	\arrow[dashed, no head, from=1-6, to=4-6]
	\arrow[dashed, no head, from=2-4, to=1-6]
	\arrow[dashed, no head, from=2-7, to=2-4]
	\arrow[no head, from=4-2, to=4-4]
	\arrow[dashed, no head, from=4-2, to=6-1]
	\arrow[dashed, no head, from=4-2, to=7-2]
	\arrow[no head, from=4-4, to=1-6]
	\arrow[no head, from=4-4, to=2-4]
	\arrow[no head, from=4-4, to=2-7]
	\arrow[no head, from=4-4, to=4-6]
	\arrow[no head, from=4-4, to=6-1]
	\arrow[no head, from=4-4, to=6-4]
	\arrow[no head, from=4-4, to=7-2]
	\arrow[dashed, no head, from=4-6, to=2-7]
	\arrow[dashed, no head, from=6-1, to=6-4]
	\arrow[dashed, no head, from=6-4, to=7-2]
\end{tikzcd},

		\end{center}
	where $b\sim c$ and $d\sim c$ but there is no cube witnessing $b\sim d$.
	\item To see that \emph{first identify all necessary cubes and afterward glue all that you need} does not suffice, consider
	 \begin{center}
\begin{tikzcd}[ampersand replacement=\&, row sep=tiny, column sep=tiny]
	\&\&\& {h_1} \& {h_2} \\
	\& e \&\&\& {h_3} \\
	b \&\& g \\
	\& c \&\& f \\
	a \&\& d
	\arrow[no head, from=2-2, to=1-4]
	\arrow[no head, from=2-2, to=1-5]
	\arrow[no head, from=3-1, to=2-2]
	\arrow[no head, from=3-1, to=3-3]
	\arrow[no head, from=3-3, to=1-4]
	\arrow[no head, from=3-3, to=2-5]
	\arrow[no head, from=4-2, to=2-2]
	\arrow[no head, from=4-2, to=4-4]
	\arrow[no head, from=4-4, to=1-5]
	\arrow[no head, from=4-4, to=2-5]
	\arrow[no head, from=5-1, to=3-1]
	\arrow[no head, from=5-1, to=4-2]
	\arrow[no head, from=5-1, to=5-3]
	\arrow[no head, from=5-3, to=3-3]
	\arrow[no head, from=5-3, to=4-4]
\end{tikzcd}.
\end{center}
Here, the three-dimensional corner has no filler.
If it had one, the new vertex would simultaneously be 2-equivalent to $h_1$, $h_2$ and $h_3$ and thus inserting the filler makes the $h_i$ equivalent.
 \end{enumerate}
\end{example}

Also note that concreteness is not preserved by the truncation.
However, together with fibrancy and concreteness, we may significantly simplify the computation of the truncation.

\begin{definition}\label{def:univ-replacement}
For $x,y\in X(0)$ define $x\sim_n y$ iff there exist $c,c'\in X(n+1)$ such that
\[
    c\rvert_{{\bbcor}^{n+1}} = c'\rvert_{{\bbcor}^{n+1}} \quad\text{and}\quad c(1) = x,\; c'(1) = y.
\]
\end{definition}

We can characterize this relation further as follows:

\begin{proposition}\label{prop:univ-replacement}
Let $X$ be a concrete fibrant cubeset, $n\in\N_0$ and $x,y\in X(0)$.
Then the following assertions are equivalent.
\begin{enumerate}[(a)]
    \item $x\sim_n y$,
    \item The configuration $c\rvert_{{\bbcor}^{n+1}} \equiv x$, $c(1) = y$ is in $X(n+1)$.
    \item For every $k\leq n+1$ and every cube $c\in X(k)$ with value $x$ at some vertex,
            the same cube with value $y$ at that vertex is in $X(k)$.
\end{enumerate}
In particular, the relation $\sim_n$ is an equivalence relation on $X(0)$.
\end{proposition}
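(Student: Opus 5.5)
We prove the cycle (a)$\Rightarrow$(b)$\Rightarrow$(c)$\Rightarrow$(a). Throughout we treat a cube of the concrete cubeset $X$ as a map $\{0,1\}^k\to X(0)$. Fibrancy gives completions of corners, and concreteness together with Proposition~\ref{prop:nilfib-restricted-nilfib-concrete} lets us complete corners ${\bbcor}^{k}_\omega$ with respect to an arbitrary missing vertex $\omega\neq 0^k$. Since automorphisms of ${\bbox}^k$ act transitively on vertices, every vertex can be treated as the ``top'' one.

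For (a)$\Rightarrow$(b), take $c,c'\in X(n+1)$ that agree on ${\bbcor}^{n+1}$, with $c(1^{n+1})=x$ and $c'(1^{n+1})=y$. Form an $(n+2)$-corner whose lower face in the last direction is $c$ and whose faces $\epsilon_i$ for $i\le n+1$ are degenerate in the last coordinate, i.e.\ $c\lvert_{\epsilon_i}$ pulled back along the projection. Complete it by fibrancy to obtain an $(n+2)$-cube $d$. The top face $[x_{n+2}=1]$ of $d$ is an $(n+1)$-cube that agrees with $c$ on the corner, and its top vertex is some $z$.

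We then compare $d$ with the analogous cube built from $c'$. Taking the face of $d$ through the ``difference direction'' gives an $(n+1)$-cube with corner values $c\lvert_{\bbcor}$ on one side, so this direct comparison does not yet produce (b).

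The cleaner route is the following. Glue $c$ and $c'$ along their common corner using the gluing property (Proposition~\ref{prop:nilfib-gluing}) to obtain an $(n+2)$-cube $e$ with $e\lvert_{[x_{n+2}=0]}=c$ and $e\lvert_{[x_{n+2}=1]}=c'$, possibly after flips so that the shared corner faces are glued. Then restrict $e$ along a diagonal embedding ${\bbox}^{n+1}\to{\bbox}^{n+2}$ that sends $0^{n+1}\mapsto (1^{n+1},0)$, i.e.\ the vertex of value $x$, and sends all other vertices into the shared corner part or to $(1^{n+1},1)$. Concretely, compose a flip with an affine map $\{0,1\}^{n+1}\to\{0,1\}^{n+2}$ of the form $t\mapsto(1-t_1,\dots,1-t_{n+1},\,\prod)$. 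Since this map is not affine, the embedding must instead be found among genuine affine maps of the form $t\mapsto(1-t,s)$ combined with the corner constancy obtained by first reducing, via a preliminary corner completion, to the case where $c\lvert_{\bbcor}$ is constant $x$.

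So the concrete plan for (a)$\Rightarrow$(b) is:
\begin{enumerate}[(1)]
\item Use completion in dimension $n+2$ to build a cube $D$ whose $0$-face in the last direction is $c$, whose $1$-face has corner constant equal to $c(1)=x$ and top vertex $y'$, and which is degenerate along the direction joining vertex $1^{n+1}$.
\item Use the relation $c\lvert_{\bbcor}=c'\lvert_{\bbcor}$ to show that the same construction with $c'$ yields top vertex $y$.
\item Use uniqueness from concreteness, i.e.\ at most unique gluing, to match the two constructions.
\end{enumerate}
I expect this to be the main obstacle: choosing the right $(n+2)$-corner so that its faces are honest cubes built only from $c$, $c'$ and degeneracies.

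For (b)$\Rightarrow$(c), let $c\in X(k)$ with $k\le n+1$ take the value $x$ at a vertex $v$. By an automorphism we may assume $v=1^k$. Pull the given $(n+1)$-cube $s$ (corner constant $x$, top vertex $y$) back to a $k$-cube $s'$ along the diagonal/projection onto the first $k$ coordinates padded with $1$'s. Then build the $(k+1)$-corner having $c$ as its bottom face, $s'$ as a side face sharing the vertex, and degenerate faces elsewhere. The completed cube's top face is $c$ with $x$ replaced by $y$. By concreteness this cube is determined by its vertices, and the dimension check uses $k\le n+1$.

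For (c)$\Rightarrow$(a), apply (c) with $k=n+1$ to a cube $c$ with $c(1^{n+1})=x$, for instance the constant cube, and take $c'$ the modified cube.

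For the final claim that $\sim_n$ is an equivalence relation: reflexivity follows from the constant cube. Symmetry follows from (c) applied to the cube of (b), which has a $y$ at the top and constant $x$ elsewhere; replacing $y$ by $x$ and then the corner values $x$ by $y$, vertex by vertex, is legitimate since each step again uses (c). Transitivity follows by applying (c) twice.
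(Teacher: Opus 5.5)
Your (c)$\Rightarrow$(a), reflexivity and transitivity are fine, but the two substantive implications are not proved.

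For (a)$\Rightarrow$(b), you end with a plan you yourself flag as unresolved, and its ingredients fail:
\begin{itemize}
\item The gluing property yields a cube of the \emph{same} dimension whose two opposite faces are the outer faces of the glued cubes. It does not give an $(n+2)$-cube with faces $c$ and $c'$, and such a cube need not exist: for $n=0$ in $\cD_1(A)$ with $x\neq y$ one has $x\sim_0 y$, yet the square with rows $(z,x)$ and $(z,y)$ is not a cube.
\item The diagonal you then try is not a morphism of $\bbox$.
\item The faces $[x_i=0]$ of the $(n+2)$-corner in step (1), i.e.\ ``$c\circ\epsilon_i$ below, constant $x$ above'', need not be cubes.
\item Step (3) appeals to a uniqueness that nothing provides.
\end{itemize}
The same defect breaks (b)$\Rightarrow$(c). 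When $X$ is not $n$-step, completing a corner produces \emph{some} value at the missing vertex, and nothing forces it to be $y$. Concreteness says a cube is determined by its vertices, not that completions are unique, and the top face you read off contains exactly that vertex. Moreover, a lower side face of a $(k+1)$-corner must restrict to $c\circ\epsilon_i$ on the bottom and does not contain the vertex where $c$ equals $x$, so $s'$ cannot sit there. For symmetry, ``replacing $y$ by $x$'' presupposes $y\sim_n x$; instead, observe that the cube of (b) and the constant cube $x$ agree on ${\bbcor}^{n+1}$.

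The missing idea is concatenation, which Proposition~\ref{prop:nilfib-gluing} plus concreteness provides; this is the route the paper points to via \cite{Gutman2020}. If $a,b\in X(m)$ satisfy $a\circ\epsilon_i=b\circ\epsilon_i$, then the configuration $g$ with $g\circ\epsilon_i=a\circ\overline{\epsilon}_i$ and $g\circ\overline{\epsilon}_i=b\circ\overline{\epsilon}_i$ lies in $X(m)$. Precomposing $a$ with the flip in direction $i$, this says: cubes with $(\epsilon_i,\overline{\epsilon}_i)$-faces $(P,Q)$ and $(Q,R)$ yield one with faces $(P,R)$.

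For (a)$\Rightarrow$(b), let $u_i$ be the affine self-map of ${\bbox}^{n+1}$ setting the $i$-th coordinate to $1$. Since $c$ and $c'$ agree on every face $\epsilon_i$, gluing them in direction $i$ gives a cube $g$ that agrees with $c\circ u_i$ off $1^{n+1}$ and has top value $y$. Thus $(c\circ u_i,g)$ is a new witness of $x\sim_n y$ whose common corner is that of $c\circ u_i$. Doing this successively for $i=1,\dots,n+1$ makes the corner constant $c(1^{n+1})=x$, which is (b).

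For (b)$\Rightarrow$(c), flip so that the vertex is $1^k$. Let $c_j=c\circ u_1\circ\cdots\circ u_j$ (the analogous maps on ${\bbox}^k$), and let $c_j'$ be $c_j$ with its top value replaced by $y$. Then:
\begin{itemize}
\item $c_k'$ is the restriction of the cube from (b) along $\omega\mapsto(\omega,1^{n+1-k})$; this is exactly where $k\leq n+1$ enters.
\item If $c_{j-1}$ has $j$-faces $(P,Q)$, then $c_j'$ has $j$-faces $(Q,Q')$, where $Q'$ is $Q$ with its top value replaced by $y$. Concatenation gives $c_{j-1}'=(P,Q')\in X(k)$.
\end{itemize}
Descending induction on $j$ ends at $c_0'$, the desired cube.
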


This can e.g. be found in \cite{Gutman2020}
and follows mainly from the fact that every concrete fibrant cubeset has the \emph{gluing property}.
The property in (c) is also referred to as \emph{universal replacement}.
Using this we see that $x\sim_n y$ is an equivalence relation on $X(0)$, inducing a factor $X(0)/\sim_n$.
This yields a new cubeset $X/\sim_n$ whose value $X/\sim_n(m)$ is given by the image of $X(m)$ under the projection
\[
    X(0)^{2^m}\to (X(0)/\sim_n)^{2^m}.
\]
Explicitly this means that two $m$-cubes in $X$ are identified if all their points are equivalent in $X(0)$.
This cubeset thus is a factor of $X$ and allows for an easy description of $\tau_n X$, as the following proposition shows.

\begin{proposition}\label{prop:trunc_concfib}
If $X$ is a concrete fibrant cubeset, then the truncation $\tau_n X$ is given by $X/\sim_n$ and the unit
$\eta_X\colon X\to \tau_n X$ is a surjective fibration.
In particular, $\tau_n X$ is concrete and fibrant.
\end{proposition}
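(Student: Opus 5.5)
We identify $X/\sim_n$ by verifying the universal property of $\tau_n$ directly. Write $q\colon X\to X/\sim_n$ for the quotient map. By construction it is pointwise surjective, and $X/\sim_n$ is concrete, being defined as a subobject of $i_\ast(X(0)/\sim_n)$ (Proposition~\ref{prop:concrete-stability}). The proof then has three steps: (1) $X/\sim_n$ is fibrant and $q$ is a fibration; (2) $X/\sim_n$ is $n$-step; (3) every map $f\colon X\to Y$ with $Y$ $n$-step factors uniquely through $q$. Steps (2) and (3) give $\tau_nX\cong X/\sim_n$, with unit $q$.

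\textbf{Step (1).} The key lemma is a lifting statement: if $c\in X(m)$ and $c'\colon\{0,1\}^m\to X(0)$ satisfies $c'(v)\sim_n c(v)$ for all $v$, then $c'\in X(m)$ whenever $m\le n+1$. This follows by changing one vertex at a time with Proposition~\ref{prop:univ-replacement}(c). Now take an $m$-corner in $X/\sim_n$ together with a compatible cube in $X/\sim_n$ below it. Choose a representing cube $c\in X(m)$ of the lower cube. Its restriction to the corner agrees with the given corner only vertexwise modulo $\sim_n$.
\begin{itemize}
\item If $m\le n+1$, the lemma lets us replace the corner vertices of $c$ by the chosen representatives, which gives the required lift.
\item If $m>n+1$, we first lift the corner to a corner in $X$. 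We do this face by face, using concreteness and the lemma applied to the $(m-1)$-faces, working inductively along the Reedy degree. We then complete it in $X$ by fibrancy. Its top vertex must be $\sim_n$ to the top vertex of $c$. To see this, restrict to an $(n+1)$-dimensional face through $1^m$ and note that two $(n+1)$-cubes whose corners agree modulo $\sim_n$ have top vertices that agree modulo $\sim_n$: adjust one corner to the other using the lemma, then apply the definition of $\sim_n$.
\end{itemize}
Fibrancy of $X/\sim_n$ follows since $q$ is surjective and $X$ is fibrant.

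\textbf{Step (2).} Completions exist by step (1). For uniqueness, suppose two $(k)$-cubes in $X/\sim_n$ with $k\ge n+1$ share a $k$-corner. Lift them to $X$, arranging via step (1) that the lifts share the same corner. Restrict both lifts to an $(n+1)$-face containing $1^k$ whose own corner lies in the big corner. The two top vertices are then $\sim_n$ by Definition~\ref{def:univ-replacement}. Hence the cubes agree in $X/\sim_n$ by concreteness.

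\textbf{Step (3).} Let $f\colon X\to Y$ with $Y$ $n$-step. Uniqueness of the factorization is clear, since $q$ is an epimorphism. For existence we show that $f$ is constant on $\sim_n$-classes and compatible on cubes.
\begin{itemize}
\item Points: if $x\sim_n y$, witnessed by cubes $c,c'$ with equal corners, then $f(c)$ and $f(c')$ share a corner in $Y$. By $n$-stepness $f(c)=f(c')$, so $f(x)=f(y)$.
\item Cubes: two $m$-cubes of $X$ with vertexwise $\sim_n$-equivalent vertices must have equal images in $Y$. Since $Y$ need not be concrete, we compare them through a chain of single-vertex replacements. For a replacement at a vertex $v$, flip $v$ to $1^m$ by an automorphism. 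If $m\ge n+1$, the two cubes share their corner, so uniqueness of completion in $Y$ gives equal images. If $m\le n+1$, we embed the situation into an $(n+1)$-cube by degeneracy/extension and use the same argument there.
\end{itemize}

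I expect this last compatibility point in step (3), with a non-concrete $Y$, to be the main obstacle. I would first try to reduce it to the point statement via Proposition~\ref{prop:truncations-char}(b), applied to a suitable trivial cofibration built from the roof of Corollary~\ref{cor:internal-nilfib}.
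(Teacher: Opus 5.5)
Your decomposition is the paper's: universal replacement shows that $q$ is a surjective fibration onto a concrete $n$-step cubeset, and maps into $n$-step objects are constant on vertexwise $\sim_n$-classes of cubes. The paper proves the latter only for $Y=\tau_nX$ and then compares $f$ and $g$, which is equivalent to your step (3). Steps (1) and (2) are sound up to one slip. In the fibration square the corner already lies in $X$, so ``lifting the corner face by face'' belongs to the fibrancy of $X/\sim_n$. There you need that $q$ is a fibration in lower dimensions, not merely surjective, as the paper's ``inductively completing lower dimensional corners'' indicates.

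There is a genuine gap in step (3) for cubes of dimension $m>n+1$. Your chain of single-vertex replacements stays inside $X(m)$ only when $m\le n+1$. That is exactly the range of Proposition~\ref{prop:univ-replacement}(c) and of your own lemma, and above it replacement fails in general. For instance, let $X$ be $(n+1)$-step, say $X=\cD_{n+1}(A)$ with $A\neq 0$, where all points are $\sim_n$-equivalent. After a flip moving any vertex to $1^{n+2}$, an $(n+2)$-cube is determined by its remaining vertices. Hence changing a single vertex to a different point never yields a cube. So two distinct vertexwise-equivalent $(n+2)$-cubes are not joined by any such chain, and your sentence ``if $m\ge n+1$, the two cubes share their corner'' has nothing to apply to.

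The missing idea is an induction on $m$ through the faces. Suppose $a,b\in X(m)$ with $m\ge n+2$ are vertexwise $\sim_n$-equivalent. Then so are $\epsilon_i^\ast a$ and $\epsilon_i^\ast b$ for every lower face. By induction $f(a)$ and $f(b)$ restrict to the same map ${\bbcor}^m\to Y$, and unique corner completion in $Y$ gives $f(a)=f(b)$. The base case $m\le n+1$ is your chain argument, pulled back along a surjection ${\bbox}^{n+1}\to{\bbox}^m$ and restricted along a section; this is how the paper argues. Nothing here uses concreteness of $Y$. So the obstacle you anticipate does not arise, and the detour through Proposition~\ref{prop:truncations-char}(b) and roofs is unnecessary.
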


\begin{proof}
Denote $\eta\colon X\to\tau_n X$ the unit of the $n$-step adjunction and $\pi\colon X\to X/\sim_n$ the factor map.
Let us begin with establishing the existence of a morphism $f\colon X/\sim_n\to\tau_n X$ such that $f\circ\pi = \eta$, i.e., $\eta$ factors through $\pi$.
To do so we have to show that if $a$ and $b$ are two $k$-cubes in $X$ which are $n$-equivalent, i.e., $a(\omega)\sim_n b(\omega)$ for all $\omega\in\{0,1\}^k$,
then $\eta(a) = \eta(b)$.

If $k\leq n+1$ take a surjection $p\colon{\bbox}^{n+1}\to{\bbox}^k$.
Then $a' = p^\ast(a)\sim_n p^\ast(b) = b'$ in $X(n+1)$.
This allows us to first use universal replacement, and afterwards unique corner completion in $\tau_n X$.
Using universal replacement, the configuration
\[
    c_0\colon\{0,1\}^{n+1}\to X(0),\quad\omega\mapsto\begin{cases}
        b'(0),\quad &\omega = 0, \\
        a'(\omega),\quad &\omega\neq 0,
    \end{cases}
\]
is an element of $X(n+1)$ since $a'(0)\sim_n b'(0)$.
But after an appropriate rotation, $a'\rvert_{{\bbcor}^{n+1}} = c_0\rvert_{{\bbcor}^{n+1}}$
and so $\eta(a')\rvert_{{\bbcor}^{n+1}} = \eta(c_0)\rvert_{{\bbcor}^{n+1}}$.
Using unique corner completion in $\tau_n X$, we obtain that $\eta(a') = \eta(c_0)$.
Iterating this process, exchanging $a'(\omega)$ for $b'(\omega)$ one after another yields a sequence $a',c_0,c_1,\ldots,b'$ of ($n+1$)-cubes in $X$
with $\eta(a') = \eta(c_0) = \eta(c_1) = \ldots = \eta(b')$.
Taking the corresponding face of $a'$ and $b'$ we obtain that $\eta(a) = \eta(b)$.

For $k>n+1$, we induct on $k$: the induction start for $k = n+1$ has already been established.
If $k \geq n+2$ and $a\sim_n b$ for $a,b\in X(k)$,
then for every inert $\epsilon_i\colon{\bbox}^{k-1}\to{\bbox}^{k}$ we have that $\epsilon_i^\ast(a) \sim_n \epsilon_i^\ast(b)$ in $X(k-1)$.
By induction hypothesis, $\eta(\epsilon_i^\ast(a)) = \eta(\epsilon_i^\ast(b))$ which in turn yields $\eta(a)\rvert_{{\bbcor}^{k}} = \eta(b)\rvert_{{\bbcor}^{k}}$.
Using unique corner completion in $\tau_n X$ we obtain that $\eta(a) = \eta(b)$.
This gives the desired factorization $f\circ\pi = \eta$.

Next, let us prove that the cubeset $X/\sim_n$ is $n$-step.
For this we first show that each corner in $X/\sim_n$ of dimension $k > n$ has at most one completion.
Since $X/\sim_n$ is concrete, it suffices to consider the case $k = n+1$ (otherwise, take a diagonal ${\bbox}^{n+1}\to{\bbox}^k$ mapping the corner into the corner and hitting the point $1^k$).
So take an ($n+1$)-corner $\lambda$ in $X/\sim_n$ with completions $a$ and $b$.
Take $a',b'\in X(n+1)$ with $\pi(a') = a$ and $\pi(b') = b$.
Then we obtain that $\pi(a')\rvert_{{\bbcor}^{n+1}} = \lambda = \pi(b')\rvert_{{\bbcor}^{n+1}}$.
This means that $a'(\omega) \sim_n b'(\omega)$ for all $\omega\neq 1^{n+1}$.
Using universal replacement $2^{n+1}-1$ many times, the configuration
\[
    \{0,1\}^{n+1}\to X(0),\quad\omega\mapsto\begin{cases}
        a'(\omega),\quad &\omega\neq 1^{n+1}, \\
        b'(1^{n+1}),\quad &\omega = 1^{n+1},
    \end{cases}
\]
is in $X(n+1)$.
But this means that $a'(1^{n+1}) \sim_n b'(1^{n+1})$ and so $a(1^{n+1}) = b(1^{n+1})$.
By concreteness, $a = b$.

Next we prove that the map $\pi\colon X\to X/\sim_n$ is a fibration.
\begin{center}
\begin{tikzcd}
	{{\bbcor}^k} & X \\
	{{\bbox}^k} & {X/\sim_n}
	\arrow[from=1-1, to=1-2]
	\arrow[hook', from=1-1, to=2-1]
	\arrow["\pi", from=1-2, to=2-2]
	\arrow[dashed, from=2-1, to=1-2]
	\arrow[from=2-1, to=2-2]
\end{tikzcd}
\end{center}
For $k\leq n+1$ the lift exists by universal replacement: first choose a cube $c$ in $X$ that maps to ${\bbox}^k\to X/\sim_n$
(this is possible by surjectivity of $\pi$) and then replace all points of $c$ in the corner with the points of the given corner ${\bbcor}^k\to X$,
which are equal in $X/\sim_n$.
For $k\geq n+1$ again complete the corner arbitrarily in $X$.
Uniqueness of corner completion in $X/\sim_n$ asserts that this completion pushes down to the cube we started with in $X/\sim_n$.

In particular, $X/\sim_n$ is a fibrant cubeset:
\begin{center}
\begin{tikzcd}
	\ast & X \\
	{{\bbcor}^k} & {X/\sim_n} \\
	{{\bbox}^k} & \ast
	\arrow[from=1-1, to=1-2]
	\arrow[hook', from=1-1, to=2-1]
	\arrow["\pi", from=1-2, to=2-2]
	\arrow[dashed, from=2-1, to=1-2]
	\arrow[from=2-1, to=2-2]
	\arrow[hook', from=2-1, to=3-1]
	\arrow["p", from=2-2, to=3-2]
	\arrow[dashed, from=3-1, to=1-2]
	\arrow[from=3-1, to=3-2]
\end{tikzcd}
\end{center}
since $\pi$ is an epic fibration one can lift the $k$-corner in $\tau_nX$ to $X$ (inductively completing lower dimensional corners) and then use that $X$ is fibrant to complete to a $k$-cube in $X$ which pushes down to the desired completion.

Taking the previous steps together, we obtain that $X/\sim_n$ is a concrete fibrant $n$-step cubeset.
By adjunction, there exists a unique map $g$ making the diagram
\begin{center}
\begin{tikzcd}
	X & {\tau_nX} \\
	& {X/\sim_n}
	\arrow["\eta", from=1-1, to=1-2]
	\arrow["\pi"', from=1-1, to=2-2]
	\arrow["g", dashed, from=1-2, to=2-2]
\end{tikzcd}
\end{center}
commutative.
This means that $f\circ g = \id$ since $\eta$ is the unit of the adjunction, and $g\circ f = \id$ since $\pi$ is surjective.
So we have shown that $\tau_n X = X/\sim_n$.
\end{proof}

\begin{remark}
Note that in particular this implies that the equivalence relation of the $n$-truncation on $n+1$-cubes is simply generated by identifying two $n+1$-cubes if they have the same corner,
i.e., it identifies with the coequalizer of both arrows $\hom(\bbox^{n+1}\sqcup_{\bbcor^{n+1}}\bbox^{n+1},X)\to \hom(\bbox^{n+1},X)=X(n+1)$.
\end{remark}

\begin{remark}
The previous proposition shows that the filtered category $(\cSet_n)_n$ restricts to a filtration on concrete fibrant cubesets.
\begin{center}
\begin{tikzcd}
	\cSet & \cdots & {\cSet_2} & {\cSet_1} & {\cSet_0} \\
	\ccSetnil & \cdots & {\cSet^\mathrm{conc}_{\mathrm{fib},2}} & {\cSet^\mathrm{conc}_{\mathrm{fib},1}} & {\cSet^\mathrm{conc}_{\mathrm{fib},0}}
	\arrow[shift left, from=1-1, to=1-2]
	\arrow[shift left, hook, from=1-2, to=1-1]
	\arrow["{\tau_2}", shift left, from=1-2, to=1-3]
	\arrow[shift left, hook, from=1-3, to=1-2]
	\arrow["{\tau_1}", shift left, from=1-3, to=1-4]
	\arrow[shift left, hook, from=1-4, to=1-3]
	\arrow["{\tau_0}", shift left, from=1-4, to=1-5]
	\arrow[shift left, hook, from=1-5, to=1-4]
	\arrow[hook, from=2-1, to=1-1]
	\arrow[shift left, from=2-1, to=2-2]
	\arrow[shift left, hook, from=2-2, to=2-1]
	\arrow["{\tau_2}", shift left, from=2-2, to=2-3]
	\arrow[hook, from=2-3, to=1-3]
	\arrow[shift left, hook, from=2-3, to=2-2]
	\arrow["{\tau_1}", shift left, from=2-3, to=2-4]
	\arrow[hook, from=2-4, to=1-4]
	\arrow[shift left, hook, from=2-4, to=2-3]
	\arrow["{\tau_0}", shift left, from=2-4, to=2-5]
	\arrow[hook, from=2-5, to=1-5]
	\arrow[shift left, hook, from=2-5, to=2-4]
\end{tikzcd}
\end{center}\end{remark}

\begin{example}\label{ex:truncation_HK}
The $n$-truncation of a Host-Kra cubegroup associated to a filtered group $G_\bullet$ is given by passing to the filtration quotiented by $G_{n+1}$,
i.e. taking the filtered group defined by $G_0/G_{n+1}\ge G_1/G_{n+1}\ge\cdots$.
Thus, the length of the filtration is the step of the cubeset.

In particular, the $n$-truncation of $\cD_k(A)$ is $0$ for $n< k$,
and is $\cD_k(A)$ for all $n\ge k$.
Hence, $\cD_k(A)$ is $k$-truncated.
\end{example}

\begin{remark}
There are several different and more categorical constructions that yield the truncation in the concrete fibrant setting, but need iteration and the gluing step in the general case.
As an example of such a construction, consider the two canonical maps
\[
    \ihom({\bbox}^{n+1}\sqcup_{{\bbcor}^{n+1}}{\bbox}^{n+1},X)\times{\bbox}^{n+1}\to X,
\]
which are obtained from currying the two canonical maps
\[
    \ihom({\bbox}^{n+1}\sqcup_{{\bbcor}^{n+1}}{\bbox}^{n+1},X)\to \ihom({\bbox}^{n+1},X).
\]
Taking the coequalizer
\begin{center}
\begin{tikzcd}[ampersand replacement=\&]
	{\ihom(\bbox^{n+1}\sqcup_{\bbcor^{n+1}}\bbox^{n+1},X)\times {\bbox}^{n+1}} \&\& X \&\& {Y}
	\arrow["{\phi_2}"', shift right=1, from=1-1, to=1-3]
	\arrow["{\phi_1}", shift left=1, from=1-1, to=1-3]
	\arrow[two heads, from=1-3, to=1-5]
\end{tikzcd}
\end{center}
gives a quotient $Y$.
If $Y$ is $n$-step, then it agrees with the $n$-truncation of $X$.
\end{remark}

\subsubsection{Connectivity}\label{ssec:connectivity}

We further investigate $0$-step cubesets and talk about connectivity.
Recall the adjunction $i_!\dashv i^\ast$ from Lemma~\ref{lem:point-cset}.

\begin{proposition}\label{prop:0-trunc-computation}
The adjunction $i_!\dashv i^\ast$ induces an equivalence of categories $\set\simeq\cSet_0$.
In particular, a cubeset $X$ is $0$-step if and only if the counit $i_!i^\ast X\to X$ is an isomorphism.
Under this equivalence, the $0$-truncation $\tau_0\colon\cSet\to\set$ is given by
\[
    X\mapsto\pi_0(X) \coloneq (\tau_0X)(0) = \varinjlim_{n\in{\bbox}^\op}X(n).
\]
In particular, a cubeset $X$ is $0$-step if and only if the unit $X\to i_!\pi_0(X)$ is an isomorphism.
Moreover, the diagram
\begin{center}
\begin{tikzcd}
	{X(1)} & {X(0)} & {\pi_0(X)}
	\arrow["{s^\ast}", shift left, from=1-1, to=1-2]
	\arrow["{t^\ast}"', shift right, from=1-1, to=1-2]
	\arrow["\eta", from=1-2, to=1-3]
\end{tikzcd}
\end{center}
is a coequalizer diagram, where $s,t$ are the two maps ${\bbox}^0\to{\bbox}^1$.
\end{proposition}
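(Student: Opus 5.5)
The plan is to first identify the $0$-step cubesets with the essential image of $i_!$, and then read off the truncation from the chain of adjunctions $\mathrm{colim}\dashv i_!\dashv i^\ast$ (Remark~\ref{rem:left-adjoint-i_!} and Lemma~\ref{lem:point-cset}).

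\textbf{Step 1: $i_!S$ is $0$-step.} For $k\geq 1$ the corner ${\bbcor}^k$ is a colimit of representables over a connected diagram. Hence $\hom({\bbcor}^k,i_!S)=\hom(\mathrm{colim}\,{\bbcor}^k,S)$, where $\mathrm{colim}$ denotes the left adjoint of $i_!$ (colimit over ${\bbox}^\op$). That left adjoint sends a representable to $\ast$, and therefore sends a connected colimit of representables to $\ast$. The same holds for ${\bbox}^k$. So $\hom({\bbox}^k,i_!S)\to\hom({\bbcor}^k,i_!S)$ is the identity of $S$, and in particular bijective.

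\textbf{Step 2: a $0$-step $X$ satisfies $X\cong i_!i^\ast X$.} This is the main step. We show by induction on $n$ that every restriction $X(n)\to X(0)$ along a vertex is a bijection and that all structure maps are compatible with these bijections.

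For $n=1$, consider the corner ${\bbcor}^1=\{0\}\subset{\bbox}^1$. Unique completion says that $s^\ast\colon X(1)\to X(0)$, restriction to the vertex $0$, is bijective. Its inverse is the degeneracy along ${\bbox}^1\to{\bbox}^0$, so every edge is degenerate. Applying this to the flip automorphism of ${\bbox}^1$ shows that $t^\ast=s^\ast$ as well.

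For general $n$, unique completion $X(n)\cong\hom({\bbcor}^n,X)$ together with the induction hypothesis shows that any $n$-cube is determined by its vertex $0^n$. The degenerate cube along ${\bbox}^n\to{\bbox}^0$ realizes every value, so $X(n)\to X(0)$ is bijective with inverse the degeneracy. Every map $f\colon{\bbox}^m\to{\bbox}^n$ then commutes with the degeneracies, so $X(f)$ is the identity under these identifications.

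This yields the counit isomorphism $i_!i^\ast X\to X$. Combined with full faithfulness of $i_!$ (Lemma~\ref{lem:point-cset}), we obtain the equivalence $\set\simeq\cSet_0$.

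\textbf{Step 3: the truncation and the coequalizer.} Under this equivalence, $\tau_0$ is left adjoint to $i_!$, so by uniqueness of adjoints it is the colimit over ${\bbox}^\op$. This gives $\pi_0(X)=\varinjlim X(n)$, and the unit statement follows from Step 2.

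For the coequalizer, every object $n$ of ${\bbox}^\op$ receives a map from $0$ in ${\bbox}^\op$, namely a vertex inclusion ${\bbox}^0\to{\bbox}^n$. So the colimit is the quotient of $X(0)$ by the relation generated by $f^\ast c\sim g^\ast c$ for $c\in X(n)$ and vertices $f,g$. Any two vertices of ${\bbox}^n$ are joined by a chain of edges ${\bbox}^1\to{\bbox}^n$, so this relation is generated by edges, i.e.\ by $s^\ast e\sim t^\ast e$ for $e\in X(1)$.

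I expect Step 2 to be the main bookkeeping obstacle, namely checking compatibility with all morphisms of $\bbox$, including flips. Concreteness is not available there, but the degeneracy argument suffices.
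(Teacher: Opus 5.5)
Your proposal is correct and follows essentially the paper's route. You identify $\cSet_0$ with the essential image of the fully faithful $i_!$ by induction over the corner presentation ${\bbcor}^k=\varinjlim_{\cJ}{\bbox}^j$, and you read off $\tau_0$ as the left adjoint $\varinjlim_{{\bbox}^\op}$ of $i_!$. The differences are only in presentation:
\begin{enumerate}
\item You justify that $i_!S$ is $0$-step via connectedness of $\cJ$, where the paper just says ``clearly''.
\item You obtain the coequalizer from the explicit colimit formula in $\set$, rather than by checking the universal property via maps $X\to i_!S$ after reducing to concrete $X$.
\item The compatibility you worry about in Step~2 is automatic, because the bijections $p_n^\ast\colon X(0)\to X(n)$ are the components of the natural counit.
\end{enumerate}
There is one small variance slip in Step~3: a vertex map ${\bbox}^0\to{\bbox}^n$ in $\bbox$ is a map $n\to 0$ in ${\bbox}^\op$, not a map from $0$. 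That direction is exactly what makes $X(0)\to\varinjlim X$ surjective, so the argument itself is unaffected.
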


\begin{proof}
Since the left adjoint is fully faithful, the adjunction induces an equivalence of categories
between $\set$ and the full subcategory of $\cSet$ spanned by objects $X$ such that the
counit $i_!i^\ast X\to X$ is an isomorphism.
Thus it suffices to show that the condition on the counit is equivalent to $X$ being $0$-step.
If $X\simeq i_!i^\ast X$, then $X$ is a constant presheaf which clearly has unique corner completion
in all dimensions and so is $0$-step.
In the other direction, assume that $X$ is $0$-step.
Recall that we can write
\[
    {\bbcor}^k = \varinjlim_{j\in\cJ}{\bbox}^j
\]
where $\cJ$ runs over all inert inclusions ${\bbox}^j\to{\bbox}^k$ for $j<k$.
Thus we have isomorphisms
\[
    X(k)\simeq\varprojlim_{j\in\cJ}X(j)
\]
for all $k\geq 1$.
By induction, we infer that $X(k)\simeq X(0)$ for all $k\geq 1$.
Thus, $X\simeq i_!i^\ast X$.

For the second claim note that if $X$ is a cubeset and $Y$ is $0$-step
we have by the previous considerations that
\[
    \hom(X,Y) = \hom(X,i_!i^\ast Y) = \hom\left(\varinjlim_{n\in{\bbox}^\op} X(n),Y(0)\right)
\]
where the second equality follows from Remark~\ref{rem:left-adjoint-i_!}.
Here, the cubeset $X$ is interpreted as a diagram $X\colon{\bbox}^\op\to\set$
whose colimit in $\set$ exists.
This means that $(\tau_0X)(0) = \varinjlim_{n\in{\bbox}^\op} X(n)$.

For the last assertion, given a map $f\colon X(0)\to S$ with $f\circ s^\ast = f\circ t^\ast$
we need to show that there is a map $g_0\colon\pi_0(X)\to S$ such that $f = g_0\circ\eta$.
By the adjunction $\pi_0(X)\dashv i_!$, this is equivalent to showing that there exists a map
$g\colon X\to i_!S$ such that $g_0 = f\colon X(0)\to i_!S(0)$.
Since $i_!S$ is concrete it suffices to assume that $X$ is concrete as well.
Given $c\in X(n)$ we have that $f(c(\omega)) = f(c(\omega'))$ for all $\omega,\omega'\in\{0,1\}^n$
since $f\circ s^\ast = f\circ t^\ast$.
Thus we can just define $g_n(c) = f(c(0^n))$ which determines a map $g\colon X\to i_!S$ of cubesets with $g_0 = f$.
Uniqueness of $g$ is clear by construction.
\end{proof}

\begin{remark}
The set $\pi_0(X)$ can be computed as follows:
it is the set $X(0)$ modded out by the equivalence relation generated by
\[
    x\sim_0 y\iff\;\text{there exists}\; h\in X(1)\;\text{such that}\; s^\ast(h) = h(0) = x,\; t^\ast(h) = h(1) = y.
\]
If $X$ has the gluing property, then this relation already is an equivalence relation.

Moreover note that the coequalizer in the proposition is reflexive.
This gives another proof of the fact that the $0$-truncation, or $\pi_0$, commutes with finite products.
\end{remark}

\begin{definition}\label{def:ergodic}
Let $X$ be a cubeset.
We say that $X$ is ergodic if $\pi_0(X) = \ast$.
More generally, we say that $X$ is \emph{$n$-ergodic} if $\tau_{n-1}X = \ast$.
\end{definition}

\begin{remark}
If $X$ is concrete and fibrant, then $X$ is $n$-ergodic if and only if $X(n) = X(0)^{2^n}$.
Moreover, since the truncation preserves finite products by Proposition~\ref{prop:n-trunc-ihom-prod},
$n$-ergodic cubesets are closed under taking finite products.
\end{remark}

\begin{example}\label{ex:EM_k_ergodic}
The Eilenberg-Mac Lane cubeset $\cD_k(A)$ from Example~\ref{ex:host_kra_cubegroups} is $k$-ergodic
by Example~\ref{ex:truncation_HK}.
\end{example}

There also exists a decomposition into ergodic components.

\begin{proposition}\label{prop:ergodic-decomp}
Let $X$ be a cubeset and $\eta\colon X\to\tau_0X$ the unit of the $0$-truncation.
For $p\in\pi_0(X)$ denote by
\[
    X_p(n) = \{ c\in X(n) \mid \eta(c(\omega)) = p\;\text{for all}\;\omega\in\{0,1\}^n \}.
\]
Then $X_p$ is an ergodic sub-cubeset of $X$ and $X = \coprod_{p\in\pi_0(X)} X_p$.
\end{proposition}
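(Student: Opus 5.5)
The plan is to reduce everything to the connectedness of the cubes and corners, using the description of $\pi_0$ from Proposition~\ref{prop:0-trunc-computation}. Write $\eta\colon X\to i_!\pi_0(X)$ for the unit; by that proposition, on points $\eta$ is the quotient of $X(0)$ by the equivalence relation generated by edges. Since $i_!\pi_0(X)$ is the constant cubeset, the map $\eta_n\colon X(n)\to\pi_0(X)$ is the composite of restriction to any vertex ${\bbox}^0\to{\bbox}^n$ with $\eta_0$, and by naturality this is independent of the vertex. Hence every $n$-cube $c$ has all of its vertices in a single class $\eta(c)\in\pi_0(X)$.

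First, $X_p$ is a sub-cubeset. For $f\colon{\bbox}^m\to{\bbox}^n$ and $c\in X_p(n)$, every vertex of $f^\ast c$ is a vertex of $c$, so it maps to $p$. Second, the sets $X_p(n)$ for $p\in\pi_0(X)$ partition $X(n)$, by the observation above that all vertices of a cube lie in one class. These partitions are compatible with restrictions, so as presheaves $X=\coprod_p X_p$, since coproducts are computed pointwise. Note that I use the constant $i_!$-description rather than vertices of concrete cubes, so no concreteness is needed. I will read $c(\omega)$ in the definition as the restriction of $c$ along the vertex $\omega$.

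Third, ergodicity of $X_p$. Since $\pi_0$ is a left adjoint (Remark~\ref{rem:left-adjoint-i_!}), it preserves coproducts, so $\pi_0(X)=\coprod_q\pi_0(X_q)$, and the map $\pi_0(X_q)\to\pi_0(X)$ lands in $\{q\}$. Hence each $\pi_0(X_q)$ must be a single point. Nonemptiness holds because $p$ is the class of some point $x\in X(0)$, and that point lies in $X_p(0)$. Equivalently, one can use the coequalizer description: two points of $X_p$ joined by an edge chain in $X$ are joined by the same chain in $X_p$, because every edge of the chain has both ends in class $p$.

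The main point needing care is the identification of $\eta_n$ with the vertex restriction for non-concrete $X$. This follows directly from $i_!S(n)=S$ with identity restriction maps. With that in hand, the remaining steps are formal.
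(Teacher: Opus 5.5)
Your proof is correct. It follows the same three-step outline as the paper (sub-cubeset, pointwise partition of $X(n)$, ergodicity of each piece), but you argue the two substantive steps differently.

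For the partition, the paper joins any two vertices $\omega,\omega'$ of $\{0,1\}^n$ by an edge $l\colon\{0,1\}^1\to\{0,1\}^n$ in the cube category. It then invokes the coequalizer description of $\pi_0(X)$ from Proposition~\ref{prop:0-trunc-computation} to get $\eta(c(\omega))=\eta(c(\omega'))$. You instead use that $\tau_0X=i_!\pi_0(X)$ has identity restriction maps, so naturality of the unit gives $\eta_0(c(\omega))=\eta_n(c)$ for every vertex $\omega$. This needs neither the existence of such edges nor the coequalizer.

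For ergodicity, the paper lifts an edge chain from $X$ to $X_p$, noting that every edge of the chain has both endpoints in class $p$. This is exactly your ``equivalently'' remark. Your primary argument is instead formal: since $\pi_0\dashv i_!$, the canonical map $\coprod_q\pi_0(X_q)\to\pi_0(X)$ is a bijection whose $q$-th summand lands in $\{q\}$. This forces each $\pi_0(X_q)$ to be a single point, and surjectivity even gives nonemptiness for free.

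Your route is shorter, uses nothing beyond the adjunctions, and handles non-concrete $X$ cleanly by reading $c(\omega)$ as restriction along a vertex. The paper's route is more hands-on and keeps the explicit link between ergodic components and edge-connectivity. That link is the form in which the components are used later, for instance when transporting structure groups along zig-zags of $1$-cubes.
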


\begin{proof}
It is clear that $X_p$ is a sub-cubeset of $X$ since for $c\in X_p(m)$ and $f\colon{\bbox}^n\to{\bbox}^m$
we have that $\eta(X(f)(c)(\omega)) = \eta(c(f(\omega))) = p$ for all $\omega\in\{0,1\}^n$
and thus $X(f)(c)\in X_p(n)$.
This induces a natural injective map $\coprod_{p\in\pi_0(X)} X_p\to X$.
It also is surjective: given $c\in X(n)$, to any two $\omega,\omega'\in\{0,1\}^n$
there is a map $l\colon{\bbox}^1\to{\bbox}^n$ with $l(0) = \omega$, $l(1) = \omega'$.
Then $c\circ l\in X(1)$ and by Proposition~\ref{prop:0-trunc-computation} we have that
\[
    \eta(c(\omega)) = \eta(c(l(0))) = \eta(c(l(1))) = \eta(c(\omega')).
\]
Lastly, $X_p$ is ergodic: given $x,y\in X_p(0)$, since $\eta(x) = \eta(y) = p$
we have that $x\sim_0 y$ in $X$.
This means that there is a finite sequence $h_1,\ldots,h_k\in X(1)$
such that $h_1(0) = x$, $h_i(1) = h_{i+1}(0)$ for $1\leq i\leq k-1$ and $h_k(1) = y$.
But then we have that $\eta(h_i(0)) = p = \eta(h_i(1))$ for all $i$,
and so $h_i\in X_p(1)$.
Hence $x\sim_0 y$ also in $X_p$ which means that $X_p$ is ergodic.
\end{proof}

We call every $X_p$ an \emph{ergodic component} of $X$
and the decomposition of $X$ into the coproduct $\coprod_{p\in\pi_0(X)}X_p$ the
\emph{decomposition in ergodic components}.

Let us also comment on connectivity of $\bbGamma$-sets.
For this we need the following propositions, characterizing connected $\bbGamma$-sets.
First, one can decompose $\bbGamma$-sets as a coproduct of the fibers over its points.

\begin{proposition}\label{prop:connected-comp-gamma}
Let $X$ be a $\bbGamma$-set, let $p\in X(0)$ and for $\langle n\rangle\in\bbGamma$ define
\[
    X_p(n) = \{ c\in X(n) \mid c(0) = p \}.
\]
Then $X_p$ is a sub-$\bbGamma$-set of $X$ and $X = \coprod_{p\in X(0)} X_p$.
This is the decomposition of $X$ into its connected components.
\end{proposition}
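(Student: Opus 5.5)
The key observation is that in $\bbGamma$ every object $\langle n\rangle$ receives a unique morphism from $\langle 0\rangle$. This morphism is the inclusion of the base point $0^n$, i.e.\ the opposite of the unique map $\langle n\rangle\to\langle 0\rangle$ in $\Fin_\ast$. Consequently every linear map $f\colon\{0,1\}^m\to\{0,1\}^n$ satisfies $f(0^m)=0^n$. Here $c(0)\in X(0)$ denotes the restriction of $c\in X(n)$ along $0\colon\langle 0\rangle\to\langle n\rangle$. For $f$ as above and $c\in X(n)$ we then get $(X(f)c)(0) = X(0\to\langle m\rangle)X(f)c = X(f\circ 0)c = X(0\to\langle n\rangle)c = c(0)$. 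The base-point vertex is therefore preserved by all restriction maps.

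The proof then proceeds in three steps.

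\begin{enumerate}
\item \emph{Sub-$\bbGamma$-set.} By the computation above, $X(f)$ maps $X_p(n)$ into $X_p(m)$, so $X_p$ is a sub-$\bbGamma$-set of $X$.
\item \emph{Decomposition.} For each $n$, the sets $X_p(n)$ for $p\in X(0)$ are the fibers of the map $X(n)\to X(0)$, $c\mapsto c(0)$. Hence $X(n)=\coprod_p X_p(n)$ as sets. This decomposition is compatible with all restriction maps by step 1, and coproducts in $\PSh(\bbGamma)$ are computed pointwise, so $X=\coprod_{p} X_p$.
\item \emph{Connectedness.} Each $X_p$ has exactly one point, namely $X_p(0)=\{p\}$. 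We show it is connected in the sense that $\hom(-,X_p)$-type decompositions are trivial, i.e.\ $X_p$ is nonempty and not a nontrivial coproduct. Suppose $X_p=A\sqcup B$. Then $X_p(0)=\{p\}$ lies in, say, $A(0)$. For any $c\in X_p(n)$ the restriction $c(0)=p$ lies in $A$, and since $A$ and $B$ are subpresheaves whose sections are disjoint, $c$ itself must lie in $A(n)$. Hence $B=\emptyset$. Equivalently, one can use that $\langle 0\rangle$ is initial in $\bbGamma$, so $\pi_0$ of a presheaf, namely $\varinjlim_{\bbGamma^\op}X$, is computed by evaluation at the terminal object $\langle 0\rangle$ of $\bbGamma^\op$. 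This gives $\pi_0(X)=X(0)$ and $\pi_0(X_p)=\ast$.
\end{enumerate}

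The only point needing care is the notion of ``connected'' used. I would use the alternative description from step 3, namely that the colimit over $\bbGamma^\op$ is evaluation at $\langle 0\rangle$, since it shows directly that the connected components are indexed by $X(0)$. Everything else is routine.
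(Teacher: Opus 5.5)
Your proof is correct and follows the paper's argument. The unique map $\langle 0\rangle\to\langle n\rangle$ makes the base-point vertex invariant under restriction, so $X_p$ is a sub-$\bbGamma$-set; the fibers of $c\mapsto c(0)$ give the pointwise coproduct decomposition; and connectedness comes from $X_p(0)$ being a singleton. You are more explicit than the paper on that last point, since the paper only asserts that a $\bbGamma$-set with a single point is connected and proves this afterwards via $\hom(X_p,-)$ preserving coproducts, but your indecomposability argument and the colimit-at-$\langle 0\rangle$ observation amount to the same thing.
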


\begin{proof}
Since there is a unique map $u\colon\ast\to\langle n\rangle$, $X_p$ is a $\bbGamma$-set, and it is clear that $X_p\sub X$.
It is clear that $X_p$ as a $\bbGamma$-set is connected since every $\bbGamma$-set with $X(0) = \ast$ is connected.
Lastly, $X(n) = \coprod_{p\in X(0)} X_p(n)$ is clear by definition, and so $X = \coprod_{p\in X(0)} X_p$.
\end{proof}

\begin{definition}
Let $\cC$ be a category.
An object $C\in\cC$ is called \emph{connected} if $\hom(C,-)\colon\cC\to\set$ preserves coproducts.
\end{definition}

\begin{proposition}\label{prop:connected-gamma-prop}
Let $X$ be a $\bbGamma$-set.
Then the following conditions are equivalent.
\begin{enumerate}[(a)]
    \item $X$ is connected.
    \item For every decomposition $X = X_1\sqcup X_2$ we have that $X_1$ is empty or equal to $X$.
    \item $X(0)$ is a singleton.
\end{enumerate}
\end{proposition}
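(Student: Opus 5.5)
I would prove the cycle of implications (c)$\Rightarrow$(a)$\Rightarrow$(b)$\Rightarrow$(c). The main tool is Proposition~\ref{prop:connected-comp-gamma}: it decomposes any $\bbGamma$-set $X$ as $\coprod_{p\in X(0)}X_p$, where $X_p$ consists of the cubes based at $p$.

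For (c)$\Rightarrow$(a), assume $X(0)=\ast$. We must show that $\hom(X,-)$ preserves coproducts. Let $f\colon X\to\coprod_i Y_i$ be a map. The unique point of $X(0)$ lands in some $Y_{i_0}(0)$. In $\bbGamma$ there is a unique map $\ast\to\langle n\rangle$, namely the inclusion of the base point $0$, so every cube $c\in X(n)$ has $c(0)$ equal to this point. Coproducts of presheaves are computed pointwise and restriction maps respect the summands. Hence $f(c)$ lies in the summand containing $f(c(0))$, which is $Y_{i_0}$. So $f$ factors uniquely through $Y_{i_0}\hookrightarrow\coprod_i Y_i$. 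Uniqueness of the factorization follows because the coproduct inclusions are monic and pairwise disjoint. This gives the bijection $\coprod_i\hom(X,Y_i)\simeq\hom(X,\coprod_i Y_i)$.

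For (a)$\Rightarrow$(b), let $X=X_1\sqcup X_2$ and apply connectedness to the identity map $X\to X_1\sqcup X_2$. It factors through one summand, say $X_1$. Then the inclusion $X_1\to X$ is split epic, so $X_1=X$, and disjointness forces $X_2=\emptyset$. One degenerate case needs attention: the empty $\bbGamma$-set. It satisfies (b) vacuously but not (c), and it is not connected, since $\hom(\emptyset,-)$ sends the empty coproduct to $\ast\neq\emptyset$. So in (a)$\Rightarrow$(b) the argument is as above, but in (b)$\Rightarrow$(c) I must either exclude $X=\emptyset$ or read (b) as also requiring $X$ to be non-empty. I would add the convention, standard for connected objects, that connected objects are non-empty, and phrase (b) accordingly.

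For (b)$\Rightarrow$(c), use the decomposition $X=\coprod_{p\in X(0)}X_p$ from Proposition~\ref{prop:connected-comp-gamma}. If $X(0)$ had two distinct points $p\neq q$, then $X=X_p\sqcup\coprod_{r\neq p}X_r$ would be a decomposition with both parts non-empty, since $p\in X_p(0)$ and $q$ lies in the second part. This contradicts (b). Non-emptiness of $X$ gives $X(0)\neq\emptyset$, because any cube $c$ restricts along $\ast\to\langle n\rangle$ to a point. Hence $X(0)$ is a singleton.

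The only real obstacle is this empty-set subtlety in (b), which I would resolve by the non-emptiness convention. Everything else is immediate from the pointwise computation of coproducts and the uniqueness of maps $\ast\to\langle n\rangle$ in $\bbGamma$.
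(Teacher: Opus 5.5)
Your proof is correct and follows the paper's route: for (c)$\Rightarrow$(a) you send the unique point to a single summand and use the unique map $\ast\to\langle n\rangle$ in $\bbGamma$; for (a)$\Rightarrow$(b) you factor the identity through one summand; and for (b)$\Rightarrow$(c) you use the decomposition from Proposition~\ref{prop:connected-comp-gamma}. Your remark about $X=\emptyset$ is a genuine point that the paper's one-line argument for (b)$\Rightarrow$(c) passes over: under the paper's definition of connectedness, the empty $\bbGamma$-set satisfies (b) vacuously but satisfies neither (a) nor (c), so adding non-emptiness to (b) is exactly the right repair.
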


\begin{proof}
$(a)\implies(b)$: Let $X = X_1\sqcup X_2$.
Then the identity on $X$ has to factor w.l.o.g. as $X\to X_1\to X$.
Therefore, $X_1\to X$ is an epimorphism, and hence an isomorphism.

$(b)\implies(c)$: Follows from Proposition~\ref{prop:connected-comp-gamma}.

$(c)\implies(a)$: If $X(0)$ is a point, then for any map $X\to\coprod_i Y_i$
its underlying point factors through one $Y_i$,
and so does every map $X(n)\to\coprod_i Y_i(n)$.
\end{proof}

Therefore, we have a decomposition of the $\bbGamma$-set $X$ into its connected components:
it is a decomposition $X = \coprod_{i\in I} X_i$ where $X_i$ are connected and $X_i\sub X$ are sub-$\bbGamma$-sets.

\subsubsection{Truncations over \texorpdfstring{$\bbGamma$}{Gamma}}\label{ssec:truncation-gamma}

Note that of course one can also define truncations over $\bbGamma$ instead of $\bbox$.
Here the correct set of morphisms to localize at is the refined $k$-corners for $k>n$ from Section~\ref{ssec:corner-gamma}.
Recall the set
\[
    \Lambda_{\bbGamma} \coloneq \{{\bbcor}^n_\omega\hookrightarrow{\bblox}^n \mid n\in\N,\,\omega\in\{0,1\}^n\setminus\{0^n\}\}
\]
from Definition~\ref{def:corner_in_gamma} and~\ref{def:omega_corner_gamma}.
This yields the variant
\[
    \Lambda_{\bbGamma,n} \coloneq \{{\bbcor}^k_\omega\hookrightarrow{\bblox}^k \mid k\geq n+1,\,\omega\in\{0,1\}^k\setminus\{0^k\}\}.
\]
of maps in $\Lambda_{\bbGamma}$ between objects of dimension $>n$.
Then we can define the truncations analogously to $\cSet$.

\begin{definition}
A $\bbGamma$-set $X$ is called $n$-step if it is local w.r.t. $\Lambda_{\bbGamma,n}$.
We denote by $\PSh(\bbGamma)_n$ the full subcategory of $n$-step $\bbGamma$-sets
and by $\tau_n^{\bbGamma}\colon\PSh(\bbGamma)\to\PSh(\bbGamma)_n$ the localization onto $\Lambda_{\bbGamma,n}$-local objects.
\end{definition}

\begin{remark}
Essentially all the same properties of the truncation that work with cubesets hold over $\bbGamma$ as well,
e.g., the subcategory of $n$-step $\bbGamma$-sets is closed under limits, filtered colimits and coproducts.
\end{remark}

Again, one can compute the truncation similarly to cubesets.
However, note that the truncation does not affect the value $X(0)$ at all:
the $\bbGamma$-set decomposes into a coproduct of its $X(0)$-values, see Proposition~\ref{prop:connected-comp-gamma},
and so it suffices to see that $\tau_n X(0) = \ast$ whenever $X(0) = \ast$.
But this follows from the explicit construction of the reflection $\tau_n X$ as a filtered colimit over objects which arise as connected colimit of objects $Y$ that all have $Y(0) = \ast$,
see \cite[Construction 1.37]{Adamek1994}.

But other than that, for a $\bbGamma$-set arising from a concrete fibrant cubeset, computing the truncation over $\bbGamma$ is analogous to computing it over $\bbox$.
This will need the following Definition.

\begin{definition}
Let $X$ be a concrete fibrant cubeset.
For $n=0$ we define $x\approx_n y$ for $x,y\in X(1)$ if $x(0) = y(0)$.
For $n>0$ we define $x\approx_n y$ for $x,y\in X(1)$ if there exist $\omega\in\{0,1\}^{n+1}\setminus\{0^{n+1}\}$ and $c,c'\in X(n+1)$ such that
\[
    c\rvert_{j_!{\bbcor}^{n+1}_\omega} = c'\rvert_{j_!{\bbcor}^{n+1}_\omega} \quad\text{and}\quad c(0,\omega) = x,\; c'(0,\omega) = y.
\]
\end{definition}

Here $c(0,\omega)$ denotes the composition $c\circ t$ where $t\colon{\bblox}^1\to{\bblox}^{n+1}$ is the unique linear map with $t(1) = \omega$.

\begin{proposition}
Let $X$ be a concrete fibrant cubeset, $n\in\N$ and $x,y\in X(1)$.
Then the following assertions are equivalent.
\begin{enumerate}[(a)]
    \item $x\approx_n y$.
    \item $x(0) = y(0)$ and $x(1)\sim_n y(1)$.
\end{enumerate}
In particular, the relation $\approx_n$ is an equivalence relation on $X(1)$ for all $n\in\N_0$.
\end{proposition}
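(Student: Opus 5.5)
The plan is to reduce both directions to statements about vertices, using concreteness, and then to invoke the characterisation of $\sim_n$ in Proposition~\ref{prop:univ-replacement}. The key preliminary observation concerns two $(n+1)$-cubes $c,c'$ of a concrete cubeset $X$. Their restrictions to $j_!{\bbcor}^{n+1}_\omega$ agree if and only if $c(v)=c'(v)$ for all vertices $v\neq\omega$.

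To justify this, recall from Lemma~\ref{lem:j-geometric-morphism} that $j_!{\bbcor}^{n+1}_\omega$ is the union of the linear injections ${\bbox}^n\hookrightarrow{\bbox}^{n+1}$ missing $\omega$. A map out of this union is determined by its restrictions to the individual faces. Each such restriction is an $n$-cube of $X$, hence determined by its vertices by concreteness. So it suffices to check that every vertex $v\neq\omega$ lies in the image of some linear injection missing $\omega$. I would do this by a case split.
\begin{itemize}
\item If $v=0$, every such injection contains $v$.
\item If there is an $i$ with $\omega_i=1$ and $v_i=0$, the inert face $\epsilon_i$ (image $[x_i=0]$) contains $v$ but not $\omega$.
\item Otherwise $v\geq\omega$ coordinatewise and $v\neq\omega$. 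Then there is a $j$ with $v_j=1$, $\omega_j=0$, and (since $\omega\neq0$) a $k$ with $\omega_k=v_k=1$. The linear diagonal face with image $[x_j=x_k]$, as the map $s_k$ in the proof of Lemma~\ref{lem:different-liftings-comparison}, contains $v$ and misses $\omega$.
\end{itemize}
I expect this bookkeeping to be the only real obstacle: identifying the vertex set of the $\omega$-corner and justifying that restriction to this non-representable union is detected on vertices.

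For $(a)\Rightarrow(b)$, let $c,c'$ witness $x\approx_n y$. Since $0^{n+1}$ lies in the $\omega$-corner, we get $x(0)=c(0)=c'(0)=y(0)$. By the observation, $c$ and $c'$ agree at every vertex except $\omega$. Let $\sigma$ be the flip automorphism of ${\bbox}^{n+1}$ (Lemma~\ref{lem:linearization-auto-bblox}) that flips exactly the coordinates $i$ with $\omega_i=0$, so that $\sigma(1^{n+1})=\omega$. Then $c\circ\sigma$ and $c'\circ\sigma$ are $(n+1)$-cubes agreeing at all vertices except $1^{n+1}$. Every vertex of the classical corner ${\bbcor}^{n+1}$ differs from $1^{n+1}$, so by concreteness the two cubes agree on ${\bbcor}^{n+1}$. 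Their top vertices are $c(\omega)=x(1)$ and $c'(\omega)=y(1)$, so Definition~\ref{def:univ-replacement} gives $x(1)\sim_n y(1)$.

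For $(b)\Rightarrow(a)$, take $\omega=1^{n+1}$ and write $p=x(0)=y(0)$.
\begin{itemize}
\item Let $c=x\circ\mathrm{pr}_1$, where $\mathrm{pr}_1\colon{\bbox}^{n+1}\to{\bbox}^1$ is the linear projection onto the first coordinate. Its main diagonal $c\circ t$ is $x$, since $\mathrm{pr}_1\circ t=\id$.
\item By universal replacement (Proposition~\ref{prop:univ-replacement}(c), with $k=n+1$), the configuration $c'$ obtained from $c$ by replacing the value $x(1)$ at $1^{n+1}$ with $y(1)$ lies in $X(n+1)$.
\item The edge $c'\circ t$ has endpoints $p$ and $y(1)$, so it equals $y$ by concreteness.
\item The cubes $c$ and $c'$ agree at all vertices except $1^{n+1}$, so the observation gives $c\rvert_{j_!{\bbcor}^{n+1}_\omega}=c'\rvert_{j_!{\bbcor}^{n+1}_\omega}$. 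Hence $x\approx_n y$.
\end{itemize}

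Finally, $\approx_n$ is an equivalence relation. For $n\geq1$ this follows from characterisation (b): equality of the initial points is an equivalence relation, and $\sim_n$ is one by Proposition~\ref{prop:univ-replacement}. For $n=0$, $\approx_0$ is by definition just equality of $x(0)$ and $y(0)$, which is trivially an equivalence relation.
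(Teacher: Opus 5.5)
Your proof is correct and follows the paper's argument. In $(a)\Rightarrow(b)$ you transfer agreement on the $\omega$-corner to the classical corner via a flip, and in $(b)\Rightarrow(a)$ you degenerate $x$ along the first coordinate and apply universal replacement at $1^{n+1}$; the one difference is that where the paper cites locality of $X$ with respect to $j_!{\bbcor}^{k}_\omega\to{\bbcor}^k_\omega$ (via Lemma~\ref{lem:alternative-nilfibrant-concrete}), you prove directly that for concrete $X$ agreement on $j_!{\bbcor}^{n+1}_\omega$ is the same as agreement at every vertex other than $\omega$, which is a more explicit and self-contained justification of that step.
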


\begin{proof}
$(a)\implies (b)$:
If $x\approx_n y$, there exist $\omega\in\{0,1\}^{n+1}\setminus\{0^{n+1}\}$ and $c,c'\in X(n+1)$ such that
\[
    c\rvert_{j_!{\bbcor}^{n+1}_\omega} = c'\rvert_{j_!{\bbcor}^{n+1}_\omega} \quad\text{and}\quad c(0,\omega) = x,\; c'(0,\omega) = y.
\]
In particular, $x(0) = y(0)$.
Moreover, we have seen in Lemma~\ref{lem:alternative-nilfibrant-concrete}
that $X$ is local w.r.t. $j_!{\bbcor}^k_\omega\to{\bbcor}^k_\omega$.
This means that $c\rvert_{{\bbcor}^{n+1}_\omega} = c'\rvert_{{\bbcor}^{n+1}_\omega}$.
Now take an automorphism of cubes exchanging $\omega$ and $1^{n+1}$ (using the appropriate flips).
This implies that $c\rvert_{{\bbcor}^{n+1}_{1^{n+1}}} = c'\rvert_{{\bbcor}^{n+1}_{1^{n+1}}}$ and in particular since ${\bbcor}^{n+1}\sub{\bbcor}^{n+1}_{1^{n+1}}$,
$c\rvert_{{\bbcor}^{n+1}} = c'\rvert_{{\bbcor}^{n+1}}$ and $c(1^{n+1}) = x(1)$, $c'(1^{n+1}) = y(1)$.
This means that $x(1)\sim_n y(1)$.

$(b)\implies (a)$:
If $x(1)\sim_n y(1)$ take the (linear!) degeneracy $p\colon{\bbox}^{n+1}\to{\bbox}^1$ along the $x_1$-coordinate axis.
Then $c = x\circ p$ is an ($n+1$)-cube in $X$ with $c(1^{n+1}) = x(1)$.
Using universal replacement, we see that the configuration
\[
    c'\colon\{0,1\}^{n+1}\to X(0),\quad\omega\mapsto\begin{cases}
        c(\omega),\quad &\omega\neq 1^{n+1}, \\
        y(1),\quad &\omega = 1^{n+1},
    \end{cases}
\]
is in $X(n+1)$.
Thus we have that
\[
    c\rvert_{j_!{\bbcor}^{n+1}_{1^{n+1}}} = c'\rvert_{j_!{\bbcor}^{n+1}_{1^{n+1}}} \quad\text{and}\quad c(0,\omega) = x,\; c'(0,\omega) = y.\qedhere
\]
\end{proof}

This equivalence relation yields a $\bbGamma$-set $X/\approx_n$ as follows:
for $m=0$ we set $X/\approx_n(0) = X(0)$ and for $m>0$ we define $X/\approx_n(m)$ to be the quotient of $X(m)$ that identifies two $m$-cubes $a,b$
if for all inclusions $l\colon{\bblox}^1\to{\bblox}^m$ we have that $a\circ l \approx_n b\circ l$.

The previous proposition gives us an easy description of $X/\approx_n$:
two $m$-cubes $a$ and $b$ are identified if and only if $a(0) = b(0)$ and $a(\omega) \sim_n b(\omega)$ for all $\omega\in\{0,1\}^m\setminus\{0^m\}$.
Our goal now is to show that this quotient $X/\approx_n$ actually is the truncation over $\bbGamma$.

\begin{proposition}\label{prop:trunc-concfib-gamma}
Let $X$ be a concrete fibrant cubeset and $n\in\N_0$.
Then the $\bbGamma$-truncation of degree $n$ of $j^\ast X$ is $X/\approx_n$ and the unit $j^\ast X\to\tau^{\bbGamma}_n j^\ast X$ is a $\bbGamma$-fibration.
\end{proposition}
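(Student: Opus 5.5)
We mirror the proof of Proposition~\ref{prop:trunc_concfib}, replacing the classical corner by the corners ${\bbcor}^k_\omega$ and universal replacement at the top vertex by universal replacement at an arbitrary non-zero vertex. Write $\pi\colon j^\ast X\to X/\approx_n$ for the quotient map and $\eta\colon j^\ast X\to\tau^{\bbGamma}_n j^\ast X$ for the unit. By the description after the previous proposition, two $m$-cubes are identified under $\pi$ if and only if they agree at $0^m$ and are $\sim_n$-equivalent at every other vertex. In particular $X/\approx_n$ embeds into the $\bbGamma$-set $m\mapsto X(0)\times(X(0)/\sim_n)^{2^m-1}$, so it is concrete in this relative sense. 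The proof has four steps.

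\emph{Step 1: $\eta$ factors through $\pi$.} Let $a,b\in X(m)$ be identified under $\pi$. We reduce to $m=n+1$ by pulling back along linear surjections ${\bblox}^{n+1}\to{\bblox}^m$ when $m\le n+1$, and we will handle $m>n+1$ by induction using the corners ${\bbcor}^m_\omega$. In dimension $n+1$ we replace the values of $a$ vertex by vertex, one $\omega\neq0^{n+1}$ at a time, by the corresponding values of $b$. Universal replacement (Proposition~\ref{prop:univ-replacement}(c)) guarantees that each intermediate configuration $c_i$ lies in $X(n+1)$. Consecutive cubes $c_i,c_{i+1}$ differ only at one vertex $\omega$, so they agree on $j_!{\bbcor}^{n+1}_\omega$ and hence on the subobject ${\bbcor}^{n+1}_\omega$ of ${\bblox}^{n+1}$. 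Since $\tau^{\bbGamma}_n j^\ast X$ is $\Lambda_{\bbGamma,n}$-local, this gives $\eta(c_i)=\eta(c_{i+1})$. The vertex $0^{n+1}$ never needs replacing, because $a(0)=b(0)$; this is exactly why $0$ is excluded from $\omega$. Chaining the equalities yields $\eta(a)=\eta(b)$.

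\emph{Step 2: $X/\approx_n$ is $n$-step over $\bbGamma$, i.e.\ uniquely $\Lambda_{\bbGamma,n}$-local.}
\begin{itemize}
\item \emph{Uniqueness.} Two fillers $\pi(a'),\pi(b')$ of a corner ${\bbcor}^k_\omega$ with $k>n$ agree at all vertices except $\omega$. We first reduce to $k=n+1$ via a linear diagonal through $\omega$. Universal replacement then gives $a'(\omega)\sim_n b'(\omega)$, exactly as in the proof of Proposition~\ref{prop:trunc_concfib} after a flip sending $\omega$ to $1^{n+1}$.
\item \emph{Existence.} Lift the corner to $j^\ast X$ and use that $j^\ast X$ is $\Lambda_{\bbGamma}$-fibrant by Proposition~\ref{prop:nilfib-restricted-nilfib-concrete}.
\end{itemize}

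\emph{Step 3: $\pi$ is a $\bbGamma$-fibration, so the final claim holds.}
\begin{itemize}
\item \emph{Dimensions $k\le n+1$.} Choose any preimage cube and replace its corner vertices by universal replacement.
\item \emph{Dimensions $k>n+1$.} Fill the corner in $X$ and use uniqueness from Step 2.
\end{itemize}

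\emph{Step 4: conclusion.} The universal property then gives mutually inverse maps between $X/\approx_n$ and $\tau^{\bbGamma}_n j^\ast X$, exactly as at the end of the proof of Proposition~\ref{prop:trunc_concfib}. One map is inverse to the other because $\eta$ is the unit; the other composite is the identity because $\pi$ is surjective.

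\emph{Main obstacle.} The hardest step is the reduction inside the concreteness arguments. The quotient $X/\approx_n$ is only determined by vertices together with a marked base vertex, so the diagonal and degeneracy maps used for reductions must be \emph{linear}, i.e.\ must fix $0$. I would first check that, for each $\omega$, there is a linear map ${\bblox}^{n+1}\to{\bblox}^k$ sending ${\bbcor}^{n+1}_{1^{n+1}}$ into ${\bbcor}^k_\omega$ and $1^{n+1}$ to $\omega$. The candidate is the map that repeats coordinate $i$ for each $i$ in the support of $\omega$ and sets the other coordinates to $0$, followed by padding.

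In Step 1 the passage between $j_!{\bbcor}^{n+1}_\omega$-agreement in $\cSet$ and agreement on ${\bbcor}^{n+1}_\omega$ in $\PSh(\bbGamma)$ also needs care. For that I would rely on the adjunction $j_!\dashv j^\ast$ (Remark~\ref{rem:adjoint-triple-j}) and on Lemma~\ref{lem:alternative-nilfibrant-concrete}, as in the previous proposition.
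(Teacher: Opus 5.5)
Your outline is the paper's: mirror Proposition~\ref{prop:trunc_concfib}, and your Step~1 is essentially the paper's argument. The gap is in the reduction you single out as the main obstacle, because the linear map you want does not exist in general.

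\textbf{The reduction in Step~2 fails.} Suppose $\omega$ has at most $n$ nonzero coordinates. Any linear $g\colon\{0,1\}^{n+1}\to\{0,1\}^k$ with $g(1^{n+1})=\omega$ must have $g_i=0$ off $\mathrm{supp}\,\omega$ and $g_i=x_{j(i)}$ on it. So some variable $x_{j_0}$ does not occur in $g$. Then the vertex with a single $0$ at position $j_0$ is sent to $\omega$, and this vertex lies in the face $[x_{j_0}=0]$ of ${\bbcor}^{n+1}_{1^{n+1}}$. Hence $g$ cannot map ${\bbcor}^{n+1}_{1^{n+1}}$ into ${\bbcor}^k_\omega$; for instance take $n=1$, $k=3$, $\omega=(1,0,0)$. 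Your uniqueness argument therefore breaks for such $\omega$, for every $n\geq 1$.

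\textbf{Fix.} Linearity is only forced where you apply $\eta$, i.e.\ in Step~1. In Step~2 the claim $a'(\omega)\sim_n b'(\omega)$ is a statement about $X$, on which all of $\bbox$ acts.
\begin{enumerate}
\item Take $g=\phi_{\omega+1^k}\circ d$ with $d(x)=(x_1,\ldots,x_{n+1},x_{n+1},\ldots,x_{n+1})$. Then $g^{-1}(\omega)=\{1^{n+1}\}$.
\item The cubes $a'\circ g$ and $b'\circ g$ lie in $X(n+1)$ and have $\sim_n$-equivalent values away from $1^{n+1}$.
\item Universal replacement makes them agree on ${\bbcor}^{n+1}$, and Definition~\ref{def:univ-replacement} then gives $a'(\omega)\sim_n b'(\omega)$.
\end{enumerate}
This works for every $k>n$, so no separate reduction to $k=n+1$ is needed. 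If you prefer to stay linear, vary the source corner instead. When $\lvert\mathrm{supp}\,\omega\rvert\le n+1$, use an inert inclusion onto $n+1$ coordinates containing $\mathrm{supp}\,\omega$, with $\omega'$ the restriction of $\omega$. Your diagonal with $\omega'=1^{n+1}$ only works when $\lvert\mathrm{supp}\,\omega\rvert\geq n+1$.

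\textbf{The existence half of Step~2 is unjustified.} Your phrase ``lift the corner to $j^\ast X$'' is not automatic once $k>n+2$. The faces then have dimension $>n+1$, where universal replacement is unavailable. Arbitrary representatives of the vertex classes need not form cubes; this already happens for $\cD_1(\Z)$ with $n=0$. One way to close this:
\begin{enumerate}
\item Push the corner to $j^\ast(X/\sim_n)$, which is $\Lambda_{\bbGamma}$-fibrant by Proposition~\ref{prop:nilfib-restricted-nilfib-concrete}, and fill it there by some $d$.
\item Lift $d$ to $c\in X(k)$ whose value at $0$ is the prescribed one. This uses that $X\to X/\sim_n$ is a fibration (Proposition~\ref{prop:trunc_concfib}) and that ${\bbox}^0\to{\bbox}^k$ is a trivial cofibration (Proposition~\ref{prop:triv-nilcofib-simp} with $K=\{\emptyset\}$).
\item Then $\pi(c)$ fills the original corner. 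Elements of $X/\approx_n$ are determined by their exact value at $0$ and their $\sim_n$-classes at the other vertices, and the corner covers every vertex except $\omega$.
\end{enumerate}
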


\begin{proof}
The proof works essentially the same as the proof of Proposition~\ref{prop:trunc_concfib}.
In a first step, we show existence of a map making the diagram
\begin{center}
\begin{tikzcd}
	{j^\ast X} && {\tau^{\bbGamma}_n j^\ast X} \\
	& {X/\approx_n}
	\arrow["\eta", from=1-1, to=1-3]
	\arrow["\pi"', from=1-1, to=2-2]
	\arrow["f"', dashed, from=2-2, to=1-3]
\end{tikzcd}
\end{center}
commutative.
We compute $f$ sectionwise, and $k = 0$ is clear since $\tau^{\bbGamma}_n j^\ast X(0) = X(0) = X/\approx_n(0)$.
For $1\leq k\leq n+1$ we need to see that if $a,b$ are two $k$-cubes in $X$ with $a\approx_n b$, then $\eta(a) = \eta(b)$.
Take a linear surjection (a degeneracy) $p\colon{\bbox}^{n+1}\to{\bbox}^k$.
Then $a' = p^\ast(a) \approx_n p^\ast(b) = b'$.
Using universal replacement we can, one by one, exchange each point of $a'$, which is not at position $0$, by a point of $b'$ at the same position.
But instead of rotating $a'$, we now use unique corner completion of $\tau_n^{\bbGamma} j^\ast X$ against every $\omega$-corner ${\bbcor}^{n+1}_\omega$
to conclude that $\eta(a') = \eta(c_k)$ for each $c_k$ after a finite replacement, and finally, $\eta(a) = \eta(b)$.
For $k\geq n+2$ the same induction as in the proof of Proposition~\ref{prop:trunc_concfib} yields the claim.

The rest of the proof works verbatim as in the proof of Proposition~\ref{prop:trunc_concfib}
except that at $\omega = 0$, equality has to occur in the equivalence relation, and one has to consider all corners ${\bbcor}^k_\omega$.

For $n=0$ note that the Hom-isomorphism of the adjunction holds since both sides agree with $Y(0)^{X(0)}$.
\end{proof}

\subsection{Homotopy Groups}\label{ssec:structure-group}

In this section we define algebraic invariants of cubesets: their structure groups,
also known as homotopy groups in analogy to simplicial sets.

\begin{definition}\label{def:structure-group}
Let $X$ be an $n$-step cubeset, $x_0\in X(0)$ a point and $n\geq 1$.
Then define $\pi_n(X,x_0)$ to be the set
\[
    \{ c\colon{\bbox}^n\to X \mid c\lvert_{{\bbcor}^n} \equiv x_0 \}.
\]
We define a binary operation on $\pi_n(X,x_0)$ as follows.
Let $a,b\in\pi_n(X,x_0)$ and set $\lambda_i\colon{\bbox}^n\to X$ to be $\lambda_1 = a$, $\lambda_2 = b$ and $\lambda_i \equiv x_0$ for $2<i\le n+1$.
This defines a corner $\lambda_{a,b}\colon{\bbcor}^{n+1}\to X$ which admits a unique completion $\overline{a*b}\colon{\bbox}^{n+1}\to X$ to a cube in $X$.
Let $s\colon{\bbox}^n\to{\bbox}^{n+1}$ be the diagonal $(x_1,\ldots,x_n)\mapsto (x_1,x_1,x_2,\ldots,x_n)$.
Then define
\[
    a * b = \overline{a*b}\circ s \colon {\bbox}^n\to X.
\]
With this operation, we call $\pi_n(X,x_0)$ the \emph{$n$-th structure monoid of $X$.}
\end{definition}

Let us illustrate this definition in the cases $n=1$ and $n=2$
where the resulting composition $a*b$ is colored red.
\begin{center}
\begin{tikzcd}[column sep=small, row sep= small]
	&&&&& b && {a*b} \\
	b && {a*b} && {x_0} && a \\
	&&&&& {x_0} && {x_0} \\
	{x_0} && a && {x_0} && {x_0}
	\arrow[no head, from=1-6, to=1-8]
	\arrow[dashed, no head, from=1-6, to=3-6]
	\arrow[no head, from=2-1, to=2-3]
	\arrow[no head, from=2-5, to=1-6]
	\arrow[color={rgb,255:red,214;green,92;blue,92}, no head, from=2-5, to=1-8]
	\arrow[no head, from=2-5, to=2-7]
	\arrow[color={rgb,255:red,214;green,92;blue,92}, no head, from=2-5, to=4-5]
	\arrow[no head, from=2-7, to=1-8]
	\arrow[dashed, no head, from=3-6, to=3-8]
	\arrow[color={rgb,255:red,214;green,92;blue,92}, no head, from=3-8, to=1-8]
	\arrow[no head, from=4-1, to=2-1]
	\arrow[color={rgb,255:red,214;green,92;blue,92}, no head, from=4-1, to=2-3]
	\arrow[no head, from=4-1, to=4-3]
	\arrow[no head, from=4-3, to=2-3]
	\arrow[dashed, no head, from=4-5, to=3-6]
	\arrow[color={rgb,255:red,214;green,92;blue,92}, dashed, no head, from=4-5, to=3-8]
	\arrow[no head, from=4-5, to=4-7]
	\arrow[no head, from=4-7, to=2-7]
	\arrow[no head, from=4-7, to=3-8]
\end{tikzcd}
\end{center}

\begin{remark}
A priori, the choice of $s$ for extracting an $n$-cube out of the completed ($n+1$)-cube
seems arbitrary.
However, it essentially is the only choice:
given two inerts $\epsilon_i,\epsilon_j\colon{\bbox}^n\to{\bbox}^{n+1}$ with $i\neq j$,
there is exactly one subobject of ${\bbox}^{n+1}$ that is the complement of the symmetric difference of $\epsilon_i$ and $\epsilon_j$.
This is due to the fact that the complement of the symmetric difference of $\epsilon_i$ and $\epsilon_j$
is equal to the subobject $[\omega_i=\omega_j=0]\cup[\omega_i=\omega_j=1]$ of dimension $n$,
since it consists of $2\cdot 2^{n-1} = 2^n$ points.
We also see that it doesn't matter in the definition of the operation that we impose $a$ and $b$
on the sides $\epsilon_1$ and $\epsilon_2$, it could have also been any other two distinct lower faces.%
\footnote{In fact, since flips and coordinate permutations induce automorphisms on the set of $n+1$-cubes, making the same definition after some automorphism of $\bbox^{n+1}$ leads to the same composition on concrete cubesets.}
\end{remark}

The next proposition justifies the definition of structure \emph{monoid}.

\begin{proposition}\label{prop:structure-group-is-monoid}
The operation
\[
    *\colon\pi_n(X,x_0)\times\pi_n(X,x_0)\to\pi_n(X,x_0),\quad (a,b)\mapsto a*b
\]
makes $\pi_n(X,x_0)$ an abelian monoid.
This construction upgrades to a functor $\cSet_{\ast,n}\to\cMon$.
\end{proposition}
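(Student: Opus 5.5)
The plan is to use only that $X$ is $n$-step, i.e.\ that every corner of dimension $n+1$ and $n+2$ has a \emph{unique} completion. Every identity will be obtained by building two cubes with the same corner and then invoking this uniqueness. Write $\mathbf{x}_0$ for the constant $n$-cube at $x_0$. Well-definedness is immediate: the corner $\lambda_{a,b}$ is compatible because $a$ and $b$ restrict to $x_0$ on their corners. Moreover $a*b$ restricts to $x_0$ on ${\bbcor}^n$: every lower face of $s$ lies in a face of $\overline{a*b}$ that is either a lower face $\epsilon_i$ with $i\ge 3$ (constant $x_0$), or a lower face of $a$ or $b$.

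\textbf{Unit.} To show $\mathbf{x}_0$ is a unit, I complete the corner $\lambda_{a,\mathbf{x}_0}$ by the explicit cube $a\circ q$, where $q\colon{\bbox}^{n+1}\to{\bbox}^n$ is the projection forgetting the second coordinate. Its first face is $a$, its second face is $a\circ\epsilon_1=\mathbf{x}_0$, and the other faces are constant. By uniqueness $\overline{a*\mathbf{x}_0}=a\circ q$, so $a*\mathbf{x}_0=a\circ q\circ s=a$. The symmetric argument gives $\mathbf{x}_0*a=a$.

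\textbf{Commutativity.} Let $\sigma$ be the swap of the first two coordinates. Then $\overline{a*b}\circ\sigma$ completes $\lambda_{b,a}$, and $\sigma\circ s=s$, so $b*a=a*b$.

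\textbf{Associativity.} This is the main obstacle. I would work in dimension $n+2$. First build the $(n+2)$-corner whose faces are $\epsilon_1\mapsto\overline{a*b}$, $\epsilon_2\mapsto\overline{a*c}$ (with $c$ placed in the second slot), $\epsilon_3\mapsto\overline{b*c}$, and all remaining faces constant $x_0$. This is compatible because pairwise intersections of these faces are $a$, $b$, $c$ or constant. Let $W$ be its unique completion. Next restrict $W$ along diagonal embeddings ${\bbox}^{n+1}\to{\bbox}^{n+2}$, namely $(x_1,x_1,x_2,\dots)$ and $(x_1,x_2,x_2,\dots)$. I would check that these restrictions are completions of the corners $\lambda_{a*b,c}$ and $\lambda_{a,b*c}$ respectively, using that faces of the diagonals are diagonals of faces of $W$. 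Then both $(a*b)*c$ and $a*(b*c)$ equal the restriction of $W$ along the total diagonal $(x_1,x_1,x_1,x_2,\dots)$. The difficulty is purely combinatorial bookkeeping of faces. If it becomes messy, I would fall back on an alternative: show that the operation does not depend on the choice of the two faces carrying $a$ and $b$, by invariance of unique completions under $\Aut({\bbox}^{n+1})$ as in the preceding remark, and use this to rearrange the faces.

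\textbf{Functoriality.} For a pointed map $f\colon(X,x_0)\to(Y,y_0)$ of $n$-step cubesets, postcomposition sends spheres to spheres. It also sends the completion $\overline{a*b}$ to a completion of $\lambda_{fa,fb}$, so by uniqueness $f(a*b)=fa*fb$. It preserves units as well, and composition of maps is clearly respected.
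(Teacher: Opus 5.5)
Your proof is correct and follows the paper's argument essentially step by step. It uses a degenerate cube for the unit, the coordinate swap $\sigma$ with $\sigma\circ s=s$ for commutativity, the same $(n+2)$-corner built from $\overline{a*b}$, $\overline{a*c}$, $\overline{b*c}$ and restricted along the two diagonals for associativity, and uniqueness of completions for functoriality. Two harmless labels are swapped. The cube $a\circ q$ (with $q$ forgetting the second coordinate) has constant $\epsilon_1$-face and $\epsilon_2$-face $a$, so it completes $\lambda_{\mathbf{x}_0,a}$ rather than $\lambda_{a,\mathbf{x}_0}$. Likewise, the diagonal $(x_1,x_1,x_2,\dots)$ yields $\overline{a*(b*c)}$, while $(x_1,x_2,x_2,\dots)$ yields $\overline{(a*b)*c}$.
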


\begin{proof}
\textbf{Commutativity:}
Let $a,b\in\pi_n(X,x_0)$.
In order to show that $a*b = b*a$ we want to see that
one can act on the ($n+1$)-cube witnessing the composition by an automorphism that interchanges $a$ with $b$ but leaves the rest of the cube, and in particular the side inducing the composition, invariant.
This can be described for $n=1$ by the following diagram
\begin{center}
\begin{tikzcd}
	b & {a*b} & a & {b*a} \\
	{x_0} & a & {x_0} & b
	\arrow[no head, from=1-1, to=1-2]
	\arrow[no head, from=1-3, to=1-4]
	\arrow[no head, from=2-1, to=1-1]
	\arrow[color={rgb,255:red,214;green,92;blue,92}, no head, from=2-1, to=1-2]
	\arrow[no head, from=2-1, to=2-2]
	\arrow[""{name=0, anchor=center, inner sep=0}, no head, from=2-2, to=1-2]
	\arrow[""{name=1, anchor=center, inner sep=0}, no head, from=2-3, to=1-3]
	\arrow[color={rgb,255:red,214;green,92;blue,92}, no head, from=2-3, to=1-4]
	\arrow[no head, from=2-3, to=2-4]
	\arrow[no head, from=2-4, to=1-4]
	\arrow["\leftrightsquigarrow"{description}, draw=none, from=0, to=1]
\end{tikzcd}
\end{center}
where the $2$-cubes relate by switching the coordinates $x_1$ and $x_2$.
This argument works in every dimension and is formalized as follows.

For $k\geq 2$ denote by $\sigma_{12}^k\in\Aut({\bbox}^{k})$ the automorphism exchanging coordinates $x_1$ and $x_2$.
Note that $\sigma_{12}^k$ maps the corner onto the corner,
i.e., restricts to an automorphism $\sigma_{12}^k\rvert\colon{\bbcor}^k\to{\bbcor}^k$.
We first show that $\overline{b*a}\circ\sigma_{12}^{n+1} = \overline{a*b}$.
Since $X$ is $n$-step it suffices to see that $\lambda_{b,a}\circ\sigma_{12}^{n+1}\rvert = \lambda_{a,b}$.
But this can be tested against inerts $\epsilon_i\colon{\bbox}^n\to{\bbcor}^{n+1}$.
There we have that $\sigma_{12}^{n+1}\rvert\circ\epsilon_i = \epsilon_i\circ\sigma_{12}^n$ for $i\geq 3$
and $\sigma_{12}^{n+1}\rvert\circ\epsilon_1 = \epsilon_2$, $\sigma_{12}^{n+1}\rvert\circ\epsilon_2 = \epsilon_1$
since $\sigma_{12}^{n+1}$ can be decomposed as
\[
    \sigma_{12}^2\times\id\colon{\bbox}^2\times{\bbox}^{n-1}\to{\bbox}^2\times{\bbox}^{n-1}.
\]
This shows the desired equality for $i = 1,2$.
For $i\geq 3$ we have that
\[
      \lambda_{b,a}\circ\sigma_{12}^{n+1}\rvert\circ\epsilon_i
    = \lambda_{b,a}\circ\epsilon_i\circ\sigma_{12}^n
    = x_0\circ\sigma_{12}^n
    = x_0
    = \lambda_{a,b}\circ\epsilon_i,
\]
verifying the claim there as well.
From $\sigma_{12}^{n+1}\circ s = s$ we conclude that
\[
    a*b = \overline{a*b}\circ s = \overline{b*a}\circ\sigma_{12}^{n+1}\circ s = b*a.
\]

\textbf{Identity element:}
Clearly, the constant $n$-cube $x_0$ is the obvious candidate to be the neutral element.
To see that it actually is, consider the case $n=1$:
the $2$-cube
\begin{center}
\begin{tikzcd}
	{x_0} & a \\
	{x_0} & a
	\arrow[no head, from=1-1, to=1-2]
	\arrow[no head, from=2-1, to=1-1]
	\arrow[color={rgb,255:red,214;green,92;blue,92}, no head, from=2-1, to=1-2]
	\arrow[no head, from=2-1, to=2-2]
	\arrow[no head, from=2-2, to=1-2]
\end{tikzcd}
\end{center}
is a cube in $X$: it is the degeneration of $a$ into an additional dimension.
By definition of the operation, $a*x_0 = a$.

In general, let $d\colon{\bbox}^{n+1}\to{\bbox}^n$ denote the degeneracy omitting the first coordinate,
which is left inverse to both the inert $\epsilon_1$ and to the diagonal $s$.
Take $a\in\pi_n(X,x_0)$ and consider $a\circ d\colon{\bbox}^{n+1}\to X$.
Then $a\circ d\circ\epsilon_1 = a$.
For $i\geq 2$ we have that $d\circ\epsilon_i\colon{\bbox}^n\to{\bbox}^n$ factors through an inert face map, and hence $a\circ d\circ\epsilon_i \equiv x_0$.
Thus, $a\circ d = \overline{a*x_0}$ and
\[
    a*x_0 = \overline{a*x_0}\circ s = a\circ d\circ s = a.
\]
By commutativity, we also have $x_0*a = a$.
Thus, $x_0$ is the identity element of the monoid.

\textbf{Associativity:} Take elements $a,b,c\in\pi_n(X,x_0)$ and the ($n+1$)-cubes $\overline{a*b}$, etc.,
that witness their composition.
Together, they yield a corner of an ($n+2$)-cube.
We use unique corner completion in dimension $n+2$ to obtain a point $a*b*c$
witnessing $(a*b)*c$ and $a*(b*c)$ as $n$-dimensional subcubes,
so that they have to be the same.
The situation is summarized for $n=1$ by the following image,
where $\overline{(a*b)*c}$ is colored red, $\overline{a*(b*c)}$ blue and $(a*b)*c = a*(b*c)$ in yellow.
\begin{center}
\begin{tikzcd}[column sep=small, row sep=small]
	& {a*c} && a*b*c && {a*c} &&& a*b*c \\
	a && {a*b} && a &&& {a*b} \\
	& c && {b*c} && c &&& {b*c} \\
	{x_0} && b && {x_0} &&& b
	\arrow[dotted, no head, from=1-2, to=1-4]
	\arrow[no head, from=1-6, to=1-9]
	\arrow[no head, from=2-1, to=1-2]
	\arrow[no head, from=2-1, to=2-3]
	\arrow[no head, from=2-1, to=4-1]
	\arrow[dotted, no head, from=2-3, to=1-4]
	\arrow[no head, from=2-5, to=1-6]
	\arrow[color={rgb,255:red,92;green,92;blue,214}, no head, from=2-5, to=1-9]
	\arrow[no head, from=2-5, to=2-8]
	\arrow[color={rgb,255:red,214;green,92;blue,92}, no head, from=2-8, to=1-9]
	\arrow[dashed, no head, from=3-2, to=1-2]
	\arrow[dashed, no head, from=3-2, to=3-4]
	\arrow[dotted, no head, from=3-4, to=1-4]
	\arrow[dashed, no head, from=3-6, to=1-6]
	\arrow[color={rgb,255:red,214;green,92;blue,92}, dashed, no head, from=3-6, to=1-9]
	\arrow[dashed, no head, from=3-6, to=3-9]
	\arrow[color={rgb,255:red,92;green,92;blue,214}, no head, from=3-9, to=1-9]
	\arrow[dashed, no head, from=4-1, to=3-2]
	\arrow[no head, from=4-1, to=4-3]
	\arrow[no head, from=4-3, to=2-3]
	\arrow[no head, from=4-3, to=3-4]
	\arrow[color={rgb,255:red,214;green,214;blue,92}, dashed, no head, from=4-5, to=1-9]
	\arrow[color={rgb,255:red,92;green,92;blue,214}, no head, from=4-5, to=2-5]
	\arrow[color={rgb,255:red,214;green,92;blue,92}, no head, from=4-5, to=2-8]
	\arrow[color={rgb,255:red,214;green,92;blue,92}, dashed, no head, from=4-5, to=3-6]
	\arrow[color={rgb,255:red,92;green,92;blue,214}, dashed, no head, from=4-5, to=3-9]
	\arrow[no head, from=4-5, to=4-8]
	\arrow[no head, from=4-8, to=2-8]
	\arrow[no head, from=4-8, to=3-9]
\end{tikzcd}
\end{center}
Now to the formal argument:
denote $\lambda_1 = \overline{a*b}$, $\lambda_2 = \overline{a*c}$ and $\lambda_3 = \overline{b*c}$ the corresponding ($n+1$)-cubes
to the compositions $a*b$, $a*c$ and $b*c$, respectively (so that $\overline{a*b}\circ s = a*b$, etc.).
Let $\lambda_i \equiv x_0$ for $4\leq i\leq n+2$.
That these ($n+1$)-cubes $\lambda_i$ yield a corner $\lambda\colon{\bbcor}^{n+2}\to X$
can be verified by a direct computation using Remark~\ref{rem:corner-combinatorial}.
Now take the completion $d\colon{\bbox}^{n+2}\to X$ of $\lambda$.
We define the maps of cubes
\begin{align*}
    r\colon{\bbox}^{n+1}\to{\bbox}^{n+2},\quad &(x_1,\ldots,x_{n+1})\mapsto(x_1,x_2,x_2,x_3,\ldots,x_{n+1}) \\
    b\colon{\bbox}^{n+1}\to{\bbox}^{n+2},\quad &(x_1,\ldots,x_{n+1})\mapsto(x_1,x_1,x_2,\ldots,x_{n+1})
\end{align*}
which ought to extract $\overline{(a*b)*c}$ and $\overline{a*(b*c)}$, respectively, out of $d$.%
\footnote{As depicted in red ($r$) and blue ($b$) for $n=1$. If you can't see the colors, as, e.g., the second author, it doesn't matter, really.}
To see that $d\circ r = \overline{(a*b)*c}$ we need to verify that both cubes agree on their corner.
For $i=1$ we have that $r\circ\epsilon_1^n = \epsilon_1^{n+1}\circ s$,
so
\[
    d\circ r\circ\epsilon_1^n = d\circ\epsilon_1^{n+1}\circ s = \overline{a*b}\circ s = a*b.
\]
For $i=2$ we calculate $r\circ\epsilon_2^n = \epsilon_2^{n+1}\circ\epsilon_2^n$,
hence
\[
    d\circ r\circ\epsilon_2^n = d\circ\epsilon_2^{n+1}\circ\epsilon_2^n = \overline{a*c}\circ\epsilon_2^n = c.
\]
For $j\geq 3$ we obtain $r\circ\epsilon_j^n = \epsilon_{j+1}^{n+1}\circ r'$ for some (linear) $r'\colon{\bbox}^n\to{\bbox}^{n+1}$ and therefore,
\[
    d\circ r\circ\epsilon_j^n = d\circ\epsilon_{j+1}^{n+1}\circ r' = x_0\circ r' = x_0.
\]
This verifies the claim.
The analogous statement for $b$ is verified using similar arguments.
From the fact that $r\circ s = b\circ s$ we finally obtain
\[
    (a*b)*c = \overline{(a*b)*c}\circ s = d\circ r\circ s = d\circ b\circ s = a*(b*c)
\]
and so the operation is associative.

\textbf{Functoriality:}
If $f\colon(X,x_0)\to(Y,y_0)$ is a map of pointed $n$-step cubesets,
it preserves corners and the basepoint and so induces a map $f_\ast\colon\pi_n(X,x_0)\to\pi_n(Y,y_0)$.
Moreover, it is a monoid homomorphism since $f$ is a natural transformation and preserves corners.
\end{proof}

\begin{remark}\label{rem:structure-mon-not-grouplike}
In general, even for $n$-step $X$, the monoid $\pi_n(X,x_0)$ does in general not have inverses.
One would like to define, for $a\in\pi_n(X,x_0)$, an inverse $a^{-1}$ as follows (in case $n=1$):
we search for an element $a^{-1}\in\pi_1(X,x_0)$ such that the 2-cube on the left is in $X$,
and the red diagonal is the constant $x_0$.
\begin{center}
\begin{tikzcd}
	{a^{-1}} & {x_0} & {a^{-1}} & {x_0} \\
	{x_0} & a & {x_0} & a
	\arrow[no head, from=1-1, to=1-2]
	\arrow[dotted, no head, from=1-4, to=1-3]
	\arrow[no head, from=2-1, to=1-1]
	\arrow[color={rgb,255:red,214;green,92;blue,92}, no head, from=2-1, to=1-2]
	\arrow[no head, from=2-1, to=2-2]
	\arrow[no head, from=2-2, to=1-2]
	\arrow[dotted, no head, from=2-3, to=1-3]
	\arrow[no head, from=2-3, to=2-4]
	\arrow[no head, from=2-4, to=1-4]
\end{tikzcd}
\end{center}
So instead of completion at $x_0$, one flips the cube so that one can now use corner completion at $a^{-1}$.
The problem now is that the (anti-)diagonal connecting $x_0$ with $x_0$ need not be the constant $x_0$
(arising from the degeneracy).
This can only be guaranteed if $X$ has unique gluing property.
Then $X$ is concrete and we will later see that $\pi_n(X,x_0)$ becomes a group in that case.
\end{remark}

\begin{definition}\label{def:structure-monoid-general}
For a general pointed cubeset $(X,x_0)$, the $n$-th structure (homotopy) monoid is given by $\pi_n(\tau_n X,x_0)$.
For $n=0$ let $\pi_0(X) = \tau_0 X(0)$ be the set of \emph{ergodic components},
see Proposition~\ref{prop:0-trunc-computation}.
\end{definition}

This definition determines functors $\pi_0\colon\cSet\to\set$
and $\pi_n\colon\cSet_\ast\to\cMon$ for $n\geq 1$.
Since $\pi_0$ is left adjoint, it preserves colimits.

\begin{example}\label{ex:HK_homotopy_groups}
Discrete cubesets $i_\ast S$ have trivial structure monoids, since they have trivial truncations.

The structure monoids of the Host-Kra cubegroup $\HK(G_\bullet)$ are given by
$\pi_n(G,1)=G_n/G_{n+1}$ which in this case are actually groups.
In particular, the structure groups of $\cD_k(A)$ are
\[
    \pi_n(A,0) = \begin{cases}
        A,\quad & n = k, \\
        0,\quad & n \neq k.
    \end{cases}
\]
\end{example}

\begin{proposition}\label{prop:structure-monoid-prod}
Let $(X,x_0)$ and $(Y,y_0)$ be pointed cubesets and $n\in\N$.
Then the canonical monoid homomorphism
\[
    \pi_n(X\times Y,(x_0,y_0))\to\pi_n(X,x_0)\times\pi_n(Y,y_0)
\]
is an isomorphism.
\end{proposition}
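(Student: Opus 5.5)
We reduce to the $n$-step case and then compute directly on spheres. By Definition~\ref{def:structure-monoid-general}, $\pi_n(X\times Y,(x_0,y_0))=\pi_n(\tau_n(X\times Y),(x_0,y_0))$. By Proposition~\ref{prop:n-trunc-ihom-prod}, $\tau_n$ preserves finite products, so $\tau_n(X\times Y)\simeq\tau_nX\times\tau_nY$. This identification is compatible with the unit maps, and hence with the basepoints and the projections. It therefore suffices to prove the claim when $X$ and $Y$ are themselves $n$-step; their product is then $n$-step by Proposition~\ref{prop:n-trunc-closure}. The canonical map is the one induced by the two projections. It is a monoid homomorphism because, by Proposition~\ref{prop:structure-group-is-monoid}, $\pi_n$ is functorial on pointed $n$-step cubesets.

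For bijectivity we use that $\hom({\bbox}^n,-)$ and $\hom({\bbcor}^n,-)$ preserve products, being maps out of a fixed object. An $n$-cube $c=(c_X,c_Y)\colon{\bbox}^n\to X\times Y$ restricts to the constant corner $(x_0,y_0)$ if and only if $c_X$ restricts to the constant corner $x_0$ and $c_Y$ restricts to the constant corner $y_0$. Hence the underlying map of sets
\[
    \{c \mid c\rvert_{{\bbcor}^n}\equiv(x_0,y_0)\}\to\{a \mid a\rvert_{{\bbcor}^n}\equiv x_0\}\times\{b \mid b\rvert_{{\bbcor}^n}\equiv y_0\}
\]
is exactly this product decomposition, and it is bijective.

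As a sanity check we can see directly that the operation is computed componentwise, which makes the inverse a homomorphism as well. The corner $\lambda_{c,c'}$ in $X\times Y$ is the pair of corners $(\lambda_{c_X,c'_X},\lambda_{c_Y,c'_Y})$. Completion in $X\times Y$ is unique, and the pair of the completions in $X$ and in $Y$ is one such completion, so $\overline{c*c'}=(\overline{c_X*c'_X},\overline{c_Y*c'_Y})$. Composing with the diagonal $s$ then gives the product operation.

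The main obstacle is the reduction step. We must check that the isomorphism $\tau_n(X\times Y)\simeq\tau_nX\times\tau_nY$ coming from Proposition~\ref{prop:n-trunc-ihom-prod} really is the canonical comparison map induced by $\tau_n$ of the projections, so that it sends the basepoint $(x_0,y_0)$ correctly and the resulting map of $\pi_n$ is the canonical homomorphism. This follows from the Yoneda argument used there, since that argument identifies the map $\hom(\tau_nX\times\tau_nY,Z)\to\hom(X\times Y,Z)$ with precomposition by $\eta_X\times\eta_Y$.
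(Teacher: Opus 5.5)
Your proof is correct and follows the same route as the paper. Both arguments reduce to the $n$-step case because $\tau_n$ preserves finite products (Proposition~\ref{prop:n-trunc-ihom-prod}), then describe $\pi_n$ as the fiber of restriction from $n$-cubes to $n$-corners over the constant corner and use that this fiber commutes with products. Your extra checks, that the product comparison is the canonical map and that the multiplication is computed componentwise, are details the paper leaves implicit.
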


\begin{proof}
Since truncation commutes with finite products by Proposition~\ref{prop:n-trunc-ihom-prod},
we can assume that $X$ and $Y$ are $n$-step.
But in this case, $\pi_n(X,x_0)$ sits in the pullback square
\begin{center}
\begin{tikzcd}
	{\pi_n(X,x_0)} & \ast \\
	{\hom({\bbox}^n,X)} & {\hom({\bbcor}^n,X)}
	\arrow[from=1-1, to=1-2]
	\arrow[from=1-1, to=2-1]
	\arrow["{x_0}", from=1-2, to=2-2]
	\arrow[from=2-1, to=2-2]
\end{tikzcd}.
\end{center}
Thus, the claim follows from the continuity of the $\hom$-functor and the commutation of limits with limits.
\end{proof}

In the case of concrete fibrant cubesets where one has an explicit description of the truncation $\tau_n$ it is possible to give another definition of structure monoid.
For this, let $X$ be a concrete fibrant cubeset with basepoint $x_0$.
As before, we would like to define $\pi_n(X,x_0)$ to be the set of all $n$-cubes in $X$ whose corner is $x_0$.
Then multiplication should be defined in the same way:
given $a,b\in\pi_n(X,x_0)$, their product $a*b$ is given by the diagonal $s$ of the completion
\begin{center}
\begin{tikzcd}
	b && b & {a*b} & b & {a*b} \\
	{x_0} & a & {x_0} & a & {x_0} & a
	\arrow[dotted, no head, from=1-3, to=1-4]
	\arrow[no head, from=1-5, to=1-6]
	\arrow[""{name=0, anchor=center, inner sep=0}, no head, from=2-1, to=1-1]
	\arrow[no head, from=2-1, to=2-2]
	\arrow[""{name=1, anchor=center, inner sep=0}, no head, from=2-3, to=1-3]
	\arrow[no head, from=2-3, to=2-4]
	\arrow[""{name=2, anchor=center, inner sep=0}, dotted, no head, from=2-4, to=1-4]
	\arrow[""{name=3, anchor=center, inner sep=0}, no head, from=2-5, to=1-5]
	\arrow[color={rgb,255:red,214;green,92;blue,92}, no head, from=2-5, to=1-6]
	\arrow[no head, from=2-5, to=2-6]
	\arrow[no head, from=2-6, to=1-6]
	\arrow["\rightsquigarrow"{marking, allow upside down, pos=0.75}, draw=none, from=0, to=1]
	\arrow["\rightsquigarrow"{marking, allow upside down}, draw=none, from=2, to=3]
\end{tikzcd}
\end{center}
But there's a catch: if $X$ is not $1$-step, the completion is not unique!
So we need to build a quotient of $\pi_n(X,x_0)$ making multiplication well-defined.
In the case sketched for $n=1$ this would mean that given two completions $c$ and $d$ of the same corner
\begin{center}
\begin{tikzcd}[column sep=small, row sep=small]
	&& c & \\
	b &&& d \\
	\\
	{x_0} && a
	\arrow[no head, from=2-1, to=1-3]
	\arrow[no head, from=2-1, to=2-4]
	\arrow[color={rgb,255:red,214;green,92;blue,92}, no head, from=4-1, to=1-3]
	\arrow[no head, from=4-1, to=2-1]
	\arrow[color={rgb,255:red,214;green,92;blue,92}, no head, from=4-1, to=2-4]
	\arrow[no head, from=4-1, to=4-3]
	\arrow[no head, from=4-3, to=1-3]
	\arrow[no head, from=4-3, to=2-4]
\end{tikzcd}
\end{center}
we would need to identify $c$ and $d$ based at $x_0$ (the two red lines).
This is precisely what we will do.

\begin{construction}\label{cons:structure-monoid-concrete}
Let $X$ be a concrete fibrant cubeset with point $x_0\in X(0)$.
Then define $\bbcor_n(X,x_0)$ to be the pullback
\begin{center}
\begin{tikzcd}
	{\bbcor_n(X,x_0)} & \ast \\
	{\hom({\bbox}^n,X)} & {\hom({\bbcor}^n,X)}
	\arrow[from=1-1, to=1-2]
	\arrow[from=1-1, to=2-1]
	\arrow["{x_0}", from=1-2, to=2-2]
	\arrow[from=2-1, to=2-2]
\end{tikzcd}
\end{center}
which is the set of all $n$-cubes in $X$ that have value $x_0$ on their corner.
Now we want to identify two $n$-cubes in $X$ if there is an $(n+1)$-homotopy connecting the two cubes.
Let $s\colon{\bbox}^n\to{\bbox}^{n+1}$ be the map $(x_1,\ldots,x_n)\mapsto (x_1,x_1,x_2,\ldots,x_n)$.
Since $s$ maps the $n$-corner into the ($n+1$)-corner,
it induces an inclusion
\[
    i\colon{\bbox}^n\sqcup_{{\bbcor}^n}{\bbox}^n\to{\bbox}^{n+1}\sqcup_{{\bbcor}^{n+1}}{\bbox}^{n+1}.
\]
We define an equivalence relation on $\bbcor_n(X,x_0)$ by taking the pullback diagram
\begin{center}
\begin{tikzcd}
	{R} & {\hom({\bbox}^{n+1}\sqcup_{{\bbcor}^{n+1}}{\bbox}^{n+1},X)} \\
	{\bbcor_n(X,x_0)\times\bbcor_n(X,x_0)} & {\hom({\bbox}^n\sqcup_{{\bbcor}^n}{\bbox}^n,X)}
	\arrow[from=1-1, to=1-2]
	\arrow[from=1-1, to=2-1]
	\arrow["{i^\ast}", from=1-2, to=2-2]
	\arrow[hook, from=2-1, to=2-2]
\end{tikzcd}.
\end{center}
Now $\tilde{\pi}_n(X,x_0)$ is defined to be the coequalizer of the diagram
\begin{center}
\begin{tikzcd}
	R & {\bbcor_n(X,x_0)} & {\tilde{\pi}_n(X,x_0)}
	\arrow[shift left, from=1-1, to=1-2]
	\arrow[shift right, from=1-1, to=1-2]
	\arrow[from=1-2, to=1-3]
\end{tikzcd}.
\end{center}
\end{construction}

To get a better understanding of this coequalizer, let us describe which elements it identifies.

\begin{lemma}\label{lem:concrete-equiv-rel-structure-monoid}
Let $X$ be a concrete fibrant cubeset, $x_0\in X$ and $c,d\in\bbcor_n(X,x_0)$.
Then the following assertions are equivalent.
\begin{enumerate}[(a)]
    \item We have that $[c] = [d]$ in $\tilde{\pi}_n(X,x_0)$.
    \item We have that $\tau_n(c) = \tau_n(d)$ for $\tau_n\colon X\to\tau_n X$ the $n$-truncation.
    \item The cube $[c,d]$ given by $[c,d]\circ\epsilon_1 = c$, $[c,d]\circ\overline{\epsilon}_1 = d$ is a cube in $X(n+1)$.
    \item There exist $e,f\in X(n+1)$ such that $e\lvert_{{\bbcor}^{n+1}} = f\lvert_{{\bbcor}^{n+1}}$
        and $e\circ s = c$, $f\circ s = d$.
\end{enumerate}
\end{lemma}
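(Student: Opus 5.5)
The plan is to prove the cycle (a)$\Rightarrow$(d)$\Rightarrow$(c)$\Rightarrow$(b)$\Rightarrow$(a), with everything driven by the explicit model $\tau_n X = X/\sim_n$ from Proposition~\ref{prop:trunc_concfib} and by universal replacement (Proposition~\ref{prop:univ-replacement}).

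First I would unwind the coequalizer. By construction, an element of $R$ is a pair $(e,f)$ of $(n+1)$-cubes with $e\rvert_{{\bbcor}^{n+1}} = f\rvert_{{\bbcor}^{n+1}}$ such that $(e\circ s, f\circ s)$ lies in $\bbcor_n(X,x_0)^2$. So the relation in (d) is exactly the one generating the coequalizer, and $[c]=[d]$ in $\tilde{\pi}_n$ means that $c,d$ are joined by a finite zigzag of such pairs. It therefore suffices to show that (d) already defines an equivalence relation, or more simply to show that (d) implies (b) and (b) implies (d); together with (a) being the transitive closure of (d), this gives (a)$\Leftrightarrow$(b)$\Leftrightarrow$(d).

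For (d)$\Rightarrow$(b): $\tau_n(e)$ and $\tau_n(f)$ agree on the $(n+1)$-corner. Since $\tau_n X$ is $n$-step, they are equal, hence $\tau_n(c)=\tau_n(e\circ s)=\tau_n(f\circ s)=\tau_n(d)$. Because (b) is clearly an equivalence relation, this also gives (a)$\Rightarrow$(b).

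For (b)$\Rightarrow$(c): $c$ and $d$ agree on ${\bbcor}^n$ (constant $x_0$), and $c(1^n)\sim_n d(1^n)$, since $\tau_n X = X/\sim_n$ is concrete. Take the degenerate cube $c\circ d_1\in X(n+1)$, which is $[c,c]$. Then use Proposition~\ref{prop:univ-replacement}(c) with $k=n+1$ to replace the value $c(1^n)$ at the vertex $1^{n+1}$ by $d(1^n)$. The resulting configuration is exactly $[c,d]$, so it lies in $X(n+1)$.

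For (c)$\Rightarrow$(d), and hence (c)$\Rightarrow$(a), I need $(n+1)$-cubes $e,f$ with the same corner and $e\circ s=c$, $f\circ s=d$. Take $e = c\circ p$, where $p\colon{\bbox}^{n+1}\to{\bbox}^n$ is a degeneracy with $p\circ s=\id$, for instance forgetting the second coordinate. For $f$, take $e$ with its top vertex $1^{n+1}$ replaced by $d(1^n)$; this is allowed by universal replacement, since $c(1^n)\sim_n d(1^n)$ follows from $[c,d]$ via Proposition~\ref{prop:univ-replacement}(b) after a rotation. Then $e$ and $f$ agree on ${\bbcor}^{n+1}$. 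By concreteness, $f\circ s$ is determined by its vertices: these are $x_0$ off $1^n$ and $d(1^n)$ at $1^n$, because $s(1^n)=1^{n+1}$ and $s$ of any other vertex is not $1^{n+1}$. Hence $f\circ s = d$. The same choice of $e,f$ gives (b)$\Rightarrow$(d) directly, which closes the cycle.

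The main obstacle is bookkeeping rather than ideas. One must check that the relation $R$ really is the single-step relation (d), and that the rotations needed to invoke Proposition~\ref{prop:univ-replacement}(b) match the positions $1^n$ and $1^{n+1}$. Concreteness is used throughout to compare cubes through their vertex values.
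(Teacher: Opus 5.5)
Your proof is correct and is essentially the paper's argument. You get (b)$\Rightarrow$(c) from $\tau_nX = X/\sim_n$ plus universal replacement at the top vertex of $[c,c]$, (d)$\Rightarrow$(b) from unique $(n+1)$-corner completion in $\tau_nX$, and (a)$\Leftrightarrow$(d) by reading off $R$, and you are more careful than the paper in noting that the coequalizer is the equivalence relation \emph{generated} by (d). One small misattribution: in (c)$\Rightarrow$(d), $c(1^n)\sim_n d(1^n)$ does not come from Proposition~\ref{prop:univ-replacement}(b) after a rotation, since that would need a cube with constant corner and no rotation of $[c,d]$ has one; it comes directly from Definition~\ref{def:univ-replacement} with witnesses $[c,c]$ and $[c,d]$, which agree on ${\bbcor}^{n+1}$ by concreteness, and at that point you can simply take $e=[c,c]$, $f=[c,d]$ as the paper does.
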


\begin{proof}
$(b)\implies(c)$: This follows from universal replacement~\ref{prop:univ-replacement} and Proposition~\ref{prop:trunc_concfib}.

$(c)\implies(d)$: Take $e = [c,c]$ and $f = [c,d]$, and use concreteness.

$(d)\implies(b)$: This follows from Proposition~\ref{prop:trunc_concfib}.

Lastly we show that the set $\bbcor_n(X,x_0)$ modded out by the equivalence relation described in $(d)$
is the coequalizer of the two maps $R\to\bbcor_n(X,x_0)$.
A careful unraveling of the definitions shows that $R$ can be identified with the set
\[
    \left\{ (c,d)\in X(n+1)^2 \mid c\lvert_{{\bbcor}^{n+1}} = d\lvert_{{\bbcor}^{n+1}},\; c\circ s\lvert_{{\bbcor}^n} \equiv x_0 \equiv d\circ s\lvert_{{\bbcor}^n} \right\}
\]
and the two maps $R\to\bbcor_n(X,x_0)$ are just the projections $(c,d)\mapsto c\circ s$ and $(c,d)\mapsto d\circ s$.
But this precisely is the equivalence relation of $(d)$.
\end{proof}

\begin{remark}
Since ${\bbox}^{n+1} = {\bbox}^n\times{\bbox}^1$, and hence
\[
    X(n+1) = \hom({\bbox}^{n+1},X) = \hom({\bbox}^1,\ihom({\bbox}^n,X)),
\]
the characterization $(c)$ in the preceding Lemma can be interpreted as the existence of a homotopy,
i.e., a map ${\bbox}^n\times{\bbox}^1\to X$ connecting the $n$-cubes $c$ and $d$ with one another.
\end{remark}

Analogously to the general case, we can also equip the new definition of $\pi_n(X,x_0)$
with the structure of a monoid.

\begin{construction}\label{cons:concrete-structure-monoid-mult}
For $[a],[b]\in\tilde{\pi}_n(X,x_0)$ define $a*b$ as in Definition~\ref{def:structure-group}.
This yields a binary operation
\[
    *\colon\tilde{\pi}_n(X,x_0)\times\tilde{\pi}_n(X,x_0)\to\tilde{\pi}_n(X,x_0),\quad (a,b)\mapsto [a*b].
\]
This operation is well-defined by universal replacement:
in an ($n+1$)-cube $\overline{a*b}$ witnessing the composition,
we can replace $a$ with any equivalent $a'$ and likewise for $b$
by Lemma~\ref{lem:concrete-equiv-rel-structure-monoid}
without altering the appearances of $x_0$ and the value at $1^{n+1}$.
This shows that, if $a\sim a'$ and $b\sim b'$, then also $a*b\sim a'*b'$.
\end{construction}

\begin{proposition}\label{prop:concrete-structure-group-is-monoid}
The operation $*\colon\tilde{\pi}_n(X,x_0)\times\tilde{\pi}_n(X,x_0)\to\tilde{\pi}_n(X,x_0)$ makes $\tilde{\pi}_n(X,x_0)$
an abelian monoid.
It upgrades to a functor $\ccSet_{\ast,\mathrm{fib}}\to\cMon$.
\end{proposition}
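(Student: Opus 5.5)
The plan is to reduce everything to the $n$-step case already handled in Proposition~\ref{prop:structure-group-is-monoid}, using the explicit description of the truncation from Proposition~\ref{prop:trunc_concfib}. Let $\eta\colon X\to\tau_n X = X/\sim_n$ be the unit. It is a surjective fibration and $\tau_nX$ is concrete, fibrant and $n$-step. Then $\eta$ induces a map $\bbcor_n(X,\,x_0)\to\pi_n(\tau_nX,\,\eta(x_0))$, since $\eta$ sends corners constant at $x_0$ to corners constant at $\eta(x_0)$. By Lemma~\ref{lem:concrete-equiv-rel-structure-monoid} (a)$\Leftrightarrow$(b), this map factors through an injection
\[
    \Phi\colon\tilde{\pi}_n(X,x_0)\to\pi_n(\tau_nX,\eta(x_0)).
\]
I would then show that $\Phi$ is a bijection compatible with the multiplications. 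The monoid axioms (associativity, commutativity, unit) and the abelian property are then transported from Proposition~\ref{prop:structure-group-is-monoid}.

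\textbf{Surjectivity.} Take $\bar c\in\pi_n(\tau_nX,\eta(x_0))$. Since $\eta$ is surjective on $n$-cubes, $\bar c$ lifts to some $c\in X(n)$. Every corner vertex of $c$ is $\sim_n$-equivalent to $x_0$. Universal replacement (Proposition~\ref{prop:univ-replacement}(c), applicable since $n\le n+1$) lets us replace those vertices one by one by $x_0$. The result $c'$ lies in $\bbcor_n(X,x_0)$ and still has $\eta(c')=\bar c$ by concreteness of $\tau_nX$. Alternatively, one can lift the constant corner via the fibration property of $\eta$.

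\textbf{Multiplicativity.} Given $a,b\in\bbcor_n(X,x_0)$, fibrancy of $X$ yields a completion $\overline{a*b}$ of the corner $\lambda_{a,b}$ in $X$. Its image $\eta\circ\overline{a*b}$ completes $\lambda_{\eta a,\eta b}$ in $\tau_nX$. By uniqueness of completions in the $n$-step cubeset, it equals the canonical completion there. Applying $s$ gives $\Phi([a*b])=\Phi([a])*\Phi([b])$. The same argument shows the operation of Construction~\ref{cons:concrete-structure-monoid-mult} is independent of the chosen completion, so well-definedness need not be re-checked by hand. The constant cube maps to the constant cube, so units match.

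\textbf{Functoriality.} For a pointed map $f\colon(X,x_0)\to(Y,y_0)$ of concrete fibrant cubesets, postcomposition sends $\bbcor_n(X,x_0)\to\bbcor_n(Y,y_0)$. It respects the relation in form (d) of Lemma~\ref{lem:concrete-equiv-rel-structure-monoid}, since $f$ preserves equality of corners and precomposition with $s$. It preserves $*$ because $f\circ\overline{a*b}$ is a completion of $\lambda_{fa,fb}$. Identities and composites are evident. Equivalently, functoriality follows from naturality of $\eta$ together with functoriality of $\tau_n$ and of $\pi_n$ on $\cSet_{\ast,n}$.

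The main obstacle I expect is surjectivity of $\Phi$, i.e.\ moving the corner of a lift exactly onto $x_0$. This needs care with universal replacement at vertices adjacent to each other, and with the case where the $n$-corner of the lift is not constant. If replacement vertex by vertex turns out to be awkward, I would instead use that $\eta$ is a fibration. First lift the constant corner $x_0$ (it is a cube in $X$, being degenerate, restricted to ${\bbcor}^n$), then lift $\bar c$ relative to it via the right lifting property against ${\bbcor}^n\to{\bbox}^n$.
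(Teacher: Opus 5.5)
Your proof is correct, but it takes a different route from the paper. The paper proves this proposition by rerunning the proof of Proposition~\ref{prop:structure-group-is-monoid} inside $X$ itself: commutativity via $\sigma_{12}$, the unit via the degeneracy, and associativity via an $(n+2)$-corner. At each step it uses that completions are no longer unique, but that any two completions of the same corner have $s$-diagonals identified in $\tilde{\pi}_n(X,x_0)$; this is exactly relation (d) of Lemma~\ref{lem:concrete-equiv-rel-structure-monoid}.

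You instead first build the injective map $\Phi\colon\tilde{\pi}_n(X,x_0)\to\pi_n(\tau_nX,\eta(x_0))$ and show it is surjective and intertwines the products. All monoid axioms are then imported wholesale from the $n$-step case. In effect you prove the last claim of Proposition~\ref{prop:trunc-structure-monoid}, upgraded to a monoid isomorphism, before rather than after this proposition.

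\textbf{What your route buys.} Well-definedness of $*$, both in the representatives and in the chosen completion, comes for free from injectivity of $\Phi$. No geometric argument has to be rechecked under non-uniqueness.

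\textbf{What it costs.} It leans on the full description of $\tau_nX$ from Proposition~\ref{prop:trunc_concfib}: surjectivity of $\eta$ on cubes, $\eta$ being a fibration, and $\tau_nX$ being $n$-step. The paper's argument stays inside $X$ and needs only the relation defining $\tilde{\pi}_n(X,x_0)$.

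\textbf{Surjectivity.} The step you flagged as the main obstacle is unproblematic. Iterated universal replacement works, since each replacement yields a genuine $n$-cube to which the next replacement applies; no adjacency issue arises. Your fibration alternative is immediate: lift $\bar c$ against the constant corner at $x_0$, using that $\eta$ is a fibration.
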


\begin{proof}
The proof is verbatim the same as the proof of Proposition~\ref{prop:structure-group-is-monoid} apart from the fact that all completions taken are not unique,
but this deficit is countered by the identification on $\tilde{\pi}_n(X,x_0)$.
\end{proof}

For $X$ a concrete fibrant cubeset, the two different definitions of structure monoid agree.
For this, we need the following Lemma.

\begin{proposition}\label{prop:trunc-structure-monoid}
Let $X$ be a concrete fibrant cubeset with truncation $\tau_n\colon X\to\tau_n X$ and $x_0\in X$.
Then $\tau_{n,\ast}\colon\pi_k(X,x_0)\to\pi_k(\tau_n X,x_0)$ is an isomorphism for $k\leq n$,
and $\pi_k(\tau_n X,x_0) = 0$ for $k > n$.

In particular, the two definitions of structure monoid are naturally isomorphic for concrete fibrant cubesets.
\end{proposition}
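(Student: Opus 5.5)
Throughout, $\pi_k(X,x_0)$ for concrete fibrant $X$ means the monoid $\tilde\pi_k(X,x_0)$ of Construction~\ref{cons:structure-monoid-concrete}, while $\pi_k(\tau_nX,x_0)$ means the monoid of Definition~\ref{def:structure-monoid-general}, i.e.\ $\pi_k(\tau_k\tau_nX,x_0)$. The plan is to reduce everything to the explicit model $\tau_n X = X/\sim_n$ of Proposition~\ref{prop:trunc_concfib}. That model is concrete, fibrant and $n$-step, and the unit $\eta$ is a surjective fibration.

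\textbf{The case $k>n$.} Since $\tau_nX$ is $n$-step, it is also $k$-step for every $k\ge n$, so $\tau_k\tau_nX=\tau_nX$. An element of $\pi_k(\tau_nX,x_0)$ is then a $k$-cube whose restriction to ${\bbcor}^k$ is constant $x_0$. The constant cube $x_0$ is also a completion of this corner. Since $k>n$, completion is unique, so the cube equals the constant cube. Hence $\pi_k(\tau_nX,x_0)=0$.

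\textbf{The case $k\le n$.} First I would show $\tau_k\tau_nX\simeq\tau_kX$. Both are left adjoints to the inclusions $\cSet_k\subseteq\cSet_n\subseteq\cSet$, so the composite of reflections is the reflection onto $\cSet_k$. This gives $\pi_k(\tau_nX,x_0)=\pi_k(\tau_kX,x_0)$.

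It therefore suffices to compare $\tilde\pi_k(X,x_0)$ with $\pi_k(\tau_kX,x_0)$. The comparison map sends $[c]\mapsto\eta(c)$, where $\eta\colon X\to\tau_kX$.
\begin{enumerate}
\item \emph{Well defined and injective.} By Lemma~\ref{lem:concrete-equiv-rel-structure-monoid}, (a)$\Leftrightarrow$(b), we have $[c]=[d]$ exactly when $\eta(c)=\eta(d)$. This gives both properties at once.
\item \emph{Surjective.} Take a $k$-cube $\bar c$ in $X/\sim_k$ whose restriction to ${\bbcor}^k$ is constant $[x_0]$. Surjectivity of $\eta$ on cubes gives a lift $c\in X(k)$, and every corner vertex of $c$ is $\sim_k x_0$. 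Applying universal replacement (Proposition~\ref{prop:univ-replacement}(c)) vertex by vertex, replace each corner value by $x_0$; this keeps $c$ inside $X(k)$ and does not change its image. The result lies in $\bbcor_k(X,x_0)$ and maps to $\bar c$.
\item \emph{Monoid homomorphism.} A witnessing $(k+1)$-cube $\overline{a*b}$ in $X$ maps under $\eta$ to a completion of the corner $\lambda_{\eta a,\eta b}$. In $\tau_kX$ this completion is unique, so it is $\overline{\eta a*\eta b}$. Restricting along $s$ gives $\eta(a*b)=\eta a*\eta b$, and $\eta$ sends the constant cube $x_0$ to the neutral element.
\end{enumerate}

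\textbf{Naturality and the ``in particular''.} Naturality in pointed maps is immediate from naturality of $\eta$. Taking $k=n$, the isomorphism for $k\le n$ identifies $\tilde\pi_n(X,x_0)$ with $\pi_n(\tau_nX,x_0)$, which is the general definition of the $n$-th structure monoid. So the two definitions agree.

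\textbf{Main obstacle.} The hardest step is surjectivity. There one must check that universal replacement applies at every corner vertex simultaneously, with the replaced cube still lying in $X(k)$ at each stage. I would do this inductively, one vertex at a time, invoking Proposition~\ref{prop:univ-replacement}(c) at each step since $k\le k+1$.
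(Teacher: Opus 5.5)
Your proposal is correct and takes essentially the paper's route. For $k>n$ you use uniqueness of corner completion in $\tau_nX$, and for $k\le n$ you reduce to the explicit model $\tau_kX=X/\sim_k$ together with Lemma~\ref{lem:concrete-equiv-rel-structure-monoid}, while also spelling out what the paper leaves implicit: the identification $\tau_k\tau_nX\simeq\tau_kX$, surjectivity via universal replacement, and the homomorphism property via unique $(k+1)$-corner completion in $\tau_kX$.
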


\begin{proof}
That $\pi_k(\tau_n X,x_0) = 0$ for $k>n$ is clear since the only $k$-dimensional cube having the constant corner $x_0$ is the constant cube $x_0$ itself by unique corner completion of $\tau_n X$ for $k>n$.

Let $k\leq n$.
By functoriality, $\tau_n$ induces a monoid homomorphism $\tau_{n,\ast}\colon\pi_k(X,x_0)\to\pi_k(\tau_n X,x_0)$.
This map is a bijection on underlying sets by Lemma~\ref{lem:concrete-equiv-rel-structure-monoid}.

For the last claim, note that it suffices to assume that $X$ is $n$-step.
Then $\tau_n$ is the identity and so by Lemma~\ref{lem:concrete-equiv-rel-structure-monoid},
we have that $\bbcor_n(X,x_0) = \tilde{\pi}_n(X,x_0)$.
\end{proof}

\begin{remark}\label{rem:nil-cofibrations}
Recall the inclusion
$i\colon{\bbox}^k\sqcup_{{\bbcor}^k}{\bbox}^k\to{\bbox}^{k+1}\sqcup_{{\bbcor}^{k+1}}{\bbox}^{k+1}$
from Construction~\ref{cons:structure-monoid-concrete} that is
obtained from gluing $s\colon{\bbox}^k\to{\bbox}^{k+1}$ along itself.
Using universal replacement~\ref{prop:univ-replacement} one can show for $X$ concrete fibrant
the truncation $\tau_n\colon X\to\tau_n X$ has right lifting against $i$, i.e.,
in the commutative diagram
\begin{center}
\begin{tikzcd}
	{{\bbox}^k\sqcup_{{\bbcor}^k} {\bbox}^k} & X \\
	{{\bbox}^{k+1}\sqcup_{{\bbcor}^{k+1}}{\bbox}^{k+1}} & {\tau_n X}
	\arrow["{a\sqcup b}", from=1-1, to=1-2]
	\arrow["i"', from=1-1, to=2-1]
	\arrow["{\tau_n}", from=1-2, to=2-2]
	\arrow[dashed, from=2-1, to=1-2]
	\arrow["{c\sqcup d}"', from=2-1, to=2-2]
\end{tikzcd}
\end{center}
there exists a diagonal filler for $k\leq n$.
So the right lifting property of $\tau_n$ against $i$ is what yields injectivity of $\tau_{n,\ast}$.
This can be done more generally:
Denote
\[
    \Gamma = \{{\bbox}^n\sqcup_{{\bbcor}^n}{\bbox}^n\to{\bbox}^{n+1}\sqcup_{{\bbcor}^{n+1}}{\bbox}^{n+1} : n\in\N_0\cup\{-1\}\}
\]
where the maps are given by diagonal embeddings $i$ arising from $s$.
Now if a map $f\colon X\to Y$ between concrete fibrant cubesets%
\footnote{or as long as the definition of structure monoid makes sense}
has right lifting property against $\Gamma$, then $f_\ast$ induces an injection of structure monoids
for all $n\geq 1$ (and even of sets for $n=0$).
\end{remark}

Another interesting fact is that, if $X$ is concrete,
the proposed construction for inverses in Remark~\ref{rem:structure-mon-not-grouplike}
does actually work.

\begin{proposition}\label{prop:concrete-structure-group-is-group}
If $X$ is a concrete fibrant cubeset and $x_0\in X$, then $\pi_n(X,x_0)$ is grouplike.
\end{proposition}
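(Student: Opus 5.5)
By Proposition~\ref{prop:trunc-structure-monoid} the two notions of structure monoid agree, and the truncation $\tau_n X$ is again concrete and fibrant by Proposition~\ref{prop:trunc_concfib}. I would therefore first replace $X$ by $\tau_n X$. This lets me assume that $X$ is concrete, fibrant and $n$-step, so that $\pi_n(X,x_0)=\bbcor_n(X,x_0)$ and all corner completions in dimension $n+1$ are unique.

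Fix $a\in\pi_n(X,x_0)$. I would implement the idea of Remark~\ref{rem:structure-mon-not-grouplike} by corner completion at a different vertex. Consider the configuration $\lambda\colon\{0,1\}^{n+1}\setminus\{\omega\}\to X(0)$ with $\omega=(1,0,\ldots,0)$, specified as follows.
\begin{itemize}
\item On the face $\epsilon_1$, i.e.\ $[x_1=0]$, it carries $a$.
\item On the diagonal $s$ it carries the constant cube $x_0$.
\item On all faces $[x_i=0]$ with $i\geq 3$ it is constant $x_0$.
\end{itemize}
Concretely, $\lambda$ is $x_0$ everywhere except at $(0,1,1,\ldots,1)$, where it takes the value $a(1^n)$. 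Each of the linear $n$-subcubes of ${\bbox}^{n+1}$ missing $\omega$ then carries either $a$, a degenerate cube of $a$, or a constant cube, so all of them lie in $X$. After flipping the first coordinate, which sends $\omega$ to $1^{n+1}$, this gives an $(n+1)$-corner in the sense of the classical corner. Fibrancy then produces a completion $c\in X(n+1)$. Alternatively, one can invoke Proposition~\ref{prop:nilfib-restricted-nilfib-concrete} directly for the corner ${\bbcor}^{n+1}_\omega$.

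Set $a^{-1}\coloneqq c\circ\epsilon_2$. I would check that the corner of $a^{-1}$ is $x_0$: the faces of $\epsilon_2$ that are lower faces in the remaining coordinates lie in $[x_1=0]\cap[x_2=0]$ or in $[x_i=0]$ for $i\geq 3$, where $\lambda\equiv x_0$. Hence $a^{-1}\in\pi_n(X,x_0)$.

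It then remains to show $c=\overline{a*a^{-1}}$. For this I compare $c$ with the defining corner $\lambda_{a,a^{-1}}$ of Definition~\ref{def:structure-group}: $c\circ\epsilon_1=a$, $c\circ\epsilon_2=a^{-1}$, and $c\circ\epsilon_i\equiv x_0$ for $i\geq3$. By uniqueness of corner completion in the $n$-step $X$, this forces $c=\overline{a*a^{-1}}$, and therefore $a*a^{-1}=c\circ s\equiv x_0$, which is the neutral element by Proposition~\ref{prop:structure-group-is-monoid}.

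The main obstacle is verifying that $\lambda$ really is a corner in $X$, in particular that the subcubes through $\omega$'s neighbours involving the diagonal are cubes. Here I would use concreteness decisively: every relevant restriction is determined by its vertex values, which are either all $x_0$ or a relabelling of the vertices of $a$ along an affine map, so it lies in $X$. This is exactly the point that fails without concreteness, where the antidiagonal need not be the degenerate $x_0$-cube. If needed, I would justify the well-definedness of the extension to all antidiagonals via Lemma~\ref{lem:alternative-nilfibrant-concrete} and the gluing property of Proposition~\ref{prop:nilfib-gluing}.
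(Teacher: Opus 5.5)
The overall strategy matches the paper's. You complete a corner at a vertex other than $1^{n+1}$, take the $\epsilon_2$-face as $a^{-1}$, and use concreteness to see that the diagonal $s$ of the completed cube is the constant $x_0$-cube. However, the vertex you omit is wrong for $n\geq 2$, and with it the construction fails.

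Why $\omega=(1,0,\ldots,0)$ fails. This $\omega$ has $\omega_2=\omega_i=0$ for $i\geq 3$, so it lies on the faces $[x_2=0]$ and $[x_i=0]$, $i\geq 3$. Those faces are therefore not part of the $\omega$-corner, which here consists only of $[x_1=0]$ and the diagonals $[x_1=x_l]$. The value $c(\omega)$ produced by the completion is then not under your control.

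For instance, take $X=\cD_n(A)$ with $x_0=0$. There a configuration $f\colon\{0,1\}^{n+1}\to A$ is a cube iff $\sum_{\omega}(-1)^{\lvert\omega\rvert}f(\omega)=0$. This forces $c(\omega)=(-1)^n a(1^n)$, which is nonzero whenever $a\neq 0$. Consequences:
\begin{itemize}
\item $a^{-1}\coloneqq c\circ\epsilon_2$ takes the value $c(\omega)$ at the corner vertex $(1,0,\ldots,0)$, so $a^{-1}\notin\pi_n(X,x_0)$.
\item $c\circ\epsilon_i\not\equiv x_0$ for $i\geq 3$.
\item $a^{-1}(1^n)=c(1,0,1,\ldots,1)=x_0$ is forced.
\end{itemize}
Your check that the lower faces of $\epsilon_2$ lie where $\lambda\equiv x_0$ is exactly the step that breaks, since $\omega\in[x_2=0]\cap[x_i=0]$. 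Also, flipping the first coordinate does not send $(1,0,\ldots,0)$ to $1^{n+1}$.

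The fix. Omit the top vertex of the $\epsilon_2$-face, $\omega=(1,0,1,\ldots,1)$. For $n=1$ this coincides with your $(1,0)$, which is presumably where the slip came from. Use the flip $\rho_2^{n+1}$ of the second coordinate, which sends this $\omega$ to $1^{n+1}$. In the flipped coordinates the classical corner consists of:
\begin{itemize}
\item $[x_1=0]$ carrying $a$,
\item $[x_2=1]$ carrying $a\circ\rho_1^n$,
\item the constant faces $[x_i=0]$, $i\geq 3$.
\end{itemize}
This is precisely the paper's corner $\lambda_1=\lambda_2=a\circ\rho_1^n$, $\lambda_i\equiv x_0$, whose completion is then precomposed with $\rho_2^{n+1}$.

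With this $\omega$ your vertex pattern ($x_0$ everywhere except $a(1^n)$ at $(0,1,\ldots,1)$) is correct. Neither $\omega$ nor $(0,1,\ldots,1)$ lies on the corner of $a^{-1}$, on $[x_i=0]$ for $i\geq 3$, or on the diagonal $[x_1=x_2]$. The rest of your argument then goes through and is essentially the paper's proof: passing to $\tau_nX$, identifying the completed cube with $\overline{a*a^{-1}}$ by unique completion, and reading off $a*a^{-1}\equiv x_0$ by concreteness.
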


\begin{proof}
Given an element $a\in\pi_n(X,x_0)$ we want to find $b\in\pi_n(X,x_0)$
such that (in case $n=1$) the left square is a $2$-cube in $X$.
\begin{center}
\begin{tikzcd}
	b & {x_0} & b & {x_0} \\
	{x_0} & a & {x_0} & a
	\arrow[no head, from=1-1, to=1-2]
	\arrow[dotted, no head, from=1-4, to=1-3]
	\arrow[no head, from=2-1, to=1-1]
	\arrow[color={rgb,255:red,214;green,92;blue,92}, no head, from=2-1, to=1-2]
	\arrow[no head, from=2-1, to=2-2]
	\arrow[no head, from=2-2, to=1-2]
	\arrow[dotted, no head, from=2-3, to=1-3]
	\arrow[no head, from=2-3, to=2-4]
	\arrow[no head, from=2-4, to=1-4]
\end{tikzcd}
\end{center}
So instead of completing $a$ and $b$ to $x_0$ we use a flip in order to center $a$ in the corner
and obtain the completion, which then after flipping once again yields an element $b$ such that $a*b = x_0$.

Denote by $\rho_j^k\colon{\bbox}^{k}\to{\bbox}^{k}$ the flip $x_j\mapsto 1-x_j$.
For $k=n+1$ and $j=2$ this flip restricts to $\epsilon_1^n$ so that
\[
    \rho_2^{n+1}\circ\epsilon_1^n = \epsilon_1^n\circ\rho_1^n.
\]
Let $\lambda\colon{\bbcor}^{n+1}\to X$ denote the corner given by $\lambda_1 = \lambda_2 = a\circ\rho_1^n$
and $\lambda_i \equiv x_0$ for $i\geq 3$.
Using Remark~\ref{rem:corner-combinatorial} one can verify that this actually is a corner
(using that $\rho_1^n\circ\epsilon_j^{n-1} = \epsilon_j^n\circ\rho_1^{n-1}$ for $j\geq 2$).
Let us denote its completion by $c$.
We claim that $\overline{a*a^{-1}} = c\circ\rho_2^{n+1}$ has the desired inverse element of $a$ as its second inert face.
For $i\geq 3$ we have that
\[
    \overline{a*a^{-1}}\circ\epsilon_i^n = c\circ\rho_2^{n+1}\circ\epsilon_i^n
    = c\circ\epsilon_i^{n+1}\circ\rho_2^n = x_0.
\]
For $i=1$ we obtain
\[
    \overline{a*a^{-1}}\circ\epsilon_1^n = c\circ\rho_2^{n+1}\circ\epsilon_1^n = c\circ\epsilon_1^{n+1}\circ\rho_1^n = \lambda_1\circ\rho_1^n = a.
\]
Define $a^{-1} = \overline{a*a^{-1}}\circ\epsilon_2^n$.
By construction, $a^{-1}\lvert_{{\bbcor}^n} \equiv x_0$
which can be checked via Remark~\ref{rem:corner-combinatorial},
hence $a^{-1}\in\pi_n(X,x_0)$.

Now to the part that uses concreteness of $X$, namely $\overline{a*a^{-1}}\circ s \equiv x_0$.
Here we can check on points:
if $x_1 = 0$, then
\[
    \overline{a*a^{-1}}\circ s(0,x_2,\ldots,x_n)
    = c(\rho_2^{n+1}(0,0,x_2,\ldots,x_n))
    = c(0,1,x_2,\ldots,x_n)
    = a(0,x_2,\ldots,x_n)
    \equiv x_0
\]
and if $x_1 = 1$, then
\[
    \overline{a*a^{-1}}\circ s(1,x_2,\ldots,x_n)
    = c(\rho_2^{n+1}(1,1,x_2,\ldots,x_n))
    = c(1,0,x_2,\ldots,x_n)
    = a(0,x_2,\ldots,x_n)
    \equiv x_0.
\]
Therefore, we obtain $a*a^{-1} = x_0$ and $a^{-1}$ is the inverse of $a$.
\end{proof}

Another important aspect of the structure monoid is that if two points $x_0,y_0\in X(0)$
are in the same ergodic component, then there is a canonical isomorphism $\pi_n(X,x_0)\to\pi_n(X,y_0)$
of structure monoids.

\begin{proposition}\label{prop:structure-monoid-independence-base-point}
Let $X$ be a cubeset and $z\in X(1)$ with vertices $x_0,y_0\in X(0)$.
Then there is a canonical monoid isomorphism $h_z\colon\pi_n(X,x_0)\to\pi_n(X,y_0)$.

More generally, if $[x_0]=[y_0]$ in $\pi_0(X)$, then $\pi_n(X,x_0)\simeq\pi_n(X,y_0)$.
If $X$ has the unique gluing property, then this isomorphism is canonical as well.
\end{proposition}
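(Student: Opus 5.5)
The plan is to first reduce to the $n$-step case. By Definition~\ref{def:structure-monoid-general}, $\pi_n(X,x_0)=\pi_n(\tau_nX,x_0)$, and the unit $X\to\tau_nX$ carries $z$ to an edge $\tau_n(z)$ with the same endpoints. So we may assume $X$ is $n$-step and then set $h_z:=h_{\tau_n(z)}$.

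For $X$ $n$-step, we construct $h_z$ as a \enquote{transport along $z$}. Let $a\in\pi_n(X,x_0)$. Consider the prism ${\bbox}^{n+1}={\bbox}^1\times{\bbox}^n$ with $a$ placed on the face $[x_1=0]$. The remaining lower faces $\epsilon_i$, $i\geq 2$, receive the degenerate cube $z\circ\mathrm{pr}_1$, which has value $x_0$ on $[x_1=0]$ and $y_0$ on $[x_1=1]$. Using Remark~\ref{rem:corner-combinatorial}, one checks that these data form an $(n+1)$-corner $\lambda_{a,z}$. Its unique completion $\overline{a}_z$ then exists, and we define
\[
    h_z(a)=\overline{a}_z\circ\overline{\epsilon}_1 .
\]
Its restriction to ${\bbcor}^n$ lies in faces of the form $\overline\epsilon_1\cap\epsilon_i$, on which $\overline{a}_z$ is constant $y_0$. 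Hence $h_z(a)\in\pi_n(X,y_0)$. Since the construction is a unique completion, it is natural in maps of pointed $n$-step cubesets. It is also canonical, because all choices are forced once we fix coordinate $1$ as the transport direction; as in the Remark after Definition~\ref{def:structure-group}, automorphisms show that the choice of coordinate does not matter.

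Next we check that $h_z$ is a monoid map and is bijective. For the inverse, flip along the first coordinate, that is, precompose with $\rho^{n+1}_1$. Let $\bar z$ denote the reversed edge. We claim that $\overline{h_z(a)}_{\bar z}=\overline{a}_z\circ\rho_1^{n+1}$. Both sides are completions of the same corner, so by uniqueness they agree, and this gives $h_{\bar z}h_z=\id$. For the neutral element, the degenerate cube $z\circ\mathrm{pr}_1$ itself completes $\lambda_{x_0,z}$, so $h_z(x_0)=y_0$.

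Multiplicativity is the main obstacle. The plan is to work in an $(n+2)$-cube whose first coordinate is the transport direction and whose next two coordinates carry the multiplication. Take as corner data $\overline{a*b}$ on $[x_1=0]$, together with the transport cubes $\overline a_z$ and $\overline b_z$ and degenerate copies of $z$ on the other lower faces. Fill this corner uniquely; this uses that $X$ is $n$-step, which gives unique completion in dimension $n+2$. Then restrict the filler in two ways. Restricting to the upper face $\overline\epsilon_1$ yields a cube witnessing $h_z(a)*h_z(b)$. Restricting along $\id\times s$ yields $\overline{(a*b)}_z$, exactly as in the associativity proof of Proposition~\ref{prop:structure-group-is-monoid}. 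The two extractions agree on the common subcube, and therefore $h_z(a*b)=h_z(a)*h_z(b)$. Verifying that the corner data are compatible is the routine but fiddly face computation.

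Finally, suppose $[x_0]=[y_0]$ in $\pi_0(X)$. By the Remark after Proposition~\ref{prop:0-trunc-computation}, the two points are joined by a zigzag of edges, and composing the maps $h_z$ and $h_{\bar z}^{-1}=h_{\bar{\bar z}}$ along the zigzag gives an isomorphism. If $X$ has unique gluing, we show this is independent of the path. Gluing two edges $z,z'$ gives a $2$-cube, and hence a transport of the kind above with $h_{z'}h_z=h_{z''}$, where $z''$ is the glued diagonal edge. Since $X$ is concrete by Proposition~\ref{prop:unique-gluing-concrete}, the edge between two points is unique whenever it exists. Consequently $h$ depends only on the endpoints, and the isomorphism is canonical.
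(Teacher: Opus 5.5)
The construction of $h_z$, the inverse via $\rho_1^{n+1}$, the multiplicativity step and the zig-zag step all coincide with the paper's proof. In particular, for multiplicativity you use the $(n+2)$-corner $(\overline{a*b},\overline a_z,\overline b_z,z\circ p_1^{n+1},\dots)$ and extract along $\overline\epsilon_1$ and $\mathrm{id}\times s$, exactly as the paper does.

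The problem is the last part. You claim the zig-zag isomorphism is independent of the path, via $h_{z'}h_z=h_{z''}$, and this is not justified.
\begin{itemize}
\item Lifting against the $1$-dimensional map in $\bprism$ only produces a new edge $z''$, by gluing $\bar z$ and $z'$ at their common initial vertex $y_0$. It does not produce a $2$-cube relating $z,z',z''$.
\item In any case, $h_{z'}h_z=h_{z''}$ is an identity between $(n+1)$-cubes of $\tau_nX$, not of $X$. The paper proves such a composition law only for concrete $n$-step cubesets with gluing (Lemma~\ref{lem:homotopy-action-well-defined-global}), by gluing $\overline{h_u(a)}$ and $\overline{h_v(b)}$ and comparing vertices.
\item These hypotheses need not pass from $X$ to $\tau_nX$, since truncation does not preserve concreteness in general (as noted before Definition~\ref{def:univ-replacement}).
\end{itemize}
You do not need path independence at all. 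The gluing property already makes ``joined by an edge'' transitive, so $[x_0]=[y_0]$ gives a single edge $z$ from $x_0$ to $y_0$ in $X$. That edge is unique by concreteness (Proposition~\ref{prop:unique-gluing-concrete}), so the canonical isomorphism is simply $h_z$, i.e.\ $h_{\tau_n(z)}$. This is exactly the paper's argument, and your final sentences already contain it once the path-independence claim is dropped.

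The aside that the choice of transport coordinate is irrelevant ``by automorphisms'' has the same kind of problem. The paper only supports it for concrete cubesets (footnote to the remark after Definition~\ref{def:structure-group}). Moving coordinate $j$ to coordinate $1$ reparametrizes $a$ by a coordinate permutation $\tau$ of ${\bbox}^n$, and $a\circ\tau=a$ is not automatic for a non-concrete $n$-step cubeset. The statement does not need this claim, so drop it.
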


\begin{proof}
We can assume w.l.o.g. that $X$ is $n$-step.
We construct a map $h_z\colon\pi_n(X,x_0)\to\pi_n(X,y_0)$, depending on $z$.
In the case $n=2$, this looks as follows: since $z = (x_0,y_0)\in X(1)$
one can degenerate this 1-cube to a 2-cube in $X$ along which we can put an element $a\in\pi_2(X,x_0)$
(depicted in blue).
Then using (unique!) corner completion in dimension 3 in $X$ yields on the back face
the corresponding element $h_z(a)\in\pi_2(X,y_0)$.
\begin{center}
\begin{tikzcd}[column sep=small, row sep=small]
	& {y_0} &&&& {y_0} && {h_z(a)} \\
	{x_0} && a && {x_0} && a \\
	& {y_0} && {y_0} && {y_0} && {y_0} \\
	{x_0} && {x_0} && {x_0} && {x_0}
	\arrow[color={rgb,255:red,214;green,92;blue,92}, no head, from=1-6, to=1-8]
	\arrow[no head, from=2-1, to=1-2]
	\arrow[color={rgb,255:red,92;green,92;blue,214}, no head, from=2-1, to=2-3]
	\arrow[no head, from=2-5, to=1-6]
	\arrow[color={rgb,255:red,92;green,92;blue,214}, no head, from=2-5, to=2-7]
	\arrow[no head, from=2-7, to=1-8]
	\arrow[color={rgb,255:red,214;green,92;blue,92}, dashed, no head, from=3-2, to=1-2]
	\arrow[color={rgb,255:red,214;green,92;blue,92}, dashed, no head, from=3-2, to=3-4]
	\arrow[color={rgb,255:red,214;green,92;blue,92}, dashed, no head, from=3-6, to=1-6]
	\arrow[color={rgb,255:red,214;green,92;blue,92}, dashed, no head, from=3-6, to=3-8]
	\arrow[color={rgb,255:red,214;green,92;blue,92}, no head, from=3-8, to=1-8]
	\arrow[color={rgb,255:red,92;green,92;blue,214}, no head, from=4-1, to=2-1]
	\arrow[dashed, no head, from=4-1, to=3-2]
	\arrow[color={rgb,255:red,92;green,92;blue,214}, no head, from=4-1, to=4-3]
	\arrow[color={rgb,255:red,92;green,92;blue,214}, no head, from=4-3, to=2-3]
	\arrow[no head, from=4-3, to=3-4]
	\arrow[color={rgb,255:red,92;green,92;blue,214}, no head, from=4-5, to=2-5]
	\arrow[dashed, no head, from=4-5, to=3-6]
	\arrow[color={rgb,255:red,92;green,92;blue,214}, no head, from=4-5, to=4-7]
	\arrow[color={rgb,255:red,92;green,92;blue,214}, no head, from=4-7, to=2-7]
	\arrow[no head, from=4-7, to=3-8]
\end{tikzcd}
\end{center}
Let $p_1^k\colon{\bbox}^k\to{\bbox}^1$ be the degeneracy onto the $x_1$-coordinate.
For $a\in\pi_n(X,x_0)$ we define an ($n+1$)-corner in $X$ given by $\lambda_1 = a$ and $\lambda_i = z\circ p_1^n$ for $i\geq 2$.
Using Remark~\ref{rem:corner-combinatorial} we see that this actually constitutes a corner in $X$,
where one uses that $p_1^n\circ\epsilon_1 \equiv 0$ and $p_1^n\circ\epsilon_i = p_1^{n-1}$ independent of $i\geq 2$.
This corner can be uniquely completed in $X$ to obtain an ($n+1$)-cube $\overline{h_z(a)}$.
Let $h_z(a) = \overline{h_z(a)}\circ\overline{\epsilon}_1^n$.
This $h_z(a)$ satisfies that $h_z(a)\circ\epsilon_i^{n-1} \equiv y_0$:
since $\overline{\epsilon}_1^n\circ\epsilon_i^{n-1} = \epsilon_{i+1}^n\circ\overline{\epsilon}_1^{n-1}$
we have that
\[
    h_z(a)\circ\epsilon_i^{n-1}
    = \overline{h_z(a)}\circ\overline{\epsilon}_1^n\circ\epsilon_i^{n-1}
    = \overline{h_z(a)}\circ\epsilon_{i+1}^n\circ\overline{\epsilon}_1^{n-1}
    = z\circ p_1^n\circ\overline{\epsilon}_1^{n-1}
    = z\circ 1
    \equiv y_0
\]
for $1\leq i\leq n$.
Therefore, the assignment $a\mapsto h_z(a)$ gives a well-defined map
\[
    h_z\colon\pi_n(X,x_0)\to\pi_n(X,y_0).
\]
It is easy to see that $h$ has an inverse:
recall that $\rho_1^k\colon{\bbox}^k\to{\bbox}^k$ is the cube automorphism $x_1\mapsto 1-x_1$.
Given $a\in\pi_n(X,x_0)$ take the ($n+1$)-cube $d$ constructed above having $a$ and $b = h_z(a)$
on its front and back $x_1$-faces.
Let $c = \overline{h_z(a)}\circ\rho_1^{n+1}$ and $w = z\circ\rho_1^1$.
Then $c\circ\epsilon_1 = b$, $c\circ\epsilon_i = w\circ p_1^n$ and $c\circ\overline{\epsilon}_1 = a$.
Therefore, $c$ is the ($n+1$)-cube $\overline{h_w(b)}$ witnessing $b$ and $h_w(b)$ and furthermore, $h_w(b) = a$.
This implies that $h_w(h_z(a)) = a$.
Interchanging $w$ with $z$, using that $\rho_1^1\circ\rho_1^1 = \id$, yields that $h_w$ is the inverse of $h_z$.

It remains to show that $h_z(a*b) = h_z(a)*h_z(b)$ for all $a,b\in\pi_n(X,x_0)$.
In case of $n=1$ this can be summarized as follows in an image:
\begin{center}
\begin{tikzcd}[column sep=small, row sep=small]
	& {h_z(a)} && c && {h_z(a)} &&& c \\
	a && {a*b} && a &&& {a*b} \\
	& {y_0} && {h_z(b)} && {y_0} &&& {h_z(b)} \\
	{x_0} && b && {x_0} &&& b
	\arrow[dotted, no head, from=1-2, to=1-4]
	\arrow[color={rgb,255:red,214;green,92;blue,92}, no head, from=1-6, to=1-9]
	\arrow[no head, from=2-1, to=1-2]
	\arrow[no head, from=2-1, to=2-3]
	\arrow[no head, from=2-1, to=4-1]
	\arrow[dotted, no head, from=2-3, to=1-4]
	\arrow[no head, from=2-5, to=1-6]
	\arrow[no head, from=2-5, to=2-8]
	\arrow[color={rgb,255:red,92;green,92;blue,214}, no head, from=2-8, to=1-9]
	\arrow[dashed, no head, from=3-2, to=1-2]
	\arrow[dashed, no head, from=3-2, to=3-4]
	\arrow[dotted, no head, from=3-4, to=1-4]
	\arrow[color={rgb,255:red,214;green,92;blue,92}, dashed, no head, from=3-6, to=1-6]
	\arrow[color={rgb,255:red,92;green,92;blue,214}, dotted, no head, from=3-6, to=1-9]
	\arrow[color={rgb,255:red,214;green,92;blue,92}, dashed, no head, from=3-6, to=3-9]
	\arrow[color={rgb,255:red,214;green,92;blue,92}, no head, from=3-9, to=1-9]
	\arrow[dashed, no head, from=4-1, to=3-2]
	\arrow[no head, from=4-1, to=4-3]
	\arrow[no head, from=4-3, to=2-3]
	\arrow[no head, from=4-3, to=3-4]
	\arrow[no head, from=4-5, to=2-5]
	\arrow[color={rgb,255:red,92;green,92;blue,214}, no head, from=4-5, to=2-8]
	\arrow[color={rgb,255:red,92;green,92;blue,214}, dashed, no head, from=4-5, to=3-6]
	\arrow[no head, from=4-5, to=4-8]
	\arrow[no head, from=4-8, to=2-8]
	\arrow[no head, from=4-8, to=3-9]
\end{tikzcd}
\end{center}
on one inert $\epsilon_1$ of the 3-corner we put the 2-cube $\overline{a*b}$ witnessing the multiplication $a*b$,
on the other two inerts $\epsilon_2$ and $\epsilon_3$ we put the witnesses $\overline{h_z(a)}$ and $\overline{h_z(b)}$ for $a\mapsto h_z(a)$ and $b\mapsto h_z(b)$.
Then using unique 3-corner completion gives us a 3-cube in $X$
which now has as 2-cube in blue the witness $\overline{h_z(a*b)}$
and $\overline{h_z(a)*h_z(b)}$ in red.
Taking the dotted blue diagonal we must have that $h_z(a)*h_z(b) = h_z(a*b)$ since completion is unique.

Now to the formal argument:
let $a,b\in\pi_n(X,x_0)$ and set $\lambda_1 = \overline{a*b}$, $\lambda_2 = \overline{h_z(a)}$, $\lambda_3 = \overline{h_z(b)}$ and $\lambda_i = z\circ p_1^{n+1}$ for $i\geq 4$.
This determines an ($n+2$)-corner $\lambda$ in $X$:
one can directly verify the conditions of Remark~\ref{rem:corner-combinatorial} once again making use of the fact that
$p_1^{n+1}\circ\epsilon_1 \equiv 0$ and $p_1^{n+1}\circ\epsilon_i = p_1^n$ independent of $i\geq 2$.
Completing this corner $\lambda$ yields an ($n+2$)-cube $c$ in $X$.
Similarly to the proof of associativity in Proposition~\ref{prop:structure-group-is-monoid}
we define maps of cubes $r = \overline{\epsilon}_1^{n+1}$ and
\[
    b\colon{\bbox}^{n+1}\to{\bbox}^{n+2},\quad (x_1,\ldots,x_{n+1})\mapsto(x_1,x_2,x_2,\ldots,x_{n+1})
\]
which ought to extract $\overline{h_z(a)*h_z(b)}$ (depicted in red $r$) and $\overline{h_z(a*b)}$ (in blue $b$), respectively, out of $c$.
To see that $\overline{h_z(a)*h_z(b)} = c\circ r$, note that $r\circ\epsilon_i^n = \overline{\epsilon}_1^{n+1}\circ\epsilon_i^n = \epsilon_{i+1}^{n+1}\circ\overline{\epsilon}_1^n$ and therefore
\[
    c\circ r\circ\epsilon_i^n = c\circ\epsilon_{i+1}^{n+1}\circ\overline{\epsilon}_1^n
    = \begin{cases}
        \overline{h_z(a)}\circ\overline{\epsilon}_1^n,\quad & i = 1, \\
        \overline{h_z(b)}\circ\overline{\epsilon}_1^n,\quad & i = 2, \\
        z\circ p_1^{n+1}\circ\overline{\epsilon}_1^n,\quad & i\geq 3,
    \end{cases}
    = \begin{cases}
        h_z(a),\quad & i = 1, \\
        h_z(b),\quad & i = 2, \\
        y_0,\quad & i\geq 3,
    \end{cases}
    = \overline{h_z(a)*h_z(b)}\circ\epsilon_i^n.
\]
For $\overline{h_z(a*b)} = c\circ b$ note that $b\circ\epsilon_1^n =\epsilon_1^{n+1}\circ s$,
$b\circ\epsilon_2^n = \epsilon_2^{n+1}\circ\epsilon_2^n$ and $b\circ\epsilon_i^n = \epsilon_{i+1}^{n+1}\circ b'$ for $i\geq 3$ and some (linear) $b'\colon{\bbox}^n\to{\bbox}^{n+1}$.
From this we obtain
\[
    c\circ b\circ\epsilon_i =
    \begin{cases}
        c\circ\epsilon_1\circ s,\quad & i = 1, \\
        c\circ\epsilon_2\circ\epsilon_2,\quad & i = 2, \\
        c\circ\epsilon_{i+1}\circ b',\quad & i\geq 3
    \end{cases} =
    \begin{cases}
        \overline{a*b}\circ s,\quad & i = 1, \\
        \overline{h_z(a)}\circ\epsilon_2,\quad & i = 2, \\
        z\circ p_1^{n+1}\circ b',\quad & i \geq 3
    \end{cases} =
    \begin{cases}
        a*b,\quad & i = 1, \\
        z\circ p_1^n,\quad & i = 2, \\
        z\circ p_1^n,\quad & i\geq 3
    \end{cases} =
    \overline{h_z(a*b)}\circ\epsilon_i
\]
where for the case $i\geq 3$ we have used that $c\circ\epsilon_j = z\circ p$ for $j\geq 4$.
Now lastly take note that $r\circ s = \overline{\epsilon}_1\circ s = b\circ\overline{\epsilon}_1$ and so
\[
    h_z(a)*h_z(b) = \overline{h_z(a)*h_z(b)}\circ s = c\circ r\circ s = c\circ b\circ\overline{\epsilon}_1 = \overline{h_z(a*b)}\circ\overline{\epsilon}_1 = h_z(a*b)
\]
proving the claim.

In the general setting, if $[x_0]=[x_1]$ in $\pi_0(X)$,
then there exists a finite zig-zag of 1-cubes $z_1,\ldots,z_k$ connecting $x_0$ and $x_1$,
see Proposition~\ref{prop:0-trunc-computation}.
Composing the corresponding isomorphisms $h_{z_i}$ gives a monoid isomorphism $\pi_n(X,x_0)\simeq\pi_n(X,y_0)$.

Lastly, if $X$ has the unique gluing property, then $[x_0]=[x_1]$ in $\pi_0(X)$
means that there exists a $z\in X(1)$ with vertices $x_0$ and $x_1$.
Since $X$ is concrete by Proposition~\ref{prop:unique-gluing-concrete},
this $z$ is unique.
\end{proof}

We have seen that if $z\in X(1)$ and $w = z\circ\rho_1^1$ then we have that $h_z\circ h_w = \id = h_{z(1)}$ where $z(1)$ denotes the degenerate 1-cube.
We would like to extend this property of the composition of different $h$.
However, in general, there is not much that can be hoped for:
given $z,w\in X(1)$ there need not exist an element $z.w$ which acts as the composition of $z$ and $w$.
And if $X$ has the gluing property, which guarantees existence of $z.w$,
then missing uniqueness doesn't let us make a statement about $h_{z.w}$ in terms of $h_z$ and $h_w$.
This can be fixed by assuming that $X$ has the unique gluing property,
in which case $X$ automatically is concrete.
But then everything works as imagined as the following Lemma shows.

\begin{lemma}\label{lem:homotopy-action-well-defined-global}
Let $X$ be a concrete $n$-step cubeset with the gluing property,
$x_0,y_0,z_0\in X(0)$ such that $(x_0,y_0),(y_0,z_0)\in X(1)$.
Then
\[
    h_{(y_0,z_0)}\circ h_{(x_0,y_0)} = h_{(x_0,z_0)}.
\]
\end{lemma}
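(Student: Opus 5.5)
The plan is to realise $h_{(x_0,z_0)}(a)$ as the far face of a single $(n+1)$-cube obtained by \emph{gluing} the two witnessing cubes of $h_{(x_0,y_0)}$ and $h_{(y_0,z_0)}$. Unique corner completion in dimension $n+1$ then identifies this glued cube with $\overline{h_{(x_0,z_0)}(a)}$.

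Fix $a\in\pi_n(X,x_0)$ and write $z=(x_0,y_0)$, $z'=(y_0,z_0)$, $z''=(x_0,z_0)$. Let $b=h_z(a)$ and $C_1=\overline{h_z(a)}$, the unique $(n+1)$-cube with $C_1\circ\epsilon_1=a$ and $C_1\circ\epsilon_i=z\circ p_1^n$ for $i\ge 2$. By definition $b=C_1\circ\overline{\epsilon}_1$. Let $C_2=\overline{h_{z'}(b)}$, so that $C_2\circ\epsilon_1=b$, $C_2\circ\epsilon_i=z'\circ p_1^n$ for $i\geq 2$, and $C_2\circ\overline{\epsilon}_1=h_{z'}(h_z(a))$.

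The cubes $C_1$ and $C_2$ agree along a common $n$-face: the upper $x_1$-face of $C_1$ is the lower $x_1$-face of $C_2$. Hence they define a map ${\bbox}^{n+1}\sqcup_{{\bbox}^n}{\bbox}^{n+1}\to X$ of the shape appearing in $\bprism$, possibly after precomposing with a coordinate flip/permutation so that the glued faces are $\epsilon_1$ and $\overline{\epsilon}_1$ in the convention of the gluing property. By the gluing property this extends to a cube $D\in X(n+1)$. The outer faces of $D$ are the remaining ends of $C_1$ and $C_2$, that is, $D\circ\epsilon_1=a$ and $D\circ\overline{\epsilon}_1=h_{z'}(h_z(a))$.

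It remains to identify the other lower faces of $D$. Since $X$ is concrete, this can be done on vertices. A vertex $\omega$ of $D$ with $\omega_1=0$ that lies on $D\circ\epsilon_i$ for $i\ge2$ carries the same value as the corresponding vertex of $C_1$ or $C_2$, depending on the value of the second coordinate. This is $x_0$ at the vertex $0$, $z_0$ where the second coordinate is $1$, and otherwise as dictated by the faces of $a$. Hence, vertex by vertex, $D\circ\epsilon_i$ and $z''\circ p_1^n$ have the same values. Note that $z''=(x_0,z_0)$ is the unique $1$-cube with these endpoints by concreteness, and it exists by assumption. By concreteness, $D\circ\epsilon_i=z''\circ p_1^n$ for all $i\ge 2$. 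So $D$ restricted to ${\bbcor}^{n+1}$ is exactly the corner used to define $h_{z''}(a)$. Since $X$ is $n$-step and $n+1>n$, corner completion is unique, hence $D=\overline{h_{z''}(a)}$. Taking $\overline{\epsilon}_1$-faces gives $h_{z'}(h_z(a))=h_{z''}(a)$.

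The main obstacle is the bookkeeping in the gluing step. The elements of $\bprism$ glue along $\epsilon_i$ versus $\overline{\epsilon}_i$ with a specific orientation, and I must check which faces of the glued cube come from which of $C_1,C_2$. I would first try to match the orientation by applying the flip $\rho_1^{n+1}$ to $C_1$, as in the proof of Proposition~\ref{prop:structure-monoid-independence-base-point}. I would then check the vertex values of the resulting $D$ directly, which is legitimate because concreteness reduces all face identities to pointwise equalities in $X(0)$.
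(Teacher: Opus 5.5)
Your proposal is correct and follows the paper's proof. You glue $\overline{h_{(x_0,y_0)}(a)}$ and $\overline{h_{(y_0,z_0)}(h_{(x_0,y_0)}(a))}$ along their common $x_1$-face, check on vertices (by concreteness) that the glued cube carries the corner defining $h_{(x_0,z_0)}(a)$, and conclude by unique $(n+1)$-corner completion; your flip by $\rho_1^{n+1}$ to match the orientation of $\bprism$ is a detail the paper leaves implicit. Tidy up the vertex check, though: for $\omega$ with $\omega_i=0$ for some $i\ge 2$ the case split is on $\omega_1$ (value $x_0$ if $\omega_1=0$, $z_0$ if $\omega_1=1$, and no further cases), and $(x_0,z_0)\in X(1)$ is not a hypothesis but is supplied by an edge of the glued cube itself.
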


\begin{proof}
Let $a\in\pi_n(X,x_0)$, $b = h_{(x_0,y_0)}(a)\in\pi_n(X,y_0)$
as well as $u = (x_0,y_0)$, $v = (y_0,z_0)$ and $w = (x_0,z_0)$.
Take the ($n+1$)-cubes $\overline{h_u(a)}$ and $\overline{h_v(b)}$
which are depicted for $n=2$ as the left and middle cube, respectively.
\begin{center}
\begin{tikzcd}[row sep=tiny, column sep = tiny]
	& {y_0} && {h_u(a)} && {z_0} && {h_v(h_u(a))} && {z_0} && {h_v(h_u(a))} \\
	{x_0} && a && {y_0} && {h_u(a)} && {x_0} && a \\
	& {y_0} && {y_0} && {z_0} && {z_0} && {z_0} && {z_0} \\
	{x_0} && {x_0} && {y_0} && {y_0} && {x_0} && {x_0}
	\arrow[color={rgb,255:red,92;green,92;blue,214}, no head, from=1-2, to=1-4]
	\arrow[no head, from=1-6, to=1-8]
	\arrow[no head, from=1-10, to=1-12]
	\arrow[no head, from=2-1, to=1-2]
	\arrow[no head, from=2-1, to=2-3]
	\arrow[no head, from=2-3, to=1-4]
	\arrow[no head, from=2-5, to=1-6]
	\arrow[color={rgb,255:red,92;green,92;blue,214}, no head, from=2-5, to=2-7]
	\arrow[no head, from=2-7, to=1-8]
	\arrow[no head, from=2-9, to=1-10]
	\arrow[no head, from=2-9, to=2-11]
	\arrow[no head, from=2-11, to=1-12]
	\arrow[color={rgb,255:red,92;green,92;blue,214}, dashed, no head, from=3-2, to=1-2]
	\arrow[color={rgb,255:red,92;green,92;blue,214}, dashed, no head, from=3-2, to=3-4]
	\arrow[color={rgb,255:red,92;green,92;blue,214}, no head, from=3-4, to=1-4]
	\arrow[dashed, no head, from=3-6, to=1-6]
	\arrow[dashed, no head, from=3-6, to=3-8]
	\arrow[no head, from=3-8, to=1-8]
	\arrow[dashed, no head, from=3-10, to=1-10]
	\arrow[dashed, no head, from=3-10, to=3-12]
	\arrow[no head, from=3-12, to=1-12]
	\arrow[no head, from=4-1, to=2-1]
	\arrow[dashed, no head, from=4-1, to=3-2]
	\arrow[no head, from=4-1, to=4-3]
	\arrow[no head, from=4-3, to=2-3]
	\arrow[no head, from=4-3, to=3-4]
	\arrow[color={rgb,255:red,92;green,92;blue,214}, no head, from=4-5, to=2-5]
	\arrow[dashed, no head, from=4-5, to=3-6]
	\arrow[color={rgb,255:red,92;green,92;blue,214}, no head, from=4-5, to=4-7]
	\arrow[color={rgb,255:red,92;green,92;blue,214}, no head, from=4-7, to=2-7]
	\arrow[no head, from=4-7, to=3-8]
	\arrow[no head, from=4-9, to=2-9]
	\arrow[dashed, no head, from=4-9, to=3-10]
	\arrow[no head, from=4-9, to=4-11]
	\arrow[no head, from=4-11, to=2-11]
	\arrow[no head, from=4-11, to=3-12]
\end{tikzcd}
\end{center}
Then $\overline{h_u(a)}\circ\overline{\epsilon}_1 = h_u(a) = b = \overline{h_v(b)}\circ\epsilon_1$
(depicted in blue).
Hence, those two cubes can be glued along the $x_1$-axis to a cube $c\in X(n+1)$ satisfying
$c\circ\epsilon_1 = \overline{h_u(a)}\circ\epsilon_1 = a$
and
$c\circ\overline{\epsilon}_1 = \overline{h_v(b)}\circ\overline{\epsilon}_1 = h_v(b)$,
which can be viewed on the right.
By comparing points we infer that $c = \overline{h_w(a)}$ and hence
$h_w(a) = h_v(b) = h_v(h_u(a))$.
\end{proof}

We close this section with an alternative description of the multiplication in $\pi_n(X,x_0)$
in terms of the isomorphism $h_{(x_0,y_0)}$ for concrete cubesets.

\begin{lemma}\label{lem:homotopy-action-well-defined}
Let $X$ be a concrete $n$-step cubeset with the (unique) gluing property and $x_0\in X(0)$.
Then for all $a,b\in\pi_n(X,x_0)$ we have that
\[
    (h_{(x_0,y_0)}(a))(1^n) = (a*b)(1^n)
\]
where $y_0 = b(1^n)$.
\end{lemma}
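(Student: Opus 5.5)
The plan is to reduce everything to a comparison of top vertices of suitable $(n+1)$-cubes, and to produce the comparison by a single unique corner completion in dimension $n+2$. This follows the pattern of the associativity proof in Proposition~\ref{prop:structure-group-is-monoid} and of the homomorphism part of Proposition~\ref{prop:structure-monoid-independence-base-point}.

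\textbf{Step 1: the edge $z=(x_0,y_0)$ exists.} Restrict $b$ along the long diagonal ${\bbox}^1\to{\bbox}^n$, $t\mapsto(t,\dots,t)$. This gives a $1$-cube with vertices $b(0^n)=x_0$ and $b(1^n)=y_0$. By concreteness it is the unique such edge $z=(x_0,y_0)$, so $h_z$ is defined. Then unwind the two sides as vertex values. We have $h_z(a)(1^n)=\overline{h_z(a)}(1^{n+1})$, and $(a*b)(1^n)=\overline{a*b}(s(1^n))=\overline{a*b}(1^{n+1})$. So the claim is that the two $(n+1)$-cubes $\overline{h_z(a)}$ and $\overline{a*b}$ have the same top vertex. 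Both agree on the face $\epsilon_1$, where each carries $a$. They differ on the remaining inert faces: $\overline{h_z(a)}$ carries $z\circ p_1^n$ there, while $\overline{a*b}$ carries $b$ on $\epsilon_2$ and $x_0$ elsewhere.

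\textbf{Step 2: an $(n+2)$-corner containing both cubes.} Use the faces
\[
\lambda_1=\overline{a*b},\qquad \lambda_2=\overline{h_z(a)},\qquad \lambda_3=b\circ d,\qquad \lambda_i=z\circ p_1^{n+1}\ (i\geq 4),
\]
where $d$ is a suitable degeneracy. The role of $\lambda_3$ is to carry $b$ together with the edge $z$, in such a way that all pairwise restrictions to $n$-faces match. I would first check the compatibility conditions of Remark~\ref{rem:corner-combinatorial}. Because $X$ is concrete, it suffices to compare vertex values on the pairwise intersections. If this particular choice of faces fails, I would adjust it by a flip or a coordinate permutation.

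\textbf{Step 3: complete and extract.} Complete the corner uniquely to an $(n+2)$-cube $c$; this is possible since $X$ is $n$-step. Then extract two $(n+1)$-subcubes via maps ${\bbox}^{n+1}\to{\bbox}^{n+2}$ that agree after composition with $s$ or on the vertex $1^{n+1}$, in the same way as the maps $r$ and $b$ in the associativity proof. Show that one of these subcubes equals $\overline{h_z(a)}$ and the other equals a cube with the same top vertex as $\overline{a*b}$. In both cases this follows by checking equality on the $(n+1)$-corner and invoking uniqueness of corner completion. Reading off the common vertex then gives the required equality.

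If the bookkeeping becomes unwieldy, there is a more direct alternative. Observe that $\overline{a*b}$ and the gluing of $\overline{h_z(a)}$ with a degenerate cube of $b$ are two completions of the same $(n+1)$-corner after a flip. Then apply the gluing property together with concreteness (Proposition~\ref{prop:unique-gluing-concrete}) to identify their top vertices.

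The main obstacle is Step 2: choosing the third face and the degeneracies so that the corner is genuinely consistent, and choosing extraction maps whose corners provably coincide with those of $\overline{h_z(a)}$ and $\overline{a*b}$. Concreteness reduces all of these checks to vertex evaluations, but the index chasing with $\epsilon_i$, $\overline{\epsilon}_1$ and $p_1$ is where errors are most likely.
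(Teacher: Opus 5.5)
Your Step 1 is fine, and it even supplies the edge $z=(x_0,y_0)$ that the paper takes for granted. Steps 2--3, however, do not work.

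\textbf{The corner in Step 2 is not consistent.} Compare $\lambda_3$ with $\lambda_1=\overline{a*b}$ on $[x_1=x_3=0]$, with $\lambda_2=\overline{h_z(a)}$ on $[x_2=x_3=0]$, and with $\lambda_i$ on $[x_3=x_i=0]$. This forces $\lambda_3\circ\epsilon_1=b$ and $\lambda_3\circ\epsilon_j=z\circ p_1^n$ for $j\geq 2$. By uniqueness of $(n+1)$-corner completion, $\lambda_3=\overline{h_z(b)}$. This cube takes the value $y_0$ at $(0,1^n)$ and at every $(1,\omega')$ with $\omega'\neq 1^n$, but not at $(0,\omega')$ for $\omega'\ne 1^n$. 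That locus is not a face. A cube $b\circ d$ takes the value $y_0$ exactly on the face $d^{-1}(1^n)$, so for $x_0\neq y_0$ no degeneracy, flip or permutation of $b$ can serve as $\lambda_3$.

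\textbf{The forced corner does not give the lemma.} With $\lambda_3=\overline{h_z(b)}$ you get exactly the corner the paper uses to prove $h_z(a*b)=h_z(a)*h_z(b)$. In its completion $c$, the two values you want to compare sit at different vertices: $c(0,1^{n+1})=(a*b)(1^n)$ and $c(1,0,1^n)=h_z(a)(1^n)$. Step 3 names no maps connecting these two vertices. The natural extractions, $\overline{\epsilon}_1$ and $(x_1,x_2,x_2,x_3,\dots)$, only return the homomorphism identity. So Steps 2--3 are not a proof.

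\textbf{Your fallback has the right idea but is incomplete.} Gluing is the mechanism of the paper's proof too, but a single gluing only suffices for $n\le 2$. Take $n\geq 3$ and the generic case. The half of $\overline{h_z(a)}$ you keep must carry the values $a(1^n)$ and $h_z(a)(1^n)$, which a degenerate cube of $b$ cannot supply. So that half is $[y_j=1]$ for some $j\geq 2$. It then contains $2^{n-1}-1\geq 3$ vertices $(1,\omega')$, $\omega'\ne 1^n$, where $\overline{h_z(a)}$ equals $y_0$. But $\overline{a*b}$ equals $x_0$ at all of these except possibly $(1,0,1^{n-1})$.

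\textbf{The missing step is to iterate.} For $k=n+1,n,\dots,3$, glue the current cube along the $y_k$-axis (after the appropriate flip) with $b\circ\delta_k$, where
\[
\delta_k\colon{\bbox}^{n+1}\to{\bbox}^{n},\quad y\mapsto(y_1,y_k,y_{k+1},\dots,y_{n+1},y_1,\dots,y_1).
\]
Keep the invariant that the current cube equals $x_0$ wherever $y_j=0$ for some $j>k$, and equals $\overline{h_z(a)}$ elsewhere. Then its $[y_k=0]$-face takes the value $y_0$ exactly where $y_1=y_{k+1}=\dots=y_{n+1}=1$. This matches the $[y_k=1]$-face of $b\circ\delta_k$. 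The gluing therefore replaces the lower $y_k$-half by the constant $x_0$ and keeps the upper half.

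After the last step, the cube agrees with $\overline{a*b}$ at every vertex except $1^{n+1}$, where it still carries $h_z(a)(1^n)$. By concreteness the two cubes have the same corner. Uniqueness of $(n+1)$-corner completion then gives the claim; this is the $n$-step hypothesis, not Proposition~\ref{prop:unique-gluing-concrete}.

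\textbf{Comparison with the paper.} The paper glues $\overline{h_z(a)}$ onto $\overline{a*b}$ along $y_1$. It then uses the universal-replacement cubes $d_k$ to reach the corner of the degenerate cube $h_z(a)\circ r^{n+1}$. The repaired version of your route needs only degenerate cubes of $b$, and hence no universal replacement and no fibrancy.
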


\begin{proof}
Let $u = (x_0,y_0)\in X(1)$ and denote by $[h_u(a),h_u(a)] = h_u(a)\circ r^{n+1}$
the cube $h_u(a)$ degenerated along the $\omega_1$-axis to an ($n+1$)-cube.
Here, $r^k\colon{\bbox}^k\to{\bbox}^{k-1},\omega\mapsto(\omega_2,\ldots,\omega_k)$
denotes the projection omitting the first coordinate.
We show that, gluing to $\overline{a*b}$ iteratively along lower faces $\epsilon_k$,
one obtains an ($n+1$)-cube $e$ that satisfies
\[
    e\rvert_{{\bbcor}^{n+1}} = [h_u(a),h_u(a)]\rvert_{{\bbcor}^{n+1}}.
\]
Using unique corner completion, we obtain that
\[
    (a*b)(1^n) = \overline{a*b}(1^{n+1}) = e(1^{n+1}) = [h_u(a),h_u(a)](1^{n+1}) = (h_u(a))(1^n).
\]
In the case of $n=2$, the gluing procedure looks as follows.
\begin{center}
\begin{tikzcd}[column sep=small, row sep=small]
	&&&&& {y_0} && {a*b} \\
	&&&& {x_0} && a \\
	&&&&& {x_0} && {x_0} \\
	&&&& {x_0} && {x_0} \\
	& {y_0} && {h(a)} && {y_0} && {a*b} \\
	{x_0} && a && {y_0} && {h(a)} \\
	& {y_0} && {y_0} && {x_0} && {x_0} \\
	{x_0} && {x_0} && {y_0} && {y_0} \\
	& {x_0} && {x_0} && {y_0} && {a*b} \\
	{y_0} && {y_0} && {y_0} && {h(a)} \\
	& {y_0} && {y_0} && {y_0} && {y_0} \\
	{y_0} && {y_0} && {y_0} && {y_0}
	\arrow[no head, from=1-6, to=1-8]
	\arrow[no head, from=2-5, to=1-6]
	\arrow[color={rgb,255:red,92;green,92;blue,214}, no head, from=2-5, to=2-7]
	\arrow[no head, from=2-7, to=1-8]
	\arrow[dashed, no head, from=3-6, to=1-6]
	\arrow[dashed, no head, from=3-6, to=3-8]
	\arrow[no head, from=3-8, to=1-8]
	\arrow[color={rgb,255:red,92;green,92;blue,214}, no head, from=4-5, to=2-5]
	\arrow[dashed, no head, from=4-5, to=3-6]
	\arrow[color={rgb,255:red,92;green,92;blue,214}, no head, from=4-5, to=4-7]
	\arrow[""{name=0, anchor=center, inner sep=0}, "{+}"{description}, draw=none, from=4-5, to=5-4]
	\arrow[color={rgb,255:red,92;green,92;blue,214}, no head, from=4-7, to=2-7]
	\arrow[no head, from=4-7, to=3-8]
	\arrow[no head, from=5-2, to=5-4]
	\arrow[no head, from=5-2, to=6-1]
	\arrow[no head, from=5-4, to=6-3]
	\arrow[no head, from=5-6, to=5-8]
	\arrow[color={rgb,255:red,92;green,92;blue,214}, no head, from=6-1, to=6-3]
	\arrow[""{name=1, anchor=center, inner sep=0}, no head, from=6-5, to=5-6]
	\arrow[no head, from=6-5, to=6-7]
	\arrow[no head, from=6-7, to=5-8]
	\arrow[dashed, no head, from=7-2, to=5-2]
	\arrow[dashed, no head, from=7-2, to=7-4]
	\arrow[dashed, no head, from=7-2, to=8-1]
	\arrow[no head, from=7-4, to=5-4]
	\arrow[no head, from=7-4, to=8-3]
	\arrow[dashed, no head, from=7-6, to=5-6]
	\arrow[color={rgb,255:red,214;green,92;blue,92}, dashed, no head, from=7-6, to=7-8]
	\arrow[no head, from=7-8, to=5-8]
	\arrow[color={rgb,255:red,92;green,92;blue,214}, no head, from=8-1, to=6-1]
	\arrow[color={rgb,255:red,92;green,92;blue,214}, no head, from=8-1, to=8-3]
	\arrow[color={rgb,255:red,92;green,92;blue,214}, no head, from=8-3, to=6-3]
	\arrow[no head, from=8-5, to=6-5]
	\arrow[color={rgb,255:red,214;green,92;blue,92}, dashed, no head, from=8-5, to=7-6]
	\arrow[color={rgb,255:red,214;green,92;blue,92}, no head, from=8-5, to=8-7]
	\arrow[""{name=2, anchor=center, inner sep=0}, "{+}"{description}, draw=none, from=8-5, to=9-4]
	\arrow[no head, from=8-7, to=6-7]
	\arrow[color={rgb,255:red,214;green,92;blue,92}, no head, from=8-7, to=7-8]
	\arrow[color={rgb,255:red,214;green,92;blue,92}, no head, from=9-2, to=9-4]
	\arrow[no head, from=9-6, to=9-8]
	\arrow[color={rgb,255:red,214;green,92;blue,92}, no head, from=10-1, to=9-2]
	\arrow[color={rgb,255:red,214;green,92;blue,92}, no head, from=10-1, to=10-3]
	\arrow[color={rgb,255:red,214;green,92;blue,92}, no head, from=10-3, to=9-4]
	\arrow[""{name=3, anchor=center, inner sep=0}, no head, from=10-5, to=9-6]
	\arrow[no head, from=10-5, to=10-7]
	\arrow[no head, from=10-7, to=9-8]
	\arrow[dashed, no head, from=11-2, to=9-2]
	\arrow[dashed, no head, from=11-2, to=11-4]
	\arrow[no head, from=11-4, to=9-4]
	\arrow[dashed, no head, from=11-6, to=9-6]
	\arrow[dashed, no head, from=11-6, to=11-8]
	\arrow[no head, from=11-8, to=9-8]
	\arrow[no head, from=12-1, to=10-1]
	\arrow[dashed, no head, from=12-1, to=11-2]
	\arrow[no head, from=12-1, to=12-3]
	\arrow[no head, from=12-3, to=10-3]
	\arrow[no head, from=12-3, to=11-4]
	\arrow[no head, from=12-5, to=10-5]
	\arrow[dashed, no head, from=12-5, to=11-6]
	\arrow[no head, from=12-5, to=12-7]
	\arrow[no head, from=12-7, to=10-7]
	\arrow[no head, from=12-7, to=11-8]
	\arrow[between={0.2}{0.8}, Rightarrow, from=0, to=1]
	\arrow[between={0.2}{0.8}, Rightarrow, from=2, to=3]
\end{tikzcd}
\end{center}
In general, start with $\overline{a*b}$ (the upper right most cube)
whose first inert side is $a = \overline{a*b}\circ\epsilon_1$ (depicted in blue).
This agrees with $a = \overline{h_u(a)}\circ\epsilon_1$ (the blue side of the upper left most cube),
so we can glue those cubes together, obtaining a new cube $c_2\in X(n+1)$ with
$c_2\circ\epsilon_1 = \overline{h_u(a)}\circ\overline{\epsilon}_1 = h_u(a)$
and $c_2\circ\overline{\epsilon}_1 = \overline{a*b}\circ\overline{\epsilon}_1$.
Now, we wish to turn all the remaining appearances of $x_0$ into $y_0$,
using only gluing at lower faces $\epsilon_k$ for $k\geq 3$ to arrive at the cube $e\in X(n+1)$
(the lower right most cube) satisfying
\[
    e\rvert_{{\bbcor}^{n+1}} = [h_u(a),h_u(a)]\rvert_{{\bbcor}^{n+1}}.
\]
If $n=1$, set $e=c_2$.
Otherwise, on to the remaining steps of gluing:
For $2\leq k\leq n$ define the configuration
\[
    d'_k\colon\{0,1\}^k\to X(0),\quad\omega\mapsto\begin{cases}
        x_0,\quad & \omega_1=\ldots=\omega_{k-1}=1\;\text{and}\;\omega_k=0, \\
        y_0,\quad & \text{else}
    \end{cases}
\]
which is in $X(k)$ since $x_0\sim_{n-1}y_0$ and $k\leq n$.
Let
\[
    q_{k}\colon{\bbox}^{n+1}\to{\bbox}^{k},\quad \omega\mapsto (\omega_1,\omega_3,\ldots,\omega_{k+1})
\]
and define $d_k = d'_k\circ q_k\in X(n+1)$.

Assume that we have inductively defined $c_k$ for $2\leq k\leq n$
which satisfies, for all $\omega\in\{0,1\}^{n+1}$ with $\omega_{k+1}=0$, that
\[
    c_k(\omega) = \begin{cases}
        x_0,\quad &\omega_1=\omega_3=\ldots=\omega_k = 1, \\
        y_0,\quad & \text{else}
    \end{cases}
\]
and $c_k(1^{n+1}) = \overline{a*b}(1^{n+1})$.
(This is, in particular, the case for $c_2$: it is the gluing
of $\overline{h_{u}(a)}$ (for $\omega_1 = 0$, having the value $y_0$)
and $\overline{a*b}$ (for $\omega_1 = 1$, having the value $x_0$).)
Then $c_k\circ\epsilon_{k+1} = d_k\circ\epsilon_{k+1}$ by induction hypothesis (for $c_k$)
and definition (for $d_k$).
This yields the gluing $c_{k+1}$ of $c_k$ and $d_k$ along the $\omega_{k+1}$-axis
so that $c_{k+1}\circ\epsilon_{k+1} = d_k\circ\overline{\epsilon}_{k+1}$
and $c_{k+1}\circ\overline{\epsilon}_{k+1} = c_k\circ\overline{\epsilon}_{k+1}$.
If $k\leq n-1$, we can compute by case distinction on $\omega_{k+1}$
for all $\omega\in\{0,1\}^{n+1}$ with $\omega_{k+2}=0$ that
\[
    c_{k+1}(\omega) = \begin{cases}
        x_0,\quad &\omega_1=\omega_3=\ldots=\omega_{k+1} = 1, \\
        y_0,\quad & \text{else}
    \end{cases}
\]
since if $\omega_{k+1}=0$ we are in the case of $d_k(\omega)$ with $\omega_{k+1}=1$
which has value $y_0$,
and if $\omega_{k+1}=1$ we are in the case of $c_k(\omega)$ where it follows from the induction hypothesis.
Moreover, we have that $c_{k+1}(1^{n+1}) = c_k(1^{n+1}) = \overline{a*b}(1^{n+1})$
by induction hypothesis.

Let $e = c_{n+1}$.
For all $\omega\in\bigcup_{i\geq 3}\epsilon_i$ (i.e., $\omega_i=0$ for some $i\geq 3$) we have that $e(\omega) = y_0$
which can be seen by reverse induction from $n+1$ to $3$ on the biggest $i$ such that $\omega_i=0$.
Moreover, we have that
\begin{align*}
    e(0,0,1,\ldots,1) &= c_2(0,0,1,\ldots,1) = h_{u}(a)(0,1,\ldots,1) = y_0, \\
    e(0,1,1,\ldots,1) &= c_2(0,1,1,\ldots,1) = h_{u}(a)(1,1,\ldots,1), \\
    e(1,0,1,\ldots,1) &= c_2(1,0,1,\ldots,1) = \overline{a*b}(1,0,1,\ldots,1) = b(1,\ldots,1) = y_0.
\end{align*}
This shows that $e\rvert_{{\bbcor}^{n+1}} = [h_u(a),h_u(a)]\rvert_{{\bbcor}^{n+1}}$
which is all that was left to prove.
\end{proof}

\subsubsection{Homotopy Groups over \texorpdfstring{$\bbGamma$}{Gamma}}\label{ssec:structure-group-gamma}

Recall the wide subcategory inclusion $j\colon\bbGamma\to\bbox$.
This induces a restriction $j^\ast\colon\cSet\to\PSh(\bbGamma)$ which has a left and a right adjoint $j_!$ and $j_\ast$, respectively,
both pairs of adjoint functors make up a geometric morphism (in opposite directions), see Lemma~\ref{lem:j-geometric-morphism}.
In $\bbGamma$, there is a more subtle notion of corner than in $\bbox$ which we have introduced in Section~\ref{ssec:corner-gamma}.
The main takeaway was that for a concrete cubeset $X$, the two different notions of fibrancy ($X$ is fibrant and $j^\ast X$ is fibrant) agree, see Proposition~\ref{prop:nilfib-restricted-nilfib-concrete}.
Moreover, in Section~\ref{ssec:truncation-gamma} we have seen that there also exists a truncation over $\bbGamma$ which, for a concrete fibrant cubeset $X$, can be computed analogously,
yielding the quotient $X/\approx_n$ (which is only a $\bbGamma$-set, not a cubeset).
Now the reason for why one should be interested in a definition of structure monoid over $\bbGamma$ is the different notion of corner.
In Remark~\ref{rem:structure-mon-not-grouplike} we have seen that for a general cubeset $X$ there is no hope in obtaining a grouplike monoid.
However, this defect is not present over $\bbGamma$ for $n=1$ anymore: to define inverses, just use the corner
\begin{center}
\begin{tikzcd}
	& {x_0} & {a^{-1}} & {x_0} \\
	{x_0} & a & {x_0} & a
	\arrow[dashed, no head, from=1-3, to=1-4]
	\arrow[no head, from=2-1, to=1-2]
	\arrow[no head, from=2-1, to=2-2]
	\arrow[""{name=0, anchor=center, inner sep=0}, draw=none, from=2-2, to=1-2]
	\arrow[""{name=1, anchor=center, inner sep=0}, dashed, no head, from=2-3, to=1-3]
	\arrow[no head, from=2-3, to=1-4]
	\arrow[no head, from=2-3, to=2-4]
	\arrow[dashed, no head, from=2-4, to=1-4]
	\arrow["\leadsto"{description}, draw=none, from=0, to=1]
\end{tikzcd}
\end{center}
whose completion gives us the inverse of $a$.
This gives us yet another way to define the structure monoid for a cubeset $X$
for which it will then turn out to be a group if $n=1$.
The good thing: everything stays the same for $X$ concrete fibrant.

\begin{definition}
Let $X$ be a $\bbGamma$-set, $x_0\in X$ and $Y = \tau_n^{\bbGamma} X$ for $n\geq 1$.
Then define $\pi_n^{\bbGamma}(X,x_0)$ to be the set
\[
    \{ c\in Y(n) \mid c\rvert_{{\bbcor}^n_1}\equiv x_0 \}.
\]
We define a binary operation on $\pi_n^{\bbGamma}(X,x_0)$:
given $a,b\in\pi_n(X,x_0)$, set $\lambda_1 = a$, $\lambda_2 = b$ and $\lambda_i \equiv x_0$ for $2<i\leq n+1$.
This defines a corner $\lambda_{a,b}\colon{\bbcor}^{n+1}_1\to Y$ whose unique completion we denote by $\overline{a*b}\in Y(n+1)$.
Let $s\colon{\bbox}^n\to{\bbox}^{n+1}$ be the map $(x_1,\ldots,x_n)\mapsto (x_1,x_1,x_2, \ldots,x_n)$.
Then define
\[
    a * b = \overline{a*b}\circ s \in \pi_n(X,x_0).
\]
With this operation, we call $\pi_n^{\bbGamma}(X,x_0)$ the \emph{$n$-th structure monoid of $X$ over $\bbGamma$}.
\end{definition}

This is verbatim the same Definition as~\ref{def:structure-group}
where the structure monoid is defined over $\bbox$.
The only difference here is that we are using the truncation w.r.t. $\bbGamma$
and so there is no reason to expect that the two definitions agree in general.

\begin{remark}
Over $\bbGamma$, every object decomposes into a coproduct of objects
which all have only one point by Proposition~\ref{prop:connected-comp-gamma}.
This means that the only notion of $\pi_0(X)$ for $X$ a $\bbGamma$-set is to take $\hom(\ast,X) = X(0)$
(which agrees with $\hom(\ast,\tau_0^{\bbGamma} X)$).
\end{remark}

\begin{proposition}\label{prop:structure-monoid-gamma}
The operation
\[
    *\colon\pi_n^{\bbGamma}(X,x_0)\times\pi_n^{\bbGamma}(X,x_0)\to\pi_n^{\bbGamma}(X,x_0),\quad (a,b)\mapsto a*b
\]
makes $\pi_n^{\bbGamma}(X,x_0)$ an abelian monoid.
Moreover, the monoid $\pi_1^{\bbGamma}(X,x_0)$ is grouplike.
This construction upgrades to a functor $\pi_n^{\bbGamma}\colon\PSh(\bbGamma)_{\ast}\to\cMon$.
\end{proposition}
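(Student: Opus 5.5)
The plan is to reduce to the $n$-step $\bbGamma$-set $Y=\tau_n^{\bbGamma}X$ and copy the proof of Proposition~\ref{prop:structure-group-is-monoid} verbatim. The only change is that every corner completion is now taken against ${\bbcor}^{k}_1={\bbcor}^k_{1^k}$, the $\bbGamma$-corner missing $1^k$, which $Y$ fills uniquely for $k>n$. All the maps used in that proof are linear, so they live in $\bbGamma$: the coordinate swap $\sigma_{12}$, the diagonal $s$, the degeneracy $d$, and the maps $r,b$ from the associativity argument. The one exception is the flip $\rho$, which we only need for $n=1$ and will replace there.

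First we check that $\lambda_{a,b}$ really is a map ${\bbcor}^{n+1}_1\to Y$. The corner ${\bbcor}^{n+1}_1$ contains more than the inert faces: it also contains linear injections ${\bblox}^n\to{\bblox}^{n+1}$ that miss $1^{n+1}$ without being inert, namely those repeating a coordinate while some other coordinate is forced to $0$. Each such map factors through an inert face. Hence the values on these extra faces are forced by the values on inerts, i.e.\ by $a$, $b$ and the constant $x_0$, provided these agree on overlaps. Agreement on overlaps is the combinatorial condition of Remark~\ref{rem:corner-combinatorial}, and it holds because $a,b$ restrict to $x_0$ on ${\bbcor}^n_1$. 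With this in hand, commutativity follows exactly as before: $\sigma_{12}$ preserves ${\bbcor}^{n+1}_1$ and fixes $s$. For the identity we use that $a\circ d$ completes $\lambda_{a,x_0}$. For associativity we complete the $(n+2)$-corner built from $\overline{a*b},\overline{a*c},\overline{b*c}$ and extract $r$ and $b$. In each case uniqueness of completion in $Y$ identifies the cube we construct with the defining one, after checking agreement on the relevant $\bbGamma$-corner. Functoriality holds because maps of $\bbGamma$-sets commute with restriction and with $\tau_n^{\bbGamma}$.

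For $n=1$, we want an inverse of $a\in\pi_1^{\bbGamma}(X,x_0)$, meaning $a(0)=x_0$. We use the $\omega=(1,0)$-corner ${\bbcor}^2_{(1,0)}$, whose faces are the edge $x\mapsto(0,x)$ and the diagonal $x\mapsto(x,x)$. We put the constant $x_0$ on the diagonal and $a$ on the edge $x\mapsto(0,x)$. The two faces overlap only in the vertex $0$, where both take the value $x_0$, so this is a valid corner. For this we need $Y$ to be $1$-step, which holds since $n=1$ and $k=2>n$. The unique filler $c\in Y(2)$ then gives $a^{-1}:=c\circ\epsilon$, the edge $x\mapsto(x,0)$. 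By construction $c$ restricted to ${\bbcor}^2_1$ consists of $a$ and $a^{-1}$, so $c=\overline{a*a^{-1}}$ (after possibly swapping coordinates by $\sigma_{12}$). Moreover $c\circ s\equiv x_0$, so $a*a^{-1}=x_0$.

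The main obstacle is the corner-compatibility step, i.e.\ showing that the data on inerts extends consistently to the whole $\bbGamma$-corner, including its non-inert faces. This is where the argument differs from the $\bbox$ case. I would handle it by factoring each non-inert face through an inert one. If that fails, the fallback is to define $\overline{a*b}$ by completing only along the inert sub-corner, which is impossible over $\bbGamma$, so the factorization really has to work. The group statement for $n=1$ is comparatively easy, because ${\bbcor}^2_\omega$ has only two faces.
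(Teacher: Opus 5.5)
Your proposal is correct and follows the paper's argument. The monoid axioms transfer verbatim from Proposition~\ref{prop:structure-group-is-monoid} because every map used there is linear. The inverse for $n=1$ is read off from the unique filler of ${\bbcor}^2_\omega$ with $a$ on an edge and the constant $x_0$ on the diagonal; the paper uses the mirror choice $\omega=(0,1)$.

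The obstacle you single out does not exist. A linear injection ${\bblox}^n\to{\bblox}^{n+1}$ missing $1^{n+1}$ must have a zero coordinate, and then it uses each variable exactly once. So ${\bbcor}^{n+1}_1$ is just the union of the inert faces, as computed in the proof of Proposition~\ref{prop:nilfib-restricted-nilfib-concrete}. The compatibility check is therefore exactly Remark~\ref{rem:corner-combinatorial}, and no factorization argument (or coordinate swap) is needed.
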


\begin{proof}
The proof for commutativity, associativity and the existence of an identity element
works the same as in the proof of Proposition~\ref{prop:structure-group-is-monoid}
since there all morphisms in $\bbox$ we have used are actually in $\bbGamma$.
Also functoriality is clear.

Next we show the existence of inverse elements in the case of $n=1$.
Given an element $a\in\pi_n^{\bbGamma}(X,x_0)$ we want to find $b\in\pi_n(X,x_0)$
such that the left square, with red diagonal being the identity (constant cube $x_0$) is in $X(2)$ .
\begin{center}
\begin{tikzcd}
	b & {x_0} & b & {x_0} \\
	{x_0} & a & {x_0} & a
	\arrow[no head, from=1-1, to=1-2]
	\arrow[dotted, no head, from=1-4, to=1-3]
	\arrow[no head, from=2-1, to=1-1]
	\arrow[color={rgb,255:red,214;green,92;blue,92}, no head, from=2-1, to=1-2]
	\arrow[no head, from=2-1, to=2-2]
	\arrow[no head, from=2-2, to=1-2]
	\arrow[dotted, no head, from=2-3, to=1-3]
	\arrow[no head, from=2-3, to=2-4]
	\arrow[no head, from=2-4, to=1-4]
\end{tikzcd}
\end{center}
So instead of completing $a$ and $b$ to $x_0$, we use the corner ${\bbcor}^2_\omega$ at $\omega=(0,1)$ to obtain a completion
\begin{center}
\begin{tikzcd}
	& {x_0} & {b} & {x_0} \\
	{x_0} & a & {x_0} & a
	\arrow[dashed, no head, from=1-3, to=1-4]
	\arrow[no head, from=2-1, to=1-2]
	\arrow[no head, from=2-1, to=2-2]
	\arrow[""{name=0, anchor=center, inner sep=0}, draw=none, from=2-2, to=1-2]
	\arrow[""{name=1, anchor=center, inner sep=0}, dashed, no head, from=2-3, to=1-3]
	\arrow[no head, from=2-3, to=1-4]
	\arrow[no head, from=2-3, to=2-4]
	\arrow[dashed, no head, from=2-4, to=1-4]
	\arrow["\leadsto"{description}, draw=none, from=0, to=1]
\end{tikzcd}
\end{center}
which then produces the desired inverse element $b$.
\end{proof}

\begin{remark}\label{rem:structure-mon-not-grouplike-gamma}
For $n\geq 2$ there is no possibility of defining an inverse in $\pi_n^{\bbGamma}(X,x_0)$
analogous to the case $n=1$.
If, for example in the 3-cube we omit the vertex $\omega = (0,1,1)$,
the corner ${\bbcor}^3_\omega$ looks as follows.
\begin{center}
\begin{tikzpicture}[
    line cap=round,
    line join=round,
    edge/.style={draw=gray!65, line width=0.55pt},
    frontedge/.style={draw=black, line width=0.65pt},
    vertex/.style={circle, fill=black, inner sep=1.15pt},
    missing/.style={circle, draw=red!70!black, fill=white, line width=0.75pt, inner sep=1.8pt},
    faceA/.style={fill=blue!35,   draw=blue!60!black,   fill opacity=0.23, line width=0.8pt},
    faceB/.style={fill=green!35,  draw=green!50!black,  fill opacity=0.23, line width=0.8pt},
    faceC/.style={fill=orange!35, draw=orange!70!black, fill opacity=0.23, line width=0.8pt},
    faceD/.style={fill=purple!30, draw=purple!65!black, fill opacity=0.23, line width=0.8pt}
]

\begin{scope}[shift={(15.0,4.8)}]
  \coordinate (A) at (0,0);
  \coordinate (B) at (2,0);
  \coordinate (C) at (0,2);
  \coordinate (D) at (2,2);
  \coordinate (E) at (0.9,0.8);
  \coordinate (F) at (2.9,0.8);
  \coordinate (G) at (0.9,2.8);
  \coordinate (H) at (2.9,2.8);

  \filldraw[faceA] (A)--(C)--(G)--(E)--cycle;
  \filldraw[faceB] (A)--(B)--(F)--(E)--cycle;
  \filldraw[faceC] (A)--(C)--(H)--(F)--cycle;
  \filldraw[faceD] (A)--(B)--(H)--(G)--cycle;

  \draw[edge] (A)--(B)--(D)--(C)--cycle;
  \draw[edge] (E)--(F)--(H)--(G)--cycle;
  \draw[edge] (A)--(E);
  \draw[edge] (B)--(F);
  \draw[edge] (C)--(G);
  \draw[edge] (D)--(H);

  \node[vertex]  at (A) {};
  \node[vertex]  at (B) {};
  \node[vertex]  at (C) {};
  \node[missing] at (D) {};
  \node[vertex]  at (E) {};
  \node[vertex]  at (F) {};
  \node[vertex]  at (G) {};
  \node[vertex]  at (H) {};
\end{scope}
\end{tikzpicture}
\end{center}
Then one would like to set the lower face to be constant $x_0$,
the left face to be the element $a\in\pi_2(X,x_0)$ whose inverse we wish to construct,
and the diagonal $[x_1=x_2]$ to be also constant $x_0$.
But then there is the diagonal $[x_1=x_3]$ left,
whose second inert (= lower) face has to be equal to the 1-dimensional diagonal of $a$,
while having constant $x_0$ on the opposite (= upper) face and on the first inert (= lower) face.
But there is no guarantee for this 2-cube to exist in $X$.

The failure can be fixed if $X$ arises from a concrete cubeset
since then the points suffice to determine the face $[x_1=x_3]$ uniquely.
In particular, in the proof of Proposition~\ref{prop:concrete-structure-group-is-group} no flip is required (if one performs the same Definitions as before, beginning from Construction~\ref{cons:structure-monoid-concrete}).
\end{remark}

Let us close this section with the result that the structure groups of a concrete fibrant cubeset
can be equally computed over $\bbox$ or over $\bbGamma$.

\begin{proposition}
Let $X$ be a concrete fibrant cubeset and $x_0\in X$.
Then there is an isomorphism $\pi_n(X,x_0)\simeq\pi_n^{\bbGamma}(j^\ast X,x_0)$.
\end{proposition}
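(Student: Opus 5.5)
We reduce both sides to explicit descriptions of sets of cubes and compare them directly.

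On the $\bbox$ side, Proposition~\ref{prop:trunc-structure-monoid} together with Lemma~\ref{lem:concrete-equiv-rel-structure-monoid} identifies $\pi_n(X,x_0)$ with $\bbcor_n(X,x_0)$ modulo the relation $c\sim d$ iff $c(\omega)\sim_n d(\omega)$ for all $\omega$. Since $c$ and $d$ agree with $x_0$ on the corner, this amounts to $c(1^n)\sim_n d(1^n)$. Using $\tau_nX=X/\sim_n$ (Proposition~\ref{prop:trunc_concfib}), this is the same as $n$-cubes of $X/\sim_n$ that are constant $x_0$ on ${\bbcor}^n$.

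On the $\bbGamma$ side, Proposition~\ref{prop:trunc-concfib-gamma} gives $\tau_n^{\bbGamma}j^\ast X=X/\approx_n$. Here two $m$-cubes are identified iff they agree at $0$ and are $\sim_n$-equivalent at every other vertex. Now ${\bbcor}^n_{1^n}$ contains the classical corner ${\bbcor}^n$, and both have the same vertex set $\{0,1\}^n\setminus\{1^n\}$. Hence an element of $\pi_n^{\bbGamma}(j^\ast X,x_0)$ is an $n$-cube $c$ of $X/\approx_n$ with $c|_{{\bbcor}^n_{1^n}}\equiv x_0$. By concreteness of $X$, this condition can be checked on vertices through a lift to $X$.

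The map $\Phi\colon\pi_n(X,x_0)\to\pi_n^{\bbGamma}(j^\ast X,x_0)$ sends $[c]$ to the class of $c$ in $X/\approx_n$.

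\begin{itemize}
\item Well-definedness: $c\sim d$ forces $c(0)=x_0=d(0)$ and $c(\omega)\sim_n d(\omega)$ elsewhere, so $c\approx_n d$.
\item Injectivity: $c\approx_n d$ gives in particular $c(1^n)\sim_n d(1^n)$, hence $[c]=[d]$.
\item Surjectivity: take a class in $X/\approx_n$ with constant restriction to ${\bbcor}^n_{1^n}$. Lift it to a cube $c\in X(n)$; this is possible since $X(n)\to X/\approx_n(n)$ is surjective by construction. Then $c(0)=x_0$, and $c(\omega)\sim_n x_0$ for $\omega\neq 1^n$. By universal replacement (Proposition~\ref{prop:univ-replacement}(c)) we may replace each such vertex by $x_0$ and stay in $X(n)$. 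This produces an element of $\bbcor_n(X,x_0)$ mapping to the given class.
\end{itemize}

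It remains to check that $\Phi$ is a monoid homomorphism. Both products are formed by the same recipe: place $a,b$ on the first two inert faces, put $x_0$ on the remaining faces, complete, and restrict along the same linear diagonal $s$.
\begin{itemize}
\item Over $\bbox$, one completes the classical corner in $X$ using fibrancy; by Construction~\ref{cons:concrete-structure-monoid-mult} any completion may be used.
\item Over $\bbGamma$, one completes the corner ${\bbcor}^{n+1}_{1^{n+1}}$ in $X/\approx_n$ uniquely. Its faces are $a$, $b$, $x_0$, together with the antidiagonal faces. By Lemma~\ref{lem:alternative-nilfibrant-concrete}, the antidiagonal faces are determined by vertices, since $X$ is concrete with the gluing property (Proposition~\ref{prop:nilfib-gluing}).
\end{itemize}
So the image in $X/\approx_n$ of any completion $\overline{a*b}\in X(n+1)$ of the classical corner restricts correctly to ${\bbcor}^{n+1}_{1^{n+1}}$. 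By uniqueness of completion in the $n$-step $\bbGamma$-set $X/\approx_n$, it is the $\bbGamma$-witness. Restricting along $s$ gives $\Phi(a*b)=\Phi(a)*\Phi(b)$. Unit preservation is clear.

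The main obstacle is this last step. One must check that the corner defining the $\bbGamma$-product is ${\bbcor}^{n+1}_{1^{n+1}}$ (the notation ${\bbcor}^{n+1}_1$ in the definition), and that the image of an $X$-cube restricted to it agrees with the prescribed corner datum in $X/\approx_n$. This uses that restriction to the extra antidiagonal faces is computed vertexwise, which in turn needs concreteness of $X$ and the explicit description of $X/\approx_n$. A subtle point is that $X/\approx_n$ need not itself be concrete at the base vertex. We therefore argue on representatives in $X$, using Lemma~\ref{lem:alternative-nilfibrant-concrete}, rather than directly in the quotient.
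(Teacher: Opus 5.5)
Your proof is correct, but it takes a different route from the paper's. The paper never maps out of $\tilde{\pi}_n(X,x_0)$. Its argument runs as follows:
\begin{enumerate}
\item It shows that $j^\ast(X/\sim_n)$ is already $n$-step over $\bbGamma$, using $\Lambda_n$-locality of $\tau_nX=X/\sim_n$, concreteness, and Proposition~\ref{prop:nilfib-restricted-nilfib-concrete}.
\item It reads off $\pi_n^{\bbGamma}(j^\ast(X/\sim_n),x_0)=\pi_n(X/\sim_n,x_0)$ directly from the definitions.
\item It uses the universal property of $\tau_n^{\bbGamma}j^\ast X=X/\approx_n$ to obtain a comparison map $f\colon X/\approx_n\to j^\ast(X/\sim_n)$. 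Then $f_\ast$ is a monoid homomorphism for free, by functoriality of $\pi_n^{\bbGamma}$.
\item Only bijectivity of $f_\ast$ is checked, by the same lift-plus-universal-replacement argument you use.
\end{enumerate}
Your $\Phi$ goes in the opposite direction, from the explicit model $\bbcor_n(X,x_0)/\!\sim$ into $X/\approx_n$. This avoids needing $\Lambda_{\bbGamma,n}$-locality of $j^\ast(X/\sim_n)$, a step the paper only sketches. The price is that you verify multiplicativity by hand. You do this via unique completion in $X/\approx_n$ together with the freedom, from Construction~\ref{cons:concrete-structure-monoid-mult}, to compute $\tilde{\pi}_n$ with any completion in $X$.

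One correction: the $\bbGamma$-corner ${\bbcor}^{n+1}_{1^{n+1}}$ has no antidiagonal faces. It is a union of images of injections in $\bbGamma$, which are linear maps, and an antidiagonal sends $0$ to a nonzero vertex. The linear injections ${\bblox}^{n}\to{\bblox}^{n+1}$ missing $1^{n+1}$ are exactly the inert faces. This is the computation $j_!{\bbcor}^{n}_{1^{n}}={\bbcor}^{n}$ in the proof of Proposition~\ref{prop:nilfib-restricted-nilfib-concrete}. You are conflating it with the $\cSet$-corner of the same name in $\Lambda'$.

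So the step you single out as the main obstacle is immediate. The image in $X/\approx_n$ of any completion $\overline{a*b}\in X(n+1)$ restricts on the $\bbGamma$-corner to $(\Phi(a),\Phi(b),x_0,\ldots,x_0)$, simply because $j^\ast X\to X/\approx_n$ is a map of $\bbGamma$-sets commuting with the inert face maps. Neither Lemma~\ref{lem:alternative-nilfibrant-concrete} nor any concreteness of $X/\approx_n$ is needed there.
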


\begin{proof}
The $\bbGamma$-set $j^\ast(X/\sim_n)$ is $n$-step over $\bbGamma$:
since $X/\sim_n = \tau_n X$ is $n$-step over $\bbox$ by Proposition~\ref{prop:trunc_concfib},
it is $\Lambda_n$-local, and thus $j^\ast(X/\sim_n)$ is $\Lambda_{\bbGamma,n}$-local by concreteness
and Proposition~\ref{prop:nilfib-restricted-nilfib-concrete}.
Hence $\tau_n^{\bbGamma}(j^\ast(X/\sim_n),x_0)) = \pi_n^{\bbGamma}(j^\ast(X-/\sim_n),x_0)$
and from the explicit definition of structure monoid over $\bbGamma$
it is immediate that $\pi_n^{\bbGamma}(j^\ast(X/\sim_n),x_0) = \pi_n(X/\sim_n,x_0)$.

Thus, it suffices to show that
$\pi_n^{\bbGamma}(j^\ast(X/\sim_n),x_0)\simeq\pi_n^{\bbGamma}(j^\ast X,x_0)$.
By the universal property of $\tau_n^{\bbGamma}j^\ast X = X/\approx_n$,
see Proposition~\ref{prop:trunc-concfib-gamma}, there is a commutative diagram
\begin{center}
\begin{tikzcd}
	{j^\ast X} & {X/\approx_n} \\
	& {j^\ast(X/\sim_n)}
	\arrow["{\eta^{\bbGamma}}", from=1-1, to=1-2]
	\arrow["{j^\ast(\eta)}"', from=1-1, to=2-2]
	\arrow["f", dashed, from=1-2, to=2-2]
\end{tikzcd}.
\end{center}
We show that the monoid homomorphism
\begin{align*}
    f_\ast\colon\{c\in (X/\approx_n)(n) \mid c\rvert_{{\bbcor}^n_1}\equiv x_0 \}
    &\to\{c\in j^\ast(X/\sim_n)(n) \mid c\rvert_{{\bbcor}^n_1}\equiv x_0 \}, \\
    c&\mapsto f(c)
\end{align*}
is bijective.
For surjectivity take $c\in(X/\sim_n)(n)$ with $c\rvert_{{\bbcor}^n}\equiv x_0$.
Take $c'\in X(n)$ with $\eta(c') = c$.
Using universal replacement we can assume that $c'(0) = x_0$.
This then yields $d = \eta^{\bbGamma}(c') \in (X/\approx_n)(n)$ with $d\rvert_{{\bbcor}^n_1}\equiv x_0$
and $f(d) = c$.
For injectivity let $c,d\in\pi_n^{\bbGamma}(X/\approx_n,x_0)$ with $f(c) = f(d)$.
Take $c',d'\in (j^\ast X)(n) = X(n)$ with $\eta^{\bbGamma}(c') = c$, $\eta^{\bbGamma}(d') = d$,
in particular $c'(0) = x_0 = d'(0)$.
Moreover,
\[
    \eta(c') = f(\eta^{\bbGamma}(c')) = f(c) = f(d) = \eta(d').
\]
This means that for all $\omega\neq 0$ we have that $c'(\omega) \sim_n d'(\omega)$.
But this implies that $c = \eta^{\bbGamma}(c') = \eta^{\bbGamma}(d') = d$.
So $f_\ast$ is a monoid isomorphism.
\end{proof}

\begin{corollary}
If $X$ is a concrete fibrant cubeset, then $\pi_n^{\bbGamma}(j^\ast X,x_0)$ is grouplike for all $n\in\N$.
\end{corollary}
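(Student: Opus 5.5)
The corollary follows by transporting grouplikeness along the isomorphism of the preceding proposition, $\pi_n(X,x_0)\simeq\pi_n^{\bbGamma}(j^\ast X,x_0)$.

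First, I need this isomorphism to be a monoid isomorphism and not only a bijection of sets. In the proof of the proposition, the map is the composite of two pieces. The first is $f_\ast$, induced by the morphism of $\bbGamma$-sets $f\colon X/\approx_n\to j^\ast(X/\sim_n)$. It is a monoid homomorphism by functoriality of $\pi_n^{\bbGamma}$ (Proposition~\ref{prop:structure-monoid-gamma}). The second is the identification $\pi_n^{\bbGamma}(j^\ast(X/\sim_n),x_0)=\pi_n(X/\sim_n,x_0)$. This one respects multiplication because in both settings the product is defined by the same recipe: complete the corner built from $a$, $b$ and constant faces, then restrict along the linear diagonal $s$. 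For a concrete $n$-step cubeset the two completions coincide, since the ${\bbcor}^{n+1}_{1^{n+1}}$-completion restricts to the ${\bbcor}^{n+1}$-completion and uniqueness on the $\bbox$ side pins it down.

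Next, Proposition~\ref{prop:trunc-structure-monoid} identifies $\pi_n(X/\sim_n,x_0)=\pi_n(\tau_nX,x_0)$ with $\tilde\pi_n(X,x_0)$, again as monoids. By Proposition~\ref{prop:concrete-structure-group-is-group}, this monoid is grouplike for concrete fibrant $X$. Alternatively, one can apply that proposition directly to $\tau_nX$, which is concrete and fibrant by Proposition~\ref{prop:trunc_concfib}.

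Finally, a monoid isomorphic to a grouplike monoid is grouplike: inverses transport along the isomorphism, so $\pi_n^{\bbGamma}(j^\ast X,x_0)$ is grouplike for every $n\in\N$.

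The only delicate point is the compatibility of the multiplications in the identification $\pi_n^{\bbGamma}(j^\ast(X/\sim_n),x_0)=\pi_n(X/\sim_n,x_0)$. I would settle it by the concreteness argument above: two $(n+1)$-cubes with the same points are equal, so both products are the unique cube determined by the same vertex values.
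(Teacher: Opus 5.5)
Your proposal is correct and matches the paper's implicit argument: you transport grouplikeness of $\pi_n(X,x_0)$ (Proposition~\ref{prop:concrete-structure-group-is-group}, applied to $X$ or to $\tau_nX$) along the monoid isomorphism $\pi_n(X,x_0)\simeq\pi_n^{\bbGamma}(j^\ast X,x_0)$ from the preceding proposition. Your extra check that $\pi_n^{\bbGamma}(j^\ast(X/\sim_n),x_0)=\pi_n(X/\sim_n,x_0)$ respects multiplication makes explicit what the paper calls immediate: under $j_!\dashv j^\ast$, the $\bbGamma$-corner at $1^{n+1}$ is just the classical corner, and $s$ is linear.
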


\subsection{The Postnikov Tower of a Cubeset}\label{ssec:postnikov}

Now we have all the techniques at hand which allow one to prove the weak structure theorem.
This theorem gives insight in the structure of concrete fibrant cubesets.
It appears as \cite[Theorem 3.2.19]{Candela2017} and \cite[Corollary 7.20]{Gutman2020}
and in both cases its proof covers large parts of the papers.

\begin{theorem}[Weak Structure Theorem]\label{thm:weak-structure}
Let $X$ be a concrete ergodic fibrant cubeset.
Then there exists a tower of surjective fibrations
\begin{center}
\begin{tikzcd}
	X & \cdots & {\tau_2X} & {\tau_1X} & {\tau_0X}
	\arrow[from=1-1, to=1-2]
	\arrow[from=1-2, to=1-3]
	\arrow[from=1-3, to=1-4]
	\arrow[from=1-4, to=1-5]
\end{tikzcd}
\end{center}
such that $\tau_n X$ is a concrete fibrant $n$-step cubeset
and $\tau_n X$ is a principal $\cD_n(\pi_n(X,x_0))$-bundle over $\tau_{n-1} X$.
This tower is natural in $X$.
\end{theorem}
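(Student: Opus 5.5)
The tower is obtained by applying the truncations $\tau_n$ to $X$. Since $\tau_{n-1}\tau_n = \tau_{n-1}$ by the universal property, the maps $\tau_nX\to\tau_{n-1}X$ are induced by units. Naturality in $X$ is then automatic from functoriality of the $\tau_n$ and their units.

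By Proposition~\ref{prop:trunc_concfib}, each $\tau_nX = X/\sim_n$ is concrete, fibrant and $n$-step, and $X\to\tau_nX$ is a surjective fibration. Since $\sim_{n-1}$ is coarser than $\sim_n$ (universal replacement, Proposition~\ref{prop:univ-replacement}), the map $\tau_nX\to\tau_{n-1}X$ identifies with $\tau_{n-1}$ applied to the concrete fibrant cubeset $\tau_nX$. Hence it is again a surjective fibration by Proposition~\ref{prop:trunc_concfib}. Ergodicity and Proposition~\ref{prop:structure-monoid-independence-base-point}, together with Proposition~\ref{prop:trunc-structure-monoid} and Proposition~\ref{prop:concrete-structure-group-is-group}, give an abelian group $A:=\pi_n(X,x_0)\cong\pi_n(\tau_nX,y)$ for every $y$. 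These identifications are compatible along edges by Lemma~\ref{lem:homotopy-action-well-defined-global}: $\tau_nX$ has unique gluing, and by ergodicity any two points are joined by a unique $1$-cube, which is canonical.

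\textbf{The action.} Set $Y=\tau_nX$ and $A=\pi_n(Y,y)$. For $y\in Y(0)$ and $a\in\pi_n(Y,y)$, define $a\cdot y := a(1^n)$. Transport along the unique edge $(x_0,y)$ turns this into an action of $A$ on each point. The fibre of $Y\to\tau_{n-1}Y$ over $[y]$ is $\{y' : y'\sim_{n-1}y\}$. By universal replacement, for $y'\sim_{n-1}y$ the configuration that is constant $y$ on the corner with $y'$ at $1^n$ is an $n$-cube, hence an element of $\pi_n(Y,y)$. Concreteness shows the map $a\mapsto a(1^n)$ is a bijection $\pi_n(Y,y)\to$ fibre: injectivity holds because a sphere is determined by its top vertex. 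Lemma~\ref{lem:homotopy-action-well-defined} rewrites $h_{(y,a\cdot y)}$-transport in terms of $*$, namely $(b\cdot(a\cdot y)) = (a*b)\cdot y$ after identifying the groups. This gives associativity of the action and that the identity acts trivially. So $A$ acts freely and transitively on each fibre, and the orbits are exactly the fibres.

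\textbf{The main obstacle.} It remains to upgrade this pointwise action to an action of the cubeset $\cD_n(A)$ on $Y$ over $\tau_{n-1}Y$. Concretely, for every $m$-cube $c$ of $Y$ and every $\alpha\in\cD_n(A)(m)$, the pointwise translate $\omega\mapsto\alpha(\omega)\cdot c(\omega)$ must again lie in $Y(m)$. The principal-bundle condition additionally requires that any two cubes with the same image in $\tau_{n-1}Y$ differ by a unique element of $\cD_n(A)(m)$. By concreteness both reduce to statements about vertex configurations.

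I would proceed by generators. First, $\alpha = a^F$ with $F$ a face of codimension $\le n$. Using symmetry of $\bbox$, assume $F$ is an upper face of codimension $k\le n$. Then glue to $c$ a cube witnessing $h$-transport of $a$ along the face, built as in the proof of Lemma~\ref{lem:homotopy-action-well-defined} by iterated gluing with degenerate and $d'_k$-type cubes. This shows the translated configuration is a cube.

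For the converse, take two cubes $c,c'$ with $c(\omega)\sim_{n-1}c'(\omega)$ for all $\omega$. Define $\alpha(\omega)$ as the unique group element with $\alpha(\omega)\cdot c(\omega)=c'(\omega)$. Membership $\alpha\in\cD_n(A)(m)$ should then follow by induction on $m$. For $m\le n$ there is nothing to check, since $\cD_n(A)(m)=A^{2^m}$ by $n$-ergodicity (Example~\ref{ex:EM_k_ergodic}). For $m>n$, both $\alpha$ and the unique $\cD_n(A)$-completion of its corner are determined by unique corner completion in the $n$-step cubesets $Y$ and $\cD_n(A)$.
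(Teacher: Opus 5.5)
Your outline agrees with the paper up to the point-level action. The tower and its fibration properties come from Proposition~\ref{prop:trunc_concfib}. The action $a\cdot y=h_{(x_0,y)}(a)(1^n)$, with associativity from Lemmas~\ref{lem:homotopy-action-well-defined} and~\ref{lem:homotopy-action-well-defined-global}, is exactly the paper's (Proposition~\ref{prop:unit-principal-bundle}).

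You diverge in the upgrade to $\cD_n(A)$. The paper extends the action to dimensions $\geq n+1$ by corner completion and checks the torsor property only on fibres over points, via Propositions~\ref{prop:fiber-ergodic-trunc} and~\ref{prop:fiber-torsor}. You instead check the shear map on all of $Y\times_{\tau_{n-1}Y}Y$. Your converse induction is sound: complete the corner of $\alpha$ in $\cD_n(A)$, then compare using unique completion in $Y$ and freeness. It also covers pairs of cubes lying over non-constant cubes of $\tau_{n-1}Y$, which a fibrewise check does not directly reach. But it uses the forward step, and that is where the gap is.

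To show $c+a^F\in Y(m)$ you glue in ``a cube witnessing $h$-transport of $a$ along the face''. The only such witnesses available are the cubes $\overline{h_z(\cdot)}$ of Proposition~\ref{prop:structure-monoid-independence-base-point}, which are attached to an edge $z$. First reduce to $F$ of codimension exactly $n$: for $m\le n$ use universal replacement, and for $m\geq n+1$ every face of smaller codimension is a union of codimension-$n$ faces.

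With that reduction, your recipe does work for $m=n+1$. Glue $c$ successively along $\omega_1,\dots,\omega_n$ with the degenerations of $c|_{\omega_1=\dots=\omega_i=1}$. This ends at $\overline{h_z(h_{(x_0,z(0))}(a))}$ for the edge $z=c|_F$, whose top vertex is $a\cdot z(1)$ by Lemma~\ref{lem:homotopy-action-well-defined-global}.

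For $m\geq n+2$, however, $c|_F$ has dimension $\geq 2$. A witness over it would be $(c|_F\circ p)+a^F$ for a degeneracy $p$, which is the very claim you are proving, now for a degenerate cube. So the recipe is circular there. The cubes $d'_k$ of Lemma~\ref{lem:homotopy-action-well-defined} cannot supply it either: they are degenerations of configurations of dimension at most $n$ taking two $\sim_{n-1}$-equivalent values.

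The missing idea is an induction on $m$ driven by unique corner completion. For $m\geq n+2$, each lower face of $c+a^F$ is either a face of $c$ or a codimension-$n$ translate of an $(m-1)$-cube, hence a cube by induction. So the corner completes to some $d\in Y(m)$. The upper face $\{\omega_m=1\}$ of $d$ is transverse to $F$'s defining coordinates and is an $(m-1)$-cube with $m-1\geq n+1$. It has the same corner as the cube $(c+a^F)|_{\omega_m=1}$, which is a cube by induction. Uniqueness of completion then forces $d(1^m)=a\cdot c(1^m)$, so $d=c+a^F$. Once you add this step, with the base case $m=n+1$ done as above, your argument goes through.
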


By principal $\cD_n(\pi_n(X,x_0))$-bundle we mean principal bundle in the wide sense,
i.e., without the assumption of local triviality.
In general, given a group object $G$ in a category $\mcC$,
a \emph{$G$-torsor} is an object $X$ together with a $G$-action $\alpha\colon G\times X\to X$
that is \emph{free} and \emph{transitive}, meaning that the \emph{shear map}
\[
    \alpha\times\pi_2\colon G\times X\to X\times X,
\]
which arises from the action $\alpha$ and the projection $\pi_2$ onto $X$, is an isomorphism.
Informally, surjectivity corresponds to transitivity and injectivity is freeness.
Now a \emph{principal $G$-bundle over $X$} is a relative $G$-torsor over a base space $X$,
i.e., a $G$-torsor in the slice category $\mcC_{/X}$.
Spelled out, this means that it is a map $p\colon P\to X$ together with a $G$-action
$\alpha\colon G\times P\to P$ over $X$ such that the shear map
\[
    \alpha\times\pi_2\colon G\times P\to P\times_X P
\]
is an isomorphism and the diagram
\begin{center}
\begin{tikzcd}
	{G\times P} & P & X
	\arrow["\alpha", shift left, from=1-1, to=1-2]
	\arrow["{\pi_2}"', shift right, from=1-1, to=1-2]
	\arrow["p", from=1-2, to=1-3]
\end{tikzcd}
\end{center}
is a coequalizer diagram.
Note that if $\mcC$ is a (Grothendieck) topos,
these conditions are equivalent to asserting that $p\colon P\to X$ is an epimorphism
and that the shear map is an isomorphism, by Giraud's axioms, see \cite[Appendix]{MacLane1994}.
And we say that $p$ is \emph{locally trivial} if there exists a cover $U\to X$ such that
there is a pullback diagram of the form
\begin{center}
\begin{tikzcd}
	{G\times U} & P \\
	U & X
	\arrow[from=1-1, to=1-2]
	\arrow["{\pi_2}"', from=1-1, to=2-1]
	\arrow["p", from=1-2, to=2-2]
	\arrow[from=2-1, to=2-2]
\end{tikzcd}.
\end{center}
If $\mcC$ is an $\infty$-topos, the situation becomes substantially easier
since colimits are better behaved.
Namely, it suffices to require that $X = P/G$
and then the shear map automatically is an equivalence,
and furthermore the principal bundle is locally trivial, see \cite{Nikolaus2014}.

\begin{remark}
If $X$ is not necessarily ergodic, then we can decompose $X$ into its ergodic components,
see Proposition~\ref{prop:ergodic-decomp}.
Applying the weak structure theorem to each of these components and noting the truncation
$\tau_n$ preserves the coproduct, see Proposition~\ref{prop:n-trunc-closure},
we obtain the weak structure theorem for non-ergodic cubesets.
However, then the fibration $\tau_nX\to\tau_{n-1}X$ is not necessarily a principal bundle for a group anymore,
but rather a principal groupoid bundle for the groupoid $\pi_n(X)$.

Since this variant is notationally more elaborate we won't formulate it here.
But we note that morally, $\pi_n(X)$ shouldn't just be a groupoid, but an $n$-groupoid,
and the whole $X$ an $\infty$-groupoid (= anima).
This view on $X$ is taken in \cite{Bihlmaier2026a}.
\end{remark}

To establish the weak structure theorem take a concrete fibrant ergodic cubeset $X$.
Let us look at the unit $\eta\colon X\to\tau_{n-1}X$
where $X$ is $n$-step.
Since $\eta$ is a surjective fibration we obtain a fiber sequence for every $x_0\in X$:
\begin{center}
\begin{tikzcd}
	{(E,x_0)} & {(X,x_0)} & {(\tau_{n-1}X,x_0)}
	\arrow["e", from=1-1, to=1-2]
	\arrow["\eta", from=1-2, to=1-3]
\end{tikzcd}.
\end{center}

\begin{proposition}\label{prop:fiber-ergodic-trunc}
The cubeset $E$ is a concrete fibrant cubeset that is both $n$-ergodic and $n$-step.
Moreover, $e_\ast\colon\pi_n(E,x_0)\to\pi_n(X,x_0)$ is an isomorphism.
\end{proposition}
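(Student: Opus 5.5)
Throughout, $E$ is the fiber of $\eta\colon X\to\tau_{n-1}X$ over the point $\eta(x_0)$. Concretely, $E(m)$ is the set of $m$-cubes $c\in X(m)$ with $c(\omega)\sim_{n-1}x_0$ for all $\omega$, using Proposition~\ref{prop:trunc_concfib} to identify $\tau_{n-1}X$ with $X/\sim_{n-1}$.

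\textbf{Structural properties of $E$.} I verify these one at a time.

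\begin{enumerate}[(i)]
\item \emph{Concrete.} $E$ is a subobject of $X$, so it is concrete by Proposition~\ref{prop:concrete-stability}.
\item \emph{$n$-step.} Limits of $n$-step cubesets are $n$-step by Proposition~\ref{prop:n-trunc-closure}. Here $X$ is $n$-step, and $\tau_{n-1}X$ and $\ast$ are $(n-1)$-step, hence $n$-step. So the pullback $E=X\times_{\tau_{n-1}X}\ast$ is $n$-step.
\item \emph{Fibrant.} Let $\lambda$ be a corner in $E$. Complete it in $X$ using fibrancy of $X$. The composite of the completion with $\eta$ completes the constant corner in $\tau_{n-1}X$. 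For dimensions $k\geq n$, uniqueness of completion in $\tau_{n-1}X$ forces this composite to be constant, so the completed cube lies in $E$. For $k\leq n-1$, the completed point may a priori leave the class of $x_0$. In that case I use universal replacement (Proposition~\ref{prop:univ-replacement}(c)) to replace the top vertex by any $\sim_{n-1}$-equivalent point, e.g. a corner vertex. This shows that any point lies in the fiber, so the cube lies in $E$. This case is where the care is needed, but it is direct.
\end{enumerate}

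\textbf{$n$-ergodicity of $E$.} I need $\tau_{n-1}E=\ast$. By Proposition~\ref{prop:trunc_concfib} and the preceding remark, it suffices to show $E(n-1)=E(0)^{2^{n-1}}$. That is, I must show that every configuration of points of $E$ (all $\sim_{n-1}$-equivalent to $x_0$) on $\{0,1\}^{n-1}$ is a cube of $X$.

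Start from the constant cube $x_0$ and apply universal replacement (Proposition~\ref{prop:univ-replacement}(c)) vertex by vertex; this is allowed since $n-1\leq (n-1)+1$. Each step keeps us inside $X(n-1)$, and hence inside $E(n-1)$.

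This gives $E(n-1)=E(0)^{2^{n-1}}$. It follows that $\sim_{n-2}$ is total on $E(0)$, so $\tau_{n-1}E$ is a point and $E$ is $n$-ergodic. The same vertex-by-vertex argument also gives $E(k)=E(0)^{2^k}$ for all $k<n$, which I will reuse below.

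\textbf{Isomorphism $e_\ast$.} Both $E$ and $X$ are $n$-step, so $\pi_n$ is computed directly as cubes with constant corner $x_0$ (Definition~\ref{def:structure-group}), and $e_\ast$ is the induced monoid homomorphism (Proposition~\ref{prop:structure-group-is-monoid}).

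\begin{enumerate}[(i)]
\item \emph{Injective.} $E(n)\subseteq X(n)$, so $e_\ast$ is injective.
\item \emph{Surjective.} Take $a\in X(n)$ with $a|_{{\bbcor}^n}\equiv x_0$. Then $\eta(a)$ is an $n$-cube of $\tau_{n-1}X$ with constant corner. By unique corner completion in the $(n-1)$-step cubeset $\tau_{n-1}X$ (dimension $n>n-1$), $\eta(a)$ is constant. Hence $a(1^n)\sim_{n-1}x_0$, so $a\in E(n)$.
\end{enumerate}

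Since the multiplications on both sides are defined by completions that exist already in $E$ (by fibrancy of $E$ and uniqueness in $X$), $e_\ast$ is a bijective homomorphism, hence an isomorphism.

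I expect the main obstacle to be the fibrancy of $E$ in low dimensions $k\leq n-1$. There one must check that the chosen filler stays in the fiber, and universal replacement should handle this.
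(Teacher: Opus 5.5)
Your overall plan matches the paper's. Concreteness comes from being a subobject. Fibrancy in dimensions $k\geq n$ and surjectivity of $e_\ast$ come from unique corner completion in the $(n-1)$-step cubeset $\tau_{n-1}X$. Ergodicity comes from universal replacement. Deducing $n$-step from closure of $n$-step cubesets under limits, applied to $E=X\times_{\tau_{n-1}X}\ast$, is valid and slicker than the paper's direct corner argument. However, two steps fail as written.

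\textbf{Ergodicity is off by one.} Since $\tau_{n-1}E=E/\sim_{n-1}$ and $\sim_{n-1}$ is witnessed by $n$-cubes, you must show the following: for all $x,y\in E(0)$, the configuration with constant corner $x$ and top vertex $y$ is an $n$-cube. The remark you invoke says $n$-ergodic iff $E(n)=E(0)^{2^n}$, not $E(n-1)=E(0)^{2^{n-1}}$.
\begin{itemize}
\item Fullness of $E(n-1)$ only makes $\sim_{n-2}$ total, i.e.\ it gives $\tau_{n-2}E=\ast$.
\item Since $\sim_{n-1}\subseteq\sim_{n-2}$, the conclusion $\tau_{n-1}E=\ast$ does not follow; for $n=1$ your argument yields nothing at all.
\item This matters downstream: Proposition~\ref{prop:fiber-torsor} needs evaluation at $1^n$ to be a bijection $\pi_n(E,x_0)\to E(0)$, which is exactly the $n$-dimensional statement.
\end{itemize}
The repair is immediate. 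Universal replacement for $\sim_{n-1}$ holds in all dimensions $k\leq n$. So your vertex-by-vertex replacement, starting from the constant cube $x_0$, gives $E(k)=E(0)^{2^k}$ for all $k\leq n$, as in the paper.

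\textbf{Fibrancy for $k\leq n-1$ also fails.} Universal replacement only exchanges $\sim_{n-1}$-equivalent points. The top vertex of the filler you chose in $X$ is exactly the point not known to be $\sim_{n-1}$-equivalent to $x_0$, so you cannot swap it for a corner vertex. Instead of completing in $X$ first, use fullness: $E(k)=E(0)^{2^k}$ for $k\leq n$. This is a statement about cubes of $X$, proved by universal replacement in $X$, so there is no circularity. Any $k$-corner of $E$ with $k\leq n$ then extends by placing, say, $x_0$ at $1^k$. This is how the paper gets fibrancy, and it must come before you apply Proposition~\ref{prop:trunc_concfib} to $E$ in the ergodicity step.

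With these two fixes, the rest of your argument, including the bijectivity of $e_\ast$, is correct.
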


\begin{proof}
Subobjects of concrete cubesets are concrete, so $E$ is concrete.

Now for $k>n$, given a $k$-corner $\lambda$ in $E$, one can complete it in $\tau_nX$ to a $k$-cube $c$ (since $\tau_nX$ is fibrant).
Under $\eta$, the $k$-corner $e\circ\lambda$ of $c$ is mapped to $x_0$.
By unique corner completion in $\tau_{n-1}X$ we must have that $\eta\circ c \equiv x_0$ and so $c$ factors through $e$, i.e., $c\in E$.
\begin{center}
\begin{tikzcd}
	{{\bbcor}^k} & E & {X} & {\tau_{n-1}X} \\
	{{\bbox}^k} & \ast & \ast & \ast
	\arrow["\lambda", from=1-1, to=1-2]
	\arrow[hook', from=1-1, to=2-1]
	\arrow["e", from=1-2, to=1-3]
	\arrow[from=1-2, to=2-2]
	\arrow["\eta", from=1-3, to=1-4]
	\arrow[from=1-3, to=2-3]
	\arrow[from=1-4, to=2-4]
	\arrow[dashed, from=2-1, to=1-2]
	\arrow["c"{description, pos=0.3}, dashed, from=2-1, to=1-3]
	\arrow[from=2-1, to=2-2]
	\arrow["{=}"{description}, draw=none, from=2-2, to=2-3]
	\arrow["{=}"{description}, draw=none, from=2-3, to=2-4]
\end{tikzcd}
\end{center}
Uniqueness of the completion in $E$ follows from uniqueness in $X$.

Given $x,y\in E(0)$ we have that $\eta(x) = \eta(y) = x_0$ in $\tau_{n-1}X$
and so $x$ and $y$ are ($n-1$)-equivalent in $X$, i.e., $x\sim_{n-1} y$.
Using universal replacement~\ref{prop:univ-replacement} we see that
we can arbitrarily interchange $x$ and $y$ in every $k$-cube of $X$ for $k\leq n$.
Therefore, every map $\{0,1\}^k\to E(0)$ is in $E(k)$ and so $E(k) \simeq \prod_{0\to k} E(0)$ for $k\leq n$,
hence $E$ is $n$-ergodic.
From the previous two items it also follows that $E$ is fibrant.

The last assertion follows from the fact that $X$ is $n$-step,
hence $\pi_n(X,x_0) = \{c\in X(n)\mid c\rvert_{{\bbcor}^n}\equiv x_0\}$.
Given such a $c\in\pi_n(X,x_0)$ we have that $\tau_{n-1}(c)\equiv x_0$ and thus $c\in E(n)$.
\end{proof}

Now it remains to show that $E\simeq\cD_n(A)$ for some abelian group $A$.
Which abelian group is available? Of course the structure group $\pi_n(E,x_0) = \pi_n(X,x_0)$.

\begin{proposition}\label{prop:fiber-torsor}
Let $E$ be an $n$-ergodic $n$-step concrete cubeset.
Let $x_0\in E$ and $A = \pi_n(E,x_0)$.
Then there is an action $\alpha\colon\cD_n(A)\times E\to E$ such that $E$ is an $\cD_n(A)$-torsor.
\end{proposition}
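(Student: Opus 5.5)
The plan is to construct the action pointwise on vertices and then use concreteness to extend it to cubes. Since $E$ is $n$-ergodic, we have $E(k)=E(0)^{2^k}$ for $k\le n$. In particular every pair $(x_0,y)$ is a $1$-cube, so for each $y\in E(0)$ the transport isomorphism $h_{(x_0,y)}\colon A=\pi_n(E,x_0)\to\pi_n(E,y)$ of Proposition~\ref{prop:structure-monoid-independence-base-point} is available. Note that $E$ is concrete, $n$-step and has the gluing property, since $E(1)$ is everything.

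For $a\in A$ and $y\in E(0)$ we set
\[
    \alpha(a,y)\coloneq h_{(x_0,y)}(a)(1^n),
\]
the top vertex of the transported sphere. Lemma~\ref{lem:homotopy-action-well-defined-global} gives $h_{(y,z)}\circ h_{(x_0,y)}=h_{(x_0,z)}$, so this is compatible with changing base points. Lemma~\ref{lem:homotopy-action-well-defined} gives $\alpha(a,b(1^n))=(a*b)(1^n)$ for $b\in A$. Together these yield the action axioms $\alpha(0,y)=y$ and $\alpha(a,\alpha(a',y))=\alpha(a+a',y)$; for the latter, transport the base point to $y$ and apply Lemma~\ref{lem:homotopy-action-well-defined} in $\pi_n(E,y)$.

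Freeness and transitivity on points come from the bijection $A\to E(0)$, $b\mapsto b(1^n)$. This map is surjective because every configuration with corner $x_0$ lies in $E(n)$ by $n$-ergodicity. It is injective because two $n$-cubes with the same corner and the same top vertex coincide by concreteness. Under this bijection, $\alpha(-,x_0)$ is translation in the group $A$ by Lemma~\ref{lem:homotopy-action-well-defined}. Hence the shear map $A\times E(0)\to E(0)\times E(0)$ is bijective: given $y,z$, transport everything to base point $y$ and use that $\pi_n(E,y)$ is a group (Proposition~\ref{prop:concrete-structure-group-is-group}).

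It remains to promote this to a map of cubesets $\cD_n(A)\times E\to E$ with bijective shear map. By concreteness (Lemma~\ref{lem:concrete-morphism}) it suffices to check the following: for $g\in\cD_n(A)(k)$ and $c\in E(k)$, the vertexwise configuration $\omega\mapsto\alpha(g(\omega),c(\omega))$ lies in $E(k)$; and conversely, if $c,c'\in E(k)$, then $\omega\mapsto c'(\omega)-c(\omega)$ (the unique group difference) lies in $\cD_n(A)(k)$. For $k\le n$ both statements are automatic, since $\cD_n(A)(k)=A^{2^k}$ and $E(k)=E(0)^{2^k}$. For $k>n$ both sides are $n$-step (Example~\ref{ex:truncation_HK}), so cubes are determined by unique corner completion. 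I would induct on $k$, completing the corner in $E$ and comparing top vertices. The decisive case is $k=n+1$, and it is where I expect the real difficulty. There one must show that the top vertex of the completion of $\alpha(g,c)|_{{\bbcor}^{n+1}}$ equals $\alpha(g(1^{n+1}),c(1^{n+1}))$. Equivalently, the $A$-valued difference of two $(n+1)$-cubes of $E$ must satisfy the $\cD_n$ cube condition: its alternating sum over the vertices vanishes.

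To handle this I would reduce to base point $x_0$ via the transport $h$, using Lemma~\ref{lem:homotopy-action-well-defined-global} and gluing to move one vertex at a time. The multiplication in $A$ is itself defined by $(n+1)$-corner completion, with the diagonal $s$ extracting $a*b$. Expressing a general $(n+1)$-cube as an iterated composition of spheres should therefore show that completion is additive in the difference, giving exactly the alternating-sum relation. The cases $k>n+1$ then follow formally from unique corner completion on both sides.
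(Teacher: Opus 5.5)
\textbf{There is a genuine gap at the decisive step.} Your route is the paper's. You transport left multiplication in $A$ to $E(0)$ via $b\mapsto b(1^n)$; your $\alpha(a,y)=h_{(x_0,y)}(a)(1^n)$ equals the paper's $(a*y_{x_0})(1^n)$ by Lemma~\ref{lem:homotopy-action-well-defined}. You act pointwise up to dimension $n$ and use unique corner completion above. Your reduction to two vertexwise statements is correct, and you correctly identify the $k=n+1$ case as the only non-formal one. But there the proposal stops at ``should therefore show'', so the key step is unproved.

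This step cannot be obtained formally from unique corner completion. Defining $\alpha_k$ by completing the pointwise-acted corner gives, by construction, compatibility with cube maps carrying the corner into the corner. Compatibility with evaluation at $1^k$ (equivalently, with flips) is exactly your claim. Already for $n=1$, every bijection $A\to A$ is a natural automorphism of $\cD_1(A)$ restricted to cubes of dimension $\le 1$, yet only the affine ones extend to $\cD_1(A)$. So the argument must use how $\alpha_0$ comes from $\pi_n$. The paper's proof passes over the same point when it asserts that the corner-defined $\alpha_k$ is natural in all $m\le k$. A minor correction: $E$ has the gluing property in dimension $n+1$ because it is fibrant (full up to dimension $n$, unique completion above; Proposition~\ref{prop:nilfib-gluing}), not because $E(1)$ is full.

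Your instinct (transport plus gluing) does close the gap, as follows.
\begin{itemize}
\item \emph{Reduction to one edge.} For a set $F$ of vertices, write $c^{F,a}$ for $c$ with each $c(\omega)$, $\omega\in F$, replaced by $\alpha(a,c(\omega))$. Every face of dimension $\ge1$ is a disjoint union of parallel edges, so $\cD_n(A)(n+1)$ is generated by the $a^F$ with $F$ an edge. Cube automorphisms act transitively on edges and commute with the pointwise action. Hence it suffices to show $c^{F_0,a}\in E(n+1)$ for all $c\in E(n+1)$, where $F_0=\{(0,1,\dots,1),1^{n+1}\}$.
\item \emph{Base case.} If $c=z\circ p_1^{n+1}$ with $z=(u,y)$, then $c^{F_0,a}$ is literally the cube $\overline{h_z(a_u)}$ for $a_u=h_{(x_0,u)}(a)$. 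Its vertex $(0,1,\dots,1)$ is $a_u(1^n)=\alpha(a,u)$. Its top vertex is $h_z(a_u)(1^n)=h_{(x_0,y)}(a)(1^n)=\alpha(a,y)$ by Lemma~\ref{lem:homotopy-action-well-defined-global}.
\item \emph{Gluing induction.} Induct on the number of $j\in\{2,\dots,n+1\}$ such that $c$ depends on $x_j$. For such $j$, put $c_j=c\circ\overline{\epsilon}_j\circ q_j$, where $q_j$ omits the $j$-th coordinate; $c_j$ depends on fewer such coordinates. Since $F_0\subseteq[x_j=1]$, the cube $c_j^{F_0,a}$ (available by induction) has lower $x_j$-face equal to the upper $x_j$-face of $c$. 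Gluing the two along $x_j$ yields $c^{F_0,a}$.
\item \emph{Differences.} Complete the corner of $c'-c$ in the fibrant $n$-step $\cD_n(A)$ to some $g$. Then $\alpha(g,c)$ and $c'$ agree on the corner, hence coincide, so $c'-c=g$.
\item \emph{Dimensions $k>n+1$.} Induct on $k$ and compare top vertices after pulling back along $\psi(x)=(x_1,\dots,x_{n+1},x_{n+1},\dots,x_{n+1})$. This map sends ${\bbcor}^{n+1}$ into ${\bbcor}^k$ and $1^{n+1}$ to $1^k$, exactly as in the proof of Proposition~\ref{prop:trunc_concfib}.
\end{itemize}
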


\begin{proof}
We have that $A = \pi_n(E,x_0) = \{ c\in E(n) \mid c\rvert_{{\bbcor}^n} \equiv x_0 \}$ because $E$ is $n$-step.
Since $E$ is $n$-ergodic and concrete, evaluation at the uppermost vertex induces a bijection $A\simeq E(0)$.
This allows us to define an action
\[
    \alpha_0\colon A\times E(0)\to E(0)
\]
which is just given by applying the bijection $E(0)\simeq A$ and then acting by left multiplication on $A$,
and afterwards forgetting again to $E(0)$.
For $k\leq n$ this defines by pushforward of $\alpha_0$ an action
\[
    \alpha_k\colon\cD_n(A)(k)\times E(k) = \hom_\set(\{0,1\}^k,A\times E(0)) \to \hom_\set(\{0,1\}^k,E(0)) = E(k)
\]
natural in $k$ (which is just the pointwise action).
For $k\geq n+1$ recall that we can write
\[
    {\bbcor}^k = \varinjlim_{j\in\cJ}{\bbox}^j
\]
where $\cJ$ runs over all inert inclusions ${\bbox}^j\to{\bbox}^k$ for $j<k$.
Suppose we have defined the action $\alpha_m\colon\cD_n(A)(m)\times E(m)\to E(m)$ for all $m<k$ natural in $m$.
This yields a map
\[
    \varprojlim_{j\in\cJ}\cD_n(A)(j)\times E(j) \to \varprojlim_{j\in\cJ} E(j)
\]
which then after using the isomorphism
\begin{align*}
    \cD_n(A)(k)\times E(k)
    &= \hom({\bbox}^k,\cD_n(A)\times E) \\
    &\simeq \hom({\bbcor}^k,\cD_n(A)\times E) \\
    &= \varprojlim_{j\in\cJ}\hom({\bbox}^j,\cD_n(A)\times E) \\
    &= \varprojlim_{j\in\cJ}\cD_n(A)(j)\times E(j)
\end{align*}
determines an action $\alpha_k\colon\cD_n(A)(k)\times E(k)\to E(k)$
(it is an action since limits commute with limits and, in particular, products) natural in $m\leq k$.
Putting every dimension together, we obtain an action
\[
    \alpha\colon\cD_n(A)\times E\to E.
\]
We go on to show that the shear map is an isomorphism.
This is clear for $k = 0$ using the bijection $A\simeq E(0)$.
But then inductively it follows for all $k\in\N$
because pushforward and taking limits is functorial, and so preserves isomorphisms.
Since $E$ is non-empty, we infer that also $E\simeq\cD_n(A)$ as cubesets by general nonsense:
every non-empty $G$-torsor is (non-canonically) isomorphic to $G$.
\end{proof}

The action $\alpha$ on $E$ does, a priori, depend on the choice of basepoint $x_0$.
Since $\pi_n(E,x_0)\simeq\pi_n(E,y_0)$ for any choice of basepoints $x_0,y_0\in E$,
we obtain an action of $\pi_n(E,y_0)$ on $E$.
Moreover, the action is compatible with this isomorphism in the following sense.
Recall from Proposition~\ref{prop:structure-monoid-independence-base-point}
that if $x_0,y_0\in E$ are two points then there is a canonical group isomorphism
$h_{(x_0,y_0)}\colon\pi_n(E,x_0)\to\pi_n(E,y_0)$.

\begin{lemma}\label{lem:action-independence-basepoint}
Let $E$ be an $n$-ergodic $n$-step concrete cubeset.
Let $x_0,y_0\in E$ and $A = \pi_n(E,x_0)$, $B = \pi_n(E,y_0)$.
Then the diagram
\begin{center}
\begin{tikzcd}
	{\cD_n(A)\times E} && {\cD_n(B)\times E} \\
	& E
	\arrow["{h\times\id}", from=1-1, to=1-3]
	\arrow["\alpha"', from=1-1, to=2-2]
	\arrow["\beta", from=1-3, to=2-2]
\end{tikzcd}
\end{center}
is commutative where $h = \cD_n(h_{(x_0,y_0)})$
and $\alpha$, $\beta$ are defined as in (the proof of) Proposition~\ref{prop:fiber-torsor}.
\end{lemma}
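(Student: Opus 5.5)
Both $\alpha$ and $\beta$ are morphisms of cubesets into the concrete cubeset $E$, and $\cD_n(A)\times E$ is concrete. So by Lemma~\ref{lem:concrete-morphism} it suffices to check commutativity on points, that is, in dimension $0$. Naturality in $k$ of the actions $\alpha_k,\beta_k$ is already built into the construction in Proposition~\ref{prop:fiber-torsor}: in dimensions $\le n$ both act pointwise, and above $n$ they are determined by corner completion. Concretely, I will show that for every $a\in A$ and $x\in E(0)$ we have
\[
    \alpha_0(a,x) = \beta_0(h_{(x_0,y_0)}(a),x).
\]

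Recall how $\alpha_0$ is defined. Let $\mathrm{ev}_{x_0}\colon A\to E(0)$, $c\mapsto c(1^n)$, be the bijection from $n$-ergodicity and concreteness. Then $\alpha_0(a,x)=\mathrm{ev}_{x_0}(a*\mathrm{ev}_{x_0}^{-1}(x))$, and $\beta_0$ is defined in the same way with $\mathrm{ev}_{y_0}$ and $B$. Set $u=(x_0,y_0)\in E(1)$; this is a $1$-cube because $E$ is $n$-ergodic with $n\ge 1$. Write $b=\mathrm{ev}_{x_0}^{-1}(x)$, so that $x=b(1^n)$, and let $b'=\mathrm{ev}_{y_0}^{-1}(x)$. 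Since $h_u$ is a monoid isomorphism by Proposition~\ref{prop:structure-monoid-independence-base-point}, the claim reduces to the identity
\[
    (a*b)(1^n) = \bigl(h_u(a)*b'\bigr)(1^n).
\]

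The key tool is Lemma~\ref{lem:homotopy-action-well-defined}. It applies because $E$ is concrete, $n$-step and fibrant, hence has the unique gluing property by Propositions~\ref{prop:nilfib-gluing} and~\ref{prop:unique-gluing-concrete}. Applied at base point $x_0$, it gives $(a*b)(1^n) = h_{(x_0,x)}(a)(1^n)$. Applied at base point $y_0$, it gives $(h_u(a)*b')(1^n) = h_{(y_0,x)}(h_u(a))(1^n)$. Lemma~\ref{lem:homotopy-action-well-defined-global} then yields $h_{(y_0,x)}\circ h_{(x_0,y_0)} = h_{(x_0,x)}$, and the two sides agree. The edges $(y_0,x)$ and $(x_0,x)$ exist by ergodicity.

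Once the identity holds on points, Lemma~\ref{lem:concrete-morphism} gives $\beta\circ(h\times\id)=\alpha$ in all dimensions. The main obstacle I expect is the hypotheses of the two lemmas. Lemma~\ref{lem:homotopy-action-well-defined} uses universal replacement, which needs fibrancy; this is available because $E$ is the fiber in Proposition~\ref{prop:fiber-ergodic-trunc}, and it also holds directly from $n$-ergodicity and unique corner completion above $n$. I also need to confirm that the orientation conventions match: $h_u$ is taken from the front face to the back face in both lemmas, and the identity element of $B$ corresponds to $y_0$. If a sign or flip mismatch appears, I would correct it using $h_w = h_u^{-1}$ for the reversed edge $w$.
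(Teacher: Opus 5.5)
Your proposal is correct and follows the paper's proof. You reduce to dimension $0$ by concreteness, evaluate both sides at $1^n$ using Lemma~\ref{lem:homotopy-action-well-defined} at the base points $x_0$ and $y_0$, and compare them via Lemma~\ref{lem:homotopy-action-well-defined-global}; your single use of $h_{(y_0,x)}\circ h_{(x_0,y_0)} = h_{(x_0,x)}$ is slightly more direct than the paper's chain, which detours through $h_{(x,y_0)}$ and $h_{(y_0,x)}$ and applies the composition lemma twice.
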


\begin{proof}
Since $E$ and $\cD_n(A)$ are concrete it suffices to see commutativity on level $k=0$
which means that
\[
    \beta_0\circ(h_{(x_0,y_0)}\times\id_0) = \alpha_0.
\]
Let $x\in E(0)$ and $a\in A$.
Write $x_{y_0}\in E(n)$ for the corresponding element in $\pi_n(E,y_0)$,
and analogously $x_{x_0}\in\pi_n(E,x_0)$ for $x_0$.
By definition,
\[
    \beta_0\circ(h_{(x_0,y_0)}\times\id_0)(a,x) = (h_{(x_0,y_0)}(a)*x_{y_0})(1^n)
    \quad\text{and}\quad
    \alpha_0(a,x) = (a*x_{x_0})(1^n).
\]
We compute that
\begin{align*}
       (a*x_{x_0})(1^n)
    &= (h_{(x_0,x)}(a))(1^n) \\
    &= (h_{(y_0,x)}(h_{(x,y_0)}(h_{(x_0,x)}(a))))(1^n) \\
    &= (h_{(x,y_0)}(h_{(x_0,x)}(a))*x_{y_0})(1^n) \\
    &= (h_{(x_0,y_0)}(a)*x_{y_0})(1^n),
\end{align*}
where we have used Lemma~\ref{lem:homotopy-action-well-defined}
in the first and third equality and Lemma~\ref{lem:homotopy-action-well-defined-global}
in the second and fourth equality.
\end{proof}

In the last step we show that the map $X\to\tau_{n-1}X$ is a principal $\cD_n(A)$-bundle
for $X$ $n$-step and $A = \pi_n(X,x_0)$.

\begin{proposition}\label{prop:unit-principal-bundle}
Let $X$ be an ergodic concrete fibrant $n$-step cubeset, $x_0\in X(0)$ and $A = \pi_n(X,x_0)$.
Then the unit $\eta\colon X\to\tau_{n-1}X$ is a principal $\cD_n(A)$-bundle.
\end{proposition}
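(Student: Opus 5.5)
Since $X$ is concrete and $\cD_n(A)$ is concrete, I plan to define a global action $\alpha\colon\cD_n(A)\times X\to X$ first on points and then check that it is a map of cubesets over $\tau_{n-1}X$. The point-level action uses the fibers. For $x\in X(0)$ let $E_x$ be the fiber of $\eta$ over $\eta(x)$. By Proposition~\ref{prop:fiber-ergodic-trunc}, $E_x$ is concrete, fibrant, $n$-ergodic and $n$-step, and $\pi_n(E_x,x)\simeq\pi_n(X,x)$. Since $X$ is ergodic, Proposition~\ref{prop:structure-monoid-independence-base-point} gives isomorphisms $\pi_n(X,x)\simeq\pi_n(X,x_0)=A$. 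These are canonical because $X$ is concrete and fibrant, hence has unique gluing by Proposition~\ref{prop:unique-gluing-concrete} and Proposition~\ref{prop:nilfib-gluing}. Lemma~\ref{lem:homotopy-action-well-defined-global} shows the composites are path-independent.

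Transporting the torsor action of Proposition~\ref{prop:fiber-torsor} along these isomorphisms gives an action of $A$ on each $E_x(0)$. Lemma~\ref{lem:action-independence-basepoint} guarantees that this action does not depend on the chosen basepoint inside the fiber. Gluing these gives $\alpha_0\colon A\times X(0)\to X(0)$, which is free, preserves $\eta$, and is transitive on each fiber $\eta^{-1}(\eta(x))$. Concretely, $\alpha_0(a,x)=h_{(x_0,x)}(a)*x_{x}$ evaluated at $1^n$, using Lemma~\ref{lem:homotopy-action-well-defined}.

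The main obstacle is showing that $\alpha_0$ extends to a map of cubesets, i.e.\ that for every $c\in X(k)$ and $g\in\cD_n(A)(k)$ the configuration $\omega\mapsto\alpha_0(g(\omega),c(\omega))$ lies in $X(k)$. Since $\cD_n(A)(k)$ is generated by the $a^F$ with $\dim F\ge k-n$, I reduce to acting by a single $a^F$. By composing with a suitable map of cubes, I further reduce to the case $k\le n+1$, because for $k>n+1$ $X$ has unique corner completion and the cubes in question are determined by lower faces. I would argue this by induction on $k$: a $k$-corner of $\alpha(g,c)$ is a cube by induction, and $X$ is $n$-step.

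For $k\le n$ there is no real obstruction. The moved points are $\sim_{n-1}$-equivalent to the original ones, and universal replacement (Proposition~\ref{prop:univ-replacement}) lets me swap them freely in $k$-cubes. The genuine content is at $k=n+1$ with $F$ a $1$-dimensional face, i.e.\ an edge, say in direction $1$. Here I would realize the modified cube by gluing $c$ with an $(n+1)$-cube built from the composition cube $\overline{a*b}$ via the gluing property. I would try first the computation pattern of Lemma~\ref{lem:homotopy-action-well-defined}, where gluing along lower faces transports a multiplication cube into the required position. This step is where I expect the real work, since it is where commutativity and associativity of $*$ and the identity $h_z(a*b)=h_z(a)*h_z(b)$ must enter.

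Once $\alpha$ is a map of cubesets, it is an action over $\tau_{n-1}X$, because its pointwise effect preserves $\eta$ and everything is concrete. For the shear map $\cD_n(A)\times X\to X\times_{\tau_{n-1}X}X$, concreteness again reduces injectivity to points, where it is freeness. For surjectivity, take a pair of $k$-cubes $(c,c')$ with $\eta c=\eta c'$. The pointwise differences define $g\colon\{0,1\}^k\to A$, and I must show $g\in\cD_n(A)(k)$. I would check this using the characterization of $\cD_n(A)$ through its corner behaviour: by Proposition~\ref{prop:structure-group-is-monoid} the structure-group composition is the same as adding in $A$, so the $(n+1)$-faces of $g$ satisfy the Gray-code alternating-sum condition. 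Finally, $\eta$ is an epimorphism by Proposition~\ref{prop:trunc_concfib}, and in a topos this together with the shear isomorphism gives a principal bundle.
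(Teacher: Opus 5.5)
\textbf{The gap: the cube-preservation step is not proved.} Your skeleton is the paper's. You use the same point-level action $\alpha_0(a,x)=h_{(x_0,x)}(a)(1^n)$; your transported fibre action reduces to this, since $x_x$ is the neutral element of $\pi_n(E_x,x)$. You use the same lemmas for the action axioms and for invariance of the $\eta$-fibres, and universal replacement for $k\le n$.

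The one non-formal step is this: for $c\in X(n+1)$, an edge $F$ and $a\in A$, the vertexwise translate $c+a^F$ must again lie in $X(n+1)$. You leave it at ``I would try'', but everything downstream depends on it.
\begin{itemize}
\item Your reduction of $k\ge n+2$ to $k=n+1$ needs it. That reduction completes the translated corner to some $d$ and compares $d\circ\delta$ with the translate of $c\circ\delta$, for a diagonal $\delta\colon{\bbox}^{n+1}\to{\bbox}^k$ preserving corners and the top vertex.
\item Surjectivity of the shear map needs it as well.
\end{itemize}
The paper handles $k\ge n+1$ in one sentence, completing the translated corner as in Proposition~\ref{prop:fiber-torsor}. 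That the completion has the translate as its top vertex is exactly this claim, so you have located the crux correctly but not resolved it.

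Separately, checking the shear map levelwise (rather than only on fibres over points, as the paper does) is the right target, but your surjectivity argument is empty. Proposition~\ref{prop:structure-group-is-monoid} only says $(A,*)$ is a commutative monoid, and ``composition is addition in $A$'' is the definition of $A$. Neither says anything about the vertex differences of two arbitrary $(n+1)$-cubes of $X$ lying over the same cube of $\tau_{n-1}X$.

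\textbf{How to close both holes.} Tools you already have suffice; neither associativity of $*$ nor Lemma~\ref{lem:homotopy-action-well-defined} is needed.
\begin{itemize}
\item \emph{Setup.} By an automorphism of ${\bbox}^{n+1}$, take $F=\{(0,1^n),(1,1^n)\}$. Put $c_j=c\circ\sigma_j$ with $\sigma_j(\omega)=(\omega_1,\dots,\omega_j,1,\dots,1)$, so $c_{n+1}=c$.
\item \emph{Base case.} Let $(x,y)$ be the edge of $c$ along $F$. The configuration $c_1+a^F$ has face $h_{(x_0,x)}(a)$ at $\omega_1=0$, face $h_{(x_0,y)}(a)$ at $\omega_1=1$, and all remaining vertices on the degenerate edge $(x,y)$. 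By concreteness, the definition of $h$, and Lemma~\ref{lem:homotopy-action-well-defined-global}, it equals $\overline{h_{(x,y)}(h_{(x_0,x)}(a))}\in X(n+1)$.
\item \emph{Inductive step.} Write $c_{j+1}=[P,Q]$ along direction $j+1$, with $Q$ the face $\omega_{j+1}=1$; then $c_j=[Q,Q]$. Since $F\subseteq\{\omega_{j+1}=1\}$, let $Q'$ be $Q$ with the two vertices of $F$ translated by $a$. Then $c_j+a^F=[Q,Q']$, a cube by induction, and $c_{j+1}+a^F=[P,Q']$. The latter is the gluing of the cubes $[P,Q]$ and $[Q,Q']$ along $Q$, using Proposition~\ref{prop:nilfib-gluing}, a flip, and concreteness.
\item \emph{All of $\cD_n(A)(n+1)$.} Faces of dimension $\ge1$ are disjoint unions of parallel edges, so this covers all of $\cD_n(A)(n+1)$.
\item \emph{Shear surjectivity at level $k\ge n+1$.} Levels $\le n$ follow from transitivity on fibres. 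At level $k\ge n+1$, take $c,c'$ with $\eta c=\eta c'$ and vertexwise difference $g$. By induction the corner of $g$ lies in $\cD_n(A)$. Since $\cD_n(A)$ is $n$-step, there is a unique $\hat g\in\cD_n(A)(k)$ with that corner. Then $c+\hat g$ and $c'$ are cubes of $X$ with the same corner, hence equal, and freeness at the top vertex gives $g=\hat g$.
\end{itemize}
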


\begin{proof}
We begin by defining an action $\alpha_0\colon A\times X(0)\to X(0)$.
Given $x\in X(0)$, by ergodicity we have that $(x_0,x)\in X(1)$.
Recall the monoid isomorphism $h_{(x_0,x)}\colon\pi_n(X,x_0)\to\pi_n(X,x)$ from Proposition~\ref{prop:structure-monoid-independence-base-point}.
Then define
\[
    \alpha_0((a,x)) = h_{(x_0,x)}(a)(1^n)\in X(0).
\]
This defines an action of $A$ on $X(0)$:
since $h_{(x_0,x)}$ is a monoid homomorphism we have that
\[
    \alpha_0((e_{x_0},x)) = h_{(x_0,x)}(e_{x_0})(1^n) = e_x(1^n) = x.
\]
Associativity follows from Lemma~\ref{lem:homotopy-action-well-defined}:
given $a,b\in A$ and $x\in X(0)$ we obtain for $z_0 = \alpha_0(b,x) = h_{(x_0,x)}(b)(1^n)$ that
$z_0\sim_{n-1} x$ and thus
\begin{align*}
       \alpha_0(a,\alpha_0(b,x))
    &= h_{(x_0,z_0)}(a)(1^n) \\
    &= h_{(x,z_0)}(h_{(x_0,x)}(a))(1^n) \\
    &= (h_{(x_0,x)}(a)*h_{(x_0,x)}(b))(1^n) \\
    &= h_{(x_0,x)}(a*b)(1^n)
    = \alpha_0(a*b,x)
\end{align*}
where we have used Lemma~\ref{lem:homotopy-action-well-defined-global} in the second equality.
Moreover, the action leaves the fibers of $\eta$ over $Y = \tau_{n-1}X$ invariant:
By definition, the $n$-cube $h_{(x_0,x)}(a)$ satisfies $h_{(x_0,x)}(a)(\omega) = x$ for all $\omega\neq 1$
and hence $\alpha_0(a,x)\sim_{n-1} x$, which in turn means that $\eta_0(\alpha_0(a,x)) = \eta_0(x)$.

Since $i_\ast\colon\set\to\cSet$ preserves finite products,
we obtain an action $i_\ast(\alpha_0)\colon i_\ast(A\times X(0))\to i_\ast X(0)$ of (concrete) cubesets.
Since $X$ and $\cD_n(A)$ are concrete it suffices to show that we obtain a factorization
\begin{center}
\begin{tikzcd}
	{\cD_n(A)\times X} & {i_\ast(A\times X(0))} \\
	X & {i_\ast X(0)}
	\arrow[hook, from=1-1, to=1-2]
	\arrow["\alpha"', dashed, from=1-1, to=2-1]
	\arrow["{i_\ast(\alpha_0)}", from=1-2, to=2-2]
	\arrow[hook, from=2-1, to=2-2]
\end{tikzcd}.
\end{center}
Then clearly the action has to arise pointwise,
so we are left to show that this is well-defined.
For $k\leq n$ we use the fact that $\alpha_0$ leaves the fibers of $\eta_0$ invariant
and so we can use universal replacement for $\sim_{n-1}$ to obtain the desired result.
For $k\geq n+1$ we use unique corner completion in $X$ to define the action
pointwise on the corner and then complete the cube,
analogously to the argument in the proof of Proposition~\ref{prop:fiber-torsor}.
This also directly shows that the action is invariant over $\eta$ by checking it pointwise
which means that $\eta\circ\alpha = p_Y\circ(\id\times\eta)$.

Moreover, the action has the following property, analogous to Lemma~\ref{lem:action-independence-basepoint}:
if $x_0,y_0\in X$ and $A = \pi_n(X,x_0)$, $B = \pi_n(X,y_0)$ then the diagram
\begin{center}
\begin{tikzcd}
	{\cD_n(A)\times X} && {\cD_n(B)\times X} \\
	& X
	\arrow["{h\times\id}", from=1-1, to=1-3]
	\arrow["\alpha"', from=1-1, to=2-2]
	\arrow["\beta", from=1-3, to=2-2]
\end{tikzcd}
\end{center}
is commutative where $h = \cD_n(h_{(x_0,y_0)})$.
This follows by checking it on points (since everything is concrete) from Lemma~\ref{lem:homotopy-action-well-defined-global}.

It remains to show that each fiber $E$ is an $\cD_n(A)$-torsor:
By invariance over $\eta$ there is an induced action $\cD_n(A)\times E\to E$.
First assume that $x_0\in E$.
Then from Lemma~\ref{lem:action-independence-basepoint} we see that
the restricted action agrees with the action defined in Proposition~\ref{prop:fiber-torsor}.
In particular, $E$ is an $\cD_n(A)$-torsor.
Otherwise, shift the action by $h$ to a basepoint in $E$ using the previous observation
and then argue analogously.
\end{proof}

\begin{proof}[Proof of Theorem~\ref{thm:weak-structure}]
The existence of the tower of fibrations is clear:
since adjunctions and hence units compose, each unit $X\to\tau_n X$ and $\tau_n X\to\tau_{n-1} X = \tau_{n-1}\tau_n X$
are surjective fibrations by Proposition~\ref{prop:trunc_concfib}.
The same proposition also asserts that $\tau_n X$ is a concrete fibrant $n$-step cubeset.
Naturality in $X$ follows from functoriality of $\tau_n$.

It remains to show that $\tau_n X\to\tau_{n-1} X$ is a principal $\cD_n(\pi_n(X,x_0))$-bundle.
First note that since $X$, and hence $\tau_n X$, is ergodic, $A = \pi_n(X,x_0)$ doesn't depend on $x_0$
by Proposition~\ref{prop:structure-monoid-independence-base-point}.
Moreover, $\pi_n(X,x_0) = \pi_n(\tau_n X,x_0)$ by Proposition~\ref{prop:trunc-structure-monoid}
and so Proposition~\ref{prop:unit-principal-bundle} gives the claim.
\end{proof}

Note that, different from the situation of simplicial sets and their Postnikov towers,
this construction of the tower does not converge as it identifies too much information.

\begin{example}
Let $S$ be a nontrivial set and let $X = i_\ast S$, i.e., $X(n) = S^{2^n}$.
Then $\tau_n X = \ast$ for every $n\in\N_0$ and so the canonical map $X\to\varprojlim_n\tau_nX = \ast$ is not an isomorphism.
\end{example}

However, it is a weak equivalence.

\begin{definition}
A map $f\colon X\to Y$ between cubesets is called a \emph{weak equivalence}
if it induces a bijection $f_\ast\colon\pi_0(X)\to\pi_0(Y)$ and
monoid isomorphisms $f_\ast\colon\pi_n(X,x_0)\to\pi_n(Y,f(x_0))$ for all $x_0\in X$ and $n\in\N$.
\end{definition}

\begin{remark}[Model Category Structures]
It is a very canonical question to ask whether there exists the structure of a model category on the category of cubesets that has this notion of weak equivalences, and/or the notion of fibration from Definition~\ref{def:nil-fibration} as fibrations.
Whilst we are not able to fully answer this question, let us remark on some obstructions,
indicating that for the present choice of fibrations from Definition~\ref{def:nil-fibration}
the answer might be negative.
\begin{enumerate}[(i)]
    \item There is no model structure on $\cSet$ with the present choice of fibrations with cofibrations being the class of monomorphisms.
		This follows from Cisinski's \emph{model structures ex nihilo} machinery in \cite{Cisinski2025}: The subobject classifier $\Omega$
		has the problem that the trivial cofibrations $\ell(r(\Lambda))$ do not consist of $\Omega$-anodyne extensions,
since in general $\ihom(\Omega, X)\to X\times_Y \ihom(\Omega, Y)$ is not surjective.
    \item We conjecture that the structure groups of the fibrant replacement of the point, $\ast_{\mathrm{fib}}$, are non-trivial.
		Thus, there should exist a trivial cofibration, e.g., $\ast\to \ast_{\mathrm{fib}}$,
		that does \emph{not} induce isomorphisms on structure groups.
		This shows that the present choice of fibrations together with the canonical choice of weak equivalences as those maps that induce isomorphisms on all structure groups
		does not induce a model structure.
	\end{enumerate}

	We were not able to disprove if using either normal monomorphisms as cofibrations, or using
	\[
    \Gamma = \{{\bbox}^n\sqcup_{{\bbcor}^n}{\bbox}^n\to{\bbox}^{n+1}\sqcup_{{\bbcor}^{n+1}}{\bbox}^{n+1} : n\in\N_0\cup\{-1\}\}
\]
	as generating cofibrations (see Remark~\ref{rem:nil-cofibrations}), yields a (useful) model structure on $\cSet$.
\end{remark}

\section{Condensed Cubesets}\label{sec:condensed-cubeset}

As of now we have only considered the category $\cSet$ of \enquote{discrete} cubesets without adhering to a topology
on the collections of $n$-cubes.
However, a great deal of the theory of nilspaces deals with \emph{topological} cubesets,
called \emph{cubespaces}, especially compact cubespaces, in order to appropriately treat Host-Kra theory.
After all, dynamical systems should always preserve a topological or measure theoretical structure!
Therefore, in \cite{Gutman2020}, the theory of nilspaces is treated topologically from the beginning,  making the arguments more involved,
and in \cite{Candela2017a}, the arguments from \cite{Candela2017} are revisited again to show that they are consistent with the topological aspects.
In both cases, every construction has to be checked for continuity of maps,
or the Hausdorff property of quotients, etc.
Moreover, the arguments usually are only considered for compact second-countable Hausdorff spaces (= metrizable compact spaces).

In this section we go a different route to install a topology on a cubeset
that is more global and doesn't adhere to the concrete constructions made in the previous sections.
One of the main differences to classical approaches is that we don't work with topological spaces,
but rather a replacement thereof: condensed sets.
The category of condensed sets $\CondSet$ is categorically much better behaved
than the category of topological spaces as it behaves just like a (Grothendieck) topos
(see also Remark~\ref{rem:size-issues}) on the one hand,
and like a category of algebras of an algebraic theory on the other hand.
Nevertheless, the category of condensed sets contains the category of compactly generated weak Hausdorff spaces as a full subcategory
and thus there is no loss of information when one wants to recover (most) topological spaces.
The good categorical properties, especially its definition as a category of sheaves on a coherent site,
allow one to apply the theory of presentable ($\infty$-)categories.
We achieve the topological enrichment by tensoring the category $\cSet$ of cubesets
with the category of condensed sets $\CondSet$ in $\PrL$, the $\infty$-category of presentable $\infty$-categories.

\subsection{Tensor Products of Presentable Categories}

We begin by recalling the tensor product of presentable $(\infty,1)$-categories from \cite[Section 4.8.1]{Lurie2017},
see also \cite[Section 0713]{Kerodon2026}.
In the following, $\infty$-category will always mean $(\infty,1)$-category.
Denote by $\PrL$ the $\infty$-category consisting of presentable categories
whose morphisms are functors $\cC\to\cD$ preserving small colimits, see \cite[Section 5.5]{Lurie2009}.
By the adjoint functor theorem, any colimit preserving functor $\cC\to\cD$ between presentable categories is left adjoint.
We denote the full sub-$\infty$-category of $\Fun(\cC,\cD)$ consisting of colimit preserving functors by $\Fun^\uL(\cC,\cD)$.

The category $\PrL$ has a symmetric monoidal tensor product $\otimes\colon\PrL\times\PrL\to\PrL$.
It represents separately colimit preserving functors, i.e., there is an equivalence of $\infty$-categories
\[
    \Fun^\uL(\cC\otimes\cD,\cE) \simeq \Fun_{\mathrm{colim}\times\mathrm{colim}}(\cC\times\cD,\cE)
\]
where the right hand side denotes the full sub-$\infty$-category of functors $\cC\times\cD\to\cE$
that preserve colimits in both variables separately.
Moreover, the tensor structure is closed with internal Hom object $\Fun^\uL(\cC,\cE)$,
i.e., there is a natural equivalence
\[
    \Fun^\uL(\cC\otimes\cD,\cE) \simeq \Fun^\uL(\cD,\Fun^\uL(\cC,\cE)).
\]
We denote the tensor unit of $\PrL$ by $\Ani$ (which is the free cocompletion of a point).
To give an explicit description of the tensor product we need the following definition.

\begin{definition}\label{def:cont-fun}
Let $\cK$ be a family of small $\infty$-categories and $S,\cD$ $\infty$-categories with all limits of forms in $\cK$.
Then we define $\Fun^\cK(S,\cD)$ to be the full sub-$\infty$-category of $\Fun(S,\cD)$ spanned by functors preserving $\cK$-indexed limits.
\end{definition}

\begin{example}\label{ex:cont-fun-cats}
\begin{enumerate}[(i)]
    \item If $\cK$ consists of all small $\infty$-categories and $S=\cC^\op$ with $\cC$ a presentable $\infty$-category,
            then we have that $\Fun^\cK(\cC^\op,\cD)$ consists of all limit preserving functors $\cC^\op\to\cD$
            which equivalently are right adjoint functors, and we denote this $\infty$-category by $\Fun^\uR(\cC^\op,\cD)$.
    \item If $\cK$ is empty, then $\Fun^\cK(S,\cD) = \Fun(S,\cD)$.
    \item If $\cK$ consists of all finite $\infty$-categories, $S$ is small and $\cD$ a presentable $\infty$-category,
            then we denote $\Fun^\cK(S,\cD)$ by $\Fun^\mathrm{lex}(S,\cD)$ (= left exact functors)
            which again is a presentable $\infty$-category.
    \item If $\cK$ consists of all finite sets, $S$ is small and $\cD$ a presentable $\infty$-category,
            we write $\Fun^\times(S,\cD)$ for $\Fun^\cK(S,\cD)$ which is the presentable $\infty$-category of finite product preserving functors.            
\end{enumerate}
\end{example}

\begin{proposition}\label{prop:pres-tens-prod}
Let $\cC$ and $\cD$ be presentable $\infty$-categories.
Then there is a canonical equivalence
\[
    \cC\otimes\cD \simeq \Fun^\uR(\cC^\op,\cD) \simeq \Fun^\uL(\cC,\cD^\op)^\op
\]
\end{proposition}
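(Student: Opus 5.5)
The plan is to derive the equivalence from the universal property of $\otimes$ in $\PrL$ together with the closedness $\Fun^\uL(\cC\otimes\cD,\cE)\simeq\Fun^\uL(\cD,\Fun^\uL(\cC,\cE))$ recalled above, and to reduce to the case $\cD=\Ani$, i.e. to presheaf categories, via presentations by localizations. First I would treat the second equivalence, which is formal. A limit preserving functor $F\colon\cC^\op\to\cD$ is the same as a colimit preserving functor $F^\op\colon\cC\to\cD^\op$, because passing to opposites exchanges limits and colimits. Hence $\Fun^\uR(\cC^\op,\cD)\simeq\Fun^\uL(\cC,\cD^\op)^\op$ simply by applying $(-)^\op$ to functor categories. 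This step uses nothing beyond the definition in Example~\ref{ex:cont-fun-cats}(i).

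For the first equivalence I would begin with the case $\cC=\PSh(A)=\Fun(A^\op,\Ani)$ for a small $A$. Here $\cC$ is the free cocompletion of $A$, so $\Fun^\uR(\cC^\op,\cD)\simeq\Fun(A^\op,\cD)$ via restriction along the Yoneda embedding, since limit preserving functors out of $\PSh(A)^\op$ are right Kan extended from $A^\op$. On the other side I would check that $\Fun(A^\op,\cD)$ has the universal property of $\PSh(A)\otimes\cD$. For any presentable $\cE$, a colimit preserving functor $\Fun(A^\op,\cD)\to\cE$ is determined by a functor $A\to\Fun^\uL(\cD,\cE)$, using that $\Fun(A^\op,\cD)\simeq\PSh(A)\otimes\cD$ is generated under colimits by the objects $y(a)\otimes d$. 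Both sides therefore represent $\Fun(A,\Fun^\uL(\cD,\cE))\simeq\Fun^\uL(\PSh(A),\Fun^\uL(\cD,\cE))$, which gives the claim via the closedness equivalence.

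For a general presentable $\cC$ I would write $\cC$ as an accessible localization $\cC\simeq S^{-1}\PSh(A)$ at a small set $S$ of morphisms. Then $\cC\otimes\cD$ is the localization of $\PSh(A)\otimes\cD\simeq\Fun(A^\op,\cD)$ at the set $S\otimes\cD$. On the right hand side, a limit preserving functor $\cC^\op\to\cD$ corresponds to a limit preserving functor $\PSh(A)^\op\to\cD$ that inverts $S$. Under the identification of the previous step, this means a functor $F\colon A^\op\to\cD$ whose right Kan extension sends each $s\in S$ to an equivalence. I would identify this condition with $F$ being $(S\otimes\cD)$-local: mapping out of $s\otimes d$ into $F$ is computed by $\Map_\cD(d,\tilde F(s))$, where $\tilde F$ denotes the right Kan extension, so locality against all $s\otimes d$ is exactly invertibility of $\tilde F(s)$. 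Then both sides are the same full subcategory of $\Fun(A^\op,\cD)$.

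The main obstacle is this last matching. It requires knowing that tensoring in $\PrL$ commutes with accessible localizations, i.e. that $(S^{-1}\PSh(A))\otimes\cD\simeq(S\otimes\cD)^{-1}(\PSh(A)\otimes\cD)$. I would try to prove it from the universal property: colimit preserving functors out of either side are separately cocontinuous functors on $\PSh(A)\times\cD$ that invert $S$ in the first variable, and this is the same data on both sides. Realistically, this whole argument is Lurie's \cite[Proposition 4.8.1.17]{Lurie2017}, and I would cite it for the localization step.
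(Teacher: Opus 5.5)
The paper gives no argument of its own here: its proof is the single citation \cite[Proposition 4.8.1.17]{Lurie2017}, which is also where you end up. Your outline (presheaves first, then accessible localizations) has the shape of Lurie's argument, and the formal second equivalence is fine. As a self-contained sketch, however, your presheaf case is circular. To show that $\Fun(A^\op,\cD)$ has the universal property of $\PSh(A)\otimes\cD$, you invoke that ``$\Fun(A^\op,\cD)\simeq\PSh(A)\otimes\cD$ is generated under colimits by the objects $y(a)\otimes d$''. That is the equivalence you are trying to prove. Read charitably, the functors $b\mapsto\mathrm{Map}_A(b,a)\otimes d$ do generate $\Fun(A^\op,\cD)$ under colimits. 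But generation by itself only shows that restricting colimit-preserving functors to these objects is conservative. It does not give $\Fun^\uL(\Fun(A^\op,\cD),\cE)\simeq\Fun(A,\Fun^\uL(\cD,\cE))$: for that you would need density (for full faithfulness) and an extension argument (for essential surjectivity). Everything downstream rests on this base case. Your final matching uses $\mathrm{Map}(P\otimes d,F)\simeq\mathrm{Map}_\cD(d,\tilde F(P))$, which presupposes knowing where $P\boxtimes d$ goes under the identification with $\Fun(A^\op,\cD)$.

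The step you single out as the main obstacle is actually fine as you sketched it. By the universal property of accessible localizations \cite[Section 5.5.4]{Lurie2009} and the closedness of $\otimes$, colimit-preserving functors out of $(S^{-1}\PSh(A))\otimes\cD$ and out of $(S\otimes\cD)^{-1}(\PSh(A)\otimes\cD)$ are both identified with the colimit-preserving $G\colon\PSh(A)\to\Fun^\uL(\cD,\cE)$ such that every $G(s)$ is invertible. To make $S\otimes\cD$ a set, let $d$ range over a small set of generators. So the repair is to run that same argument on the base case as well. Write $\cD\simeq T^{-1}\PSh(B)$. Then $\PSh(A)\otimes\PSh(B)\simeq\PSh(A\times B)\simeq\Fun(A^\op,\PSh(B))$ follows directly from the universal property of free cocompletion, since $\Fun^\uL(\PSh(A\times B),\cE)\simeq\Fun(A\times B,\cE)\simeq\Fun^\uL(\PSh(A),\Fun^\uL(\PSh(B),\cE))$. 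Localizing at $\{y(a)\boxtimes t\}$ then cuts out exactly the functors $A^\op\to\PSh(B)$ that land in $T$-local objects, because $\mathrm{Map}(y(a)\boxtimes Q,F)\simeq\mathrm{Map}_{\PSh(B)}(Q,F(a))$. This gives $\PSh(A)\otimes\cD\simeq\Fun(A^\op,\cD)$ non-circularly, together with the mapping-space formula your last step needs.
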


\begin{proof}
This is \cite[Proposition 4.8.1.17]{Lurie2017}.
\end{proof}

\begin{corollary}\label{cor:pres-tens-prod}
Let $\cK$ be a collection of small $\infty$-categories and $S$ a small $\infty$-category with limits of form in $\cK$.
For $\cC = \Fun^\cK(S,\Ani)$ we have that
\[
    \cC\otimes\cD \simeq \Fun^\cK(S,\cD).
\]
\end{corollary}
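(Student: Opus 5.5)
The plan is to deduce the statement from Proposition~\ref{prop:pres-tens-prod} by combining it with the universal property of $\cC=\Fun^\cK(S,\Ani)$ as a presentable category. The first step is to apply Proposition~\ref{prop:pres-tens-prod} with the roles arranged so that
\[
    \cC\otimes\cD\simeq\Fun^\uR(\cC^\op,\cD),
\]
the $\infty$-category of limit-preserving functors $\cC^\op\to\cD$. The task then reduces to identifying limit-preserving functors out of $\cC^\op$ with $\cK$-limit-preserving functors out of $S$. To do this I will establish that the Yoneda embedding $y\colon S^\op\to\Fun^\cK(S,\Ani)$ exhibits $\cC$ as the free cocompletion of $S^\op$ that preserves $\cK^\op$-indexed colimits. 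Concretely, $\cC$ is the accessible localization of $\PSh(S^\op)=\Fun(S,\Ani)$ at the maps
\[
    \varinjlim_{k} y(s_k)\to y(\varprojlim_k s_k)
\]
for $\cK$-diagrams in $S$. This should follow from \cite[Proposition 5.3.6.2]{Lurie2009} and \cite[Section 5.5.4]{Lurie2009}; the corepresentables $\Map(s,-)$ preserve limits, so they do land in $\Fun^\cK(S,\Ani)$.

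With this in hand, the second step is to argue that for every cocomplete $\cE$ restriction along $y$ gives an equivalence
\[
    \Fun^\uL(\cC,\cE)\simeq\Fun^{\cK^\op\text{-colim}}(S^\op,\cE).
\]
I will apply this with $\cE=\cD^\op$. The category $\cD^\op$ is not presentable but it is cocomplete, and the universal property of the localized free cocompletion only needs cocompleteness of the target. Passing to opposites, $\Fun^\uL(\cC,\cD^\op)^\op$ becomes the category of $\cK$-limit-preserving functors $S\to\cD$, which is $\Fun^\cK(S,\cD)$. Combining this with the second description in Proposition~\ref{prop:pres-tens-prod}, namely $\cC\otimes\cD\simeq\Fun^\uL(\cC,\cD^\op)^\op$, gives the claim, and canonicity is inherited from both equivalences.

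The main obstacle is the free-cocompletion step, because of size. $\cD^\op$ is typically not locally small in the right sense for the adjoint functor theorem, so the claim that every $\cK^\op$-colimit-preserving functor $S^\op\to\cD^\op$ extends uniquely to a colimit-preserving functor on $\cC$ has to be handled with care. My first approach is to avoid $\cD^\op$ entirely. Working with $\Fun^\uR(\cC^\op,\cD)$, I would show that restriction along $y^\op\colon S\to\cC^\op$ is fully faithful with essential image $\Fun^\cK(S,\cD)$, where the inverse is the right Kan extension $F\mapsto(X\mapsto\varprojlim_{(s,\,x\in X(s))}F(s))$. This formula makes sense because $\cD$ is complete. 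Checking that this Kan extension preserves limits on $\cC^\op$ again reduces to the localization description of $\cC$: it sends the generating local equivalences to equivalences precisely because $F$ preserves $\cK$-limits.
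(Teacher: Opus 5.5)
Your proposal is correct and follows essentially the paper's own argument: the paper also writes $\cC\otimes\cD\simeq\Fun^\uL(\cC,\cD^\op)^\op$ via Proposition~\ref{prop:pres-tens-prod}, identifies colimit-preserving functors out of $\cC=\Fun^\cK(S,\Ani)$ with $\cK^\op$-colimit-preserving functors out of $S^\op$ using the universal property from \cite[Proposition 5.3.6.2]{Lurie2009}, and then passes to opposites. Your size worry is unfounded, since that universal property only requires the target to admit small colimits, which $\cD^\op$ does, and no adjoint functor theorem enters; the right-Kan-extension fallback is correct but not needed.
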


\begin{proof}
We have that
\[
    \cC\otimes\cD\simeq\Fun^\uR(\cC^\op,\cD)\simeq\Fun^\uL(\Fun^\cK((S^\op)^\op,\Ani),\cD^\op)^\op\simeq\Fun_\cK(S^\op,\cD^\op)^\op\simeq\Fun^\cK(S,\cD)
\]
where the second to last equivalence follows from \cite[Theorem 5.3.6.2]{Lurie2009}.
\end{proof}

We will also need the following lemma on the functoriality of the tensor product.
In some special cases, this is \cite[Section 2.2]{Haine2025}
but we weren't able to find a complete reference, so we record it here for completeness.

\begin{lemma}\label{lem:pres-tens-prod-fun}
Let $\cK$ be a family of small $\infty$-categories, $S$ a small $\infty$-category and $\cC = \Fun^\cK(S,\Ani)$.
If $f^\ast\colon\cD\to\cD'$ is a map in $\PrL$ that preserves limits of forms in $\cK$,
then the diagram
\begin{center}
\begin{tikzcd}
	{\cC\otimes\cD} & {\cC\otimes\cD'} \\
	{\Fun^\cK(S,\cD)} & {\Fun^\cK(S,\cD')}
	\arrow["{\id\otimes f^\ast}", from=1-1, to=1-2]
	\arrow["\simeq"', from=1-1, to=2-1]
	\arrow["\simeq", from=1-2, to=2-2]
	\arrow["{(f^\ast)\circ -}"', from=2-1, to=2-2]
\end{tikzcd}
\end{center}
is commutative.
\end{lemma}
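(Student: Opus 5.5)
I will prove the lemma by computing both composites explicitly on the Yoneda-type generators, and reducing to the universal property of the tensor product in $\PrL$. The equivalence $\cC\otimes\cD\simeq\Fun^\cK(S,\cD)$ from Corollary~\ref{cor:pres-tens-prod} is the composite
\[
    \cC\otimes\cD\simeq\Fun^\uR(\cC^\op,\cD)\simeq\Fun^\uL(\cC,\cD^\op)^\op\simeq\Fun_\cK(S^\op,\cD^\op)^\op=\Fun^\cK(S,\cD),
\]
where the third step is restriction along the Yoneda embedding $y\colon S^\op\to\cC$. Concretely, the resulting functor $\cC\otimes\cD\to\Fun^\cK(S,\cD)$ sends an object $F$ to $s\mapsto\mathrm{Map}$-type evaluation, i.e.\ it is characterized on pure tensors by $c\otimes d\mapsto (s\mapsto c(s)\otimes d)$ with $c(s)\otimes d=\colim_{c(s)}d$ the $\Ani$-tensoring of $\cD$. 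I will first make this description precise: the bifunctor $\cC\times\cD\to\Fun^\cK(S,\cD)$, $(c,d)\mapsto c(-)\otimes d$, preserves colimits separately in each variable, and it induces exactly the equivalence of the corollary (both are determined by the same separately cocontinuous functor via \cite[Proposition 4.8.1.17]{Lurie2017}).

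With this description the square is checked on the separately cocontinuous bifunctors out of $\cC\times\cD$. Going right then down sends $(c,d)$ to $s\mapsto c(s)\otimes f^\ast d$. Going down then right sends $(c,d)$ to $s\mapsto f^\ast(c(s)\otimes d)$. Since $f^\ast$ preserves colimits, $f^\ast(\colim_{c(s)}d)\simeq\colim_{c(s)}f^\ast d$, naturally in $s$, $c$ and $d$. Hence the two bifunctors agree, and by the universal property of $\otimes$ in $\PrL$ the two colimit-preserving functors out of $\cC\otimes\cD$ agree, provided the bottom horizontal functor $(f^\ast)\circ-$ is itself colimit-preserving and actually lands in $\Fun^\cK(S,\cD')$.

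These two side conditions are where the hypotheses enter. Postcomposition with $f^\ast$ preserves $\cK$-limit-preserving functors exactly because $f^\ast$ preserves $\cK$-indexed limits; this is the only use of that assumption. For cocontinuity of $(f^\ast)\circ-$, colimits in $\Fun^\cK(S,\cD)$ are not pointwise in general: they are computed by the localization $L\colon\Fun(S,\cD)\to\Fun^\cK(S,\cD)$. I expect this to be the main obstacle. The cleanest route is to avoid it: since $\Fun^\cK(S,\cD)\simeq\cC\otimes\cD$ is generated under colimits by the pure tensors $c(-)\otimes d$, and both composites are obtained as the unique cocontinuous extension from pure tensors, it suffices to define the bottom map as $\id\otimes f^\ast$ transported and then identify it with postcomposition on all objects. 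For that identification I would write an arbitrary $G\in\Fun^\cK(S,\cD)$ as the colimit in $\cC\otimes\cD$ of pure tensors, i.e.\ as $L$ of a pointwise colimit, and use that $f^\ast$ commutes with pointwise colimits and that $f^\ast\circ L\simeq L'\circ f^\ast$. The last equivalence follows because $f^\ast\circ-$ preserves $\cK$-continuity and the right adjoint of $f^\ast$ commutes with the inclusion of $\cK$-continuous functors.
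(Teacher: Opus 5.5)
\textbf{Gap: the formula on pure tensors is wrong.} You describe the equivalence $\cC\otimes\cD\simeq\Fun^\cK(S,\cD)$ on pure tensors by $c\otimes d\mapsto\bigl(s\mapsto c(s)\otimes d\bigr)$, and your main comparison step rests on this. The formula fails because $-\otimes d\colon\Ani\to\cD$ does not preserve limits, so $s\mapsto c(s)\otimes d$ is in general not $\cK$-continuous. For example, suppose $\cK$ contains the empty diagram, as for finite products, the case used later for $\extr_\kappa^\op$. Then $S$ has a terminal object $t$ and $c(t)\simeq\ast$, so $c(t)\otimes d\simeq d$, which is not terminal unless $d$ is. Hence your bifunctor does not land in $\Fun^\cK(S,\cD)$.

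The correct description is $c\otimes d\mapsto L\bigl(c(-)\otimes d\bigr)$, where $L\colon\Fun(S,\cD)\to\Fun^\cK(S,\cD)$ is left adjoint to the inclusion. The two bifunctors you must compare are therefore $L'\bigl(c(-)\otimes f^\ast d\bigr)$ and $f^\ast\circ L\bigl(c(-)\otimes d\bigr)$. The fact that $f^\ast$ commutes with $\Ani$-tensoring does not identify them. You need $f^\ast\circ L\simeq L'\circ(f^\ast\circ-)$ in the central step, not only as a side condition for cocontinuity of the bottom arrow. So your remark that landing in $\Fun^\cK$ is ``the only use'' of the hypothesis is also inaccurate. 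A second point is asserted rather than proved: that the equivalence of Corollary~\ref{cor:pres-tens-prod}, defined through $\Fun^\uR(\cC^\op,\cD)$ and a Kan-extension equivalence, is given by any explicit formula on pure tensors. In a generators-based approach that identification is genuine work.

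\textbf{How to repair it.} Your last sentence already contains the fix. Since $f_\ast$ preserves all limits, $f_\ast\circ-$ commutes with the inclusions of $\cK$-continuous functors. Since $f^\ast$ preserves $\cK$-limits, the adjunction $f^\ast\circ-\dashv f_\ast\circ-$ restricts to $\Fun^\cK(S,\cD)\rightleftarrows\Fun^\cK(S,\cD')$. Passing to left adjoints gives $f^\ast\circ L\simeq L'\circ(f^\ast\circ-)$, and also cocontinuity of the bottom arrow.

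This restricted adjunction is the entire content of the paper's proof, which never touches generators. Under $\cC\otimes\cD\simeq\Fun^\uR(\cC^\op,\cD)$, the right adjoint of $\id\otimes f^\ast$ is postcomposition with $f_\ast$. This visibly commutes with restriction along the Yoneda embedding $S\to\cC^\op$. So on $\Fun^\cK(S,-)$ the right adjoint is $f_\ast\circ-$, and its left adjoint is $f^\ast\circ-$ by the restricted adjunction. Arguing through right adjoints avoids both the localization $L$ and the need to compute the equivalence on pure tensors, which is exactly where your proposal goes wrong.
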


\begin{proof}
By \cite[Remark 071F]{Kerodon2026}, under the equivalence $\cC\otimes\cD\simeq\Fun^\uR(\cC^\op,\cD)$,
the right adjoint part of $\id\otimes f^\ast$ is given by postcomposition
\[
    f_\ast\circ -\colon\Fun^\uR(\cC^\op,\cD')\to\Fun^\uR(\cC^\op,\cD)
\]
with the right adjoint $f_\ast$ of $f^\ast$.
Thus, under the equivalence $\cC\otimes\cD\simeq\Fun^\cK(S,\cD)$,
the right adjoint of $\id\otimes f^\ast$ is also given by postcomposition with $f_\ast$.
But the left adjoint of postcomposition with $f_\ast$ on the whole functor category is given by
\[
    f^\ast\circ -\colon\Fun(S,\cD)\to\Fun(S,\cD').
\]
Since $f^\ast$ preserves $\cK$-indexed limits, it restricts to a functor
\[
    f^\ast\circ -\colon\Fun^\cK(S,\cD)\to\Fun^\cK(S,\cD')
\]
which then also is the left adjoint to $f_\ast\circ -$.
\end{proof}

Furthermore, the tensor product of presentable categories behaves nicely under truncation.

\begin{example}\label{ex:pres-tens-prod}
\begin{enumerate}[(i)]
    \item If $n\geq -2$, then $\cC\otimes\tau_n\Ani = \tau_n\cC$
        where $\tau_n$ denotes the left adjoint to the inclusion of $n$-truncated objects in $\cC$ and $\Ani$, respectively.
        In particular, $\tau_n\cC\otimes\cD = \cC\otimes\tau_n\cD = \tau_n\cC\otimes\tau_n\cD$.
    \item For $n = 0$ we have that $\tau_0\Ani = \set$ and so $\cC\otimes\set = \tau_0\cC$.
    \item If $\cC$ is a 1-category and $\cD$ any $\infty$-category, then
        $\cC\otimes\cD = \tau_0\cC\otimes\cD = \cC\otimes\tau_0\cD$ which is a 1-category.
    \item The 1-category $\set$ is an idempotent algebra in $\PrL$.
        By the previous we have that
        \[
            \set\otimes\set = \set\otimes\tau_0\Ani = \tau_0\set = \set.
        \]
    \item In particular, the tensor product restricts to 1-categories:
        We have that, if $\cC$ and $\cD$ are presentable 1-categories, then $\cC\otimes_\set\cD = \cC\otimes\cD$,
        and $\set$ is the tensor unit there.
        For this special case see also \cite{Bird1984}.
\end{enumerate}
\end{example}

\subsection{Condensed Cubesets}

To pass from \enquote{discrete} cubesets to \enquote{topological} cubesets we want to apply this tensor product
to the case where $\cC$ is the category of cubesets and $\cD$ is the category of condensed sets.
Let us first recall the category of condensed sets.

We implicitly fix an uncountable strong limit cardinal $\kappa$.

\begin{definition}\label{def:cond}
\begin{enumerate}[(i)]
    \item Denote by $\extr$ the category of extremally disconnected compact Hausdorff spaces (and continuous maps)
            and by $\extr_\kappa$ its full subcategory of spaces with topological weight less than $\kappa$.
    \item We denote the sifted Ind-completion of $\extr_\kappa$ by $\CondAni$, the $\infty$-category of \emph{($\kappa$-)condensed anima}.
            Its category of (homotopy!) discrete objects (0-truncation) is denoted by $\CondSet$,
            the category of \emph{($\kappa$-)condensed sets}.
\end{enumerate}
\end{definition}

Condensed sets have been introduced in \cite{Scholze2026,Scholze2026a,Clausen2026}
with a particular focus on applications in analytic and complex geometry.
For a thorough introduction to the basics of condensed sets see \cite{Bihlmaier2025}.

\begin{proposition}\label{prop:cond-sheaves}
There are equivalences of ($\infty$-)categories
\[
    \CondAni\simeq\Fun^\times(\extr_\kappa^\op,\Ani)\quad\text{and}\quad\CondSet\simeq\Fun^\times(\extr_\kappa^\op,\set).
\]
\end{proposition}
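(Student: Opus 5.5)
The plan is to combine the description of $\CondAni$ as a sifted Ind-completion (Definition~\ref{def:cond}) with the universal property of sifted cocompletions, and then pass to $0$-truncations via Example~\ref{ex:pres-tens-prod}.

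First, $\extr_\kappa$ is essentially small, because $\kappa$ bounds the topological weight. It also has finite coproducts: finite disjoint unions of extremally disconnected compact Hausdorff spaces are again extremally disconnected, and their weight stays below $\kappa$. By \cite[Proposition 5.5.8.15]{Lurie2009}, the sifted cocompletion $\mathcal{P}_\Sigma(\extr_\kappa)$ of a small $\infty$-category with finite coproducts is equivalent to $\Fun^\times(\extr_\kappa^\op,\Ani)$, the full subcategory of presheaves sending finite coproducts in $\extr_\kappa$ to products. Under this equivalence the Yoneda embedding corresponds to the inclusion of representables. Since $\CondAni$ is by definition this sifted Ind-completion, we obtain $\CondAni\simeq\Fun^\times(\extr_\kappa^\op,\Ani)$.

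Second, for condensed sets we take $0$-truncations. By definition $\CondSet=\tau_0\CondAni$. Applying Example~\ref{ex:pres-tens-prod}(ii) together with Corollary~\ref{cor:pres-tens-prod} (with $\cK$ the finite sets, $S=\extr_\kappa^\op$ and $\cD=\set$) gives
\[
    \tau_0\CondAni\simeq\CondAni\otimes\set\simeq\Fun^\times(\extr_\kappa^\op,\set).
\]
Alternatively, one can verify directly that the $0$-truncated objects of $\Fun^\times(\extr_\kappa^\op,\Ani)$ are exactly the pointwise discrete functors. This holds because truncatedness in a presheaf category is checked pointwise, and the product condition restricts correctly.

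The main obstacle is making sure the sifted completion in Definition~\ref{def:cond} really matches Lurie's $\mathcal{P}_\Sigma$, and in particular that $\extr_\kappa$ has finite coproducts. The empty space must be included as the initial object, so that $F(\emptyset)=\ast$. Checking this closure, and that the cardinality bound is preserved under finite coproducts, is the only non-formal input. Everything else follows from the cited results.
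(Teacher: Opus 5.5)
Your proof is correct. For the anima-valued statement you argue exactly as the paper does: you invoke \cite[Proposition 5.5.8.15]{Lurie2009} (with Remark 5.5.8.16) after checking that $\extr_\kappa$ is essentially small and has finite coproducts, including the empty space.

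For the set-valued statement you take a different route. The paper cites the $1$-categorical result \cite[Corollary 2.8]{Adamek2001}: the free sifted-colimit completion of a small category with finite coproducts is its category of finite-product-preserving $\set$-valued presheaves. You instead deduce the set case from the anima case by $0$-truncation. You do this either via Example~\ref{ex:pres-tens-prod}(ii) combined with Corollary~\ref{cor:pres-tens-prod}, or via the direct observation that an object $F$ of $\Fun^\times(\extr_\kappa^\op,\Ani)$ is $0$-truncated if and only if it is pointwise discrete. That observation holds because this subcategory is full in presheaves and contains the representables, so $F(E)\simeq\mathrm{Map}(yE,F)$.

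Your route matches Definition~\ref{def:cond} more literally, since there $\CondSet$ is defined as the $0$-truncated objects of $\CondAni$. It also uses only tools already present in the paper. The paper's citation instead identifies $\Fun^\times(\extr_\kappa^\op,\set)$ with the $1$-categorical sifted completion of $\extr_\kappa$. That gives a $1$-categorical universal property for free. However, to connect it with the definition as stated, one still has to know that the $0$-truncation of the $\infty$-categorical sifted completion is the $1$-categorical one, which is precisely your pointwise-truncation argument.
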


\begin{proof}
This follows from \cite[Proposition 5.5.8.15, Remark 5.5.8.16]{Lurie2009} in the $\Ani$-valued case
and \cite[Corollary 2.8]{Adamek2001} in the $\set$-valued case.
\end{proof}

\begin{remark}\label{rem:size-issues}
At first it might seem arbitrary to fix a strong limit cardinal $\kappa$.
We mainly do so in order to ensure that $\CondAni$ and $\CondSet$ are presentable
since they are ($\infty$-, resp. 1-)sheaf categories on a small site
and thus we can simply apply all aspects of the theory of presentable $\infty$-categories without further ado.
However, one can certainly take the colimit over all $\kappa$ and obtain the \enquote{actual} category of condensed anima/sets,
see also \cite[Remark 1.4]{Scholze2026} and \cite[Theorem 2.4.2]{Bihlmaier2025}.
Then, one has to work with accessible sheaves on $\extr$ and extend the theory of tensor product of presentable categories
to the slightly more general setting to which condensed ($\infty$-)categories belong.
\end{remark}

\begin{remark}\label{rem:cond-sheaves-comparison}
By the comparison lemma, see \cite[Theorem 2.3.10]{Bihlmaier2025}, \cite[Appendix, Corollary 4.3]{MacLane1994} and \cite[Remark 4.4.8]{Kuijper2026},
the $\infty$-category of condensed anima is equivalent to the $\infty$-category of hypersheaves on the site $\prolak$.
Here, $\prolak$ is the category of profinite sets (= Stone spaces) of weight less than $\kappa$,
and covers are given by finite families of jointly surjective maps.
\end{remark}

As usual with a (1-)topos, there are certain finiteness and separation conditions, see \cite[Chapter 2.5]{Bihlmaier2025}.

\begin{definition}\label{def:cond-qcqs}
\begin{enumerate}[(i)]
    \item A cover of a condensed set $X$ is a family $(U_i\to X)_{i\in I}$ such that the map $\coprod_{i\in I}U_i\to X$ is surjective.
    \item A condensed set $X$ is \emph{quasicompact} if every cover has a finite subcover.
    \item A condensed set $X$ is \emph{quasiseparated} if for any two maps $f,g\colon K\to X$ with $K$ quasicompact,
            the pullback $K\times_X K$ is quasicompact.
\end{enumerate}
\end{definition}

A condensed set can be thought of as a replacement for a topological space.
Namely, the assignment
\[
    X \mapsto (E\mapsto C(E,X))
\]
maps a topological space $X$ to a condensed set: its $E$-valued points are simply the continuous functions $E\to X$,
see \cite[Section 2.4]{Bihlmaier2025}.
Thus, this functor is faithful and, if $X$ is ($\kappa$-)compactly generated, then all those functions recover $X$ completely.
This includes locally ($\kappa$-)compact and metrizable spaces.

\begin{proposition}\label{prop:cond-top-compactly-gen}
The functor $\Top\to\CondSet$ is fully faithful on $\kappa$-compactly generated topological spaces
and induces an equivalence of the category $\CHaus_\kappa$ of compact Hausdorff spaces of weight less than $\kappa$
and the full subcategory $\qcqs$ of quasicompact quasiseparated condensed sets.
\end{proposition}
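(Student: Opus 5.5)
The plan is to assemble this result from standard facts about condensed sets, recalled from the literature in the form already set up in this section, rather than to prove it from scratch. First, the functor $X\mapsto(E\mapsto C(E,X))$ is well defined as a functor $\Top\to\CondSet$ via the description $\CondSet\simeq\Fun^\times(\extr_\kappa^\op,\set)$ of Proposition~\ref{prop:cond-sheaves}: finite coproducts in $\extr_\kappa$ are disjoint unions, so $C(E\sqcup E',X)=C(E,X)\times C(E',X)$. Faithfulness is immediate from evaluating at the point $\ast\in\extr_\kappa$, which recovers the underlying set.

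For full faithfulness on $\kappa$-compactly generated spaces, I will show that a natural transformation $\phi\colon\underline{X}\to\underline{Y}$ is induced by its underlying map $f=\phi(\ast)$. Naturality against the points $\ast\to E$ gives $\phi_E(g)=f\circ g$ for every $g\in C(E,X)$, so $f\circ g$ is continuous for all $g\colon E\to X$ with $E$ extremally disconnected of weight $<\kappa$. Every compact Hausdorff space $K$ of weight $<\kappa$ is a quotient of such an $E$ (the Stone–Čech compactification of $K$ with the discrete topology, or a Gleason cover, whose weight remains below $\kappa$ because $\kappa$ is a strong limit). Hence $f$ is continuous on every compact Hausdorff test space of weight $<\kappa$, and $\kappa$-compact generation of $X$ then gives continuity of $f$.

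For the equivalence $\CHaus_\kappa\simeq\qcqs$, I will first check that $\underline{K}$ is qcqs for $K$ compact Hausdorff. Choosing a surjection $E\twoheadrightarrow K$ from an extremally disconnected $E$ of weight $<\kappa$ gives an epimorphism $\underline{E}\to\underline{K}$ from a representable, which yields quasicompactness. The fiber products $\underline{E}\times_{\underline{K}}\underline{E'}=\underline{E\times_K E'}$ are again compact Hausdorff, which yields quasiseparatedness. For essential surjectivity, take a qcqs $X$ and choose a surjection $\underline{E}\to X$, which exists by quasicompactness. Its kernel $R=\underline{E}\times_X\underline{E}$ is quasicompact by quasiseparatedness and is a subobject of $\underline{E\times E}$. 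I then need to show that $R$ is representable by a closed subspace $R'\subseteq E\times E$: it is the image of a quasicompact, hence compact Hausdorff, cover, and images of compact spaces are closed. The space $K=E/R'$ is then compact Hausdorff, and $X\simeq\operatorname{coeq}(\underline{R'}\rightrightarrows\underline{E})=\underline{K}$ because quotients of compact Hausdorff spaces by closed equivalence relations are preserved.

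The main obstacle is this last step: identifying quasicompact subobjects of $\underline{E\times E}$ with closed subsets, and checking that $\underline{(-)}$ preserves the relevant coequalizer, that is, that every effective epimorphism of compact Hausdorff spaces remains effective in $\CondSet$. I would cite \cite[Chapter 2.5]{Bihlmaier2025} and \cite{Scholze2026} for these facts instead of reproving them.
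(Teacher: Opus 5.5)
Your proposal is correct and takes the same route as the paper, which proves this statement purely by citation (\cite[Corollary 2.3.29]{Bihlmaier2025} for full faithfulness and \cite[Theorem 2.5.46]{Bihlmaier2025} for $\CHaus_\kappa\simeq\qcqs$); your sketch is the standard argument behind those results. The points you leave implicit are routine. First, $\underline{E}\to\underline{K}$ is an epimorphism because extremally disconnected spaces are projective among compact Hausdorff spaces. Second, full faithfulness on $\CHaus_\kappa$ is a special case of your first part. Third, $E/R'$ has weight at most $w(E)<\kappa$, so $\underline{E/R'}$ indeed lies in $\CHaus_\kappa$.
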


\begin{proof}
See \cite[Corollary 2.3.29]{Bihlmaier2025} for the first, and \cite[Theorem 2.5.46]{Bihlmaier2025} for the second statement.
\end{proof}

\begin{definition}\label{def:cond-cube}
The category of \emph{condensed cubesets} is $\cSet\otimes\CondAni = \cSet\otimes\CondSet$.
We denote this category by $\cCond$.
\end{definition}

\begin{proposition}\label{prop:cond-cube-descr}
Explicitly, the category $\cCond$ of condensed cubesets is canonically equivalent to any of the following categories.
\begin{enumerate}[(a)]
    \item The category of right adjoint functors $\cSet^\op\to\CondSet$,
    \item the category of right adjoint functors $\CondSet^\op\to\cSet$,
    \item the category of functors ${\bbox}^\op\to\CondSet$ \emph{(cubical condensed sets)},
    \item the category of finite product preserving functors $\extr_\kappa^\op\to\cSet$ \emph{(condensed cubesets)},
    \item the category of functors $\extr_\kappa^\op\times{\bbox}^\op\to\set$ that preserve finite products in the first variable.
\end{enumerate}
\end{proposition}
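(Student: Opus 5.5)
The plan is to obtain all five descriptions from the general machinery for tensor products in $\PrL$ recorded above, starting from the definition $\cCond=\cSet\otimes\CondSet$.

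For (a), apply Proposition~\ref{prop:pres-tens-prod} with $\cC=\cSet$ and $\cD=\CondSet$. This gives $\cSet\otimes\CondSet\simeq\Fun^\uR(\cSet^\op,\CondSet)$. For (b), use the symmetry of the tensor product in $\PrL$ and apply the same proposition with the roles exchanged, giving $\Fun^\uR(\CondSet^\op,\cSet)$.

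We first settle the agreement $\cSet\otimes\CondAni=\cSet\otimes\CondSet$ implicit in the definition. Since $\cSet$ is a 1-category, Example~\ref{ex:pres-tens-prod}(iii) gives $\cSet\otimes\CondAni=\cSet\otimes\tau_0\CondAni$. Moreover $\tau_0\CondAni=\CondSet$, because the $0$-truncation of $\Fun^\times(\extr_\kappa^\op,\Ani)$ is $\Fun^\times(\extr_\kappa^\op,\set)$ by Proposition~\ref{prop:cond-sheaves}.

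For (c), write $\cSet=\PSh(\bbox)=\Fun^\cK(\bbox^\op,\set)$ with $\cK=\emptyset$. By Example~\ref{ex:pres-tens-prod}(ii) and (iii) this equals $\Fun(\bbox^\op,\Ani)\otimes\set$. Corollary~\ref{cor:pres-tens-prod}, with $S=\bbox^\op$ and $\cK$ empty, then gives $\Fun(\bbox^\op,\Ani)\otimes\CondSet\simeq\Fun(\bbox^\op,\CondSet)$. One checks that tensoring with $\set$ in between does no harm, since $\CondSet$ is already $0$-truncated and $\set$ is idempotent by Example~\ref{ex:pres-tens-prod}(iv).

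For (d), use symmetry and write $\CondSet\simeq\Fun^\times(\extr_\kappa^\op,\set)$. This is the $0$-truncation of $\Fun^\times(\extr_\kappa^\op,\Ani)$, which is of the form required by Corollary~\ref{cor:pres-tens-prod} with $\cK$ the finite sets and $S=\extr_\kappa^\op$. Note that $S$ has finite products, since $\extr_\kappa$ has finite coproducts. The corollary then yields $\CondSet\otimes\cSet\simeq\Fun^\times(\extr_\kappa^\op,\cSet)$.

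For (e), curry: $\Fun(\bbox^\op,\Fun^\times(\extr_\kappa^\op,\set))$ is equivalent to functors $\extr_\kappa^\op\times\bbox^\op\to\set$ that preserve finite products in the first variable. This uses that finite products in a functor category are computed pointwise. The same currying shows directly that (c) and (d) agree, which is a useful consistency check.

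The main point needing care is the bookkeeping between $\infty$-categorical and $1$-categorical tensor products. Corollary~\ref{cor:pres-tens-prod} is stated for $\cC=\Fun^\cK(S,\Ani)$, while our factors are $\set$-valued, so at each step we must pass through $\tau_0$ using Example~\ref{ex:pres-tens-prod}(i)–(iii). We also need $\Fun^\cK(S,\set)=\tau_0\Fun^\cK(S,\Ani)$, which holds because truncation commutes with the relevant finite limits here (products of $0$-truncated objects are $0$-truncated). Finally, canonicity follows because every equivalence used is either the universal one from Proposition~\ref{prop:pres-tens-prod} or the functorial one from Lemma~\ref{lem:pres-tens-prod-fun}.
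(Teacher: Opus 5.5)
Your proposal is correct and follows essentially the paper's route: (a) and (b) come from Proposition~\ref{prop:pres-tens-prod} (using symmetry of $\otimes$ for (b)), (c) and (d) from Corollary~\ref{cor:pres-tens-prod} with $\cK=\emptyset$ and $\cK$ the finite sets respectively, and (e) by currying. Your additional bookkeeping passes between $\Ani$-valued and $\set$-valued functor categories via Example~\ref{ex:pres-tens-prod}; this fills in a step the paper leaves implicit, since Corollary~\ref{cor:pres-tens-prod} is only stated for $\Fun^\cK(S,\Ani)$.
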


\begin{proof}
The first two follow from Proposition~\ref{prop:pres-tens-prod}.
The equivalence of $(a)$ and $(c)$ follows from Corollary~\ref{cor:pres-tens-prod}, as does the equivalence of $(b)$ and $(d)$.
The equivalence with $(e)$ then follows from currying.
\end{proof}

\begin{remark}
We have already considered a similar construction when talking about cubegroups:
those form the category of functors ${\bbox}^\op\to\Grp$, and so can be described as $\cSet\otimes\Grp$.
Thus, considering topological cubesets (= cubespaces) is essentially the same as working with cubegroups (or any other algebraic object).
The reason for working in condensed sets is that those behave just like any other algebraic theory,
unlike topological spaces, and so admit a description as a sheaf category.
\end{remark}

Recall the inclusion $i\colon\ast\to\bbox$ and the corresponding adjoint functors
$i_!\dashv i^\ast\dashv i_\ast$ from Lemma~\ref{lem:point-cset}.
Also, recall that for a condensed set $X$, the functor given by evaluation at the point $X\mapsto X(\ast)$
admits a left adjoint, given by sending a set $S$ to the sheafification of the constant presheaf
$E\mapsto S$ for $E\in\extr$.
This left adjoint is denoted $S_\ud$ since it agrees with the set $S$
as discrete topological space, viewed as condensed set, see \cite[Lemma 2.4.38]{Bihlmaier2025}.

\begin{lemma}
\begin{enumerate}[(i)]
\item The adjunction $i_!\dashv i^\ast$ induces an adjunction
        $i_!\otimes\id\colon\CondSet\rightleftarrows\cCond\lon i^\ast\otimes\id$.
        Moreover, the diagram
        \begin{center}
        \begin{tikzcd}
            	{\set\otimes\CondSet} & {\cSet\otimes\CondSet} \\
            	\\
            	{\Fun^\times(\extr_\kappa^\op,\set)} & {\Fun^\times(\extr_\kappa^\op,\cSet)}
            	\arrow[""{name=0, anchor=center, inner sep=0}, "{i_!\otimes\id}"', from=1-1, to=1-2]
            	\arrow["\simeq"', from=1-1, to=3-1]
            	\arrow[""{name=1, anchor=center, inner sep=0}, "{i^\ast\otimes\id}"', curve={height=12pt}, from=1-2, to=1-1]
            	\arrow["\simeq", from=1-2, to=3-2]
            	\arrow[""{name=2, anchor=center, inner sep=0}, "{i_!\circ -}"', from=3-1, to=3-2]
            	\arrow[""{name=3, anchor=center, inner sep=0}, "{i^\ast\circ -}"', curve={height=12pt}, from=3-2, to=3-1]
            	\arrow["\dashv"{anchor=center, rotate=84}, draw=none, from=0, to=1]
            	\arrow["\dashv"{anchor=center, rotate=84}, draw=none, from=2, to=3]
        \end{tikzcd}
        \end{center}
        is commutative.
\item The adjunction $(-)_\ud\dashv(-)(\ast)$ induces an adjunction
        $\id\otimes(-)_\ud\colon\cSet\rightleftarrows\cCond\lon \id\otimes(-)(\ast)$.
        Moreover, the diagram
        \begin{center}
        \begin{tikzcd}
	        {\cSet\otimes\set} & {\cSet\otimes\CondSet} \\
            	\\
            	{\Fun({\bbox}^\op,\set)} & {\Fun({\bbox}^\op,\CondSet)}
            	\arrow[""{name=0, anchor=center, inner sep=0}, "{\id\otimes(-)_\ud}"', from=1-1, to=1-2]
            	\arrow["\simeq"', from=1-1, to=3-1]
            	\arrow[""{name=1, anchor=center, inner sep=0}, "{\id\otimes(-)(*)}"', curve={height=12pt}, from=1-2, to=1-1]
            	\arrow["\simeq", from=1-2, to=3-2]
            	\arrow[""{name=2, anchor=center, inner sep=0}, "{(-)_\ud\circ -}"', from=3-1, to=3-2]
            	\arrow[""{name=3, anchor=center, inner sep=0}, "{(-)(*)\circ -}"', curve={height=12pt}, from=3-2, to=3-1]
            	\arrow["\dashv"{anchor=center, rotate=69}, draw=none, from=0, to=1]
            	\arrow["\dashv"{anchor=center, rotate=68}, draw=none, from=2, to=3]
        \end{tikzcd}
        \end{center}
        is commutative.
\end{enumerate}
\end{lemma}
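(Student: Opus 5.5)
Both parts follow one pattern: realise the tensor product of a functor with an identity as postcomposition, using Lemma~\ref{lem:pres-tens-prod-fun}, and then transport the adjunction through the equivalences.

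For (i), I would take $\cC=\CondSet\simeq\Fun^\times(\extr_\kappa^\op,\set)$, so $S=\extr_\kappa^\op$ and $\cK$ consists of the finite discrete sets. By symmetry of $\otimes$ in $\PrL$, $\set\otimes\CondSet\simeq\CondSet\otimes\set$ and $\cSet\otimes\CondSet\simeq\CondSet\otimes\cSet$. Corollary~\ref{cor:pres-tens-prod} identifies these with $\Fun^\times(\extr_\kappa^\op,\set)$ and $\Fun^\times(\extr_\kappa^\op,\cSet)$, which are the vertical equivalences.

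Both $i_!$ and $i^\ast$ are left adjoints, so they lie in $\PrL$. They also preserve finite products: $i^\ast$ is evaluation at $0$ and preserves all limits, and $i_!$ preserves limits by Remark~\ref{rem:left-adjoint-i_!}. Lemma~\ref{lem:pres-tens-prod-fun} therefore applies to each, so $i_!\otimes\id$ and $i^\ast\otimes\id$ correspond to postcomposition $i_!\circ-$ and $i^\ast\circ-$. This gives commutativity of the square for both the upper and the lower arrows.

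The adjunction itself I would get from 2-functoriality. The unit and counit of $i_!\dashv i^\ast$ are natural transformations between functors in $\PrL$, and whiskering with $\id$ (or, concretely, postcomposing) carries the triangle identities along. So $i_!\circ-\dashv i^\ast\circ-$ on $\Fun^\times(\extr_\kappa^\op,-)$. This is the pointwise adjunction restricted to full subcategories that both functors preserve.

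For (ii), I would take $\cC=\cSet=\Fun({\bbox}^\op,\set)$ with $\cK=\emptyset$, which is Example~\ref{ex:cont-fun-cats}(ii). Then the only requirement in Lemma~\ref{lem:pres-tens-prod-fun} is that the functors lie in $\PrL$. The functor $(-)_\ud$ is a left adjoint. The functor $(-)(\ast)$, evaluation at the point, has the right adjoint $S\mapsto(E\mapsto S^{E(\ast)})$, which is the indiscrete condensed set, so it too lies in $\PrL$. Hence both tensored functors are postcomposition, and the same whiskering argument gives $(-)_\ud\circ-\dashv(-)(\ast)\circ-$ on $\Fun({\bbox}^\op,-)$.

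The main obstacle is the left adjoint of $(-)(\ast)$'s role in (ii), that is, verifying that $(-)(\ast)$ is cocontinuous. I would argue it via $\extr$-presentation: sheafification is not needed at $\ast$, because $\ast$ is projective in $\extr$, so evaluation at $\ast$ commutes with colimits. A secondary point is that the $(\infty,1)$-statement of Lemma~\ref{lem:pres-tens-prod-fun} only gives commutativity up to equivalence. Since everything here is a 1-category, Example~\ref{ex:pres-tens-prod}(v) makes this sufficient.
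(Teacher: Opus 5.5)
Your proposal is correct and follows the paper's proof. Commutativity of both squares comes from Lemma~\ref{lem:pres-tens-prod-fun}, using that $i_!$ and $i^\ast$ preserve finite products in (i) and that $\cK=\emptyset$ in (ii). The adjunction comes from the $2$-functoriality of $\otimes$ on $\PrL$; your concrete variant is an equally valid, more elementary substitute: transport $i_!\circ{-}\dashv i^\ast\circ{-}$ (resp.\ $(-)_\ud\circ{-}\dashv(-)(\ast)\circ{-}$) along the vertical equivalences.

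Your explicit check that $(-)(\ast)$ lies in $\PrL$ via the indiscrete right adjoint settles a point the paper uses tacitly. Your alternative justification via projectivity of $\ast$ would by itself only give sifted colimits, since every object of $\extr$ is projective. Preservation of coproducts also needs that $\ast$ has no nontrivial clopen decomposition.
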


\begin{proof}
That the adjunctions $i_!\dashv i^\ast$ and $(-)_\ud\dashv(-)(\ast)$ upgrade to adjunctions
between the respective tensor products follows from \cite[Section I.1.6.1]{Gaitsgory2017}
where it is shown that the tensor product of presentable $(\infty,1)$-categories
extends to a symmetric monoidal structure on the $(\infty,2)$-category $\PrL$
(where the proof in the unstable case works as in the stable case),
see also \cite[Section 1.2]{Haine2025}.
In particular, the tensor product preserves adjunctions in $\PrL$.

The second claim in (i) follows from Lemma~\ref{lem:pres-tens-prod-fun}
since both $i_!$ and $i^\ast$ preserve limits.
The second claim in (ii) also follows from Lemma~\ref{lem:pres-tens-prod-fun} since $\cK =\emptyset$ in this case.
\end{proof}

\begin{definition}
A condensed cubeset in the essential image of the functor $(-)_\ud\colon\cSet\to\cCond$ is called a \emph{discrete condensed cubeset},
or simply \emph{cubeset}.
\end{definition}

This abuse of notation is justified by the fact that $(-)_\ud$ is fully faithful,
and so its essential image is equivalent to $\cSet$.

Furthermore, there is also a functor $i_!\colon\CondSet\to\cCond$.
This yields, in particular, a Yoneda embedding
\[
    \extr_\kappa\times\bbox\to\cCond,\quad (E,n)\mapsto i_!E\times{\bbox}^n_\ud
\]
since it is given by the composition
\begin{center}
\begin{tikzcd}
	{\extr_\kappa\times\bbox} & {\Fun((\extr_\kappa\times\bbox)^\op,\set)=\Fun(\extr_\kappa^\op,\cSet)} & \cCond.
	\arrow["y", from=1-1, to=1-2]
	\arrow["\Sh", from=1-2, to=1-3]
\end{tikzcd}
\end{center}
To see those two functors to be equal,
note that the Yoneda lemma on the level of presheaves takes the form
\[
    (E,n)\mapsto\left((S,m)\mapsto C(S,E)\times\hom({\bbox}^m,{\bbox}^n)\right)
\]
and since sheafification commutes with products and is computed pointwise on $\bbox$,
we have that
\begin{align*}
       \Sh\left(C(-,E)\times\hom({\bbox}^m,{\bbox}^n)\right)
    &= C(-,E)\times\Sh(\hom({\bbox}^m,{\bbox}^n)) \\
    &= C(-,E)\times C(-,\hom({\bbox}^m,{\bbox}^n)) \\
    &= C(-,E\times\hom({\bbox}^m,{\bbox}^n)).
\end{align*}
The classical Yoneda lemma also gives rise to an enriched Yoneda lemma.
First, note that the Hom-set $\hom(X,Y)$ of two condensed cubesets $X$ and $Y$
naturally has the structure of a condensed set:
its $E$-valued points are given by $\hom(X\times i_!E,Y)$.
This defines a condensed set since $i_!$ preserves coproducts as left adjoint,
so we have that
\[
    X\times i_!(E\sqcup E') = X\times (i_!E\sqcup i_!E') = (X\times i_!E)\sqcup(X\times i_!E')
\]
for $E,E'\in\extr_\kappa$,
where we have used that coproducts are stable under base change in $\CondSet$
and hence by pointwise computation also in $\cCond$.

\begin{lemma}[Enriched Yoneda Lemma]\label{lem:yoneda-enriched}
Let $X$ be a condensed cubeset and $n\in\N_0$.
Then there is an isomorphism of condensed sets
\[
    X(n) \simeq \hom({\bbox}^n_\ud,X).
\]
\end{lemma}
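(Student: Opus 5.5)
We prove the statement by identifying both sides as condensed sets through their $E$-valued points, for $E\in\extr_\kappa$, and applying the ordinary Yoneda lemma levelwise. By Proposition~\ref{prop:cond-cube-descr}(d),(e), a condensed cubeset $X$ is a functor $\extr_\kappa^\op\times{\bbox}^\op\to\set$ that preserves finite products in the first variable. Under this description the condensed set $X(n)$ is $E\mapsto X(E,n)$. The condensed set $\hom({\bbox}^n_\ud,X)$, in turn, has $E$-valued points $\hom({\bbox}^n_\ud\times i_!E,X)$ by the definition given just before the lemma. The claim therefore reduces to exhibiting a bijection
\[
    \hom({\bbox}^n_\ud\times i_!E,X)\simeq X(E,n)
\]
that is natural in $E$ and in $X$.

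The key step is to recognize ${\bbox}^n_\ud\times i_!E$ as the representable object attached to $(E,n)$. The paragraph preceding the lemma shows that the composite of the Yoneda embedding with sheafification sends $(E,n)$ to $i_!E\times{\bbox}^n_\ud$; it also computes the underlying presheaf explicitly as $(S,m)\mapsto C(S,E)\times\hom({\bbox}^m,{\bbox}^n)$. This presheaf already preserves finite products in $S$, since $C(S\sqcup S',E)=C(S,E)\times C(S',E)$. Hence it is already a sheaf, and no sheafification is needed. Because $\cCond$ is a full subcategory of $\Fun(\extr_\kappa^\op,\cSet)$ by Proposition~\ref{prop:cond-cube-descr}, the ordinary Yoneda lemma in that presheaf category gives
\[
    \hom(y(E,n),X)\simeq X(E,n),
\]
and this bijection is natural in $(E,n)$ and in $X$.

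It remains to check that the identifications are compatible with the condensed structures on both sides. A map $E'\to E$ in $\extr_\kappa$ acts on $\hom({\bbox}^n_\ud\times i_!E,X)$ by precomposition with $i_!E'\to i_!E$. Under the Yoneda bijection this corresponds to the restriction $X(E,n)\to X(E',n)$, because the maps are induced by the morphism $(E',n)\to(E,n)$ in $\extr_\kappa\times\bbox$. The levelwise bijections thus assemble into an isomorphism of presheaves on $\extr_\kappa$, and hence into an isomorphism of condensed sets.

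The main obstacle is making the identification of ${\bbox}^n_\ud\times i_!E$ with $y(E,n)$ precise. In particular, one must confirm that $i_!E$ is the constant cubeset valued in the condensed set $E$, i.e.\ the functor $(S,m)\mapsto C(S,E)$, and that ${\bbox}^n_\ud$ is $(S,m)\mapsto C(S,\hom({\bbox}^m,{\bbox}^n))$. For the latter, note that $\hom({\bbox}^m,{\bbox}^n)$ is finite, so $C(S,\hom({\bbox}^m,{\bbox}^n))$ equals the product-preserving extension of the constant presheaf on extremally disconnected $S$. This follows from the commutative diagrams in the preceding lemma together with the explicit description of $(-)_\ud$.
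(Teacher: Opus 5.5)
Your overall route is the paper's: compute $\hom({\bbox}^n_\ud,X)$ on $E$-points as $\hom({\bbox}^n_\ud\times i_!E,X)$ and identify this with $X(E,n)$ by the classical Yoneda lemma. However, your justification of the key step is wrong.

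The representable presheaf $y(E,n)\colon(S,m)\mapsto C(S,E)\times\hom({\bbox}^m,{\bbox}^n)$ does \emph{not} preserve finite products in $S$ once $n\geq 1$. The second factor is constant in $S$. At $S=\emptyset$ the presheaf takes the value $\hom({\bbox}^m,{\bbox}^n)$ instead of a point. At $S\sqcup S'$ the comparison map to $y(E,n)(S,m)\times y(E,n)(S',m)$ is the diagonal on the finite set $\hom({\bbox}^m,{\bbox}^n)$, which has at least two elements. The identity $C(S\sqcup S',E)=C(S,E)\times C(S',E)$ only handles the first factor.

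So $y(E,n)$ is not a condensed cubeset. The object ${\bbox}^n_\ud\times i_!E$ is not $y(E,n)$ but its sheafification $(S,m)\mapsto C(S,E)\times C(S,\hom({\bbox}^m,{\bbox}^n))$. This is exactly what your own last paragraph computes, which contradicts your claim that no sheafification is needed. Consequently the Yoneda bijection $\hom(y(E,n),X)\simeq X(E,n)$ in $\Fun(\extr_\kappa^\op,\cSet)$ does not, as you argued it, compute $\hom_{\cCond}({\bbox}^n_\ud\times i_!E,X)$.

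The repair is one line, and it is what the paper's proof uses together with the computation just before the lemma. Sheafification is left adjoint to the inclusion $\cCond\subset\Fun(\extr_\kappa^\op,\cSet)$ and sends $y(E,n)$ to $i_!E\times{\bbox}^n_\ud$. Hence, since $X$ is a sheaf, $\hom_{\cCond}({\bbox}^n_\ud\times i_!E,X)\simeq\hom(y(E,n),X)\simeq X(E,n)$. The adjunction bijection is natural in the first variable, so your naturality check in $E$ goes through unchanged. With this substitution your argument coincides with the paper's.
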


\begin{proof}
We show the claim pointwise on $E\in\extr_\kappa$.
There we have that
\[
    \hom({\bbox}^n_\ud,X)(E) = \hom({\bbox}^n_\ud\times i_!E,X) = X(E,n),
\]
where the last equality follows from the classical Yoneda lemma.
\end{proof}

In the following subsections we will revisit the main notions of cubesets introduced throughout the first chapters
and explain what happens to them after tensoring with $\CondSet$.

\subsection{Concrete Cubesets}

The following proposition allows us to translate properties of discrete cubesets to condensed cubesets.

\begin{proposition}
The concrete cubeset adjunction $(-)^\mathrm{conc}\colon\cSet\rightleftarrows\ccSet$
induces by tensoring with $\CondSet$ an adjunction
$(-)^\mathrm{conc}\colon\cCond\rightleftarrows\cCond^\mathrm{conc}$
with fully faithful right adjoint.
The category $\cCond^\mathrm{conc}$ is canonically identified with the full subcategory of $\cCond$
spanned by objects $X$ for which the map
\[
    X(n) \to \prod_{0\to n} X(0)
\]
is an injection of condensed sets for every $n\in\N$.
Moreover, the left adjoint $(-)^\mathrm{conc}$ is given by
\[
    X^\mathrm{conc}(n) = \im\left(X(n)\to\prod_{0\to n}X(0)\right).
\]
\end{proposition}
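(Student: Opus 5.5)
The plan is to reduce everything to the pointwise description of $\cCond$ as finite product preserving functors $\extr_\kappa^\op\to\cSet$ from Proposition~\ref{prop:cond-cube-descr}(d). The first step is to upgrade the adjunction $(-)^\mathrm{conc}\dashv\iota$ of Proposition~\ref{prop:concrete-left-adjoint} to an adjunction between the tensor products. Here $\ccSet$ is a presentable category: it is reflective in $\cSet$ and closed under filtered colimits by Proposition~\ref{prop:concrete-stability}. Adjunctions in $\PrL$ are preserved by $-\otimes\CondSet$, as in the proof of the preceding lemma via \cite{Gaitsgory2017}. Full faithfulness of the right adjoint is equivalent to the counit being an isomorphism, and this property is also transported by the 2-functoriality of the tensor product.

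The second step identifies $\ccSet\otimes\CondSet$ concretely. The inclusion $\iota$ preserves limits, but as a right adjoint it is not a morphism of $\PrL$. So I would use the right-adjoint picture of Proposition~\ref{prop:pres-tens-prod}: $\cC\otimes\cD\simeq\Fun^\uR(\cD^\op,\cC)$, functorial in $\cC$ along right adjoints by \cite[Remark 071F]{Kerodon2026}, exactly as in the proof of Lemma~\ref{lem:pres-tens-prod-fun}. With $\cD=\CondSet$ and the description $\CondSet\simeq\Fun^\times(\extr_\kappa^\op,\set)$, this gives $\ccSet\otimes\CondSet\simeq\Fun^\times(\extr_\kappa^\op,\ccSet)$. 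Under this equivalence the right adjoint is postcomposition with $\iota$. Since $\iota$ preserves limits, this postcomposition indeed lands in finite product preserving functors.

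The left adjoint is then postcomposition with $(-)^\mathrm{conc}$, provided this preserves finite products. That holds by Proposition~\ref{prop:concrete-ihom-prod}, and this is exactly the hypothesis of Lemma~\ref{lem:pres-tens-prod-fun} with $\cK$ the finite sets. Hence $X^\mathrm{conc}(E,n)=\im(X(E,n)\to X(E,0)^{2^n})$ pointwise in $E$.

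The third step translates the pointwise conditions into condensed ones. A condensed cubeset $X$ lies in the image precisely when each $X(E,-)$ is concrete. Injectivity of $X(E,n)\to\prod X(E,0)$ for all $E$ is exactly injectivity of $X(n)\to\prod_{0\to n}X(0)$ as a map of condensed sets, since monomorphisms of sheaves are detected on sections and products are computed pointwise. Likewise, the pointwise images assemble to the image in $\CondSet$. On $\extr_\kappa$ every surjection of condensed sets is surjective on sections, because extremally disconnected spaces are projective, so no sheafification of the image is needed.

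I expect the main obstacle to be the second step: making rigorous that the tensor product of the reflective inclusion is again the pointwise inclusion. This needs the functoriality of $\Fun^\uR(\CondSet^\op,-)$ along right adjoints, together with the compatibility of the identification $\Fun^\uR(\CondSet^\op,\cC)\simeq\Fun^\times(\extr_\kappa^\op,\cC)$ with postcomposition. I would check that compatibility by restricting along the Yoneda embedding of $\extr_\kappa$.
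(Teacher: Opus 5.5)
Your proof is correct and follows essentially the paper's route: you pass to the description $\Fun^\times(\extr_\kappa^\op,-)$ and use Lemma~\ref{lem:pres-tens-prod-fun} together with the finite-product preservation from Proposition~\ref{prop:concrete-ihom-prod}; your extra details (the right adjoint via the $\Fun^\uR$-picture, and monomorphisms and images computed sectionwise using projectivity of $\extr_\kappa$) spell out what the paper summarizes as ``the rest of the claims''. The one slip is in your first step: $\iota$ does not preserve colimits (Example~\ref{ex:concrete-colimit-not}), so $(-)^\mathrm{conc}\dashv\iota$ is not an adjunction internal to $\PrL$ and 2-functoriality there does not transport full faithfulness, but your second step (postcomposition with the fully faithful $\iota$) gives it directly, so nothing is lost.
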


\begin{proof}
The existence of the adjunction is clear by functoriality in $\PrL$.
Since both the inclusion $\cSet^\mathrm{conc}\to\cSet$ and its left adjoint $(-)^\mathrm{conc}$ preserve finite products
by Proposition~\ref{prop:concrete-ihom-prod},
under the equivalence $\cSet\otimes\CondSet\simeq\Fun^\times(\extr_\kappa^\op,\cSet)$,
the tensored adjunction is given by pushforward of the original one by Lemma~\ref{lem:pres-tens-prod-fun}.
This implies the rest of the claims.
\end{proof}

\subsection{Fibrant Cubesets}

Since the functor $(-)_\ud\colon\cSet\to\cCond$ preserves colimits,
the corner ${\bbcor}^n_\ud$ is the colimit in $\cCond$ over all inert inclusions ${\bbox}^k_\ud\to{\bbox}^n_\ud$ for $k<n$.

\begin{definition}
A condensed cubeset $X$ is \emph{fibrant} if the map
\[
    X(n) = \hom({\bbox}^n_\ud,X)\to\hom({\bbcor}^n_\ud,X) = \varprojlim_{\substack{{\bbox}^k\to{\bbox}^n\\k<n}}X(k)
\]
is a surjection of condensed sets.
\end{definition}

Equivalently, $X$ is fibrant if for each $E\in\extr_\kappa$, the corresponding cubeset $X(E)$ is fibrant.
Similarly, one defines fibrations of condensed cubesets.

Recall the set of maps
\[
    \bprism\coloneq \{ {{\bbox}^{n}\sqcup_{{\bbox}^{n-1}}{\bbox}^{n}}\to{({\bbox}^{n}\sqcup_{{\bbox}^{n-1}}{\bbox}^{n})\sqcup_{{\bbox}^{n-1}\sqcup{\bbox}^{n-1}}{\bbox}^n} \mid n\in\N \}
\]
from Definition~\ref{def:gluing-property}.

\begin{definition}
A condensed cubeset $X$ has the \emph{gluing property} (\emph{unique gluing property}, \emph{at most unique gluing property}) if the pullback
\[
    f^\ast\colon\hom(B,X)\to\hom(A,X)
\]
is an epimorphism (isomorphism, monomorphism) of condensed sets for every $f\in\bprism$.
\end{definition}

Equivalently, $X$ has the gluing property if for each $E\in\extr_\kappa$, the cubeset $X(E)$ has the gluing property (and likewise for the other properties).

\begin{proposition}
Let $X$ be a condensed cubeset.
Then the following conditions are equivalent.
\begin{enumerate}[(a)]
    \item It has at most unique gluing.
    \item It is concrete.
\end{enumerate}
Moreover, if $X$ is fibrant, then it has the gluing property.
\end{proposition}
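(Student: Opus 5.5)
The plan is to reduce everything to the discrete statements, Proposition~\ref{prop:unique-gluing-concrete} and Proposition~\ref{prop:nilfib-gluing}, by working pointwise in $E\in\extr_\kappa$. We use the description of $\cCond$ as finite-product preserving functors $\extr_\kappa^\op\to\cSet$ from Proposition~\ref{prop:cond-cube-descr}~(d). The first thing to record is that for $E\in\extr_\kappa$, evaluation at $E$ is exact on condensed sets. It preserves limits, since $\CondSet\simeq\Fun^\times(\extr_\kappa^\op,\set)$ and limits are computed pointwise. It also preserves and reflects monomorphisms and epimorphisms: a map of condensed sets is mono (resp.\ epi) if and only if it is injective (resp.\ surjective) on $E$-points for all extremally disconnected $E$, because extremally disconnected sets are projective and form a generating family.

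Second, for a finite (discrete) cubeset $A$, the hom $\hom(A_\ud,X)$ should be identified pointwise with $\hom(A,X(E))$. We apply this to $A={\bbox}^n$, ${\bbcor}^n$, and the domains and codomains of the maps in $\bprism$. To justify it, write $A$ as a finite colimit of representables and use the enriched Yoneda Lemma~\ref{lem:yoneda-enriched}: $\hom(A_\ud,X)$ becomes the corresponding finite limit of the $X(k)$. Evaluating at $E$ commutes with that limit and gives $\varprojlim X(E)(k)=\hom(A,X(E))$.

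With this in hand, the three properties become pointwise conditions. Concreteness of $X$ means that $X(n)\to\prod X(0)$ is a monomorphism of condensed sets, which holds if and only if every $X(E)$ is a concrete cubeset. At most unique gluing means each $f^\ast$ with $f\in\bprism$ is mono, which holds if and only if each $X(E)$ has at most unique gluing. Fibrancy and the gluing property translate in the same way, using epimorphisms. Then (a)$\iff$(b) follows from Proposition~\ref{prop:unique-gluing-concrete} applied to each $X(E)$, and the final assertion follows from Proposition~\ref{prop:nilfib-gluing} applied to each $X(E)$.

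The main obstacle is the pointwise identification in the second step, specifically that the condensed hom out of a discrete finite colimit is the finite limit. This rests on $(-)_\ud$ preserving colimits and on the pushouts in $\bprism$ being computed as honest finite colimits of representables. A secondary point is the epi/mono characterization via projectivity of extremally disconnected sets. For that I would cite the sheaf description in Proposition~\ref{prop:cond-sheaves}, where epimorphisms are surjective on $\extr$-points because covers of extremally disconnected spaces split.
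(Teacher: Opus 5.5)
Your proposal is correct and is essentially the paper's argument. The paper's proof is the single line that everything follows by testing pointwise against $E\in\extr_\kappa$, i.e.\ by applying Propositions~\ref{prop:unique-gluing-concrete} and~\ref{prop:nilfib-gluing} to each cubeset $X(E)$. Your extra justifications are exactly the details the paper leaves implicit: identifying $\hom(A_\ud,X)(E)$ with $\hom(A,X(E))$ via the enriched Yoneda lemma, and detecting monomorphisms and epimorphisms of condensed sets on $E$-points using projectivity of extremally disconnected spaces.
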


\begin{proof}
This follows from testing pointwise against $E\in\extr_\kappa$.
\end{proof}

\subsection{Truncated Cubesets}

Next, we investigate $n$-step condensed cubesets.

\begin{proposition}
Let $n\in\N_0$.
The $n$-truncation $\tau_n\colon\cSet\rightleftarrows\cSet_n$ induces by tensoring with $\CondSet$ an adjunction
$\tau_n\colon\cCond_n\rightleftarrows\cCond$ with fully faithful right adjoint.
The category $\cCond_n$ is canonically identified with the full subcategory of $\cCond$ of $n$-step condensed cubesets $X$:
those for which the map
\[
    \hom({\bbox}^k_\ud,X)\to\hom({\bbcor}^k_\ud,X)
\]
is an isomorphism for every $k>n$.
Moreover, the left adjoint can be computed pointwise on $E\in\extr_\kappa$.
In particular, if $X$ is concrete and fibrant, then the unit $\eta\colon X\to\tau_n X$ is a surjective fibration
and $\tau_n X$ is concrete and fibrant.
\end{proposition}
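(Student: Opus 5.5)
The plan is to follow the same pattern as the preceding proposition on concretification. First obtain the tensored adjunction formally, then identify it via Lemma~\ref{lem:pres-tens-prod-fun}, and finally deduce the statements about concrete fibrant objects by evaluating at each $E\in\extr_\kappa$.

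Tensoring the adjunction $\tau_n\dashv\iota$ of Proposition~\ref{prop:step-left-adjoint} with $\CondSet$ in $\PrL$ gives an adjunction $\tau_n\otimes\id\dashv\iota\otimes\id$. This uses that the tensor product extends to the $(\infty,2)$-category $\PrL$ and hence preserves adjunctions, as in the proof of the lemma on $i_!\dashv i^\ast$. Both functors preserve finite products:
\begin{itemize}
\item $\iota$ preserves them because $\cSet_n$ is closed under limits (Proposition~\ref{prop:n-trunc-closure});
\item $\tau_n$ preserves them by Proposition~\ref{prop:n-trunc-ihom-prod}.
\end{itemize}
So Lemma~\ref{lem:pres-tens-prod-fun}, applied with $S=\extr_\kappa^\op$ and $\cK$ the finite sets, identifies $\tau_n\otimes\id$ and $\iota\otimes\id$ with postcomposition on $\Fun^\times(\extr_\kappa^\op,-)$. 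Here one must check that $\cSet_n\otimes\CondSet\simeq\Fun^\times(\extr_\kappa^\op,\cSet_n)$. This follows from Corollary~\ref{cor:pres-tens-prod}, writing $\CondSet=\Fun^\times(\extr_\kappa^\op,\set)$ and using symmetry of $\otimes$ together with presentability of $\cSet_n$. In particular the left adjoint is computed pointwise in $E$, and the right adjoint is fully faithful because postcomposition with the fully faithful $\iota$ is fully faithful.

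Next, identify the essential image. A functor $X\colon\extr_\kappa^\op\to\cSet$ lies in it if and only if each $X(E)$ is $n$-step. By the enriched Yoneda Lemma~\ref{lem:yoneda-enriched} and the description of $\hom({\bbcor}^k_\ud,X)$ as a finite limit of the $X(j)$, the map $\hom({\bbox}^k_\ud,X)\to\hom({\bbcor}^k_\ud,X)$ evaluated at $E$ is exactly $\hom({\bbox}^k,X(E))\to\hom({\bbcor}^k,X(E))$. Isomorphisms of condensed sets are detected pointwise on $\extr_\kappa$, so the two conditions agree.

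For the final claim, take $X$ concrete and fibrant. Then every $X(E)$ is concrete and fibrant, by the pointwise characterizations established in the preceding subsections. Proposition~\ref{prop:trunc_concfib} then shows that each $X(E)\to\tau_nX(E)$ is a surjective fibration with concrete fibrant target, and $(\tau_nX)(E)=\tau_n(X(E))$ by the pointwise computation. Concreteness and fibrancy of $\tau_nX$ follow pointwise. Surjectivity and the fibration property also follow, because epimorphisms of condensed sets can be tested on $\extr_\kappa$-points, where every cover splits.

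I expect the main obstacle to be the pointwise-detection step: showing that surjectivity in the definition of fibrant, and the isomorphism condition for $n$-step, may be checked sectionwise on extremally disconnected $E$. This holds because $\CondSet\simeq\Fun^\times(\extr_\kappa^\op,\set)$ has no further sheafification, so epimorphisms are sectionwise surjections. I would rely on that description from Proposition~\ref{prop:cond-sheaves}.
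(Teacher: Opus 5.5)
Your proposal is correct and follows the paper's proof essentially step for step. You tensor the adjunction, use that $\tau_n$ and the inclusion preserve finite products so that Lemma~\ref{lem:pres-tens-prod-fun} turns everything into postcomposition on finite-product-preserving functors, read off the essential image via the enriched Yoneda lemma, and deduce the concrete fibrant case pointwise from Proposition~\ref{prop:trunc_concfib}; you also spell out two points the paper leaves tacit, namely the identification of the tensored category of $n$-step cubesets as finite-product-preserving functors into $n$-step cubesets, and the fact that epimorphisms of condensed sets can be detected on extremally disconnected points.
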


\begin{proof}
The existence of the $\CondSet$-enriched adjunction follows from functoriality of the tensor product.
Both the inclusion $\cSet_n\to\cSet$ and its left adjoint $\tau_n$ preserve finite products, see Proposition~\ref{prop:n-trunc-ihom-prod}.
Therefore, the adjunction is simply given by pushforward under the equivalence $\cCond\simeq\Fun^\times(\extr_\kappa^\op,\cSet)$
by Lemma~\ref{lem:pres-tens-prod-fun}.
This implies fully faithfulness of the right adjoint and the computation of the left adjoint pointwise on $E\in\extr_\kappa$.
The subcategory $\cCond_n$ is canonically identified with those $X\in\cCond$ such that $X(E)\in\cSet_n$ for every $E\in\extr_\kappa$,
i.e. the map
\[
    X(E)(k) = \hom({\bbox}^k,X(E))\to\hom({\bbcor}^k,X(E)) = (\varprojlim_{j\in\cJ} X(E)(j)) = (\varprojlim_{j\in\cJ} X(j))(E)
\]
is a bijection for every $k>n$.
But this is precisely the condition that the map
\[
    \hom({\bbox}^k_\ud,X)\to\hom({\bbcor}^k_\ud,X)
\]
is an isomorphism of condensed sets for every $k>n$ by the enriched Yoneda lemma~\ref{lem:yoneda-enriched}.
The last claim follows by the pointwise statement on $\extr_\kappa$ from Proposition~\ref{prop:trunc_concfib}.
\end{proof}

\begin{proposition}\label{prop:cond-cube-trunc-qcqs}
Let $X$ be a concrete fibrant condensed cubeset.
If $X$ is (pointwise) qcqs, then so is $\tau_n X$.
\end{proposition}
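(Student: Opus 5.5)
The plan is to compute $\tau_n X$ levelwise as a quotient of $X$ by an equivalence relation that is itself a qcqs condensed set, and then use the fact that qcqs condensed sets are compact Hausdorff spaces (Proposition~\ref{prop:cond-top-compactly-gen}), which are stable under finite limits and under quotients by closed equivalence relations. Here ``pointwise qcqs'' means that each $X(k)$ is a qcqs condensed set. By the preceding proposition, the truncation is computed pointwise on $E\in\extr_\kappa$. Since $X(E)$ is concrete fibrant, Proposition~\ref{prop:trunc_concfib} gives $(\tau_n X)(E) = X(E)/\sim_n$. Consequently $\tau_n X(0)$ is the quotient of the condensed set $X(0)$ by the condensed equivalence relation $R_n\subset X(0)\times X(0)$, where $R_n(E)$ is the relation $\sim_n$ on $X(E)(0)$. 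Moreover, $\tau_n X(k)$ is the image of $X(k)\to \prod_{0\to k}\tau_nX(0)$, using concreteness of $\tau_nX$.

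The first step is to describe $R_n$ as a finite limit followed by an image, using Proposition~\ref{prop:univ-replacement}(b). We have $x\sim_n y$ exactly when the configuration that is constantly $x$ on the corner and equals $y$ at $1^{n+1}$ lies in $X(n+1)$. Hence $R_n$ is the pullback of $X(n+1)\hookrightarrow\prod_{0\to n+1}X(0)$ along the map $X(0)\times X(0)\to \prod X(0)$, $(x,y)\mapsto(x,\dots,x,y)$. Both objects are qcqs, and the first map is a monomorphism by concreteness. Since $\qcqs\simeq\CHaus_\kappa$ is closed under finite limits in $\CondSet$, the relation $R_n$ is a qcqs subobject, that is, a closed subspace of $X(0)\times X(0)$. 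This pointwise description is valid because condition (b) can be checked on $E$-points.

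Next, the quotient $X(0)/R_n$ in $\CondSet$ of a compact Hausdorff space by a closed equivalence relation is again compact Hausdorff, hence qcqs. I expect this to be the main obstacle: one has to justify that the condensed quotient agrees with the topological quotient. I would argue that the coequalizer of $R_n\rightrightarrows X(0)$ in $\CondSet$ is the quotient by an effective equivalence relation in a topos. Its topological counterpart $X(0)/R_n$ is compact Hausdorff since $R_n$ is closed. For the identification, the map $X(0)\to X(0)/R_n$ is a surjection of compact Hausdorff spaces, hence an epimorphism in $\CondSet$, and its kernel pair is $R_n$. Epimorphisms in a topos are effective, so the condensed quotient equals the compact Hausdorff quotient.

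Finally, for $k\ge1$ the object $\tau_nX(k)$ is the image of the qcqs object $X(k)$ in the qcqs object $\prod_{0\to k}\tau_nX(0)$. By the equivalence with $\CHaus_\kappa$, this image is a continuous image of a compact space inside a Hausdorff space, so it is closed and therefore qcqs. The images computed in $\CondSet$ and in $\CHaus$ agree, since the factorization of a map between compact Hausdorff spaces into a surjection followed by a closed embedding is an epi–mono factorization in $\CondSet$. Hence $\tau_nX$ is pointwise qcqs.
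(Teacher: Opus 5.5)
Your proof is correct and follows the paper's overall strategy. Both compute $\tau_n X$ pointwise as $X/\sim_n$, show that $\sim_n$ is a closed subobject of $X(0)\times X(0)$, deduce that $\tau_nX(0)$ is qcqs, and pass to higher levels via concreteness of $\tau_nX$. The middle steps are carried out differently.

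The paper works from the definition of $\sim_n$:
\begin{enumerate}
\item It forms the pullback $W$ of two copies of $X(n+1)$ over the $(n+1)$-corner space; $W$ is qcqs as a finite limit.
\item It exhibits the kernel pair $Y\times_Z Y$ (with $Y=X(0)$, $Z=\tau_nX(0)$) as the image of $W$ under the map to pairs of top vertices.
\item Closedness follows because images of qcqs objects in quasiseparated objects are qcqs.
\item It then applies the internal criterion that $Z$ is quasiseparated iff this kernel pair is closed in $Y\times Y$.
\item Quasicompactness comes from surjectivity of the unit, and the higher levels are handled by stability of quasiseparatedness under subobjects and limits.
\end{enumerate}

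You instead use characterization (b) of Proposition~\ref{prop:univ-replacement} to write $R_n$ directly as a pullback of the monomorphism $X(n+1)\hookrightarrow\prod X(0)$. Closedness then comes from finite limits alone, with no image needed. The cost is that you must invoke the classical fact that a compact Hausdorff space modulo a closed equivalence relation is compact Hausdorff. You must also separately identify the condensed coequalizer with the topological quotient.

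Your effective-epimorphism argument for that identification is sound. The topological quotient map is an epimorphism of condensed sets by projectivity of extremally disconnected spaces. Its kernel pair is $R_n$, since both are closed subspaces of $X(0)\times X(0)$ with the same underlying set.

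The paper's version stays internal to condensed sets and never needs this identification. Yours is more explicit: it describes each $\tau_nX(k)$ concretely as a closed image inside a power of the compact Hausdorff space $X(0)/\sim_n$.

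Your handling of the higher levels also correctly relies on concreteness of $\tau_nX$. The paper's text says \emph{since $X$ is concrete} at that point, where $\tau_nX$ is what is actually needed.
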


\begin{proof}
Since the unit $\eta\colon X\to\tau_n X$ is surjective, $\tau_n X$ is quasicompact.
For quasiseparatedness it suffices to see that $\tau_n X(0)$ is quasiseparated:
since $X$ is concrete, the map $X(n)\to\prod_{0\to n}X(0)$ is injective
and subobjects and limits of quasiseparated condensed sets are quasiseparated by \cite[Lemma 2.5.47]{Bihlmaier2025}.
Let $Y = X(0)$ and $Z = \tau_nX(0)$.
Thus it suffices to see that $Z$ is quasiseparated,
which is equivalent to the kernel pair $Y\times_Z Y$ being a closed subobject of $Y\times Y$,
i.e. the inclusion $Y\times_Z Y\to Y\times Y$ is quasicompact.
Since $Y\times Y$ is compact Hausdorff, we need to see that $Y\times_Z Y$ is given by a closed subset of $Y\times Y$.
Consider the pullback square
\begin{center}
\begin{tikzcd}
	W & {\hom({\bbox}^{n+1}_\ud,X)} \\
	{\hom({\bbox}^{n+1}_\ud,X)} & {\hom({\bbcor}^{n+1}_\ud,X)}
	\arrow[from=1-1, to=1-2]
	\arrow[from=1-1, to=2-1]
	\arrow[from=1-2, to=2-2]
	\arrow[from=2-1, to=2-2]
\end{tikzcd}.
\end{center}
The condensed set $W$ is qcqs since all other vertices of the diagram are,
and we have that $Y\times_Z Y$ is the image of the map
\[
    W\to X(n+1)\times X(n+1)\to X(0)\times X(0) = Y\times Y
\]
by Proposition~\ref{prop:trunc_concfib}.
But images of qcqs objects in quasiseparated objects are qcqs again, so that $Y\times_Z Y$ is closed in $Y\times Y$.
\end{proof}

\begin{definition}
A condensed cubeset $X$ is \emph{$n$-ergodic} if $\tau_{n-1} X = \ast$.
\end{definition}

In particular, if $X$ is concrete and fibrant, then $X$ is $n$-ergodic if and only if $X(n) = X^{2^n}$.

\subsection{Structure Groups}

There also is the enrichment on structure groups.
For the following statement, we have to leave the setting of presentable categories and work externally in $\Cat$.

\begin{proposition}
Let $n\in\N$.
The functor $\pi_n\colon\cSet_\ast\to\cMon$ upgrades to a functor
\[
    \pi_n\colon\cCond_\ast\to\cMon(\CondSet) = \Cond(\cMon).
\]
We have that $\pi_n(X,x_0)(E) = \pi_n(X(E),x_0)$.
If $X$ is concrete fibrant, then $\pi_n(X,x_0)$ is a condensed abelian group.
If $X$ additionally is qcqs, so is $\pi_n(X,x_0)$.
\end{proposition}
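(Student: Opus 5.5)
The plan is to define $\pi_n$ on condensed pointed cubesets by applying the discrete construction pointwise on $\extr_\kappa$. Under Proposition~\ref{prop:cond-cube-descr}(d), a pointed condensed cubeset $(X,x_0)$ is a finite product preserving functor $E\mapsto X(E)$ into $\cSet$. The basepoint $x_0\colon\ast\to X$ gives compatible basepoints $x_0\in X(E)(0)$, pulled back from $X(\ast)$ along $E\to\ast$. I then set
\[
    \pi_n(X,x_0)(E)\coloneq \pi_n(X(E),x_0).
\]
This is a functor $\extr_\kappa^\op\to\cMon$, since a map $E'\to E$ induces a pointed map $X(E)\to X(E')$ of cubesets, and $\pi_n\colon\cSet_\ast\to\cMon$ is functorial by Proposition~\ref{prop:structure-group-is-monoid} (applied after $\tau_n$, Definition~\ref{def:structure-monoid-general}).

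The point to check is that this functor preserves finite products, so that it lies in $\Fun^\times(\extr_\kappa^\op,\cMon)=\Cond(\cMon)$. For $E\sqcup E'$ we have $X(E\sqcup E')=X(E)\times X(E')$ with basepoint $(x_0,x_0)$. Proposition~\ref{prop:structure-monoid-prod} then gives
\[
    \pi_n(X(E)\times X(E'))\simeq\pi_n(X(E))\times\pi_n(X(E')).
\]
For the empty space, $X(\emptyset)=\ast$ and $\pi_n(\ast)=0$. Since $\cMon(\CondSet)$ is computed by product preserving functors into $\cMon$, the identification $\cMon(\CondSet)=\Cond(\cMon)$ is formal. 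Functoriality in maps of condensed pointed cubesets is again pointwise.

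Next I treat the concrete fibrant case. Concreteness and fibrancy of $X$ are pointwise in $E$: for fibrancy this is stated after the definition, and for concreteness it follows because injectivity of condensed sets is pointwise. So each $X(E)$ is concrete fibrant, and each $\pi_n(X(E),x_0)$ is an abelian group by Proposition~\ref{prop:concrete-structure-group-is-group}. A monoid object that is objectwise grouplike is a condensed abelian group, because inverses are unique and hence automatically natural.

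The main obstacle is the qcqs claim. My plan is to realize $\pi_n(X,x_0)$ as a quotient of a qcqs condensed set by a qcqs equivalence relation, imitating Proposition~\ref{prop:cond-cube-trunc-qcqs}.
\begin{enumerate}[(i)]
    \item Replace $X$ by $\tau_nX$, which is concrete, fibrant and qcqs by Proposition~\ref{prop:cond-cube-trunc-qcqs}, and note that $\pi_n$ is unchanged by Proposition~\ref{prop:trunc-structure-monoid}, pointwise.
    \item Since $\tau_nX$ is $n$-step, $\pi_n(X,x_0)$ is the pullback of $\hom({\bbox}^n_\ud,\tau_nX)\to\hom({\bbcor}^n_\ud,\tau_nX)$ along $x_0\colon\ast\to\hom({\bbcor}^n_\ud,\tau_nX)$. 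This holds pointwise on $E$ by the enriched Yoneda Lemma~\ref{lem:yoneda-enriched}.
    \item The three vertices $\hom({\bbox}^n_\ud,\tau_nX)=\tau_nX(n)$, $\hom({\bbcor}^n_\ud,\tau_nX)$ and $\ast$ are qcqs: $\hom({\bbcor}^n_\ud,\tau_nX)$ is a finite limit of the qcqs $\tau_nX(k)$.
    \item Quasiseparatedness of the pullback then follows from stability of quasiseparated objects under limits.
    \item For quasicompactness, use that $x_0$ is a closed point of a compact Hausdorff space, so the fiber is a closed subobject of the qcqs $\tau_nX(n)$, hence qcqs.
\end{enumerate}
The delicate point is this last fiber argument, i.e.\ ensuring that the fibre over a point in a qcqs target is quasicompact. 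I would justify it via \cite[Lemma 2.5.47]{Bihlmaier2025} and the fact that the diagonal of a quasiseparated object is quasicompact.
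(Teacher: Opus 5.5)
Your proposal is correct and follows the paper's proof. You push $\pi_n$ forward pointwise along $\extr_\kappa^\op$, using Proposition~\ref{prop:structure-monoid-prod} to stay inside finite-product-preserving functors; for the qcqs claim you pass to $\tau_n X$, which is qcqs by Proposition~\ref{prop:cond-cube-trunc-qcqs}, and read $\pi_n$ off as the fibre over $x_0$ of the restriction from $n$-cubes to $n$-corners. Your pointwise grouplikeness argument for the concrete fibrant case and your explicit closed-fibre justification simply fill in details that the paper's short proof leaves implicit.
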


\begin{proof}
The functor $\pi_n$ induces by pushforward a functor $\Fun(\extr_\kappa^\op,\cSet_\ast)\to\Fun(\extr_\kappa^\op,\cMon)$.
Since $\pi_n$ preserves finite products by Proposition~\ref{prop:structure-monoid-prod}
it restricts to a functor
\[
    \cSet_\ast\otimes\CondSet\simeq\Fun^\times(\extr_\kappa^\op,\cSet_\ast)\to\Fun^\times(\extr_\kappa^\op,\cMon)\simeq\cond(\cMon).
\]
If $X$ is qcqs, then $\tau_n X$ is as well, see Proposition~\ref{prop:cond-cube-trunc-qcqs},
and so the last claim follows from the fact that if $X$ is $n$-step, then $\pi_n(X,x_0)$ fits in the pullback square
\begin{center}
\begin{tikzcd}
	{\pi_n(X,x_0)} & \ast \\
	{\hom({\bbox}^n,X)} & {\hom({\bbcor}^n,X)}
	\arrow[from=1-1, to=1-2]
	\arrow[from=1-1, to=2-1]
	\arrow["{x_0}", from=1-2, to=2-2]
	\arrow[from=2-1, to=2-2]
\end{tikzcd}.
\end{center}
\end{proof}

\begin{remark}
The analogous statement holds for $\pi_0$:
since $\pi_0$ is left adjoint, tensoring with $\CondSet$ gives a functor $\pi_0\colon\cCond\to\CondSet$.
The functor is given by pointwise application of $\pi_0\colon\cSet\to\set$ since $\pi_0$ preserves finite products.
Moreover, it is computed by $\pi_0(X) = \tau_0 X(0)$.
\end{remark}

\begin{proposition}
Let $n\in\N$.
Then the functor $\cD_n\colon\Ab\to\cSet$ upgrades to a functor $\cD_n\colon\CondAb\to\cCond$
which factors through the subcategory of condensed abelian cubegroups.
We have that $\cD_n(A)(E) = \cD_n(A(E))$.
\end{proposition}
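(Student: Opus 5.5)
The plan is to run the same argument already used for $\pi_n$ and the truncations. First, $\cD_n$ is extended along the equivalence $\CondAb\simeq\Fun^\times(\extr_\kappa^\op,\Ab)$. Then I check that it lands in $\cCond\simeq\Fun^\times(\extr_\kappa^\op,\cSet)$ from Proposition~\ref{prop:cond-cube-descr}(d). The candidate functor is pushforward: a condensed abelian group $A$, viewed as a finite-product-preserving functor $E\mapsto A(E)$, is sent to $E\mapsto\cD_n(A(E))$. This composite again preserves finite products, because $\cD_n\colon\Ab\to\cSet$ preserves products by Remark~\ref{rem:EM_preserves_products}. Hence we get a functor $\Fun^\times(\extr_\kappa^\op,\Ab)\to\Fun^\times(\extr_\kappa^\op,\cSet)$, and the formula $\cD_n(A)(E)=\cD_n(A(E))$ holds by construction.

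To see that the construction is canonical rather than ad hoc, I would also compare it with the tensor description $\cSet\otimes\CondSet$. The point is that $\cD_n$ is the underlying-set functor of a functor $\Ab\to\cSet\otimes\Grp$-style cubegroups. That functor is obtained by applying $\cSet\otimes(-)$-functoriality to the forgetful structure, and Lemma~\ref{lem:pres-tens-prod-fun} identifies tensoring with pushforward exactly when the functor preserves the relevant ($\cK=$ finite products) limits. Since $\cD_n$ is not a left adjoint in general, I would not insist on it being a morphism in $\PrL$. Instead I would work externally in $\Cat$, as in the preceding proposition on $\pi_n$: any product-preserving functor $F\colon\cC\to\cD$ induces $F\circ-$ on $\Fun^\times(\extr_\kappa^\op,-)$.

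For the factorization through condensed abelian cubegroups, the key observation is that $\cD_n(A)$ for a discrete abelian group is an abelian group object in $\cSet$, with pointwise addition of cubes inside $A^{2^k}$. Moreover, homomorphisms of abelian groups induce homomorphisms of these group objects, so $\cD_n$ factors as $\Ab\to\Ab(\cSet)\to\cSet$ with both pieces product preserving. Applying the pushforward to the first factor lands in $\Fun^\times(\extr_\kappa^\op,\Ab(\cSet))\simeq\Ab(\Fun^\times(\extr_\kappa^\op,\cSet))$, since group objects are defined by finite products and these are computed pointwise.

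The only step I expect to need care is checking that $\cD_n$ preserves finite products, including the empty product ($\cD_n(0)=\ast$). It is also the main obstacle, since the rest is formal. I would verify it directly: a tuple of generators $a^F$ in $A\times B$ splits as $(a^F,0)+(0,b^F)$, so the subgroup of $(A\times B)^{2^k}$ generated by face configurations equals the product of the two generated subgroups.
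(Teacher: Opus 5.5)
Your proposal is correct and matches the paper's proof: pushforward $\cD_n\circ-$ along the product-preserving functor $\cD_n$ restricts from $\Fun(\extr_\kappa^\op,\Ab)$ to the finite-product-preserving functors, giving $\CondAb\to\cCond$ with $\cD_n(A)(E)=\cD_n(A(E))$ by construction. Your direct check of product preservation via $(a,b)^F=(a,0)^F+(0,b)^F$ and your factorization through $\Ab(\cSet)$ fill in what the paper leaves to Remark~\ref{rem:EM_preserves_products}; the tensor-product comparison in your second paragraph is loosely stated, but nothing in the argument depends on it.
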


\begin{proof}
The pointwise pushforward $\cD_n\circ -\colon\Fun(\extr_\kappa^\op,\Ab)\to\Fun(\extr_\kappa^\op,\cSet)$
restricts to a functor $\cD_n\colon\CondAb\to\cCond$ since it preserves finite products by Remark~\ref{rem:EM_preserves_products}.
\end{proof}

\subsection{Weak Structure Theorem}

Now we are ready to state and prove a weak structure theorem for condensed cubesets.

\begin{theorem}\label{thm:cond-weak-structure}
Let $X$ be a condensed concrete ergodic fibrant cubeset.
Then there exists a tower of surjective fibrations
\begin{center}
\begin{tikzcd}
	X & \cdots & {\tau_2X} & {\tau_1X} & {\tau_0X}
	\arrow[from=1-1, to=1-2]
	\arrow[from=1-2, to=1-3]
	\arrow[from=1-3, to=1-4]
	\arrow[from=1-4, to=1-5]
\end{tikzcd}
\end{center}
such that $\tau_n X$ is a condensed concrete fibrant $n$-step cubeset
and $\tau_n X$ is a principal $\cD_n(\pi_n(X,x_0))$-bundle over $\tau_{n-1} X$.
This tower is natural in $X$.
\end{theorem}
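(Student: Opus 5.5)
The plan is to reduce everything to the discrete weak structure theorem (Theorem~\ref{thm:weak-structure}) applied pointwise on $E\in\extr_\kappa$, using the description $\cCond\simeq\Fun^\times(\extr_\kappa^\op,\cSet)$ from Proposition~\ref{prop:cond-cube-descr}(d). By the preceding subsections, the truncation $\tau_n$, concreteness, fibrancy, $\pi_n$ and $\cD_n$ on condensed cubesets are all computed pointwise in $E$. Hence for each $E$ the cubeset $X(E)$ is concrete and fibrant, and $(\tau_nX)(E)=\tau_n(X(E))$.

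\textbf{Ergodicity.} One subtlety is that ergodicity of $X$ means $\tau_0X=\ast$ as a condensed cubeset. Since $\tau_0$ is computed pointwise, this gives $\tau_0(X(E))=\ast$ for every $E$, so each $X(E)$ is a concrete ergodic fibrant cubeset. For $E=\emptyset$ the product-preservation forces $X(\emptyset)=\ast$, which is harmless.

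\textbf{Tower and its properties.} The tower is obtained by tensoring the units $\tau_nX\to\tau_{n-1}X$, or equivalently by taking them pointwise. Surjectivity and the fibration property of these maps, as well as concreteness, fibrancy and $n$-stepness of $\tau_nX$, follow from the truncation proposition for condensed cubesets stated above. Naturality in $X$ is inherited from functoriality of $\tau_n$.

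\textbf{Principal bundle structure.} The basepoint $x_0$ should be a point $\ast\to X$, i.e.\ $x_0\in X(0)(\ast)$, which restricts to a basepoint in every $X(E)$. Then $\pi_n(X,x_0)(E)=\pi_n(X(E),x_0)$ and $\cD_n(\pi_n(X,x_0))(E)=\cD_n(\pi_n(X(E),x_0))$. The discrete theorem gives, for each $E$, an action
\[
    \alpha_E\colon\cD_n(\pi_n(X(E),x_0))\times\tau_nX(E)\to\tau_nX(E)
\]
over $\tau_{n-1}X(E)$ whose shear map is an isomorphism and for which $\tau_nX(E)\to\tau_{n-1}X(E)$ is surjective. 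Provided the $\alpha_E$ are natural in $E$, they assemble into an action of condensed cubesets. The shear map is then a pointwise isomorphism, hence an isomorphism, and the epimorphism condition also holds pointwise. Since $\cCond$ is a topos, these two conditions suffice to give a principal bundle.

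\textbf{Main obstacle.} The crux is naturality of the action in $E$, which is needed so that the $\alpha_E$ define a morphism in $\Fun^\times(\extr_\kappa^\op,\cSet)$. For a continuous map $g\colon E'\to E$, the pullback $X(g)$ is a morphism of pointed concrete fibrant cubesets preserving $x_0$. I would check compatibility on points: by concreteness both sides are determined by the $0$-cubes. On points the action is $\alpha_0(a,x)=h_{(x_0,x)}(a)(1^n)$, where $h_{(x_0,x)}(a)$ is built from the unique completion of a corner assembled from $a$ and degeneracies of the edge $(x_0,x)$. Morphisms of $n$-step cubesets preserve such corners and, by uniqueness, their completions, so they commute with $h$ and hence with $\alpha$.

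Alternatively, this follows directly from the naturality in $X$ claimed in Theorem~\ref{thm:weak-structure}, applied to the maps $X(g)$. This is precisely where the functoriality part of the discrete theorem is used.
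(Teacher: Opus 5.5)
Your proposal is correct and follows the paper's proof. Like the paper, you apply Theorem~\ref{thm:weak-structure} pointwise on $E\in\extr_\kappa$, assemble the pointwise actions using naturality, and deduce the principal bundle property from two facts: the shear map is a pointwise isomorphism, and $\tau_nX\to\tau_{n-1}X$ is surjective, where the paper phrases surjectivity as being an effective epimorphism. Your direct check that the action commutes with the restriction maps $X(g)$ (uniqueness of the corner completions defining $h_{(x_0,x)}$, plus concreteness) spells out a step the paper leaves to the naturality clause of the discrete theorem, which as stated only covers the tower and not the action.
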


Note that by projectivity of the point in $\cCond$, there is always a choice of some basepoint for nonempty $X$.

\begin{proof}
The theorem follows from the pointwise statement for the cubeset $X(E)$ from Theorem~\ref{thm:weak-structure}.
Note that the fact that the fibrations $\tau_n X\to\tau_{n-1} X$ are quotient maps
is automatic since in $\CondSet$, every epimorphism is regular.
Let us also remark that for $\tau_n X\to\tau_{n-1} X$ to be a principal $\cD_n(\pi_n(X,x_0))$-bundle,
it suffices to show that the shear map $\cD_n(\pi_n(X,x_0))\times\tau_n X\to\tau_n X\times_{\tau_{n-1}X}\tau_n X$ is an isomorphism,
since then the diagram
\begin{center}
\begin{tikzcd}
	{\cD_n(\pi_n(X,x_0))\times\tau_nX} & {\tau_n X} & {\tau_{n-1}X}
	\arrow[shift left, from=1-1, to=1-2]
	\arrow[shift right, from=1-1, to=1-2]
	\arrow[from=1-2, to=1-3]
\end{tikzcd}
\end{center}
is a coequalizer by surjectivity (= effective epimorphism) of $\tau_nX\to\tau_{n-1}X$.
\end{proof}

\begin{remark}
Recall that in this context, a principal $G$-bundle is a principal bundle in the wide sense,
i.e. without asserting local triviality of the action.
This is in line with \cite[Definition 2.1.6]{Candela2017a} and \cite[Theorem 5.4]{Gutman2020}
where no assumption on local triviality is made.
\end{remark}

By Proposition~\ref{prop:cond-cube-trunc-qcqs} the condensed weak structure theorem has the following corollary.

\begin{corollary}\label{cor:compact-weak-structure}
Let $X$ be a compact ergodic nilspace.
Then there is a tower of surjective fibrations
\begin{center}
\begin{tikzcd}
	X & \cdots & {\tau_2X} & {\tau_1X} & {\tau_0X}
	\arrow[from=1-1, to=1-2]
	\arrow[from=1-2, to=1-3]
	\arrow[from=1-3, to=1-4]
	\arrow[from=1-4, to=1-5]
\end{tikzcd}
\end{center}
such that $\tau_n X$ is a compact $n$-step nilspace
and the compact group $\pi_n(X,x_0)$ acts freely on $\tau_n X$
such that the orbits are the fibers of the quotient $\tau_n X\to\tau_{n-1} X$.
\end{corollary}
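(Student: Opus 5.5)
The plan is to deduce the corollary from Theorem~\ref{thm:cond-weak-structure} by passing through the equivalence $\CHaus_\kappa\simeq\qcqs$ of Proposition~\ref{prop:cond-top-compactly-gen}.

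\textbf{Translation to condensed cubesets.} Choose $\kappa$ larger than the weights of all spaces $X(n)$; these are compact Hausdorff, hence qcqs. A compact ergodic nilspace $X$ is a cubespace: a functor $\bbox^\op\to\CHaus$ such that each $X(n)\subseteq X(0)^{2^n}$ is closed and corner completion holds. Composing with $\CHaus_\kappa\to\CondSet$ and using Proposition~\ref{prop:cond-cube-descr}(c) gives a condensed cubeset $\underline X$. I then check the hypotheses of Theorem~\ref{thm:cond-weak-structure}.
\begin{itemize}
\item \emph{Concreteness.} The map $X(n)\to X(0)^{2^n}$ is a closed embedding, and the functor $\CHaus\to\CondSet$ preserves limits and monomorphisms.
\item \emph{Fibrancy.} The map $X(n)\to\hom({\bbcor}^n,X)$ is a continuous surjection of compact Hausdorff spaces, since $\hom({\bbcor}^n,X)$ is a finite limit and therefore compact. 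Continuous surjections of compact Hausdorff spaces become epimorphisms of condensed sets, because extremally disconnected spaces are projective.
\item \emph{Ergodicity.} Every pair of points spans a $1$-cube, so $\tau_0\underline X=\ast$ by Proposition~\ref{prop:0-trunc-computation}, computed pointwise.
\end{itemize}

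\textbf{Applying the theorem.} Theorem~\ref{thm:cond-weak-structure} now gives the tower of surjective fibrations $\tau_n\underline X$ together with the principal $\cD_n(\pi_n(\underline X,x_0))$-bundle structures. By Proposition~\ref{prop:cond-cube-trunc-qcqs}, each $\tau_n\underline X$ is pointwise qcqs and concrete fibrant, so it comes from a compact Hausdorff cubespace. That cubespace is a compact $n$-step nilspace: the $n$-step condition is an isomorphism of condensed sets, which transfers back along the fully faithful embedding. By the proposition on condensed structure groups, $\pi_n(X,x_0)$ is a qcqs condensed abelian group, i.e.\ a compact Hausdorff abelian group.

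\textbf{Extracting the free action.} The action $\cD_n(A)\times\tau_n X\to\tau_nX$ restricts in degree $0$ to a map $A\times\tau_nX(0)\to\tau_nX(0)$. This is a morphism in $\qcqs$, hence a continuous action. The shear isomorphism in degree $0$,
\[
A\times\tau_nX(0)\simeq\tau_nX(0)\times_{\tau_{n-1}X(0)}\tau_nX(0),
\]
gives freeness together with transitivity on fibers. The quotient map is surjective, so the orbits are exactly the fibers.

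\textbf{Main obstacle.} The step I expect to be most delicate is the identification of the objects produced condensedly with the classical topological ones. One must check that $\tau_n\underline X$ agrees with the classical quotient $X/\sim_n$ carrying the quotient topology, and that the quotient is Hausdorff. I would argue this via Proposition~\ref{prop:trunc_concfib} applied pointwise, together with the qcqs closure argument of Proposition~\ref{prop:cond-cube-trunc-qcqs}. Separately, the pullback $W$ in that argument must be confirmed to be qcqs.
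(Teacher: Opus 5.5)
Your proposal is correct and follows the paper's route. The paper deduces the corollary from Theorem~\ref{thm:cond-weak-structure}, together with Proposition~\ref{prop:cond-cube-trunc-qcqs} and the identification of compact Hausdorff spaces with qcqs condensed sets (Proposition~\ref{prop:cond-top-compactly-gen}); your checks of concreteness, fibrancy and ergodicity, and your extraction of the free action from the degree-$0$ shear isomorphism, spell out what the paper leaves implicit.

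The obstacle you flag is milder than you expect. $X(0)\to\tau_nX(0)$ is a continuous surjection from a compact space onto a Hausdorff space, hence automatically a quotient map, so the condensed truncation carries the quotient topology on $X(0)/\sim_n$. Hausdorffness is exactly what Proposition~\ref{prop:cond-cube-trunc-qcqs} provides, and its $W$ is qcqs because it is a finite limit of qcqs objects.
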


We have broken our convention and used classical terminology here,
the reason being an easier comparability to the classical theorem,
see \cite[Theorem 5.4]{Gutman2020} and \cite[Proposition 2.1.9]{Candela2017a}.
Note that a nilspace is an $n$-step concrete fibrant cubeset for some $n\in\N$,
see \cite[Definition 3.6]{Gutman2020} and \cite[Definition 1.2.1]{Candela2017}, \cite[Definition 1.0.2]{Candela2017a},
and as usual, $X$ is notationally identified with $X(0)$.
Moreover, compact is used to denote a compact second-countable Hausdorff space, see \cite[Definition 1.0.1]{Candela2017a}.

\begin{remark}
In general, there is no reason to believe that the principal bundle $\tau_n X\to\tau_{n-1}X$ is locally trivial,
i.e., the existence of a cover $U\to \tau_{n-1}X$ such that after base change to $U$,
the principal bundle is the trivial one $U\times G\to U$.
However, if $\pi_n(X,x_0)$ is a compact Lie group, then local triviality of the principal bundle is automatic
by \cite[Theorem 3.3]{Gleason1950}, see also \cite[Proposition 2.5.2]{Candela2017a}.
\end{remark}

\printbibliography[heading=bibintoc]
\end{document}